\newtheorem{theorem}{Theorem}[section]
\newtheorem{lemma}[theorem]{Lemma}
\newtheorem{remark}[theorem]{Remark}
\renewcommand \theequation {%
\ifnum \c@chapter>\z@ \@arabic\c@chapter.%
\fi\@arabic\c@equation} \@addtoreset{equation}{chapter}
\def\XXint#1#2#3{{\setbox0=\hbox{$#1{#2#3}{\int}$ }
\vcenter{\hbox{$#2#3$ }}\kern-.6\wd0}}
\providecommand{\abs}[1]{\left\vert#1\right\vert}
\providecommand{\nm}[1]{\left\Vert#1\right\Vert}
\providecommand{\br}[1]{\left\langle #1 \right\rangle}
\providecommand{\tm}[2]{\left\Vert#1\right\Vert_{L^2(#2)}}
\providecommand{\im}[2]{\left\Vert#1\right\Vert_{L^{\infty}(#2)}}
\providecommand{\lnnm}[1]{{\left\Vert#1\right\Vert}_{L^{\infty}_{\eta}L^{\infty}_{\phi,\psi}}}
\providecommand{\tnnm}[1]{{\left\Vert#1\right\Vert}_{L^{2}_{\eta}L^2_{\phi,\psi}}}
\providecommand{\ltnm}[1]{{\left\Vert#1\right\Vert}_{L^{\infty}_{\eta}L^{2}_{\phi,\psi}}}
\providecommand{\lnm}[1]{\left\Vert#1\right\Vert_{L^{\infty}_{\phi,\psi}}}
\providecommand{\tnm}[1]{\left\Vert#1\right\Vert_{L^{2}_{\phi,\psi}}}
\providecommand{\lnmp}[1]{\left\Vert#1\right\Vert_{L^{\infty}_{+,\phi,\psi}}}
\providecommand{\tnmp}[1]{\left\Vert#1\right\Vert_{L^{2}_{+,\phi,\psi}}}
\providecommand{\lss}[2]{\left\Vert#1\right\Vert_{L^{\infty}_{#2}}}
\def\ud{\mathrm{d}}
\def\dt{\partial_t}
\def\p{\partial}
\def\ls{\lesssim}
\def\half{\dfrac{1}{2}}
\def\rt{\rightarrow}
\def\r{\mathbb{R}}
\def\no{\nonumber}
\def\ue{\mathrm{e}}
\def\ui{\mathrm{i}}
\def\ds{\displaystyle}
\def\u{U}
\def\ub{\mathscr{U}}
\def\bu{\bar U}
\def\bub{\bar{\mathscr{U}}}
\def\uf{\mathfrak{U}}
\def\buf{\bar{\mathfrak{U}}}
\def\uu{\mathcal{U}^B}
\def\buu{\bar{\mathcal{U}}^B}
\def\ui{\mathcal{U}^I}
\def\bui{\bar{\mathcal{U}}^I}
\def\e{\epsilon}
\def\s{\mathbb{S}}
\def\vx{\vec x}
\def\vw{\vec w}
\def\vt{\vec\varsigma}
\def\vr{\vec r}
\def\nx{\nabla_{x}}
\def\vn{\vec\nu}
\def\l{\lambda}
\def\ll{\mathcal{L}}
\def\kk{\kappa}
\def\q{Q}
\def\qi{\mathcal{Q}}
\def\qb{\mathscr{Q}}
\def\qf{\mathfrak{Q}}
\def\v{\mathscr{V}}
\def\w{\mathscr{W}}
\def\d{\delta}
\def\vn{\vec\nu}
\def\t{\mathcal{T}}
\def\k{\mathcal{K}}
\def\a{\mathscr{A}}
\def\id{{\bf{1}}}
\def\rr{\mathscr{R}}
\def\gb{\mathscr{G}}
\def\gf{\mathfrak{G}}
\def\rp{\r^+}
\def\ss{S}
\def\h{h}
\def\g{g}
\def\tf{\tilde F}
\def\tv{\tilde V}
\begin{document}

\title{Asymptotic Analysis of Transport Equation in Bounded Domains}
\author{Lei Wu\footnote{L. Wu is supported by NSF grant DMS-1853002. Email: lew218@lehigh.edu}\\
Department of Mathematics, Lehigh University }
\date{}


\maketitle

\begin{abstract}
\par Consider neutron transport equations in 3D convex domains with in-flow boundary. We mainly study the asymptotic limits as the Knudsen number $\e\rt 0^+$. Using Hilbert expansion, we rigorously justify that the solution of steady problem converges to that of the Laplace's equation, and the solution of unsteady problem converges to that of the heat equation. The proof relies on a detailed analysis on the boundary layer effect with geometric correction.

This problem can be formulated in many different settings, and the above one is probably the most physically significant and most mathematically challenging. We have to utilize almost all methods and techniques we developed in a series of papers \cite{AA003,AA005,AA007,AA009,AA012,AA014} in the past decade, and bring novel ideas to treat the new complications.

The difficulty mainly comes from three sources: 3D domain, boundary layer regularity, and time dependence. To fully solve this problem, we introduce several techniques: (1) boundary layer with geometric correction; (2) remainder estimates with $L^2-L^{2m}-L^{\infty}$ framework; (3) boundary layer decomposition.\\
\ \\
\textbf{Keywords:} neutron transport; boundary layer; Milne problem; geometric correction
\end{abstract}

\tableofcontents

\newpage


\pagestyle{myheadings} \thispagestyle{plain} \markboth{LEI WU}{ASYMPTOTIC ANALYSIS OF TRANSPORT EQUATION}

\chapter{Introduction}

\section{Problem Presentation}

We consider the steady neutron transport equation in a
three-dimensional bounded convex domain with in-flow boundary. In the spacial
domain $\vx=(x_1,x_2,x_3)\in\Omega$ where $\p\Omega\in C^3$ and the velocity domain
$\vw=(w_1,w_2,w_3)\in\s^2$, the neutron density $u^{\e}(\vx,\vw)$
satisfies
\begin{align}\label{transport}
\left\{
\begin{array}{l}\displaystyle
\e \vw\cdot\nabla_x u^{\e}+u^{\e}-\bar u^{\e}=0\ \ \text{in}\ \ \Omega\times\s^2,\\\rule{0ex}{2.0em}
u^{\e}(\vx_0,\vw)=g(\vx_0,\vw)\ \ \text{for}\
\ \vw\cdot\vn<0\ \ \text{and}\ \ \vx_0\in\p\Omega,
\end{array}
\right.
\end{align}
where
\begin{align}\label{average}
\bar u^{\e}(\vx)=\frac{1}{4\pi}\int_{\s^2}u^{\e}(\vx,\vw)\ud{\vw},
\end{align}
$\vn$ is the outward unit normal vector, with the Knudsen number $0<\e<<1$.

Also, we consider the unsteady counterpart. Taking additional temporal domain $t\in\rp$ into account, the neutron density $u^{\e}(t,\vx,\vw)$
satisfies
\begin{eqnarray}\label{transport.}
\left\{
\begin{array}{l}\displaystyle
\e^2\dt u^{\e}+\e \vw\cdot\nabla_x u^{\e}+u^{\e}-\bar u^{\e}=0\ \ \text{in}\ \ \rp\times\Omega\times\s^2,\\\rule{0ex}{2.0em}
u^{\e}(0,\vx,\vw)=h(\vx,\vw)\ \ \text{in}\ \ \Omega\times\s^2,\\\rule{0ex}{2.0em}
u^{\e}(t,\vx_0,\vw)=g(t,\vx_0,\vw)\ \ \text{for}\
\ t\in\rp,\ \ \vw\cdot\vn<0\ \ \text{and}\ \ \vx_0\in\p\Omega,
\end{array}
\right.
\end{eqnarray}
where
\begin{eqnarray}\label{average.}
\bar u^{\e}(t,\vx)=\frac{1}{4\pi}\int_{\s^2}u^{\e}(t,\vx,\vw)\ud{\vw}.
\end{eqnarray}
The initial and boundary data satisfy the
compatibility condition
\begin{eqnarray}\label{compatibility condition.}
h(\vx_0,\vw)=g(0,\vx_0,\vw)\ \ \text{for}\ \ \vx_0\in\p\Omega\ \ \text{and}\ \ \vw\cdot\vn<0.
\end{eqnarray}
\ \\
In both cases, we intend to study the behavior of $u^{\e}$ as $\e\rt0$. 
\ \\
Based on the flow direction, we can divide the boundary $\Gamma=\{(\vx,\vw): \vx\in\p\Omega\ \ \text{and}\ \ \vw\in\s^2\}$ into
the in-flow boundary $\Gamma^-$, the out-flow boundary $\Gamma^+$
and the grazing set $\Gamma^0$ as
\begin{align}
\Gamma^{-}=\{(\vx,\vw): \vx\in\p\Omega,\ \vw\cdot\vn<0\}\\
\Gamma^{+}=\{(\vx,\vw): \vx\in\p\Omega,\ \vw\cdot\vn>0\}\\
\Gamma^{0}=\{(\vx,\vw): \vx\in\p\Omega,\ \vw\cdot\vn=0\}
\end{align}
It is easy to see that $\Gamma=\Gamma^+\cup\Gamma^-\cup\Gamma^0$.
Hence, the boundary condition is only prescribed for $\Gamma^{-}$. This is usually called the in-flow or absorbing boundary condition.

\section{Background and Methods}

We have been involved in this long-term project to justify the diffusive limits for neutron transport equations(see \cite{AA003}, \cite{AA005}, \cite{AA006}, \cite{AA007}, \cite{AA009}, \cite{AA012} and \cite{AA014}) and the hydrodynamic limits for nonlinear Boltzmann equation (see \cite{AA004} and \cite{AA013}). In the past decade, we push the results in many aspects: geometry of domain, dimension, time-dependence, etc. Here, we present a brief review of the background and the progress we have made.

\subsection{Asymptotic Analysis}

Diffusive limits, or more general hydrodynamic limits, are central to connecting kinetic theory and fluid mechanics. The basic idea is to consider the asymptotic behaviors of the solutions to Boltzmann equation, transport equation, or Vlasov systems. Since the early $20^{th}$ century, this type of problems have been extensively studied in many different settings: steady or unsteady, linear or nonlinear, strong solution or weak solution, etc.

Among all these variations, one of the simplest but most important models --- neutron transport equation in bounded domains, has attracted a lot of attention since the dawn of the atomic age. Besides its significance in nuclear sciences and medical imaging, the neutron transport equation is usually regarded as a linear prototype of the more complicated nonlinear Boltzmann equation, and thus, is an ideal starting point to develop new theories and techniques.
The early development focuses on formal expansion, explicit solution and numerical methods. We refer to
\cite{Larsen1974=}, \cite{Larsen1974}, \cite{Larsen1975}, \cite{Larsen1977}, \cite{Larsen.D'Arruda1976}, \cite{Larsen.Habetler1973}, \cite{Larsen.Keller1974}, \cite{Larsen.Zweifel1974}, \cite{Larsen.Zweifel1976} for more details.

We start from 2D steady neutron transport equation, i.e. $\vx\in\Omega\subset\r^2$ and $\vw\in\s^1$. Generally speaking, the solution $u^{\e}$ varies smoothly and slowly in the interior of $\Omega$, and behaves like $u^{\e}-\bar u^{\e}=0$ which ignores $\vw\cdot\nabla_xu^{\e}$. However, its value changes severely when approaching the boundary $\p\Omega$ and in this regime, $\vw\cdot\nabla_xu^{\e}$ plays a crucial role. The smaller $\e$ is, the more violently $u^{\e}$ changes.

This phenomenon indicates that $u^{\e}$ can actually be described in two distinct regimes with different scalings, namely, the interior solution $\u$ and the boundary layer $\ub$. 
The interior solution satisfies certain fluid equations or thermodynamic equations, and the boundary layer satisfies a half-space kinetic equation, which decays rapidly when it is away from the boundary.

\begin{figure}[H]
\begin{minipage}[t]{0.5\linewidth}
\centering
\includegraphics[width=3in]{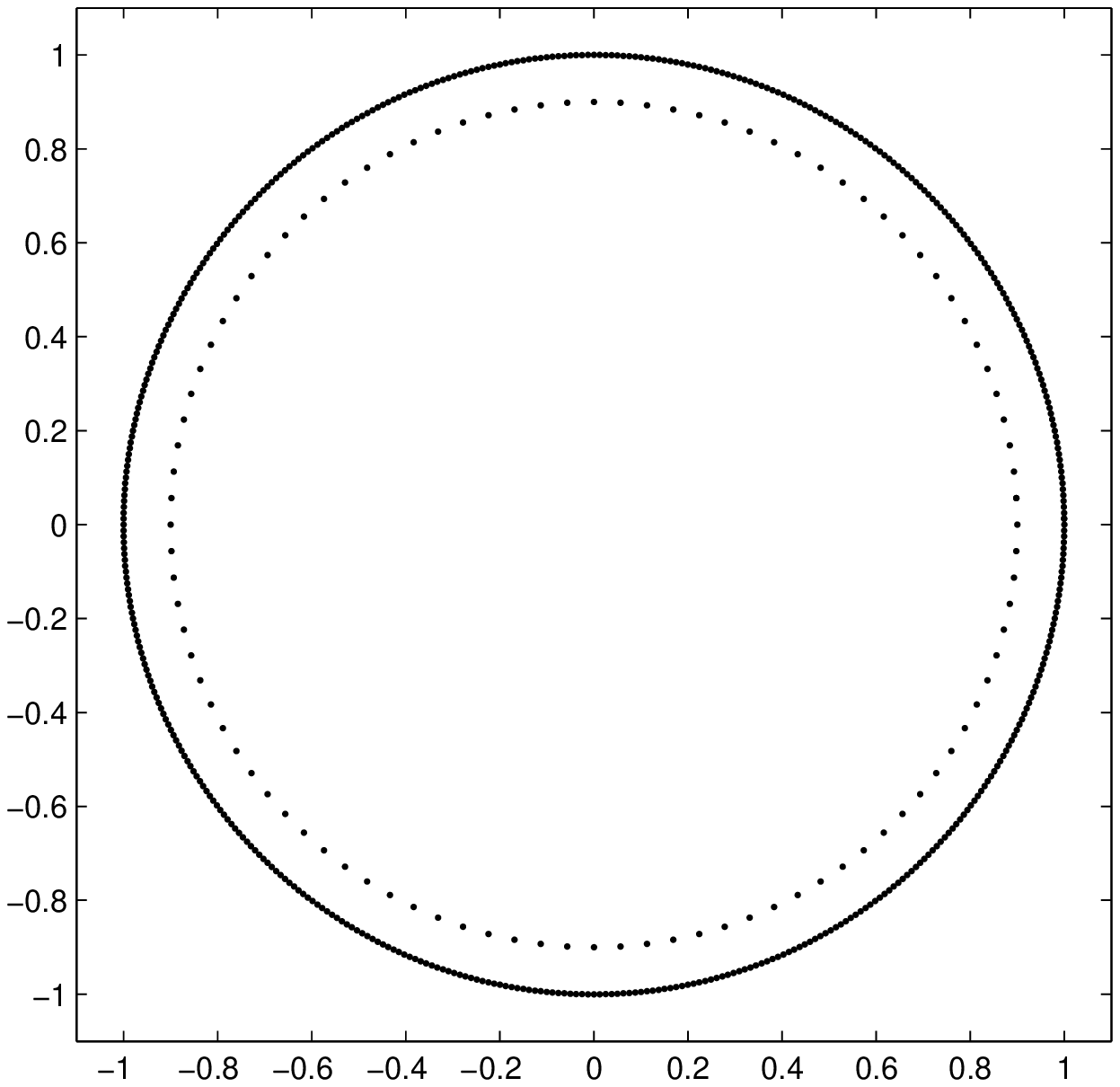}
\caption{Boundary Layer in a Disk} \label{fig 3}
\end{minipage}%
\begin{minipage}[t]{0.5\linewidth}
\centering
\includegraphics[width=3in]{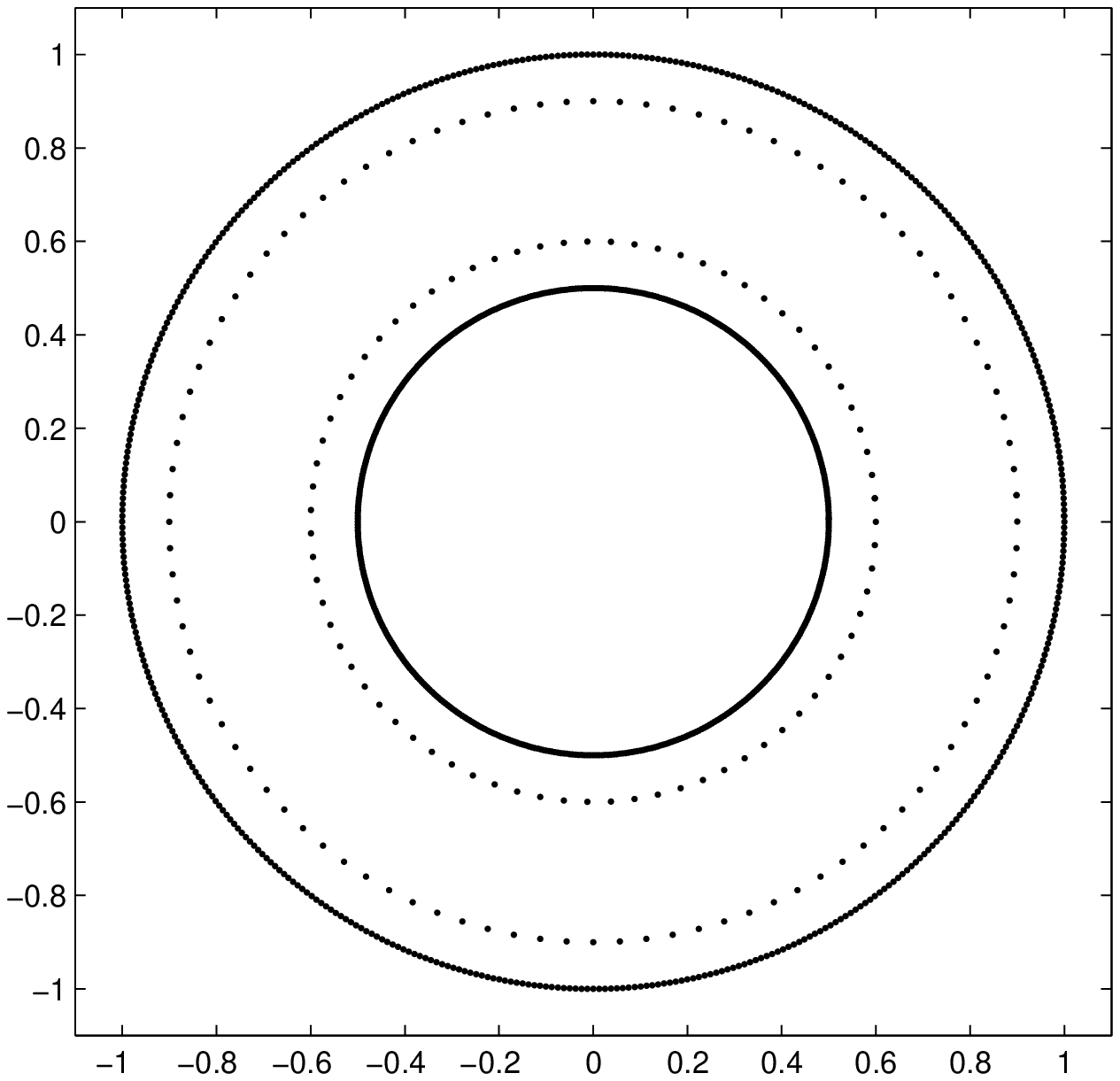}
\caption{Boundary Layers in an Annulus} \label{fig 4}
\end{minipage}
\end{figure}
In Figure \ref{fig 3} and Figure \ref{fig 4}, the solid circles represent the physical boundary $\p\Omega$. The regions between the solid and dotted circles are the regime of boundary layers. Here we exaggerate the thickness of boundary layer regions for clarity; it is actually very thin and depends on $\e$.

The justification of this approximation, i.e. the so-called diffusive limit usually involves two steps:
\begin{enumerate}
\item
Hilbert expansion: expanding $\u=\ds\sum_{k=0}^{\infty}\e^k\u_k$ and $\uu=\ds\sum_{k=0}^{\infty}\e^k\uu_k$ as power series of $\e$ and proving the coefficients $\u_k$ and $\uu_k$ are well-defined. \\
Traditionally, the estimates of the interior solutions $\u_k$ are relatively straightforward. On the other hand, boundary layers $\uu_k$ satisfy one-dimensional half-space problems which lose some key structures of the original equations. The well-posedness of boundary layer equations are sometimes extremely difficult and it is possible that they are actually ill-posed (e.g. certain type of Prandtl layers \cite{Guo.Nguyen2011}).
\item
Remainder estimates: proving that $R=u^{\e}-\u_0-\uu_0=o(1)$ as $\e\rt0$. \\
Ideally, this should be done just by expanding to the leading-order level $\u_0$ and $\uu_0$. However, in singular perturbation problems, the estimates of the remainder $R$ usually involve negative powers of $\e$, which requires an expansion to higher-order terms $\u_N$ and $\uu_N$ for $N\geq1$ such that we have a sufficient power of $\e$. In other words, we define $R=u^{\e}-\ds\sum_{k=0}^{N}\e^k\u_k-\ds\sum_{k=0}^{N}\e^k\uu_k$ for $N\geq1$ instead of $R=u^{\e}-\u_0-\uu_0$ to get better estimates of $R$.
\end{enumerate}

\subsection{Classical Approach}

The construction of kinetic boundary layers
has long been believed to be satisfactorily solved since Bensoussan, Lions and Papanicolaou published their remarkable paper \cite{Bensoussan.Lions.Papanicolaou1979} in 1979. Their formulation, based on the flat Milne problem, was later extended to treat the nonlinear Boltzmann equation (see \cite{Sone2002} and \cite{Sone2007}).

In detail, in $\Omega\subset\r^2$, let $\eta\in[0,\infty)$ denote the rescaled normal variable with respect to the boundary, $\iota$ the tangential variables, and $\phi$ the velocity variable in $\s^1$.
The classical boundary layer $\uu_0$ satisfies the flat Milne problem,
\begin{align}\label{correct 1}
\sin\phi\frac{\p \uu_0}{\p\eta}+\uu_0-\bar\uu_0=&0.
\end{align}

Unfortunately, in \cite{AA003}, we demonstrated that both the proof and result of this formulation are invalid due to a lack of regularity in estimating $\dfrac{\p\uu_0}{\p\iota}$. Also, this glitch was further captured by numerical tests in \cite{Li.Lu.Sun2017}. This pulls the whole research back to the starting point, and any later results based on this type of boundary layers should be reexamined.

To be more specific, the remainder estimates require $\uu_1\in L^{\infty}$ which needs $\dfrac{\p\uu_0}{\p\iota}\in L^{\infty}$. However, though \cite{Bensoussan.Lions.Papanicolaou1979} shows that $\uu_0\in L^{\infty}$, it does not necessarily mean that $\dfrac{\p \uu_0}{\p\eta}\in L^{\infty}$. Furthermore, this singularity $\dfrac{\p \uu_0}{\p\eta}\notin L^{\infty}$ will be transferred to $\dfrac{\p \uu_0}{\p\iota}\notin L^{\infty}$. A careful construction of boundary data justifies this invalidity, i.e. the chain of estimates
\begin{align}
R=o(1)\ \Leftarrow\ \uu_1\in L^{\infty}\ \Leftarrow\ \dfrac{\p\uu_0}{\p\iota}\in L^{\infty}\ \Leftarrow\ \dfrac{\p \uu_0}{\p\eta}\in L^{\infty},
\end{align}
is broken since the rightmost estimate is wrong.

Note that the difficulty of the above classical approach is purely due to the geometry of the curved boundary $\p\Omega$. When $\p\Omega$ is flat, i.e. when $\Omega$ is the half space $\r\times\r^+$, the flat Milne problem \eqref{correct 1} provides the correct description of the kinetic boundary layer.

\subsection{Geometric Correction (\cite{AA003} and \cite{AA006})}

While the classical method fails, a new approach with geometric correction to the boundary layer construction has been developed to ensure regularity in the cases of disk and annulus in \cite{AA003} and \cite{AA006}. The new boundary layer $\uu_0$ satisfies the $\e$-Milne problem with geometric correction,
\begin{align}\label{itt 01}
\sin\phi\frac{\p \uu_0}{\p\eta}+\frac{\e}{R_{\kappa}-\e\eta}\cos\phi\frac{\p
\uu_0}{\p\phi}+\uu_0-\bar\uu_0=&0,
\end{align}
where $R_{\kappa}$ is the radius of curvature of the boundary. We proved that the solution recovers the well-posedness and exponential decay as in the flat Milne problem, and the regularity in $\iota$ is indeed improved, i.e. $\dfrac{\p\uu_0}{\p\iota}\in L^{\infty}$. A similar formulation was introduced in \cite{Cercignani.Marra.Esposito1998} to describe Boltzmann equation. Our analysis provides a rigorous justification of its implementation in the construction of kinetic boundary layers.

However, this new method fails to treat more general smooth convex domains. Roughly speaking, we have two contradictory goals to achieve:
\begin{itemize}
\item
To prove diffusive limits, the remainder estimates require higher-order regularity estimates of the boundary layer.
\item
The geometric correction $\dfrac{\e}{R_{\kappa}-\e\eta}\cos\phi\dfrac{\p
\uu_0}{\p\phi}$ in \eqref{itt 01} is related to the curvature of the boundary curve, which prevents higher-order regularity estimates.
\end{itemize}
In other words, the improvement of regularity is still not enough to close the proof.

\subsection{Diffusive Boundary (\cite{AA007} and \cite{AA009})}

In \cite{AA007}, for the 2D case of diffusive boundary, we push the above argument from both sides, i.e. improvements in remainder estimates and boundary layer regularity.
%

In detail, consider the boundary layer expansion
\begin{align}
\uu(\eta,\iota,\vw)\sim \uu_0(\eta,\iota,\vw)+\e\uu_1(\eta,\iota,\vw).
\end{align}
The diffusive boundary condition
\begin{align}
u^{\e}(\vx_0,\vw)=\frac{1}{2}\int_{\vw\cdot\vn>0}u^{\e}(\vx_0,\vw)(\vw\cdot\vn)\ud{\vw}+\e g(\vx_0,\vw),
\end{align}
leads to an important simplification: $\uu_0=0$. Then the next-order boundary layer $\uu_1$ must formally satisfy
\begin{align}
\sin\phi\frac{\p \uu_1}{\p\eta}+\frac{\e}{R_{\kappa}-\e\eta}\cos\phi\frac{\p
\uu_1}{\p\phi}+\uu_1-\bar\uu_1=&0.
\end{align}
The proof of diffusive limit requires an estimate of $\dfrac{\p \uu_1}{\p\iota}$, which we do not have in hand. Here, a key observation is that $W=\dfrac{\p \uu_1}{\p\iota}$ satisfies
\begin{align}
\sin\phi\frac{\p W}{\p\eta}+\frac{\e}{R_{\kappa}-\e\eta}\cos\phi\frac{\p
W}{\p\phi}+W-\bar W=&-\frac{\p_{\iota}R_{\kappa}}{R_{\kappa}-\e\eta}\bigg(\frac{\e}{R_{\kappa}-\e\eta}\cos\phi\frac{\p
\uu_1}{\p\phi}\bigg).
\end{align}
Note that the term in the large parenthesis of the right-hand side is part of the $\uu_1$ equation and its estimate depends on $\sin\phi\dfrac{\p \uu_1}{\p\eta}$. In other words, the estimate of $\dfrac{\p \uu_1}{\p\iota}$ depends on $\sin\phi\dfrac{\p \uu_1}{\p\eta}$, not just $\dfrac{\p \uu_1}{\p\eta}$ which is possibly unbounded. The $\sin\phi$ is crucial to eliminate the singularity. This forms the major proof in \cite{AA007}, i.e. the weighted regularity of $\uu_1$.

Our main idea is
to delicately track $\uu_1$ along the characteristics in the mild formulation, and prove the weighted $W^{1,\infty}$ estimates of the boundary layer.
In particular, we showed that $\dfrac{\p\uu_1}{\p\iota}$ is bounded even when $R_{\kappa}$ is not constant for general convex domains.
Furthermore, with a novel $L^{2m}-L^{\infty}$ framework, we prove a new remainder estimate, which does not require any higher regularity estimates of the boundary layer.

In summary, in \cite{AA007}, we proved the diffusive limit: $u^{\e}$ converges to the solution of a Laplace's equation with Neumann boundary condition. Such result is extended in \cite{AA009} to treat 3D case with diffusive boundary.

\subsection{In-Flow Boundary (\cite{AA014} and Current Monograph)}

It is notable that, for the case of in-flow boundary as equation \eqref{transport}, the situation is much worse. The leading-order boundary layer $\uu_0$ is no longer zero, i.e.
\begin{align}
\sin\phi\frac{\p \uu_0}{\p\eta}+\frac{\e}{R_{\kappa}-\e\eta}\cos\phi\frac{\p
\uu_0}{\p\phi}+\uu_0-\bar\uu_0=&0,\\
\sin\phi\frac{\p \uu_1}{\p\eta}+\frac{\e}{R_{\kappa}-\e\eta}\cos\phi\frac{\p
\uu_1}{\p\phi}+\uu_1-\bar\uu_1=&-\cos\phi\frac{\p\uu_0}{\p\iota}.
\end{align}
The remainder contains the term $\dfrac{\p\uu_1}{\p\iota}$, which depends on the estimate of $\dfrac{\p^2\uu_0}{\p\iota^2}$. Then we must prove $W^{2,\infty}$ estimates in the boundary layer equation. In principle, this is impossible for general kinetic equations as \cite{Guo.Kim.Tonon.Trescases2013} pointed out.

Here, we have a key observation that actually the singularity that prevents higher-order regularity concentrates in the neighborhood of the grazing set, so it is natural to isolate the singular part from the whole solution and tackle them in different approaches.

Inspired by \cite{Li.Lu.Sun2017}, we introduce a new regularization argument in \cite{AA014}. Instead of trying different weighted norms, we may also modify the boundary data and smoothen the boundary layer in this modified problem.

To be precise, we decompose the boundary data $g=\gb+\gf$, such that
\begin{itemize}
\item
the boundary layer $\ub$ with data $\gb$, which we call regular boundary layer, attains second-order regularity in the tangential direction, i.e. $\dfrac{\p^2\ub}{\p\iota^2}\in L^{\infty}$; $\gb=g$ in most of the region except a small neighborhood of the grazing set;
\item
the boundary layer $\uf$ with data $\gf$, which we call singular boundary layer, attains only first-order regularity in the tangential direction i.e. $\dfrac{\p\uf}{\p\iota}\in L^{\infty}$, but the support of $\gf$ is restricted to a very small neighborhood of the grazing set with diameter $\e^{\alpha}$ for some $0<\alpha<1$.
\end{itemize}
In other words, for the remainder estimates, the extra power of $\e$ comes from two sources: $\ub$ gains power by expanding to the higher order, and $\uf$ gains power through a small support $\e^{\alpha}$.

However, this method cannot be directly adapted to 3D case with in-flow boundary. The difficulty is comes from the failure of balance between remainder estimates and boundary layer regularity:
\begin{itemize}
\item
The above boundary layer decomposition argument contains a delicate balance of regular and singular part, i.e. $\alpha$ cannot be too large (near $1$) or too small (near $0$). A detailed analysis and maximization argument reveal that the best power we can gain from this argument is $\e^{\frac{1}{2}-}$.
\item
The 3D remainder estimate is much worse in $L^{2m}-L^{\infty}$ framework. Due to the restriction of Sobolev embedding and trace theorem, we need an additional $\e^{\frac{2}{3}}$ to close the proof.
\end{itemize}
Certainly, $\e^{\frac{1}{2}}$ is not enough to fill the gap of $\e^{\frac{2}{3}}$ needs. Roughly speaking, the regularization is still not enough in 3D.

In the current monograph, we introduce a key improvement to this argument. By using mixed derivative estimates, we greatly improve the bounds of regular boundary layer $\ub$ and push the maximization procedure to $\alpha=1$. Then we gain an extra $\e^{1-}$ from the boundary layer decompose which eventually results in the diffusive limit with $\e^{\frac{1}{3}-}$ convergence.

\subsection{Time-Dependence (\cite{AA005} and \cite{AA012} and Current Monograph)}

Compared with steady problem, there are much less work on unsteady problem \eqref{transport.}. Our previous research only covers the 2D case with either in-flow \cite{AA005} or diffusive \cite{AA012} boundary. In the current monograph, we will focus on the 3D case.

Typically, as a 1D variable, time is not a major difficulty in the asymptotic analysis of linear equations. However, the regularity issue in kinetic equations has serious consequences in its evolutionary counterpart:
\begin{itemize}
\item
The exact solution can be approximated by the interior solution and some additional correction layers. Besides the known boundary layer, the time derivative will introduce a new correction term --- initial layer, which varies rapidly near $t=0$. Moreover, these two types of layers may have interaction with each other and we have to introduce a mixed-type correction --- the notorious initial-boundary layer to compensate for such effect. Unfortunately, the well-posedness of such equation is not available even in 1D, so this is a dead end so far.
\item
The remainder estimate is much worse than that of steady problems. Besides the similar difficulty in 3D as in steady problems, the time derivative requires an additional $\e^{\frac{1}{3}}$ to close the proof. This is an extremely sad news. The improvement of boundary layer decomposition argument can provide additional $\e^{1-}$, but now we need in total $\e^{\frac{2}{3}}\times \e^{\frac{1}{3}}=\e$, which is not enough.
\end{itemize}
In order to resolve such difficulties, we introduce several new techniques:
\begin{itemize}
\item
Analyzing the compatibility condition \eqref{compatibility condition.}.\\
This provides additional information between the interaction of initial and boundary layers. In particular, we deduce that the leading-order initial-boundary layer vanishes, so we do not need to discuss its well-posedness. Then the next-order expansion is already unnecessary from proof viewpoint.
\item
$L^2-L^{2m}-L^{\infty}$ framework.\\
This is an improvement that is particularly designed for unsteady problems. It is also the highlight of the current monograph. In addition to the usual $L^2$ energy estimates, we justify its $L^{2m}$ version with delicately chosen test functions. Then an intricate interpolation argument and $L^{2m}$ kernel estimates help to reduce the effect of time derivatives. Eventually, this step only requires $\e^{\frac{5}{6}}$ rather than $\e$, which yields an asymptotic convergence of $\e^{\frac{1}{6}}-$.
\end{itemize}

\section{Main Theorem}

\subsection{Steady Problem}

\begin{theorem}[Steady Diffusive Limit]\label{main theorem}
Assume $g(\vx_0,\vw)\in C^3(\Gamma^-)$. Then for the steady neutron
transport equation \eqref{transport}, there exists a unique solution
$u^{\e}(\vx,\vw)\in L^{\infty}(\Omega\times\s^2)$. Moreover, for any $0<\d<<1$, the solution obeys the estimate
\begin{align}
\im{u^{\e}-\u-\uu}{\Omega\times\s^2}\leq C(\d)\e^{\frac{1}{3}-\d},
\end{align}
where $\u(\vx)$ satisfies the Laplace equation with Dirichlet boundary condition
\begin{align}
\left\{
\begin{array}{l}
\Delta_x\u(\vx)=0\ \ \text{in}\
\ \Omega,\\\rule{0ex}{1.5em}
\u(\vx_0)=D(\vx_0)\ \ \text{on}\ \
\p\Omega,
\end{array}
\right.
\end{align}
and $\uu(\eta,\iota_1,\iota_2,\phi,\psi)$ satisfies the $\e$-Milne problem with geometric correction
\begin{align}
\left\{
\begin{array}{l}
\sin\phi\dfrac{\p \uu }{\p\eta}+F(\e;\eta,\psi,\iota_1,\iota_2)\cos\phi\dfrac{\p
\uu }{\p\phi}+\uu -\buu =0,\\\rule{0ex}{1.5em}
\uu (0,\iota_1,\iota_2,\phi,\psi)=g(\iota_1,\iota_2,\phi,\psi)-D(\iota_1,\iota_2)\ \ \text{for}\ \
\sin\phi>0,\\\rule{0ex}{1.5em}
\uu (L,\iota_1,\iota_2,\phi,\psi)=\uu (L,\iota_1,\iota_2,\rr[\phi],\psi),
\end{array}
\right.
\end{align}
for $L=\e^{-\frac{1}{2}}$, $\rr[\phi]=-\phi$, $\eta$ the rescaled normal variable, $(\iota_1,\iota_2)$ the tangential variable, and $(\phi,\psi)$ the velocity variables (see Section \ref{substitution}).
\end{theorem}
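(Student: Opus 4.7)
The plan is to carry out a Hilbert expansion of $u^{\e}$ up to a sufficiently high order $N$ in the form $u^{\e} = \sum_{k=0}^{N}\e^k \u_k + \sum_{k=0}^{N}\e^k \uu_k + R$, where $\u_k(\vx)$ is the $k$-th interior term and $\uu_k(\eta,\iota_1,\iota_2,\phi,\psi)$ is the $k$-th boundary layer in the local half-space frame near $\p\Omega$. Substituting into \eqref{transport} and matching powers of $\e$ yields: $\u_0$ solves Laplace's equation whose Dirichlet data $D$ is determined by the far-field trace of the leading boundary layer; each higher $\u_k$ solves a Poisson equation driven by velocity averages of $\uu_{k-1}$; and each $\uu_k$ satisfies the $\e$-Milne problem with geometric correction stated in Theorem \ref{main theorem}, with in-flow data built from $g$, $D$ and traces of lower-order interior terms. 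Standard elliptic theory handles $\u_k \in C^2(\bar\Omega)$, while the $\e$-Milne existence theory of \cite{AA003,AA006} gives $\uu_k \in L^{\infty}$ with exponential decay in $\eta$ and a bounded first tangential derivative.

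The main obstacle, and the novel content of the monograph, is constructing $\uu_k$ with enough regularity to close the remainder estimate. As the introduction explains, the remainder bound eventually needs control of $\p_{\iota}\uu_1$, which in turn requires $\p_{\iota}^{2}\uu_0 \in L^{\infty}$, a $W^{2,\infty}$ bound that fails in a generic kinetic half-space. To bypass this, I would split the in-flow data $g-D = \gb + \gf$ so that $\gf$ is supported in an $\e^{\alpha}$-neighborhood of the grazing set $\{\sin\phi=0\}$, and correspondingly decompose each boundary layer $\uu_k = \ub_k + \uf_k$. For the regular part $\ub_k$ I would employ mixed-derivative estimates: differentiating the $\e$-Milne problem simultaneously in $\eta$ and in $\iota$ and tracking the resulting transport equation along characteristics, using the $\sin\phi\,\p_{\eta}$ structure (as in \cite{AA007}) to absorb the grazing singularity. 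This is precisely where the improvement over \cite{AA014} appears, since the mixed-derivative control lets one push the maximisation in $\alpha$ up to $\alpha\rt 1^-$. The singular part $\uf_k$ only admits first-order tangential regularity, but its $\e^{\alpha}$-thin support supplies the compensating power of $\e$.

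With the expansion in hand, $R$ satisfies a transport equation with source of order $\e^{N}$ and boundary datum of order $\e^{N+1}$, and would be estimated in the $L^{2}\!-\!L^{2m}\!-\!L^{\infty}$ framework: a weighted $L^{2}$ energy estimate bounds $\bar R$ and $R-\bar R$; a bootstrap through $L^{2m}$ with carefully chosen test functions refines the loss coming from the 3D Sobolev and trace embeddings; and finally a stochastic-cycle/characteristic argument upgrades the bound to $L^{\infty}$. The hardest step, and the one deciding the final rate, is balancing three free parameters: $N$ must be large enough to beat the negative powers of $\e$ produced by the remainder system, $\alpha$ must approach $1$ to exploit the mixed-derivative gain on $\ub_k$, and $m$ must be tuned so that the $L^{2m}$ improvement offsets the 3D embedding loss of $\e^{\frac{2}{3}}$. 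Combining the $\e^{1-}$ gain from the decomposition with the sharp $L^{2m}$ trace bound and optimising over these parameters is what produces the stated convergence rate $\e^{\frac{1}{3}-\d}$.
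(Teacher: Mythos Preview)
Your high-level strategy matches the paper's: Hilbert expansion, boundary-data splitting $g=\gb+\gf$ near the grazing set, mixed-derivative estimates on the regular layer to push $\alpha\to 1$, and an $L^{2m}$ remainder estimate with $m\to 3^-$ to produce $\e^{\frac{1}{3}-\d}$. But two structural points in your plan diverge from what the paper actually does, and the first one is not cosmetic.

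First, $N$ is \emph{not} a free parameter to be taken ``large enough.'' The whole difficulty is that regularity obstructions cap the boundary-layer expansion at very low order: the singular layer $\uf$ can only be carried to order $0$ (you have no control on $\p_{\iota}^2\uf_0$, so $\uf_1$ cannot even be defined in $L^\infty$), and the regular layer $\ub$ only to order $1$ (the mixed-derivative trick buys exactly one extra tangential derivative, not arbitrarily many). The interior expansion is then taken to order $2$ to match. So the actual remainder is
\[
R=u^{\e}-(\u_0+\e\u_1+\e^2\u_2)-(\ub_0+\e\ub_1)-\uf_0,
\]
with an asymmetric truncation, and the source $\ll[R]$ is \emph{not} uniformly $O(\e^N)$: the singular-layer contribution stays $O(\e)$ in $L^\infty$ and only gains extra powers in the $L^{\frac{2m}{2m-1}}$ norm via the thin support. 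Your sentence ``$R$ satisfies a transport equation with source of order $\e^N$'' misses this, and optimising over $N$ as if it were free would hide the real obstacle.

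Second, the decomposition is applied only to the \emph{leading} boundary layer, not to every $\uu_k$; there is no $\uf_1$. Two minor points: the framework here is what the paper calls $L^{2m}$--$L^{\infty}$ (the full $L^2$--$L^{2m}$--$L^{\infty}$ with the extra interpolation is reserved for the unsteady problem), and since the boundary is in-flow there are no stochastic cycles --- the $L^\infty$ step is a straight double Duhamel along characteristics.
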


\begin{remark}
Note that the effects of the boundary layer decays very fast when it is away from the boundary. Roughly speaking, this theorem states that for $\vx$ not very close to the boundary, $u^{\e}(\vx,\vw)$ can be approximated by the solution of a Laplace equation with Dirichlet boundary condition.
\end{remark}

\subsection{Unsteady Problem}

\begin{theorem}[Unsteady Diffusive Limit]\label{main theorem.}
Assume $h(\vx,\vw)\in C(\Omega\times\s^2)$ and $g(t,\vx_0,\vw)\in C^3(\rp\times\Gamma^-)$. Then for the unsteady neutron
transport equation \eqref{transport.}, there exists a unique solution
$u^{\e}(t,\vx,\vw)\in L^{\infty}(\rp\times\Omega\times\s^2)$. Moreover,
\begin{align}
\lim_{\e\rt0}\nm{\ue^{K_0t}\Big(u^{\e}-\u-\ui-\uu\Big)}_{L^{\infty}(\rp\times\Omega\times\s^2)}=0,
\end{align}
where $\u(\vx)$ satisfies the heat equation with Dirichlet boundary condition
\begin{align}
\left\{
\begin{array}{l}
\dt\u-\Delta_x\u(\vx)=0\ \ \text{in}\
\ \rp\times\Omega,\\\rule{0ex}{1.5em}
\u(0,\vx)=\ds\frac{1}{4\pi}\int_{\s^2}h(\vx,\vw)\ud{\vw}\ \ \text{in}\ \ \Omega,\\\rule{0ex}{1.5em}
\u(t,\vx_0)=D(t,\vx_0)\ \ \text{on}\ \
\p\Omega,
\end{array}
\right.
\end{align}
$\ui(\tau,\vx,\vw)$ satisfies
\begin{align}
\ui(\tau,\vx,\vw)=\ue^{-\tau}\left(h(\vx,\vw)-\frac{1}{4\pi}\int_{\s^2}h(\vx,\vw)\ud{\vw}\right),
\end{align}
for $\tau$ the rescaled time variable, and $\uu(t,\eta,\tau,\phi)$ satisfies the $\e$-Milne problem with geometric correction
\begin{align}
\left\{
\begin{array}{l}
\sin\phi\dfrac{\p \uu }{\p\eta}-\bigg(\dfrac{\sin^2\psi}{R_1(\tau)-\e\eta}+\dfrac{\cos^2\psi}{R_2(\tau)-\e\eta}\bigg)\cos\phi\dfrac{\p
\uu}{\p\phi}+\uu -\buu =0,\\\rule{0ex}{1.5em}
\uu (t,0,\tau,\phi,\psi)=g(t,\tau,\phi,\psi)-D(t,\tau)\ \ \text{for}\ \
\sin\phi>0,\\\rule{0ex}{1.5em}
\uu (t,L,\tau,\phi,\psi)=\uu (t,L,\tau,\rr[\phi],\psi),
\end{array}
\right.
\end{align}
for $L=\e^{-\frac{1}{2}}$, $\rr[\phi]=-\phi$, $\eta$ the rescaled normal variable, $\tau$ the tangential variable, and $\phi$, $\psi$ the velocity variable (see Section \ref{substitution..} and \ref{substitution.}).
\end{theorem}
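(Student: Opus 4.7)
The plan is to mirror the two-step structure described in the excerpt --- Hilbert expansion plus remainder estimates --- but now with time dependence and three-dimensional geometry handled by the new ideas advertised in the introduction. First I would perform a Hilbert expansion of the form
\begin{align*}
u^{\e} \sim \sum_{k=0}^{N} \e^k \u_k(t,\vx) + \sum_{k=0}^{N} \e^k \ui_k(\tau,\vx,\vw) + \sum_{k=0}^{N} \e^k \uu_k(t,\eta,\tau,\phi,\psi),
\end{align*}
with $\tau=t/\e^2$ the fast time and $(\eta,\iota_1,\iota_2,\phi,\psi)$ the boundary-fitted substitution announced in Sections \ref{substitution..} and \ref{substitution.}. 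Plugging this ansatz into \eqref{transport.} and matching orders of $\e$ yields: (i) at interior level, $\u_0$ solves the advertised heat equation, with $\u_k$ for $k\geq 1$ solving inhomogeneous heat equations forced by lower-order correctors; (ii) at initial-layer level, $\ui_0$ satisfies $\partial_\tau \ui_0 + \ui_0 - \bar\ui_0 = 0$, which gives precisely $\ui_0(\tau,\vx,\vw) = e^{-\tau}(h - \bar h)$ as claimed; (iii) at boundary-layer level, $\uu_0$ satisfies the stated $\e$-Milne problem with the two-principal-curvature geometric correction. The initial-boundary matching problem would normally produce a fourth, mixed corrector, and this is where I would invoke the compatibility condition \eqref{compatibility condition.}: substituting it into the matching identities on $\{t=0\}\cap \Gamma^-$ forces the leading mixed corrector to vanish, so it can be dropped from the ansatz entirely.

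Next I would establish the needed regularity of $\uu_0$ (and the next-order $\uu_1$ if required by the remainder bookkeeping) by the boundary layer decomposition of \cite{AA014}: split $g - D = \gb + \gf$ with $\gb$ smooth and $\gf$ supported in an $\e^{\alpha}$-neighborhood of the grazing set $\{\sin\phi=0\}$. For $\ub$ I would carry out the mixed-derivative estimates advertised in the introduction, tracking $\sin\phi\,\p_\eta \ub$ along the characteristics of the $\e$-Milne problem with geometric correction, and use the identity that $\p_{\iota}\ub$ is driven by the combination $F(\e;\eta,\psi,\iota)\cos\phi\,\p_\phi\ub$ rather than by $\p_\eta\ub$ alone. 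This is the step that allows pushing the maximization of $\alpha$ all the way to $\alpha \to 1^-$ in 3D, gaining $\e^{1-}$ from the decomposition. For $\uf$ one only needs first-order tangential regularity, which is standard once the support is narrow.

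The remainder $R = u^\e - \sum \e^k(\u_k + \ui_k + \uu_k)$ satisfies a transport equation of the form
\begin{align*}
\e^2 \dt R + \e\vw\cdot \nabla_x R + R - \bar R = \e^{N+1}\mathcal{S},
\end{align*}
with explicit source $\mathcal{S}$ controlled by derivatives of the correctors, and with in-flow boundary data and initial data of size $o(1)$ after absorbing the corrector traces. Here I would execute the $L^2$-$L^{2m}$-$L^\infty$ framework that is the main technical novelty of the monograph. An energy estimate multiplied by $e^{K_0 t}$ handles the $L^2$ piece with the small constant $\e^2$ in front of $\dt$ pulled out carefully; the $L^{2m}$ step, with the specially chosen test function $R|R|^{2m-2}$ and an interpolation against the kernel estimate for the collision operator $R - \bar R$, reduces the extra $\e^{1/3}$ loss caused by the time derivative to only $\e^{5/6}$; finally $L^\infty$ is closed by integrating the mild form along characteristics and using the double Duhamel trick together with the trace/Sobolev bounds from the $L^{2m}$ step. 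Combining with the $\e^{1-}$ gained from the boundary layer decomposition and the $\e^{2/3}$ lost from 3D trace theory, the remainder is $o(1)$, which with the exponential weight $e^{K_0 t}$ is exactly the stated conclusion.

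The main obstacle, as emphasized in the introduction, is the $L^{2m}$ step in the presence of $\dt$: the time derivative produces an extra $\e^{1/3}$ loss that neither the standard energy estimate nor the classical $L^2$-$L^\infty$ framework can absorb, and the budget from the boundary-layer side is only $\e^{1-}$, leaving essentially no room. The delicate choice of the $L^{2m}$ test function, the interpolation between $L^2$ and $L^{2m}$ kernel estimates, and the simultaneous treatment of $\e^2\dt$ and the small boundary flux are what makes this step work at all; getting each $\e$-power to balance exactly at $\e^{1/6}$ convergence is where the argument is most fragile.
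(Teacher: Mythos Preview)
Your proposal is correct and follows essentially the same route as the paper: Hilbert expansion into interior, initial layer, and (regular plus singular) boundary layers, use of the compatibility condition \eqref{compatibility condition.} to kill the leading initial-boundary interaction, mixed-derivative regularity on the $\e$-Milne problem to push the cutoff exponent to $\alpha=1$, and the $L^2$--$L^{2m}$--$L^\infty$ remainder framework closing at $\e^{1/6-}$. One technical point you have slightly understated: in the unsteady $L^\infty$ step the paper performs a \emph{triple} Duhamel iteration rather than the double one from the steady case, and the change of variables to $\vec y$ uses the angular coordinates $(\phi,\psi,\psi')$ from two successive velocity integrations (see the Jacobian $\e^3 r^2 q\sin^2\phi\sin\phi'\sin(\phi-\phi')$), because the time variable now occupies one of the degrees of freedom that the steady argument spent on $r$; this forces additional small-angle cutoffs in $\sin\phi$ and $\sin(\phi-\phi')$ before the $L^{2m}$ bound on $\bar u$ can be invoked.
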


\begin{remark}
Note that the effects of the boundary layer decays very fast when it is away from the boundary. Roughly speaking, this theorem states that for $\vx$ not very close to the boundary, $u^{\e}(t,\vx,\vw)$ can be approximated by the solution of a heat equation with Dirichlet boundary condition.
\end{remark}

\section{Notation and Convention}

Throughout this paper, $C>0$ denotes a constant that only depends on
the domain $\Omega$, but does not depend on the data or $\e$. It is
referred as universal and can change from one inequality to another.
When we write $C(z)$, it means a certain positive constant depending
on the quantity $z$. We write $a\ls b$ to denote $a\leq Cb$.

This paper is organized as follows: in Chapter 2, we study the steady problem, and in Chapter 3, we study the unsteady problem. Chapter 4 focuses on the analysis of boundary layer equation, i.e. the $\e$-Milne problem with geometric correction.


\chapter{Steady Neutron Transport Equation}

In this chapter, we prove the diffusive limit of the steady neutron transport equation \eqref{transport}.

\section{Asymptotic Expansions}

\subsection{Interior Expansion}

We define the interior expansion as follows:
\begin{align}\label{interior expansion}
\u(\vx,\vw)\sim\u_0(\vx,\vw)+\e\u_1(\vx,\vw)+\e^2\u_2(\vx,\vw),
\end{align}
where $\u_k$ can be determined by comparing the order of $\e$ via
plugging \eqref{interior expansion} into the equation
\eqref{transport}. Thus we have
\begin{align}
\u_0-\bu_0=&0,\label{expansion temp 1}\\
\u_1-\bu_1=&-\vw\cdot\nx\u_0,\label{expansion temp 2}\\
\u_2-\bu_2=&-\vw\cdot\nx\u_1.\label{expansion temp 3}
\end{align}
Plugging \eqref{expansion temp 1} into \eqref{expansion temp 2},
we obtain
\begin{align}
\u_1=\bu_1-\vw\cdot\nx\bu_0.\label{expansion temp 4}
\end{align}
Plugging \eqref{expansion temp 4} into \eqref{expansion temp 3},
we get
\begin{align}\label{expansion temp 5}
\u_2-\bu_2=&-\vw\cdot\nx(\bu_1-\vw\cdot\nx\bu_0)\\
=&-\vw\cdot\nx\bu_1+\Big(w_1^2\p_{x_1x_1}\bu_0+w_2^2\p_{x_2x_2}\bu_0+w_3^2\p_{x_3x_3}\bu_0\Big)\no\\
&+2\Big(w_1w_2\p_{x_1x_2}\bu_0+w_1w_3\p_{x_1x_3}\bu_0+w_2w_3\p_{x_2x_3}\bu_0\Big).\no
\end{align}
Integrating \eqref{expansion temp 5} over $\vw\in\s^2$, we have
the final form
\begin{align}
\Delta_x\bu_0=0,
\end{align}
where all cross terms vanish due to the symmetry of $\s^2$. Hence, $\u_0(\vx,\vw)$ satisfies the equation
\begin{align}\label{interior 1}
\left\{ \begin{array}{l} \u_0=\bu_0,\\\rule{0ex}{1.0em}
\Delta_x\bu_0=0.
\end{array}
\right.
\end{align}
In a similar fashion, for $k=1,2$, we may define that $\u_k$ satisfies
\begin{align}\label{interior 2}
\left\{ \begin{array}{l} \u_k=\bu_k-\vw\cdot\nx\u_{k-1},\\
\Delta_x\bu_k=\displaystyle-\int_{\s^2}\vw\cdot\nx\u_{k-1}\ud{\vw}.\end{array}
\right.
\end{align}
It is easy to see that $\bu_k$ satisfies an elliptic equation. However, the boundary condition of $\bu_k$ is unknown at this stage, since generally $\u_k$ does not necessarily satisfy the in-flow boundary condition of \eqref{transport}. Therefore, we have to resort to boundary layer analysis.

\subsection{Quasi-Spherical Coordinate System}\label{substitution}

While the interior solution can be well-defined using the standard Cartesian coordinate system, the boundary layer requires a local description in a neighborhood of the physical boundary $\p\Omega$. We call it a quasi-spherical coordinate system. The construction can be divided into the following substitutions:\\
\ \\
\textbf{Substitution 1: Spacial Substitution:}\\
Consider the three-dimensional transport operator $\vw\cdot\nx$. By standard differential geometry, in a neighborhood of $\vx_0\in\p\Omega$, we can always define an orthogonal curvilinear coordinates system $(\iota_1,\iota_2)$ such that at $\vx_0$ the coordinate lines coincide with the principal directions (if $\vx_0$ is not a umbilical point, then we can extend such properties to the whole neighborhood; note that the equation we derive is invariant under the change of coordinate system since it only depends on normal vector and tangential plane, so we only need to choose the simplest one). The boundary surface is $\vr=\vr(\iota_1,\iota_2)$. In addition, $\p_1\vr$ and $\p_2\vr$ denote two orthogonal tangential vectors. Then the outward unit normal vector is
\begin{align}\label{coordinate 1}
\vn=\frac{\p_1\vr\times\p_2\vr}{\abs{\p_1\vr\times\p_2\vr}}.
\end{align}
Here $\abs{\cdot}$ denotes the length and $\p_i$ denotes the derivative with respect to $\iota_i$. Let
\begin{align}\label{coordinate 2}
P=\abs{\p_1\vr\times\p_2\vr}=\abs{\p_1\vr}\abs{\p_2\vr}=P_1P_2,
\end{align}
for $P_i=\abs{\p_i\vr}$ with the unit tangential vectors
\begin{align}\label{coordinate 14}
\vt_1=\frac{\p_1\vr}{P_1},\ \ \vt_2=\frac{\p_2\vr}{P_2}.
\end{align}
Then consider the new coordinate system $(\mu,\iota_1,\iota_2)$, where $\mu$ denotes the normal distance to boundary surface $\p\Omega$, i.e.
\begin{align}
\vx=\vr-\mu\vn.
\end{align}
The transport operator becomes
\begin{align}\label{coordinate 8}
\vw\cdot\nx=&-\frac{\bigg(\left(\p_1\vr-\mu\p_1\vn\right)\times\left(\p_2\vr-\mu\p_2\vn\right)\bigg)\cdot\vw}
{\bigg(\left(\p_1\vr-\mu\p_1\vn\right)\times\left(\p_2\vr-\mu\p_2\vn\right)\bigg)\cdot\vn}\frac{\p f}{\p\mu}\\
&+\frac{\bigg(\left(\p_2\vr-\mu\p_2\vn\right)\times\vn\bigg)\cdot\vw}{\bigg(\left(\p_1\vr-\mu\p_1\vn\right)\times\left(\p_2\vr-\mu\p_2\vn\right)\bigg)\cdot\vn}\frac{\p f}{\p\iota_1}-\frac{\bigg(\left(\p_1\vr-\mu\p_1\vn\right)\times\vn\bigg)\cdot\vw}{\bigg(\left(\p_1\vr-\mu\p_1\vn\right)\times\left(\p_2\vr-\mu\p_2\vn\right)\bigg)\cdot\vn}\frac{\p f}{\p\iota_2}.\no
\end{align}
As usual, at $\vx_0$, define the first fundamental form $(E,F,G)=\Big(\p_1\vr\cdot\p_1\vr, \p_1\vr\cdot\p_2\vr, \p_2\vr\cdot\p_2\vr\Big)$ and second fundamental form $(L,M,N)=\Big(\p_{11}\vr\cdot\vn, \p_{12}\vr\cdot\vn, \p_{22}\vr\cdot\vn\Big)$. Then we have $F=M=0$ due to the orthogonality and the principal curvatures are
\begin{align}
\kk_1=\frac{L}{E},\ \ \kk_2=\frac{N}{G}.
\end{align}
Note that $\kk_1$ and $\kk_2$ depend on $(\iota_1,\iota_2)$ and can change from point to point on $\p\Omega$. Also,
\begin{align}\label{coordinate 3}
\p_1\vn=\kk_1\p_1\vr,\ \ \p_2\vn=\kk_2\p_2\vr.
\end{align}
Hence, direct computation using \eqref{coordinate 1}, \eqref{coordinate 2} and \eqref{coordinate 3} reveals that
\begin{align}
\bigg(\left(\p_1\vr-\mu\p_1\vn\right)\times\left(\p_2\vr-\mu\p_2\vn\right)\bigg)\cdot\vn=&(1-\kk_1\mu)(1-\kk_2\mu)(\p_1\vr\times\p_2\vr)\cdot\vn=(1-\kk_1\mu)(1-\kk_2\mu)P,
\label{coordinate 4}\\
\label{coordinate 5}\\
\bigg(\left(\p_1\vr-\mu\p_1\vn\right)\times\left(\p_2\vr-\mu\p_2\vn\right)\bigg)\cdot\vw=&(1-\kk_1\mu)(1-\kk_2\mu)(\p_1\vr\times\p_2\vr)\cdot\vw
=(1-\kk_1\mu)(1-\kk_2\mu)P(\vw\cdot\vn),\no
\end{align}
and
\begin{align}
\bigg(\left(\p_2\vr-\mu\p_2\vn\right)\times\vn\bigg)\cdot\vw=&(1-\kk_2\mu)(\p_2\vr\times\vn)\cdot\vw=(1-\kk_2\mu)P_2(\vw\cdot\vt_1),\label{coordinate 6}\\
\bigg(\left(\p_1\vr-\mu\p_1\vn\right)\times\vn\bigg)\cdot\vw=&(1-\kk_1\mu)(\p_1\vr\times\vn)\cdot\vw=-(1-\kk_1\mu)P_1(\vw\cdot\vt_2).\label{coordinate 7}
\end{align}
Hence, plugging \eqref{coordinate 4}, \eqref{coordinate 5}, \eqref{coordinate 6} and \eqref{coordinate 7} into \eqref{coordinate 8}, we have the transport operator
\begin{align}
\vw\cdot\nx=-(\vw\cdot\vn)\frac{\p}{\p\mu}-\frac{\vw\cdot\vt_1}{P_1(\kk_1\mu-1)}\frac{\p}{\p\iota_1}-\frac{\vw\cdot\vt_2}{P_2(\kk_2\mu-1)}\frac{\p}{\p\iota_2}.
\end{align}
Above computation is valid in a neighborhood $\Sigma\subset\p\Omega$ of the boundary surface. Let
\begin{align}
R_{\min}=\min_{\iota_1,\iota_2}\{R_1(\iota_1,\iota_2),R_2(\iota_1,\iota_2)\},
\end{align}
where $R_1(\iota_1,\iota_2)=\dfrac{1}{\kk_1(\iota_1,\iota_2)}$ and $R_2(\iota_1,\iota_2)=\dfrac{1}{\kk_2(\iota_1,\iota_2)}$. Therefore, under the substitution $(x_1,x_2,x_3)\rt(\mu,\iota_1,\iota_2)$ for $0\leq\mu<R_{\min}$, the equation \eqref{transport} is transformed into
\begin{align}\label{coordinate 9}
\left\{
\begin{array}{l}\displaystyle
\e\bigg(-(\vw\cdot\vn)\frac{\p u^{\e}}{\p\mu}-\frac{\vw\cdot\vt_1}{P_1(\kk_1\mu-1)}\frac{\p u^{\e}}{\p\iota_1}-\frac{\vw\cdot\vt_2}{P_2(\kk_2\mu-1)}\frac{\p u^{\e}}{\p\iota_2}\bigg)+u^{\e}-\bar u^{\e}=0\ \ \text{in}\ \ (0,R_{\min})\times\Sigma\times\s^2,\\\rule{0ex}{2.0em}
u^{\e}(0,\iota_1,\iota_2,\vw)=g(\iota_1,\iota_2,\vw)\ \ \text{for}\
\ \vw\cdot\vn<0.
\end{array}
\right.
\end{align}
\ \\
\textbf{Substitution 2: Velocity Substitution:}\\
Define the orthogonal velocity substitution
\begin{align}\label{coordinate 11}
\left\{
\begin{aligned}
-\vw\cdot\vn=&\sin\phi,\\
\vw\cdot\vt_1=&\cos\phi\sin\psi,\\
\vw\cdot\vt_2=&\cos\phi\cos\psi,
\end{aligned}
\right.
\end{align}
for $\phi\in\left[-\dfrac{\pi}{2},\dfrac{\pi}{2}\right]$ and $\psi\in[-\pi,\pi]$. Then using chain rule, we may directly compute
\begin{align}
\frac{\p}{\p\iota_1}\rt \frac{\p}{\p\iota_1}+\frac{\p}{\p\phi}\frac{\p\phi}{\p\iota_1}+\frac{\p}{\p\psi}\frac{\p\psi}{\p\iota_1},\quad
\frac{\p}{\p\iota_2}\rt \frac{\p}{\p\iota_2}+\frac{\p}{\p\phi}\frac{\p\phi}{\p\iota_2}+\frac{\p}{\p\psi}\frac{\p\psi}{\p\iota_2}.
\end{align}
We first compute the derivatives of $\phi$. Taking $\iota_i$ derivative on both sides of the first equation of \eqref{coordinate 11}, we have
\begin{align}
-\vw\cdot\p_i\vn=\cos\phi\frac{\p\phi}{\p\iota_i},
\end{align}
which implies
\begin{align}
\frac{\p\phi}{\p\iota_i}=-\frac{\vw\cdot\p_i\vn}{\cos\phi}=-\frac{\kk_i(\vw\cdot\p_i\vr)}{\cos\phi}=-\frac{\kk_iP_i(\vw\cdot\vt_i)}{\cos\phi}
\end{align}
Then using the second and third equations of \eqref{coordinate 11}, we obtain
\begin{align}\label{coordinate 12}
\frac{\p\phi}{\p\iota_1}=-\kk_1P_1\sin\psi,\quad\frac{\p\phi}{\p\iota_2}=-\kk_2P_2\cos\psi.
\end{align}
Then we consider the derivatives of $\psi$. There seems to be multiple ways to achieve this in the second and third equations of \eqref{coordinate 11}. However, in order to avoid the singularity caused by $\phi$ and $\psi$ in the denominator of final expression, we must be very careful. Taking $\iota_2$ derivative on both sides of the second equation of \eqref{coordinate 11}, we obtain
\begin{align}
\vw\cdot\p_2\vt_1=-\sin\phi\sin\psi\frac{\p\phi}{\p\iota_2}+\cos\phi\cos\psi\frac{\p\psi}{\p\iota_2},
\end{align}
which implies
\begin{align}\label{coordinate 13}
\frac{\p\psi}{\p\iota_2}=\frac{1}{\cos\phi\cos\psi}\bigg(\vw\cdot\p_2\vt_1+\sin\phi\sin\psi\frac{\p\phi}{\p\iota_2}\bigg).
\end{align}
We need to compute the two terms on the right-hand side separately. Using \eqref{coordinate 14}, we obtain
\begin{align}\label{coordinate 15}
\vw\cdot\p_2\vt_1=&\vw\cdot\p_2\left(\frac{\p_1\vr}{\abs{\p_1\vr}}\right)=\vw\cdot\frac{\p_{12}\vr\abs{\p_1\vr}^2-\p_1\vr(\p_1\vr\cdot\p_{12}\vr)}{\abs{\p_1\vr}^3}
=\vw\cdot\frac{\p_{12}\vr-\vt_1(\vt_1\cdot\p_{12}\vr)}{P_1}=\vw\cdot\frac{\vt_1\times(\p_{12}\vr\times\vt_1)}{P_1}.
\end{align}
We can easily see that $\p_{12}\vr-\vt_1(\vt_1\cdot\p_{12}\vr)$ denotes the projection of $\p_{12}\vr$ into the subspace spanned by $\vn$ and $\vt_2$. In addition, the second fundamental form $M=\p_{12}\vr\cdot\vn=0$ indicates that $\p_{12}\vr$ is orthogonal to $\vn$. Hence, we may decompose
\begin{align}
\vw=\vn(\vw\cdot\vn)+\vt_1(\vw\cdot\vt_1)+\vt_2(\vw\cdot\vt_2),
\end{align}
where $\vn(\vw\cdot\vn)$ and $\vt_1(\vw\cdot\vt_1)$ term vanish in \eqref{coordinate 15}. Therefore, using \eqref{coordinate 11}, we know
\begin{align}\label{coordinate 16}
\vw\cdot\p_2\vt_1=&(\vw\cdot\vt_2)\frac{\vt_2\cdot\Big(\vt_1\times(\p_{12}\vr\times\vt_1)\Big)}{P_1}
=\cos\phi\cos\psi\frac{\vt_2\cdot\Big(\vt_1\times(\p_{12}\vr\times\vt_1)\Big)}{P_1}.
\end{align}
On the other hand, using \eqref{coordinate 12}, we can directly compute
\begin{align}\label{coordinate 17}
\sin\phi\sin\psi\frac{\p\phi}{\p\iota_2}=-\kk_2P_2\sin\phi\sin\psi\cos\psi.
\end{align}
Inserting \eqref{coordinate 16} and \eqref{coordinate 17} into \eqref{coordinate 13}, we get
\begin{align}\label{coordinate 18}
\frac{\p\psi}{\p\iota_2}=&\frac{1}{\cos\phi\cos\psi}
\bigg(\cos\phi\cos\psi\frac{\vt_2\cdot\Big(\vt_1\times(\p_{12}\vr\times\vt_1)\Big)}{P_1}
-\kk_2P_2\sin\phi\sin\psi\cos\psi\bigg)\\
=&\frac{\vt_2\cdot\Big(\vt_1\times(\p_{12}\vr\times\vt_1)\Big)}{P_1}
-\kk_2P_2\tan\phi\sin\psi.\no
\end{align}
In a similar fashion, taking $\iota_1$ derivative on both sides of the third equation of \eqref{coordinate 11}, we obtain
\begin{align}
\vw\cdot\p_1\vt_2=-\sin\phi\cos\psi\frac{\p\phi}{\p\iota_1}-\cos\phi\sin\psi\frac{\p\psi}{\p\iota_1},
\end{align}
which implies
\begin{align}\label{coordinate 13'}
\frac{\p\psi}{\p\iota_1}=-\frac{1}{\cos\phi\sin\psi}\bigg(\vw\cdot\p_1\vt_2+\sin\phi\cos\psi\frac{\p\phi}{\p\iota_1}\bigg).
\end{align}
We need to compute the two terms on the right-hand side separately. Using \eqref{coordinate 14}, we obtain
\begin{align}\label{coordinate 15'}
\vw\cdot\p_1\vt_2=&\vw\cdot\p_1\left(\frac{\p_2\vr}{\abs{\p_2\vr}}\right)=\vw\cdot\frac{\p_{12}\vr\abs{\p_2\vr}^2-\p_2\vr(\p_2\vr\cdot\p_{12}\vr)}{\abs{\p_2\vr}^3}
=\vw\cdot\frac{\p_{12}\vr-\vt_2(\vt_2\cdot\p_{12}\vr)}{P_2}=\vw\cdot\frac{\vt_2\times(\p_{12}\vr\times\vt_2)}{P_2}.
\end{align}
Here $\p_{12}\vr-\vt_2(\vt_2\cdot\p_{12}\vr)$ denotes the projection of $\p_{12}\vr$ into the subspace spanned by $\vn$ and $\vt_1$. Still, $\p_{12}\vr$ is orthogonal to $\vn$. Hence, we may decompose
\begin{align}
\vw=\vn(\vw\cdot\vn)+\vt_1(\vw\cdot\vt_1)+\vt_2(\vw\cdot\vt_2),
\end{align}
where $\vn(\vw\cdot\vn)$ and $\vt_2(\vw\cdot\vt_1)$ term vanish in \eqref{coordinate 15'}. Therefore, using \eqref{coordinate 11}, we know
\begin{align}\label{coordinate 16'}
\vw\cdot\p_1\vt_2=&(\vw\cdot\vt_1)\frac{\vt_1\cdot\Big(\vt_2\times(\p_{12}\vr\times\vt_2)\Big)}{P_2}=
\cos\phi\sin\psi\frac{\vt_1\cdot\Big(\vt_2\times(\p_{12}\vr\times\vt_2)\Big)}{P_2}.
\end{align}
On the other hand, using \eqref{coordinate 12}, we can directly compute
\begin{align}\label{coordinate 17'}
\sin\phi\cos\psi\frac{\p\phi}{\p\iota_1}=-\kk_1P_1\sin\phi\sin\psi\cos\psi.
\end{align}
Inserting \eqref{coordinate 16'} and \eqref{coordinate 17'} into \eqref{coordinate 13'}, we get
\begin{align}\label{coordinate 18'}
\frac{\p\psi}{\p\iota_1}=&-\frac{1}{\cos\phi\sin\psi}
\bigg(\cos\phi\sin\psi\frac{\vt_1\cdot\Big(\vt_2\times(\p_{12}\vr\times\vt_2)\Big)}{P_2}
-\kk_1P_1\sin\phi\sin\psi\cos\psi\bigg)\\
=&-\frac{\vt_1\cdot\Big(\vt_2\times(\p_{12}\vr\times\vt_2)\Big)}{P_2}
+\kk_1P_1\tan\phi\cos\psi.\no
\end{align}
Assembling \eqref{coordinate 12}, \eqref{coordinate 18} and \eqref{coordinate 18'}, we have
\begin{align}
\frac{\p}{\p\iota_1}\rt&\frac{\p}{\p\iota_1}-\kk_1P_1\sin\psi\frac{\p}{\p\phi}
+\bigg(-\frac{\vt_1\cdot\Big(\vt_2\times(\p_{12}\vr\times\vt_2)\Big)}{P_2}
+\kk_1P_1\tan\phi\cos\psi\bigg)\frac{\p}{\p\psi},\label{coordinate 19}\\
\frac{\p}{\p\iota_2}\rt&\frac{\p}{\p\iota_2}-\kk_2P_2\cos\psi\frac{\p}{\p\phi}
+\bigg(\frac{\vt_2\cdot\Big(\vt_1\times(\p_{12}\vr\times\vt_1)\Big)}{P_1}
-\kk_2P_2\tan\phi\sin\psi\bigg)\frac{\p}{\p\psi}.\label{coordinate 20}
\end{align}
Then inserting \eqref{coordinate 12}, \eqref{coordinate 19} and \eqref{coordinate 20} into \eqref{coordinate 9}, we obtain the transport operator
\begin{align}
\vw\cdot\nx=&\sin\phi\frac{\p}{\p\mu}-\bigg(\frac{\sin^2\psi}{R_1-\mu}+\frac{\cos^2\psi}{R_2-\mu}\bigg)\cos\phi\frac{\p}{\p\phi}\\
&+\bigg(\frac{\cos\phi\sin\psi}{P_1(1-\kk_1\mu)}\frac{\p}{\p\iota_1}+\frac{\cos\phi\cos\psi}{P_2(1-\kk_2\mu)}\frac{\p}{\p\iota_2}\bigg)\no\\
&+\Bigg(\frac{\sin\psi}{1-\kk_1\mu}\bigg(\cos\phi\Big(\vt_1\cdot\Big(\vt_2\times(\p_{12}\vr\times\vt_2)\Big)\Big)
-\kk_1P_1P_2\sin\phi\cos\psi\bigg)\no\\
&
+\frac{\cos\psi}{1-\kk_2\mu}\bigg(-\cos\phi\Big(\vt_2\cdot\Big(\vt_1\times(\p_{12}\vr\times\vt_1)\Big)\Big)
+\kk_2P_1P_2\sin\phi\sin\psi\bigg)\Bigg)\frac{1}{P_1P_2}\frac{\p}{\p\psi}.\no
\end{align}
Hence, under substitution $(w_1,w_2,w_3)\rt(\phi,\psi)$ for $\phi\in\left[-\dfrac{\pi}{2},\dfrac{\pi}{2}\right]$ and $\psi\in[-\pi,\pi]$,
the equation \eqref{transport} is transformed into
\begin{align}\label{coordinate 21}
\left\{
\begin{array}{l}\displaystyle
\e\sin\phi\dfrac{\p u^{\e}}{\p\mu}-\e\bigg(\dfrac{\sin^2\psi}{R_1-\mu}+\dfrac{\cos^2\psi}{R_2-\mu}\bigg)\cos\phi\dfrac{\p u^{\e}}{\p\phi}\\\rule{0ex}{2.0em}
+\e\bigg(\dfrac{\cos\phi\sin\psi}{P_1(1-\kk_1\mu)}\dfrac{\p u^{\e}}{\p\iota_1}+\dfrac{\cos\phi\cos\psi}{P_2(1-\kk_2\mu)}\dfrac{\p u^{\e}}{\p\iota_2}\bigg)\\\rule{0ex}{2.0em}
+\e\Bigg(\dfrac{\sin\psi}{1-\kk_1\mu}\bigg(\cos\phi\Big(\vt_1\cdot\Big(\vt_2\times(\p_{12}\vr\times\vt_2)\Big)\Big)
-\kk_1P_1P_2\sin\phi\cos\psi\bigg)\\\rule{0ex}{2.0em}
+\dfrac{\cos\psi}{1-\kk_2\mu}\bigg(-\cos\phi\Big(\vt_2\cdot\Big(\vt_1\times(\p_{12}\vr\times\vt_1)\Big)\Big)
+\kk_2P_1P_2\sin\phi\sin\psi\bigg)\Bigg)\dfrac{1}{P_1P_2}\dfrac{\p u^{\e}}{\p\psi}\\
+ u^{\e}-\bar u^{\e}=0\ \ \text{in}\ \ (0,R_{\min})\times\Sigma\times\left[-\dfrac{\pi}{2},\dfrac{\pi}{2}\right]\times[-\pi,\pi],\\\rule{0ex}{2.0em}
u^{\e}(0,\iota_1,\iota_2,\phi,\psi)=g(\iota_1,\iota_2,\phi,\psi)\ \
\text{for}\ \ \sin\phi>0.
\end{array}
\right.
\end{align}
\ \\
\textbf{Substitution 3: Scaling Substitution:}\\
Define the scaled variable $\eta=\dfrac{\mu}{\e}$, which implies $\dfrac{\p}{\p\mu}=\dfrac{1}{\e}\dfrac{\p}{\p\eta}$. Then, under the substitution $\mu\rt\eta$, the equation \eqref{transport} is transformed into
\begin{align}\label{coordinate 22}
\left\{
\begin{array}{l}\displaystyle
\sin\phi\dfrac{\p u^{\e}}{\p\eta}-\e\bigg(\dfrac{\sin^2\psi}{R_1-\e\eta}+\dfrac{\cos^2\psi}{R_2-\e\eta}\bigg)\cos\phi\dfrac{\p u^{\e}}{\p\phi}\\\rule{0ex}{2.0em}
+\e\bigg(\dfrac{\cos\phi\sin\psi}{P_1(1-\e\kk_1\eta)}\dfrac{\p u^{\e}}{\p\iota_1}+\dfrac{\cos\phi\cos\psi}{P_2(1-\e\kk_2\eta)}\dfrac{\p u^{\e}}{\p\iota_2}\bigg)\\\rule{0ex}{2.0em}
+\e\Bigg(\dfrac{\sin\psi}{1-\e\kk_1\eta}\bigg(\cos\phi\Big(\vt_1\cdot\Big(\vt_2\times(\p_{12}\vr\times\vt_2)\Big)\Big)
-\kk_1P_1P_2\sin\phi\cos\psi\bigg)\\\rule{0ex}{2.0em}
+\dfrac{\cos\psi}{1-\e\kk_2\eta}\bigg(-\cos\phi\Big(\vt_2\cdot\Big(\vt_1\times(\p_{12}\vr\times\vt_1)\Big)\Big)
+\kk_2P_1P_2\sin\phi\sin\psi\bigg)\Bigg)\dfrac{1}{P_1P_2}\dfrac{\p u^{\e}}{\p\psi}+u^{\e}-\bar u^{\e}=0\\
+ u^{\e}-\bar u^{\e}=0\ \ \text{in}\ \ \left(0,\dfrac{R_{\min}}{\e}\right)\times\Sigma\times\left[-\dfrac{\pi}{2},\dfrac{\pi}{2}\right]\times[-\pi,\pi],\\\rule{0ex}{2.0em}
u^{\e}(0,\iota_1,\iota_2,\phi,\psi)=g(\iota_1,\iota_2,\phi,\psi)\ \
\text{for}\ \ \sin\phi>0.
\end{array}
\right.
\end{align}

\subsection{Boundary Layer Expansion}

We define the boundary layer expansion as follows:
\begin{align}\label{boundary layer expansion}
\uu(\eta,\iota_1,\iota_2,\phi,\psi)\sim\uu_0(\eta,\iota_1,\iota_2,\phi,\psi)+\e\uu_1(\eta,\iota_1,\iota_2,\phi,\psi),
\end{align}
where $\uu_k$ can be defined by comparing the order of $\e$ via
plugging \eqref{boundary layer expansion} into the equation
\eqref{coordinate 22}. Thus, in a neighborhood of the boundary, we have
\begin{align}
\sin\phi\frac{\p\uu_0}{\p\eta}-\e\bigg(\dfrac{\sin^2\psi}{R_1-\e\eta}+\dfrac{\cos^2\psi}{R_2-\e\eta}\bigg)\cos\phi\dfrac{\p \uu_0}{\p\phi}+\uu_0-\buu_0=&0,\label{expansion temp 6}\\
\sin\phi\frac{\p\uu_1}{\p\eta}-\e\bigg(\dfrac{\sin^2\psi}{R_1-\e\eta}+\dfrac{\cos^2\psi}{R_2-\e\eta}\bigg)\cos\phi\dfrac{\p \uu_1}{\p\phi}+\uu_1-\buu_1=&-G[\uu_0],\label{expansion temp 7}
\end{align}
where
\begin{align}\label{coordinate 23}
G[\uu_0]=&\bigg(\dfrac{\cos\phi\sin\psi}{P_1(1-\e\kk_1\eta)}\dfrac{\p \uu_0}{\p\iota_1}+\dfrac{\cos\phi\cos\psi}{P_2(1-\e\kk_2\eta)}\dfrac{\p \uu_0}{\p\iota_2}\bigg)\\
&+\Bigg(\dfrac{\sin\psi}{1-\e\kk_1\eta}\bigg(\cos\phi\Big(\vt_1\cdot\Big(\vt_2\times(\p_{12}\vr\times\vt_2)\Big)\Big)
-\kk_1P_1P_2\sin\phi\cos\psi\bigg)\no\\
&+\dfrac{\cos\psi}{1-\e\kk_2\eta}\bigg(-\cos\phi\Big(\vt_2\cdot\Big(\vt_1\times(\p_{12}\vr\times\vt_1)\Big)\Big)
+\kk_2P_1P_2\sin\phi\sin\psi\bigg)\Bigg)\dfrac{1}{P_1P_2}\dfrac{\p \uu_0}{\p\psi},\no
\end{align}
and
\begin{align}
\buu_k(\eta,\iota_1,\iota_2)=\frac{1}{4\pi}\int_{-\pi}^{\pi}\int_{-\frac{\pi}{2}}^{\frac{\pi}{2}}\uu_k(\eta,\iota_1,\iota_2,\phi,\psi)\cos\phi\ud{\phi}\ud{\psi}.
\end{align}
We call this type of equations the $\e$-Milne problem with geometric correction.

\subsection{Decomposition and Modification}

In this section, we prove the important decomposition of boundary data, which can greatly improve the regularity. The idea is adapted from \cite{Li.Lu.Sun2017} for the flat Milne problem.

Consider the $\e$-Milne problem with geometric correction with $L=\e^{-\frac{1}{2}}$ and $\rr[\phi]=-\phi$,
\begin{align}\label{deco}
\left\{
\begin{array}{l}
\sin\phi\dfrac{\p f}{\p\eta}-\e\bigg(\dfrac{\sin^2\psi}{R_1-\e\eta}+\dfrac{\cos^2\psi}{R_2-\e\eta}\bigg)\cos\phi\dfrac{\p f}{\p\phi}+f-\bar f=0,\\\rule{0ex}{1.5em}
f(0,\phi,\psi)=g(\phi,\psi)\ \ \text{for}\ \
\sin\phi>0,\\\rule{0ex}{1.5em}
f(L,\phi,\psi)=f(L,\rr[\phi],\psi).
\end{array}
\right.
\end{align}
We assume that $g(\phi,\psi)$ is not a constant and $0\leq g(\phi,\psi)\leq 1$. This is always achievable and we do not lose the generality since the equation is linear. For some $\alpha>0$ which will be determined later, define two $C^{\infty}$ auxiliary functions for $\phi\in\left[0,\dfrac{\pi}{2}\right]$ and $\psi\in[-\pi,\pi]$,
\begin{align}
g_1(\phi,\psi)=\left\{
\begin{array}{ll}
0&\ \ \text{for}\ \ \phi\in(0,\e^{\alpha}],\\
g(\phi,\psi)&\ \ \text{for}\ \ \phi\in\left[2\e^{\alpha},\dfrac{\pi}{2}\right],\\
\end{array}
\right.
\end{align}
and
\begin{align}
g_2(\phi,\psi)=\left\{
\begin{array}{ll}
1&\ \ \text{for}\ \ \phi\in(0,\e^{\alpha}],\\
g(\phi,\psi)&\ \ \text{for}\ \ \phi\in\left[2\e^{\alpha},\dfrac{\pi}{2}\right].\\
\end{array}
\right.
\end{align}
A standard construction using mollifier justifies the existence of $g_i$ for $i=1,2$. Also, we can easily obtain $\lss{\dfrac{\p g_i}{\p\phi}}{\phi,\psi}\leq C\e^{-\alpha}$ and $\lss{\dfrac{\p^2 g_i}{\p\phi^2}}{\phi,\psi}\leq C\e^{-2\alpha}$. In addition, it is easy to verify that $\lss{\dfrac{\p g_i}{\p\psi}}{\phi,\psi}\leq C$. Let $f_1(\eta,\phi,\psi)$ and $f_2(\eta,\phi,\psi)$ be the solutions to the equation \eqref{deco} with in-flow data $g_1(\phi,\psi)$ and $g_2(\phi,\psi)$ respectively. Then by Theorem \ref{Milne theorem 1}, we know $f_1$ and $f_2$ are well-defined in $L^{\infty}_{\eta,\phi,\psi}$. By Theorem \ref{Milne theorem 3}, they satisfy the maximum principle, which means
\begin{align}
f_1(0,0^+,\psi)-\bar f_1(0)=&-\bar f_1(0)<0,\\
f_2(0,0^+,\psi)-\bar f_2(0)=&1-\bar f_2(0)>0.
\end{align}
Therefore, there exists a constant $0<\l<1$ such that
\begin{align}
\l\Big(f_1(0,0^+,\psi)-\bar f_1(0)\Big)+(1-\l)\Big(f_2(0,0^+,\psi)-\bar f_2(0)\Big)=&0.
\end{align}
Let $g_{\l}(\phi)=\l g_1(\phi)+(1-\l)g_2(\phi)$ and the corresponding solution to the equation \eqref{deco} is $f_{\l}(\eta,\phi)$. We have
\begin{align}
f_{\l}(0,0^+,\psi)-\bar f_{\l}(0)=0.
\end{align}
Since for $\phi\in(0,\e^{\alpha}]$, $g_{\l}=1-\l$ is a constant, we naturally have $\dfrac{\p g_{\l}}{\p\phi}=0$. We may solve from the equation \eqref{deco} that at $\eta=0,\phi\in[0,\e^{\alpha}],\psi\in[-\pi,\pi]$,
\begin{align}
\dfrac{\p f_{\l}}{\p\eta}&=\frac{1}{\sin\phi}\Bigg(\e\bigg(\dfrac{\sin^2\psi}{R_1-\e\eta}+\dfrac{\cos^2\psi}{R_2-\e\eta}\bigg)\cos\phi\dfrac{\p
g_{\l}}{\p\phi}-\Big(f_{\l}-\bar f_{\l}\Big)\Bigg)=0.
\end{align}
Note that $g_{\l}(\phi,\psi)=g(\phi,\psi)$ for $\phi\in\left[2\e^{\alpha},\dfrac{\pi}{2}\right]$, so our modification is restricted to a small region near the grazing set and we can smoothen the normal derivative at the boundary.

This method can be easily generalized to treat other $g(\phi,\psi)$. In principle, for $g(\phi,\psi)\in C^1$, we can define a decomposition
\begin{align}
g(\phi,\psi)=\gb(\phi,\psi)+\gf(\phi,\psi),
\end{align}
such that $\gf(\phi,\psi)=0$ for $\sin\phi\geq2\e^{\alpha}$, and the solution to the equation \eqref{deco} with in-flow data $\gb(\phi)$ has $L^{\infty}$ normal derivative at $\eta$=0.
Such a decomposition comes with a price. Originally, we have $\lss{\dfrac{\p g}{\p\phi}}{\phi,\psi}\leq C$. However, now we only have $\lss{\dfrac{\p\gb}{\p\phi}}{\phi,\psi}\leq C\e^{-\alpha}$ and $\lss{\dfrac{\p\gf}{\p\phi}}{\phi,\psi}\leq C\e^{-\alpha}$ due to the short-ranged cut-off function.

For either $\gb$ and $\gf$, we may define the corresponding boundary layer $\ub$ and $\uf$. We call $\ub$ the regular boundary layer and expand it up to $O(\e)$, i.e.
\begin{align}
\ub(\eta,\iota_1,\iota_2,\phi,\psi)\sim\ub_0(\eta,\iota_1,\iota_2,\phi,\psi)+\e\ub_1(\eta,\iota_1,\iota_2,\phi,\psi).
\end{align}
Also, we call $\uf$ the singular boundary layer and only expand it to $O(1)$, i.e.
\begin{align}
\uf(\eta,\iota_1,\iota_2,\phi,\psi)\sim\uf_0(\eta,\iota_1,\iota_2,\phi,\psi).
\end{align}
They should both satisfy the $\e$-Milne problem with geometric correction.

\subsection{Matching Procedure}

The bridge between the interior solution and boundary layer
is the boundary condition of \eqref{transport}, so we
consider the boundary expansion:
\begin{align}
\u_0(\vx_0,\vw)+\ub_0(\vx_0,\vw)+\uf_0(\vx_0,\vw)=&g(\vx_0,\vw)\ \ \text{for}\ \ \vx_0\in\p\Omega,\\
\u_1(\vx_0,\vw)+\ub_1(\vx_0,\vw)=&0\ \ \text{for}\ \ \vx_0\in\p\Omega.
\end{align}
The construction and determination of asymptotic expansion are as follows:\\
\ \\
Step 0: Preliminaries.\\
Define the force
\begin{align}
F(\e;\eta,\iota_1,\iota_2,\psi)=-\e\bigg(\dfrac{\sin^2\psi}{R_1(\iota_1,\iota_2)-\e\eta}+\dfrac{\cos^2\psi}{R_2(\iota_1,\iota_2)-\e\eta}\bigg).
\end{align}
Define the length of boundary layer $L=\e^{-n}$ for $0<n<\dfrac{1}{2}$. For $\phi\in\left[-\dfrac{\pi}{2},\dfrac{\pi}{2}\right]$, denote $\rr[\phi]=-\phi$.\\
\ \\
Step 1: Construction of $\ub_0$, $\uf_0$ and $\u_0$.\\
Define the zeroth-order regular boundary layer as
\begin{align}\label{et 1}
\left\{
\begin{array}{l}
\ub_0(\eta,\iota_1,\iota_2,\phi,\psi)=\mathscr{F}_0 (\eta,\iota_1,\iota_2,\phi,\psi)-\mathscr{F}_{0,L}(\iota_1,\iota_2),\\\rule{0ex}{2em}
\sin\phi\dfrac{\p \mathscr{F}_0 }{\p\eta}+F(\e;\eta,\iota_1,\iota_2,\psi)\cos\phi\dfrac{\p
\mathscr{F}_0 }{\p\phi}+\mathscr{F}_0 -\bar{\mathscr{F}}_0 =0,\\\rule{0ex}{1.5em}
\mathscr{F}_0 (0,\iota_1,\iota_2,\phi,\psi)=\gb(\iota_1,\iota_2,\phi,\psi)\ \ \text{for}\ \
\sin\phi>0,\\\rule{0ex}{1.5em}
\mathscr{F}_0 (L,\iota_1,\iota_2,\phi,\psi)=\mathscr{F}_0 (L,\iota_1,\iota_2,\rr[\phi],\psi),
\end{array}
\right.
\end{align}
with $\mathscr{F}_{0,L}(\iota_1,\iota_2)$ is defined as in Theorem \ref{Milne theorem 1}.\\
\ \\
Define the zeroth-order singular boundary layer as
\begin{align}\label{et 2}
\left\{
\begin{array}{l}
\uf_0(\eta,\iota_1,\iota_2,\phi,\psi)=\mathfrak{F}_0 (\eta,\iota_1,\iota_2,\phi,\psi)-\mathfrak{F} _{0,L}(\iota_1,\iota_2),\\\rule{0ex}{2em}
\sin\phi\dfrac{\p \mathfrak{F}_0 }{\p\eta}+F(\e;\eta,\iota_1,\iota_2,\psi)\cos\phi\dfrac{\p
\mathfrak{F}_0 }{\p\phi}+\mathfrak{F}_0 -\bar{\mathfrak{F}}_0 =0,\\\rule{0ex}{1.5em}
\mathfrak{F}_0 (0,\iota_1,\iota_2,\phi,\psi)=\gf(\iota_1,\iota_2,\phi,\psi)\ \ \text{for}\ \
\sin\phi>0,\\\rule{0ex}{1.5em}
\mathfrak{F}_0 (L,\iota_1,\iota_2,\phi,\psi)=\mathfrak{F}_0 (L,\iota_1,\iota_2,\rr[\phi],\psi),
\end{array}
\right.
\end{align}
with $\mathfrak{F} _{0,L}(\iota_1,\iota_2)$ is defined as in Theorem \ref{Milne theorem 1}.\\
\ \\
Also, define the zeroth-order interior solution $\u_0(\vx,\vw)$ as
\begin{align}\label{et 3}
\left\{
\begin{array}{l}
\u_0(\vx,\vw)=\bu_0(\vx) ,\\\rule{0ex}{1.5em} \Delta_x\bu_0(\vx)=0\ \ \text{in}\
\ \Omega,\\\rule{0ex}{1.5em}
\bu_0(\vx_0)=\mathscr{F}_{0,L}(\iota_1,\iota_2)+\mathfrak{F}_{0,L}(\iota_1,\iota_2)\ \ \text{on}\ \
\p\Omega.
\end{array}
\right.
\end{align}
\ \\
Step 2: Construction of $\ub_1$ and $\u_1$.\\
Define the first-order regular boundary layer as
\begin{align}\label{et 4}
\left\{
\begin{array}{l}
\ub_1(\eta,\iota_1,\iota_2,\phi,\psi)=\mathscr{F}_1 (\eta,\iota_1,\iota_2,\phi,\psi)-\mathscr{F} _{1,L}(\iota_1,\iota_2),\\\rule{0ex}{2em}
\sin\phi\dfrac{\p \mathscr{F}_1 }{\p\eta}+F(\e;\eta,\iota_1,\iota_2,\psi)\cos\phi\dfrac{\p
\mathscr{F}_1 }{\p\phi}+\mathscr{F}_1 -\bar{\mathscr{F}}_1 =G[\ub_0],\\\rule{0ex}{1.5em}
\mathscr{F}_1 (0,\iota_1,\iota_2,\phi,\psi)=\vw\cdot\nx\u_0(0,\iota_1,\iota_2)\ \ \text{for}\ \
\sin\phi>0,\\\rule{0ex}{1.5em}
\mathscr{F}_1 (L,\iota_1,\iota_2,\phi,\psi)=\mathscr{F}_1 (L,\iota_1,\iota_2,\rr[\phi],\psi),
\end{array}
\right.
\end{align}
with $\mathscr{F}_{1,L}(\iota_1,\iota_2)$ is defined as in Theorem \ref{Milne theorem 1}, and $G_0$ is defined in \eqref{coordinate 23}.\\
\ \\
Then define the first-order interior solution $\u_1(\vx,\vw)$ as
\begin{align}\label{et 5}
\left\{
\begin{array}{l}
\u_1(\vx,\vw)=\bu_1(\vx)-\vw\cdot\nx\u_0(\vx,\vw),\\\rule{0ex}{1.5em}
\Delta_x\bu_1(\vx)=-\displaystyle\int_{\s^1}\Big(\vw\cdot\nx\u_{0}(\vx,\vw)\Big)\ud{\vw}\
\ \text{in}\ \ \Omega,\\\rule{0ex}{1em} \bu_1(\vx_0)=f _{1,L}(\iota_1,\iota_2)\ \ \text{on}\ \
\p\Omega.
\end{array}
\right.
\end{align}
Note that we do not define $\uf_1$ here.\\
\ \\
Step 3: Construction of $\u_2$.\\
Since we do not expand to $\ub_2$ and $\uf_2$, simply define the second-order interior solution as
\begin{align}
\left\{
\begin{array}{l}
\u_{2}(\vx,\vw)=\bu_{2}(\vx)-\vw\cdot\nx\u_{1}(\vx,\vw),\\\rule{0ex}{1.5em}
\Delta_x\bu_{2}(\vx)=-\displaystyle\int_{\s^1}\Big(\vw\cdot\nx\u_{1}(\vx,\vw)\Big)\ud{\vw}\
\ \text{in}\ \ \Omega,\\\rule{0ex}{1.5em} \bu_2(\vx_0)=0\ \ \text{on}\ \
\p\Omega.
\end{array}
\right.
\end{align}
Here, we might have $O(\e^3)$ error in this step due to the trivial boundary data. Thanks to the remainder estimate, it will not affect the diffusive limit.

\section{Remainder Estimate}

In this section, we consider the remainder equation for $u(\vx,\vw)$ as
\begin{align}\label{neutron}
\left\{
\begin{array}{l}\displaystyle
\e\vw\cdot\nx u+u-\bar
u=\ss(\vx,\vw)\ \ \text{in}\ \ \Omega\times\s^2,\\\rule{0ex}{1.0em}
u(\vx_0,\vw)=\g(\vx_0,\vw)\ \ \text{for}\ \
\vw\cdot\vn<0\ \ \text{and}\ \ \vx_0\in\p\Omega.
\end{array}
\right.
\end{align}
\ \\
Define the $L^p$ norms with $1\leq p<\infty$ and $L^{\infty}$ norm in $\Omega\times\s^2$ as
usual:
\begin{align}
\nm{f}_{L^p(\Omega\times\s^2)}=&\bigg(\int_{\Omega}\int_{\s^2}\abs{f(\vx,\vw)}^p\ud{\vw}\ud{\vx}\bigg)^{\frac{1}{p}},\\
\nm{f}_{L^{\infty}(\Omega\times\s^2)}=&\text{esssup}_{(\vx,\vw)\in\Omega\times\s^2}\abs{f(\vx,\vw)}.
\end{align}
Define the $L^p$ norm with $1\leq p<\infty$ and $L^{\infty}$ norm on the boundary $\Gamma=\p\Omega\times\s^2$ as follows:
\begin{align}
\nm{f}_{L^p(\Gamma)}=&\bigg(\iint_{\Gamma}\abs{f(\vx,\vw)}^p\abs{\vw\cdot\vn}\ud{\vw}\ud{\vx}\bigg)^{\frac{1}{p}},\\
\nm{f}_{L^p(\Gamma^{\pm})}=&\bigg(\iint_{\Gamma^{\pm}}\abs{f(\vx,\vw)}^p\abs{\vw\cdot\vn}\ud{\vw}\ud{\vx}\bigg)^{\frac{1}{p}},\\
\nm{f}_{L^{\infty}(\Gamma)}=&\text{esssup}_{(\vx,\vw)\in\Gamma}\abs{f(\vx,\vw)},\\
\nm{f}_{L^{\infty}(\Gamma^{\pm})}=&\text{esssup}_{(\vx,\vw)\in\Gamma^{\pm}}\abs{f(\vx,\vw)}.
\end{align}
In particular, we denote $\ud{\gamma}=(\vw\cdot\vn)\ud{\vw}\ud{\vx}$ on the boundary.

\subsection{$L^2$ Estimate}

\begin{lemma}[Green's Identity]\label{remainder lemma 1}
Assume $u(\vx,\vw),\ v(\vx,\vw)\in L^2(\Omega\times\s^2)$ and
$\vw\cdot\nx u,\ \vw\cdot\nx v\in L^2(\Omega\times\s^2)$ with $u,\
v\in L^2(\Gamma)$. Then
\begin{align}
\iint_{\Omega\times\s^2}\bigg((\vw\cdot\nx u)v+(\vw\cdot\nx
u)v\bigg)\ud{\vx}\ud{\vw}=\int_{\Gamma}uv\ud{\gamma}.
\end{align}
\end{lemma}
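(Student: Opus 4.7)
The plan is to prove this standard Green's identity for the free-streaming operator $\vw\cdot\nx$ by first establishing it for smooth $u,v$ via the classical divergence theorem and then passing to the general case by density. (The integrand on the left is naturally $(\vw\cdot\nx u)v + u(\vw\cdot\nx v)$; the printed formula repeats the first summand, which I read as a typo.)

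For the smooth case, suppose $u, v \in C^1(\bar\Omega\times\s^2)$ and fix $\vw\in\s^2$. Since $\vw$ is $\vx$-independent, the product rule gives $\nx\cdot(\vw\, uv) = (\vw\cdot\nx u)v + u(\vw\cdot\nx v)$. The classical divergence theorem on the $C^3$ bounded convex domain $\Omega$ yields
$$\int_\Omega \Big[(\vw\cdot\nx u)v + u(\vw\cdot\nx v)\Big] \ud\vx = \int_{\p\Omega}(\vw\cdot\vn)\,uv\,\ud\sigma.$$
Integrating over $\vw\in\s^2$ and recalling $\ud\gamma = (\vw\cdot\vn)\ud\vw\ud\sigma$ produces the stated identity for smooth $u,v$.

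For the general case, I would construct smooth approximants $u_n, v_n \in C^1(\bar\Omega\times\s^2)$ with $u_n \to u$, $v_n \to v$ in $L^2(\Omega\times\s^2)$; $\vw\cdot\nx u_n \to \vw\cdot\nx u$, $\vw\cdot\nx v_n \to \vw\cdot\nx v$ in $L^2(\Omega\times\s^2)$; and $u_n|_\Gamma \to u|_\Gamma$, $v_n|_\Gamma \to v|_\Gamma$ in $L^2(\Gamma)$. The $C^3$ regularity of $\p\Omega$ supplies a tubular neighborhood, and a partition of unity separates the interior from a boundary collar; in each piece one mollifies in $\vx$ fiber-by-fiber in $\vw$ after a suitable extension across $\p\Omega$. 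Applying the smooth identity to $(u_n, v_n)$ and sending $n\to\infty$, the two bulk integrals pass to the limit by Cauchy-Schwarz using the graph-norm convergence, while the boundary integral converges by the $L^2(\Gamma)$ convergence of the traces.

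The main obstacle is the density step with simultaneous trace convergence. Graph-norm approximation of $L^2$ functions with $L^2$ streaming derivative is classical, but a generic function in this graph space has its boundary trace only in a weighted local space and not globally in $L^2(\Gamma)$, so density with trace convergence is not automatic. The hypothesis $u, v \in L^2(\Gamma)$ is exactly what enables a trace-preserving smooth approximation; one practical route is to approximate the boundary data $u|_\Gamma$ and $v|_\Gamma$ directly in $L^2(\Gamma^-)$, then lift along characteristics using the mild formulation of $\vw\cdot\nx u$ so that the interior and boundary approximations are compatible, giving the required simultaneous convergence.
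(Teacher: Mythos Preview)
Your approach is correct and is exactly the standard argument. The paper itself does not give a proof: it simply cites \cite[Chapter 9]{Cercignani.Illner.Pulvirenti1994} and \cite{Esposito.Guo.Kim.Marra2013}, where this Green's identity for the streaming operator is established by precisely the route you outline---divergence theorem in $\vx$ for each fixed $\vw$ on smooth functions, followed by a density argument in the graph norm with simultaneous trace convergence in $L^2(\Gamma)$. You have also correctly identified (and corrected) the typo in the integrand: the second summand should be $u(\vw\cdot\nx v)$, not a repeat of the first.

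Your discussion of the density step is accurate: the graph space $\{u\in L^2:\vw\cdot\nx u\in L^2\}$ does not embed into $L^2(\Gamma)$ in general, so the extra hypothesis $u,v\in L^2(\Gamma)$ is genuinely needed and is what makes a trace-preserving approximation possible. The cited references carry out this approximation carefully (using, e.g., collar coordinates and characteristic lifting, much as you sketch), so your proposal matches the literature the paper defers to.
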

\begin{proof}
See \cite[Chapter 9]{Cercignani.Illner.Pulvirenti1994} and
\cite{Esposito.Guo.Kim.Marra2013}.
\end{proof}
\begin{theorem}\label{LT estimate}
There exists a unique solution $u(\vx,\vw)$ to the equation \eqref{neutron} that satisfies
\begin{align}
\frac{1}{\e^{\frac{1}{2}}}\nm{u}_{L^2(\Gamma^+)}+\nm{u}_{L^2(\Omega\times\s^2)}\leq
C \bigg(
\frac{1}{\e^2}\nm{\ss}_{L^2(\Omega\times\s^2)}+\frac{1}{\e^{\frac{1}{2}}}\nm{\g}_{L^2(\Gamma^-)}\bigg).
\end{align}
\end{theorem}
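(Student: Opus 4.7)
The plan is a two-stage energy argument: an $L^2$ multiplier test against $u$ itself to control $u-\bar u$ and the outgoing trace, followed by a test-function argument through an auxiliary Poisson problem to close the estimate on $\bar u$. Existence and uniqueness then follow from the a priori bound by a standard penalization/regularization and limiting procedure (uniqueness is immediate from the bound applied to the difference of two solutions).

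For Step 1 (basic energy), multiply \eqref{neutron} by $u$, integrate over $\Omega\times\s^2$, apply Lemma \ref{remainder lemma 1} to the transport term, and use the orthogonality $\iint (u-\bar u)\bar u\,\ud\vw\ud\vx=0$ to obtain
\begin{align}
\frac{\e}{2}\nm{u}_{L^2(\Gamma^+)}^2+\nm{u-\bar u}_{L^2(\Omega\times\s^2)}^2=\frac{\e}{2}\nm{\g}_{L^2(\Gamma^-)}^2+\iint_{\Omega\times\s^2}\ss u\,\ud\vw\ud\vx.
\end{align}
This gives the outflow trace and the ``microscopic'' part $u-\bar u$, but not $\bar u$.

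For Step 2 (control of $\bar u$), solve the Poisson problem $-\Delta_x\Phi=\bar u$ in $\Omega$ with $\Phi=0$ on $\p\Omega$, so that $\nm{\Phi}_{H^2(\Omega)}\ls\nm{\bar u}_{L^2(\Omega)}$ by elliptic regularity. Test \eqref{neutron} against $\psi(\vx,\vw)=\vw\cdot\nx\Phi$. The key integration by parts uses the Maxwellian moment $\int_{\s^2}w_iw_j\,\ud\vw=\frac{4\pi}{3}\d_{ij}$ on the pure $\bar u$ piece:
\begin{align}
\iint\bar u\,(\vw\cdot\nx)(\vw\cdot\nx\Phi)\,\ud\vw\ud\vx=\frac{4\pi}{3}\int\bar u\,\Delta_x\Phi\,\ud\vx=-\frac{4\pi}{3}\nm{\bar u}_{L^2(\Omega)}^2,
\end{align}
so that after Green's identity the coercive term $\frac{4\pi\e}{3}\nm{\bar u}_{L^2}^2$ emerges. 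The remaining contributions are all controlled by $\nm{u-\bar u}_{L^2}\nm{\bar u}_{L^2}$, $\nm{\ss}_{L^2}\nm{\bar u}_{L^2}$ and $\e\big(\nm{u}_{L^2(\Gamma^+)}+\nm{\g}_{L^2(\Gamma^-)}\big)\nm{\bar u}_{L^2}$, using $\nm{\psi}_{L^2(\Omega\times\s^2)}\ls\nm{\bar u}_{L^2}$ and the trace inequality $\nm{\psi}_{L^2(\Gamma)}\ls\nm{\Phi}_{H^2}\ls\nm{\bar u}_{L^2}$. Dividing by $\nm{\bar u}_{L^2}$ yields
\begin{align}
\nm{\bar u}_{L^2(\Omega)}\ls\frac{1}{\e}\nm{u-\bar u}_{L^2}+\frac{1}{\e}\nm{\ss}_{L^2}+\nm{u}_{L^2(\Gamma^+)}+\nm{\g}_{L^2(\Gamma^-)}.
\end{align}

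For Step 3 (closure), substitute this bound for $\bar u$ back into the right-hand side of the Step 1 identity through $\iint\ss u=\iint\ss(u-\bar u)+\iint\ss\bar u$, then apply Cauchy's inequality with carefully chosen weights of $\e$ to absorb $\frac{1}{4}\nm{u-\bar u}_{L^2}^2$ and $\frac{\e}{2}\nm{u}_{L^2(\Gamma^+)}^2$ into the left-hand side, producing an excess term $\e^{-2}\nm{\ss}_{L^2}^2$. Squaring the Step 2 inequality and inserting the resulting bounds on $\nm{u-\bar u}_{L^2}$ and $\nm{u}_{L^2(\Gamma^+)}$ yields $\nm{\bar u}_{L^2}^2\ls\e^{-1}\nm{\g}_{L^2(\Gamma^-)}^2+\e^{-4}\nm{\ss}_{L^2}^2$, which combined with the Step 1 estimate gives the claim through $\nm{u}_{L^2}^2\leq\nm{u-\bar u}_{L^2}^2+\nm{\bar u}_{L^2}^2$. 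The principal difficulty is Step 2: the choice $\psi=\vw\cdot\nx\Phi$ must be made precisely so the dominant quadratic term in $\bar u$ appears with the correct sign, and the trace control $\nm{\psi}_{L^2(\Gamma)}\ls\nm{\bar u}_{L^2}$ must be matched against the $\e^{1/2}$-weighted outflow norm from Step 1 to land at $\e^{-2}\nm{\ss}_{L^2}$ rather than a worse power of $\e$.
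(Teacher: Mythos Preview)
Your proposal is correct and follows essentially the same approach as the paper. The paper's proof also consists of the energy estimate with $\phi=u$ and the kernel estimate with the test function $-\vw\cdot\nx\xi$ where $\Delta\xi=\bar u$, $\xi|_{\p\Omega}=0$ (which is your $\psi=\vw\cdot\nx\Phi$ up to a sign convention on the Poisson equation); the only cosmetic differences are that the paper presents the kernel estimate first and the energy estimate second, and closes by squaring the kernel bound and adding a small multiple to the energy identity rather than splitting $\iint\ss u=\iint\ss(u-\bar u)+\iint\ss\bar u$, but the weights and absorptions are identical.
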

\begin{proof}
\ \\
Step 1: Kernel Estimate.\\
Applying Lemma \ref{remainder lemma 1} to the
equation \eqref{neutron}. Then for any
$\phi\in L^2(\Omega\times\s^2)$ satisfying $\vw\cdot\nx\phi\in
L^2(\Omega\times\s^2)$ and $\phi\in L^2(\Gamma)$, we have
\begin{align}\label{lt 1}
\e\int_{\Gamma}u\phi\ud{\gamma}
-\e\iint_{\Omega\times\s^2}(\vw\cdot\nx\phi)u+\iint_{\Omega\times\s^2}(u-\bar
u)\phi=\iint_{\Omega\times\s^2}\ss\phi.
\end{align}
Our goal is to choose a particular test function $\phi$. We first
construct an auxiliary function $\xi$. Clearly, $u\in
L^{2}(\Omega\times\s^2)$ implies that $\bar u\in
L^2(\Omega)$. We define $\xi(\vx)$ on $\Omega$ satisfying
\begin{align}\label{lt 2}
\left\{
\begin{array}{l}
\Delta \xi=\bar u\ \ \text{in}\ \
\Omega,\\\rule{0ex}{1.0em} \xi=0\ \ \text{on}\ \ \p\Omega.
\end{array}
\right.
\end{align}
In the bounded domain $\Omega$, based on the standard elliptic
estimates, there exists a unique $\xi\in H^2(\Omega)$ such that
\begin{align}\label{lt 6}
\nm{\xi}_{H^2(\Omega)}\leq C \nm{\bar
u}_{L^2(\Omega)}\leq
C \nm{\bar u}_{L^2(\Omega\times\s^2)}.
\end{align}
We plug the test function
\begin{align}\label{lt 3}
\phi=-\vw\cdot\nx\xi,
\end{align}
into the weak formulation \eqref{lt 1} and estimate
each term there. By definition, we have
\begin{align}\label{lt 12}
\nm{\phi}_{H^1(\Omega\times\s^2)}\leq C\nm{\xi}_{H^2(\Omega)}\leq
C \nm{\bar u}_{L^2(\Omega)}\leq
C \nm{\bar u}_{L^2(\Omega\times\s^2)}.
\end{align}
On the other hand, we decompose
\begin{align}\label{lt 4}
-\e\iint_{\Omega\times\s^2}(\vw\cdot\nx\phi)u=&-\e\iint_{\Omega\times\s^2}(\vw\cdot\nx\phi)\bar
u-\e\iint_{\Omega\times\s^2}(\vw\cdot\nx\phi)(u-\bar
u).
\end{align}
For the first term on the right-hand side of \eqref{lt 4}, by
\eqref{lt 2} and \eqref{lt 3}, we have
\begin{align}\label{lt 5}
&-\e\iint_{\Omega\times\s^2}(\vw\cdot\nx\phi)\bar
u\\
=&\e\iint_{\Omega\times\s^2}\bar
u\Big(w_1(w_1\p_{11}\xi+w_2\p_{12}\xi+w_3\p_{13}\xi)+w_2(w_1\p_{21}\xi+w_2\p_{22}\xi+w_3\p_{23}\xi)+w_3(w_1\p_{31}\xi+w_2\p_{32}\xi+w_3\p_{33}\xi)\Big)\no\\
=&\e\iint_{\Omega\times\s^2}\bar
u\Big(w_1^2\p_{11}\xi+w_2^2\p_{22}\xi+w_3^2\p_{33}\xi\Big)=\frac{4}{3}\e\pi\int_{\Omega}\bar u(\p_{11}\xi+\p_{22}\xi+\p_{33}\xi)=\frac{4}{3}\e\pi\nm{\bar u}_{L^2(\Omega)}^2=\frac{1}{3}\e\nm{\bar u}_{L^2(\Omega\times\s^2)}^2\no.
\end{align}
Here $\p_i$ denotes the derivative with respect to $x_i$. In the second equality, above cross terms vanish due to the symmetry
of the integral over $\s^2$.\\
For the second term
on the right-hand side of \eqref{lt 4}, H\"older's inequality and \eqref{lt 12} imply
\begin{align}\label{lt 7}
\abs{-\e\iint_{\Omega\times\s^2}(\vw\cdot\nx\phi)(u-\bar
u)}\leq& \e\nm{\vw\cdot\nx\phi}_{L^2(\Omega\times\s^2)}\nm{u-\bar u}_{L^2(\Omega\times\s^2)}\leq C \e\nm{\phi}_{H^1(\Omega\times\s^2)}\nm{u-\bar u}_{L^2(\Omega\times\s^2)}\\
\leq&C \e\nm{\bar u}_{L^2(\Omega\times\s^2)}\nm{u-\bar u}_{L^2(\Omega\times\s^2)}\no.
\end{align}
Using the trace theorem, H\"older's inequality and \eqref{lt 12}, we have
\begin{align}\label{lt 8}
\abs{\e\int_{\Gamma}u\phi\ud{\gamma}}\leq&\abs{\e\int_{\Gamma^+}u\phi\ud{\gamma}}+\abs{\e\int_{\Gamma^-}u\phi\ud{\gamma}}
\leq \e\nm{\phi}_{L^2(\Gamma)}\Big(\nm{u}_{L^2(\Gamma^+)}+\nm{\g}_{L^2(\Gamma^-)}\Big)\\
\leq& C\e\nm{\phi}_{H^1(\Omega\times\s^2)}\Big(\nm{u}_{L^2(\Gamma^+)}+\nm{\g}_{L^2(\Gamma^-)}\Big)\leq C\e\nm{\xi}_{H^2(\Omega)}\Big(\nm{u}_{L^2(\Gamma^+)}+\nm{\g}_{L^2(\Gamma^-)}\Big)\no\\
\leq& C\e\nm{\bar u}_{L^2(\Omega\times\s^2)}\Big(\nm{u}_{L^2(\Gamma^+)}+\nm{\g}_{L^2(\Gamma^-)}\Big).\no
\end{align}
Also, using H\"older's inequality and \eqref{lt 12}, we obtain
\begin{align}\label{lt 9}
\abs{\iint_{\Omega\times\s^2}(u-\bar u)\phi}\leq
 \nm{\phi}_{L^2(\Omega\times\s^2)}\nm{u-\bar
u}_{L^2(\Omega\times\s^2)}\leq
C \nm{\bar u}_{L^2(\Omega\times\s^2)}\nm{u-\bar
u}_{L^2(\Omega\times\s^2)},
\end{align}
and
\begin{align}\label{lt 10}
\abs{\iint_{\Omega\times\s^2}\ss\phi}\leq  \nm{\phi}_{L^2(\Omega\times\s^2)}\nm{\ss}_{L^2(\Omega\times\s^2)}\leq C \nm{\bar
u}_{L^2(\Omega\times\s^2)}\nm{\ss}_{L^2(\Omega\times\s^2)}.
\end{align}
Collecting estimates in \eqref{lt 5}, \eqref{lt 7}, \eqref{lt 8},
\eqref{lt 9} and \eqref{lt 10} for the weak formulation \eqref{lt 1}, we obtain
\begin{align}
\e\nm{\bar u}_{L^2(\Omega\times\s^2)}^2\leq&
C \nm{\bar u}_{L^2(\Omega\times\s^2)}\Big(\nm{u-\bar
u}_{L^2(\Omega\times\s^2)}+\e\nm{u}_{L^2(\Gamma^+)}+\nm{\ss}_{L^2(\Omega\times\s^2)}+\e\tm{\g}{\Gamma^-}\Big).
\end{align}
Then this implies that
\begin{align}\label{lt 13}
\e\nm{\bar u}_{L^2(\Omega\times\s^2)}\leq&
C \Big(\nm{u-\bar
u}_{L^2(\Omega\times\s^2)}+\e\nm{u}_{L^2(\Gamma^+)}+\nm{\ss}_{L^2(\Omega\times\s^2)}+\e\tm{\g}{\Gamma^-}\Big).
\end{align}
\ \\
Step 2: Energy Estimate.\\
In the weak formulation \eqref{lt 1}, we may take
the test function $\phi=u$ to get the energy estimate
\begin{align}
\half\e\int_{\Gamma}\abs{u}^2\ud{\gamma}+\nm{u-\bar
u}_{L^2(\Omega\times\s^2)}^2=\iint_{\Omega\times\s^2}\ss u,
\end{align}
where we use the fact that
\begin{align}
\iint_{\Omega\times\s^2}u(u-\bar u)=\iint_{\Omega\times\s^2}\bar u(u-\bar u)+\iint_{\Omega\times\s^2}(u-\bar u)^2=\nm{u-\bar
u}_{L^2(\Omega\times\s^2)}^2.
\end{align}
Then decomposing the boundary term, we have
\begin{align}\label{lt 14}
&&\half\e\nm{u}^2_{L^2(\Gamma^+)}+\nm{u-\bar
u}_{L^2(\Omega\times\s^2)}^2= \iint_{\Omega\times\s^2}\ss u+\half\e\nm{\g}_{L^2(\Gamma^-)}^2.
\end{align}
On the other hand, we can square on both sides of
\eqref{lt 13} to obtain
\begin{align}\label{lt 15}
\e^2\nm{\bar u}_{L^2(\Omega\times\s^2)}^2\leq&
C\Big(\nm{u-\bar
u}_{L^2(\Omega\times\s^2)}^2+\e^2\nm{u}_{L^2(\Gamma^+)}^2+\nm{\ss}_{L^2(\Omega\times\s^2)}^2+\e^2\tm{\g}{\Gamma^-}^2\Big).
\end{align}
Multiplying \eqref{lt 15} by a sufficiently small constant and adding it to \eqref{lt 14} to absorb $\nm{u}_{L^2(\Gamma^+)}^2$ and
$\nm{u-\bar u}_{L^2(\Omega\times\s^2)}^2$, we deduce
\begin{align}
&&\e\nm{u}_{L^2(\Gamma^+)}^2+\e^2\nm{\bar
u}_{L^2(\Omega\times\s^2)}^2+\nm{u-\bar
u}_{L^2(\Omega\times\s^2)}^2\leq
C \Big(\nm{\ss}_{L^2(\Omega\times\s^2)}^2+
\iint_{\Omega\times\s^2}\ss u+\e\nm{\g}_{L^2(\Gamma^-)}^2\Big).
\end{align}
Since
\begin{align}
\nm{u}_{L^2(\Omega\times\s^2)}^2=\nm{\bar u}_{L^2(\Omega\times\s^2)}^2+\nm{u-\bar u}_{L^2(\Omega\times\s^2)}^2,
\end{align}
we have
\begin{align}\label{lt 16}
\e\nm{u}_{L^2(\Gamma^+)}^2+\e^2\nm{u}_{L^2(\Omega\times\s^2)}^2\leq
C \Big(\nm{\ss}_{L^2(\Omega\times\s^2)}^2+
\iint_{\Omega\times\s^2}\ss u+\e\nm{\g}_{L^2(\Gamma^-)}^2\Big).
\end{align}
A direct application of Cauchy's inequality leads to
\begin{align}
\iint_{\Omega\times\s^2}\ss u\leq\frac{1}{4C_0\e^2}\nm{\ss}_{L^2(\Omega\times\s^2)}^2+C_0\e^2\nm{u}_{L^2(\Omega\times\s^2)}^2.
\end{align}
Taking $C_0$ sufficiently small to absorb $C_0\e^2\nm{u}_{L^2(\Omega\times\s^2)}^2$ in \eqref{lt 16}, we obtain
\begin{align}\label{lt 17}
\e\nm{u}_{L^2(\Gamma^+)}^2+\e^2\nm{u}_{L^2(\Omega\times\s^2)}^2\leq
C \bigg(\frac{1}{\e^2}\nm{\ss}_{L^2(\Omega\times\s^2)}^2+\e\nm{\g}_{L^2(\Gamma^-)}^2\bigg).
\end{align}
Then we can divide $\e^2$ on both sides of \eqref{lt 17} to obtain
\begin{align}\label{lt 18}
\frac{1}{\e}\nm{u}_{L^2(\Gamma^+)}^2+\nm{u}_{L^2(\Omega\times\s^2)}^2\leq
C \bigg(
\frac{1}{\e^4}\nm{\ss}_{L^2(\Omega\times\s^2)}^2+\frac{1}{\e}\nm{\g}_{L^2(\Gamma^-)}^2\bigg).
\end{align}
Hence, we have
\begin{align}
\frac{1}{\e^{\frac{1}{2}}}\nm{u}_{L^2(\Gamma^+)}+\nm{u}_{L^2(\Omega\times\s^2)}\leq
C \bigg(
\frac{1}{\e^2}\nm{\ss}_{L^2(\Omega\times\s^2)}+\frac{1}{\e^{\frac{1}{2}}}\nm{\g}_{L^2(\Gamma^-)}\bigg).
\end{align}
\end{proof}

\subsection{$L^{\infty}$ Estimate - First Round}

\begin{theorem}\label{LI estimate'}
The unique solution $u(\vx,\vw)$ to the equation \eqref{neutron} satisfies
\begin{align}
\nm{u}_{L^{\infty}(\Omega\times\s^2)}&\leq C\bigg(\frac{1}{\e^{\frac{7}{2}}}\nm{\ss}_{L^2(\Omega\times\s^2)}+\nm{\ss}_{L^{\infty}(\Omega\times\s^2)}
+\frac{1}{\e^{2}}\nm{\g}_{L^2(\Gamma^-)}+\nm{\g}_{L^{\infty}(\Gamma^-)}\bigg).
\end{align}
\end{theorem}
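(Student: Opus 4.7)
The plan is to apply a double Duhamel iteration in the characteristic formulation of \eqref{neutron} and then close the estimate by invoking the $L^2$ bound of Theorem \ref{LT estimate}. For $(\vx,\vw)\in\Omega\times\s^2$, let $t_b=t_b(\vx,\vw)$ denote the backward exit time, so that $\vx_0(\vx,\vw):=\vx-t_b\vw\in\p\Omega$ with $\vw\cdot\vn(\vx_0)<0$. Integration of \eqref{neutron} along the characteristic $s\mapsto\vx-s\vw$ yields the mild representation
\begin{align*}
u(\vx,\vw) = e^{-t_b/\e}\g(\vx_0,\vw) + \int_0^{t_b}\frac{e^{-s/\e}}{\e}\Big(\bar u(\vx-s\vw)+\ss(\vx-s\vw,\vw)\Big)\,ds.
\end{align*}
The boundary term is immediately bounded by $\nm{\g}_{L^{\infty}(\Gamma^-)}$ and the $\ss$-term by $\nm{\ss}_{L^{\infty}(\Omega\times\s^2)}$, so the only nontrivial task is to control $K(\vx,\vw):=\int_0^{t_b}\e^{-1}e^{-s/\e}|\bar u(\vx-s\vw)|\,ds$.

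The next step is to iterate. I expand $\bar u(\vx-s\vw)=\frac{1}{4\pi}\int_{\s^2}u(\vx-s\vw,\vw')\,d\vw'$ and substitute the mild formula again for $u(\vx',\vw')$ with $\vx':=\vx-s\vw$ and backward time $t_b':=t_b(\vx',\vw')$. The boundary and source contributions of the inner formula again collapse into multiples of $\nm{\g}_{L^{\infty}(\Gamma^-)}+\nm{\ss}_{L^{\infty}(\Omega\times\s^2)}$, so only the pure $\bar u$ term survives, namely
\begin{align*}
J(\vx,\vw)=\frac{1}{4\pi}\int_0^{t_b}\frac{e^{-s/\e}}{\e}\int_{\s^2}\int_0^{t_b'}\frac{e^{-s'/\e}}{\e}|\bar u(\vx-s\vw-s'\vw')|\,ds'\,d\vw'\,ds.
\end{align*}

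The central maneuver is the Cartesian change of variables $z=s'\vw'$ on $\r^3$ inside the $(s',\vw')$ integral, under which $ds'\,d\vw'=|z|^{-2}\,dz$. The inner double integral becomes $\int\e^{-1}|z|^{-2}e^{-|z|/\e}|\bar u(\vx-s\vw-z)|\,dz$, and I split it at $|z|=\delta$ for a parameter $\delta=\delta(\e)$ to be chosen. On $\{|z|<\delta\}$ I use $|\bar u|\leq\nm{u}_{L^{\infty}(\Omega\times\s^2)}$ together with $\int_{|z|<\delta}|z|^{-2}\,dz=4\pi\delta$, yielding a contribution of order $\delta\e^{-1}\nm{u}_{L^{\infty}}$. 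On $\{|z|\geq\delta\}$ I bound $|z|^{-2}\leq\delta^{-2}$ and apply Cauchy--Schwarz against $e^{-|z|/\e}$, using the 3D identity $\nm{e^{-|\cdot|/\e}}_{L^2(\r^3)}=C\e^{3/2}$ and Jensen's inequality $\nm{\bar u}_{L^2(\Omega)}\leq C\nm{u}_{L^2(\Omega\times\s^2)}$, to obtain a contribution of order $\e^{-1/2}\delta^{-2}\nm{u}_{L^2}$. Since the outer $s$-integral contributes only a factor $\leq 1$, I arrive at
\begin{align*}
|u(\vx,\vw)|\leq C\Big(\nm{\g}_{L^{\infty}(\Gamma^-)}+\nm{\ss}_{L^{\infty}(\Omega\times\s^2)}\Big)+C\frac{\delta}{\e}\nm{u}_{L^{\infty}(\Omega\times\s^2)}+C\frac{\e^{1/2}}{\delta^2}\nm{u}_{L^2(\Omega\times\s^2)}.
\end{align*}

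Finally, I pick $\delta=c\e$ with $c$ small enough that $C\delta/\e\leq 1/2$, absorbing the $L^{\infty}$ term into the left-hand side, and substitute the $L^2$ bound $\nm{u}_{L^2}\leq C(\e^{-2}\nm{\ss}_{L^2}+\e^{-1/2}\nm{\g}_{L^2})$ of Theorem \ref{LT estimate}. The resulting factor $\e^{1/2}/\delta^2=O(\e^{-3/2})$ combines with $\e^{-2}$ to produce the $\e^{-7/2}$ coefficient on $\nm{\ss}_{L^2}$ and with $\e^{-1/2}$ to produce the $\e^{-2}$ coefficient on $\nm{\g}_{L^2}$, matching the claimed estimate. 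The main obstacle is handling the non-integrable singularity $|z|^{-2}$ at the origin: naive Cauchy--Schwarz on the whole ball diverges, and the split at $\delta=O(\e)$ is dictated by the simultaneous requirements that the small-$z$ piece be absorbable into $\nm{u}_{L^{\infty}}$ and that the dimensional factor $\e^{3/2}$ from $\nm{e^{-|\cdot|/\e}}_{L^2(\r^3)}$ exactly matches the three inverse powers of $\e$ produced by the double Duhamel iteration together with the Cauchy--Schwarz step.
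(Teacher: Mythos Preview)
Your argument is correct and follows the same strategy as the paper: double Duhamel iteration along characteristics, a spherical-to-Cartesian change of variables in the inner $(s',\vw')$ integral, a near/far split to isolate the singularity, absorption of the near piece into $\nm{u}_{L^\infty}$, and closure via Theorem~\ref{LT estimate}. The only visible difference is cosmetic: you parametrize characteristics by physical length (so the kernel is $\e^{-1}e^{-s/\e}$ and the split occurs at $|z|=c\e$), whereas the paper rescales the characteristic parameter so that the kernel is $e^{-s}$ and the split occurs at $r=\delta$ with $\delta$ an $\e$-independent small constant; after undoing the rescaling these are the same cut, and both routes land on the factor $\e^{-3/2}$ in front of $\nm{\bar u}_{L^2}$.
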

\begin{proof}
\ \\
Step 1: Double Duhamel iterations.\\
It is well-known that in kinetic equations, the mild formulation is stronger than the weak formulation. Let $\Big(\vec X(s),\vec W(s)\Big)$ for $s\in\r$ represent the characteristics of the equation \eqref{neutron}. We have
\begin{align}
\frac{\ud \vec{X}}{\ud s}=\e\vw,\quad\frac{\ud \vec{W}}{\ud s}=0,
\end{align}
which, combining with the initial data $\vec X(0)=\vx,\vec W(0)=\vw$, further implies
\begin{align}
\vec X(s)=\vx+\e\vw s,\quad \vec W(s)=\vw.
\end{align}
Along the characteristics, the equation \eqref{neutron} is
\begin{align}
\frac{\ud u}{\ud s}+u-\bar u=\ss.
\end{align}
Then we can rewrite the equation
\eqref{neutron} by tracking along the characteristics all the way back to the in-flow boundary $\Gamma^-$ as
\begin{align}\label{li 1}
u(\vx,\vw)=&\g(\vx-\e t_b\vw,\vw)\ue^{-t_b}+\int_{0}^{t_b}\ss(\vx-\e{s}\vw,\vw)\ue^{-{s}}\ud{s}+\int_{0}^{t_b}\bar u(\vx-\e{s}\vw)\ue^{-{s}}\ud{s}\\
=&\g(\vx-\e t_b\vw,\vw)\ue^{-t_b}+\int_{0}^{t_b}\ss(\vx-\e{s}\vw,\vw)\ue^{-{s}}\ud{s}
+\frac{1}{4\pi}\int_{0}^{t_b}\bigg(\int_{\s^2}u(\vx-\e{s}\vw,\vw_t)\ud{\vw_t}\bigg)\ue^{-{s}}\ud{s}\no,
\end{align}
where the backward exit time $t_b$ is defined as
\begin{align}
t_b(\vx,\vw)=\inf\{s\geq0: (\vx-\e s\vw,\vw)\in\Gamma^-\},
\end{align}
which represents the first time that the characteristics track back and hit the in-flow boundary. Note that we have replaced $\bar u$ by the integral of $u$ over the dummy velocity
variable $\vw_t$. For the last term in \eqref{li 1}, we rewrite $u(\vx-\e{s}\vw,\vw_t)$ by tracking back along the characteristics again to obtain
\begin{align}\label{li 2}
u(\vx,\vw)=&\g(\vx-\e t_b\vw,\vw)\ue^{-t_b}+\int_{0}^{t_b}\ss(\vx-\e{s}\vw,\vw)\ue^{-{s}}\ud{s}\\
&+\frac{1}{4\pi}\int_{0}^{t_b}\bigg(\int_{\s^2}\g(\vx-\e{s}\vw-\e
s_b\vw_t,\vw_t)\ue^{-s_b}\ud{\vw_t}\bigg)\ue^{-{s}}\ud{s}\no\\
&+\frac{1}{4\pi}\int_{0}^{t_b}\Bigg(\int_{\s^2}\bigg(\int_{0}^{s_b}\ss(\vx-\e{s}\vw-\e
{r}\vw_t,\vw_t)\ue^{-{r}}\ud{r}\bigg)\ud{\vw_t}\Bigg)\ue^{-{s}}\ud{s}\no\\
&+\frac{1}{4\pi}\int_{0}^{t_b}\Bigg(\int_{\s^2}\bigg(\int_{0}^{s_b}\bar u(\vx-\e{s}\vw-\e
{r}\vw_t)\ue^{-{r}}\ud{r}\bigg)\ud{\vw_t}\Bigg)\ue^{-{s}}\ud{s}\no,
\end{align}
where the exiting time from $(\vx-\e{s}\vw,\vw_t)$ is defined as
\begin{align}
s_b(\vx,\vw;s,\vw_t)=\inf\left\{r\geq0: (\vx-\e{s}\vw-\e
r\vw_t,\vw_t)\in\Gamma^-\right\}.
\end{align}
\ \\
Step 2: Estimates of all but the last term in \eqref{li 2}.\\
Note the fact that $0\leq s\leq t_b$ and $0\leq r\leq s_b$. We can directly estimate
\begin{align}\label{li 3}
\abs{\g(\vx-\e t_b\vw,\vw)\ue^{-t_b}}\leq\nm{\g}_{L^{\infty}(\Gamma^-)},
\end{align}
\begin{align}\label{li 4}
\abs{\frac{1}{4\pi}\int_{0}^{t_b}\bigg(\int_{\s^2}\g(\vx-\e{s}\vw-\e
s_b\vw_t,\vw_t)\ue^{-s_b}\ud{\vw_t}\bigg)\ue^{-{s}}\ud{s}} \leq
\nm{\g}_{L^{\infty}(\Gamma^-)},
\end{align}
\begin{align}\label{li 5}
\abs{\int_{0}^{t_b}\ss(\vx-\e{s}\vw,\vw)\ue^{-{s}}\ud{s}}\leq
\nm{\ss}_{L^{\infty}(\Omega\times\s^2)},
\end{align}
\begin{align}\label{li 6}
\abs{\frac{1}{4\pi}\int_{0}^{t_b}\Bigg(\int_{\s^2}\bigg(\int_{0}^{s_b}\ss(\vx-\e{s}\vw-\e
{r}\vw_t,\vw_t)\ue^{-{r}}\ud{r}\bigg)\ud{\vw_t}\Bigg)\ue^{-{s}}\ud{s}}
\leq \nm{\ss}_{L^{\infty}(\Omega\times\s^2)}.
\end{align}
\ \\
Step 3: Estimates of the last term in \eqref{li 2}.\\
Now we decompose the last term in \eqref{li 2} as
\begin{align}
\int_{0}^{t_b}\int_{\s^2}\int_0^{s_b}=\int_{0}^{t_b}\int_{\s^2}\int_{0\leq r\leq\delta}+
\int_{0}^{t_b}\int_{\s^2}\int_{r\geq\delta}=I_1+I_2,
\end{align}
for some $0<\delta<<1$ to be determined later. Since $I_1$ contains an integral in a very small region, we may directly estimate
\begin{align}\label{li 7}
\abs{I_1}
\leq&\frac{1}{4\pi}\int_{0}^{t_b}\Bigg(\int_{\s^2}\int_{0\leq r\leq\delta}\nm{u}_{L^{\infty}(\Omega\times\s^2)}\ue^{-{r}}
\ud{r}\ud{\vw_t}\Bigg)\ue^{-{s}}\ud{s}\\
\leq&\nm{u}_{L^{\infty}(\Omega\times\s^2)}\bigg(\int_{0\leq r\leq\delta}\ue^{-{r}}
\ud{r}\bigg)\leq C\delta\nm{u}_{L^{\infty}(\Omega\times\s^2)}.\no
\end{align}
Then we need to handle the more complicated term $I_2$,
\begin{align}
\abs{I_2}\leq&C\int_{0}^{t_b}\Bigg(\int_{\s^2}\int_{r\geq\delta}\abs{\bar u(\vx-\e{s}\vw-\e
{r}\vw_t)}\ue^{-{r}}\ud{r}\ud{\vw_t}\Bigg)\ue^{-{s}}\ud{s}.
\end{align}
By the definition of $t_b$ and $s_b$, we always have
\begin{align}
\vx-\e{s}\vw-\e {r}\vw_t\in \Omega.
\end{align}
Hence, we may introduce the indicator function ${\bf{1}}_{\Omega}$ and apply H\"older's inequality
to obtain
\begin{align}\label{li 9}
\abs{I_2}\leq&C\int_{0}^{t_b}\Bigg(\int_{\s^2}\int_{r\geq\delta}{\bf{1}}_{\Omega}(\vx-\e{s}\vw-\e
{r}\vw_t)\abs{\bar u(\vx-\e{s}\vw-\e
{r}\vw_t)}\ud{r}\ud{\vw_t}\Bigg)\ue^{-{s}}\ud{s}\\
\leq&C\int_{0}^{t_b}\Bigg(\bigg(\int_{\s^2}\int_{r\geq\delta}{\bf{1}}_{\Omega}(\vx-\e{s}\vw-\e
{r}\vw_t)\abs{\bar u(\vx-\e{s}\vw-\e
{r}\vw_t)}^2\ud{r}\ud{\vw_t}\bigg)^{\frac{1}{2}}\no\\
&\times\bigg(\int_{\s^2}\int_{r\geq\delta}{\bf{1}}_{\Omega}(\vx-\e{s}\vw-\e
{r}\vw_t)\ue^{-2{r}}\ud{r}\ud{\vw_t}\bigg)^{\frac{1}{2}}\Bigg)\ue^{-{s}}\ud{s}\no\\
\leq&C\int_{0}^{t_b}\Bigg(\bigg(\int_{\s^2}\int_{r\geq\delta}{\bf{1}}_{\Omega}(\vx-\e{s}\vw-\e
{r}\vw_t)\abs{\bar u(\vx-\e{s}\vw-\e
{r}\vw_t)}^2\ud{r}\ud{\vw_t}\bigg)^{\frac{1}{2}}\Bigg)\ue^{-{s}}\ud{s}.\no
\end{align}
Note $\vw_t\in\s^2$, which can be parameterized as
\begin{align}
\vw_t=(\sin\phi\cos\psi,\sin\phi\sin\psi,\cos\phi),
\end{align}
for $\phi\in[0,\pi]$ and $\psi\in[0,2\pi]$. Hence, we may write the integral
\begin{align}\label{li 12}
\int_{\s^2}\cdots\ud\vw_t=\int_0^{2\pi}\int_0^{\pi}\cdots\sin\phi\ud\phi\ud\psi,
\end{align}
where $\sin\phi$ is the Jacobian of spherical coordinates. Then we further define the change of variable
$[0,\pi]\times[0,2\pi]\times\r\rt \Omega: (\phi,\psi,r)\rt(y_1,y_2,y_3)=\vec
y=\vx-\e{s}\vw-\e {r}\vw_t$, i.e.
\begin{align}\label{li 11}
\left\{
\begin{array}{rcl}
y_1&=&x_1-\e{s}w_1-\e {r}\sin\phi\cos\psi,\\
y_2&=&x_2-\e{s}w_2-\e {r}\sin\phi\sin\psi,\\
y_3&=&x_3-\e{s}w_3-\e {r}\cos\phi.
\end{array}
\right.
\end{align}
The Jacobian is
\begin{align}
\abs{\frac{\p(y_1,y_2,y_3)}{\p(\phi,\psi,r)}}&=\abs{\abs{\begin{array}{ccc}
-\e{r}\cos\phi\cos\psi&\e{r}\sin\phi\sin\psi&\e\sin\phi\cos\psi\\
-\e{r}\cos\phi\sin\psi&-\e{r}\sin\phi\cos\psi&\e\sin\phi\sin\psi\\
\e{r}\sin\phi&0&\e\cos\phi
\end{array}}}=\e^3{r}^2\sin\phi.
\end{align}
Now we consider the restriction on $r$ and $\phi$. $r\geq\delta$ implies ${r}^2\geq\delta^2$. Therefore, we may bound the Jacobian
\begin{align}
\abs{\frac{\p(y_1,y_2,y_3)}{\p(\phi,\psi,r)}}=\e^3{r}^2\sin\phi\geq \e^3\delta^2\sin\phi.
\end{align}
The extra $\sin\phi$ can be cancelled out by \eqref{li 12}. Hence, we may simplify \eqref{li 9} as
\begin{align}\label{li 10}
\abs{I_2}\leq&C\int_{0}^{t_b}\bigg(\int_{\Omega}\frac{1}{\e^3\d^2}\abs{\bar u(\vec
y)}^2\ud{\vec y}\bigg)^{\frac{1}{2}}\ue^{-{s}}\ud{s}
\leq\frac{C}{\e^{\frac{3}{2}}\d}\int_{0}^{t_b}\bigg(\int_{\Omega}\abs{\bar u(\vec
y)}^2\ud{\vec y}\bigg)^{\frac{1}{2}}\ue^{-{s}}\ud{s}
\leq\frac{C}{\e^{\frac{3}{2}}\d}\nm{\bar u}_{L^2(\Omega)}.
\end{align}
\ \\
Step 4: Synthesis.\\
In summary, collecting \eqref{li 3}, \eqref{li 4}, \eqref{li 5}, \eqref{li 6}, \eqref{li 7}, and \eqref{li 10}, for fixed $0<\delta<<1$, we have
\begin{align}
\abs{u(\vx,\vw)}\leq C\bigg(\delta
\nm{u}_{L^{\infty}(\Omega\times\s^2)}+\frac{1}{\e^{\frac{3}{2}}}\nm{\bar u}_{L^2(\Omega)}+\nm{\ss}_{L^{\infty}(\Omega\times\s^2)}+\nm{\g}_{L^{\infty}(\Gamma^-)}\bigg).
\end{align}
Taking supremum over all $(\vx,\vw)\in\Omega\times\s^2$, we obtain
\begin{align}
\nm{u}_{L^{\infty}(\Omega\times\s^2)}\leq C\bigg(\delta
\nm{u}_{L^{\infty}(\Omega\times\s^2)}+\frac{1}{\e^{\frac{3}{2}}\d}\nm{\bar u}_{L^2(\Omega)}+\nm{\ss}_{L^{\infty}(\Omega\times\s^2)}+\nm{\g}_{L^{\infty}(\Gamma^-)}\bigg).
\end{align}
Then taking $\d$ sufficiently small to absorb $C\delta
\nm{u}_{L^{\infty}(\Omega\times\s^2)}$ into the left-hand side, we get
\begin{align}
\nm{u}_{L^{\infty}(\Omega\times\s^2)}\leq
C\bigg(\frac{1}{\e^{\frac{3}{2}}}\nm{\bar u}_{L^2(\Omega)}+\nm{\ss}_{L^{\infty}(\Omega\times\s^2)}+\nm{\g}_{L^{\infty}(\Gamma^-)}\bigg).
\end{align}
Using Theorem \ref{LT estimate}, we get
\begin{align}
\nm{u}_{L^{\infty}(\Omega\times\s^2)}\leq& C\bigg(\frac{1}{\e^{\frac{7}{2}}}\nm{\ss}_{L^2(\Omega\times\s^2)}+\nm{\ss}_{L^{\infty}(\Omega\times\s^2)}
+\frac{1}{\e^{2}}\nm{\g}_{L^2(\Gamma^-)}+\nm{\g}_{L^{\infty}(\Gamma^-)}\bigg).
\end{align}

\end{proof}

\subsection{$L^{2m}$ Estimate}

In the following, let $o(1)$ denote a sufficiently small constant.
\begin{theorem}\label{LN estimate}
The unique solution $u(\vx,\vw)$ to the equation \eqref{neutron} satisfies for integer $1\leq m< 3$,
\begin{align}
&\frac{1}{\e^{\frac{1}{2}}}\nm{u}_{L^2(\Gamma^+)}+\nm{
\bar u}_{L^{2m}(\Omega\times\s^2)}+\frac{1}{\e}\nm{u-\bar
u}_{L^2(\Omega\times\s^2)}\\
\leq&
C\bigg(o(1)\e^{\frac{3}{2m}}\Big(\nm{u}_{L^{\infty}(\Omega\times\s^2)}+\nm{u}_{L^{\infty}(\Gamma^+)}\Big)\no\\
&+\frac{1}{\e}\nm{\ss}_{L^2(\Omega\times\s^2)}+
\frac{1}{\e^2}\nm{\ss}_{L^{\frac{2m}{2m-1}}(\Omega\times\s^2)}+\frac{1}{\e^{\frac{1}{2}}}\nm{\g}_{L^2(\Gamma^-)}+\nm{\g}_{L^{\frac{4m}{3}}(\Gamma^-)}\bigg).\no
\end{align}
\end{theorem}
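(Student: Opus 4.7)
The plan is to adapt the $L^2$-duality argument of Theorem~\ref{LT estimate} by replacing its elliptic auxiliary problem with one tailored to the $L^{2m}$ norm. Specifically, I would solve
\begin{equation*}
\Delta\xi = |\bar u|^{2m-2}\bar u\ \text{in}\ \Omega,\qquad \xi=0\ \text{on}\ \p\Omega,
\end{equation*}
so that Calder\'on-Zygmund theory with the conjugate exponent $q=2m/(2m-1)$ yields
\begin{equation*}
\|\xi\|_{W^{2,q}(\Omega)}\leq C\|\bar u\|_{L^{2m}(\Omega\times\s^2)}^{2m-1}.
\end{equation*}
The test function in Green's identity (Lemma~\ref{remainder lemma 1}) is then $\phi=-\vw\cdot\nx\xi$; the Sobolev embedding of $W^{1,q}(\Omega)$ into $L^{6m/(4m-3)}(\Omega)$ and, via the trace, into $L^{4m/(4m-3)}(\p\Omega)$, supply the $\phi$-norms needed to bound the terms on the right.

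Plugging this $\phi$ into the weak formulation extracts the leading contribution exactly as in Theorem~\ref{LT estimate}: the off-diagonal $\vw\otimes\vw$ terms vanish by $\s^2$-symmetry while the diagonal ones assemble into $\Delta\xi$, giving
\begin{equation*}
-\e\iint_{\Omega\times\s^2}(\vw\cdot\nx\phi)\bar u = \frac{\e}{3}\|\bar u\|_{L^{2m}(\Omega\times\s^2)}^{2m}.
\end{equation*}
The remaining volume terms are controlled by H\"older's inequality: the source by $\|\ss\|_{L^{2m/(2m-1)}}\|\phi\|_{L^{2m}}$, and the kinetic commutator $\e\iint(\vw\cdot\nx\phi)(u-\bar u)$ together with $\iint(u-\bar u)\phi$ by $\|u-\bar u\|_{L^2}$ paired with the $W^{1,q}$ bound on $\phi$, possibly after interpolation between $L^2$ and $L^\infty$.

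The boundary integral $\e\int_\Gamma u\phi\,\ud\gamma$ is split into its inflow and outflow pieces. On $\Gamma^-$ we simply pair $\|\g\|_{L^{4m/3}(\Gamma^-)}$ with $\|\phi\|_{L^{4m/(4m-3)}(\Gamma^-)}$. On $\Gamma^+$ the key step is the interpolation
\begin{equation*}
\|u\|_{L^{4m/3}(\Gamma^+)}\leq \|u\|_{L^2(\Gamma^+)}^{3/(2m)}\|u\|_{L^\infty(\Gamma^+)}^{1-3/(2m)},
\end{equation*}
from which the small factor $o(1)\e^{3/(2m)}$ multiplying $\|u\|_{L^\infty(\Gamma^+)}$ emerges after $\e^{-1/2}\|u\|_{L^2(\Gamma^+)}$ is absorbed through the simultaneously established energy estimate (the $\phi=u$ test, exactly as in Theorem~\ref{LT estimate}). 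Putting the kernel estimate and the energy estimate together and applying Young's inequality with dual exponents $2m/(2m-1)$ and $2m$ to split every product $\|\bar u\|_{L^{2m}}^{2m-1}\cdot(\text{good quantity})$ absorbs $\|\bar u\|_{L^{2m}}^{2m}$ back to the left and yields the advertised bound.

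The main obstacle is the $\Gamma^+$ contribution: because the small $o(1)\e^{3/(2m)}$ factor in front of $\|u\|_{L^\infty(\Gamma^+)}$ is produced by interpolating against $\|u\|_{L^2(\Gamma^+)}$, which is itself one of the three quantities being estimated on the left, the constants in Young's inequality must be tracked carefully so that absorption is genuine rather than circular. The restriction $m<3$ with integer $m$ keeps both Sobolev exponents $6m/(4m-3)$ and $4m/(4m-3)$ finite, so that all embeddings invoked for $\phi$ remain admissible in three dimensions.
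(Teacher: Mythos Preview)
Your proposal is correct and follows essentially the same approach as the paper: the auxiliary elliptic problem $\Delta\xi=\bar u^{2m-1}$, the test function $\phi=-\vw\cdot\nx\xi$, the symmetry argument extracting $\e\|\bar u\|_{L^{2m}}^{2m}$, the energy identity with $\phi=u$, and the $L^{4m/3}$--$L^2$--$L^\infty$ interpolation on $\Gamma^+$ all match. Two minor points worth tightening: first, the pairing you propose for the source in the kernel step, $\|\ss\|_{L^{2m/(2m-1)}}\|\phi\|_{L^{2m}}$, would require $W^{2,q}\hookrightarrow W^{1,2m}$, which fails for $m=2$; the paper instead uses $\|\phi\|_{L^2}\|\ss\|_{L^2}$ here (the $\|\ss\|_{L^{2m/(2m-1)}}$ term enters later, when splitting $\iint\ss u$ in the energy step). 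Second, the term $\e\iint(\vw\cdot\nx\phi)(u-\bar u)$ is paired in the paper with $\|u-\bar u\|_{L^{2m}}$ rather than $\|u-\bar u\|_{L^2}$, and it is this $L^{2m}$ norm whose interpolation against $L^\infty$ produces the $o(1)\e^{3/(2m)}\|u\|_{L^\infty(\Omega\times\s^2)}$ contribution---you allude to this with ``possibly after interpolation,'' so you are on the right track, but this is not optional. Finally, the reason for the restriction $m\le 3$ is precisely the embedding $W^{2,2m/(2m-1)}(\Omega)\hookrightarrow H^1(\Omega)$ in three dimensions, not merely finiteness of the exponents.
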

\begin{proof}
\ \\
Step 1: Kernel Estimate.\\
As in $L^2$ estimates, applying Green's identity in Lemma \ref{remainder lemma 1} to the
equation \eqref{neutron}, for any
$\phi\in L^2(\Omega\times\s^2)$ satisfying $\vw\cdot\nx\phi\in
L^2(\Omega\times\s^2)$ and $\phi\in L^2(\Gamma)$, we have the weak formulation
\begin{align}\label{ln 1}
\e\int_{\Gamma}u\phi\ud{\gamma}
-\e\iint_{\Omega\times\s^2}(\vw\cdot\nx\phi)u+\iint_{\Omega\times\s^2}(u-\bar
u)\phi=\iint_{\Omega\times\s^2}\ss\phi.
\end{align}
Now we choose a different test function $\phi$. We first
construct an auxiliary function $\xi$. Using Theorem \ref{LI estimate'}, $u\in
L^{\infty}(\Omega\times\s^2)$ implies that $\bar u\in
L^{2m}(\Omega)$ which further leads to $\bar u^{2m-1}\in
L^{\frac{2m}{2m-1}}(\Omega)$. Define $\xi(\vx)$ on $\Omega$ satisfying
\begin{align}\label{ln 2}
\left\{
\begin{array}{l}
\Delta \xi=\bar u^{2m-1}\ \ \text{in}\ \
\Omega,\\\rule{0ex}{1.0em} \xi=0\ \ \text{on}\ \ \p\Omega.
\end{array}
\right.
\end{align}
In the bounded domain $\Omega$, based on the standard elliptic
estimates, there exists a unique $\xi\in W^{2,\frac{2m}{2m-1}}(\Omega)$ satisfying
\begin{align}\label{ln 3}
\nm{\xi}_{W^{2,\frac{2m}{2m-1}}(\Omega)}\leq C\nm{\bar
u^{2m-1}}_{L^{\frac{2m}{2m-1}}(\Omega)}= C\nm{\bar
u}_{L^{2m}(\Omega)}^{2m-1}.
\end{align}
We plug the test function
\begin{align}\label{ln 4}
\phi=-\vw\cdot\nx\xi,
\end{align}
into the weak formulation \eqref{ln 1} and estimate
each term there. By Sobolev embedding theorem and \eqref{ln 3}, we have
\begin{align}
&\nm{\phi}_{L^2(\Omega)}\leq C\nm{\xi}_{H^1(\Omega)}\leq C\nm{\xi}_{W^{2,\frac{2m}{2m-1}}(\Omega)}\leq
C\nm{\bar
u}_{L^{2m}(\Omega)}^{2m-1},\label{ln 5}\\
&\nm{\phi}_{L^{\frac{2m}{2m-1}}(\Omega)}\leq C\nm{\xi}_{W^{1,\frac{2m}{2m-1}}(\Omega)}\leq C\nm{\xi}_{W^{2,\frac{2m}{2m-1}}(\Omega)}\leq
C\nm{\bar
u}_{L^{2m}(\Omega)}^{2m-1}.\label{ln 6}
\end{align}
Note that the embedding $W^{2,\frac{2m}{2m-1}}(\Omega)\hookrightarrow H^1(\Omega)$ requires $m\leq 3$.\\
On the other hand, we decompose
\begin{align}\label{ln 7}
-\e\iint_{\Omega\times\s^2}(\vw\cdot\nx\phi)u=&-\e\iint_{\Omega\times\s^2}(\vw\cdot\nx\phi)\bar
u-\e\iint_{\Omega\times\s^2}(\vw\cdot\nx\phi)(u-\bar
u).
\end{align}
For the first term on the right-hand side of \eqref{ln 7}, by
\eqref{ln 2} and \eqref{ln 4}, we have
\begin{align}\label{ln 8}
&-\e\iint_{\Omega\times\s^2}(\vw\cdot\nx\phi)\bar
u\\
=&\e\iint_{\Omega\times\s^2}\bar
u\Big(w_1(w_1\p_{11}\zeta+w_2\p_{12}\zeta+w_3\p_{13}\zeta)+w_2(w_1\p_{21}\zeta+w_2\p_{22}\zeta+w_3\p_{23}\zeta)+w_3(w_1\p_{31}\zeta+w_2\p_{32}\zeta+w_3\p_{33}\zeta)\Big)\no\\
=&\e\iint_{\Omega\times\s^2}\bar
u\Big(w_1^2\p_{11}\zeta+w_2^2\p_{22}\zeta+w_3^2\p_{33}\zeta\Big)=\frac{4}{3}\e\pi\int_{\Omega}\bar u(\p_{11}\zeta+\p_{22}\zeta+\p_{33}\zeta)=\frac{4}{3}\e\pi\nm{\bar u}_{L^{2m}(\Omega)}^{2m}=\frac{1}{3}\e\nm{\bar u}_{L^{2m}(\Omega)}^{2m}\no.
\end{align}
In the second equality, above cross terms vanish due to the symmetry
of the integral over $\s^2$.\\
For the second term
on the right-hand side of \eqref{ln 7}, H\"older's inequality and \eqref{ln 6} imply
\begin{align}\label{ln 9}
&\abs{-\e\iint_{\Omega\times\s^2}(\vw\cdot\nx\phi)(u-\bar
u)}\leq \e\nm{\vw\cdot\nx\phi}_{L^{\frac{2m}{2m-1}}(\Omega)}\nm{u-\bar u}_{L^{2m}(\Omega\times\s^2)}\\
\leq&C\e\nm{\xi}_{W^{2,\frac{2m}{2m-1}}(\Omega)}\nm{u-\bar u}_{L^{2m}(\Omega\times\s^2)}
\leq C\e\nm{\bar
u}_{L^{2m}(\Omega)}^{2m-1}\nm{u-\bar
u}_{L^{2m}(\Omega\times\s^2)}\no.
\end{align}
Based on Sobolev embedding theorem, trace theorem and \eqref{ln 5}, we have
\begin{align}\label{ln 10}
\nm{\nx\xi}_{L^{\frac{4m}{4m-3}}(\Gamma)}\leq C\nm{\nx\xi}_{W^{\frac{1}{2m},\frac{2m}{2m-1}}(\Gamma)}\leq C\nm{\nx\xi}_{W^{1,\frac{2m}{2m-1}}(\Omega)}\leq C\nm{\xi}_{W^{2,\frac{2m}{2m-1}}(\Omega)}\leq
C\nm{\bar
u}_{L^{2m}(\Omega)}^{2m-1}.
\end{align}
Using H\"older's inequality and \eqref{ln 10}, we obtain
\begin{align}\label{ln 11}
\abs{\e\int_{\Gamma}u\phi\ud{\gamma}}\leq &\abs{\e\int_{\Gamma^+}u\phi\ud{\gamma}}+\abs{\e\int_{\Gamma^-}u\phi\ud{\gamma}}\leq C\e\nm{\phi}_{L^{\frac{4m}{4m-3}}(\Gamma)}\bigg(\nm{u}_{L^{\frac{4m}{3}}(\Gamma^+)}+\nm{\g}_{L^{\frac{4m}{3}}(\Gamma^-)}\bigg)\\
\leq&C\e\nm{\nx\xi}_{L^{\frac{4m}{4m-3}}(\Gamma)}\bigg(\nm{u}_{L^{\frac{4m}{3}}(\Gamma^+)}+\nm{\g}_{L^{\frac{4m}{3}}(\Gamma^-)}\bigg)
\leq C\e\nm{\bar u}_{L^{2m}(\Omega)}^{2m-1}\bigg(\nm{u}_{L^{\frac{4m}{3}}(\Gamma^+)}+\nm{\g}_{L^{\frac{4m}{3}}(\Gamma^-)}\bigg).\no
\end{align}
Also, using H\"older's inequality and \eqref{ln 5}, we have
\begin{align}\label{ln 12}
\iint_{\Omega\times\s^2}(u-\bar u)\phi\leq
C\nm{\phi}_{L^2(\Omega\times\s^2)}\nm{u-\bar
u}_{L^2(\Omega\times\s^2)}\leq
C\nm{\bar
u}_{L^{2m}(\Omega)}^{2m-1}\nm{u-\bar
u}_{L^2(\Omega\times\s^2)},
\end{align}
and
\begin{align}\label{ln 13}
\iint_{\Omega\times\s^2}\ss\phi\leq C\nm{\phi}_{L^2(\Omega\times\s^2)}\nm{\ss}_{L^2(\Omega\times\s^2)}\leq C\nm{\bar
u}_{L^{2m}(\Omega)}^{2m-1}\nm{\ss}_{L^2(\Omega\times\s^2)}.
\end{align}
Collecting terms in \eqref{ln 8}, \eqref{ln 9}, \eqref{ln 11}, \eqref{ln 12} and \eqref{ln 13}, we obtain
\begin{align}
\e\nm{\bar u}_{L^{2m}(\Omega)}^{2m}\leq &C\Bigg(\e\nm{\bar
u}_{L^{2m}(\Omega)}^{2m-1}\nm{u-\bar
u}_{L^{2m}(\Omega\times\s^2)}+\e\nm{\bar u}_{L^{2m}(\Omega)}^{2m-1}\bigg(\nm{u}_{L^{\frac{4m}{3}}(\Gamma^+)}+\nm{\g}_{L^{\frac{4m}{3}}(\Gamma^-)}\bigg)\\
&+\nm{\bar
u}_{L^{2m}(\Omega)}^{2m-1}\nm{u-\bar
u}_{L^2(\Omega\times\s^2)}+\nm{\bar
u}_{L^{2m}(\Omega)}^{2m-1}\nm{\ss}_{L^2(\Omega\times\s^2)}\Bigg),\no
\end{align}
which further implies
\begin{align}\label{ln 14}
\\
\e\nm{\bar u}_{L^{2m}(\Omega)}&\leq
C\bigg(\e\nm{u-\bar
u}_{L^{2m}(\Omega\times\s^2)}+\nm{u-\bar
u}_{L^2(\Omega\times\s^2)}+\e\nm{u}_{L^{\frac{4m}{3}}(\Gamma^+)}+\nm{\ss}_{L^2(\Omega\times\s^2)}+\e\nm{\g}_{L^{\frac{4m}{3}}(\Gamma^-)}\bigg).\no
\end{align}
\ \\
Step 2: Energy Estimate.\\
Similar to the $L^2$ estimates, in the weak formulation \eqref{ln 1}, we may take
the test function $\phi=u$ to get the energy estimate
\begin{align}\label{ln 15}
&&\half\e\nm{u}^2_{L^2(\Gamma^+)}+\nm{u-\bar
u}_{L^2(\Omega\times\s^2)}^2= \iint_{\Omega\times\s^2}\ss u+\half\e\nm{\g}_{L^2(\Gamma^-)}^2.
\end{align}
On the other hand, we can square on both sides of
\eqref{ln 14} to obtain
\begin{align}\label{ln 16}
\\
\e^2\nm{\bar u}_{L^{2m}(\Omega)}^2\leq&
C\bigg(\e^2\nm{u-\bar
u}_{L^{2m}(\Omega\times\s^2)}^2+\nm{u-\bar
u}_{L^2(\Omega\times\s^2)}^2+\e^2\nm{u}_{L^{\frac{4m}{3}}(\Gamma^+)}+\nm{\ss}_{L^2(\Omega\times\s^2)}^2+\e^2\nm{\g}_{L^{\frac{4m}{3}}(\Gamma^-)}^2\bigg).\no
\end{align}
Multiplying \eqref{ln 16} by a sufficiently small constant and adding it to \eqref{ln 15} to absorb
$\nm{u-\bar u}_{L^2(\Omega\times\s^2)}^2$, we deduce
\begin{align}\label{ln 17}
&\e\nm{u}_{L^2(\Gamma^+)}^2+\e^2\nm{\bar
u}_{L^{2m}(\Omega)}^2+\nm{u-\bar
u}_{L^2(\Omega\times\s^2)}^2\\
\leq&
C\bigg(\e^2\nm{u-\bar
u}_{L^{2m}(\Omega\times\s^2)}^2+\e^2\nm{u}_{L^{\frac{4m}{3}}(\Gamma^+)}+\nm{\ss}_{L^2(\Omega\times\s^2)}^2+
\iint_{\Omega\times\s^2}\ss u+\e\nm{\g}_{L^2(\Gamma^-)}^2+\e^2\nm{\g}_{L^{\frac{4m}{3}}(\Gamma^-)}^2\bigg).\no
\end{align}
By interpolation estimates and Young's inequality, we have
\begin{align}\label{ln 18}
\nm{u}_{L^{\frac{4m}{3}}(\Gamma^+)}\leq&\nm{u}_{L^2(\Gamma^+)}^{\frac{3}{2m}}\nm{u}_{L^{\infty}(\Gamma^+)}^{\frac{2m-3}{2m}}
=\bigg(\frac{1}{\e^{\frac{6m-9}{4m^2}}}\nm{u}_{L^2(\Gamma^+)}^{\frac{3}{2m}}\bigg)
\bigg(\e^{\frac{6m-9}{4m^2}}\nm{u}_{L^{\infty}(\Gamma^+)}^{\frac{2m-3}{2m}}\bigg)\\
\leq&C\bigg(\frac{1}{\e^{\frac{6m-9}{4m^2}}}\nm{u}_{L^2(\Gamma^+)}^{\frac{3}{2m}}\bigg)^{\frac{2m}{3}}+o(1)
\bigg(\e^{\frac{6m-9}{4m^2}}\nm{u}_{L^{\infty}(\Gamma^+)}^{\frac{2m-3}{2m}}\bigg)^{\frac{2m}{2m-3}}\no\\
\leq&\frac{C}{\e^{\frac{2m-3}{2m}}}\nm{u}_{L^2(\Gamma^+)}+o(1)\e^{\frac{3}{2m}}\nm{u}_{L^{\infty}(\Gamma^+)}.\no
\end{align}
Similarly, we have
\begin{align}\label{ln 19}
\nm{u-\bar u}_{L^{2m}(\Omega\times\s^2)}\leq&\nm{u-\bar u}_{L^2(\Omega\times\s^2)}^{\frac{1}{m}}\nm{u-\bar u}_{L^{\infty}(\Omega\times\s^2)}^{\frac{m-1}{m}}\\
=&\bigg(\frac{1}{\e^{\frac{3m-3}{2m^2}}}\nm{u-\bar u}_{L^2(\Omega\times\s^2)}^{\frac{1}{m}}\bigg)\bigg(\e^{\frac{3m-3}{2m^2}}\nm{u-\bar u}_{L^{\infty}(\Omega\times\s^2)}^{\frac{m-1}{m}}\bigg)\no\\
\leq&C\bigg(\frac{1}{\e^{\frac{3m-3}{2m^2}}}\nm{u-\bar u}_{L^2(\Omega\times\s^2)}^{\frac{1}{m}}\bigg)^{m}+o(1)\bigg(\e^{\frac{3m-3}{2m^2}}\nm{u-\bar u}_{L^{\infty}(\Omega\times\s^2)}^{\frac{m-1}{m}}\bigg)^{\frac{m}{m-1}}\no\\
\leq&\frac{C}{\e^{\frac{3m-3}{2m}}}\nm{u-\bar u}_{L^2(\Omega\times\s^2)}+o(1)\e^{\frac{3}{2m}}\nm{u-\bar u}_{L^{\infty}(\Omega\times\s^2)}.\no
\end{align}
In \eqref{ln 18} and \eqref{ln 19}, we need this extra $\e^{\frac{3}{2m}}$ for the convenience of $L^{\infty}$ estimate.
Then we know for sufficiently small $\e$ and $1\leq m< 3$,
\begin{align}\label{ln 20}
\e^2\nm{u}_{L^{m}(\Gamma^+)}^2
\leq&C\e^{2-\frac{2m-3}{m}}\nm{u}_{L^2(\Gamma^+)}^2+o(1)\e^{2+\frac{3}{m}}\nm{u}_{L^{\infty}(\Gamma^+)}^2\\
\leq&o(1)\e^{\frac{3}{m}}\nm{u}_{L^2(\Gamma^+)}^2+o(1)\e^{2+\frac{3}{m}}\nm{u}_{L^{\infty}(\Gamma^+)}^2\no\\
\leq &o(1)\e\nm{u}_{L^2(\Gamma^+)}^2+o(1)\e^{2+\frac{3}{m}}\nm{u}_{L^{\infty}(\Gamma^+)}^2,\no
\end{align}
and
\begin{align}\label{ln 21}
\e^2\nm{u-\bar
u}_{L^{2m}(\Omega\times\s^2)}^2\leq&\e^{2-\frac{3m-3}{m}}\nm{u-\bar u}_{L^2(\Omega\times\s^2)}^2+o(1)\e^{2+\frac{3}{m}}\nm{u}_{L^{\infty}(\Omega\times\s^2)}^2\\
\leq& o(1)\e^{\frac{3}{m}-1}\nm{u-\bar u}_{L^2(\Omega\times\s^2)}^2+o(1)\e^{2+\frac{3}{m}}\nm{u}_{L^{\infty}(\Omega\times\s^2)}^2\no\\
\leq& o(1)\nm{u-\bar u}_{L^2(\Omega\times\s^2)}^2+o(1)\e^{2+\frac{3}{m}}\nm{u}_{L^{\infty}(\Omega\times\s^2)}^2.\no
\end{align}
Inserting \eqref{ln 20} and \eqref{ln 21} into \eqref{ln 17}, we can absorb $\nm{u-\bar u}_{L^2(\Omega\times\s^2)}$ and $\e\nm{u}_{L^2(\Gamma^+)}^2$ into the left-hand side to obtain
\begin{align}\label{ln 22}
&\e\nm{u}_{L^2(\Gamma^+)}^2+\e^2\nm{\bar
u}_{L^{2m}(\Omega\times\s^2)}^2+\nm{u-\bar
u}_{L^2(\Omega\times\s^2)}^2\\
\leq&
C\bigg(o(1)\e^{2+\frac{3}{m}}\Big(\nm{u}_{L^{\infty}(\Omega\times\s^2)}^2+\nm{u}_{L^{\infty}(\Gamma^+)}^2\Big)+\nm{\ss}_{L^2(\Omega\times\s^2)}^2+
\iint_{\Omega\times\s^2}\ss u+\e\nm{\g}_{L^2(\Gamma^-)}^2+\e^2\nm{\g}_{L^{\frac{4m}{3}}(\Gamma^-)}^2\bigg).\no
\end{align}
We decompose
\begin{align}
\iint_{\Omega\times\s^2}\ss u=\iint_{\Omega\times\s^2}\ss\bar u+\iint_{\Omega\times\s^2}\ss(u-\bar u).
\end{align}
H\"older's inequality and Cauchy's inequality imply
\begin{align}\label{ln 23}
\iint_{\Omega\times\s^2}\ss\bar u\leq\nm{\ss}_{L^{\frac{2m}{2m-1}}(\Omega\times\s^2)}\nm{\bar u}_{L^{2m}(\Omega\times\s^2)}
\leq\frac{C}{\e^{2}}\nm{\ss}_{L^{\frac{2m}{2m-1}}(\Omega\times\s^2)}^2+o(1)\e^2\nm{\bar u}_{L^{2m}(\Omega\times\s^2)}^2,
\end{align}
and
\begin{align}\label{ln 24}
\iint_{\Omega\times\s^2}\ss(u-\bar u)\leq C\nm{\ss}_{L^{2}(\Omega\times\s^2)}^2+o(1)\nm{u-\bar u}_{L^2(\Omega\times\s^2)}^2.
\end{align}
Hence, inserting \eqref{ln 23} and \eqref{ln 24} into \eqref{ln 22}, we can absorb $\e^2\nm{\bar u}_{L^{2m}(\Omega\times\s^2)}^2$ and $\nm{u-\bar u}_{L^2(\Omega\times\s^2)}^2$ into the left-hand side to get
\begin{align}
&\e\nm{u}_{L^2(\Gamma^+)}^2+\e^2\nm{\bar
u}_{L^{2m}(\Omega\times\s^2)}^2+\nm{u-\bar
u}_{L^2(\Omega\times\s^2)}^2\\
\leq&
C\bigg(o(1)\e^{2+\frac{3}{m}}\Big(\nm{u}_{L^{\infty}(\Omega\times\s^2)}^2+\nm{u}_{L^{\infty}(\Gamma^+)}^2\Big)+\nm{\ss}_{L^2(\Omega\times\s^2)}^2+
\frac{1}{\e^2}\nm{\ss}_{L^{\frac{2m}{2m-1}}(\Omega\times\s^2)}^2+\e\nm{\g}_{L^2(\Gamma^-)}^2+\e^2\nm{\g}_{L^{\frac{4m}{3}}(\Gamma^-)}^2\bigg),\no
\end{align}
which implies
\begin{align}
&\frac{1}{\e^{\frac{1}{2}}}\nm{u}_{L^2(\Gamma^+)}+\nm{
\bar u}_{L^{2m}(\Omega\times\s^2)}+\frac{1}{\e}\nm{u-\bar
u}_{L^2(\Omega\times\s^2)}\\
\leq&
C\bigg(o(1)\e^{\frac{3}{2m}}\Big(\nm{u}_{L^{\infty}(\Omega\times\s^2)}+\nm{u}_{L^{\infty}(\Gamma^+)}\Big)+\frac{1}{\e}\nm{\ss}_{L^2(\Omega\times\s^2)}+
\frac{1}{\e^2}\nm{\ss}_{L^{\frac{2m}{2m-1}}(\Omega\times\s^2)}+\frac{1}{\e^{\frac{1}{2}}}\nm{\g}_{L^2(\Gamma^-)}+\nm{\g}_{L^{\frac{4m}{3}}(\Gamma^-)}\bigg),\no
\end{align}

\end{proof}

\subsection{$L^{\infty}$ Estimate - Second Round}

\begin{theorem}\label{LI estimate}
The unique solution $u(\vx,\vw)$ to the equation \eqref{neutron} satisfies for integer $1\leq m< 3$,
\begin{align}
\nm{u}_{L^{\infty}(\Omega\times\s^2)}\leq& C\bigg(\frac{1}{\e^{1+\frac{3}{2m}}}\nm{\ss}_{L^{2}(\Gamma^-)}+
\frac{1}{\e^{2+\frac{3}{2m}}}\nm{\ss}_{L^{\frac{2m}{2m-1}}(\Omega\times\s^2)}+\nm{\ss}_{L^{\infty}(\Omega\times\s^2)}\\
&+\frac{1}{\e^{\frac{1}{2}+\frac{3}{2m}}}\nm{\g}_{L^2(\Gamma^-)}+\frac{1}{\e^{\frac{3}{2m}}}\nm{\g}_{L^{\frac{4m}{3}}(\Gamma^-)}+\nm{\g}_{L^{\infty}(\Gamma^-)}\bigg).\no
\end{align}
\end{theorem}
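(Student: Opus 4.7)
The plan is to follow essentially the same double-Duhamel / mild formulation argument used in Theorem \ref{LI estimate'}, but to swap the $L^2$--Cauchy--Schwarz step for an $L^{2m}$--H\"older step so that the resulting kernel estimate feeds directly into Theorem \ref{LN estimate} rather than Theorem \ref{LT estimate}. Concretely, I would write $u(\vx,\vw)$ via \eqref{li 2}, separate off the four boundary/source terms bounded by $\nm{\g}_{L^\infty(\Gamma^-)}$ and $\nm{\ss}_{L^\infty(\Omega\times\s^2)}$ exactly as in \eqref{li 3}--\eqref{li 6}, and split the remaining $\bar u$ triple integral as $I_1+I_2$ according to $0\le r\le\d$ and $r\ge\d$. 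The small-$r$ piece still satisfies $|I_1|\le C\d\nm{u}_{L^\infty(\Omega\times\s^2)}$, to be absorbed at the end.

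For $I_2$, I would perform the same change of variables $(\phi,\psi,r)\mapsto\vec y=\vx-\e s\vw-\e r\vw_t$ with Jacobian $|\partial\vec y/\partial(\phi,\psi,r)|=\e^3 r^2\sin\phi\ge\e^3\d^2\sin\phi$, but apply H\"older in $(\vw_t,r)$ with exponents $2m$ and $2m/(2m-1)$ instead of $2$ and $2$:
\begin{align}
\int_{\s^2}\!\!\int_{r\ge\d}|\bar u(\vec y)|\ue^{-r}\ud r\ud\vw_t
\le \left(\int_{\s^2}\!\!\int_{r\ge\d}\!{\bf 1}_{\Omega}|\bar u|^{2m}\ud r\ud\vw_t\right)^{\!\!\frac{1}{2m}}\!\!\!\left(\int_{\s^2}\!\!\int_{r\ge\d}\!\ue^{-\frac{2m}{2m-1}r}\ud r\ud\vw_t\right)^{\!\!\frac{2m-1}{2m}}\!\!.
\end{align}
The second factor is a universal constant, while changing variables on the first factor and using $\sin\phi/|J|\le 1/(\e^3\d^2)$ yields
\begin{align}
\int_{\s^2}\int_{r\ge\d}{\bf 1}_{\Omega}|\bar u|^{2m}\ud r\ud\vw_t\le \frac{1}{\e^3\d^2}\nm{\bar u}_{L^{2m}(\Omega)}^{2m},
\end{align}
so that $|I_2|\le C\e^{-3/(2m)}\d^{-1/m}\nm{\bar u}_{L^{2m}(\Omega)}$. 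Combining the pieces, taking the supremum over $(\vx,\vw)$ and fixing $\d$ small enough to absorb the $C\d\nm{u}_{L^\infty(\Omega\times\s^2)}$ contribution gives
\begin{align}
\nm{u}_{L^\infty(\Omega\times\s^2)}\le C\bigg(\frac{1}{\e^{3/(2m)}}\nm{\bar u}_{L^{2m}(\Omega\times\s^2)}+\nm{\ss}_{L^\infty(\Omega\times\s^2)}+\nm{\g}_{L^\infty(\Gamma^-)}\bigg).
\end{align}

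Finally, I would substitute the bound on $\nm{\bar u}_{L^{2m}(\Omega\times\s^2)}$ from Theorem \ref{LN estimate} into the right-hand side. This produces each of the claimed source-term and data-term scalings with exactly the prefactors $\e^{-1-3/(2m)}$, $\e^{-2-3/(2m)}$, $\e^{-1/2-3/(2m)}$, and $\e^{-3/(2m)}$, plus the two $L^\infty$ contributions. The $L^\infty$ trace $\nm{u}_{L^\infty(\Gamma^+)}$ appearing in Theorem \ref{LN estimate} is controlled by $\nm{u}_{L^\infty(\Omega\times\s^2)}$ since the mild formulation \eqref{li 2} applies equally well at outflow boundary points; together with the $o(1)\e^{3/(2m)}$ prefactor from Theorem \ref{LN estimate}, the corresponding $o(1)\nm{u}_{L^\infty(\Omega\times\s^2)}$ contribution is absorbed into the left-hand side, completing the estimate.

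The main obstacle is purely bookkeeping: one must track the powers of $\e$ and $\d$ carefully through the H\"older step, the change of variables, and the two successive absorptions (first of $C\d\nm{u}_{L^\infty}$, then of the $o(1)$ term produced by Theorem \ref{LN estimate}). The restriction $1\le m<3$ is inherited from Theorem \ref{LN estimate} through the Sobolev embedding $W^{2,\frac{2m}{2m-1}}(\Omega)\hookrightarrow H^1(\Omega)$ used there, so no new constraint appears here.
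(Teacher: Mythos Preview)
Your proposal is correct and follows essentially the same route as the paper: repeat the double-Duhamel argument of Theorem \ref{LI estimate'}, replace the Cauchy--Schwarz step on $I_2$ by H\"older with exponents $2m$ and $\tfrac{2m}{2m-1}$ to obtain $|I_2|\le C\e^{-3/(2m)}\d^{-1/m}\nm{\bar u}_{L^{2m}(\Omega)}$, absorb the $C\d\nm{u}_{L^\infty}$ term, and then feed in Theorem \ref{LN estimate}. The only cosmetic difference is in the final absorption: the paper separately takes suprema over $\Gamma^+$ and over $\Omega\times\s^2$ (obtaining two inequalities which are then substituted into one another), whereas you short-circuit this by noting that the pointwise bound \eqref{ctt 1} already holds on $\bar\Omega\times\s^2$, so $\nm{u}_{L^\infty(\Gamma^+)}$ satisfies the same estimate and the $o(1)\e^{3/(2m)}\cdot\e^{-3/(2m)}\big(\nm{u}_{L^\infty(\Omega\times\s^2)}+\nm{u}_{L^\infty(\Gamma^+)}\big)$ term from Theorem \ref{LN estimate} can be absorbed in one pass---this is a legitimate streamlining.
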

\begin{proof}
Following the argument in the proof of Theorem \eqref{LI estimate'}, Step 1, Step 2, and the estimate of $I_1$ in Step 3 are identical. We focus on the estimate of $I_2$. Similar to \eqref{li 9}, we apply H\"{o}lder's inequality to obtain
\begin{align}\label{li 1'}
\abs{I_2}\leq&C\int_{0}^{t_b}\Bigg(\bigg(\int_{\s^2}\int_{r\geq\delta}{\bf{1}}_{\Omega}\Big(\vx-\e s\vw-\e
r\vw_t\Big)\abs{\bar u\Big(\vx-\e s\vw-\e
r\vw_t\Big)}^{2m}\ud{r}\ud{\vw_t}\bigg)^{\frac{1}{2m}}\\
&\times\bigg(\int_{\s^2}\int_{r\geq\delta}{\bf{1}}_{\Omega}\Big(\vx-\e s\vw-\e
r\vw_t\Big)\ue^{-2r}\ud{r}\ud{\vw_t}\bigg)^{\frac{2m-1}{2m}}\Bigg)\ue^{-s}\ud{s}\no\\
\leq&C\int_{0}^{t_b}\Bigg(\bigg(\int_{\s^2}\int_{r\geq\delta}{\bf{1}}_{\Omega}\Big(\vx-\e s\vw-\e
r\vw_t\Big)\abs{\bar u\Big(\vx-\e s\vw-\e
r\vw_t\Big)}^{2m}\ud{r}\ud{\vw_t}\bigg)^{\frac{1}{2m}}\Bigg)\ue^{-s}\ud{s}.\no
\end{align}
Then, using the same substitution \eqref{li 11}, we may simplify \eqref{li 1'} as
\begin{align}
\abs{I_2}\leq&C\int_{0}^{t_b}\bigg(\int_{\Omega}\frac{1}{\e^3\d^2}\abs{\bar u(\vec
y)}^{2m}\ud{\vec y}\bigg)^{\frac{1}{2m}}\ue^{-s}\ud{s}\\
\leq&\frac{C}{\e^{\frac{3}{2m}}\d^{\frac{1}{m}}}\int_{0}^{t_b}\bigg(\int_{\Omega}\abs{\bar u(\vec
y)}^{2m}\ud{\vec y}\bigg)^{\frac{1}{2m}}\ue^{-s}\ud{s}
\leq\frac{C}{\e^{\frac{3}{2m}}\d^{\frac{1}{m}}}\nm{\bar u}_{L^{2m}(\Omega)}.\no
\end{align}
In summary, collecting \eqref{li 3}, \eqref{li 4}, \eqref{li 5}, \eqref{li 6}, \eqref{li 7}, and \eqref{li 1'}, for any $(\vx,\vw)\in\bar\Omega\times\s^2$, we have
\begin{align}\label{ctt 1}
\abs{u(\vx,\vw)}\leq C\bigg(\delta
\nm{u}_{L^{\infty}(\Omega\times\s^2)}+\frac{1}{\e^{\frac{3}{2m}}\d^{\frac{1}{m}}}\nm{
\bar u}_{L^{2m}(\Omega\times\s^2)}+
\nm{\ss}_{L^{\infty}(\Omega\times\s^2)}+\nm{\g}_{L^{\infty}(\Gamma^-)}\bigg).
\end{align}
Let $\d$ be sufficiently small such that $C\d\leq \dfrac{1}{2}$. Taking supremum over $(\vx,\vw)\in\Gamma^+$ and using Theorem \ref{LN estimate}, we have
\begin{align}
\nm{u}_{L^{\infty}(\Gamma^+)}\leq& \frac{1}{2}
\nm{u}_{L^{\infty}(\Omega\times\s^2)}+C\bigg(o(1)\Big(\nm{u}_{L^{\infty}(\Omega\times\s^2)}+\nm{u}_{L^{\infty}(\Gamma^+)}\Big)\\
&+\frac{1}{\e^{1+\frac{3}{2m}}}\nm{\ss}_{L^{2}(\Gamma^-)}+
\frac{1}{\e^{2+\frac{3}{2m}}}\nm{\ss}_{L^{\frac{2m}{2m-1}}(\Omega\times\s^2)}+\nm{\ss}_{L^{\infty}(\Omega\times\s^2)}\no\\
&+\frac{1}{\e^{\frac{1}{2}+\frac{3}{2m}}}\nm{\g}_{L^2(\Gamma^-)}+\frac{1}{\e^{\frac{3}{2m}}}\nm{\g}_{L^{\frac{4m}{3}}(\Gamma^-)}+\nm{\g}_{L^{\infty}(\Gamma^-)}\bigg)
.\no
\end{align}
Absorbing $o(1)\nm{u}_{L^{\infty}(\Gamma^+)}$ into the left-hand side, we obtain
\begin{align}\label{ctt 2}
\nm{u}_{L^{\infty}(\Gamma^+)}\leq& \frac{1}{2}
\nm{u}_{L^{\infty}(\Omega\times\s^2)}+C\bigg(o(1)\nm{u}_{L^{\infty}(\Omega\times\s^2)}\\
&+\frac{1}{\e^{1+\frac{3}{2m}}}\nm{\ss}_{L^{2}(\Gamma^-)}+
\frac{1}{\e^{2+\frac{3}{2m}}}\nm{\ss}_{L^{\frac{2m}{2m-1}}(\Omega\times\s^2)}+\nm{\ss}_{L^{\infty}(\Omega\times\s^2)}\no\\
&+\frac{1}{\e^{\frac{1}{2}+\frac{3}{2m}}}\nm{\g}_{L^2(\Gamma^-)}+\frac{1}{\e^{\frac{3}{2m}}}\nm{\g}_{L^{\frac{4m}{3}}(\Gamma^-)}+\nm{\g}_{L^{\infty}(\Gamma^-)}\bigg)
.\no
\end{align}
Taking supremum over $(\vx,\vw)\in\Omega\times\s^2$ and using Theorem \ref{LN estimate}, we have
\begin{align}\label{ctt 3}
\nm{u}_{L^{\infty}(\Omega\times\s^2)}\leq& \frac{1}{2}
\nm{u}_{L^{\infty}(\Omega\times\s^2)}+C\bigg(o(1)\Big(\nm{u}_{L^{\infty}(\Omega\times\s^2)}+\nm{u}_{L^{\infty}(\Gamma^+)}\Big)\\
&+\frac{1}{\e^{1+\frac{3}{2m}}}\nm{\ss}_{L^{2}(\Gamma^-)}+
\frac{1}{\e^{2+\frac{3}{2m}}}\nm{\ss}_{L^{\frac{2m}{2m-1}}(\Omega\times\s^2)}+\nm{\ss}_{L^{\infty}(\Omega\times\s^2)}\no\\
&+\frac{1}{\e^{\frac{1}{2}+\frac{3}{2m}}}\nm{\g}_{L^2(\Gamma^-)}+\frac{1}{\e^{\frac{3}{2m}}}\nm{\g}_{L^{\frac{4m}{3}}(\Gamma^-)}+\nm{\g}_{L^{\infty}(\Gamma^-)}\bigg)
.\no
\end{align}
Inserting (\ref{ctt 2}) into (\ref{ctt 3}), we obtain
\begin{align}
\nm{u}_{L^{\infty}(\Omega\times\s^2)}\leq& \frac{1}{2}
\nm{u}_{L^{\infty}(\Omega\times\s^2)}+C\bigg(o(1)\nm{u}_{L^{\infty}(\Omega\times\s^2)}\\
&+\frac{1}{\e^{1+\frac{3}{2m}}}\nm{\ss}_{L^{2}(\Gamma^-)}+
\frac{1}{\e^{2+\frac{3}{2m}}}\nm{\ss}_{L^{\frac{2m}{2m-1}}(\Omega\times\s^2)}+\nm{\ss}_{L^{\infty}(\Omega\times\s^2)}\no\\
&+\frac{1}{\e^{\frac{1}{2}+\frac{3}{2m}}}\nm{\g}_{L^2(\Gamma^-)}+\frac{1}{\e^{\frac{3}{2m}}}\nm{\g}_{L^{\frac{4m}{3}}(\Gamma^-)}+\nm{\g}_{L^{\infty}(\Gamma^-)}\bigg)
.\no
\end{align}
Then absorbing $\dfrac{1}{2}
\nm{u}_{L^{\infty}(\Omega\times\s^2)}$ and $o(1)\nm{u}_{L^{\infty}(\Omega\times\s^2)}$ into the left-hand side, we get
\begin{align}
\nm{u}_{L^{\infty}(\Omega\times\s^2)}\leq& C\bigg(\frac{1}{\e^{1+\frac{3}{2m}}}\nm{\ss}_{L^{2}(\Gamma^-)}+
\frac{1}{\e^{2+\frac{3}{2m}}}\nm{\ss}_{L^{\frac{2m}{2m-1}}(\Omega\times\s^2)}+\nm{\ss}_{L^{\infty}(\Omega\times\s^2)}\\
&+\frac{1}{\e^{\frac{1}{2}+\frac{3}{2m}}}\nm{\g}_{L^2(\Gamma^-)}+\frac{1}{\e^{\frac{3}{2m}}}\nm{\g}_{L^{\frac{4m}{3}}(\Gamma^-)}+\nm{\g}_{L^{\infty}(\Gamma^-)}\bigg).\no
\end{align}

\end{proof}

\section{Diffusive Limit}

\subsection{Analysis of Regular Boundary Layer}

In this subsection, we will justify that the regular boundary layers are all well-defined for $0<\alpha\leq 1$. In the following, we will not show $\e$, $\iota_i$ and $\psi$ dependence when there is no confusion.\\
\ \\
Step 1: Well-Posedness of $\ub_0$.\\
Based on \eqref{et 1}, $\ub_0$ satisfies the $\e$-Milne problem with geometric correction
\begin{align}\label{dte 1}
\left\{
\begin{array}{l}
\sin\phi\dfrac{\p\ub_0 }{\p\eta}+F(\eta)\cos\phi\dfrac{\p
\ub_0 }{\p\phi}+\ub_0 -\bub_0 =0,\\\rule{0ex}{2em}
\ub_0 (0,\phi)=\gb(\phi)-\mathscr{F}_{0,L}\ \ \text{for}\ \
\sin\phi>0,\\\rule{0ex}{2em}
\ub_0 (L,\phi)=\ub_0 (L,\rr[\phi]),
\end{array}
\right.
\end{align}
where
\begin{align}
F(\eta)=-\e\bigg(\dfrac{\sin^2\psi}{R_1-\e\eta}+\dfrac{\cos^2\psi}{R_2-\e\eta}\bigg).
\end{align}
Therefore, since $\lnmp{\gb}\leq C$, applying Theorem \ref{Milne theorem 2} to the equation \eqref{dte 1}, we know
\begin{align}
\lnnm{\ue^{K_0\eta}\ub_0}\leq C.
\end{align}
\ \\
Step 2: Tangential and Velocity Derivatives of $\ub_0$.\\
For $i=1,2$, the $\iota_i$ derivative $W_i=\dfrac{\p\ub_0}{\p\iota_i}$ satisfies
\begin{align}\label{dte 2}
\left\{
\begin{array}{l}
\sin\phi\dfrac{\p W_i}{\p\eta}+F(\eta)\cos\phi\dfrac{\p W_i}{\p\phi}+W_i-\bar W_i=-\e\left(\dfrac{\p_{\iota_i}R_1\sin^2\psi}{(R_1-\e\eta)^2}+\dfrac{\p_{\iota_i}R_2\cos^2\psi}{(R_2-\e\eta)^2}\right)
\cos\phi\dfrac{\p \ub_0}{\p\phi},\\\rule{0ex}{2em}
W_i (0,\phi)=\dfrac{\p\gb}{\p\iota_i}(\phi)-\dfrac{\p\mathscr{F}_{0,L}}{\p\iota_i}\ \ \text{for}\ \
\sin\phi>0,\\\rule{0ex}{2em}
W_i (L,\phi)=W_i (L,\rr[\phi]).
\end{array}
\right.
\end{align}
Since
\begin{align}\label{dt 4}
\abs{\dfrac{\p_{\iota_i}R_1\sin^2\psi}{(R_1-\e\eta)^2}+\dfrac{\p_{\iota_i}R_2\cos^2\psi}{(R_2-\e\eta)^2}}
\leq&\max\left\{\abs{\frac{\p_{\iota_i}R_1}{R_1-\e\eta}},\abs{\frac{\p_{\iota_i}R_2}{R_2-\e\eta}}\right\}
\abs{\dfrac{\sin^2\psi}{R_1-\e\eta}+\dfrac{\cos^2\psi}{R_2-\e\eta}}\\
\leq&C\abs{\dfrac{\sin^2\psi}{R_1-\e\eta}+\dfrac{\cos^2\psi}{R_2-\e\eta}},\no
\end{align}
we have
\begin{align}
\abs{-\e\left(\dfrac{\p_{\iota_i}R_1\sin^2\psi}{(R_1-\e\eta)^2}+\dfrac{\p_{\iota_i}R_2\cos^2\psi}{(R_2-\e\eta)^2}\right)
\cos\phi\dfrac{\p \ub_0}{\p\phi}}\leq C\abs{F(\eta)\cos\phi\frac{\p\ub_0}{\p\phi}}.
\end{align}
Applying Theorem \ref{pt theorem 2} to the equation \eqref{dte 1}, we know
\begin{align}\label{dt 1}
\\
\lnnm{\ue^{K_0\eta}F(\eta)\cos\phi\frac{\p\ub_0}{\p\phi}}
\leq&C\abs{\ln(\e)}^8\bigg(\lnmp{\gb}+\lnmp{\e\dfrac{\p\gb}{\p\phi}}+\lnnm{\ue^{K_0\eta}\ub_0}\bigg)\leq C\abs{\ln(\e)}^8.\no
\end{align}
Note that here although $\lnm{\dfrac{\p\gb}{\p\phi}}\leq C\e^{-\alpha}$, with the help of $\e$, we can get rid of this negative power. Therefore, applying Theorem \ref{Milne theorem 2} to the equation \eqref{dte 2}, we have
\begin{align}\label{dt 2}
\lnnm{\ue^{K_0\eta}W_i}\leq C\abs{\ln(\e)}^8.
\end{align}
\ \\
Step 3: Velocity Derivatives of $\ub_0$.\\
The $\psi$ derivative $H=\dfrac{\p\ub_0}{\p\psi}$ satisfies
\begin{align}\label{dte 3}
\left\{
\begin{array}{l}
\sin\phi\dfrac{\p H}{\p\eta}+F(\eta)\cos\phi\dfrac{\p H}{\p\phi}+H=\e\bigg(\dfrac{2\sin\psi\cos\psi}{R_1-\e\eta}-\dfrac{2\sin\psi\cos\psi}{R_2-\e\eta}\bigg)\cos\phi\dfrac{\p \ub_0}{\p\phi},\\\rule{0ex}{2em}
H(0,\phi)=\dfrac{\p\gb}{\p\psi}(\phi)\ \ \text{for}\ \
\sin\phi>0,\\\rule{0ex}{2em}
H(L,\phi)=H(L,\rr[\phi]).
\end{array}
\right.
\end{align}
Since
\begin{align}
\abs{\dfrac{2\sin\psi\cos\psi}{R_1-\e\eta}-\dfrac{2\sin\psi\cos\psi}{R_2-\e\eta}}
\leq&C\abs{\dfrac{1}{\max\{R_1,R_2\}-\e\eta}}
\leq C\abs{\dfrac{\sin^2\psi}{R_1-\e\eta}+\dfrac{\cos^2\psi}{R_2-\e\eta}},
\end{align}
we have
\begin{align}\label{dt 11}
\abs{\e\bigg(\dfrac{2\sin\psi\cos\psi}{R_1-\e\eta}-\dfrac{2\sin\psi\cos\psi}{R_2-\e\eta}\bigg)\cos\phi\dfrac{\p \ub_0}{\p\phi}}\leq C\abs{F(\eta)\cos\phi\frac{\p\ub_0}{\p\phi}}.
\end{align}
Therefore, using \eqref{dt 1}, applying Theorem \ref{Milne theorem 2'} to the equation \eqref{dte 3} (by the construction of $\ub_0$, $H$ must satisfy the requirement of Theorem \ref{Milne theorem 2'}), we have
\begin{align}\label{dt 3}
\lnnm{\ue^{K_0\eta}H}\leq C\abs{\ln(\e)}^8.
\end{align}
\ \\
Step 4: Well-Posedness of $\ub_1$.\\
$\ub_1$ satisfies the $\e$-Milne problem with geometric correction
\begin{align}\label{dte 4}
\left\{
\begin{array}{l}
\sin\phi\dfrac{\p \ub_1 }{\p\eta}+F(\eta)\cos\phi\dfrac{\p
\ub_1 }{\p\phi}+\ub_1 -\bub_1 =G[\ub_0],\\\rule{0ex}{2em}
\ub_1 (0,\phi)=\vw\cdot\nx\u_0(\vx_0,\vw)-\mathscr{F} _{1,L}\ \ \text{for}\ \
\sin\phi>0,\\\rule{0ex}{2em}
\ub_1 (L,\phi)=\ub_1 (L,\rr[\phi]),
\end{array}
\right.
\end{align}
where
\begin{align}
G[\ub_0]=&\bigg(\dfrac{\cos\phi\sin\psi}{P_1(1-\e\kk_1\eta)}W_1+\dfrac{\cos\phi\cos\psi}{P_2(1-\e\kk_2\eta)}W_2\bigg)\\
&+\Bigg(\dfrac{\sin\psi}{1-\e\kk_1\eta}\bigg(\cos\phi\Big(\vt_1\cdot\Big(\vt_2\times(\p_{12}\vr\times\vt_2)\Big)\Big)
-\kk_1P_1P_2\sin\phi\cos\psi\bigg)\no\\
&+\dfrac{\cos\psi}{1-\e\kk_2\eta}\bigg(-\cos\phi\Big(\vt_2\cdot\Big(\vt_1\times(\p_{12}\vr\times\vt_1)\Big)\Big)
+\kk_2P_1P_2\sin\phi\sin\psi\bigg)\Bigg)\dfrac{1}{P_1P_2}H\no\\
=&G_{W_1}W_1+G_{W_2}W_2+G_HH.\no
\end{align}
Here we use $G_{W_1}$, $G_{W_2}$ and $G_H$ to denote the quantities in front of $W_1$, $W_2$ and $H$. We may directly bound
\begin{align}
\abs{G_{W_1}}+\abs{G_{W_2}}+\abs{G_H}\leq C.
\end{align}
Using \eqref{dt 2} and \eqref{dt 3}, we have
\begin{align}
\lnnm{\ue^{K_0\eta}G[\ub_0]}\leq C\abs{\ln(\e)}^8.
\end{align}
Therefore, applying Theorem \ref{Milne theorem 2} to the equation \eqref{dte 4}, we know
\begin{align}
\lnnm{\ue^{K_0\eta}\ub_1}\leq C\abs{\ln(\e)}^8.
\end{align}
\ \\
Step 5: Tangential and Velocity Derivatives of $\ub_1$.\\
For $j=1,2$, the $\iota_j$ derivative $V_j=\dfrac{\p\ub_1}{\p\iota_j}$ satisfies
\begin{align}
\left\{
\begin{array}{l}
\sin\phi\dfrac{\p V_j}{\p\eta}+F(\eta)\cos\phi\dfrac{\p V_j}{\p\phi}+V_j-\bar V_j=S_1+S_2+S_3,\\\rule{0ex}{2em}
V_j(0,\phi)=\dfrac{\p}{\p\iota_j}\bigg(\vw\cdot\nx\u_0(\vx_0,\vw)-\mathscr{F} _{1,L}\bigg)\ \ \text{for}\ \
\sin\phi>0,\\\rule{0ex}{2em}
V_j(L,\phi)=V_j(L,\rr[\phi]),
\end{array}
\right.
\end{align}
where
\begin{align}
S_1=&-\e\left(\dfrac{\p_{\iota_j}R_1\sin^2\psi}{(R_1-\e\eta)^2}+\dfrac{\p_{\iota_j}R_2\cos^2\psi}{(R_2-\e\eta)^2}\right)
\cos\phi\dfrac{\p \ub_1}{\p\phi},\\
S_2=&\frac{\p G_{W_1}}{\p\iota_j}W_1+\frac{\p G_{W_2}}{\p\iota_j}W_2+\frac{\p G_{H}}{\p\iota_j}H,\\
S_3=&G_{W_1}\frac{\p W_1}{\p\iota_j}+G_{W_2}\frac{\p W_2}{\p\iota_j}+G_H\frac{\p H}{\p\iota_j}.
\end{align}
Using \eqref{dt 4}, we know
\begin{align}\label{dt 5}
\lnnm{\ue^{K_0\eta}S_1}\leq&C\lnnm{\ue^{K_0\eta}F(\eta)\cos\phi\frac{\p \ub_1}{\p\phi}}.
\end{align}
Using \eqref{dt 2} and \eqref{dt 3}, applying Theorem \ref{pt theorem 2} to the equation \eqref{dte 4}, we have
\begin{align}\label{dt 6}
&\lnnm{\ue^{K_0\eta}F(\eta)\cos\phi\frac{\p \ub_1}{\p\phi}}\\
\leq& C\bigg(\lnnm{\ue^{K_0\eta}\Big(G_{W_1}W_1+G_{W_2}W_2+G_HH\Big)}+\lnnm{\ue^{K_0\eta}\zeta\frac{\p}{\p\eta}\Big(G_{W_1}W_1+G_{W_2}W_2+G_HH\Big)}\bigg)\no\\
\leq& C\bigg(\lnnm{\ue^{K_0\eta}W_1}+\lnnm{\ue^{K_0\eta}W_2}+\lnnm{\ue^{K_0\eta}H}\no\\
&+\lnnm{\ue^{K_0\eta}\zeta\frac{\p W_1}{\p\eta}}+\lnnm{\ue^{K_0\eta}\zeta\frac{\p W_2}{\p\eta}}+\lnnm{\ue^{K_0\eta}\zeta\frac{\p H}{\p\eta}}\bigg)\no\\
\leq& C\bigg(1+\lnnm{\ue^{K_0\eta}\zeta\frac{\p W_1}{\p\eta}}+\lnnm{\ue^{K_0\eta}\zeta\frac{\p W_2}{\p\eta}}+\lnnm{\ue^{K_0\eta}\zeta\frac{\p H}{\p\eta}}\bigg).\no
\end{align}
Combining \eqref{dt 5} and \eqref{dt 6}, we know
\begin{align}\label{dt 7}
\lnnm{\ue^{K_0\eta}S_1}\leq&C\bigg(1+\lnnm{\ue^{K_0\eta}\zeta\frac{\p W_1}{\p\eta}}+\lnnm{\ue^{K_0\eta}\zeta\frac{\p W_2}{\p\eta}}+\lnnm{\ue^{K_0\eta}\zeta\frac{\p H}{\p\eta}}\bigg).
\end{align}
On the other hand, using \eqref{dt 2} and \eqref{dt 3}, we directly estimate
\begin{align}\label{dt 8}
\lnnm{\ue^{K_0\eta}S_2}\leq&C\bigg(\lnnm{\ue^{K_0\eta}W_1}+\lnnm{\ue^{K_0\eta}W_2}+\lnnm{\ue^{K_0\eta}H}\bigg)\leq C.
\end{align}
Also, we know
\begin{align}\label{dt 9}
\lnnm{\ue^{K_0\eta}S_3}\leq&C\bigg(\lnnm{\ue^{K_0\eta}\frac{\p W_1}{\p\iota_j}}+\lnnm{\ue^{K_0\eta}\frac{\p W_2}{\p\iota_j}}+\lnnm{\ue^{K_0\eta}\frac{\p H}{\p\iota_j}}\bigg).
\end{align}
Summarizing \eqref{dt 7}, \eqref{dt 8} and \eqref{dt 9} and applying Theorem \ref{Milne theorem 2}, we know
\begin{align}\label{dt 10}
\lnnm{\ue^{K_0\eta}V_j}\leq&\lnnm{\ue^{K_0\eta}S_1}+\lnnm{\ue^{K_0\eta}S_2}+\lnnm{\ue^{K_0\eta}S_3}\\
\leq&C\bigg(1+\lnnm{\ue^{K_0\eta}\zeta\frac{\p W_1}{\p\eta}}+\lnnm{\ue^{K_0\eta}\zeta\frac{\p W_2}{\p\eta}}+\lnnm{\ue^{K_0\eta}\zeta\frac{\p H}{\p\eta}}\no\\
&+\lnnm{\ue^{K_0\eta}\frac{\p W_1}{\p\iota_j}}+\lnnm{\ue^{K_0\eta}\frac{\p W_2}{\p\iota_j}}+\lnnm{\ue^{K_0\eta}\frac{\p H}{\p\iota_j}}\bigg).\no
\end{align}
Therefore, in order to bound $V_j$, we need the $\eta$ and $\iota_i$ derivative estimates of $W_i$ and $H$.\\
\ \\
Using a similar argument, we obtain the estimate of $\psi$ derivative $M=\dfrac{\p\ub_1}{\p\psi}$:
\begin{align}\label{dt 15}
\lnnm{\ue^{K_0\eta}M}\leq&C\bigg(1+\lnnm{\ue^{K_0\eta}\zeta\frac{\p W_1}{\p\eta}}+\lnnm{\ue^{K_0\eta}\zeta\frac{\p W_2}{\p\eta}}+\lnnm{\ue^{K_0\eta}\zeta\frac{\p H}{\p\eta}}\\
&+\lnnm{\ue^{K_0\eta}\frac{\p W_1}{\p\psi}}+\lnnm{\ue^{K_0\eta}\frac{\p W_2}{\p\psi}}+\lnnm{\ue^{K_0\eta}\frac{\p H}{\p\psi}}\bigg).\no
\end{align}
Therefore, in order to bound $M$, we need the $\eta$ and $\psi$ derivative estimates of $W_i$ and $H$.\\
\ \\
Step 6: Tangential and Velocity Derivatives of $W_i$ and $H$.\\
The $\iota_j$ derivative $\Theta_{ij}=\dfrac{\p W_i}{\p\iota_j}$ satisfies
\begin{align}\label{dte 6}
\left\{
\begin{array}{l}
\sin\phi\dfrac{\p \Theta_{ij}}{\p\eta}+F(\eta)\cos\phi\dfrac{\p \Theta_{ij}}{\p\phi}+\Theta_{ij}-\bar \Theta_{ij}=T_1+T_2,\\\rule{0ex}{2em}
\Theta_{ij} (0,\phi)=\dfrac{\p^2\gb}{\p\iota_i\p\iota_j}(\phi)-\dfrac{\p^2\mathscr{F}_0}{\p\iota_i\p\iota_j}\ \ \text{for}\ \
\sin\phi>0,\\\rule{0ex}{2em}
\Theta_{ij} (L,\phi)=\Theta_{ij} (L,\rr[\phi]),
\end{array}
\right.
\end{align}
where
\begin{align}
T_1=&-\e\left(\dfrac{\p_{\iota_j}R_1\sin^2\psi}{(R_1-\e\eta)^2}+\dfrac{\p_{\iota_j}R_2\cos^2\psi}{(R_2-\e\eta)^2}\right)
\cos\phi\bigg(\dfrac{\p W_i}{\p\phi}+\dfrac{\p W_j}{\p\phi}\bigg),\\
T_2=&-\e\dfrac{\p}{\p\iota_j}\left(\dfrac{\p_{\iota_i}R_1\sin^2\psi}{(R_1-\e\eta)^2}+\dfrac{\p_{\iota_i}R_2\cos^2\psi}{(R_2-\e\eta)^2}\right)\cos\phi\dfrac{\p\ub_0}{\p\phi}.
\end{align}
Using \eqref{dt 4}, we have
\begin{align}\label{dt 16}
\lnnm{\ue^{K_0\eta}T_1}\leq&C\lnnm{\ue^{K_0\eta}F(\eta)\cos\phi\dfrac{\p W_i}{\p\phi}}.
\end{align}
By a similar argument as \eqref{dt 4}, using \eqref{dt 1}, we know
\begin{align}\label{dt 17}
\lnnm{\ue^{K_0\eta}T_2}\leq&C\lnnm{F(\eta)\cos\phi\dfrac{\p\ub_0}{\p\phi}}\leq C\abs{\ln(\e)}^8.
\end{align}
Applying Theorem \ref{Milne theorem 2} to the equation \eqref{dte 6}, using \eqref{dt 16} and \eqref{dt 17}, we have
\begin{align}\label{dt 18}
\lnnm{\ue^{K_0\eta}\Theta_{ij}}\leq& C\bigg(\lnnm{\ue^{K_0\eta}T_1}+\lnnm{\ue^{K_0\eta}T_2}\bigg)\\
\leq& C\abs{\ln(\e)}^8+C\lnnm{\ue^{K_0\eta}F(\eta)\cos\phi\dfrac{\p W_i}{\p\phi}}.\no
\end{align}
\ \\
Using a similar argument, we obtain the estimate of $\psi$ derivative $\Lambda_i=\dfrac{\p W_i}{\p\psi}$:
\begin{align}\label{dt 19}
\lnnm{\ue^{K_0\eta}\Lambda_i}\leq  C\abs{\ln(\e)}^8+C\bigg(\lnnm{\ue^{K_0\eta}F(\eta)\cos\phi\dfrac{\p W_i}{\p\phi}}+\lnnm{\ue^{K_0\eta}F(\eta)\cos\phi\dfrac{\p H}{\p\phi}}\bigg).
\end{align}
Using a similar argument with Theorem \ref{Milne theorem 2'}, we obtain the estimate of $\iota_j$ derivative $\Upsilon_j=\dfrac{\p H}{\p\iota_j}$:
\begin{align}\label{dt 20}
\lnnm{\ue^{K_0\eta}\Upsilon_j}\leq  C\abs{\ln(\e)}^8+C\bigg(\lnnm{\ue^{K_0\eta}F(\eta)\cos\phi\dfrac{\p W_j}{\p\phi}}+\lnnm{\ue^{K_0\eta}F(\eta)\cos\phi\dfrac{\p H}{\p\phi}}\bigg).
\end{align}
Using a similar argument with Theorem \ref{Milne theorem 2'}, we obtain the estimate of $\psi$ derivative $\Xi=\dfrac{\p H}{\p\psi}$:
\begin{align}\label{dt 21}
\lnnm{\ue^{K_0\eta}\Xi}\leq  C\abs{\ln(\e)}^8+C\lnnm{\ue^{K_0\eta}F(\eta)\cos\phi\dfrac{\p H}{\p\phi}}.
\end{align}
\ \\
Inserting \eqref{dt 18}, \eqref{dt 19}, \eqref{dt 20}, \eqref{dt 21} into \eqref{dt 10} and \eqref{dt 15}, we know
\begin{align}\label{dt 29}
&\lnnm{\ue^{K_0\eta}V_j}+\lnnm{\ue^{K_0\eta}M}\\
\leq&C\bigg(1+\lnnm{\ue^{K_0\eta}\zeta\frac{\p W_1}{\p\eta}}+\lnnm{\ue^{K_0\eta}\zeta\frac{\p W_2}{\p\eta}}+\lnnm{\ue^{K_0\eta}\zeta\frac{\p H}{\p\eta}}\no\\
&+\lnnm{\ue^{K_0\eta}F(\eta)\cos\phi\dfrac{\p W_1}{\p\phi}}+\lnnm{\ue^{K_0\eta}F(\eta)\cos\phi\dfrac{\p W_2}{\p\phi}}+\lnnm{\ue^{K_0\eta}F(\eta)\cos\phi\dfrac{\p H}{\p\phi}}\bigg).\no
\end{align}
Hence, we need the regularity estimate of $W_i$ and $H$. However, this cannot be done directly. We will first study the normal derivative of $\ub_0$.\\
\ \\
Step 7: Regularity of Normal Derivative.\\
The normal derivative $A=\dfrac{\p\ub_0}{\p\eta}$ satisfies
\begin{align}\label{dte 7}
\left\{
\begin{array}{l}
\sin\phi\dfrac{\p A}{\p\eta}+F(\eta)\cos\phi\dfrac{\p A}{\p\phi}+A-\bar A=\dfrac{\p F}{\p\eta}\cos\phi\dfrac{\p\ub_0}{\p\phi},\\\rule{0ex}{2em}
A (0,\phi)=\dfrac{1}{\sin\phi}\bigg(F(\eta)\cos\phi\dfrac{\p\gb}{\p\phi}(\phi)-\gb(0,\phi)+\bar\ub_0(0,\phi)\bigg)\ \ \text{for}\ \
\sin\phi>0,\\\rule{0ex}{2em}
A (L,\phi)=A (L,\rr[\phi]).
\end{array}
\right.
\end{align}
This is where the cut-off in $\gb$ plays a role. For $0<\phi<\e^{\alpha}$, we know $A(0,\phi)=0$. Here note the fact that $\abs{\dfrac{\p F}{\p\eta}}\leq C\e \abs{F}$. \\
\ \\
Based on the construction of $\gb$, we know $\lnmp{\dfrac{\p\gb}{\p\phi}}\leq C\e^{-\alpha}$ and $\abs{F(\eta)}\leq C\e$. Hence, we have $\lnmp{A(0,\phi)}\leq C\e^{-\alpha}$ and $\lnmp{\e\dfrac{\p A}{\p\phi}(0,\phi)}\leq C\e^{-\alpha}$. Therefore, applying Theorem \ref{Milne theorem 2} to the equation \eqref{dte 7} and using \eqref{dt 1}, we have
\begin{align}\label{dt 22}
\lnnm{\ue^{K_0\eta}A}\leq&C\bigg(\lnmp{A(0,\phi)}+\lnnm{\ue^{K_0\eta}F(\eta)\cos\phi\dfrac{\p\ub_0}{\p\phi}}\bigg)\leq C\e^{-\alpha}.
\end{align}
Applying Theorem \ref{pt theorem 2} to the equation \eqref{dte 7} and using \eqref{dt 1}, we know
\begin{align}
&\lnnm{\ue^{K_0\eta}\zeta\frac{\p A}{\p\eta}}+\lnnm{\ue^{K_0\eta}F(\eta)\cos\phi\frac{\p A}{\p\phi}}\\
\leq& C\abs{\ln(\e)}^8\Bigg(\lnmp{\e\dfrac{\p A}{\p\phi}(0,\phi)}+\lnnm{\ue^{K_0\eta}\dfrac{\p F}{\p\eta}\cos\phi\frac{\p \ub_0}{\p\phi}}+\lnnm{\ue^{K_0\eta}\zeta\frac{\p}{\p{\eta}}\bigg(\dfrac{\p F}{\p\eta}\cos\phi\frac{\p\ub_0}{\p\phi}\bigg)}\Bigg)\no\\
\leq& C\abs{\ln(\e)}^8\Bigg(\e^{-\alpha}+\e\lnnm{\ue^{K_0\eta}F(\eta)\cos\phi\frac{\p \ub_0}{\p\phi}}+\e\lnnm{\ue^{K_0\eta}F(\eta)\cos\phi\frac{\p A}{\p\phi}}\Bigg)\no\\
\leq& C\abs{\ln(\e)}^8\Bigg(\e^{-\alpha}+\e\lnnm{\ue^{K_0\eta}F(\eta)\cos\phi\frac{\p A}{\p\phi}}\Bigg).\no
\end{align}
Then we may absorb $\e\lnnm{\ue^{K_0\eta}F(\eta)\cos\phi\dfrac{\p A}{\p\phi}}$ into the left-hand side to obtain
\begin{align}\label{dt 23}
&\lnnm{\ue^{K_0\eta}\zeta\frac{\p A}{\p\eta}}+\lnnm{\ue^{K_0\eta}F(\eta)\cos\phi\frac{\p A}{\p\phi}}
\leq C\e^{-\alpha}\abs{\ln(\e)}^8.
\end{align}
\ \\
Step 8: Regularity of Velocity Derivative.\\
The velocity derivative $B=\dfrac{\p\ub_0}{\p\phi}$ satisfies
\begin{align}\label{dte 8}
\left\{
\begin{array}{l}
\sin\phi\dfrac{\p B}{\p\eta}+F(\eta)\cos\phi\dfrac{\p B}{\p\phi}+B=-\cos\phi\dfrac{\p \ub_0}{\p\eta}+F(\eta)\sin\phi\dfrac{\p \ub_0}{\p\phi},\\\rule{0ex}{2em}
B(0,\phi)=\dfrac{\p\gb}{\p\phi}(\phi)\ \ \text{for}\ \
\sin\phi>0,\\\rule{0ex}{2em}
B (L,\phi)=B (L,\rr[\phi]).
\end{array}
\right.
\end{align}
Applying Theorem \ref{Milne theorem 2'} to the equation \eqref{dte 8}, using \eqref{dt 22}, we obtain
\begin{align}
\lnnm{\ue^{K_0\eta}B}\leq&C\bigg(\lnmp{\dfrac{\p\gb}{\p\phi}}+\lnnm{\ue^{K_0\eta}\cos\phi\dfrac{\p \ub_0}{\p\eta}}+\lnnm{\ue^{K_0\eta}F(\eta)\sin\phi\dfrac{\p \ub_0}{\p\phi}}\bigg)\\
\leq&C\bigg(\e^{-\alpha}+\lnnm{\ue^{K_0\eta}A}+\e\lnnm{\ue^{K_0\eta}B}\bigg)\leq C\bigg(\e^{-\alpha}\abs{\ln(\e)}^8+\e\lnnm{\ue^{K_0\eta}B}\bigg).\no
\end{align}
Then we may absorb $\e\lnnm{\ue^{K_0\eta}B}$ into the left-hand side to obtain
\begin{align}\label{dt 24}
\lnnm{\ue^{K_0\eta}B}\leq C\e^{-\alpha}\abs{\ln(\e)}^8.
\end{align}
Then applying Theorem \ref{pt theorem 2'} to the equation \eqref{dte 8}, using \eqref{dt 22}, \eqref{dt 23} and \eqref{dt 24}, we have
\begin{align}
&\lnnm{\ue^{K_0\eta}\zeta\frac{\p B}{\p\eta}}+\lnnm{\ue^{K_0\eta}F(\eta)\cos\phi\frac{\p B}{\p\phi}}\\
\leq&C\abs{\ln(\e)}^8\Bigg(\lnmp{\e\dfrac{\p^2\gb}{\p\phi^2}}+\lnnm{\ue^{K_0\eta}B}+\lnnm{\ue^{K_0\eta}\cos\phi\dfrac{\p \ub_0}{\p\eta}}+\lnnm{\ue^{K_0\eta}F(\eta)\sin\phi\dfrac{\p \ub_0}{\p\phi}}\no\\
&+\lnnm{\ue^{K_0\eta}\zeta\dfrac{\p}{\p\eta}\bigg(\cos\phi\dfrac{\p \ub_0}{\p\eta}\bigg)}+\lnnm{\ue^{K_0\eta}\zeta\dfrac{\p}{\p\eta}\bigg(F(\eta)\sin\phi\dfrac{\p \ub_0}{\p\phi}\bigg)}\Bigg)\no\\
\leq&C\abs{\ln(\e)}^8\Bigg(\e^{-\alpha}+\e^{-\alpha}+\lnnm{\ue^{K_0\eta}A}+\e\lnnm{\ue^{K_0\eta}B}+\lnnm{\ue^{K_0\eta}\zeta\dfrac{\p A}{\p\eta}}+\e\lnnm{\ue^{K_0\eta}\zeta\dfrac{\p B}{\p\eta}}\Bigg)\no\\
\leq&C\abs{\ln(\e)}^8\Bigg(\e^{-\alpha}+\e\lnnm{\ue^{K_0\eta}\zeta\dfrac{\p B}{\p\eta}}\Bigg).\no
\end{align}
Then we may absorb $\e\lnnm{\ue^{K_0\eta}\zeta\dfrac{\p B}{\p\eta}}$ into the left-hand side to obtain
\begin{align}\label{dt 25}
&\lnnm{\ue^{K_0\eta}\zeta\frac{\p B}{\p\eta}}+\lnnm{\ue^{K_0\eta}F(\eta)\cos\phi\frac{\p B}{\p\phi}}
\leq C\e^{-\alpha}\abs{\ln(\e)}^8.
\end{align}
\ \\
Step 9: Regularity of Mixed Derivative.\\
The weighted mixed derivative $C=\zeta\dfrac{\p A}{\p\phi}$ satisfies
\begin{align}\label{dte 8'}
\left\{
\begin{array}{l}
\sin\phi\dfrac{\p C}{\p\eta}+F(\eta)\cos\phi\dfrac{\p C}{\p\phi}+C=\zeta\dfrac{\p F}{\p\eta}\cos\phi\dfrac{\p B}{\p\phi}-\zeta\dfrac{\p F}{\p\eta}\sin\phi B
-\zeta\cos\phi\dfrac{\p A}{\p\eta}+\zeta F(\eta)\sin\phi\dfrac{\p A}{\p\phi},\\\rule{0ex}{2em}
C(0,\phi)=\zeta\dfrac{\p}{\p\phi}\Bigg(\dfrac{1}{\sin\phi}\bigg(F(\eta)\cos\phi\dfrac{\p\gb}{\p\phi}(\phi)-\gb(0,\phi)+\bar\ub_0(0,\phi)\bigg)\Bigg)\ \ \text{for}\ \
\sin\phi>0,\\\rule{0ex}{2em}
C (L,\phi)=C (L,\rr[\phi]).
\end{array}
\right.
\end{align}
Applying Theorem \ref{Milne lemma 2'} to the equation \eqref{dte 8'}, using \eqref{dt 23}, \eqref{dt 24} and \eqref{dt 25}, we have
\begin{align}
\lnnm{\ue^{K_0\eta}C}\leq&C\bigg(\e^{-\alpha}+\e\lnnm{\ue^{K_0\eta} F(\eta)\cos\phi\dfrac{\p B}{\p\phi}}+\e^2\lnnm{\ue^{K_0\eta} B}\\
&+\lnnm{\ue^{K_0\eta}\zeta\dfrac{\p A}{\p\eta}}+\e\lnnm{\ue^{K_0\eta}C}\bigg)\no\\
\leq&C\bigg(\e^{-\alpha}\abs{\ln(\e)}^8+\e\lnnm{\ue^{K_0\eta}C}\bigg)\no.
\end{align}
Then we may absorb $\e\lnnm{\ue^{K_0\eta}C}$ into the left-hand side to obtain
\begin{align}\label{dt 26}
\lnnm{\ue^{K_0\eta}C}\leq&\e^{-\alpha}\abs{\ln(\e)}^8.
\end{align}
\ \\
Step 10: Regularity of Tangential and Velocity Derivative.\\
We turn to the regularity of $W_i$. Applying Theorem \ref{pt theorem 2} to the equation \eqref{dte 2}, using \eqref{dt 1} and \eqref{dt 26}, noting $\abs{F}\leq C\e$, we have
\begin{align}\label{dt 27}
&\lnnm{\ue^{K_0\eta}\zeta\frac{\p W_i}{\p\eta}}+\lnnm{\ue^{K_0\eta}F(\eta)\cos\phi\frac{\p W_i}{\p\phi}}\\
\leq& C\abs{\ln(\e)}^8\Bigg(\lnmp{\e\dfrac{\p^2\gb}{\p\iota_i\p\phi}}+\lnnm{\ue^{K_0\eta}\dfrac{\p F}{\p\iota_i}\cos\phi\dfrac{\p \ub_0}{\p\phi}}+\lnnm{\ue^{K_0\eta}\zeta\frac{\p}{\p{\eta}}\bigg(\dfrac{\p F}{\p\iota_i}\cos\phi\dfrac{\p \ub_0}{\p\phi}\bigg)}\Bigg)\no\\
\leq& C\abs{\ln(\e)}^8\Bigg(1+\lnnm{\ue^{K_0\eta}F(\eta)\cos\phi\dfrac{\p \ub_0}{\p\phi}}+\lnnm{\ue^{K_0\eta}F(\eta)C}\Bigg)\no\\
\leq&C\abs{\ln(\e)}^{16}\bigg(1+\e^{1-\alpha}\bigg)\leq C\abs{\ln(\e)}^{16}.\no
\end{align}
A similar argument holds for the regularity of $H$. Applying Theorem \ref{pt theorem 2'} to the equation \eqref{dte 3}, we have
\begin{align}\label{dt 28}
&\lnnm{\ue^{K_0\eta}\zeta\frac{\p H}{\p\eta}}+\lnnm{\ue^{K_0\eta}F(\eta)\cos\phi\frac{\p H}{\p\phi}}\leq C\abs{\ln(\e)}^{16}.
\end{align}
\ \\
Summary.\\
Inserting \eqref{dt 27} and \eqref{dt 28} into \eqref{dt 29}, we obtain
\begin{align}
&\lnnm{\ue^{K_0\eta}V_j}+\lnnm{\ue^{K_0\eta}M}\leq C\abs{\ln(\e)}^{16}.
\end{align}
This is actually
\begin{align}\label{dt 30}
&\lnnm{\ue^{K_0\eta}\frac{\p\ub_1}{\p\iota_1}}+\lnnm{\ue^{K_0\eta}\frac{\p\ub_1}{\p\iota_2}}+\lnnm{\ue^{K_0\eta}\frac{\p\ub_1}{\p\psi}}+\leq C\abs{\ln(\e)}^{16}.
\end{align}
\begin{theorem}\label{dt theorem 1}
For $K_0>0$ sufficiently small, the regular boundary layer satisfies
\begin{align}
\begin{array}{ll}
\lnnm{\ue^{K_0\eta}\ub_0}\leq C,& \lnnm{\ue^{K_0\eta}\ub_1}\leq C\abs{\ln(\e)}^8,\\\rule{0ex}{2em}
\lnnm{\ue^{K_0\eta}\dfrac{\p\ub_0}{\p\iota_i}}\leq C\abs{\ln(\e)}^8,&\lnnm{\ue^{K_0\eta}\dfrac{\p\ub_1}{\p\iota_i}}\leq C\abs{\ln(\e)}^{16},\\\rule{0ex}{2em}
\lnnm{\ue^{K_0\eta}\dfrac{\p\ub_0}{\p\psi}}\leq C\abs{\ln(\e)}^8,&\lnnm{\ue^{K_0\eta}\dfrac{\p\ub_1}{\p\psi}}\leq C\abs{\ln(\e)}^{16}.
\end{array}
\end{align}
\end{theorem}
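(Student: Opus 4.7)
The plan is to establish the six bounds in order of increasing difficulty, using the exponentially weighted $L^\infty$ framework for the $\e$-Milne problem with geometric correction. Every estimate is obtained by identifying the equation satisfied by the relevant derivative, recognizing it as a Milne-type problem in $\eta$ and $\phi$ with tangential/velocity variables as parameters, and then invoking Theorem \ref{Milne theorem 2}, Theorem \ref{Milne theorem 2'}, Theorem \ref{pt theorem 2}, or Theorem \ref{pt theorem 2'}. The weight $\ue^{K_0\eta}$ encodes the boundary-layer decay.

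First I would dispose of $\ub_0$ directly from \eqref{dte 1} via Theorem \ref{Milne theorem 2}, which gives $\lnnm{\ue^{K_0\eta}\ub_0}\leq C$. The tangential and $\psi$-derivatives $W_i=\p\ub_0/\p\iota_i$ and $H=\p\ub_0/\p\psi$ satisfy Milne problems whose right-hand sides are proportional to $F(\eta)\cos\phi\,\p\ub_0/\p\phi$. The critical observation is that Theorem \ref{pt theorem 2} controls this quantity by $C\abs{\ln(\e)}^8$ modulo the boundary data $\e\,\p\gb/\p\phi$; the $\e$ factor crucially absorbs the $\e^{-\alpha}$ blow-up of $\p\gb/\p\phi$ inherited from the cut-off, so the weighted norm of $W_i$ and $H$ is bounded by $C\abs{\ln(\e)}^8$.

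For $\ub_1$, the forcing $G[\ub_0]$ is a linear combination of $W_1$, $W_2$, $H$ with bounded coefficients, so another application of Theorem \ref{Milne theorem 2} gives the bound $C\abs{\ln(\e)}^8$. The delicate part is the derivatives $V_j=\p\ub_1/\p\iota_j$ and $M=\p\ub_1/\p\psi$: after differentiation the right-hand sides involve not only $W_i$ and $H$ but also $F(\eta)\cos\phi\,\p\ub_1/\p\phi$, the tangential/velocity derivatives of $W_i$ and $H$, and the weighted normal derivatives $\zeta\,\p W_i/\p\eta$ and $\zeta\,\p H/\p\eta$. These last quantities ultimately reduce, through the equations for $W_i$ and $H$, to control of the normal derivative $A=\p\ub_0/\p\eta$.

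The main obstacle, and the reason $W^{2,\infty}$ regularity normally fails, is precisely the loss of $L^\infty$-control on $A$ near the grazing set $\sin\phi=0$. The cut-off design of $\gb$, with $\p\gb/\p\phi=0$ on $(0,\e^\alpha]$, is introduced specifically to force $A(0,\phi)=0$ on the grazing neighborhood, yielding $\lnmp{A(0,\phi)}\leq C\e^{-\alpha}$ in lieu of an outright divergence. Propagating this through Theorem \ref{Milne theorem 2} and Theorem \ref{pt theorem 2} gives $\lnnm{\ue^{K_0\eta}A}\leq C\e^{-\alpha}\abs{\ln(\e)}^8$, and analogous reasoning with Theorem \ref{Milne theorem 2'} and Theorem \ref{pt theorem 2'} handles $B=\p\ub_0/\p\phi$ together with the weighted mixed derivative $C=\zeta\,\p A/\p\phi$. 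When these are fed back into the equations for $W_i$ and $H$, every appearance of $A$ or $C$ is multiplied by an $\e$ coming from the coefficients $\p F/\p\iota_i$ or $\p F/\p\eta$, so the effective blow-up is $\e\cdot\e^{-\alpha}=\e^{1-\alpha}$, which stays bounded exactly when $\alpha\leq 1$. This is where the constraint $\alpha\leq 1$ enters; within this range one obtains weighted regularity of $W_i$ and $H$ of size $C\abs{\ln(\e)}^{16}$, and a final assembly in the $V_j$, $M$ equations yields the stated $\abs{\ln(\e)}^{16}$ bounds on the $\ub_1$ derivatives.
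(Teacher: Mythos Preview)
Your proposal is correct and follows essentially the same approach as the paper: basic Milne estimates for $\ub_0$, $W_i$, $H$, $\ub_1$; then the cascade through the normal derivative $A=\p\ub_0/\p\eta$ (exploiting the cut-off design of $\gb$), the velocity derivative $B=\p\ub_0/\p\phi$, and the weighted mixed derivative $C=\zeta\,\p A/\p\phi$; and finally the $\e\cdot\e^{-\alpha}$ cancellation in the regularity estimates for $W_i$ and $H$ that closes the $\abs{\ln(\e)}^{16}$ bounds on $V_j$ and $M$. The only minor imprecision is that the paper obtains $\lnnm{\ue^{K_0\eta}A}\leq C\e^{-\alpha}$ without the logarithmic factor (the $\abs{\ln(\e)}^8$ appears only in the weighted regularity of $A$), but this does not affect the argument.
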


\subsection{Analysis of Singular Boundary Layer}

In this subsection, we will justify that the singular boundary layers are all well-defined.\\
\ \\
Step 1: Well-Posedness of $\uf_0$.\\
$\uf_0$ satisfies the $\e$-Milne problem with geometric correction
\begin{align}
\left\{
\begin{array}{l}
\sin\phi\dfrac{\p \uf_0 }{\p\eta}+F(\eta)\cos\phi\dfrac{\p
\uf_0 }{\p\phi}+\uf_0 -\buf_0 =0,\\\rule{0ex}{1.5em}
\uf_0 (0,\phi)=\gf(\phi)-\mathfrak{F} _{0,L}\ \ \text{for}\ \
\sin\phi>0,\\\rule{0ex}{1.5em}
\uf_0 (L,\phi)=\uf_0 (L,\rr[\phi]).
\end{array}
\right.
\end{align}
Therefore, by Theorem \ref{Milne theorem 2}, we know
\begin{align}
\lnnm{\ue^{K_0\eta}\uf_0}\leq C.
\end{align}
However, this is not sufficient for future use and we need more detailed analysis. We will divide the domain $(\eta,\phi)\in[0,L]\times\left[-\dfrac{\pi}{2},\dfrac{\pi}{2}\right]$ into two regions:
\begin{itemize}
\item
Region I $\chi_1$: $0\leq\zeta<2\e^{\alpha}$.
\item
Region II $\chi_2$: $2\e^{\alpha}\leq\zeta\leq1$.
\end{itemize}
Here we use $\chi_i$ to represent either the corresponding region or the indicator function. It is easy to see that $\gf=0$ in Region II. Similarly we decompose the solution $\uf_0=\chi_1\uf_0+\chi_2\uf_0=f^{(1)}+f^{(2)}$ in these two regions. In the following, the estimates for $f_i$ will be restricted to the region $\chi_i$ for $i=1,2$. Using Theorem \ref{Milne theorem 2}, we can easily show that
\begin{align}
\tnnm{\ue^{K_0\eta}\uf_0}\leq&C\e^{\alpha}.
\end{align}
The key to $L^{\infty}$ estimates in Theorem \ref{Milne theorem 2} is Lemma \ref{Milne lemma 3}. Its proof is basically tracking along the characteristics. Hence, we know
\begin{align}
\lnnm{\ue^{K_0\eta}\buf_0}\leq& C\bigg(\e^{\alpha}\ltnm{\ue^{K_0\eta}f^{(1)}}+\ltnm{\ue^{K_0\eta}f^{(2)}}\bigg)\\
\leq&C\bigg(\tnnm{\ue^{K_0\eta}\uf_0}+\d\e^{\alpha}\lnnm{\ue^{K_0\eta}f^{(1)}}+\d\lnnm{\ue^{K_0\eta}f^{(2)}}\bigg).\no
\end{align}
Thus, considering $\chi_1\gf=\gf$ and $\chi_2\gf=0$, we may directly obtain
\begin{align}
\lnnm{\ue^{K_0\eta}f^{(1)}}\leq&C\bigg(\lnm{\chi_1\gf}+\lnnm{\ue^{K_0\eta}\buf_0}\bigg)\\
\leq&C\bigg(\lnm{\chi_1\gf}+\tnnm{\ue^{K_0\eta}\uf_0}+\d\e^{\alpha}\lnnm{\ue^{K_0\eta}f^{(1)}}+\d\lnnm{\ue^{K_0\eta}f^{(2)}}\bigg)\no\\
\leq&C\bigg(1+\d\e^{\alpha}\lnnm{\ue^{K_0\eta}f^{(1)}}+\d\lnnm{\ue^{K_0\eta}f^{(2)}}\bigg),\no\\
\lnnm{\ue^{K_0\eta}f^{(2)}}\leq&C\bigg(\lnm{\chi_2\gf}+\lnnm{\ue^{K_0\eta}\buf_0}\bigg)\\
\leq&C\bigg(\lnm{\chi_2\gf}+\tnnm{\ue^{K_0\eta}\uf_0}+\d\e^{\alpha}\lnnm{\ue^{K_0\eta}f^{(1)}}+\d\lnnm{\ue^{K_0\eta}f^{(2)}}\bigg)\no\\
\leq&C\bigg(\e^{\alpha}+\d\e^{\alpha}\lnnm{\ue^{K_0\eta}f^{(1)}}+\d\lnnm{\ue^{K_0\eta}f^{(2)}}\bigg).\no
\end{align}
Letting $\d$ small, absorbing $\lnnm{\ue^{K_0\eta}f^{(1)}}$ and $\lnnm{\ue^{K_0\eta}f^{(2)}}$, we know
\begin{align}
\lnnm{\ue^{K_0\eta}f^{(1)}}\leq&C\bigg(1+\d\lnnm{\ue^{K_0\eta}f^{(2)}}\bigg),\\
\lnnm{\ue^{K_0\eta}f^{(2)}}\leq&C\bigg(\e^{\alpha}+\d\e^{\alpha}\lnnm{\ue^{K_0\eta}f^{(1)}}\bigg).
\end{align}
Combining them together, we can easily see that
\begin{align}
\lnnm{\ue^{K_0\eta}f^{(1)}}\leq&C,\\
\lnnm{\ue^{K_0\eta}f^{(2)}}\leq&C\e^{\alpha}.
\end{align}
In total, we can derive
\begin{align}
\lnnm{\ue^{K_0\eta}\buf_0}\leq C\e^{\alpha}.
\end{align}
\ \\
Step 2: Regularity of $\uf_0$.\\
This is very similar to the well-posedness proof, we will also consider the regularity of $\uf_0$ in two regions. Note that in the proof of Theorem \ref{pt theorem 2}, the $L^{\infty}$ estimates relies on two kinds of quantities:
\begin{itemize}
\item
$\abs{\zeta\dfrac{\p\uf_0}{\p\eta}}$ on the same characteristics.
\item
$\ds\int_{-\pi}^{\pi}\zeta\dfrac{\p\uf_0}{\p\eta}\ud{\phi}$ for some $\eta>0$.
\end{itemize}
Correspondingly, we may handle them separately: for the first case, since $\zeta$ is preserved along the characteristics, we can directly separate the estimate of $f^{(1)}$ and $f^{(2)}$; for the second case, we may use the simple domain decomposition
\begin{align}
\int_{-\pi}^{\pi}\zeta\dfrac{\p\uf_0}{\p\eta}(\eta,\phi)\ud{\phi}=&\int_{\chi_1}\zeta\dfrac{\p f^{(1)}}{\p\eta}\ud{\phi}+\int_{\chi_2}\zeta\dfrac{\p f^{(2)}}{\p\eta}\ud{\phi}\leq C\bigg(\e^{\alpha}\ltnm{\zeta\dfrac{\p f^{(1)}}{\p\eta}}+\ltnm{\zeta\dfrac{\p f^{(2)}}{\p\eta}}\bigg).
\end{align}
Then following a similar absorbing argument as in above well-posedness proof, we have
\begin{align}
&\lnnm{\ue^{K_0\eta}\zeta\frac{\p f^{(1)}}{\p\eta}}+\lnnm{\ue^{K_0\eta}F(\eta)\cos\phi\frac{\p f^{(1)}}{\p\phi}}\\
\leq& C\abs{\ln(\e)}^8\bigg(\lnm{\gf}+\lnm{\e\frac{\p\gf}{\p\phi}}+\lnnm{\ue^{K_0\eta}\uf_0}\bigg)\leq C\abs{\ln(\e)}^8,\no\\
&\lnnm{\ue^{K_0\eta}\zeta\frac{\p f^{(2)}}{\p\eta}}+\lnnm{\ue^{K_0\eta}F(\eta)\cos\phi\frac{\p f^{(2)}}{\p\phi}}\\
\leq& C\abs{\ln(\e)}^8\bigg(\lnnm{\ue^{K_0\eta}f^{(2)}}+\e^{\alpha}\lnnm{\ue^{K_0\eta}f^{(1)}}\bigg) \leq C\e^{\alpha}\abs{\ln(\e)}^8.\no
\end{align}
Note that although $\lnm{\dfrac{\p\gf}{\p\phi}}\leq C\e^{-\alpha}$, with the help of $\e$, we can get rid of this negative power. \\
\ \\
Step 3: Tangential and Velocity Derivatives of $\uf_0$.\\
The $\iota_i$ derivative $P_i=\dfrac{\p\uf_0}{\p\iota_i}$ satisfies
\begin{align}
\left\{
\begin{array}{l}
\sin\phi\dfrac{\p P_i}{\p\eta}+F(\eta)\cos\phi\dfrac{\p P_i}{\p\phi}+P_i-\bar P_i=-\dfrac{\p F}{\p\iota_i}\cos\phi\dfrac{\p \uf_0}{\p\phi},\\\rule{0ex}{2.0em}
P_i (0,\phi)=\dfrac{\p\gf}{\p\iota_i}(\phi)-\dfrac{\p\mathfrak{F}_{0,L}}{\p\iota_i}\ \ \text{for}\ \
\sin\phi>0,\\\rule{0ex}{2.0em}
P_i (L,\phi)=P_i (L,\rr[\phi]).
\end{array}
\right.
\end{align}
It is easy to check that
\begin{align}
\int_{-\pi}^{\pi}\cos\phi\dfrac{\p \uf_0}{\p\phi}\ud{\phi}=\int_{-\pi}^{\pi}\uf_0\sin\phi\ud{\phi}=0,
\end{align}
due to the orthogonal property. Hence, using Theorem \ref{Milne theorem 2} with $S_Q=0$, we have
\begin{align}
\tnnm{\ue^{K_0\eta}P_i}\leq&C\e^{\alpha}\abs{\ln(\e)}^8,
\end{align}
which further implies
\begin{align}
\lnnm{\ue^{K_0\eta}P_i^{(1)}}\leq&C\bigg(\lnnm{\frac{\p\gf}{\p\iota_i}}+\tnnm{\ue^{K_0\eta}P_i}+\lnnm{\ue^{K_0\eta}F(\eta)\cos\phi\frac{\p \uf_0}{\p\phi}}\bigg)\\
\leq& C\abs{\ln(\e)}^8,\no\\
\\
\lnnm{\ue^{K_0\eta}P_i^{(2)}}\leq&C\bigg(\ue^{K_0\eta}\tnnm{P_i}+\e^{\alpha}\lnnm{\ue^{K_0\eta}F(\eta)\cos\phi\frac{\p f^{(1)}}{\p\phi}}+\lnnm{\ue^{K_0\eta}F(\eta)\cos\phi\frac{\p f^{(2)}}{\p\phi}}\bigg)\no\\
\leq& C\e^{\alpha}\abs{\ln(\e)}^8,\no
\end{align}
where $P_i^{(1)}=\dfrac{\p f^{(1)}}{\p\iota_i}$ and $P_i^{(2)}=\dfrac{\p f^{(2)}}{\p\iota_i}$.\\
\ \\
A similar argument justifies similar results for velocity derivative $Q^{(1)}=\dfrac{\p f^{(1)}}{\p\psi}$ and $Q^{(2)}=\dfrac{\p f^{(2)}}{\p\psi}$:
\begin{align}
\lnnm{\ue^{K_0\eta}Q^{(1)}}\leq&C\abs{\ln(\e)}^8,\\
\lnnm{\ue^{K_0\eta}Q^{(2)}}\leq& C\e^{\alpha}\abs{\ln(\e)}^8.
\end{align}
\ \\
\begin{theorem}\label{dt theorem 2}
Let
\begin{align}
\left\{
\begin{array}{ll}
\chi_1: &0\leq\zeta<2\e^{\alpha},\\
\chi_2: &2\e^{\alpha}\leq\zeta\leq1.
\end{array}
\right.
\end{align}
For $K_0>0$ sufficiently small, the singular boundary layer satisfies
\begin{align}
\begin{array}{ll}
\lnnm{\ue^{K_0\eta}(\chi_1\uf_0)}\leq C,& \lnnm{\ue^{K_0\eta}(\chi_2\uf_0)}\leq C\e^{\alpha},\\\rule{0ex}{2em}
\lnnm{\ue^{K_0\eta}\dfrac{\p(\chi_1\uf_0)}{\p\iota_i}}\leq C\abs{\ln(\e)}^8,&\lnnm{\ue^{K_0\eta}\dfrac{\p(\chi_2\uf_0)}{\p\iota_i}}\leq C\e^{\alpha}\abs{\ln(\e)}^{8},\\\rule{0ex}{2em}
\lnnm{\ue^{K_0\eta}\dfrac{\p(\chi_1\uf_0)}{\p\psi}}\leq C\abs{\ln(\e)}^8,&\lnnm{\ue^{K_0\eta}\dfrac{\p(\chi_2\uf_0)}{\p\psi}}\leq C\e^{\alpha}\abs{\ln(\e)}^{8}.
\end{array}
\end{align}
\end{theorem}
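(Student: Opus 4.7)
The overall strategy is to exploit the support structure of $\gf$, which is concentrated in the grazing neighborhood $\zeta<2\e^{\alpha}$. I would decompose the solution as $\uf_0=f^{(1)}+f^{(2)}$ with $f^{(i)}=\chi_i\uf_0$, and run every estimate on each piece separately. The key gain available throughout is that any $L^2$ norm picks up the measure $\e^{\alpha}$ of the support of $\gf$; the plan is to transmit this gain to $L^{\infty}$ via a coupled bootstrap between the two regions.

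For the zeroth-order bounds I would first apply Theorem \ref{Milne theorem 2} in its $L^2$ form to obtain $\tnnm{\ue^{K_0\eta}\uf_0}\ls\e^{\alpha}$, since $\gf$ has $L^2_+$ norm of order $\e^{\alpha/2}$. Next, tracking along characteristics in the representation of $\buf_0$ (this being the source of the $L^{\infty}$ estimate in Theorem \ref{Milne theorem 2} via Lemma \ref{Milne lemma 3}), I would split the angular integral over $\chi_1$ and $\chi_2$ to obtain
\[
\lnnm{\ue^{K_0\eta}\buf_0}\;\ls\;\e^{\alpha}\ltnm{\ue^{K_0\eta}f^{(1)}}+\ltnm{\ue^{K_0\eta}f^{(2)}},
\]
and then use the standard interpolation $\ltnm{\cdot}\ls\tnnm{\cdot}+\delta\lnnm{\cdot}$ to write a coupled pair of $L^{\infty}$ inequalities for $f^{(1)}$ and $f^{(2)}$. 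Since $\chi_1\gf=\gf=O(1)$ and $\chi_2\gf\equiv 0$, choosing $\delta$ small enough to absorb the cross-terms on the right yields $\lnnm{\ue^{K_0\eta}f^{(1)}}\ls 1$ and $\lnnm{\ue^{K_0\eta}f^{(2)}}\ls\e^{\alpha}$, as claimed.

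For the derivative bounds $\zeta\p_{\eta}\uf_0$ and $F(\eta)\cos\phi\,\p_{\phi}\uf_0$, I would invoke Theorem \ref{pt theorem 2} region-by-region; the only nontrivial input is $\lnm{\p_{\phi}\gf}\ls\e^{-\alpha}$, but this loss is paired with an $\e$ from the equation and so enters as $O(\e^{1-\alpha})$, which is admissible. For the tangential derivative $P_i=\p_{\iota_i}\uf_0$ the forcing is proportional to $\cos\phi\,\p_{\phi}\uf_0$, whose angular average vanishes by integration by parts together with the orthogonality relation $\int_{-\pi}^{\pi}\uf_0\sin\phi\,d\phi=0$. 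This zero-mean forcing lets the version of Theorem \ref{Milne theorem 2} with vanishing kernel source deliver $\tnnm{\ue^{K_0\eta}P_i}\ls\e^{\alpha}\abs{\ln(\e)}^8$ directly, and then the same two-region bootstrap as in the well-posedness step upgrades this to the stated $L^{\infty}$ bounds on $\chi_i P_i$. The velocity derivative $Q=\p_{\psi}\uf_0$ is handled identically.

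The main subtlety, rather than any single hard calculation, is to make the two gains coexist cleanly: the $\e^{\alpha}$ smallness in Region II must not leak into Region I, where the data is genuinely $O(1)$, and conversely the $\e^{-\alpha}$ losses coming from differentiating the cut-off in $\gf$ must in every appearance be paired either with an $\e$ from the geometric factor $F(\eta)$ or with an integrated quantity of size $\e^{\alpha}$. Once this bookkeeping is kept consistent across the two regions, the proof needs no machinery beyond what was developed for the regular layer $\ub_0$ in Theorem \ref{dt theorem 1}.
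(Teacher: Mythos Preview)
Your proposal is correct and mirrors the paper's argument almost step for step: the same $\chi_1/\chi_2$ splitting along level sets of $\zeta$, the same coupled $L^{\infty}$ bootstrap fed by the $L^2$ smallness $\tnnm{\ue^{K_0\eta}\uf_0}\ls\e^{\alpha}$, the same observation that $\zeta$ is conserved along characteristics so the regularity estimate of Theorem~\ref{pt theorem 2} respects the decomposition, and the same use of the zero-mean forcing (via $\int\uf_0\sin\phi\,\cos\phi\,\ud\phi=0$) to run Theorem~\ref{Milne theorem 2} with $S_Q=0$ on the tangential derivative. One minor correction: the $L^2_+$ norm of $\gf$ is of order $\e^{\alpha}$, not $\e^{\alpha/2}$, because $\tnmp{\cdot}$ carries the weight $\sin\phi$, which is itself $O(\e^{\alpha})$ on the support of $\gf$; this is exactly what produces the $\e^{\alpha}$ you state in the $L^2$ estimate.
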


\subsection{Analysis of Interior Solution}

In this subsection, we will justify that the interior solutions are all well-defined. \\
\ \\
Step 1: Well-Posedness of $\u_0$.\\
$\u_0$ satisfies an elliptic equation
\begin{align}
\left\{
\begin{array}{l}
\u_0(\vx,\vw)=\bu_0(\vx) ,\\\rule{0ex}{1.5em} \Delta_x\bu_0(\vx)=0\ \ \text{in}\
\ \Omega,\\\rule{0ex}{1.5em}
\bu_0(\vx_0)=\mathscr{F}_{0,L}(\iota_1,\iota_2)+\mathfrak{F}_{0,L}(\iota_1,\iota_2)\ \ \text{on}\ \
\p\Omega.
\end{array}
\right.
\end{align}
Based on standard elliptic theory, we have
\begin{align}
\nm{\u_0}_{H^3(\Omega)}\leq C\bigg(\nm{\mathscr{F}_{0,L}}_{H^{\frac{5}{2}}(\p\Omega)}+\nm{\mathfrak{F}_{0,L}}_{H^{\frac{5}{2}}(\p\Omega)}\bigg)\leq C.
\end{align}
\ \\
Step 2: Well-Posedness of $\u_1$.\\
$\u_1$ satisfies an elliptic equation
\begin{align}
\left\{
\begin{array}{rcl}
\u_1(\vx,\vw)&=&\bu_1(\vx)-\vw\cdot\nx\u_0(\vx,\vw),\\\rule{0ex}{1.5em}
\Delta_x\bu_1(\vx)&=&-\displaystyle\int_{\s^2}\Big(\vw\cdot\nx\u_{0}(\vx,\vw)\Big)\ud{\vw}\
\ \text{in}\ \ \Omega,\\\rule{0ex}{1.5em} \bu_1(\vx_0)&=&f _{1,L}(\iota_1,\iota_2)\ \ \text{on}\ \
\p\Omega.
\end{array}
\right.
\end{align}
Based on standard elliptic theory, we have
\begin{align}
\nm{\u_1}_{H^3(\Omega)}\leq C\bigg(\nm{\mathscr{F}_{1,L}}_{H^{\frac{5}{2}}(\p\Omega)}+\nm{\u_0}_{H^{2}(\Omega)}\bigg)\leq C\abs{\ln(\e)}^8.
\end{align}
\ \\
Step 3: Well-Posedness of $\u_2$.\\
$\u_2$ satisfies an elliptic equation
\begin{align}
\left\{
\begin{array}{rcl}
\u_{2}(\vx,\vw)&=&\bu_{2}(\vx)-\vw\cdot\nx\u_{1}(\vx,\vw),\\\rule{0ex}{1.5em}
\Delta_x\bu_{2}(\vx)&=&-\displaystyle\int_{\s^2}\Big(\vw\cdot\nx\u_{1}(\vx,\vw)\Big)\ud{\vw}\
\ \text{in}\ \ \Omega,\\\rule{0ex}{1.5em} \bu_2(\vx_0)&=&0\ \ \text{on}\ \
\p\Omega.
\end{array}
\right.
\end{align}
Based on standard elliptic theory, we have
\begin{align}
\nm{\u_2}_{H^3(\Omega)}\leq C\bigg(\nm{\bu_0}_{H^{3}(\Omega)}+\nm{\bu_1}_{H^{2}(\Omega)}\bigg)\leq C\abs{\ln(\e)}^8.
\end{align}

\begin{theorem}\label{dt theorem 3}
The interior solution satisfies
\begin{align}
\nm{\u_0}_{H^3(\Omega)}\leq C,\quad \nm{\u_1}_{H^3(\Omega)}\leq C\abs{\ln(\e)}^8,\quad \nm{\u_2}_{H^3(\Omega)}\leq C\abs{\ln(\e)}^8.
\end{align}
\end{theorem}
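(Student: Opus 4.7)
The plan is to prove each of the three bounds in order by a direct application of standard elliptic regularity on a $C^3$ domain, with the $\e$-dependent constants being tracked through the source terms and Dirichlet data inherited from the boundary-layer analysis just completed.

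For $\u_0$, I would first observe that $\bu_0$ solves $\Delta_x \bu_0 = 0$ in $\Omega$ with $\bu_0(\vx_0) = \mathscr{F}_{0,L}(\iota_1,\iota_2) + \mathfrak{F}_{0,L}(\iota_1,\iota_2)$ on $\p\Omega$. Since $\p\Omega \in C^3$, the standard elliptic regularity estimate gives $\nm{\bu_0}_{H^3(\Omega)} \ls \nm{\mathscr{F}_{0,L}}_{H^{5/2}(\p\Omega)} + \nm{\mathfrak{F}_{0,L}}_{H^{5/2}(\p\Omega)}$. Both boundary traces are $\e$-independent quantities depending only on tangential data through the $\e$-Milne problem (cf. Theorem \ref{Milne theorem 1}), and Theorems \ref{dt theorem 1} and \ref{dt theorem 2} together with $g \in C^3(\Gamma^-)$ show these $H^{5/2}$ norms are uniformly bounded. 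Then since $\u_0(\vx,\vw)=\bu_0(\vx)$ is independent of $\vw$, the $H^3(\Omega)$ bound for $\u_0$ follows immediately.

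For $\u_1$, using $\u_1 = \bu_1 - \vw\cdot\nx \u_0$, it suffices to estimate $\bu_1$, which satisfies a Poisson equation with right-hand side $-\int_{\s^2}\vw\cdot\nx\u_0\,\ud\vw$ and Dirichlet data $\mathscr{F}_{1,L}$. The source is controlled in $H^2(\Omega)$ by $\nm{\u_0}_{H^3(\Omega)} \ls 1$ from the previous step, while the boundary trace $\mathscr{F}_{1,L}$ is only known to be bounded by $C|\ln(\e)|^8$ because it originates from $\vw\cdot\nx\u_0$-type data fed into the $\e$-Milne equation (compare the $|\ln(\e)|^8$ bounds appearing throughout the regular boundary-layer analysis). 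Standard $H^3$ elliptic regularity then yields $\nm{\bu_1}_{H^3(\Omega)} \ls \nm{\mathscr{F}_{1,L}}_{H^{5/2}(\p\Omega)} + \nm{\u_0}_{H^2(\Omega)} \ls |\ln(\e)|^8$, and subtracting $\vw\cdot\nx\u_0$ preserves this bound.

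For $\u_2$, the same scheme applies: $\u_2 = \bu_2 - \vw\cdot\nx\u_1$, and $\bu_2$ solves a Poisson problem with source $-\int_{\s^2}\vw\cdot\nx\u_1\,\ud\vw$ and zero Dirichlet data. Elliptic regularity gives $\nm{\bu_2}_{H^3(\Omega)} \ls \nm{\u_1}_{H^2(\Omega)} \ls |\ln(\e)|^8$, and the bound on $\u_2$ then follows.

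The main obstacle is not the elliptic theory itself, which is entirely routine once the $C^3$ regularity of $\p\Omega$ is used, but rather ensuring that the Dirichlet traces $\mathscr{F}_{k,L}$ and $\mathfrak{F}_{k,L}$ carry exactly the $\e$-dependence (constant for $k=0$, $|\ln(\e)|^8$ for $k=1$) claimed in the statement and that they sit in $H^{5/2}(\p\Omega)$. This requires invoking the tangential and mixed derivative estimates for the boundary layers from Theorems \ref{dt theorem 1} and \ref{dt theorem 2}, which already provide $L^\infty$ control of two tangential derivatives of $\ub_0,\ub_1$ and one tangential derivative of $\uf_0$; combined with the smoothness of $g$ and the $C^3$ geometry, these suffice to bound the boundary traces in $H^{5/2}(\p\Omega)$ with the stated $\e$-dependence, closing the argument.
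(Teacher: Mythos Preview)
Your proposal is correct and follows essentially the same approach as the paper: apply standard $H^3$ elliptic regularity on the $C^3$ domain to each of $\bu_0,\bu_1,\bu_2$ in turn, with the $\e$-dependence entering only through the Dirichlet traces $\mathscr{F}_{k,L},\mathfrak{F}_{k,L}$ supplied by the boundary-layer analysis. If anything, your treatment is slightly more explicit than the paper's in flagging that the $H^{5/2}(\p\Omega)$ control of these traces relies on the tangential-derivative bounds from Theorems~\ref{dt theorem 1} and~\ref{dt theorem 2}, a point the paper simply asserts.
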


\subsection{Proof of Main Theorem}

\begin{theorem}\label{diffusive limit}
Assume $g(\vx_0,\vw)\in C^3(\Gamma^-)$. Then for the steady neutron
transport equation \eqref{transport}, there exists a unique solution
$u^{\e}(\vx,\vw)\in L^{\infty}(\Omega\times\s^2)$. Moreover, for any $0<\d<<1$, the solution obeys the estimate
\begin{align}
\im{u^{\e}-\u-\uu}{\Omega\times\s^2}\leq C(\d)\e^{\frac{1}{3}-\d},
\end{align}
where $\u(\vx)$ satisfies the Laplace equation with Dirichlet boundary condition
\begin{align}
\left\{
\begin{array}{l}
\Delta_x\u(\vx)=0\ \ \text{in}\
\ \Omega,\\\rule{0ex}{1.5em}
\u(\vx_0)=D(\vx_0)\ \ \text{on}\ \
\p\Omega,
\end{array}
\right.
\end{align}
and $\uu(\eta,\iota_1,\iota_2,\phi,\psi)$ satisfies the $\e$-Milne problem with geometric correction
\begin{align}
\left\{
\begin{array}{l}
\sin\phi\dfrac{\p \uu }{\p\eta}+F(\e;\eta,\psi,\iota_1,\iota_2)\cos\phi\dfrac{\p
\uu }{\p\phi}+\uu -\buu =0,\\\rule{0ex}{1.5em}
\uu (0,\iota_1,\iota_2,\phi,\psi)=g(\iota_1,\iota_2,\phi,\psi)-D(\iota_1,\iota_2)\ \ \text{for}\ \
\sin\phi>0,\\\rule{0ex}{1.5em}
\uu (L,\iota_1,\iota_2,\phi,\psi)=\uu (L,\iota_1,\iota_2,\rr[\phi],\psi),
\end{array}
\right.
\end{align}
for $L=\e^{-n}$ with $0<n<\dfrac{1}{2}$, $\rr[\phi]=-\phi$, $\eta$ the rescaled normal variable, $(\iota_1,\iota_2)$ the tangential variables, and $(\phi,\psi)$ the velocity variables.
\end{theorem}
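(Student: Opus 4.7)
The plan is to introduce the remainder
\begin{align}
R = u^{\e} - \u_0 - \e\u_1 - \e^2\u_2 - \ub_0 - \e\ub_1 - \uf_0
\end{align}
and derive its transport equation
\begin{align}
\e\, \vw\cdot\nx R + R - \bar R = \ss \quad \text{in } \Omega\times\s^2, \qquad R = \g \quad \text{on } \Gamma^-,
\end{align}
where, by the matching procedure, every order up to $\e^2$ in the interior and up to $\e$ in the boundary layer cancels exactly. Consequently, $\ss$ collects only: (i) the interior residual $-\e^3 \vw\cdot\nx\u_2$; (ii) the boundary-layer remnants $-\e\, G[\ub_1] - \e\, G[\uf_0]$ with $G$ as in \eqref{coordinate 23} (note that $\uf_1$ was deliberately not constructed); (iii) the geometric-correction mismatch between the exact curvilinear operator \eqref{coordinate 22} and the operator used to define $\ub_k, \uf_0$; and (iv) harmless harmonic corrections involving $\mathscr{F}_{k,L}, \mathfrak{F}_{k,L}$. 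The data $\g$ on $\Gamma^-$ consists of the exponentially small tails $\ub_k(L,\cdot)$ and $\uf_0(L,\cdot)$ of order $\ue^{-K_0 L}$ with $L=\e^{-n}$, together with the $O(\e^2)$ trace $-\e^2 \u_2|_{\p\Omega}$.

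\textbf{Bounding the source in mixed norms.} Using Theorems \ref{dt theorem 1}, \ref{dt theorem 2} and \ref{dt theorem 3}, I would estimate each piece of $\ss$ in $L^2$, $L^{2m/(2m-1)}$ and $L^\infty$, keeping careful track of the Jacobian $\e$ produced by the scaling $\mu = \e\eta$. The regular-layer contribution then scales as $\e \cdot \e \cdot \abs{\ln\e}^{16}$ in $L^2$ and is harmless. For the singular layer, the pointwise estimate incurs a factor $\e^{-\alpha}$, but the essential support $\{\zeta<\e^\alpha\}$ returns a gain of $\e^{\alpha/2}$ in $L^2$ and $\e^{\alpha(2m-1)/(2m)}$ in $L^{2m/(2m-1)}$. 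The interior residual $\e^3 \vw\cdot\nx\u_2$ is of size $\e^3 \abs{\ln\e}^{8}$ in every norm by Theorem \ref{dt theorem 3}. The boundary contribution from $\eta=L$ is $\ue^{-K_0\e^{-n}}$, negligible in all norms, and $\e^2 \u_2|_{\p\Omega}$ is $O(\e^2)$ on $\Gamma^-$.

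\textbf{Closing the estimate and optimising.} Plugging these bounds into Theorem \ref{LI estimate},
\begin{align}
\nm{R}_{L^\infty} \leq C\bigg(\frac{1}{\e^{1+\frac{3}{2m}}}\nm{\ss}_{L^2} + \frac{1}{\e^{2+\frac{3}{2m}}}\nm{\ss}_{L^{\frac{2m}{2m-1}}} + \nm{\ss}_{L^\infty} + \text{boundary terms}\bigg),
\end{align}
and choosing $\alpha = 1$ (the endpoint made accessible by the mixed-derivative estimate of Theorem \ref{dt theorem 1}), $n$ close to $\tfrac{1}{2}$, and $m$ arbitrarily close to $3$, a direct balancing of the $\e^{3}$ interior gain against the $\e^{-2-3/(2m)}$ singular-layer prefactor yields $\nm{R}_{L^\infty(\Omega\times\s^2)} \leq C(\d)\, \e^{1/3-\d}$. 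By linearity of the $\e$-Milne problem applied to $g = \gb + \gf$ with $D = \mathscr{F}_{0,L} + \mathfrak{F}_{0,L}$, the sum $\ub_0 + \uf_0$ is exactly the $\uu$ of the statement, and since $\nm{\e\u_1 + \e^2\u_2 + \e\ub_1}_{L^\infty} \leq C\e\abs{\ln\e}^{16} = o(\e^{1/3-\d})$, the bound transfers to $u^\e - \u - \uu$.

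\textbf{Principal obstacle.} The delicate point is the three-way balancing of $\alpha$, $m$ and $n$: pushing $m \uparrow 3$ is required to offset the $L^{2m/(2m-1)}$ loss but is bounded by the Sobolev embedding $W^{2,2m/(2m-1)}\hookrightarrow H^1$ in 3D, pushing $\alpha \uparrow 1$ hinges on the mixed normal-tangential estimate $\nm{\ue^{K_0\eta}\zeta\, \p_\phi \p_\eta \ub_0}$ established inside Theorem \ref{dt theorem 1}, and the polylogarithmic factors $\abs{\ln\e}^{16}$ accumulated through the geometric-correction regularity must be absorbed into the slack $\e^{-\d}$. It is precisely the improved regular-layer regularity feeding the new $L^{2}$--$L^{2m}$--$L^{\infty}$ framework that closes the gap left open by earlier work and produces the asymptotic rate $\e^{1/3-\d}$.
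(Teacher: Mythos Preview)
Your architecture matches the paper's proof exactly: same remainder, same $L^2$--$L^{2m/(2m-1)}$--$L^\infty$ bounds on $\ll[R]$ fed into Theorem \ref{LI estimate}, same optimization $\alpha\to 1$, $m\to 3^-$. But two of your quantitative claims are wrong and, as stated, do not produce $\e^{1/3-\d}$.

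First, the tangential and $\psi$-derivatives of the singular layer $\uf_0$ do \emph{not} carry a pointwise factor $\e^{-\alpha}$; that blow-up afflicts $\p_\phi\gf$, not $\p_{\iota_i}\uf_0$ or $\p_\psi\uf_0$. Theorem \ref{dt theorem 2} gives $\abs{\p_{\iota_i}\uf_0}\le C\abs{\ln\e}^8$ on the thin strip $\chi_1=\{\zeta<2\e^\alpha\}$ and $\le C\e^\alpha\abs{\ln\e}^8$ on its complement $\chi_2$; the smallness in $L^p$ comes from this two-region splitting (small support on $\chi_1$, small value on $\chi_2$), not from cancelling an $\e^{-\alpha}$ against a measure gain. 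With the $\e$ prefactor and the rescaling Jacobian one obtains $\nm{\ll[\qf]}_{L^{2m/(2m-1)}}\le C\e^{2-1/(2m)+\alpha}\abs{\ln\e}^8$, hence $\e^{3-1/(2m)}$ at $\alpha=1$. Second, the balancing you wrote---interior $\e^3$ against $\e^{-2-3/(2m)}$---gives $\e^{1-3/(2m)}$, which is $\e^{1/2}$ at $m=3^-$, not $\e^{1/3}$. The binding constraint is the boundary-layer $L^{2m/(2m-1)}$ source: $\e^{-2-3/(2m)}\cdot\e^{3-1/(2m)}=\e^{1-2/m}\to\e^{1/3}$; the extra $\e^{-1/(2m)}$ in the source norm (from the layer Jacobian in $L^{2m/(2m-1)}$) is exactly what fixes the rate. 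Two smaller slips: the regular-layer residual is $\e^2 G[\ub_1]$, since $\e G[\ub_0]$ is absorbed as the source of the $\ub_1$ equation; and the ``tails at $\eta=L$'' are in the interior, not on $\Gamma^-$ (which is $\eta=0$), so they contribute to $\ss$, not to $\g$.
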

\begin{proof}
Based on Theorem \ref{LI estimate}, we know there exists a unique $u^{\e}(\vx,\vw)\in L^{\infty}(\Omega\times\s^2)$, so we focus on the diffusive limit. \\
\ \\
Step 1: Remainder definitions.\\
We define the remainder as
\begin{align}\label{pf 1_}
R=&u^{\e}-\sum_{k=0}^{2}\e^k\u_k-\sum_{k=0}^{1}\e^k\ub_k-\uf_0=u^{\e}-\q-\qb-\qf,
\end{align}
where
\begin{align}
\q=&\u_0+\e\u_1+\e^2\u_2,\\
\qb=&\ub_0+\e\ub_1,\\
\qf=&\uf_0.
\end{align}
Noting the equation \eqref{coordinate 22} is equivalent to the
equation \eqref{transport}, we write $\ll$ to denote the neutron
transport operator as follows:
\begin{align}
\ll[u]=&\e\vw\cdot\nx u+ u-\bar u\\
=&\sin\phi\dfrac{\p u}{\p\eta}-\e\bigg(\dfrac{\sin^2\psi}{R_1-\e\eta}+\dfrac{\cos^2\psi}{R_2-\e\eta}\bigg)\cos\phi\dfrac{\p u}{\p\phi}\no\\\rule{0ex}{2.0em}
&+\e\bigg(\dfrac{\cos\phi\sin\psi}{P_1(1-\e\kk_1\eta)}\dfrac{\p u}{\p\iota_1}+\dfrac{\cos\phi\cos\psi}{P_2(1-\e\kk_2\eta)}\dfrac{\p u}{\p\iota_2}\bigg)\no\\\rule{0ex}{2.0em}
&+\e\Bigg(\dfrac{\sin\psi}{1-\e\kk_1\eta}\bigg(\cos\phi\Big(\vt_1\cdot\Big(\vt_2\times(\p_{12}\vr\times\vt_2)\Big)\Big)
-\kk_1P_1P_2\sin\phi\cos\psi\bigg)\no\\\rule{0ex}{2.0em}
&+\dfrac{\cos\psi}{1-\e\kk_2\eta}\bigg(-\cos\phi\Big(\vt_2\cdot\Big(\vt_1\times(\p_{12}\vr\times\vt_1)\Big)\Big)
+\kk_2P_1P_2\sin\phi\sin\psi\bigg)\Bigg)\dfrac{1}{P_1P_2}\dfrac{\p u}{\p\psi}+u-\bar u.\no
\end{align}
\ \\
Step 2: Estimates of $\ll[\q]$.\\
The interior contribution can be estimated as
\begin{align}
\ll[\q]=\e\vw\cdot\nx \q+ \q-\bar
\q=&\e^{3}\vw\cdot\nx \u_2.
\end{align}
Based on Theorem \ref{dt theorem 3}, we have
\begin{align}
\im{\ll[\q]}{\Omega\times\s^2}\leq&\im{\e^{3}\vw\cdot\nx \u_2}{\Omega\times\s^2}\leq C\e^{3}\im{\nx\u_2}{\Omega\times\s^2}\leq
C\e^{3}\abs{\ln(\e)}^8.
\end{align}
This implies
\begin{align}
\tm{\ll[\q]}{\Omega\times\s^2}\leq& C\e^{3}\abs{\ln(\e)}^8,\\
\nm{\ll[\q]}_{L^{\frac{2m}{2m-1}}(\Omega\times\s^2)}\leq& C\e^{3}\abs{\ln(\e)}^8,\\
\im{\ll[\q]}{\Omega\times\s^2}\leq& C\e^{3}\abs{\ln(\e)}^8.
\end{align}
\ \\
Step 3: Estimates of $\ll \qb$.\\
We need to estimate $\ub_0+\e\ub_1$. The boundary layer contribution can be
estimated as
\begin{align}
\ll[\ub_0+\e\ub_1]=&\sin\phi\dfrac{\p(\ub_0+\e\ub_1)}{\p\eta}-\e\bigg(\dfrac{\sin^2\psi}{R_1-\e\eta}+\dfrac{\cos^2\psi}{R_2-\e\eta}\bigg)\cos\phi\dfrac{\p (\ub_0+\e\ub_1)}{\p\phi}\no\\\rule{0ex}{2.0em}
&+\e\bigg(\dfrac{\cos\phi\sin\psi}{P_1(1-\e\kk_1\eta)}\dfrac{\p (\ub_0+\e\ub_1)}{\p\iota_1}+\dfrac{\cos\phi\cos\psi}{P_2(1-\e\kk_2\eta)}\dfrac{\p (\ub_0+\e\ub_1)}{\p\iota_2}\bigg)\no\\\rule{0ex}{2.0em}
&+\e\Bigg(\dfrac{\sin\psi}{1-\e\kk_1\eta}\bigg(\cos\phi\Big(\vt_1\cdot\Big(\vt_2\times(\p_{12}\vr\times\vt_2)\Big)\Big)
-\kk_1P_1P_2\sin\phi\cos\psi\bigg)\no\\\rule{0ex}{2.0em}
&+\dfrac{\cos\psi}{1-\e\kk_2\eta}\bigg(-\cos\phi\Big(\vt_2\cdot\Big(\vt_1\times(\p_{12}\vr\times\vt_1)\Big)\Big)
+\kk_2P_1P_2\sin\phi\sin\psi\bigg)\Bigg)\dfrac{1}{P_1P_2}\dfrac{\p (\ub_0+\e\ub_1)}{\p\psi}\no\\
&+(\ub_0+\e\ub_1)-(\bub_0+\e\bub_1)\no\\
=&\e^2\bigg(\dfrac{\cos\phi\sin\psi}{P_1(1-\e\kk_1\eta)}\dfrac{\p \ub_1}{\p\iota_1}+\dfrac{\cos\phi\cos\psi}{P_2(1-\e\kk_2\eta)}\dfrac{\p \ub_1}{\p\iota_2}\bigg)\no\\\rule{0ex}{2.0em}
&+\e^2\Bigg(\dfrac{\sin\psi}{1-\e\kk_1\eta}\bigg(\cos\phi\Big(\vt_1\cdot\Big(\vt_2\times(\p_{12}\vr\times\vt_2)\Big)\Big)
-\kk_1P_1P_2\sin\phi\cos\psi\bigg)\no\\\rule{0ex}{2.0em}
&+\dfrac{\cos\psi}{1-\e\kk_2\eta}\bigg(-\cos\phi\Big(\vt_2\cdot\Big(\vt_1\times(\p_{12}\vr\times\vt_1)\Big)\Big)
+\kk_2P_1P_2\sin\phi\sin\psi\bigg)\Bigg)\dfrac{1}{P_1P_2}\dfrac{\p\ub_1}{\p\psi}\no.
\end{align}
Based on Theorem \ref{dt theorem 1}, we have
\begin{align}
&\im{\e^2\bigg(\dfrac{\cos\phi\sin\psi}{P_1(1-\e\kk_1\eta)}\dfrac{\p \ub_1}{\p\iota_1}+\dfrac{\cos\phi\cos\psi}{P_2(1-\e\kk_2\eta)}\dfrac{\p \ub_1}{\p\iota_2}\bigg)}{\Omega\times\s^2}\\
\leq&C\e^2\bigg(\im{\frac{\p
\ub_1}{\p\iota_1}}{\Omega\times\s^2}+\im{\frac{\p
\ub_1}{\p\iota_2}}{\Omega\times\s^2}\bigg)\leq C\e^{2}\abs{\ln(\e)}^8.\no
\end{align}
Similarly, we can show that
\begin{align}
&\Bigg\Vert\e^2\Bigg(\dfrac{\sin\psi}{1-\e\kk_1\eta}\bigg(\cos\phi\Big(\vt_1\cdot\Big(\vt_2\times(\p_{12}\vr\times\vt_2)\Big)\Big)
-\kk_1P_1P_2\sin\phi\cos\psi\bigg)\\\rule{0ex}{2.0em}
&+\dfrac{\cos\psi}{1-\e\kk_2\eta}\bigg(-\cos\phi\Big(\vt_2\cdot\Big(\vt_1\times(\p_{12}\vr\times\vt_1)\Big)\Big)
+\kk_2P_1P_2\sin\phi\sin\psi\bigg)\Bigg)\dfrac{1}{P_1P_2}\dfrac{\p\ub_1}{\p\psi}\Bigg\Vert_{L^{\infty}(\Omega\times\s^2)}\no\\
\leq&C\e^2\im{\frac{\p
\ub_1}{\p\psi}}{\Omega\times\s^2}\leq C\e^{2}\abs{\ln(\e)}^8.\no
\end{align}
Also, the exponential decay of $\dfrac{\p\ub_1}{\p\iota_i}$ in Theorem \ref{dt theorem 1} and the rescaling $\eta=\dfrac{\mu}{\e}$ implies
\begin{align}
\\
&\tm{\e^2\bigg(\dfrac{\cos\phi\sin\psi}{P_1(1-\e\kk_1\eta)}\dfrac{\p \ub_1}{\p\iota_1}+\dfrac{\cos\phi\cos\psi}{P_2(1-\e\kk_2\eta)}\dfrac{\p \ub_1}{\p\iota_2}\bigg)}{\Omega\times\s^2}
\leq C\e^2\bigg(\tm{\frac{\p
\ub_1}{\p\iota_1}}{\Omega\times\s^2}+\tm{\frac{\p
\ub_1}{\p\iota_2}}{\Omega\times\s^2}\bigg)\no\\
\leq&\e^2\Bigg(\int_0^{R_{\min}}(R_{\min}-\mu)\bigg(\lnm{\frac{\p\ub_1}{\p\iota_1}(\mu)}^2
+\lnm{\frac{\p\ub_1}{\p\iota_2}(\mu)}^2\bigg)\ud{\mu}\Bigg)^{\frac{1}{2}}\no\\
\leq&\e^{\frac{5}{2}}\Bigg(\int_0^{\frac{R_{\min}}{\e}}(R_{\min}-\e\eta)
\bigg(\lnm{\frac{\p\ub_1}{\p\iota_1}(\eta)}^2
+\lnm{\frac{\p\ub_1}{\p\iota_2}(\eta)}^2\bigg)\ud{\eta}\Bigg)^{\frac{1}{2}}\no\\
\leq&C\e^{\frac{5}{2}}\abs{\ln(\e)}^8\Bigg(\int_0^{\infty}\ue^{-2K_0\eta}\ud{\eta}\Bigg)^{\frac{1}{2}}\no\\
\leq& C\e^{\frac{5}{2}}\abs{\ln(\e)}^8.\no
\end{align}
Similarly, we have
\begin{align}
\nm{\e^2\bigg(\dfrac{\cos\phi\sin\psi}{P_1(1-\e\kk_1\eta)}\dfrac{\p \ub_1}{\p\iota_1}+\dfrac{\cos\phi\cos\psi}{P_2(1-\e\kk_2\eta)}\dfrac{\p \ub_1}{\p\iota_2}\bigg)}_{L^{\frac{2m}{2m-1}}(\Omega\times\s^2)}\leq&C\e^{3-\frac{1}{2m}}\abs{\ln(\e)}^8.
\end{align}
Using similar arguments, we may justify
\begin{align}
&\Bigg\Vert\e^2\Bigg(\dfrac{\sin\psi}{1-\e\kk_1\eta}\bigg(\cos\phi\Big(\vt_1\cdot\Big(\vt_2\times(\p_{12}\vr\times\vt_2)\Big)\Big)
-\kk_1P_1P_2\sin\phi\cos\psi\bigg)\\\rule{0ex}{2.0em}
&+\dfrac{\cos\psi}{1-\e\kk_2\eta}\bigg(-\cos\phi\Big(\vt_2\cdot\Big(\vt_1\times(\p_{12}\vr\times\vt_1)\Big)\Big)
+\kk_2P_1P_2\sin\phi\sin\psi\bigg)\Bigg)\dfrac{1}{P_1P_2}\dfrac{\p\ub_1}{\p\psi}\Bigg\Vert_{L^{2}(\Omega\times\s^2)}
\leq C\e^{\frac{5}{2}}\abs{\ln(\e)}^8,\no
\end{align}
and
\begin{align}
&\Bigg\Vert\e^2\Bigg(\dfrac{\sin\psi}{1-\e\kk_1\eta}\bigg(\cos\phi\Big(\vt_1\cdot\Big(\vt_2\times(\p_{12}\vr\times\vt_2)\Big)\Big)
-\kk_1P_1P_2\sin\phi\cos\psi\bigg)\\\rule{0ex}{2.0em}
&+\dfrac{\cos\psi}{1-\e\kk_2\eta}\bigg(-\cos\phi\Big(\vt_2\cdot\Big(\vt_1\times(\p_{12}\vr\times\vt_1)\Big)\Big)
+\kk_2P_1P_2\sin\phi\sin\psi\bigg)\Bigg)\dfrac{1}{P_1P_2}\dfrac{\p\ub_1}{\p\psi}\Bigg\Vert_{L^{\frac{2m}{2m-1}}(\Omega\times\s^2)}
\leq C\e^{3-\frac{1}{2m}}\abs{\ln(\e)}^8.\no
\end{align}
In total, we have
\begin{align}
\tm{\ll[\qb]}{\Omega\times\s^2}\leq& C\e^{\frac{5}{2}}\abs{\ln(\e)}^8,\\
\nm{\ll[\qb]}_{L^{\frac{2m}{2m-1}}(\Omega\times\s^2)}\leq& C\e^{3-\frac{1}{2m}}\abs{\ln(\e)}^8,\\
\im{\ll[\qb]}{\Omega\times\s^2}\leq& C\e^{2}\abs{\ln(\e)}^8.
\end{align}
\ \\
Step 4: Estimates of $\ll \qf$.\\
We need to estimate $\uf_0$. The boundary layer contribution can be
estimated as
\begin{align}
\ll[\uf_0]=&\sin\phi\dfrac{\p \uf_0}{\p\eta}-\e\bigg(\dfrac{\sin^2\psi}{R_1-\e\eta}+\dfrac{\cos^2\psi}{R_2-\e\eta}\bigg)\cos\phi\dfrac{\p \uf_0}{\p\phi}\\\rule{0ex}{2.0em}
&+\e\bigg(\dfrac{\cos\phi\sin\psi}{P_1(1-\e\kk_1\eta)}\dfrac{\p \uf_0}{\p\iota_1}+\dfrac{\cos\phi\cos\psi}{P_2(1-\e\kk_2\eta)}\dfrac{\p \uf_0}{\p\iota_2}\bigg)\no\\\rule{0ex}{2.0em}
&+\e\Bigg(\dfrac{\sin\psi}{1-\e\kk_1\eta}\bigg(\cos\phi\Big(\vt_1\cdot\Big(\vt_2\times(\p_{12}\vr\times\vt_2)\Big)\Big)
-\kk_1P_1P_2\sin\phi\cos\psi\bigg)\no\\\rule{0ex}{2.0em}
&+\dfrac{\cos\psi}{1-\e\kk_2\eta}\bigg(-\cos\phi\Big(\vt_2\cdot\Big(\vt_1\times(\p_{12}\vr\times\vt_1)\Big)\Big)
+\kk_2P_1P_2\sin\phi\sin\psi\bigg)\Bigg)\dfrac{1}{P_1P_2}\dfrac{\p \uf_0}{\p\psi}+\uf_0-
\buf_0\no\\
=&\e\bigg(\dfrac{\cos\phi\sin\psi}{P_1(1-\e\kk_1\eta)}\dfrac{\p \uf_0}{\p\iota_1}+\dfrac{\cos\phi\cos\psi}{P_2(1-\e\kk_2\eta)}\dfrac{\p \uf_0}{\p\iota_2}\bigg)\no\\\rule{0ex}{2.0em}
&+\e\Bigg(\dfrac{\sin\psi}{1-\e\kk_1\eta}\bigg(\cos\phi\Big(\vt_1\cdot\Big(\vt_2\times(\p_{12}\vr\times\vt_2)\Big)\Big)
-\kk_1P_1P_2\sin\phi\cos\psi\bigg)\no\\\rule{0ex}{2.0em}
&+\dfrac{\cos\psi}{1-\e\kk_2\eta}\bigg(-\cos\phi\Big(\vt_2\cdot\Big(\vt_1\times(\p_{12}\vr\times\vt_1)\Big)\Big)
+\kk_2P_1P_2\sin\phi\sin\psi\bigg)\Bigg)\dfrac{1}{P_1P_2}\dfrac{\p \uf_0}{\p\psi}.\no
\end{align}
Based on Theorem \ref{dt theorem 2}, we have
\begin{align}
&\im{\e\bigg(\dfrac{\cos\phi\sin\psi}{P_1(1-\e\kk_1\eta)}\dfrac{\p \uf_0}{\p\iota_1}+\dfrac{\cos\phi\cos\psi}{P_2(1-\e\kk_2\eta)}\dfrac{\p \uf_0}{\p\iota_2}\bigg)}{\Omega\times\s^2}\\
\leq&C\e\bigg(\im{\frac{\p
\uf_0}{\p\iota_1}}{\Omega\times\s^2}+\im{\frac{\p
\uf_0}{\p\iota_2}}{\Omega\times\s^2}\bigg)\leq C\e\abs{\ln(\e)}^8.\no
\end{align}
Similarly, we can show that
\begin{align}
&\Bigg\Vert\e\Bigg(\dfrac{\sin\psi}{1-\e\kk_1\eta}\bigg(\cos\phi\Big(\vt_1\cdot\Big(\vt_2\times(\p_{12}\vr\times\vt_2)\Big)\Big)
-\kk_1P_1P_2\sin\phi\cos\psi\bigg)\\\rule{0ex}{2.0em}
&+\dfrac{\cos\psi}{1-\e\kk_2\eta}\bigg(-\cos\phi\Big(\vt_2\cdot\Big(\vt_1\times(\p_{12}\vr\times\vt_1)\Big)\Big)
+\kk_2P_1P_2\sin\phi\sin\psi\bigg)\Bigg)\dfrac{1}{P_1P_2}\dfrac{\p\uf_0}{\p\psi}\Bigg\Vert_{L^{\infty}(\Omega\times\s^2)}\no\\
\leq&C\e\im{\frac{\p
\uf_0}{\p\psi}}{\Omega\times\s^2}\leq C\e\abs{\ln(\e)}^8.\no
\end{align}
Also, the exponential decay of $\dfrac{\p\uf_0}{\p\iota_1}$ in Theorem \ref{dt theorem 2} and the rescaling $\eta=\dfrac{\mu}{\e}$ implies
\begin{align}
&\tm{\e\dfrac{\cos\phi\sin\psi}{P_1(1-\e\kk_1\eta)}\dfrac{\p \uf_0}{\p\iota_1}}{\Omega\times\s^2}
\leq \e\tm{\frac{\p
\uf_0}{\p\iota_1}}{\Omega\times\s^2}\\
\leq&\e\Bigg(\int_0^{R_{\min}}\int_{-\frac{\pi}{2}}^{\frac{\pi}{2}}\chi_1(R_{\min}-\mu)\lnm{\frac{\p \uf_0}{\p\iota_1}(\mu)}^2\ud{\phi}\ud{\mu}\Bigg)^{\frac{1}{2}}\no\\
&+\e\Bigg(\int_0^{R_{\min}}\int_{-\frac{\pi}{2}}^{\frac{\pi}{2}}\chi_2(R_{\min}-\mu)\lnm{\frac{\p \uf_0}{\p\iota_1}(\mu)}^2\ud{\phi}\ud{\mu}\Bigg)^{\frac{1}{2}}\no\\
\leq&\e^{\frac{3}{2}}\Bigg(\int_0^{\frac{R_{\min}}{\e}}\int_{-\frac{\pi}{2}}^{\frac{\pi}{2}}\chi_1(R_{\min}-\e\eta)\lnm{\frac{\p \uf_0}{\p\iota_1}(\eta)}^2\ud{\phi}\ud{\eta}\Bigg)^{\frac{1}{2}}\no\\
&+\e^{\frac{3}{2}}\Bigg(\int_0^{\frac{R_{\min}}{\e}}\int_{-\frac{\pi}{2}}^{\frac{\pi}{2}}\chi_2(R_{\min}-\e\eta)\lnm{\frac{\p \uf_0}{\p\iota_1}(\eta)}^2\ud{\phi}\ud{\eta}\Bigg)^{\frac{1}{2}}\no\\
\leq&C\Big(\e^{1+\frac{3}{2}\alpha}+\e^{\frac{3}{2}+\alpha}\Big)
\abs{\ln(\e)}^{8}\Bigg(\int_{-\pi}^{\pi}\int_0^{\frac{R_{\min}}{\e}}\ue^{-2K_0\eta}\ud{\eta}\Bigg)^{\frac{1}{2}}\no\\
\leq& C\e^{1+\frac{3}{2}\alpha}\abs{\ln(\e)}^8.\no
\end{align}
Here the smallness of $\chi_1$ quantity comes from the small domain $\abs{\phi}\leq\e^{\alpha}$ and $\abs{\eta}\leq \e^{2\alpha-1}$. The smallness of $\chi_2$ quantity comes from the extra $\e^{\alpha}$ for $0<\alpha<1$. This can naturally be extended to treat $\iota_2$ and $\psi$ derivatives. Hence, we have
\begin{align}
\tm{\e\bigg(\dfrac{\cos\phi\sin\psi}{P_1(1-\e\kk_1\eta)}\dfrac{\p \uf_0}{\p\iota_1}+\dfrac{\cos\phi\cos\psi}{P_2(1-\e\kk_2\eta)}\dfrac{\p \uf_0}{\p\iota_2}\bigg)}{\Omega\times\s^2}
\leq& C\e^{1+\frac{3}{2}\alpha}\abs{\ln(\e)}^8,
\end{align}
\begin{align}
\nm{\e\bigg(\dfrac{\cos\phi\sin\psi}{P_1(1-\e\kk_1\eta)}\dfrac{\p \uf_0}{\p\iota_1}+\dfrac{\cos\phi\cos\psi}{P_2(1-\e\kk_2\eta)}\dfrac{\p \uf_0}{\p\iota_2}\bigg)}_{L^{\frac{2m}{2m-1}}(\Omega\times\s^2)}\leq&C\e^{2-\frac{1}{2m}+\alpha}\abs{\ln(\e)}^8,
\end{align}
Using similar arguments, we may justify
\begin{align}
&\Bigg\Vert\e\Bigg(\dfrac{\sin\psi}{1-\e\kk_1\eta}\bigg(\cos\phi\Big(\vt_1\cdot\Big(\vt_2\times(\p_{12}\vr\times\vt_2)\Big)\Big)
-\kk_1P_1P_2\sin\phi\cos\psi\bigg)\\\rule{0ex}{2.0em}
&+\dfrac{\cos\psi}{1-\e\kk_2\eta}\bigg(-\cos\phi\Big(\vt_2\cdot\Big(\vt_1\times(\p_{12}\vr\times\vt_1)\Big)\Big)
+\kk_2P_1P_2\sin\phi\sin\psi\bigg)\Bigg)\dfrac{1}{P_1P_2}\dfrac{\p\uf_0}{\p\psi}\Bigg\Vert_{L^{2}(\Omega\times\s^2)}
\leq C\e^{1+\frac{3}{2}\alpha}\abs{\ln(\e)}^8,\no
\end{align}
and
\begin{align}
&\Bigg\Vert\e\Bigg(\dfrac{\sin\psi}{1-\e\kk_1\eta}\bigg(\cos\phi\Big(\vt_1\cdot\Big(\vt_2\times(\p_{12}\vr\times\vt_2)\Big)\Big)
-\kk_1P_1P_2\sin\phi\cos\psi\bigg)\\\rule{0ex}{2.0em}
&+\dfrac{\cos\psi}{1-\e\kk_2\eta}\bigg(-\cos\phi\Big(\vt_2\cdot\Big(\vt_1\times(\p_{12}\vr\times\vt_1)\Big)\Big)
+\kk_2P_1P_2\sin\phi\sin\psi\bigg)\Bigg)\dfrac{1}{P_1P_2}\dfrac{\p\uf_0}{\p\psi}\Bigg\Vert_{L^{\frac{2m}{2m-1}}(\Omega\times\s^2)}
\leq C\e^{2-\frac{1}{2m}+\alpha}\abs{\ln(\e)}^8.\no
\end{align}
In total, we have
\begin{align}
\tm{\ll[\qb]}{\Omega\times\s^2}\leq& C\e^{1+\frac{3}{2}\alpha}\abs{\ln(\e)}^8,\\
\nm{\ll[\qb]}_{L^{\frac{2m}{2m-1}}(\Omega\times\s^2)}\leq& C\e^{2-\frac{1}{2m}+\alpha}\abs{\ln(\e)}^8,\\
\im{\ll[\qb]}{\Omega\times\s^2}\leq& C\e\abs{\ln(\e)}^8.
\end{align}
\ \\
Step 5: Source Term and Boundary Condition.\\
In summary, since $\ll[u^{\e}]=0$, collecting estimates in Step 2 to Step 4 with $\alpha=1$, we can prove
\begin{align}
\tm{\ll[R]}{\Omega\times\s^2}\leq& C\e^{\frac{5}{2}}\abs{\ln(\e)}^8,\\
\nm{\ll[R]}_{L^{\frac{2m}{2m-1}}(\Omega\times\s^2)}\leq& C\e^{3-\frac{1}{2m}}\abs{\ln(\e)}^8,\\
\im{\ll[R]}{\Omega\times\s^2}\leq& C\e\abs{\ln(\e)}^8.
\end{align}
We can directly obtain that the boundary data is satisfied up to $O(\e)$, so we know for the boundary data $R^B$ of $R$,
\begin{align}
\tm{R^B}{\Gamma^-}\leq& C\e^2,\\
\nm{R^B}_{L^{m}(\Gamma^-)}\leq&C\e^2,\\
\im{R^B}{\Gamma^-}\leq& C\e^2
\end{align}
\ \\
Step 6: Diffusive Limit.\\
Hence, the remainder $R$ satisfies the equation
\begin{align}
\left\{
\begin{array}{l}
\e \vw\cdot\nabla_x R+R-\bar R=\ll[R]\ \ \text{in}\ \ \Omega\times\s^2,\\\rule{0ex}{2.0em}
R=R^B\ \ \text{for}\ \ \vw\cdot\vn<0\ \ \text{and}\ \
\vx_0\in\p\Omega.
\end{array}
\right.
\end{align}
By Theorem \ref{LI estimate}, we have for $1\leq m< 3$,
\begin{align}
\im{R}{\Omega\times\s^2}
\leq& C\bigg(\frac{1}{\e^{1+\frac{3}{2m}}}\nm{\ll[R]}_{L^{2}(\Omega\times\s^2)}+
\frac{1}{\e^{2+\frac{3}{2m}}}\nm{\ll[R]}_{L^{\frac{2m}{2m-1}}(\Omega\times\s^2)}+\nm{\ll[R]}_{L^{\infty}(\Omega\times\s^2)}\\
&+\frac{1}{\e^{\frac{1}{2}+\frac{3}{2m}}}\nm{R^B}_{L^2(\Gamma^-)}+\frac{1}{\e^{\frac{3}{2m}}}\nm{R^B}_{L^{\frac{4m}{3}}(\Gamma^-)}+\nm{R^B}_{L^{\infty}(\Gamma^-)}\bigg)\no\\
\leq& C\Bigg(\frac{1}{\e^{1+\frac{3}{2m}}}\e^{\frac{5}{2}}\abs{\ln(\e)}^8+
\frac{1}{\e^{2+\frac{3}{2m}}}\e^{3-\frac{1}{2m}}\abs{\ln(\e)}^8+(\e)\abs{\ln(\e)}^8\no\\
&+\frac{1}{\e^{\frac{1}{2}+\frac{3}{2m}}}(\e^2)+\frac{1}{\e^{\frac{3}{2m}}}(\e^2)+(\e^2)\Bigg)\no\\
\leq&C\e^{1-\frac{2}{m}}\abs{\ln(\e)}^8.\no
\end{align}
Here taking $m=3-$, we get the estimates
\begin{align}
\im{R}{\Omega\times\s^2}
\leq& C\e^{\frac{1}{3}-}\abs{\ln(\e)}^8.\no
\end{align}
Since it is easy to see
\begin{align}
\im{\sum_{k=1}^{2}\e^k\u_k+\sum_{k=1}^{1}\e^k\ub_k}{\Omega\times\s^2}\leq C\e,
\end{align}
our result naturally follows. We simply take $\u=\u_0$ and $\uu=\ub_0+\uf_0$. It is obvious that $\uu$ satisfies the $\e$-Milne problem with geometric correction with the full boundary data $g(\phi,\psi,\iota_1,\iota_2)-\mathscr{F}_{0,L}(\iota_1,\iota_2)-\mathfrak{F}_{0,L}(\iota_1,\iota_2)$. This completes the proof of main theorem.
\end{proof}

\chapter{Unsteady Neutron Transport Equation}

In this chapter, we prove the diffusive limit of the unsteady neutron transport equation \eqref{transport.}

\section{Asymptotic Expansions}

\subsection{Interior Expansion}

We define the interior expansion as follows:
\begin{align}\label{interior expansion.}
\u(t,\vx,\vw)\sim\sum_{k=0}^{2}\e^k\u_k(t,\vx,\vw),
\end{align}
where $\u_k$ can be defined by comparing the order of $\e$ via
plugging \eqref{interior expansion.} into the equation
\eqref{transport.}. Thus, we have
\begin{align}
\u_0-\bu_0=&0,\label{expansion temp 1.}\\
\u_1-\bu_1=&-\vw\cdot\nx\u_0,\label{expansion temp 2.}\\
\u_2-\bu_2=&-\dt\u_0-\vw\cdot\nx\u_1.\label{expansion temp 3.}
\end{align}
Plugging \eqref{expansion temp 1.} into \eqref{expansion temp 2.}, we
obtain
\begin{align}
\u_1=\bu_1-\vw\cdot\nx\bu_0.\label{expansion temp 4.}
\end{align}
Plugging \eqref{expansion temp 4.} into \eqref{expansion temp 3.}, we
get
\begin{align}\label{expansion temp 13.}
\u_2-\bu_2=&-\dt\u_0-\vw\cdot\nx(\bu_1-\vw\cdot\nx\bu_0)\\
=&-\dt\u_0-\vw\cdot\nx\bu_1+\Big(w_1^2\p_{x_1x_1}\bu_0+w_2^2\p_{x_2x_2}\bu_0+w_3^2\p_{x_3x_3}\bu_0\Big)\no\\
&+2\Big(w_1w_2\p_{x_1x_2}\bu_0+w_1w_3\p_{x_1x_3}\bu_0+w_2w_3\p_{x_2x_3}\bu_0\Big).\no
\end{align}
Integrating \eqref{expansion temp 13.} over $\vw\in\s^2$, we achieve
the final form
\begin{align}
\dt\bu_0-\frac{1}{3}\Delta_x\bu_0=0,
\end{align}
where all cross terms vanish due to the symmetry of $\s^2$. Hence, $\u_0(t,\vx,\vw)$ satisfies the equation
\begin{align}\label{interior 1.}
\left\{
\begin{array}{l}
\u_0=\bu_0,\\\rule{0ex}{2em}
\dt\bu_0-\dfrac{1}{3}\Delta_x\bu_0=0.
\end{array}
\right.
\end{align}
Similarly, we can derive that $\u_k(t,\vx,\vw)$ for $k=1,2$ satisfies
\begin{align}\label{interior 2.}
\left\{
\begin{array}{l}
\u_k=\bu_k-\vw\cdot\nx\u_{k-1},\\\rule{0ex}{1em}
\dt\bu_k-\dfrac{1}{3}\Delta_x\bu_k=0,
\end{array}
\right.
\end{align}
It is easy to see that $\bu_k$ satisfies an parabolic equation. However, the initial and boundary conditions of $\bu_k$ is unknown at this stage, since generally $\u_k$ does not necessarily satisfy the initial and boundary condition of \eqref{transport.}. Therefore, we have to resort to initial and boundary layer analysis.

\subsection{Initial Layer Expansion}\label{substitution..}

In order to determine the initial condition for $\u_k$, we need to define the initial layer expansion. Hence, we need a substitution:\\
\ \\
Temporal Substitution:\\
We define the rescaled variable $\tau$ by making the
scaling transform for $\tau=\dfrac{t}{\e^2}$,
which implies $\dfrac{\p u^{\e}}{\p t}=\dfrac{1}{\e^2}\dfrac{\p u^{\e}}{\p\tau}$.
Then, under the substitution $t\rt\tau$, the equation \eqref{transport.} is transformed into
\begin{align}\label{initial.}
\left\{ \begin{array}{l}\displaystyle \p_{\tau}u^{\e}+\e\vw\cdot\nabla_xu^{\e}+u^{\e}-\bar u^{\e}=0\ \ \ \text{for}\ \
(\tau,\vx,\vw)\in\rp\times\Omega\times\s^2,\\\rule{0ex}{2.0em}
u^{\e}(0,\vx,\vw)=h(\vx,\vw)\ \ \ \text{for}\ \
(\vx,\vw)\in\Omega\times\s^2,\\\rule{0ex}{2.0em}
u^{\e}(\tau,\vx_0,\vw)=g(\tau,\vx_0,\vw)\ \ \text{for}\ \ \tau\in\rp,\ \ \vx_0\in\p\Omega,\ \ \text{and}\ \
\vw\cdot\vn<0.
\end{array}
\right.
\end{align}
We define the initial layer expansion as follows:
\begin{align}\label{initial layer expansion.}
\ui(\tau,\vx,\vw)\sim\ui_{0}(\tau,\vx,\vw)+\e\ui_{1}(\tau,\vx,\vw),
\end{align}
where $\ui_{k}$ can be determined by comparing the order of $\e$ via
plugging \eqref{initial layer expansion.} into the equation
\eqref{initial.}. Thus, we
have
\begin{align}
\p_{\tau}\ui_{0}+\ui_{0}-\bui_{0}=&0,\label{initial expansion 1.}\\
\p_{\tau}\ui_{1}+\ui_{1}-\bui_{1}=&-\vw\cdot\nabla_x\ui_{0}.\label{initial expansion 2.}
\end{align}
Integrate \eqref{initial expansion 1.} over $\vw\in\s^2$, we have
\begin{align}
\p_{\tau}\bui_{0}=0,
\end{align}
which further implies
\begin{align}
\bui_{0}(\tau,\vx)=\bui_{0}(0,\vx)\ \ \text{for}\ \ \tau\in\rp.
\end{align}
Therefore, from \eqref{initial expansion 1.}, we can deduce
\begin{align}
\ui_{0}(\tau,\vx,\vw)=&\ue^{-\tau}\ui_{0}(0,\vx,\vw)+\int_0^{\tau}\bui_{0}(s,\vx)\ue^{s-\tau}\ud{s}=\ue^{-\tau}\ui_{0}(0,\vx,\vw)+(1-\ue^{-\tau})\bui_{0}(0,\vx).
\end{align}
This means that we have
\begin{align}
\left\{
\begin{array}{l}
\p_{\tau}\bui_{0}=0,\\\rule{0ex}{2.0em}
\ui_{0}(\tau,\vx,\vw)=\ue^{-\tau}\ui_{0}(0,\vx,\vw)+(1-\ue^{-\tau})\bui_{0}(0,\vx).
\end{array}
\right.
\end{align}
Similarly, we can derive that $\ub_{1}^I(\tau,\vx,\vw)$ satisfies
\begin{align}
\left\{
\begin{array}{l}
\p_{\tau}\bui_{1}=-\displaystyle\int_{\s^2}\bigg(\vw\cdot\nabla_x\ui_{0}\bigg)\ud{\vw},\\\rule{0ex}{2em}
\ui_{1}(\tau,\vx,\vw)=\ue^{-\tau}\ui_{1}(0,\vx,\vw)+\displaystyle\int_0^{\tau}\bigg(\bui_{1}-\vw\cdot\nabla_x\ui_{0}\bigg)(s,\vx,\vw)\ue^{s-\tau}\ud{s}.
\end{array}
\right.
\end{align}

\subsection{Boundary Layers Expansion}\label{substitution.}

Here, we implement the same geometric substitution as in steady problems. \\
\ \\
\textbf{Substitution 1: Spacial Substitution:}\\
In a neighborhood of $\vx_0\in\p\Omega$, define an orthogonal curvilinear coordinates system $(\iota_1,\iota_2)$ such that at $\vx_0$ the coordinate lines coincide with the principal directions. The boundary surface is $\vr=\vr(\iota_1,\iota_2)$. Let
\begin{align}
P=\abs{\p_1\vr\times\p_2\vr}=\abs{\p_1\vr}\abs{\p_2\vr}=P_1P_2,
\end{align}
for $P_i=\abs{\p_i\vr}$ with the unit tangential vectors
\begin{align}
\vt_1=\frac{\p_1\vr}{P_1},\ \ \vt_2=\frac{\p_2\vr}{P_2}.
\end{align}
Then consider the new coordinate system $(\mu,\iota_1,\iota_2)$, where $\mu$ denotes the normal distance to boundary surface $\p\Omega$, i.e.
\begin{align}
\vx=\vr-\mu\vn.
\end{align}
Let $\kk_1$ and $\kk_2$ be principal curvatures and
\begin{align}
R_{\min}=\min_{\iota_1,\iota_2}\{R_1(\iota_1,\iota_2),R_2(\iota_1,\iota_2)\},
\end{align}
where $R_1(\iota_1,\iota_2)=\dfrac{1}{\kk_1(\iota_1,\iota_2)}$ and $R_2(\iota_1,\iota_2)=\dfrac{1}{\kk_2(\iota_1,\iota_2)}$. Therefore, under the substitution $(x_1,x_2,x_3)\rt(\mu,\iota_1,\iota_2)$ for $0\leq\mu<R_{\min}$, the equation \eqref{transport.} is transformed into
\begin{align}\label{coordinate 9.}
\left\{
\begin{array}{l}\displaystyle
\e^2\dt u^{\e}+\e\bigg(-(\vw\cdot\vn)\frac{\p u^{\e}}{\p\mu}-\frac{\vw\cdot\vt_1}{P_1(\kk_1\mu-1)}\frac{\p u^{\e}}{\p\iota_1}-\frac{\vw\cdot\vt_2}{P_2(\kk_2\mu-1)}\frac{\p u^{\e}}{\p\iota_2}\bigg)+u^{\e}-\bar u^{\e}=0\ \ \text{in}\ \ \rp\times(0,R_{\min})\times\Sigma\times\s^2,\\\rule{0ex}{2.0em}
u^{\e}(0,\mu,\iota_1,\iota_2,\vw)=h(\mu,\iota_1,\iota_2,\vw)\ \ \text{in}\ \ (0,R_{\min})\times\Sigma\times\s^2,\\\rule{0ex}{2.0em}
u^{\e}(t,0,\iota_1,\iota_2,\vw)=g(t,\iota_1,\iota_2,\vw)\ \ \text{for}\
\ t\in\rp\ \ \text{and}\ \ \vw\cdot\vn<0.
\end{array}
\right.
\end{align}
\ \\
\textbf{Substitution 2: Velocity Substitution:}\\
Define the orthogonal velocity substitution
\begin{align}\label{coordinate 11.}
\left\{
\begin{aligned}
-\vw\cdot\vn=&\sin\phi,\\
\vw\cdot\vt_1=&\cos\phi\sin\psi,\\
\vw\cdot\vt_2=&\cos\phi\cos\psi,
\end{aligned}
\right.
\end{align}
for $\phi\in\left[-\dfrac{\pi}{2},\dfrac{\pi}{2}\right]$ and $\psi\in[-\pi,\pi]$.
Hence, under substitution $(w_1,w_2,w_3)\rt(\phi,\psi)$ for $\phi\in\left[-\dfrac{\pi}{2},\dfrac{\pi}{2}\right]$ and $\psi\in[-\pi,\pi]$,
the equation \eqref{transport.} is transformed into
\begin{align}\label{coordinate 21.}
\left\{
\begin{array}{l}\displaystyle
\e^2\dt u^{\e}+\e\sin\phi\dfrac{\p u^{\e}}{\p\mu}-\e\bigg(\dfrac{\sin^2\psi}{R_1-\mu}+\dfrac{\cos^2\psi}{R_2-\mu}\bigg)\cos\phi\dfrac{\p u^{\e}}{\p\phi}\\\rule{0ex}{2.0em}
+\e\bigg(\dfrac{\cos\phi\sin\psi}{P_1(1-\kk_1\mu)}\dfrac{\p u^{\e}}{\p\iota_1}+\dfrac{\cos\phi\cos\psi}{P_2(1-\kk_2\mu)}\dfrac{\p u^{\e}}{\p\iota_2}\bigg)\\\rule{0ex}{2.0em}
+\e\Bigg(\dfrac{\sin\psi}{1-\kk_1\mu}\bigg(\cos\phi\Big(\vt_1\cdot\Big(\vt_2\times(\p_{12}\vr\times\vt_2)\Big)\Big)
-\kk_1P_1P_2\sin\phi\cos\psi\bigg)\\\rule{0ex}{2.0em}
+\dfrac{\cos\psi}{1-\kk_2\mu}\bigg(-\cos\phi\Big(\vt_2\cdot\Big(\vt_1\times(\p_{12}\vr\times\vt_1)\Big)\Big)
+\kk_2P_1P_2\sin\phi\sin\psi\bigg)\Bigg)\dfrac{1}{P_1P_2}\dfrac{\p u^{\e}}{\p\psi}\\
+ u^{\e}-\bar u^{\e}=0\ \ \text{in}\ \ \rp\times(0,R_{\min})\times\Sigma\times\left[-\dfrac{\pi}{2},\dfrac{\pi}{2}\right]\times[-\pi,\pi],\\\rule{0ex}{2.0em}
u^{\e}(0,\mu,\iota_1,\iota_2,\phi,\psi)=h(\mu,\iota_1,\iota_2,\phi,\psi)\ \ \text{in}\ \ (0,R_{\min})\times\Sigma\times\left[-\dfrac{\pi}{2},\dfrac{\pi}{2}\right]\times[-\pi,\pi],\\\rule{0ex}{2.0em}
u^{\e}(t,0,\iota_1,\iota_2,\phi,\psi)=g(t,\iota_1,\iota_2,\phi,\psi)\ \ \text{for}\
\ t\in\rp\ \ \text{and}\ \ \sin\phi>0.
\end{array}
\right.
\end{align}
\ \\
\textbf{Substitution 3: Scaling Substitution:}\\
Define the scaled variable $\eta=\dfrac{\mu}{\e}$, which implies $\dfrac{\p}{\p\mu}=\dfrac{1}{\e}\dfrac{\p}{\p\eta}$. Then, under the substitution $\mu\rt\eta$, the equation \eqref{transport.} is transformed into
\begin{align}\label{coordinate 22.}
\left\{
\begin{array}{l}\displaystyle
\e^2\dt u^{\e}+\sin\phi\dfrac{\p u^{\e}}{\p\eta}-\e\bigg(\dfrac{\sin^2\psi}{R_1-\e\eta}+\dfrac{\cos^2\psi}{R_2-\e\eta}\bigg)\cos\phi\dfrac{\p u^{\e}}{\p\phi}\\\rule{0ex}{2.0em}
+\e\bigg(\dfrac{\cos\phi\sin\psi}{P_1(1-\e\kk_1\eta)}\dfrac{\p u^{\e}}{\p\iota_1}+\dfrac{\cos\phi\cos\psi}{P_2(1-\e\kk_2\eta)}\dfrac{\p u^{\e}}{\p\iota_2}\bigg)\\\rule{0ex}{2.0em}
+\e\Bigg(\dfrac{\sin\psi}{1-\e\kk_1\eta}\bigg(\cos\phi\Big(\vt_1\cdot\Big(\vt_2\times(\p_{12}\vr\times\vt_2)\Big)\Big)
-\kk_1P_1P_2\sin\phi\cos\psi\bigg)\\\rule{0ex}{2.0em}
+\dfrac{\cos\psi}{1-\e\kk_2\eta}\bigg(-\cos\phi\Big(\vt_2\cdot\Big(\vt_1\times(\p_{12}\vr\times\vt_1)\Big)\Big)
+\kk_2P_1P_2\sin\phi\sin\psi\bigg)\Bigg)\dfrac{1}{P_1P_2}\dfrac{\p u^{\e}}{\p\psi}+u^{\e}-\bar u^{\e}=0\\
+ u^{\e}-\bar u^{\e}=0\ \ \text{in}\ \ \rp\times\left(0,\dfrac{R_{\min}}{\e}\right)\times\Sigma\times\left[-\dfrac{\pi}{2},\dfrac{\pi}{2}\right]\times[-\pi,\pi],\\\rule{0ex}{2.0em}
u^{\e}(0,\eta,\iota_1,\iota_2,\phi,\psi)=h(\eta,\iota_1,\iota_2,\phi,\psi)\ \ \text{in}\ \ \left(0,\dfrac{R_{\min}}{\e}\right)\times\Sigma\times\left[-\dfrac{\pi}{2},\dfrac{\pi}{2}\right]\times[-\pi,\pi],\\\rule{0ex}{2.0em}
u^{\e}(t,0,\iota_1,\iota_2,\phi,\psi)=g(t,\iota_1,\iota_2,\phi,\psi)\ \ \text{for}\
\ t\in\rp\ \ \text{and}\ \ \sin\phi>0.
\end{array}
\right.
\end{align}
We define the boundary layer expansion as follows:
\begin{align}\label{boundary layer expansion.}
\uu(t,\eta,\iota_1,\iota_2,\phi,\psi)\sim\uu_0(t,\eta,\iota_1,\iota_2,\phi,\psi)+\e\uu_1(t,\eta,\iota_1,\iota_2,\phi,\psi),
\end{align}
where $\uu_k$ can be defined by comparing the order of $\e$ via
plugging (\eqref{boundary layer expansion.}) into the equation
(\eqref{coordinate 22.}). Thus, in a neighborhood of the boundary, we have
\begin{align}
\sin\phi\frac{\p\uu_0}{\p\eta}-\e\bigg(\dfrac{\sin^2\psi}{R_1-\e\eta}+\dfrac{\cos^2\psi}{R_2-\e\eta}\bigg)\cos\phi\dfrac{\p \uu_0}{\p\phi}+\uu_0-\buu_0=&0,\label{expansion temp 6.}\\
\sin\phi\frac{\p\uu_1}{\p\eta}-\e\bigg(\dfrac{\sin^2\psi}{R_1-\e\eta}+\dfrac{\cos^2\psi}{R_2-\e\eta}\bigg)\cos\phi\dfrac{\p \uu_1}{\p\phi}+\uu_1-\buu_1=&-G[\uu_0],\label{expansion temp 7.}
\end{align}
where
\begin{align}\label{coordinate 23.}
G[\uu_0]=&\bigg(\dfrac{\cos\phi\sin\psi}{P_1(1-\e\kk_1\eta)}\dfrac{\p \uu_0}{\p\iota_1}+\dfrac{\cos\phi\cos\psi}{P_2(1-\e\kk_2\eta)}\dfrac{\p \uu_0}{\p\iota_2}\bigg)\\
&+\Bigg(\dfrac{\sin\psi}{1-\e\kk_1\eta}\bigg(\cos\phi\Big(\vt_1\cdot\Big(\vt_2\times(\p_{12}\vr\times\vt_2)\Big)\Big)
-\kk_1P_1P_2\sin\phi\cos\psi\bigg)\no\\
&+\dfrac{\cos\psi}{1-\e\kk_2\eta}\bigg(-\cos\phi\Big(\vt_2\cdot\Big(\vt_1\times(\p_{12}\vr\times\vt_1)\Big)\Big)
+\kk_2P_1P_2\sin\phi\sin\psi\bigg)\Bigg)\dfrac{1}{P_1P_2}\dfrac{\p \uu_0}{\p\psi},\no
\end{align}
and
\begin{align}
\buu_k(\eta,\iota_1,\iota_2)=\frac{1}{4\pi}\int_{-\pi}^{\pi}\int_{-\frac{\pi}{2}}^{\frac{\pi}{2}}\uu_k(\eta,\iota_1,\iota_2,\phi,\psi)\cos\phi\ud{\phi}\ud{\psi}.
\end{align}

\subsection{Matching Procedure}

Here we still define the boundary data decomposition
\begin{align}
g(\phi,\psi)=\gb(\phi,\psi)+\gf(\phi,\psi).
\end{align}
For either $\gb$ and $\gf$, we may define the corresponding boundary layer $\ub$ and $\uf$. We call $\ub$ the regular boundary layer and expand it up to $O(\e)$, i.e.
\begin{align}
\ub(\eta,\iota_1,\iota_2,\phi,\psi)\sim\ub_0(\eta,\iota_1,\iota_2,\phi,\psi)+\e\ub_1(\eta,\iota_1,\iota_2,\phi,\psi).
\end{align}
Also, we call $\uf$ the singular boundary layer and only expand it to $O(1)$, i.e.
\begin{align}
\uf(\eta,\iota_1,\iota_2,\phi,\psi)\sim\uf_0(\eta,\iota_1,\iota_2,\phi,\psi).
\end{align}
They should both satisfy the $\e$-Milne problem with geometric correction.\\
\ \\
The bridge between the interior solution, initial layer and boundary layer
is the initial and boundary conditions of \eqref{transport.}, so we
consider the initial and boundary expansion:
\begin{align}
\u_0(0,\vx,\vw)+\ui_{0}(0,\vx,\vw)=&h(\vx,\vw),\\
\u_0(t,\vx_0,\vw)+\ub_0(t,\vx_0,\vw)+\uf_0(t,\vx_0,\vw)=&g(t,\vx_0,\vw)\ \ \text{for}\ \ \vx_0\in\p\Omega,\\
\u_1(t,\vx_0,\vw)+\ub_1(t,\vx_0,\vw)=&0\ \ \text{for}\ \ \vx_0\in\p\Omega.
\end{align}
The construction and determination of asymptotic expansion are as follows:\\
\ \\
Step 0: Preliminaries.\\
Define the force
\begin{align}
F(\e;\eta,\iota_1,\iota_2,\psi)=-\e\bigg(\dfrac{\sin^2\psi}{R_1(\iota_1,\iota_2)-\e\eta}+\dfrac{\cos^2\psi}{R_2(\iota_1,\iota_2)-\e\eta}\bigg).
\end{align}
Define the length of boundary layer $L=\e^{-n}$ for $0<n<\dfrac{1}{2}$. For $\phi\in\left[-\dfrac{\pi}{2},\dfrac{\pi}{2}\right]$, denote $\rr[\phi]=-\phi$.\\
\ \\
Step 1: Construction of $\ub_0$, $\uf_0$, $\ui_0$ and $\u_0$.\\
Define the zeroth-order regular boundary layer as
\begin{align}\label{et 1.}
\left\{
\begin{array}{l}
\ub_0(t,\eta,\iota_1,\iota_2,\phi,\psi)=\mathscr{F}_0 (t,\eta,\iota_1,\iota_2,\phi,\psi)-\mathscr{F}_{0,L}(t,\iota_1,\iota_2),\\\rule{0ex}{2em}
\sin\phi\dfrac{\p \mathscr{F}_0 }{\p\eta}+F(\e;\eta,\iota_1,\iota_2,\psi)\cos\phi\dfrac{\p
\mathscr{F}_0 }{\p\phi}+\mathscr{F}_0 -\bar{\mathscr{F}}_0 =0,\\\rule{0ex}{1.5em}
\mathscr{F}_0 (t,0,\iota_1,\iota_2,\phi,\psi)=\gb(t,\iota_1,\iota_2,\phi,\psi)\ \ \text{for}\ \
\sin\phi>0,\\\rule{0ex}{1.5em}
\mathscr{F}_0 (t,L,\iota_1,\iota_2,\phi,\psi)=\mathscr{F}_0 (t,L,\iota_1,\iota_2,\rr[\phi],\psi),
\end{array}
\right.
\end{align}
with $\mathscr{F}_{0,L}(t,\iota_1,\iota_2)$ is defined as in Theorem \ref{Milne theorem 1}.\\
\ \\
Define the zeroth-order singular boundary layer as
\begin{align}\label{et 2.}
\left\{
\begin{array}{l}
\uf_0(t,\eta,\iota_1,\iota_2,\phi,\psi)=\mathfrak{F}_0 (t,\eta,\iota_1,\iota_2,\phi,\psi)-\mathfrak{F} _{0,L}(\iota_1,\iota_2),\\\rule{0ex}{2em}
\sin\phi\dfrac{\p \mathfrak{F}_0 }{\p\eta}+F(\e;\eta,\iota_1,\iota_2,\psi)\cos\phi\dfrac{\p
\mathfrak{F}_0 }{\p\phi}+\mathfrak{F}_0 -\bar{\mathfrak{F}}_0 =0,\\\rule{0ex}{1.5em}
\mathfrak{F}_0 (t,0,\iota_1,\iota_2,\phi,\psi)=\gf(t,\iota_1,\iota_2,\phi,\psi)\ \ \text{for}\ \
\sin\phi>0,\\\rule{0ex}{1.5em}
\mathfrak{F}_0 (t,L,\iota_1,\iota_2,\phi,\psi)=\mathfrak{F}_0 (t,L,\iota_1,\iota_2,\rr[\phi],\psi),
\end{array}
\right.
\end{align}
with $\mathfrak{F} _{0,L}(t,\iota_1,\iota_2)$ is defined as in Theorem \ref{Milne theorem 1}.\\
\ \\
Define the zeroth-order initial layer as
\begin{align}\label{et 6.}
\left\{
\begin{array}{l}
\ui_{0}(\tau,\vx,\vw)=\mathfrak{f}_0(\tau,\vx,\vw)-\mathfrak{f}_0(\infty,\vx)\\\rule{0ex}{2.0em}
\p_{\tau}\bar{\mathfrak{f}}_0=0,\\\rule{0ex}{2.0em}
\mathfrak{f}_0(\tau,\vx,\vw)=\ue^{-\tau}\mathfrak{f}_0(0,\vx,\vw)+(1-\ue^{-\tau})\bar{\mathfrak{f}}_0(0,\vx),\\\rule{0ex}{2.0em}
\mathfrak{f}_0(0,\vx,\vw)=h(\vx,\vw),\\\rule{0ex}{2.0em}
\lim_{\tau\rt\infty}\mathfrak{f}_0(\tau,\vx,\vw)=\mathfrak{f}_0(\infty,\vx).
\end{array}
\right.
\end{align}
\ \\
Also, define the zeroth-order interior solution $\u_0(t,\vx,\vw)$ as
\begin{align}\label{et 3.}
\left\{
\begin{array}{l}
\u_0(t,\vx,\vw)=\bu_0(t,\vx) ,\\\rule{0ex}{1.5em} \dt\bu_0-\dfrac{1}{3}\Delta_x\bu_0=0\ \ \text{in}\
\ \Omega,\\\rule{0ex}{1.5em}
\bu_0(0,\vx)=\mathfrak{f}_0(\infty,\vx)\ \ \text{in}\ \
\Omega,\\\rule{0ex}{1.5em}
\bu_0(t,\vx_0)=\mathscr{F}_{0,L}(t,\iota_1,\iota_2)+\mathfrak{F}_{0,L}(t,\iota_1,\iota_2)\ \ \text{on}\ \
\p\Omega.
\end{array}
\right.
\end{align}
\ \\
Step 2: Construction of $\ub_1$, $\ui_1$ and $\u_1$.\\
Define the first-order regular boundary layer as
\begin{align}\label{et 4.}
\left\{
\begin{array}{l}
\ub_1(t,\eta,\iota_1,\iota_2,\phi,\psi)=\mathscr{F}_1 (t,\eta,\iota_1,\iota_2,\phi,\psi)-\mathscr{F} _{1,L}(t,\iota_1,\iota_2),\\\rule{0ex}{2em}
\sin\phi\dfrac{\p \mathscr{F}_1 }{\p\eta}+F(\e;\eta,\iota_1,\iota_2,\psi)\cos\phi\dfrac{\p
\mathscr{F}_1 }{\p\phi}+\mathscr{F}_1 -\bar{\mathscr{F}}_1 =G[\ub_0],\\\rule{0ex}{1.5em}
\mathscr{F}_1 (t,0,\iota_1,\iota_2,\phi,\psi)=\vw\cdot\nx\u_0(t,\vx_0)\ \ \text{for}\ \
\sin\phi>0,\\\rule{0ex}{1.5em}
\mathscr{F}_1 (t,L,\iota_1,\iota_2,\phi,\psi)=\mathscr{F}_1 (t,L,\iota_1,\iota_2,\rr[\phi],\psi),
\end{array}
\right.
\end{align}
with $\mathscr{F}_{1,L}(t,\iota_1,\iota_2)$ is defined as in Theorem \ref{Milne theorem 1}, and $G_0$ is defined in \eqref{coordinate 23.}.\\
\ \\
Define the first-order initial layer as
\begin{eqnarray}\label{et 7.}
\left\{
\begin{array}{l}
\ui_{1}(\tau,\vx,\vw)=\mathfrak{f}_1(\tau,\vx,\vw)-\mathfrak{f}_1(\infty,\vx)\\\rule{0ex}{2em}
\p_{\tau}\bar{\mathfrak{f}}_1=-\displaystyle\int_{\s^2}\bigg(\vw\cdot\nabla_x\ui_{0}\bigg)\ud{\vw},\\\rule{0ex}{2em}
\mathfrak{f}_1(\tau,\vx,\vw)=\ue^{-\tau}\mathfrak{f}_1(0,\vx,\vw)+\displaystyle\int_0^{\tau}
\bigg(\bar{\mathfrak{f}}_1-\vw\cdot\nabla_x\ui_{0}\bigg)(s,\vx,\vw)\ue^{s-\tau}\ud{s},\\\rule{0ex}{2em}
\mathfrak{f}_1(0,\vx,\vw)=\vw\cdot\nx\u_0(0,\vx),\\\rule{0ex}{2em}
\lim_{\tau\rt\infty}\mathfrak{f}_1(\tau,\vx,\vw)=\mathfrak{f}_1(\infty,\vx).
\end{array}
\right.
\end{eqnarray}
\ \\
Then define the first-order interior solution $\u_1(\vx,\vw)$ as
\begin{align}\label{et 5}
\left\{
\begin{array}{l}
\u_1(\vx,\vw)=\bu_1(\vx)-\vw\cdot\nx\u_0(\vx,\vw),\\\rule{0ex}{1.5em}
\dt\bu_1-\dfrac{1}{3}\Delta_x\bu_1=-\displaystyle\int_{\s^1}\Big(\vw\cdot\nx\u_{0}(\vx,\vw)\Big)\ud{\vw}\
\ \text{in}\ \ \Omega,\\\rule{0ex}{1em}
\bu_1(0,\vx)=\mathfrak{f}_1(\infty,\vx)\ \ \text{in}\ \
\Omega,\\\rule{0ex}{1.5em}\bu_1(t,\vx_0)=f _{1,L}(t,\iota_1,\iota_2)\ \ \text{on}\ \
\p\Omega.
\end{array}
\right.
\end{align}
Note that we do not define $\uf_1$ here.\\
\ \\
Step 3: Construction of $\u_2$.\\
Since we do not expand to $\ub_2$ and $\uf_2$, simply define the second-order interior solution as
\begin{align}
\left\{
\begin{array}{l}
\u_{2}(\vx,\vw)=\bu_{2}(\vx)-\vw\cdot\nx\u_{1}(\vx,\vw),\\\rule{0ex}{1.5em}
\dt\bu_2-\dfrac{1}{3}\Delta_x\bu_{2}=-\displaystyle\int_{\s^1}\Big(\vw\cdot\nx\u_{1}(\vx,\vw)\Big)\ud{\vw}\
\ \text{in}\ \ \Omega,\\\rule{0ex}{1.5em}
\bu_2(0,\vx)=0\ \ \text{in}\ \
\Omega,\\\rule{0ex}{1.5em}\bu_2(t,\vx_0)=0\ \ \text{on}\ \
\p\Omega.
\end{array}
\right.
\end{align}
Here, we might have $O(\e^3)$ error in this step due to the trivial boundary data. Thanks to the remainder estimate, it will not affect the diffusive limit.

\section{Remainder Estimate}

In this section, we consider the remainder equation for $u(t,\vx,\vw)$ as
\begin{align}\label{neutron.}
\left\{
\begin{array}{l}
\e^2\dt u+\e \vw\cdot\nabla_x u+u-\bar
u=\ss\ \ \ \text{for}\ \
(t,\vx,\vw)\in\rp\times\Omega\times\s^2,\\\rule{0ex}{2.0em}
u(0,\vx,\vw)=\h(\vx,\vw)\ \ \text{for}\ \ (\vx,\vw)\in\Omega\times\s^2\\\rule{0ex}{2.0em}
u(t,\vx_0,\vw)=\g(t,\vx_0,\vw)\ \ \text{for}\ \ t\in\rp,\ \ \vx_0\in\p\Omega\ \ \text{and}\ \ \vw\cdot\vn<0.
\end{array}
\right.
\end{align}
The initial and boundary data satisfy the
compatibility condition
\begin{align}
\h(\vx_0,\vw)=\g(0,\vx_0,\vw)\ \ \text{for}\ \ \vx_0\in\p\Omega\ \ \text{and}\ \ \vw\cdot\vn<0.
\end{align}
\ \\
Define the $L^p$ norms with $1\leq p<\infty$ and $L^{\infty}$ norm in $\rp\times\Omega\times\s^2$ as
usual:
\begin{align}
\nm{f}_{L^p(\rp\times\Omega\times\s^2)}=&\bigg(\int_0^{\infty}\int_{\Omega}\int_{\s^2}\abs{f(t,\vx,\vw)}^p\ud{\vw}\ud{\vx}\ud t\bigg)^{\frac{1}{p}},\\
\nm{f}_{L^{\infty}(\rp\times\Omega\times\s^2)}=&\text{esssup}_{(t,\vx,\vw)\in\rp\times\Omega\times\s^2}\abs{f(t,\vx,\vw)}.
\end{align}
Define the $L^p$ norm with $1\leq p<\infty$ and $L^{\infty}$ norm on the boundary $\Gamma=\p\Omega\times\s^2$ as follows:
\begin{align}
\nm{f}_{L^p(\rp\times\Gamma)}=&\bigg(\int_0^{\infty}\iint_{\Gamma}\abs{f(t,\vx,\vw)}^p\abs{\vw\cdot\vn}\ud{\vw}\ud{\vx}\ud t\bigg)^{\frac{1}{p}},\\
\nm{f}_{L^p(\rp\times\Gamma^{\pm})}=&\bigg(\int_0^{\infty}\iint_{\Gamma^{\pm}}\abs{f(t,\vx,\vw)}^p\abs{\vw\cdot\vn}\ud{\vw}\ud{\vx}\ud t\bigg)^{\frac{1}{p}},\\
\nm{f}_{L^{\infty}(\rp\times\Gamma)}=&\text{esssup}_{(t,\vx,\vw)\in\rp\times\Gamma}\abs{f(t,\vx,\vw)},\\
\nm{f}_{L^{\infty}(\rp\times\Gamma^{\pm})}=&\text{esssup}_{(t,\vx,\vw)\in\rp\times\Gamma^{\pm}}\abs{f(t,\vx,\vw)}.
\end{align}
In particular, we denote $\ud{\gamma}=(\vw\cdot\vn)\ud{\vw}\ud{\vx}$ on the boundary.\\
\ \\
Similar notation also applies to the space
$[0,t]\times\Omega\times\s^2$, $[0,t]\times\Gamma$, and
$[0,t]\times\Gamma^{\pm}$.

\subsection{$L^2$ Estimate}

\begin{lemma}[Green's Identity]\label{wt lemma 1}
Assume $f(t,\vx,\vw),\ g(t,\vx,\vw)\in
L^{\infty}(\rp\times\Omega\times\s^2)$ and $\dt f+\vw\cdot\nx
f,\ \dt g+\vw\cdot\nx g\in L^2(\rp\times\Omega\times\s^2)$
with $f,\ g\in L^2(\rp\times\Gamma)$. Then for almost all
$s,t\in\rp$,
\begin{align}
&\int_s^t\iint_{\Omega\times\s^2}\bigg((\dt f+\vw\cdot\nx f)g+(\dt
g+\vw\cdot\nx
g)f\bigg)\ud{\vx}\ud{\vw}\ud{r}\\
=&\int_s^t\int_{\Gamma}fg\ud{\gamma}\ud{r}+\iint_{\Omega\times\s^2}f(t)g(t)\ud{\vx}\ud{\vw}-\iint_{\Omega\times\s^2}f(s)g(s)\ud{\vx}\ud{\vw}.\no
\end{align}
\end{lemma}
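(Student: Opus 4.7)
The plan is to reduce this unsteady Green's identity to the steady one already established in Lemma~\ref{remainder lemma 1}, by recognizing the integrand as a total derivative in the combined $(t,\vx)$ variables. The key algebraic observation is the product rule
\begin{align*}
(\dt f+\vw\cdot\nx f)\,g+(\dt g+\vw\cdot\nx g)\,f=\dt(fg)+\vw\cdot\nx(fg),
\end{align*}
which holds pointwise for smooth $f,g$ and, by approximation, in the sense of distributions under the stated regularity hypotheses. Once this is in hand the proof splits cleanly into a temporal piece and a spatial/velocity piece.

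First I would handle the smooth case. Assuming $f,g\in C^\infty_c(\rp\times\bar\Omega\times\s^2)$ (or with sufficient regularity and decay so that all integrals are finite), I apply the product rule above and integrate over $[s,t]\times\Omega\times\s^2$. The $\dt(fg)$ contribution is evaluated by the fundamental theorem of calculus in $r$, producing the two slice terms
\begin{align*}
\iint_{\Omega\times\s^2}f(t)g(t)\ud{\vx}\ud{\vw}-\iint_{\Omega\times\s^2}f(s)g(s)\ud{\vx}\ud{\vw}.
\end{align*}
For the $\vw\cdot\nx(fg)$ contribution I apply the steady Green's identity of Lemma~\ref{remainder lemma 1}, with the product $fg$ in place of the single function, to each fixed time slice $r\in[s,t]$; this produces the boundary integral $\int_\Gamma fg\ud{\gamma}$, and then integration in $r$ gives $\int_s^t\int_\Gamma fg\ud{\gamma}\ud{r}$. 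Adding the two pieces yields the claimed identity.

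Second, I would extend to the stated regularity class by a standard mollification/density argument. Under the hypotheses $f,g\in L^\infty$ with $\dt f+\vw\cdot\nx f,\ \dt g+\vw\cdot\nx g\in L^2$ and traces $f,g\in L^2(\rp\times\Gamma)$, one convolves in $(t,\vx,\vw)$ with a smooth mollifier (using a tangential mollification near $\p\Omega$ so the traces are preserved in the $L^2(\Gamma)$ sense) to obtain sequences $f_k,g_k$ satisfying the smooth identity. Each term passes to the limit: the two slice integrals by dominated convergence (the $L^\infty$ bound plus a.e.\ convergence of slices, which holds for a.e.\ $s,t$), the boundary integral by $L^2(\Gamma)$ convergence, and the volume integral by pairing the $L^2$ convergence of the transport-plus-time derivatives against the $L^\infty$/$L^2$ bounds on the other factor.

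The main obstacle is not the formal calculation but the justification of the slice values $f(t),g(t)$ and of the trace on $\Gamma$ at this low regularity; this is precisely the issue handled in the kinetic-equations literature cited in Lemma~\ref{remainder lemma 1}, namely \cite[Chapter~9]{Cercignani.Illner.Pulvirenti1994} and \cite{Esposito.Guo.Kim.Marra2013}. The short proof I would record simply invokes that machinery: verify the product rule in the distributional sense, apply Lemma~\ref{remainder lemma 1} slicewise, and integrate in time via Fubini together with the absolute continuity of $r\mapsto\iint f(r)g(r)\ud{\vx}\ud{\vw}$ that follows from the $L^2$ regularity of $\dt(fg)+\vw\cdot\nx(fg)$.
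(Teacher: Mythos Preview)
Your proposal is correct and aligns with the paper's treatment: the paper does not give an independent proof but simply refers to \cite[Chapter~9]{Cercignani.Illner.Pulvirenti1994} and \cite{Esposito.Guo.Kim.Marra2013}, exactly the sources you invoke for the trace theory and density argument. Your sketch---product rule, slicewise application of Lemma~\ref{remainder lemma 1}, fundamental theorem of calculus in time, then mollification---is the standard route and is what those references carry out.
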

\begin{proof}
See \cite[Chapter 9]{Cercignani.Illner.Pulvirenti1994} and
\cite{Esposito.Guo.Kim.Marra2013}.
\end{proof}
\begin{theorem}\label{LT estimate.}
Assume $\ss(t,\vx,\vw)\in
L^{\infty}(\rp\times\Omega\times\s^2)$, $\h(\vx,\vw)\in
L^{\infty}(\Omega\times\s^2)$ and $\g(t,x_0,\vw)\in
L^{\infty}(\rp\times\Gamma^-)$. Then the neutron
transport equation \eqref{neutron.} has a unique solution
$u(t,\vx,\vw)\in L^2(\rp\times\Omega\times\s^2)$ satisfying
\begin{align}
&\nm{u (t)}_{L^2(\Omega\times\s^2)}+\frac{1}{\e^{\frac{1}{2}}}\nm{u}_{L^2([0,t]\times\Gamma^+)}+\nm{u }_{L^2([0,t]\times\Omega\times\s^2)}\\
\leq&
C \bigg(\frac{1}{\e^2}\tm{\ss}{[0,t]\times\Omega\times\s^2}+
\nm{\h}_{L^2(\Omega\times\s^2)}+\frac{1}{\e^{\frac{1}{2}}}\nm{\g}_{L^2([0,t]\times\Gamma^-)}\bigg).\no
\end{align}
\end{theorem}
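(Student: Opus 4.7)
The plan is to mirror the two-step structure of Theorem \ref{LT estimate} for the steady problem: a spacetime energy identity obtained by testing the equation against $u$ itself, followed by a kernel estimate obtained by introducing an auxiliary elliptic function and testing against a carefully chosen vector field. Existence and uniqueness follow from standard linear transport theory (mild formulation as in Theorem \ref{LI estimate'} combined with contraction on short time intervals); the substantive content is the a priori bound. First I would apply Green's identity (Lemma \ref{wt lemma 1}) to \eqref{neutron.} against $\phi=u$ on the slab $[0,t]\times\Omega\times\s^2$, obtaining
\begin{align*}
\tfrac{\e^2}{2}\nm{u(t)}_{L^2(\Omega\times\s^2)}^2+\tfrac{\e}{2}\nm{u}_{L^2([0,t]\times\Gamma^+)}^2+\nm{u-\bar u}_{L^2([0,t]\times\Omega\times\s^2)}^2=\int_0^t\iint_{\Omega\times\s^2}\ss u+\tfrac{\e^2}{2}\nm{\h}_{L^2(\Omega\times\s^2)}^2+\tfrac{\e}{2}\nm{\g}_{L^2([0,t]\times\Gamma^-)}^2,
\end{align*}
which controls $\nm{u(t)}$, the outflow trace, and the collision defect $\nm{u-\bar u}$, but not $\nm{\bar u}$ itself.

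Next I would carry out the kernel estimate. For almost every $s\in[0,t]$ I define $\xi(s,\cdot)\in H^2(\Omega)\cap H^1_0(\Omega)$ by $\Delta_x\xi=\bar u(s,\cdot)$, so $\nm{\xi(s)}_{H^2(\Omega)}\lesssim\nm{\bar u(s)}_{L^2(\Omega)}$, and take $\phi=-\vw\cdot\nx\xi$ as test function. The spatial pieces of the weak formulation reproduce \eqref{lt 5} verbatim: the contribution $-\e\int_0^t\iint(\vw\cdot\nx\phi)\bar u$ becomes $\tfrac{\e}{3}\int_0^t\nm{\bar u(s)}^2_{L^2(\Omega\times\s^2)}\ud s$, and the source, boundary, and collision-defect couplings are controlled by H\"older's inequality and the elliptic bound exactly as in the steady case. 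The genuinely new ingredient is the $\e^2\dt u\cdot\phi$ contribution: since $\int_{\s^2}\phi\ud\vw=0$ by symmetry, only $\dt(u-\bar u)$ enters, and integrating by parts in $t$ produces boundary-in-time terms $\e^2\iint u(t)\phi(t)$ and $\e^2\iint\h\phi(0)$, bounded respectively by $C\e^2\nm{u(t)}\nm{\bar u(t)}_{L^2(\Omega)}$ and $C\e^2\nm{\h}\nm{\bar u(0)}_{L^2(\Omega)}$, plus an interior correction $\e^2\int_0^t\iint u\dt\phi$ to be handled below. Together with Cauchy this yields an estimate of the form
\begin{align*}
\e\nm{\bar u}_{L^2([0,t]\times\Omega\times\s^2)}\lesssim\nm{u-\bar u}+\e\nm{u}_{L^2([0,t]\times\Gamma^+)}+\nm{\ss}+\e\nm{\g}_{L^2([0,t]\times\Gamma^-)}+\e\nm{u(t)}+\e\nm{\h}.
\end{align*}

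Finally I would combine the two. Squaring the kernel estimate, multiplying by a small constant, and adding it to the energy identity allows me to absorb $\nm{u-\bar u}^2$ and reconstruct $\nm{u}^2=\nm{\bar u}^2+\nm{u-\bar u}^2$ on the left. Splitting $\iint\ss u$ via Cauchy as $\tfrac{1}{4C_0\e^2}\nm{\ss}^2+C_0\e^2\nm{u}^2$ and absorbing the $\nm{u}^2$ piece, then dividing through by $\e^2$, produces the claimed inequality.

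The main obstacle is the interior correction $\e^2\int_0^t\iint u\dt\phi$ coming from the time-by-parts step: naively $\dt\phi=-\vw\cdot\nx\dt\xi$ requires $\nm{\dt\xi}_{H^1}\lesssim\nm{\dt\bar u}_{L^2(\Omega)}$, which is not \emph{a priori} available. The resolution uses the kernel symmetry $\int_{\s^2}\phi\ud\vw=0$ to reduce to $\int(u-\bar u)\dt\phi$, integrates by parts in $\vx$ (the boundary contribution vanishing because $\dt\xi|_{\p\Omega}=0$), and rewrites the result in terms of the current $\vec J=\int_{\s^2}\vw u\ud\vw$ and its divergence. Crucially, \eqref{neutron.} itself yields $\nx\cdot\vec J=-4\pi\e\dt\bar u+\tfrac{4\pi}{\e}\bar\ss$, and substituting $\dt\bar u=\Delta_x\dt\xi$ collapses the offending coupling into a definite-sign quantity of order $\e^3\nm{\nx\dt\xi}^2_{L^2(\Omega)}$ together with a residual source contribution; these are then reorganized and absorbed through Cauchy. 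Tracking the signs and the precise $\e$-powers in this last step is where the unsteady analysis genuinely departs from the steady argument and where the most care is required.
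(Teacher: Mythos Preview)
Your overall two-step architecture (energy estimate plus kernel estimate with $\phi=-\vw\cdot\nx\xi$) matches the paper, and everything up to and including the identification of the interior correction $-\e^2\int_0^t\iint\dt\phi\,(u-\bar u)$ is correct. The gap is in your treatment of that term.

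Carrying out your computation gives, after integrating by parts in $\vx$ and substituting $\nx\cdot\vec J=\tfrac{4\pi}{\e}(\bar\ss-\e^2\dt\bar u)$ and $\dt\bar u=\Delta_x\dt\xi$,
\[
-\e^2\int_0^t\iint_{\Omega\times\s^2}\dt\phi\,u
= -4\pi\e\int_0^t\int_\Omega\dt\xi\,\bar\ss
\;-\;4\pi\e^3\nm{\nx\dt\xi}^2_{L^2([0,t]\times\Omega)}.
\]
This sits on the \emph{left} side of the weak formulation alongside the good term $\tfrac{\e}{3}\nm{\bar u}^2$. Moving it to the right therefore produces $+4\pi\e^3\nm{\nx\dt\xi}^2$ with the \emph{unfavorable} sign, plus a source residual $4\pi\e\int\dt\xi\,\bar\ss$ that also carries the uncontrolled quantity $\dt\xi$. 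Neither term can be absorbed by Cauchy into anything already present: the $\e^3\nm{\nx\dt\xi}^2$ term has no counterpart of the correct sign, and Young's inequality applied to the source residual only generates more of the same bad term. Your identity is a dead end here, not a closure.

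The paper resolves this differently. Rather than manipulate the interior correction algebraically, it derives an \emph{independent} pointwise-in-time bound on $\nm{\nx\dt\zeta(s)}_{L^2(\Omega)}$ by taking a difference quotient of the weak formulation and testing against the scalar function $\phi=-\Phi(\vx)$ where $\Delta_x\Phi=\dt\bar u(s)$, $\Phi|_{\p\Omega}=0$ (so $\Phi=\dt\zeta$). Because $\Phi$ is independent of $\vw$, the collision term and the boundary term drop out, and one obtains directly
\[
\e^2\nm{\nx\dt\zeta(s)}_{L^2(\Omega\times\s^2)}\ \lesssim\ \nm{u(s)-\bar u(s)}_{L^2(\Omega\times\s^2)}+\nm{\ss(s)}_{L^2(\Omega\times\s^2)}.
\]
Inserting this back into the naive bound $\abs{\e^2\int\iint\dt\phi(u-\bar u)}\leq\e^2\nm{\nx\dt\zeta}\nm{u-\bar u}$ gives a contribution of size $\nm{u-\bar u}(\nm{u-\bar u}+\nm{\ss})$, which is harmless. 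This auxiliary scalar-test-function step is the missing idea in your proposal.
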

\begin{proof}
\ \\
Step 1: Kernel Estimate.\\
Applying Lemma \ref{wt lemma 1} to the
equation \eqref{neutron.}. Then for any
$\phi\in L^{2}([0,t]\times\Omega\times\s^2)$ satisfying
$\e\dt\phi+\vw\cdot\nx\phi\in L^2([0,t]\times\Omega\times\s^2)$
and $\phi\in L^{2}([0,t]\times\Gamma)$, we have
\begin{align}\label{wt 1}
&-\e^2\int_0^t\iint_{\Omega\times\s^2}\dt\phi
u -\e\int_0^t\iint_{\Omega\times\s^2}(\vw\cdot\nx\phi)u +\int_0^t\iint_{\Omega\times\s^2}(u -\bar
u )\phi\\
=&-\e^2\iint_{\Omega\times\s^2}u (t)\phi(t)+
\e^2\iint_{\Omega\times\s^2}u (0)\phi(0)-\e\int_0^t\int_{\Gamma}u \phi\ud{\gamma}+\int_0^t\iint_{\Omega\times\s^2}\ss\phi.\no
\end{align}
Our goal is to choose a particular test function $\phi$. We first
construct an auxiliary function $\zeta(t)$. Clearly, $u(t)\in
L^{2}(\Omega\times\s^2)$ implies that $\bar
u (t)\in L^{2}(\Omega)$. Define $\zeta(t,\vx)$ on $\Omega$
satisfying
\begin{align}\label{wt 2}
\left\{
\begin{array}{l}
\Delta_x \zeta(t)=\bar u (t)\ \ \text{in}\ \
\Omega,\\\rule{0ex}{1.0em} \zeta(t)=0\ \ \text{on}\ \ \p\Omega.
\end{array}
\right.
\end{align}
In the bounded domain $\Omega$, based on the standard elliptic
estimates, there exists a unique $\xi(t)\in H^2(\Omega)$ such that
\begin{align}\label{wt 3}
\nm{\zeta(t)}_{H^2(\Omega)}\leq C\nm{\bar
u (t)}_{L^2(\Omega)}.
\end{align}
We plug the test function
\begin{align}\label{wt 4}
\phi(t)=-\vw\cdot\nx\zeta(t)
\end{align}
into the weak formulation \eqref{wt 1} and estimate
each term there. By definition, we have
\begin{align}\label{wt 5}
\nm{\phi(t)}_{L^2(\Omega)}\leq C\nm{\zeta(t)}_{H^1(\Omega)}\leq
C \nm{\bar u (t)}_{L^2(\Omega)}.
\end{align}
On the other hand, we decompose
\begin{align}\label{wt 6}
-\e\int_0^t\iint_{\Omega\times\s^2}(\vw\cdot\nx\phi)u =&-\e\int_0^t\iint_{\Omega\times\s^2}(\vw\cdot\nx\phi)\bar
u -\e\int_0^t\iint_{\Omega\times\s^2}(\vw\cdot\nx\phi)(u -\bar
u ).
\end{align}
For the first term on the right-hand side of \eqref{wt 6}, by
\eqref{wt 2} and \eqref{wt 4}, we have
\begin{align}\label{wt 7}
&-\e\int_0^t\iint_{\Omega\times\s^2}(\vw\cdot\nx\phi)\bar
u \\
=&\e\int_0^t\iint_{\Omega\times\s^2}\bar
u \Big(w_1(w_1\p_{11}\xi+w_2\p_{12}\xi+w_3\p_{13}\xi)+w_2(w_1\p_{21}\xi+w_2\p_{22}\xi+w_3\p_{23}\xi)+w_3(w_1\p_{31}\xi+w_2\p_{32}\xi+w_3\p_{33}\xi)\Big)\no\\
=&\e\int_0^t\iint_{\Omega\times\s^2}\bar
u \Big(w_1^2\p_{11}\xi+w_2^2\p_{22}\xi+w_3^2\p_{33}\xi\Big)
=\frac{4}{3}\e\pi\int_0^t\int_{\Omega}\bar u (\p_{11}\zeta+\p_{22}\zeta+\p_{33}\zeta)
=\frac{4}{3}\e\pi\nm{\bar u }_{L^2([0,t]\times\Omega)}^2\no\\
=&\frac{1}{3}\e\nm{\bar
u }_{L^2([0,t]\times\Omega\times\s^2)}^2\no.
\end{align}
Here $\p_i$ denotes the derivative with respect to $x_i$. In the second equality, above cross terms vanish due to the symmetry
of the integral over $\s^2$.\\
For the second term
on the right-hand side of \eqref{wt 6}, H\"older's inequality and \eqref{wt 5} imply
\begin{align}\label{wt 8}
&\abs{-\e\int_0^t\iint_{\Omega\times\s^2}(\vw\cdot\nx\phi)(u -\bar
u )}\leq C \e\nm{\vw\cdot\nx\phi}_{L^2([0,t]\times\Omega\times\s^2)}\nm{u -\bar u }_{L^2([0,t]\times\Omega\times\s^2)}\\
\leq&C \e\nm{u -\bar u }_{L^2([0,t]\times\Omega\times\s^2)}\bigg(\int_0^t\nm{\zeta(s)}^2_{H^2(\Omega)}\ud{s}\bigg)^{\frac{1}{2}}
\leq C \e\nm{u -\bar
u }_{L^2([0,t]\times\Omega\times\s^2)}\nm{\bar
u }_{L^2([0,t]\times\Omega\times\s^2)}\no.
\end{align}
Using the trace theorem, H\"older's inequality and \eqref{wt 5}, we have
\begin{align}\label{wt 9}
\abs{\e\int_0^t\int_{\Gamma}u \phi\ud{\gamma}}
=&\e\int_0^t\int_{\Gamma^-}\g\phi\ud{\gamma}+\e\int_0^t\int_{\Gamma^+}u\phi\ud{\gamma}\\
\leq&\e\nm{\phi}_{L^2([0,t]\times\Gamma)}\bigg(\nm{u}_{L^2([0,t]\times\Gamma^+)}+\nm{\g}_{L^2([0,t]\times\Gamma^-)}\bigg)\no\\
\leq&\e\nm{\phi}_{H^1([0,t]\times\Omega\times\s^2)}\bigg(\nm{u}_{L^2([0,t]\times\Gamma^+)}+\nm{\g}_{L^2([0,t]\times\Gamma^-)}\bigg)\no\\
\leq&\e\nm{\bar u }_{L^2([0,t]\times\Omega\times\s^2)}\bigg(\nm{u}_{L^2([0,t]\times\Gamma^+)}+\nm{\g}_{L^2([0,t]\times\Gamma^-)}\bigg).\no
\end{align}
Also, using H\"older's inequality and \eqref{wt 5}, we obtain
\begin{align}\label{wt 10}
\abs{\int_0^t\iint_{\Omega\times\s^2}(u -\bar u )\phi}\leq
C \nm{\bar
u }_{L^2([0,t]\times\Omega\times\s^2)}\nm{u -\bar
u }_{L^2([0,t]\times\Omega\times\s^2)},
\end{align}
and
\begin{align}\label{wt 11}
\abs{\int_0^t\iint_{\Omega\times\s^2}\ss\phi}\leq C \nm{\bar
u }_{L^2([0,t]\times\Omega\times\s^2)}\nm{\ss}_{L^2([0,t]\times\Omega\times\s^2)}.
\end{align}
On the other hand, using H\"older's inequality and \eqref{wt 5}, we may directly estimate
\begin{align}\label{wt 12}
\abs{\e^2\iint_{\Omega\times\s^2}u (t)\phi(t)}\leq& C\e^2\nm{\phi(t)}_{L^2(\Omega\times\s^2)}\nm{u (t)}_{L^2(\Omega\times\s^2)}\\
\leq& C\e^2\nm{\bar
u (t)}_{L^2(\Omega\times\s^2)}\nm{u (t)}_{L^2(\Omega\times\s^2)}\leq C\e^2\nm{u (t)}^2_{L^2(\Omega\times\s^2)}.\no
\end{align}
Similarly, we know
\begin{align}\label{wt 13}
\abs{\e^2\iint_{\Omega\times\s^2}u (0)\phi(0)}\leq  C\e^2\nm{h}^2_{L^2(\Omega\times\s^2)}.
\end{align}
Then the only remaining term in \eqref{wt 1} is
\begin{align}\label{wt 14}
&\abs{-\e^2\int_0^t\iint_{\Omega\times\s^2}\dt\phi u} =\abs{\e^2\int_0^t\iint_{\Omega\times\s^2}\dt\phi (u -\bar u )}\\
&\leq\e^2\nm{\dt\phi}_{L^2([0,t]\times\Omega\times\s^2)}\nm{u -\bar
u }_{L^2([0,t]\times\Omega\times\s^2)}\leq\e^2\nm{\dt\nx\zeta}_{L^2([0,t]\times\Omega\times\s^2)}\nm{u -\bar
u }_{L^2([0,t]\times\Omega\times\s^2)}.\no
\end{align}
Now we have to tackle $\nm{\dt\nx\zeta}_{L^2([0,t]\times\Omega\times\s^2)}$. We will implement difference quotient.\\
\ \\
For test function $\phi(\vx,\vw)$ which is independent of time $t$,
in time interval $[t-\delta,t]$ the weak formulation in
\eqref{wt 1} can be simplified as
\begin{align}\label{wt 15}
&\e^2\iint_{\Omega\times\s^2}u (t)\phi-\e^2\iint_{\Omega\times\s^2}u ({t-\delta})\phi
-\e\int_{t-\delta}^t\iint_{\Omega\times\s^2}(\vw\cdot\nx\phi)u +\int_{t-\delta}^t\iint_{\Omega\times\s^2}(u -\bar
u )\phi\\
=&-\e\int_{t-\delta}^t\int_{\Gamma}u \phi\ud{\gamma}
+\int_{t-\delta}^t\iint_{\Omega\times\s^2}\ss\phi.\no
\end{align}
Taking difference quotient as $\delta\rt0$, we know
\begin{align}
\frac{\e^2\displaystyle\iint_{\Omega\times\s^2}u (t)\phi-\e^2\displaystyle\iint_{\Omega\times\s^2}u ({t-\delta})\phi}{\delta}\rt
\e^2\iint_{\Omega\times\s^2}\dt
u (t)\phi.
\end{align}
Then \eqref{wt 15} can be simplified into
\begin{align}\label{wt 16}
\e^2\iint_{\Omega\times\s^2}\dt u (t)\phi
=\e\iint_{\Omega\times\s^2}(\vw\cdot\nx\phi)u (t)-\iint_{\Omega\times\s^2}\Big(u(t) -\bar
u(t)\Big)\phi-\e\int_{\Gamma}u (t)\phi\ud{\gamma}+\iint_{\Omega\times\s^2}\ss(t)\phi.
\end{align}
For fixed $t$, taking $\phi=-\Phi(\vx)$ which satisfies
\begin{align}
\left\{
\begin{array}{l}
\Delta_x \Phi=\dt\bar u(t)\ \ \text{in}\ \
\Omega,\\\rule{0ex}{1.0em} \Phi(t)=0\ \ \text{on}\ \ \p\Omega,
\end{array}
\right.
\end{align}
which further implies $\Phi=\dt\zeta$.
Then the left-hand side of \eqref{wt 16} is actually
\begin{align}\label{wt 17}
LHS=&-\e^2\iint_{\Omega\times\s^2}\Phi\dt u (t)=-\e^2\iint_{\Omega\times\s^2}\Phi\dt\bar u
=-\e^2\iint_{\Omega\times\s^2}\Phi\Delta_x\Phi=\e^2\iint_{\Omega\times\s^2}\abs{\nx\Phi}^2\\
=&\e^2\nm{\dt\nx\zeta(t)}_{L^2(\Omega\times\s^2)}^2.\no
\end{align}
By a similar argument as above and Poincar\'e's inequality, the right-hand side of
\eqref{wt 16} can be bounded as
\begin{align}\label{wt 18}
RHS\leq& \nm{\dt\nx\zeta(t)}_{L^2(\Omega\times\s^2)}\bigg(\nm{u (t)-\bar
u (t)}_{L^2(\Omega\times\s^2)}
+\nm{\ss(t)}_{L^2(\Omega\times\s^2)}\bigg).
\end{align}
Note that the boundary terms vanish due to the construction of $\Phi$. Therefore, combining \eqref{wt 17} and \eqref{wt 18}, we have
\begin{align}\label{wt 19}
\e^2\nm{\dt\nx\zeta(t)}_{L^2(\Omega\times\s^2)}\leq&
\nm{u (t)-\bar u (t)}_{L^2(\Omega\times\s^2)}
+\nm{\ss(t)}_{L^2(\Omega\times\s^2)}.
\end{align}
\eqref{wt 19} is true for all $t$. Then we can further integrate it over $[0,t]$ to obtain
\begin{align}\label{wt 20}
\e^2\nm{\dt\nx\zeta}_{L^2([0,t]\times\Omega\times\s^2)}
\leq& \nm{u -\bar
u }_{L^2([0,t]\times\Omega\times\s^2)}
+\nm{\ss}_{L^2([0,t]\times\Omega\times\s^2)}.
\end{align}
Collecting terms in \eqref{wt 7}, \eqref{wt 8}, \eqref{wt 9}, \eqref{wt 10}, \eqref{wt 11}, \eqref{wt 12}, \eqref{wt 13},\eqref{wt 14} and \eqref{wt 20}, we have
\begin{align}\label{wt 21}
&\e\nm{\bar u }_{L^2([0,t]\times\Omega\times\s^2)}^2\\
\leq&C\Bigg(\e\nm{u -\bar
u }_{L^2([0,t]\times\Omega\times\s^2)}\nm{\bar
u }_{L^2([0,t]\times\Omega\times\s^2)}+\e\nm{\bar u }_{L^2([0,t]\times\Omega\times\s^2)}\bigg(\nm{u}_{L^2([0,t]\times\Gamma^+)}+\nm{\g}_{L^2([0,t]\times\Gamma^-)}\bigg)\no\\
&+\nm{\bar
u }_{L^2([0,t]\times\Omega\times\s^2)}\nm{u -\bar
u }_{L^2([0,t]\times\Omega\times\s^2)}+\nm{\bar
u }_{L^2([0,t]\times\Omega\times\s^2)}\nm{\ss}_{L^2([0,t]\times\Omega\times\s^2)}\no\\
&+\e^2\nm{u (t)}^2_{L^2(\Omega\times\s^2)}+\e^2\nm{h}^2_{L^2(\Omega\times\s^2)}+\nm{u -\bar
u }_{L^2([0,t]\times\Omega\times\s^2)}\bigg(\nm{u -\bar
u }_{L^2([0,t]\times\Omega\times\s^2)}
+\nm{\ss}_{L^2([0,t]\times\Omega\times\s^2)}\bigg)\Bigg)
.\no
\end{align}
Applying Cauchy's inequality to each term on the right-hand side of \eqref{wt 21}, we
obtain
\begin{align}\label{wt 22}
\e\nm{\bar u }_{L^2([0,t]\times\Omega\times\s^2)}
\leq& C \bigg(\nm{u -\bar
u }_{L^2([0,t]\times\Omega\times\s^2)}+\e^{\frac{3}{2}}\nm{u (t)}_{L^2(\Omega\times\s^2)}+\e\nm{u}_{L^2([0,t]\times\Gamma^+)}\\
&+\nm{\ss}_{L^2([0,t]\times\Omega\times\s^2)}+\e^{\frac{3}{2}}\nm{h}_{L^2(\Omega\times\s^2)}+\e\tm{\g}{[0,t]\times\Gamma^-}\bigg).\no
\end{align}
\ \\
Step 2: Energy Estimates.\\
In the weak formulation \eqref{wt 1}, we may take
the test function $\phi=u $ to get the energy estimate
\begin{align}\label{wt 23}
\\
\frac{\e^2}{2}\nm{u (t)}_{L^2(\Omega\times\s^2)}^2+\frac{\e}{2}\nm{u}^2_{L^2([0,t]\times\Gamma^+)}+\nm{u -\bar
u }_{L^2([0,t]\times\Omega\times\s^2)}^2
=\int_0^t\iint_{\Omega\times\s^2}\ss u +\frac{\e^2}{2}\nm{\h}_{L^2(\Omega\times\s^2)}^2+\frac{\e}{2}\nm{\g}_{L^2([0,t]\times\Gamma^-)}^2.\no
\end{align}
On the other hand, we can square on both sides of
\eqref{wt 22} to obtain
\begin{align}\label{wt 24}
\e^2\nm{\bar u }_{L^2([0,t]\times\Omega\times\s^2)}^2
\leq& C \bigg(\nm{u -\bar
u }_{L^2([0,t]\times\Omega\times\s^2)}^2+\e^{3}\nm{u (t)}_{L^2(\Omega\times\s^2)}^2+\e^2\nm{u}_{L^2([0,t]\times\Gamma^+)}^2\\
&+\nm{\ss}_{L^2([0,t]\times\Omega\times\s^2)}^2+\e^{3}\nm{h}_{L^2(\Omega\times\s^2)}^2+\e^2\tm{\g}{[0,t]\times\Gamma^-}^2\bigg).\no
\end{align}
Multiplying \eqref{wt 24} by a sufficiently small constant and adding it to \eqref{wt 23} to absorb $\nm{u -\bar
u }_{L^2([0,t]\times\Omega\times\s^2)}^2$, $\e^{3}\nm{u (t)}_{L^2(\Omega\times\s^2)}^2$,
and $\e^2\nm{u}_{L^2([0,t]\times\Gamma^+)}^2$, we deduce
\begin{align}
&\e^2\nm{u (t)}_{L^2(\Omega\times\s^2)}^2+\e\nm{u}_{L^2([0,t]\times\Gamma^+)}^2+\e^2\nm{\bar
u }_{L^2([0,t]\times\Omega\times\s^2)}^2+\nm{u -\bar
u }_{L^2([0,t]\times\Omega\times\s^2)}^2\\
\leq&
C \bigg(\tm{\ss}{[0,t]\times\Omega\times\s^2}^2+
\int_0^t\iint_{\Omega\times\s^2}\ss u +\e^2\nm{\h}_{L^2(\Omega\times\s^2)}^2+\e\nm{\g}_{L^2([0,t]\times\Gamma^-)}^2\bigg).\no
\end{align}
Hence, we have
\begin{align}\label{wt 25}
&\e^2\nm{u (t)}_{L^2(\Omega\times\s^2)}^2+\e\nm{u}_{L^2([0,t]\times\Gamma^+)}^2+\e^2\nm{u }_{L^2([0,t]\times\Omega\times\s^2)}^2\\
\leq&
C \bigg(\tm{\ss}{[0,t]\times\Omega\times\s^2}^2+
\int_0^t\iint_{\Omega\times\s^2}\ss u +\e^2\nm{\h}_{L^2(\Omega\times\s^2)}^2+\e\nm{\g}_{L^2([0,t]\times\Gamma^-)}^2\bigg).\no
\end{align}
A direct application of Cauchy's inequality leads to
\begin{align}\label{wt 26}
\int_0^t\iint_{\Omega\times\s^2}\ss u \leq\frac{1}{4C_0\e^2}\tm{\ss}{[0,t]\times\Omega\times\s^2}^2+C_0\e^2\tm{u }{[0,t]\times\Omega\times\s^2}^2.
\end{align}
Taking $C_0$ sufficiently small and inserting \eqref{wt 26} into \eqref{wt 25}, we obtain
\begin{align}\label{wt 27}
&\e^2\nm{u (t)}_{L^2(\Omega\times\s^2)}^2+\e\nm{u}_{L^2([0,t]\times\Gamma^+)}^2+\e^2\nm{u }_{L^2([0,t]\times\Omega\times\s^2)}^2\\
\leq&
C \bigg(\frac{1}{\e^2}\tm{\ss}{[0,t]\times\Omega\times\s^2}^2+
\e^2\nm{\h}_{L^2(\Omega\times\s^2)}^2+\e\nm{\g}_{L^2([0,t]\times\Gamma^-)}^2\bigg).\no
\end{align}
Then we have
\begin{align}\label{wt 28}
&\nm{u (t)}_{L^2(\Omega\times\s^2)}+\frac{1}{\e^{\frac{1}{2}}}\nm{u}_{L^2([0,t]\times\Gamma^+)}+\nm{u }_{L^2([0,t]\times\Omega\times\s^2)}\\
\leq&
C \bigg(\frac{1}{\e^2}\tm{\ss}{[0,t]\times\Omega\times\s^2}+
\nm{\h}_{L^2(\Omega\times\s^2)}+\frac{1}{\e^{\frac{1}{2}}}\nm{\g}_{L^2([0,t]\times\Gamma^-)}\bigg).\no
\end{align}
\end{proof}

\subsection{$L^{\infty}$ Estimate - First Round}

\begin{theorem}\label{LI estimate.'}
Assume $\ss(t,\vx,\vw)\in
L^{\infty}([0,t]\times\Omega\times\s^2)$, $\h(\vx,\vw)\in
L^{\infty}(\Omega\times\s^2)$ and $\g(t,x_0,\vw)\in
L^{\infty}([0,t]\times\Gamma^-)$. Then the solution $u(t,\vx,\vw)$ to the neutron transport
equation \eqref{neutron.} satisfies
\begin{align}
\im{u}{[0,t]\times\Omega\times\s^2}\leq& C \bigg(\frac{1}{\e^3}\nm{\ss}_{L^2([0,t]\times\Omega\times\s^2)}
+\im{\ss}{[0,t]\times\Omega\times\s^2}\\
&+\frac{1}{\e}\nm{\h}_{L^2(\Omega\times\s^2)}+\im{\h}{[0,t]\times\Omega\times\s^2}\bigg)\no\\
&+\frac{1}{\e^2}\nm{\g}_{L^2([0,t]\times\Gamma^-)}+\im{g}{[0,t]\times\Gamma^-}.\no
\end{align}
\end{theorem}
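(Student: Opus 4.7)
The argument parallels the double-Duhamel proof of Theorem~\ref{LI estimate'} for the steady case, with two new features: the backward characteristic carries a temporal component, and the trajectory may terminate either at the spatial boundary $\p\Omega$ or at the initial surface $t=0$. Along the characteristic $(t-\e^2\sigma,\vx-\e\sigma\vw,\vw)$, equation~\eqref{neutron.} reduces to $\frac{\ud u}{\ud\sigma}=u-\bar u-\ss$, giving the mild formulation
\[
u(t,\vx,\vw)=\ue^{-\sigma^{\ast}}B+\int_{0}^{\sigma^{\ast}}\ue^{-\sigma}\bar u(t-\e^2\sigma,\vx-\e\sigma\vw)\,\ud\sigma+\int_{0}^{\sigma^{\ast}}\ue^{-\sigma}\ss(t-\e^2\sigma,\vx-\e\sigma\vw,\vw)\,\ud\sigma,
\]
with $\sigma^{\ast}=\min\{t_b,\,t/\e^2\}$, and $B=\g$ or $B=\h$ according to whether the characteristic first meets $\p\Omega$ (at scaled time $t_b$) or the initial surface (at scaled time $t/\e^2$). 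I would then iterate by writing $\bar u=\frac{1}{4\pi}\int_{\s^2}u\,\ud\vw_t$ and re-applying the mild formula to the inner $u$, producing a double integral over $(\sigma,\vw_t,\rho)$.

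The ``trivial'' contributions---those from $B$ and $\ss$ in both the single and double iterates---bound pointwise in $L^{\infty}$ by $C(\nm{\h}_{L^{\infty}}+\nm{\g}_{L^{\infty}}+\nm{\ss}_{L^{\infty}})$, exactly as in Steps~2 and~3 of the proof of Theorem~\ref{LI estimate'}. The only nontrivial term is
\[
I:=\frac{1}{4\pi}\int_{0}^{\sigma^{\ast}}\!\!\int_{\s^2}\!\!\int_{0}^{\rho^{\ast}}\ue^{-(\sigma+\rho)}\,\bar u\big(t-\e^2(\sigma+\rho),\,\vx-\e\sigma\vw-\e\rho\vw_t\big)\,\ud\rho\,\ud\vw_t\,\ud\sigma.
\]
I would split $I=I_1+I_2$ at $\rho=\delta$ for some $0<\delta<<1$ to be chosen. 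The small-$\rho$ piece is bounded directly by $C\delta\nm{u}_{L^{\infty}}$, while $I_2$ is the one requiring work.

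For $I_2$ (with $\rho\geq\delta$), apply H\"older in $(\vw_t,\rho)$ and, for each fixed $\sigma$, implement the spherical substitution $(\phi,\psi,\rho)\to \vec y=\vx-\e\sigma\vw-\e\rho\vw_t$ used in the steady proof; its Jacobian is $\e^3\rho^2\sin\phi\geq \e^3\delta^2\sin\phi$. The time argument $t-\e^2(\sigma+\rho)$ inside $\bar u$ lies in $[0,t]$, so after the substitution it is controlled by $\sup_{t'\in[0,t]}\nm{\bar u(t')}_{L^2(\Omega)}$. Using $\int\ue^{-\sigma}\,\ud\sigma=O(1)$, this yields a bound of the form $|I_2|\leq C\e^{-3/2}\delta^{-1}\sup_{t'\in[0,t]}\nm{\bar u(t')}_{L^2(\Omega)}$. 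Taking the supremum of $|u|$ over $[0,t]\times\Omega\times\s^2$, absorbing $C\delta\nm{u}_{L^{\infty}}$ by choosing $\delta$ small, and invoking Theorem~\ref{LT estimate.} (which controls $\nm{u(t')}_{L^2(\Omega\times\s^2)}$ uniformly in $t'\in[0,t]$ by the stated combination of $\nm{\ss}_{L^2}$, $\nm{\h}_{L^2}$, $\nm{\g}_{L^2}$) yields the desired inequality after collecting powers of $\e$.

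The principal new difficulty compared to Theorem~\ref{LI estimate'} is the bookkeeping of the dual exit option: the characteristic may reach $t=0$ before hitting $\p\Omega$, and this case distinction must be carried consistently through both layers of the Duhamel iteration so that $\h$ enters the estimate in the correct way. Fortunately the exponential weight $\ue^{-\sigma}$ confines the effective range of $\sigma$ to $O(1)$, so the characteristic only traverses $O(\e^2)$ in time; consequently the temporal variation of $\bar u$ along the iterated characteristics is small, and the pointwise-in-time $L^2$ control supplied by Theorem~\ref{LT estimate.} is precisely the right substitute for the steady $L^2(\Omega)$ bound used in Theorem~\ref{LI estimate'}.
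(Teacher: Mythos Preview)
Your overall strategy is right, but there is a genuine gap at the heart of the $I_2$ estimate. After the spherical substitution $(\phi,\psi,\rho)\to\vec y=\vx-\e\sigma\vw-\e\rho\vw_t$, the time argument $t-\e^2(\sigma+\rho)$ becomes a function of $\vec y$ (since $\rho=\e^{-1}|\vec y-\vx_\sigma|$). You are therefore left with
\[
\int_{\Omega}\bigl|\bar u\bigl(t'(\vec y),\vec y\bigr)\bigr|^{2}\,\ud\vec y,
\qquad t'(\vec y)=t-\e^{2}\sigma-\e\,|\vec y-\vx_\sigma|,
\]
and this is \emph{not} controlled by $\sup_{t'\in[0,t]}\nm{\bar u(t')}_{L^{2}(\Omega)}^{2}$: for each $\vec y$ the time is evaluated at a different slice, and the pointwise supremum in $t'$ sits \emph{inside} the spatial integral, giving the $L^{2}_{x}L^{\infty}_{t}$ norm rather than the $L^{\infty}_{t}L^{2}_{x}$ norm supplied by Theorem~\ref{LT estimate.}. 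A traveling-bump example shows the gap is real, not merely notational.

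The paper avoids this coupling by iterating the Duhamel formula a \emph{third} time, producing an additional velocity integral over $\vw_s\in\s^{2}$. The change of variable is then taken in three \emph{angular} coordinates $(\phi,\psi,\psi')$ (one from $\vw_t$, two from $\vw_t,\vw_s$ combined), with Jacobian $\e^{3}r^{2}q\sin^{2}\phi\sin\phi'\,|\sin(\phi-\phi')|$; the radial variables $r,q$ that carry the time dependence remain as integration variables. Consequently, for each fixed $(s,r,q)$ the time argument is constant across the $\vec y$-integral, yielding a genuine $\nm{\bar u(t')}_{L^{2}(\Omega)}$ that can be bounded uniformly in $t'$ via Theorem~\ref{LT estimate.}. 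The price is two extra small-set cutoffs (on $\sin\phi$ and on $|\sin(\phi-\phi')|$) to keep the Jacobian bounded below, which produce further $\delta\nm{u}_{L^{\infty}}$ terms to absorb. Your bookkeeping of the dual exit option (boundary versus initial surface) is correct and must be carried through all three layers.
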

\begin{proof}
\ \\
Step 1: Mild formulation.\\
The characteristics $\Big(T(s),X(s),W(s)\Big)$ for $s\in\r$ of the equation \eqref{neutron.} which goes through $(t,\vx,\vw)$ is defined by
\begin{eqnarray}\label{character}
\Big(T(0),X(0),W(0)\Big)=(t,\vx,\vw),\quad
\dfrac{\ud{T(s)}}{\ud{s}}=\e^2,\quad
\dfrac{\ud{X(s)}}{\ud{s}}=\e W(s),\quad
\dfrac{\ud{W(s)}}{\ud{s}}=0,
\end{eqnarray}
which implies
\begin{eqnarray}
T(s)=t+\e^2s,\quad
X(s)=\vx+(\e\vw)s,\quad
W(s)=\vw.
\end{eqnarray}
We rewrite the equation \eqref{neutron.} along the characteristics as
\begin{align}\label{wt 31}
u(t,\vx,\vw)
=&{\bf 1}_{\{t> \e^2t_b\}}\g(t-\e^2t_b,\vx-{\e\vw}
t_b,\vw)\ue^{-t_b}+{\bf 1}_{\{t=\e^2t_b\}}\h(\vx-{\e\vw}
t_b,\vw)\ue^{- t_b}\\
&+\int_{0}^{t_b}\ss(t-\e^2{s},\vx-\e{s}\vw,\vw)\ue^{-{s}}\ud{s}+\int_{0}^{t_b}\bar u(t-\e^2{s},\vx-\e{s}\vw)\ue^{-{s}}\ud{s}\no\\
=&{\bf 1}_{\{t> \e^2t_b\}}\g(t-\e^2t_b,\vx-{\e\vw}
t_b,\vw)\ue^{-t_b}+{\bf 1}_{\{t=\e^2t_b\}}\h(\vx-{\e\vw}
t_b,\vw)\ue^{- t_b}\no\\
&+\int_{0}^{t_b}\ss(t-\e^2{s},\vx-\e{s}\vw,\vw)\ue^{-{s}}\ud{s}+\frac{1}{4\pi}\int_{0}^{t_b}\bigg(\int_{\s^2}u(t-\e^2{s},\vx-\e{s}\vw,\vw_t)\ud{\vw_t}\bigg)\ue^{-{s}}\ud{s},\no
\end{align}
where the backward exit time $t_b\in\left[0,t\e^{-2}\right]$ is defined as
\begin{equation}\label{exit time}
t_b(t,\vx,\vw)=\inf\left\{s\geq0: (t-\e^2s,\vx-\e s\vw,\vw)\in([0,t]\times\Gamma^-)\cup(\{0\}\times\Omega\times\s^2)\right\},
\end{equation}
and $\vw_t$ is a dummy variable for velocity.\\
\ \\
For the last term in \eqref{wt 31}, we rewrite $u(t-\e^2{s},\vx-\e{s}\vw,\vw_t)$ by tracking back along the characteristics again to obtain
\begin{align}\label{wt 32}
u(t,\vx,\vw)
=&{\bf 1}_{\{t> \e^2t_b\}}\g(t-\e^2t_b,\vx-{\e\vw}
t_b,\vw)\ue^{-t_b}+{\bf 1}_{\{t=\e^2t_b\}}\h(\vx-{\e\vw}
t_b,\vw)\ue^{- t_b}+\int_{0}^{t_b}\ss(t',\vx_t,\vw)\ue^{-{s}}\ud{s}\\
&+\frac{1}{4\pi}\int_{0}^{t_b}\bigg(\int_{\s^2}{\bf 1}_{\{t'> \e^2s_b\}}\g(t'-\e^2s_b,\vx_t-{\e\vw}
s_b,\vw_t)\ue^{-s_b}\ud{\vw_t}\bigg)\ue^{-{s}}\ud{s},\no\\
&+\frac{1}{4\pi}\int_{0}^{t_b}\bigg(\int_{\s^2}{\bf 1}_{\{t'=\e^2s_b\}}\h(\vx_t-{\e\vw}
s_b,\vw_t)\ue^{- s_b}\ud{\vw_t}\bigg)\ue^{-{s}}\ud{s},\no\\
&+\frac{1}{4\pi}\int_{0}^{t_b}\Bigg(\int_{\s^2}\bigg(\int_0^{s_b}\ss(t'-\e^2{r},\vx_t-\e{r}\vw_t,\vw_t)
\ue^{-{r}}\ud r\bigg)\ud{\vw_t}\Bigg)\ue^{-{s}}\ud{s}\no\\
&+\frac{1}{4\pi}\int_{0}^{t_b}\Bigg(\int_{\s^2}\bigg(\int_0^{s_b}\bar u(t'-\e^2{r},\vx_t-\e{r}\vw_t)
\ue^{-{r}}\ud r\bigg)\ud{\vw_t}\Bigg)\ue^{-{s}}\ud{s},\no
\end{align}
where the exiting time from $(t',\vx_t,\vw_t)=(t-\e^2s,\vx-\e{s}\vw,\vw_t)$ is defined as
\begin{align}
s_b(t,\vx,\vw;s,\vw_t)=\inf\left\{r\geq0: (t'-\e^2r,\vx_t-\e
r\vw_t,\vw_t)\in([0,t]\times\Gamma^-)\cup(\{0\}\times\Omega\times\s^2)\right\}.
\end{align}
We may reiterate \eqref{wt 32} again along the characteristics to obtain
\begin{align}\label{wt 32'}
&u(t,\vx,\vw)\\
=&{\bf 1}_{\{t> \e^2t_b\}}\g(t-\e^2t_b,\vx-{\e\vw}
t_b,\vw)\ue^{-t_b}+{\bf 1}_{\{t=\e^2t_b\}}\h(\vx-{\e\vw}
t_b,\vw)\ue^{- t_b}+\int_{0}^{t_b}\ss(t',\vx_t,\vw)\ue^{-{s}}\ud{s}\no\\
&+\frac{1}{4\pi}\int_{0}^{t_b}\bigg(\int_{\s^2}{\bf 1}_{\{t'> \e^2s_b\}}\g(t'-\e^2s_b,\vx_t-{\e\vw}
s_b,\vw_t)\ue^{-s_b}\ud{\vw_t}\bigg)\ue^{-{s}}\ud{s},\no\\
&+\frac{1}{4\pi}\int_{0}^{t_b}\bigg(\int_{\s^2}{\bf 1}_{\{t'=\e^2s_b\}}\h(\vx_t-{\e\vw}
s_b,\vw_t)\ue^{- s_b}\ud{\vw_t}\bigg)\ue^{-{s}}\ud{s},\no\\
&+\frac{1}{4\pi}\int_{0}^{t_b}\Bigg(\int_{\s^2}\bigg(\int_0^{s_b}\ss(t'',\vx_s,\vw_t)
\ue^{-{r}}\ud r\bigg)\ud{\vw_t}\Bigg)\ue^{-{s}}\ud{s}\no\\
&+\left(\frac{1}{4\pi}\right)^2\int_{0}^{t_b}\Bigg(\int_{\s^2}\bigg(\int_0^{s_b}\bigg(\int_{\s^2}{\bf 1}_{\{t''> \e^2r_b\}}\g(t''-\e^2r_b,\vx_s-\e\vw_sr_b,\vw_s)\ue^{-r_b}\ud{\vw_s}\bigg)
\ue^{-{r}}\ud r\bigg)\ud{\vw_t}\Bigg)\ue^{-{s}}\ud{s}\no\\
&+\left(\frac{1}{4\pi}\right)^2\int_{0}^{t_b}\Bigg(\int_{\s^2}\bigg(\int_0^{s_b}\bigg(\int_{\s^2}{\bf 1}_{\{t''=\e^2r_b\}}\h(\vx_s-{\e\vw_s}
r_b,\vw_s)\ue^{- r_b}\ud{\vw_s}\bigg)
\ue^{-{r}}\ud r\bigg)\ud{\vw_t}\Bigg)\ue^{-{s}}\ud{s}\no\\
&+\left(\frac{1}{4\pi}\right)^2\int_{0}^{t_b}\Bigg(\int_{\s^2}\bigg(\int_0^{s_b}\bigg(\int_{\s^2}\int_0^{r_b}\ss(t''-\e^2q,\vx_s-\e q\vw_s,\vw_s)
\ue^{-{q}}\ud q\ud\vw_s\bigg)
\ue^{-{r}}\ud r\bigg)\ud{\vw_t}\Bigg)\ue^{-{s}}\ud{s}\no\\
&+\left(\frac{1}{4\pi}\right)^2\int_{0}^{t_b}\Bigg(\int_{\s^2}\bigg(\int_0^{s_b}\bigg(\int_{\s^2}\int_0^{r_b}\bar u(t''-\e^2q,\vx_s-\e q\vw_s)
\ue^{-{q}}\ud q\ud\vw_s\bigg)
\ue^{-{r}}\ud r\bigg)\ud{\vw_t}\Bigg)\ue^{-{s}}\ud{s},\no
\end{align}
where the dummy variable $\vw_s$ and the exiting time from $(t'',\vx_s,\vw_s)=(t'-\e^2r,\vx_t-\e r\vw_t,\vw_s)$ is defined as
\begin{align}
r_b(t,\vx,\vw;s,\vw_t;r,\vw_s)=\inf\left\{q\geq0: (t''-\e^2q,\vx_s-\e q\vw_s,\vw_s)\in([0,t]\times\Gamma^-)\cup(\{0\}\times\Omega\times\s^2)\right\}.
\end{align}
\ \\
Step 2: Estimates of all but the last term in \eqref{wt 32'}.\\
Note the fact that $0\leq s\leq t_b$ and $0\leq r\leq s_b$. We can directly estimate
\begin{align}\label{wt 33}
\abs{{\bf 1}_{\{t> \e^2t_b\}}\g(t-\e^2t_b,\vx-{\e\vw}
t_b,\vw)\ue^{-t_b}}\leq \im{\g}{[0,t]\times\Gamma^-},
\end{align}
\begin{align}\label{wt 34}
\abs{{\bf 1}_{\{t=\e^2t_b\}}\h(\vx-{\e\vw}
t_b,\vw)\ue^{- t_b}}\leq \im{\h}{\Omega\times\s^2},
\end{align}
\begin{align}\label{wt 35}
\abs{\int_{0}^{t_b}\ss(t-\e^2{s},\vx-\e{s}\vw,\vw)\ue^{-{s}}\ud{s}}\leq\im{\ss}{[0,t]\times\Omega\times\s^2},
\end{align}
\begin{align}\label{wt 36}
\abs{\frac{1}{4\pi}\int_{0}^{t_b}\bigg(\int_{\s^2}{\bf 1}_{\{t'> \e^2s_b\}}\g(t'-\e^2s_b,\vx_t-{\e\vw}
s_b,\vw_t)\ue^{-s_b}\ud{\vw_t}\bigg)\ue^{-{s}}\ud{s}}
\leq\im{\g}{[0,t]\times\Gamma^-},
\end{align}
\begin{align}\label{wt 37}
\abs{\frac{1}{4\pi}\int_{0}^{t_b}\bigg(\int_{\s^2}{\bf 1}_{\{t'=\e^2s_b\}}\h(\vx_t-{\e\vw}
s_b,\vw_t)\ue^{- s_b}\ud{\vw_t}\bigg)\ue^{-{s}}\ud{s}}
\leq\im{\h}{\Omega\times\s^2},
\end{align}
\begin{align}\label{wt 38}
\abs{\frac{1}{4\pi}\int_{0}^{t_b}\Bigg(\int_{\s^2}\bigg(\int_0^{s_b}\ss(t'',\vx_s,\vw_t)
\ue^{-{r}}\ud r\bigg)\ud{\vw_t}\Bigg)\ue^{-{s}}\ud{s}}
\leq\im{\ss}{[0,t]\times\Omega\times\s^2}.
\end{align}
\begin{align}\label{wt 39}
\\
\abs{\left(\frac{1}{4\pi}\right)^2\int_{0}^{t_b}\Bigg(\int_{\s^2}\bigg(\int_0^{s_b}\bigg(\int_{\s^2}{\bf 1}_{\{t''> \e^2r_b\}}\g(t''-\e^2r_b,\vx_s-\e\vw_sr_b,\vw_s)\ue^{-r_b}\ud{\vw_s}\bigg)
\ue^{-{r}}\ud r\bigg)\ud{\vw_t}\Bigg)\ue^{-{s}}\ud{s}}
\leq\im{\g}{[0,t]\times\Gamma^-},\no
\end{align}
\begin{align}\label{wt 40}
\\
\abs{\left(\frac{1}{4\pi}\right)^2\int_{0}^{t_b}\Bigg(\int_{\s^2}\bigg(\int_0^{s_b}\bigg(\int_{\s^2}{\bf 1}_{\{t''=\e^2r_b\}}\h(\vx_s-{\e\vw_s}
r_b,\vw_s)\ue^{- r_b}\ud{\vw_s}\bigg)
\ue^{-{r}}\ud r\bigg)\ud{\vw_t}\Bigg)\ue^{-{s}}\ud{s}}
\leq\im{\h}{\Omega\times\s^2},\no
\end{align}
\begin{align}\label{wt 41}
\\
\abs{\left(\frac{1}{4\pi}\right)^2\int_{0}^{t_b}\Bigg(\int_{\s^2}\bigg(\int_0^{s_b}\bigg(\int_{\s^2}\int_0^{r_b}\ss(t''-\e^2q,\vx_s-\e q\vw_s,\vw_s)
\ue^{-{q}}\ud q\ud\vw_s\bigg)
\ue^{-{r}}\ud r\bigg)\ud{\vw_t}\Bigg)\ue^{-{s}}\ud{s}}
\leq\im{\ss}{[0,t]\times\Omega\times\s^2}.\no
\end{align}
\ \\
Step 3: Estimates of the last term in \eqref{wt 32'}.\\
Now we decompose the last term in \eqref{wt 32} as
\begin{align}
\int_{0}^{t_b}\int_{\s^2}\int_0^{s_b}\int_{\s^2}\int_0^{r_b}=&\int_{0}^{t_b}\int_{\s^2}\int_{0\leq r\leq\d}\int_{\s^2}\int_0^{r_b}+\int_{0}^{t_b}\int_{\s^2}\int_{ r\geq\d}\int_{\s^2}\int_0^{r_b}=I_1+I_1^{\ast},
\end{align}
for some $0<\delta<<1$ to be determined later. Since $I_1$ contains an integral in a very small region, we may directly estimate
\begin{align}\label{wt 42}
\abs{I_1}
\leq&\nm{u}_{L^{\infty}([0,t]\times\Omega\times\s^2)}\bigg(\int_{r\leq\delta}\ue^{-{r}}
\ud{r}\bigg)\leq C\delta\nm{u}_{L^{\infty}([0,t]\times\Omega\times\s^2)}.
\end{align}
We may further decompose
\begin{align}
I_1^{\ast}=\int_{0}^{t_b}\int_{\s^2}\int_{ r\geq\d}\int_{\s^2}\int_{0\leq q\leq\d}+\int_{0}^{t_b}\int_{\s^2}\int_{ r\geq\d}\int_{\s^2}\int_{q\geq\d}=I_2+I_2^{\ast}.
\end{align}
Similarly, we may bound
\begin{align}\label{wt 43}
\abs{I_2}
\leq&\nm{u}_{L^{\infty}([0,t]\times\Omega\times\s^2)}\bigg(\int_{q\leq\delta}\ue^{-{q}}
\ud{r}\bigg)\leq C\delta\nm{u}_{L^{\infty}([0,t]\times\Omega\times\s^2)}.
\end{align}
Then we need to handle the most complicated term $I_2^{\ast}$,
\begin{align}
\abs{I_2^{\ast}}\leq&C\int_{0}^{t_b}\int_{\s^2}\int_{ r\geq\d}\int_{\s^2}\int_{q\geq\d}\abs{\bar u(t''-\e^2q,\vx_t-\e r\vw_t-\e q\vw_s)}\ue^{-{q}}\ue^{-{r}}\ue^{-{s}}\ud q\ud\vw_s\ud{r}\ud{\vw_t}\Bigg)\ud{s}.
\end{align}
By the definition of $t_b$ and $s_b$, we always have
\begin{align}
\vx_t-\e r\vw_t-\e q\vw_s\in \Omega.
\end{align}
Hence, we may introduce the indicator function ${\bf{1}}_{\Omega}$ and apply H\"older's inequality
to obtain
\begin{align}\label{wt 44}
\\
\abs{I_2^{\ast}}\leq&C\int_{0}^{t_b}\Bigg(\int_{\s^2}\int_{ r\geq\d}\int_{\s^2}\int_{q\geq\d}{\bf{1}}_{\Omega}(\vx_t-\e r\vw_t-\e q\vw_s)\abs{\bar u(t''-\e^2q,\vx_t-\e r\vw_t-\e q\vw_s)}\ue^{-{q}}\ue^{-{r}}\ud q\ud\vw_s\ud{r}\ud{\vw_t}\Bigg)\ue^{-{s}}\ud{s}\no\\
\leq&C\int_{0}^{t_b}\Bigg(\bigg(\int_{\s^2}\int_{ r\geq\d}\int_{\s^2}\int_{q\geq\d}{\bf{1}}_{\Omega}(\vx_t-\e r\vw_t-\e q\vw_s)\abs{\bar u(t''-\e^2q,\vx_t-\e r\vw_t-\e q\vw_s)}^2\ue^{-{q}}\ue^{-{r}}\ud q\ud\vw_s\ud{r}\ud{\vw_t}\bigg)^{\frac{1}{2}}\no\\
&\times\bigg(\int_{\s^2}\int_{ r\geq\d}\int_{\s^2}\int_{q\geq\d}{\bf{1}}_{\Omega}(\vx_t-\e r\vw_t-\e q\vw_s)\ue^{-{q}}\ue^{-{r}}\ud q\ud\vw_s\ud{r}\ud{\vw_t}\bigg)^{\frac{1}{2}}\Bigg)\ue^{-{s}}\ud{s}\no\\
\leq&C\int_{0}^{t_b}\Bigg(\bigg(\int_{\s^2}\int_{ r\geq\d}\int_{\s^2}\int_{q\geq\d}{\bf{1}}_{\Omega}(\vx_t-\e r\vw_t-\e q\vw_s)\abs{\bar u(t''-\e^2q,\vx_t-\e r\vw_t-\e q\vw_s)}^2\ue^{-{q}}\ue^{-{r}}\ud q\ud\vw_s\ud{r}\ud{\vw_t}\bigg)^{\frac{1}{2}}\Bigg)\ue^{-{s}}\ud{s}.\no
\end{align}
Note $\vw_t,\vw_s\in\s^2$, which can be parameterized as
\begin{align}
\vw_t=&(\sin\phi\cos\psi,\sin\phi\sin\psi,\cos\phi),\\
\vw_s=&(\sin\phi'\cos\psi',\sin\phi'\sin\psi',\cos\phi'),
\end{align}
for $\phi,\phi'\in[0,\pi]$ and $\psi,\psi'\in[0,2\pi]$. Hence, we may write the integral
\begin{align}\label{wt 45}
\int_{\s^2}\cdots\ud\vw_t=\int_0^{2\pi}\int_0^{\pi}\cdots\sin\phi\ud\phi\ud\psi,\quad \int_{\s^2}\cdots\ud\vw_s=\int_0^{2\pi}\int_0^{\pi}\cdots\sin\phi'\ud\phi'\ud\psi'
\end{align}
where $\sin\phi$ and $\sin\phi'$ are the Jacobian of spherical coordinates. Then we define the change of variable
$[0,\pi]\times[0,2\pi]\times[0,2\pi]\rt \Omega: (\phi,\psi,\psi')\rt(y_1,y_2,y_3)=\vec
y=\vx_t-\e r\vw_t-\e q\vw_s$, i.e.
\begin{align}\label{wt 46}
\left\{
\begin{array}{rcl}
y_1&=&(x_t)_1-\e {r}\sin\phi\cos\psi-\e {q}\sin\phi'\cos\psi',\\
y_2&=&(x_t)_2-\e {r}\sin\phi\sin\psi-\e {q}\sin\phi'\sin\psi',\\
y_3&=&(x_t)_3-\e {r}\cos\phi-\e {q}\cos\phi'.
\end{array}
\right.
\end{align}
The Jacobian is
\begin{align}
\abs{\frac{\p(y_1,y_2,y_3)}{\p(\phi,\psi,\psi')}}=&\abs{\abs{\begin{array}{ccc}
-\e{r}\cos\phi\cos\psi&\e{r}\sin\phi\sin\psi&-\e q\sin\phi'\sin\psi'\\
-\e{r}\cos\phi\sin\psi&-\e{r}\sin\phi\cos\psi&\e q\sin\phi'\cos\psi'\\
\e{r}\sin\phi&0&0
\end{array}}}=\e^3{r}^2q\sin^2\phi\sin\phi'\sin(\phi-\phi').
\end{align}
Now we consider the restriction on $r$ and $q$. Naturally, $r,q\geq\delta$ implies ${r}^2q\geq\delta^3$. Therefore, we may bound the Jacobian
\begin{align}\label{wt 48}
\abs{\frac{\p(y_1,y_2,y_3)}{\p(\phi,\psi,\psi')}}\geq \e^3\delta^3\sin^2\phi\sin\phi'\sin(\phi-\phi').
\end{align}
$\sin\phi\sin\phi'$ can be cancelled out by \eqref{wt 45}. Now, we need to further restrict $\sin\phi\sin(\phi-\phi')$. Decompose
\begin{align}
I_2^{\ast}=&\int_{0}^{t_b}\int_{0\leq\sin\phi\leq\d}\int_{ r\geq\d}\int_{\s^2}\int_{q\geq\d}
+\int_{0}^{t_b}\int_{\sin\phi\geq\d}\int_{ r\geq\d}\int_{\abs{\sin(\phi-\phi')}\leq\d}\int_{q\geq\d}\\
&+\int_{0}^{t_b}\int_{\sin\phi\geq\d}\int_{ r\geq\d}\int_{\abs{\sin(\phi-\phi')}\geq\d}\int_{q\geq\d}=I_3+I_4+I_4^{\ast}.\no
\end{align}
Similar to the estimate of $I_1$ and $I_2$ in \eqref{wt 42} and \eqref{wt 43}, we may bound
\begin{align}\label{wt 47}
\abs{I_3}
\leq&\nm{u}_{L^{\infty}([0,t]\times\Omega\times\s^2)}\bigg(\int_{0\leq\sin\phi\leq\d}\sin\phi
\ud{\phi}\bigg)\leq C\delta\nm{u}_{L^{\infty}([0,t]\times\Omega\times\s^2)},
\end{align}
and
\begin{align}\label{wt 47'}
\abs{I_4}
\leq&\nm{u}_{L^{\infty}([0,t]\times\Omega\times\s^2)}\bigg(\int_{\abs{\sin(\phi-\phi')}\leq\d}\sin\phi'
\ud{\phi'}\bigg)\leq C\delta\nm{u}_{L^{\infty}([0,t]\times\Omega\times\s^2)}.
\end{align}
Then in the estimate of $I_4^{\ast}$, using \eqref{wt 48}, we know
\begin{align}\label{wt 49}
\abs{\frac{\p(y_1,y_2,y_3)}{\p(\phi,\psi,\psi')}}\geq \e^3\delta^5\sin\phi\sin\phi'.
\end{align}
Hence, we may bound
\begin{align}\label{wt 50}
\\
\abs{I_4^{\ast}}\leq&C\int_{0}^{t_b}\Bigg(\int_{ r\geq\d}\int_{q\geq\d}\bigg(\int_{\Omega}\frac{1}{\e^3\d^5}\abs{\bar u(t''-\e^2q,\vec
y)}^2\ud{\vec y}\bigg)\ue^{-{q}}\ue^{-{r}}\ud q\ud{r}\Bigg)^{\frac{1}{2}}\ue^{-{s}}\ud{s}
\leq\frac{C}{\e^{\frac{3}{2}}\d^{\frac{5}{2}}}\nm{\bar u(t)}_{L^2([0,t]\times\Omega)}.\no
\end{align}
\ \\
Step 4: Synthesis.\\
Summarizing \eqref{wt 33}, \eqref{wt 34}, \eqref{wt 35}, \eqref{wt 36}, \eqref{wt 37}, \eqref{wt 38}, \eqref{wt 39}, \eqref{wt 40}, \eqref{wt 41}, \eqref{wt 42}, \eqref{wt 43}, \eqref{wt 47}, \eqref{wt 47'}, \eqref{wt 50}, we have shown
\begin{align}
\\
\abs{u}\leq& C\bigg(\delta\im{u}{[0,t]\times\Omega\times\s^2}+\frac{1}{\d^{\frac{5}{2}}\e^{\frac{3}{2}}}\nm{\bar u(t)}_{L^2(\Omega\times\s^2)}+\im{\ss}{[0,t]\times\Omega\times\s^2}+\im{\h}{\Omega\times\s^2}+\im{\g}{[0,t]\times\Gamma^-}\bigg).\no
\end{align}
Taking supremum over all $(t,\vx,\vw)\in[0,t]\times\Omega\times\s^2$, we obtain
\begin{align}
\nm{u}_{L^{\infty}([0,t]\times\Omega\times\s^2)}\leq& C\bigg(\delta\im{u}{[0,t]\times\Omega\times\s^2}+\frac{1}{\d^{\frac{5}{2}}\e^{\frac{3}{2}}}\nm{\bar u(t)}_{L^2(\Omega\times\s^2)}\\
&+\im{\ss}{[0,t]\times\Omega\times\s^2}+\im{\h}{\Omega\times\s^2}+\im{\g}{[0,t]\times\Gamma^-}\bigg).\no
\end{align}
Then taking $\d$ sufficiently small to absorb $C\delta
\im{u}{[0,t]\times\Omega\times\s^2}$ into the left-hand side, we get
\begin{align}
\\
\nm{u}_{L^{\infty}([0,t]\times\Omega\times\s^2)}\leq C\bigg(\frac{1}{\d^{\frac{5}{2}}\e^{\frac{3}{2}}}\nm{\bar u(t)}_{L^2(\Omega\times\s^2)}+\im{\ss}{[0,t]\times\Omega\times\s^2}+\im{\h}{\Omega\times\s^2}+\im{\g}{[0,t]\times\Gamma^-}\bigg).\no
\end{align}
Using Theorem \ref{LT estimate.}, we get
\begin{align}
\nm{u}_{L^{\infty}(\Omega\times\s^2)}\leq& C\bigg(
\frac{1}{\e^{\frac{7}{2}}}\tm{\ss}{[0,t]\times\Omega\times\s^2}+\nm{\ss}_{L^{\infty}(\Omega\times\s^2)}\\
&+\frac{1}{\e^{\frac{3}{2}}}\nm{\h}_{L^2(\Omega\times\s^2)}+\im{\h}{\Omega\times\s^2}
+\frac{1}{\e^{2}}\nm{\g}_{L^2([0,t]\times\Gamma^-)}+\nm{\g}_{L^{\infty}(\Gamma^-)}\bigg).\no
\end{align}

\end{proof}

\subsection{$L^{2m}$ Estimate}

In this subsection, we try to improve previous estimates. In the following, let $o(1)$ denote a sufficiently small constant.
\begin{lemma}\label{LN estimate.}
Assume $\ss(t,\vx,\vw)\in
L^{\infty}(\rp\times\Omega\times\s^2)$, $\h(\vx,\vw)\in
L^{\infty}(\Omega\times\s^2)$ and $\g(t,x_0,\vw)\in
L^{\infty}(\rp\times\Gamma^-)$. Then the solution $u(t,\vx,\vw)$ to the neutron transport
equation \eqref{neutron.} satisfies
\begin{align}
&\nm{u (t)}_{L^{2m}(\Omega\times\s^2)}+\frac{1}{\e^{\frac{1}{2}+\frac{1}{2m}}}\nm{u}_{L^2([0,t]\times\Gamma^+)}
+\frac{1}{\e^{\frac{1}{2m}}}\nm{u}_{L^{2m}([0,t]\times\Gamma^+)}\\
&+\frac{1}{\e^{\frac{1}{2m}}}\nm{\bar
u }_{L^{2m}([0,t]\times\Omega\times\s^2)}+\dfrac{1}{\e^{1+\frac{1}{2m}}}\nm{u -\bar
u }_{L^2([0,t]\times\Omega\times\s^2)}+\frac{1}{\e^{\frac{1}{m}}}\nm{u -\bar
u }_{L^{2m}([0,t]\times\Omega\times\s^2)}\no\\
\leq& C \bigg(\frac{1}{\e^{2+\frac{1}{2m}}}\nm{S}_{L^{\frac{2m}{2m-1}}([0,t]\times\Omega\times\s^2)}
+\frac{1}{\e^{1+\frac{1}{2m}}}\nm{\ss}_{L^2([0,t]\times\Omega\times\s^2)}+\frac{1}{\e^{1+\frac{1}{2m}}}\nm{S}_{L^{2m}([0,t]\times\Omega\times\s^2)}\no\\
&+\frac{1}{\e^{\frac{1}{2m}}}\tm{\dt\ss}{[0,t]\times\Omega\times\s^2}+
\frac{1}{\e^{\frac{1}{2m}}}\nm{\ss(0)}_{L^2(\Omega\times\s^2)}\no\\
&+
\frac{1}{\e^{\frac{1}{2m}}}\nm{\h}_{L^2(\Omega\times\s^2)}+\nm{\h}_{L^{2m}(\Omega\times\s^2)}+
\e^{1-\frac{1}{2m}}\nm{\vw\cdot\nx\h}_{L^2(\Omega\times\s^2)}\no\\
&+\frac{1}{\e^{\frac{1}{2m}}}\nm{\g}_{L^{\frac{4m}{3}}([0,t]\times\Gamma^-)}+\frac{1}{\e^{\frac{1}{2m}}}\nm{\g}_{L^{2m}([0,t]\times\Gamma^-)}
+\frac{1}{\e^{\frac{1}{2}+\frac{1}{2m}}}\nm{\g}_{L^2([0,t]\times\Gamma^-)}+\e^{\frac{3}{2}-\frac{1}{2m}}\nm{\dt\g}_{L^2([0,t]\times\Gamma^-)}\bigg).\no
\end{align}
\end{lemma}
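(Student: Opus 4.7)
The plan is to mimic the structure of Theorem \ref{LN estimate} (steady $L^{2m}$ estimate) but with the time-dependent modifications introduced in Theorem \ref{LT estimate.}. That is, I will combine a kernel estimate based on a carefully chosen test function with an energy estimate, and then interpolate.

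First I would apply Green's identity (Lemma \ref{wt lemma 1}) to \eqref{neutron.} to get a weak formulation analogous to \eqref{wt 1}, valid on the time slab $[0,t]$. Then, to extract the $L^{2m}$ norm of $\bar u$, I would introduce the auxiliary function $\xi(t,\vx)$ solving $\Delta_x \xi(t) = \bar u(t)^{2m-1}$ in $\Omega$ with $\xi(t) = 0$ on $\p\Omega$. As in Theorem \ref{LN estimate}, elliptic regularity gives $\nm{\xi(t)}_{W^{2,2m/(2m-1)}(\Omega)} \leq C\nm{\bar u(t)}_{L^{2m}(\Omega)}^{2m-1}$, and plugging $\phi = -\vw\cdot\nx \xi$ into the weak formulation produces the desired $\e\nm{\bar u}_{L^{2m}}^{2m}$ term via the symmetry calculation over $\s^2$ (the constraint $m < 3$ comes from the Sobolev embedding $W^{2,2m/(2m-1)}\hookrightarrow H^1$, exactly as in Step 1 of Theorem \ref{LN estimate}).

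The main new difficulty is controlling the time-derivative terms on the right-hand side of the weak formulation: namely $\e^2\iint u(t)\phi(t)$, $\e^2\iint u(0)\phi(0)$, and the term $-\e^2 \int_0^t \iint \dt\phi \, u$, each of which now carries an $L^{2m}$-type weight through $\xi$. For the boundary-in-time terms I would estimate $\nm{\phi(t)}_{L^{2m/(2m-1)}(\Omega)}$ by $\nm{\bar u(t)}_{L^{2m}}^{2m-1}$ and similarly for $\phi(0)$, using the initial condition $u(0)=\h$. For the genuinely new term $\e^2\int_0^t \iint \dt\phi\, u$, I would run the difference-quotient argument from \eqref{wt 15}--\eqref{wt 20}, but with test function $\Phi = \dt\xi$ coming from $\Delta_x \Phi(t) = \dt\bar u(t)^{2m-1} = (2m-1)\bar u^{2m-2}\dt\bar u$. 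This yields a bound on $\e^2\nm{\dt \nx \xi}$ in terms of $\nm{u-\bar u}$, $\nm{\ss}$, plus boundary and initial correctors involving $\dt \ss(0)$, $\vw\cdot\nx\h$, and $\dt\g$ — this is precisely where the data norms $\nm{\dt\ss}_{L^2}$, $\nm{\ss(0)}_{L^2}$, $\nm{\vw\cdot\nx\h}_{L^2}$, and $\nm{\dt\g}_{L^2}$ enter the statement. This step is the main obstacle, because the nonlinear power $\bar u^{2m-2}$ needs to be distributed by H\"older so that the residual $L^{2m-1}$ factor of $\bar u$ can be absorbed into the $\nm{\bar u}_{L^{2m}}^{2m-1}$ prefactor produced by $\xi$.

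Next, I would derive the energy estimate by taking $\phi = u$ in the weak formulation (as in \eqref{wt 23}), obtaining the basic identity controlling $\e^2\nm{u(t)}_{L^2}^2$, $\e\nm{u}_{L^2(\Gamma^+)}^2$, and $\nm{u-\bar u}_{L^2}^2$ in terms of $\iint \ss u$, $\e^2\nm{\h}_{L^2}^2$, and $\e\nm{\g}_{L^2(\Gamma^-)}^2$. Squaring the kernel inequality, multiplying by a small constant and adding to the energy inequality (just as in \eqref{wt 24}--\eqref{wt 27}) lets me absorb $\nm{u-\bar u}_{L^2}^2$ and $\e\nm{u}_{L^2(\Gamma^+)}^2$.

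Finally, to upgrade the $L^2$-type control to genuine $L^{2m}$ bounds on $u-\bar u$ on the interior and on $u$ on the outflow boundary (the norms that appear on the left of the stated estimate), I would use the same interpolation tricks as in \eqref{ln 18}--\eqref{ln 21}:
\begin{align*}
\nm{u}_{L^{4m/3}(\Gamma^+)} &\leq \nm{u}_{L^2(\Gamma^+)}^{3/(2m)}\nm{u}_{L^{\infty}(\Gamma^+)}^{(2m-3)/(2m)},\\
\nm{u-\bar u}_{L^{2m}} &\leq \nm{u-\bar u}_{L^2}^{1/m}\nm{u-\bar u}_{L^{\infty}}^{(m-1)/m},
\end{align*}
combined with Young's inequality to split into an absorbable part and a remainder weighted by a power of $\e$. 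Handling $\iint \ss u$ with H\"older and Cauchy produces the $\e^{-2}\nm{\ss}_{L^{2m/(2m-1)}}^2$ and $\nm{\ss}_{L^2}^2$ terms, and dividing through by the appropriate power of $\e$ gives the stated inequality. The role of $m < 3$ is again the Sobolev embedding, and all new time-norm terms trace back to the difference-quotient step above.
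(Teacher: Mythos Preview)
Your proposal has two genuine gaps that prevent it from reaching the stated estimate.

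\textbf{Missing $L^{2m}$ energy estimate.} You only propose the test function $\phi=u$ (the $L^2$ energy), and then plan to upgrade to $L^{2m}$ via interpolation with $L^{\infty}$, exactly as in the steady Theorem~\ref{LN estimate}. But the steady $L^{2m}$ estimate is \emph{not closed}: its right-hand side carries $o(1)\e^{3/(2m)}\nm{u}_{L^{\infty}}$, which is acceptable there because the second-round $L^{\infty}$ bound absorbs it. The unsteady Lemma~\ref{LN estimate.} has \emph{no} $L^{\infty}$ norms of $u$ on the right, so your interpolation scheme cannot produce it. The paper's key new step is a direct $L^{2m}$ energy estimate: take $\phi=u^{2m-1}$ in the weak formulation and use the coercivity $\int(u-\bar u)u^{2m-1}\geq C\nm{u-\bar u}_{L^{2m}}^{2m}$. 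This gives control of $\e^{2}\nm{u(t)}_{L^{2m}}^{2m}$, $\e\nm{u}_{L^{2m}(\Gamma^+)}^{2m}$, and $\nm{u-\bar u}_{L^{2m}}^{2m}$ directly, with only $\nm{S}_{L^{2m}}$, $\nm{\h}_{L^{2m}}$, $\nm{\g}_{L^{2m}(\Gamma^-)}$ on the right. The boundary term $\nm{u}_{L^{4m/3}(\Gamma^+)}$ is then interpolated between $L^{2}$ and $L^{2m}$, not $L^{\infty}$.

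\textbf{Handling the time derivative.} Your difference-quotient plan with $\Delta_x\Phi=(2m-1)\bar u^{2m-2}\dt\bar u$ does not close: in \eqref{wt 17} the identity $-\e^{2}\!\iint\Phi\,\dt u = \e^{2}\nm{\nx\Phi}^{2}$ relies on $\Delta_x\Phi=\dt\bar u$, and the nonlinear right-hand side destroys this structure for $m>1$. The paper avoids this entirely. In the kernel step it uses the weak formulation \emph{without} integrating by parts in time, keeping the term $\e^{2}\int_0^t\!\iint\phi\,\dt u$. Since $\phi=-\vw\cdot\nx\zeta$ is odd in $\vw$, $\int\phi\,\dt\bar u=0$, so this reduces to $\e^{2}\int_0^t\!\iint\phi\,\dt(u-\bar u)$, bounded by $\e^{2}\nm{\bar u}_{L^{2m}}^{2m-1}\nm{\dt(u-\bar u)}_{L^{2}}$. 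The factor $\nm{\dt(u-\bar u)}_{L^{2}}$ is then controlled by applying the already-proved $L^{2}$ theory (Theorem~\ref{LT estimate.}) to the \emph{time-differentiated} equation \eqref{wt 65}; this is precisely where $\dt\ss$, $\ss(0)$, $\vw\cdot\nx\h$, and $\dt\g$ enter, linearly and without any nonlinear H\"older juggling.
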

\begin{proof}
\ \\
Step 1: Kernel Estimate.\\
Applying Lemma \eqref{wt lemma 1} to the
equation \eqref{neutron.}. Then for any
$\phi\in L^{2}([0,t]\times\Omega\times\s^2)$ satisfying
$\e\dt\phi+\vw\cdot\nx\phi\in L^2([0,t]\times\Omega\times\s^2)$
and $\phi\in L^{2}([0,t]\times\Gamma)$, we have
\begin{align}\label{wt 51}
\e^2\int_0^t\iint_{\Omega\times\s^2}\phi
\dt u-\e\int_0^t\iint_{\Omega\times\s^2}(\vw\cdot\nx\phi)u+\int_0^t\iint_{\Omega\times\s^2}(u-\bar
u)\phi
=&-\e\int_0^t\int_{\Gamma}u\phi\ud{\gamma}+\int_0^t\iint_{\Omega\times\s^2}\ss\phi.
\end{align}
Our goal is to choose a particular test function $\phi$. We first
construct an auxiliary function $\zeta(t)$. Since $u(t)\in
L^{\infty}(\Omega\times\s^2)$, it implies $\bar
u(t)\in L^{\infty}(\Omega)$. Define $\zeta(t,\vx)$ on $\Omega$
satisfying
\begin{align}\label{wt 52}
\left\{
\begin{array}{l}
\Delta_x \zeta(t)=\bar u^{2m-1}(t)\ud{\vx}\ \ \text{in}\ \
\Omega,\\\rule{0ex}{1.0em} \zeta(t)=0\ \ \text{on}\ \ \p\Omega.
\end{array}
\right.
\end{align}
In the bounded domain $\Omega$, based on the standard elliptic
estimates, there exists a unique $\xi(t)\in W^{2,\frac{2m}{2m-1}}(\Omega)$ satisfying
\begin{align}\label{wt 53}
\nm{\zeta(t)}_{W^{2,\frac{2m}{2m-1}}(\Omega)}\leq C\nm{\bar
u^{2m-1}(t)}_{L^{\frac{2m}{2m-1}}(\Omega)}= C\nm{\bar
u(t)}_{L^{2m}(\Omega)}^{2m-1}.
\end{align}
We plug the test function
\begin{align}\label{wt 54}
\phi=-\vw\cdot\nx\zeta
\end{align}
into the weak formulation \eqref{wt 51} and estimate
each term there. By Sobolev embedding theorem, we have
\begin{align}
\nm{\phi(t)}_{L^2(\Omega)}\leq& C\nm{\zeta(t)}_{H^1(\Omega)}\leq C\nm{\zeta(t)}_{W^{2,\frac{2m}{2m-1}}(\Omega)}\leq
C\nm{\bar
u(t)}_{L^{2m}(\Omega)}^{2m-1},\label{wt 55}\\
\nm{\phi(t)}_{L^{\frac{2m}{2m-1}}(\Omega)}\leq&C\nm{\zeta(t)}_{W^{1,\frac{2m}{2m-1}}(\Omega)}\leq
C\nm{\bar
u(t)}_{L^{2m}(\Omega)}^{2m-1}.\label{wt 56}
\end{align}
Note that the embedding $W^{2,\frac{2m}{2m-1}}(\Omega)\hookrightarrow H^1(\Omega)$ requires $m\leq 3$.\\
On the other hand, we decompose
\begin{align}\label{wt 57}
-\e\int_0^t\iint_{\Omega\times\s^2}(\vw\cdot\nx\phi)u=&-\e\int_0^t\iint_{\Omega\times\s^2}(\vw\cdot\nx\phi)\bar
u-\e\int_0^t\iint_{\Omega\times\s^2}(\vw\cdot\nx\phi)(u-\bar
u).
\end{align}
For the first term on the right-hand side of \eqref{wt 57}, by
\eqref{wt 52} and \eqref{wt 54}, we have
\begin{align}\label{wt 58}
&-\e\int_0^t\iint_{\Omega\times\s^2}(\vw\cdot\nx\phi)\bar
u\\
=&\e\int_0^t\iint_{\Omega\times\s^2}\bar
u\Big(w_1(w_1\p_{11}\zeta+w_2\p_{12}\zeta+w_3\p_{13}\zeta)+w_2(w_1\p_{21}\zeta+w_2\p_{22}\zeta+w_3\p_{23}\zeta)+w_3(w_1\p_{31}\zeta+w_2\p_{32}\zeta+w_3\p_{33}\zeta)\Big)\no\\
=&\e\int_0^t\iint_{\Omega\times\s^2}\bar
u\Big(w_1^2\p_{11}\zeta+w_2^2\p_{22}\zeta+w_3^2\p_{33}\zeta\Big)=\frac{4}{3}\e\pi\int_0^t\int_{\Omega}\bar u(\p_{11}\zeta+\p_{22}\zeta+\p_{33}\zeta)\no\\
=&\frac{4}{3}\e\pi\nm{\bar u}_{L^{2m}([0,t]\times\Omega)}^{2m}=\frac{1}{3}\e\nm{\bar u}_{L^{2m}([0,t]\times\Omega\times\s^2)}^{2m}\no.
\end{align}
In the second equality, above cross terms vanish due to the symmetry
of the integral over $\s^2$.\\
For the second term
on the right-hand side of \eqref{wt 57}, H\"older's inequality and \eqref{wt 56} imply
\begin{align}\label{wt 59}
&\abs{-\e\int_0^t\iint_{\Omega\times\s^2}(\vw\cdot\nx\phi)(u-\bar
u)}\leq C \e\nm{u-\bar u}_{L^{2m}([0,t]\times\Omega\times\s^2)}\bigg(\int_0^t\nm{\nx\phi}^{\frac{2m}{2m-1}}_{W^{2,\frac{2m}{2m-1}}(\Omega)}\bigg)^{\frac{2m-1}{2m}}\\
\leq&C \e\nm{u-\bar u}_{L^{2m}([0,t]\times\Omega\times\s^2)}\bigg(\int_0^t\nm{\zeta}^{\frac{2m}{2m-1}}_{W^{2,\frac{2m}{2m-1}}(\Omega)}\bigg)^{\frac{2m-1}{2m}}
\leq C \e\nm{u-\bar
u}_{L^{2m}([0,t]\times\Omega\times\s^2)}\nm{\bar
u}_{L^{2m}([0,t]\times\Omega\times\s^2)}^{2m-1}\no.
\end{align}
Based on Sobolev embedding theorem, trace theorem and \eqref{wt 55}, we have
\begin{align}\label{wt 60}
\nm{\nx\xi}_{L^{\frac{4m}{4m-3}}(\Gamma)}\leq C\nm{\nx\xi}_{W^{\frac{1}{2m},\frac{2m}{2m-1}}(\Gamma)}\leq C\nm{\nx\xi}_{W^{1,\frac{2m}{2m-1}}(\Omega)}\leq C\nm{\xi}_{W^{2,\frac{2m}{2m-1}}(\Omega)}\leq
C\nm{\bar
u}_{L^{2m}(\Omega)}^{2m-1}.
\end{align}
Using H\"older's inequality and \eqref{wt 60}, we obtain
\begin{align}\label{wt 61}
\abs{\e\int_0^t\int_{\Gamma}u\phi\ud{\gamma}}\leq &\abs{\e\int_0^t\int_{\Gamma^+}u\phi\ud{\gamma}}
+\abs{\e\int_0^t\int_{\Gamma^-}\g\phi\ud{\gamma}}\\
\leq&\e\nm{\phi}_{L^{\frac{4m}{4m-3}}(\Gamma)}\bigg(\nm{u}_{L^{\frac{4m}{3}}([0,t]\times\Gamma^+)}+\nm{\g}_{L^{\frac{4m}{3}}([0,t]\times\Gamma^-)}\bigg)\no\\
\leq&\e\nm{\bar u}_{L^{2m}([0,t]\times\Omega\times\s^2)}^{2m-1}\bigg(\nm{u}_{L^{\frac{4m}{3}}([0,t]\times\Gamma^+)}+\nm{\g}_{L^{\frac{4m}{3}}([0,t]\times\Gamma^-)}\bigg).\no
\end{align}
Also, using H\"older's inequality and \eqref{wt 55}, we have
\begin{align}\label{wt 62}
\\
\int_0^t\iint_{\Omega\times\s^2}(u-\bar u)\phi\leq&\nm{\phi}_{L^{2}([0,t]\times\Omega\times\s^2)}\nm{u-\bar
u}_{L^2([0,t]\times\Omega\times\s^2)}
\leq
C \nm{\bar
u}_{L^{2m}([0,t]\times\Omega\times\s^2)}^{2m-1}\nm{u-\bar
u}_{L^2([0,t]\times\Omega\times\s^2)},\no
\end{align}
and
\begin{align}\label{wt 63}
\int_0^t\iint_{\Omega\times\s^2}\ss\phi\leq C \leq&\nm{\phi}_{L^{2}([0,t]\times\Omega\times\s^2)}\nm{\ss}_{L^2([0,t]\times\Omega\times\s^2)}
\leq
C \nm{\bar
u}_{L^{2m}([0,t]\times\Omega\times\s^2)}^{2m-1}\nm{\ss}_{L^2([0,t]\times\Omega\times\s^2)}.
\end{align}
Then the only remaining term is
\begin{align}\label{wt 64}
\abs{\e^2\int_0^t\iint_{\Omega\times\s^2}\phi\dt u}=&\abs{\e^2\int_0^t\iint_{\Omega\times\s^2}\phi\dt(u-\bar u)}
\leq\e^2\nm{\phi}_{L^{2}([0,t]\times\Omega\times\s^2)}\nm{\dt(u-\bar u)}_{L^2([0,t]\times\Omega\times\s^2)}\\ \leq& \e^2\nm{\bar u}_{L^{2m}([0,t]\times\Omega\times\s^2)}^{2m-1}\nm{\dt(u-\bar u)}_{L^2([0,t]\times\Omega\times\s^2)}.\no
\end{align}
Now we have to tackle $\nm{\dt u}_{L^2([0,t]\times\Omega\times\s^2)}$.\\
\ \\
Taking $\dt$ on both sides of the equation \eqref{neutron.}, we obtain
\begin{align}\label{wt 65}
\left\{
\begin{array}{l}
\e^2\dt (\dt u)+\e \vw\cdot\nabla_x (\dt u)+\dt u-\dt\bar
u=\dt\ss\ \ \ \text{for}\ \
(t,\vx,\vw)\in\rp\times\Omega\times\s^2,\\\rule{0ex}{2.0em}
\dt u(0,\vx,\vw)=\tilde h=\dfrac{1}{\e^2}(-\e\vw\cdot\nx h-h+\bar h+\ss)(0,\vx,\vw)\ \ \text{for}\ \ (\vx,\vw)\in\Omega\times\s^2\\\rule{0ex}{2.0em}
\dt u(t,\vx_0,\vw)=\dt\g(t,\vx_0,\vw)\ \ \text{for}\ \ t\in\rp,\ \ \vx_0\in\p\Omega\ \ \text{and}\ \ \vw\cdot\vn<0,
\end{array}
\right.
\end{align}
Based on the proof of Theorem \ref{LT estimate.}, we have
\begin{align}\label{wt 66}
&\e\nm{\dt u (t)}_{L^2(\Omega\times\s^2)}+\e^{\frac{1}{2}}\nm{\dt u}_{L^2([0,t]\times\Gamma^+)}+\nm{\dt( u-\bar u) }_{L^2([0,t]\times\Omega\times\s^2)}\\
\leq&
C \bigg(\frac{1}{\e}\tm{\dt\ss}{[0,t]\times\Omega\times\s^2}+
\e\nm{\tilde\h}_{L^2(\Omega\times\s^2)}+\e^{\frac{1}{2}}\nm{\dt\g}_{L^2([0,t]\times\Gamma^-)}\bigg)\no\\
\leq&
C \bigg(\frac{1}{\e}\tm{\dt\ss}{[0,t]\times\Omega\times\s^2}+
\frac{1}{\e}\nm{\ss(0)}_{L^2(\Omega\times\s^2)}+
\nm{\vw\cdot\nx\h}_{L^2(\Omega\times\s^2)}+
\frac{1}{\e}\nm{\h}_{L^2(\Omega\times\s^2)}+\e^{\frac{1}{2}}\nm{\dt\g}_{L^2([0,t]\times\Gamma^-)}\bigg).\no
\end{align}
Inserting \eqref{wt 66} into \eqref{wt 64}, we obtain
\begin{align}\label{wt 67}
\abs{\e^2\int_0^t\iint_{\Omega\times\s^2}\phi\dt u}
\leq&C\nm{\bar u}_{L^{2m}([0,t]\times\Omega\times\s^2)}^{2m-1}\bigg(\e\tm{\dt\ss}{[0,t]\times\Omega\times\s^2}+
\e\nm{\ss(0)}_{L^2(\Omega\times\s^2)}\\&+
\e^2\nm{\vw\cdot\nx\h}_{L^2(\Omega\times\s^2)}+
\e\nm{\h}_{L^2(\Omega\times\s^2)}+\e^{\frac{5}{2}}\nm{\dt\g}_{L^2([0,t]\times\Gamma^-)}\bigg).\no
\end{align}
Collecting all terms in \eqref{wt 58}, \eqref{wt 59}, \eqref{wt 61}, \eqref{wt 62}, \eqref{wt 63} and \eqref{wt 67}, we
obtain
\begin{align}\label{wt 68}
&\e\nm{\bar u}_{L^{2m}([0,t]\times\Omega\times\s^2)}\\
\leq& C \bigg(\e\nm{u-\bar
u}_{L^{2m}([0,t]\times\Omega\times\s^2)}+\nm{u-\bar
u}_{L^{2}([0,t]\times\Omega\times\s^2)}+\e\nm{u}_{L^{\frac{4m}{3}}([0,t]\times\Gamma^+)}
+\nm{\ss}_{L^2([0,t]\times\Omega\times\s^2)}+\e\tm{\dt\ss}{[0,t]\times\Omega\times\s^2}\no\\
&+
\e\nm{\ss(0)}_{L^2(\Omega\times\s^2)}+
\e^2\nm{\vw\cdot\nx\h}_{L^2(\Omega\times\s^2)}+
\e\nm{\h}_{L^2(\Omega\times\s^2)}+\e\nm{\g}_{L^{\frac{4m}{3}}([0,t]\times\Gamma^-)}+\e^{\frac{5}{2}}\nm{\dt\g}_{L^2([0,t]\times\Gamma^-)}\bigg).\no
\end{align}
\ \\
Step 2: $L^{2m}$ Energy Estimate.\\
Similar to the $L^2$ estimates, in the weak formulation \eqref{wt 1}, we may take
the test function $\phi=u^{2m-1}$ to get the energy estimate
\begin{align}\label{wt 11'}
&\frac{\e^2}{2m}\nm{u (t)}_{L^{2m}(\Omega\times\s^2)}^{2m}+\frac{\e}{2m}\nm{u}^{2m}_{L^{2m}([0,t]\times\Gamma^+)}+\int_0^t\iint_{\Omega\times\s^2}(u-\bar u)u^{2m-1}\\
=&\int_0^t\iint_{\Omega\times\s^2}\ss u^{2m-1} +\frac{\e^2}{2m}\nm{\h}_{L^{2m}(\Omega\times\s^2)}^{2m}+\frac{\e}{2m}\nm{\g}_{L^{2m}([0,t]\times\Gamma^-)}^{2m}.\no
\end{align}
Here, direct computation reveals that for $1\leq m\leq 3$, 
\begin{align}\label{wt 12'}
\int_0^t\iint_{\Omega\times\s^2}(u-\bar u)u^{2m-1}\geq C\nm{u -\bar
u }_{L^{2m}([0,t]\times\Omega\times\s^2)}^{2m}.
\end{align}
Hence, inserting \eqref{wt 12'} into \eqref{wt 11'}, we obtain
\begin{align}\label{wt 3'}
&\e^2\nm{u (t)}_{L^{2m}(\Omega\times\s^2)}^{2m}+\e\nm{u}^{2m}_{L^{2m}([0,t]\times\Gamma^+)}+\nm{u -\bar
u }_{L^{2m}([0,t]\times\Omega\times\s^2)}^{2m}\\
\leq&C\bigg(\int_0^t\iint_{\Omega\times\s^2}\ss u^{2m-1} +\e^2\nm{\h}_{L^{2m}(\Omega\times\s^2)}^{2m}+\e\nm{\g}_{L^{2m}([0,t]\times\Gamma^-)}^{2m}\bigg).\no
\end{align}
Since
\begin{align}
\abs{u}^{2m-1}\leq \abs{u-\bar u}^{2m-1}+\abs{\bar u}^{2m-1},
\end{align}
we have
\begin{align}\label{wt 13'}
\abs{\int_0^t\iint_{\Omega\times\s^2}\ss u^{2m-1}}\leq C\bigg(\int_0^t\iint_{\Omega\times\s^2}\abs{\ss}\abs{u-\bar u}^{2m-1} +\int_0^t\iint_{\Omega\times\s^2}\abs{\ss} \abs{\bar u}^{2m-1}\bigg).
\end{align}
Using H\"{o}lder's inequality and Young's inequality, we obtain
\begin{align}\label{wt 1'}
\int_0^t\iint_{\Omega\times\s^2}\abs{\ss}\abs{u-\bar u}^{2m-1}\leq&\nm{\abs{u -\bar
u}^{2m-1} }_{L^{\frac{2m}{2m-1}}([0,t]\times\Omega\times\s^2)}^{\frac{2m-1}{2m}}\nm{S}_{L^{2m}([0,t]\times\Omega\times\s^2)}^{2m}\\
=&\nm{u -\bar
u }_{L^{2m}([0,t]\times\Omega\times\s^2)}^{2m-1}\nm{S}_{L^{2m}([0,t]\times\Omega\times\s^2)}^{2m}\no\\
\leq&C\bigg(\Big(o(1)\nm{u -\bar
u }_{L^{2m}([0,t]\times\Omega\times\s^2)}^{2m-1}\Big)^{\frac{2m}{2m-1}}+\Big(\nm{S}_{L^{2m}([0,t]\times\Omega\times\s^2)}^{2m}\Big)^{2m}\bigg)\no\\
\leq&o(1)\nm{u -\bar
u }_{L^{2m}([0,t]\times\Omega\times\s^2)}^{2m}+C\nm{S}_{L^{2m}([0,t]\times\Omega\times\s^2)}^{2m},\no
\end{align}
and
\begin{align}\label{wt 2'}
\int_0^t\iint_{\Omega\times\s^2}\abs{\ss} \abs{\bar u}^{2m-1} \leq&\nm{\abs{\bar
u}^{2m-1} }_{L^{\frac{2m}{2m-1}}([0,t]\times\Omega\times\s^2)}^{\frac{2m-1}{2m}}\nm{S}_{L^{2m}([0,t]\times\Omega\times\s^2)}^{2m}\\
=&\nm{\bar
u }_{L^{2m}([0,t]\times\Omega\times\s^2)}^{2m-1}\nm{S}_{L^{2m}([0,t]\times\Omega\times\s^2)}^{2m}\no\\
=&\left(\e^{\frac{2m-1}{2m}}\nm{\bar
u }_{L^{2m}([0,t]\times\Omega\times\s^2)}^{2m-1}\right)\left(\e^{-\frac{2m-1}{2m}}\nm{S}_{L^{2m}([0,t]\times\Omega\times\s^2)}^{2m}\right)\no\\
\leq&C\bigg(\left(o(1)\e^{\frac{2m-1}{2m}}\nm{\bar
u }_{L^{2m}([0,t]\times\Omega\times\s^2)}^{2m-1}\right)^{\frac{2m}{2m-1}}+\left(\e^{-\frac{2m-1}{2m}}\nm{S}_{L^{2m}([0,t]\times\Omega\times\s^2)}^{2m}\right)^{2m}\bigg)\no\\
\leq&o(1)\e\nm{\bar
u }_{L^{2m}([0,t]\times\Omega\times\s^2)}^{2m}+\frac{C}{\e^{2m-1}}\nm{S}_{L^{2m}([0,t]\times\Omega\times\s^2)}^{2m}.\no
\end{align}
Inserting \eqref{wt 1'} and \eqref{wt 2'} into \eqref{wt 13'}, we obtain
\begin{align}\label{wt 14'}
\\
\abs{\int_0^t\iint_{\Omega\times\s^2}\ss u^{2m-1}}\leq o(1)\nm{u -\bar
u }_{L^{2m}([0,t]\times\Omega\times\s^2)}^{2m}+o(1)\e\nm{\bar
u }_{L^{2m}([0,t]\times\Omega\times\s^2)}^{2m}+\frac{C}{\e^{2m-1}}\nm{S}_{L^{2m}([0,t]\times\Omega\times\s^2)}^{2m}.\no
\end{align}
Inserting \eqref{wt 14'} into \eqref{wt 3'} to absorb $o(1)\nm{u -\bar
u }_{L^{2m}([0,t]\times\Omega\times\s^2)}^{2m}$ into the left-hand side, we have
\begin{align}
&\e^2\nm{u (t)}_{L^{2m}(\Omega\times\s^2)}^{2m}+\e\nm{u}^{2m}_{L^{2m}([0,t]\times\Gamma^+)}+\nm{u -\bar
u }_{L^{2m}([0,t]\times\Omega\times\s^2)}^{2m}\\
\leq&C\bigg(o(1)\e\nm{\bar
u }_{L^{2m}([0,t]\times\Omega\times\s^2)}^{2m}+\frac{1}{\e^{2m-1}}\nm{S}_{L^{2m}([0,t]\times\Omega\times\s^2)}^{2m} +\e^2\nm{\h}_{L^{2m}(\Omega\times\s^2)}^{2m}+\e\nm{\g}_{L^{2m}([0,t]\times\Gamma^-)}^{2m}\bigg).\no
\end{align}
Taking $(2m)^{th}$ root, we actually have
\begin{align}\label{wt 4'}
&\e^{\frac{1}{m}}\nm{u (t)}_{L^{2m}(\Omega\times\s^2)}+\e^{\frac{1}{2m}}\nm{u}_{L^{2m}([0,t]\times\Gamma^+)}+\nm{u -\bar
u }_{L^{2m}([0,t]\times\Omega\times\s^2)}\\
\leq&C\bigg(o(1)\e^{\frac{1}{2m}}\nm{\bar
u }_{L^{2m}([0,t]\times\Omega\times\s^2)}+\frac{1}{\e^{1-\frac{1}{2m}}}\nm{S}_{L^{2m}([0,t]\times\Omega\times\s^2)} +\e^{\frac{1}{m}}\nm{\h}_{L^{2m}(\Omega\times\s^2)}+\e^{\frac{1}{2m}}\nm{\g}_{L^{2m}([0,t]\times\Gamma^-)}\bigg).\no
\end{align}
Multiplying \eqref{wt 68} by $\e^{-\frac{2m-1}{2m}}$, we have
\begin{align}\label{wt 15'}
&\e^{\frac{1}{2m}}\nm{\bar u}_{L^{2m}([0,t]\times\Omega\times\s^2)}\\
\leq& C \bigg(\e^{\frac{1}{2m}}\nm{u-\bar
u}_{L^{2m}([0,t]\times\Omega\times\s^2)}+\frac{1}{\e^{1-\frac{1}{2m}}}\nm{u-\bar
u}_{L^{2}([0,t]\times\Omega\times\s^2)}+\e^{\frac{1}{2m}}\nm{u}_{L^{\frac{4m}{3}}([0,t]\times\Gamma^+)}\no\\
&+\frac{1}{\e^{1-\frac{1}{2m}}}\nm{\ss}_{L^2([0,t]\times\Omega\times\s^2)}+\e^{\frac{1}{2m}}\tm{\dt\ss}{[0,t]\times\Omega\times\s^2}+
\e^{\frac{1}{2m}}\nm{\ss(0)}_{L^2(\Omega\times\s^2)}\no\\
&+
\e^{\frac{1}{2m}}\nm{\h}_{L^2(\Omega\times\s^2)}+
\e^{1+\frac{1}{2m}}\nm{\vw\cdot\nx\h}_{L^2(\Omega\times\s^2)}+\e^{\frac{1}{2m}}\nm{\g}_{L^{\frac{4m}{3}}([0,t]\times\Gamma^-)}
+\e^{\frac{3}{2}+\frac{1}{2m}}\nm{\dt\g}_{L^2([0,t]\times\Gamma^-)}\bigg).\no
\end{align}
Adding \eqref{wt 15'} to \eqref{wt 4'} to absorb $o(1)\e^{\frac{1}{2m}}\nm{\bar
u }_{L^{2m}([0,t]\times\Omega\times\s^2)}$ and $\e^{\frac{1}{2m}}\nm{u-\bar
u}_{L^{2m}([0,t]\times\Omega\times\s^2)}$ into the left-hand side, we have
\begin{align}\label{wt 5'}
&\e^{\frac{1}{m}}\nm{u (t)}_{L^{2m}(\Omega\times\s^2)}+\e^{\frac{1}{2m}}\nm{u}_{L^{2m}([0,t]\times\Gamma^+)}+\e^{\frac{1}{2m}}\nm{\bar
u }_{L^{2m}([0,t]\times\Omega\times\s^2)}+\nm{u -\bar
u }_{L^{2m}([0,t]\times\Omega\times\s^2)}\\
\leq& C \bigg(\frac{1}{\e^{1-\frac{1}{2m}}}\nm{u-\bar
u}_{L^{2}([0,t]\times\Omega\times\s^2)}+\e^{\frac{1}{2m}}\nm{u}_{L^{\frac{4m}{3}}([0,t]\times\Gamma^+)}\no\\
&+\frac{1}{\e^{1-\frac{1}{2m}}}\nm{\ss}_{L^2([0,t]\times\Omega\times\s^2)}+\frac{1}{\e^{1-\frac{1}{2m}}}\nm{S}_{L^{2m}([0,t]\times\Omega\times\s^2)}
+\e^{\frac{1}{2m}}\tm{\dt\ss}{[0,t]\times\Omega\times\s^2}+
\e^{\frac{1}{2m}}\nm{\ss(0)}_{L^2(\Omega\times\s^2)}\no\\
&+
\e^{\frac{1}{2m}}\nm{\h}_{L^2(\Omega\times\s^2)}+\e^{\frac{1}{m}}\nm{\h}_{L^{2m}(\Omega\times\s^2)}+
\e^{1+\frac{1}{2m}}\nm{\vw\cdot\nx\h}_{L^2(\Omega\times\s^2)}\no\\
&+\e^{\frac{1}{2m}}\nm{\g}_{L^{\frac{4m}{3}}([0,t]\times\Gamma^-)}+\e^{\frac{1}{2m}}\nm{\g}_{L^{2m}([0,t]\times\Gamma^-)}
+\e^{\frac{3}{2}+\frac{1}{2m}}\nm{\dt\g}_{L^2([0,t]\times\Gamma^-)}\bigg).\no
\end{align}
\ \\
Step 3: $L^2$ Energy Estimate.\\
Recall the standard $L^2$ estimates \eqref{wt 23}, where, in the weak formulation \eqref{wt 1}, we take
the test function $\phi=u$ to get
\begin{align}
&\frac{\e^2}{2}\nm{u (t)}_{L^2(\Omega\times\s^2)}^2+\frac{\e}{2}\nm{u}^2_{L^2([0,t]\times\Gamma^+)}+\nm{u -\bar
u }_{L^2([0,t]\times\Omega\times\s^2)}^2\\
=&\int_0^t\iint_{\Omega\times\s^2}\ss u +\frac{\e^2}{2}\nm{\h}_{L^2(\Omega\times\s^2)}^2+\frac{\e}{2}\nm{\g}_{L^2([0,t]\times\Gamma^-)}^2.\no
\end{align}
Naturally, this implies
\begin{align}\label{wt 6'}
&\e^2\nm{u (t)}_{L^2(\Omega\times\s^2)}^2+\e\nm{u}_{L^2([0,t]\times\Gamma^+)}^2+\nm{u -\bar
u }_{L^2([0,t]\times\Omega\times\s^2)}^2\\
\leq&\int_0^t\iint_{\Omega\times\s^2}\ss u +\e^2\nm{\h}_{L^2(\Omega\times\s^2)}^2+\e\nm{\g}_{L^2([0,t]\times\Gamma^-)}^2.\no
\end{align}
Multiplying \eqref{wt 6'} by $\e^{-2+\frac{1}{m}}$, we have
\begin{align}\label{wt 16'}
&\e^{\frac{1}{m}}\nm{u (t)}_{L^2(\Omega\times\s^2)}^2+\dfrac{1}{\e^{1-\frac{1}{m}}}\nm{u}_{L^2([0,t]\times\Gamma^+)}^2+\dfrac{1}{\e^{2-\frac{1}{m}}}\nm{u -\bar
u }_{L^2([0,t]\times\Omega\times\s^2)}^2\\
\leq&\dfrac{1}{\e^{2-\frac{1}{m}}}\int_0^t\iint_{\Omega\times\s^2}\ss u +\e^{\frac{1}{m}}\nm{\h}_{L^2(\Omega\times\s^2)}^2+\dfrac{1}{\e^{1-\frac{1}{m}}}\nm{\g}_{L^2([0,t]\times\Gamma^-)}^2.\no
\end{align}
Note that
\begin{align}\label{wt 17'}
\abs{\int_0^t\iint_{\Omega\times\s^2}\ss u}\leq\abs{\int_0^t\iint_{\Omega\times\s^2}\ss(u-\bar u)}+\abs{\int_0^t\iint_{\Omega\times\s^2}\ss \bar u}.
\end{align}
Using H\"{o}lder's inequality and Cauchy's inequality, we have
\begin{align}\label{wt 18'}
\abs{\int_0^t\iint_{\Omega\times\s^2}\ss(u-\bar u)}\leq&\nm{u -\bar
u }_{L^{2m}([0,t]\times\Omega\times\s^2)}\nm{S}_{L^{\frac{2m}{2m-1}}([0,t]\times\Omega\times\s^2)}\\
\leq& o(1)\e^{2-\frac{1}{m}}\nm{u -\bar
u }_{L^{2m}([0,t]\times\Omega\times\s^2)}^2+\frac{C}{\e^{2-\frac{1}{m}}}\nm{S}_{L^{\frac{2m}{2m-1}}([0,t]\times\Omega\times\s^2)}^2,\no
\end{align}
and
\begin{align}\label{wt 19'}
\abs{\int_0^t\iint_{\Omega\times\s^2}\ss \bar u}\leq&\nm{\bar
u }_{L^{2m}([0,t]\times\Omega\times\s^2)}\nm{S}_{L^{\frac{2m}{2m-1}}([0,t]\times\Omega\times\s^2)}\\
\leq&o(1)\e^2\nm{\bar
u }_{L^{2m}([0,t]\times\Omega\times\s^2)}^2+\frac{C}{\e^2}\nm{S}_{L^{\frac{2m}{2m-1}}([0,t]\times\Omega\times\s^2)}^2.\no
\end{align}
Inserting \eqref{wt 18'} and \eqref{wt 19'} into \eqref{wt 17'}, we obtain
\begin{align}\label{wt 20'}
\\
\abs{\int_0^t\iint_{\Omega\times\s^2}\ss u}\leq o(1)\e^{2-\frac{1}{m}}\nm{u -\bar
u }_{L^{2m}([0,t]\times\Omega\times\s^2)}^2+o(1)\e^2\nm{\bar
u }_{L^{2m}([0,t]\times\Omega\times\s^2)}^2+\frac{C}{\e^2}\nm{S}_{L^{\frac{2m}{2m-1}}([0,t]\times\Omega\times\s^2)}^2.\no
\end{align}
Inserting \eqref{wt 20'} into \eqref{wt 16'}, we have
\begin{align}\label{wt 21'}
&\e^{\frac{1}{m}}\nm{u (t)}_{L^2(\Omega\times\s^2)}^2+\dfrac{1}{\e^{1-\frac{1}{m}}}\nm{u}_{L^2([0,t]\times\Gamma^+)}^2+\dfrac{1}{\e^{2-\frac{1}{m}}}\nm{u -\bar
u }_{L^2([0,t]\times\Omega\times\s^2)}^2\\
\leq&o(1)\nm{u -\bar
u }_{L^{2m}([0,t]\times\Omega\times\s^2)}^2+o(1)\e^{\frac{1}{m}}\nm{\bar
u }_{L^{2m}([0,t]\times\Omega\times\s^2)}^2+\frac{C}{\e^{4-\frac{1}{m}}}\nm{S}_{L^{\frac{2m}{2m-1}}([0,t]\times\Omega\times\s^2)}^2\no\\ &+\e^{\frac{1}{m}}\nm{\h}_{L^2(\Omega\times\s^2)}^2+\dfrac{1}{\e^{1-\frac{1}{m}}}\nm{\g}_{L^2([0,t]\times\Gamma^-)}^2.\no
\end{align}
Taking square root in \eqref{wt 21'}, we obtain
\begin{align}\label{wt 22'}
&\e^{\frac{1}{2m}}\nm{u (t)}_{L^2(\Omega\times\s^2)}+\dfrac{1}{\e^{\frac{1}{2}-\frac{1}{2m}}}\nm{u}_{L^2([0,t]\times\Gamma^+)}+\dfrac{1}{\e^{1-\frac{1}{2m}}}\nm{u -\bar
u }_{L^2([0,t]\times\Omega\times\s^2)}\\
\leq&o(1)\nm{u -\bar
u }_{L^{2m}([0,t]\times\Omega\times\s^2)}+o(1)\e^{\frac{1}{2m}}\nm{\bar
u }_{L^{2m}([0,t]\times\Omega\times\s^2)}+\frac{C}{\e^{2-\frac{1}{2m}}}\nm{S}_{L^{\frac{2m}{2m-1}}([0,t]\times\Omega\times\s^2)}\no\\ &+\e^{\frac{1}{2m}}\nm{\h}_{L^2(\Omega\times\s^2)}+\dfrac{1}{\e^{\frac{1}{2}-\frac{1}{2m}}}\nm{\g}_{L^2([0,t]\times\Gamma^-)}.\no
\end{align}
Multiplying \eqref{wt 5'} by a small constant and adding it to \eqref{wt 22'} to absorb $\frac{1}{\e^{1-\frac{1}{2m}}}\nm{u-\bar
u}_{L^{2}([0,t]\times\Omega\times\s^2)}$, $o(1)\nm{u -\bar
u }_{L^{2m}([0,t]\times\Omega\times\s^2)}$ and $o(1)\e^{\frac{1}{2m}}\nm{\bar
u }_{L^{2m}([0,t]\times\Omega\times\s^2)}$ into the left-hand side, we obtain
\begin{align}\label{wt 7'}
&\e^{\frac{1}{m}}\nm{u (t)}_{L^{2m}(\Omega\times\s^2)}+\frac{1}{\e^{\frac{1}{2}-\frac{1}{2m}}}\nm{u}_{L^2([0,t]\times\Gamma^+)}
+\e^{\frac{1}{2m}}\nm{u}_{L^{2m}([0,t]\times\Gamma^+)}\\
&+\e^{\frac{1}{2m}}\nm{\bar
u }_{L^{2m}([0,t]\times\Omega\times\s^2)}+\dfrac{1}{\e^{1-\frac{1}{2m}}}\nm{u -\bar
u }_{L^2([0,t]\times\Omega\times\s^2)}+\nm{u -\bar
u }_{L^{2m}([0,t]\times\Omega\times\s^2)}\no\\
\leq& C \bigg(\e^{\frac{1}{2m}}\nm{u}_{L^{\frac{4m}{3}}([0,t]\times\Gamma^+)}+\frac{1}{\e^{2-\frac{1}{2m}}}\nm{S}_{L^{\frac{2m}{2m-1}}([0,t]\times\Omega\times\s^2)}
+\frac{1}{\e^{1-\frac{1}{2m}}}\nm{\ss}_{L^2([0,t]\times\Omega\times\s^2)}+\frac{1}{\e^{1-\frac{1}{2m}}}\nm{S}_{L^{2m}([0,t]\times\Omega\times\s^2)}\no\\
&+\e^{\frac{1}{2m}}\tm{\dt\ss}{[0,t]\times\Omega\times\s^2}+
\e^{\frac{1}{2m}}\nm{\ss(0)}_{L^2(\Omega\times\s^2)}\no\\
&+
\e^{\frac{1}{2m}}\nm{\h}_{L^2(\Omega\times\s^2)}+\e^{\frac{1}{m}}\nm{\h}_{L^{2m}(\Omega\times\s^2)}+
\e^{1+\frac{1}{2m}}\nm{\vw\cdot\nx\h}_{L^2(\Omega\times\s^2)}\no\\
&+\e^{\frac{1}{2m}}\nm{\g}_{L^{\frac{4m}{3}}([0,t]\times\Gamma^-)}+\e^{\frac{1}{2m}}\nm{\g}_{L^{2m}([0,t]\times\Gamma^-)}
+\frac{1}{\e^{\frac{1}{2}-\frac{1}{2m}}}\nm{\g}_{L^2([0,t]\times\Gamma^-)}+\e^{\frac{3}{2}+\frac{1}{2m}}\nm{\dt\g}_{L^2([0,t]\times\Gamma^-)}\bigg).\no
\end{align}
Using the interpolation estimate and Young's inequality, we know
\begin{align}
\nm{u}_{L^{\frac{4m}{3}}([0,t]\times\Gamma^+)}\leq&\nm{u}_{L^{2m}([0,t]\times\Gamma^+)}^{\frac{2m-3}{2m-2}}\nm{u}_{L^{2}([0,t]\times\Gamma^+)}^{\frac{1}{2m-2}}\\
\leq&o(1)\Big(\nm{u}_{L^{2m}([0,t]\times\Gamma^+)}^{\frac{2m-3}{2m-2}}\Big)^{\frac{2m-2}{2m-3}}+C\Big(\nm{u}_{L^{2}([0,t]\times\Gamma^+)}^{\frac{1}{2m-2}}\Big)^{2m-2}\no\\
\leq&o(1)\nm{u}_{L^{2m}([0,t]\times\Gamma^+)}+C\nm{u}_{L^{2}([0,t]\times\Gamma^+)}^{\frac{1}{2m-2}}.\no
\end{align}
Hence, we know
\begin{align}\label{wt 8'}
\e^{\frac{1}{2m}}\nm{u}_{L^{\frac{4m}{3}}([0,t]\times\Gamma^+)}\leq& o(1)\e^{\frac{1}{2m}}\nm{u}_{L^{2m}([0,t]\times\Gamma^+)}+C\e^{\frac{1}{2m}}\nm{u}_{L^{2}([0,t]\times\Gamma^+)}\\
\leq&o(1)\e^{\frac{1}{2m}}\nm{u}_{L^{2m}([0,t]\times\Gamma^+)}+o(1)\frac{1}{\e^{\frac{1}{2}-\frac{1}{2m}}}\nm{u}_{L^{2}([0,t]\times\Gamma^+)}.\no
\end{align}
Inserting \eqref{wt 8'} into \eqref{wt 7'} to absorb $\e^{\frac{1}{2m}}\nm{u}_{L^{\frac{4m}{3}}([0,t]\times\Gamma^+)}$ into the left-hand side, we have
\begin{align}
&\e^{\frac{1}{m}}\nm{u (t)}_{L^{2m}(\Omega\times\s^2)}+\frac{1}{\e^{\frac{1}{2}-\frac{1}{2m}}}\nm{u}_{L^2([0,t]\times\Gamma^+)}
+\e^{\frac{1}{2m}}\nm{u}_{L^{2m}([0,t]\times\Gamma^+)}\\
&+\e^{\frac{1}{2m}}\nm{\bar
u }_{L^{2m}([0,t]\times\Omega\times\s^2)}+\dfrac{1}{\e^{1-\frac{1}{2m}}}\nm{u -\bar
u }_{L^2([0,t]\times\Omega\times\s^2)}+\nm{u -\bar
u }_{L^{2m}([0,t]\times\Omega\times\s^2)}\no\\
\leq& C \bigg(\frac{1}{\e^{2-\frac{1}{2m}}}\nm{S}_{L^{\frac{2m}{2m-1}}([0,t]\times\Omega\times\s^2)}
+\frac{1}{\e^{1-\frac{1}{2m}}}\nm{\ss}_{L^2([0,t]\times\Omega\times\s^2)}+\frac{1}{\e^{1-\frac{1}{2m}}}\nm{S}_{L^{2m}([0,t]\times\Omega\times\s^2)}\no\\
&+\e^{\frac{1}{2m}}\tm{\dt\ss}{[0,t]\times\Omega\times\s^2}+
\e^{\frac{1}{2m}}\nm{\ss(0)}_{L^2(\Omega\times\s^2)}\no\\
&+
\e^{\frac{1}{2m}}\nm{\h}_{L^2(\Omega\times\s^2)}+\e^{\frac{1}{m}}\nm{\h}_{L^{2m}(\Omega\times\s^2)}+
\e^{1+\frac{1}{2m}}\nm{\vw\cdot\nx\h}_{L^2(\Omega\times\s^2)}\no\\
&+\e^{\frac{1}{2m}}\nm{\g}_{L^{\frac{4m}{3}}([0,t]\times\Gamma^-)}+\e^{\frac{1}{2m}}\nm{\g}_{L^{2m}([0,t]\times\Gamma^-)}
+\frac{1}{\e^{\frac{1}{2}-\frac{1}{2m}}}\nm{\g}_{L^2([0,t]\times\Gamma^-)}+\e^{\frac{3}{2}+\frac{1}{2m}}\nm{\dt\g}_{L^2([0,t]\times\Gamma^-)}\bigg).\no
\end{align}
Hence, we get the desired estimate
\begin{align}\label{wt 10'}
&\nm{u (t)}_{L^{2m}(\Omega\times\s^2)}+\frac{1}{\e^{\frac{1}{2}+\frac{1}{2m}}}\nm{u}_{L^2([0,t]\times\Gamma^+)}
+\frac{1}{\e^{\frac{1}{2m}}}\nm{u}_{L^{2m}([0,t]\times\Gamma^+)}\\
&+\frac{1}{\e^{\frac{1}{2m}}}\nm{\bar
u }_{L^{2m}([0,t]\times\Omega\times\s^2)}+\dfrac{1}{\e^{1+\frac{1}{2m}}}\nm{u -\bar
u }_{L^2([0,t]\times\Omega\times\s^2)}+\frac{1}{\e^{\frac{1}{m}}}\nm{u -\bar
u }_{L^{2m}([0,t]\times\Omega\times\s^2)}\no\\
\leq& C \bigg(\frac{1}{\e^{2+\frac{1}{2m}}}\nm{S}_{L^{\frac{2m}{2m-1}}([0,t]\times\Omega\times\s^2)}
+\frac{1}{\e^{1+\frac{1}{2m}}}\nm{\ss}_{L^2([0,t]\times\Omega\times\s^2)}+\frac{1}{\e^{1+\frac{1}{2m}}}\nm{S}_{L^{2m}([0,t]\times\Omega\times\s^2)}\no\\
&+\frac{1}{\e^{\frac{1}{2m}}}\tm{\dt\ss}{[0,t]\times\Omega\times\s^2}+
\frac{1}{\e^{\frac{1}{2m}}}\nm{\ss(0)}_{L^2(\Omega\times\s^2)}\no\\
&+
\frac{1}{\e^{\frac{1}{2m}}}\nm{\h}_{L^2(\Omega\times\s^2)}+\nm{\h}_{L^{2m}(\Omega\times\s^2)}+
\e^{1-\frac{1}{2m}}\nm{\vw\cdot\nx\h}_{L^2(\Omega\times\s^2)}\no\\
&+\frac{1}{\e^{\frac{1}{2m}}}\nm{\g}_{L^{\frac{4m}{3}}([0,t]\times\Gamma^-)}+\frac{1}{\e^{\frac{1}{2m}}}\nm{\g}_{L^{2m}([0,t]\times\Gamma^-)}
+\frac{1}{\e^{\frac{1}{2}+\frac{1}{2m}}}\nm{\g}_{L^2([0,t]\times\Gamma^-)}+\e^{\frac{3}{2}-\frac{1}{2m}}\nm{\dt\g}_{L^2([0,t]\times\Gamma^-)}\bigg).\no
\end{align}

\end{proof}

\subsection{$L^{\infty}$ Estimate - Second Round}

\begin{theorem}\label{LI estimate..}
Assume $\ss(t,\vx,\vw)\in
L^{\infty}(\rp\times\Omega\times\s^2)$, $\h(\vx,\vw)\in
L^{\infty}(\Omega\times\s^2)$ and $\g(t,x_0,\vw)\in
L^{\infty}(\rp\times\Gamma^-)$. Then the solution $u(t,\vx,\vw)$ to the neutron transport
equation \eqref{neutron.} satisfies
\begin{align}
&\nm{u}_{L^{\infty}([0,t]\times\Omega\times\s^2)}\\
\leq& C \bigg(\frac{1}{\e^{2+\frac{2}{m}}}\nm{S}_{L^{\frac{2m}{2m-1}}([0,t]\times\Omega\times\s^2)}
+\frac{1}{\e^{1+\frac{2}{m}}}\nm{\ss}_{L^2([0,t]\times\Omega\times\s^2)}+\frac{1}{\e^{1+\frac{2}{m}}}\nm{S}_{L^{2m}([0,t]\times\Omega\times\s^2)}
+\im{\ss}{[0,t]\times\Omega\times\s^2}\no\\
&+\frac{1}{\e^{\frac{2}{m}}}\tm{\dt\ss}{[0,t]\times\Omega\times\s^2}+
\frac{1}{\e^{\frac{2}{m}}}\nm{\ss(0)}_{L^2(\Omega\times\s^2)}\no\\
&+
\e^{1-\frac{2}{m}}\nm{\vw\cdot\nx\h}_{L^2(\Omega\times\s^2)}+
\frac{1}{\e^{\frac{2}{m}}}\nm{\h}_{L^2(\Omega\times\s^2)}+\frac{1}{\e^{\frac{3}{2m}}}\nm{\h}_{L^{2m}(\Omega\times\s^2)}+\im{\h}{\Omega\times\s^2}\no\\
&+\frac{1}{\e^{\frac{2}{m}}}\nm{\g}_{L^{\frac{4m}{3}}([0,t]\times\Gamma^-)}+\frac{1}{\e^{\frac{2}{m}}}\nm{\g}_{L^{2m}([0,t]\times\Gamma^-)}
+\frac{1}{\e^{\frac{1}{2}+\frac{2}{m}}}\nm{\g}_{L^2([0,t]\times\Gamma^-)}+\im{\g}{[0,t]\times\Gamma^-}+\e^{\frac{3}{2}-\frac{2}{m}}\nm{\dt\g}_{L^2([0,t]\times\Gamma^-)}\bigg).\no
\end{align}
\end{theorem}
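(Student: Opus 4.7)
The plan is to imitate the steady-case second-round argument (Theorem \ref{LI estimate}) with the unsteady triple-Duhamel mild representation already developed in Theorem \ref{LI estimate.'}, swapping the $L^2$ kernel bound for an $L^{2m}$ one and then invoking Lemma \ref{LN estimate.} at the end. Concretely, I reuse the identity obtained after iterating the characteristic representation twice, so that only the fully diffused term
\[
\left(\tfrac{1}{4\pi}\right)^{2}\int_{0}^{t_b}\int_{\s^2}\int_{0}^{s_b}\int_{\s^2}\int_{0}^{r_b}\bar u\bigl(t''-\e^2 q,\vx_s-\e q\vw_s\bigr)\,\ue^{-q-r-s}\,\ud q\ud\vw_s\ud r\ud\vw_t\ud s
\]
still involves the unknown $\bar u$. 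All other contributions (the three boundary terms, the three initial terms, and the three source terms) were already bounded in $L^{\infty}$ in the proof of Theorem \ref{LI estimate.'}, and those bounds carry over verbatim.

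For the diffused term I keep the same region decomposition $\{r\le\d\}\cup\{q\le\d\}\cup\{\sin\phi\le\d\}\cup\{|\sin(\phi-\phi')|\le\d\}$ to extract a $C\d\,\nm{u}_{L^{\infty}}$ contribution, and on the remaining good region I apply H\"older with exponents $(2m,\,2m/(2m-1))$ in place of the $(2,2)$ used before. Combining this with the Jacobian lower bound $|\p(y_1,y_2,y_3)/\p(\phi,\psi,\psi')|\gtrsim \e^{3}\d^{5}\sin\phi\sin\phi'$ from \eqref{wt 48}--\eqref{wt 49} and the change of variables \eqref{wt 46} gives
\[
\abs{I_4^{\ast}}\le \frac{C}{\e^{3/(2m)}\d^{5/(2m)}}\,\nm{\bar u}_{L^{2m}([0,t]\times\Omega\times\s^2)}.
\]
Adding the regional contributions and taking first the supremum over $(\vx,\vw)\in\Gamma^+$ and then over $(\vx,\vw)\in\Omega\times\s^2$, as in the two-step absorption argument of Theorem \ref{LI estimate}, produces
\[
\nm{u}_{L^{\infty}([0,t]\times\Omega\times\s^2)}\le C\,\Bigl(\tfrac{1}{\e^{3/(2m)}}\nm{\bar u}_{L^{2m}}+\im{\ss}{}+\im{\h}{}+\im{\g}{}\Bigr),
\]
after $\d$ is fixed small enough to absorb $\d\nm{u}_{L^{\infty}}$.

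The last step is to feed in Lemma \ref{LN estimate.}, which controls $\nm{\bar u}_{L^{2m}}$ by the data norms (possibly together with $\nm{u}_{L^{4m/3}(\Gamma^+)}$, which I dispose of through the interpolation \eqref{wt 8'} already used in the proof of that lemma). Multiplying the $L^{2m}$ bounds by the Jacobian prefactor $\e^{-3/(2m)}$ accounts for the powers $\e^{-2-2/m}$, $\e^{-1-2/m}$, etc., that appear in the statement, once I also remember that the bound for $\nm{\bar u}_{L^{2m}}$ from Lemma \ref{LN estimate.} comes with a $\e^{1/(2m)}$ on the left and thus contributes an extra $\e^{-1/(2m)}$ overall.

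The main obstacle is the careful bookkeeping of $\e$ powers across the three nested estimates (triple Duhamel kernel, $L^{2m}$ energy estimate, interpolation on $\Gamma^+$), together with the fact that Lemma \ref{LN estimate.} already carries derivative and initial-time terms $\nm{\dt\ss}_{L^2}$, $\nm{\ss(0)}_{L^2}$, $\nm{\vw\cdot\nx\h}_{L^2}$, $\nm{\dt\g}_{L^2}$ that were introduced to tame $\e^{2}\int\dt\phi\,u$ via difference quotients and the auxiliary problem \eqref{wt 65}. These must be carried through unchanged but without inflating the final powers; keeping track of them is the only genuinely delicate part. Once the bookkeeping is finalized, absorbing the residual $o(1)\nm{u}_{L^{\infty}}$ and $\tfrac12\nm{u}_{L^{\infty}}$ terms into the left-hand side yields the stated inequality.
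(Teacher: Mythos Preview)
Your approach is essentially identical to the paper's: reuse the triple-Duhamel representation from Theorem~\ref{LI estimate.'}, replace the H\"older exponent $2$ by $2m$ on the good region $I_4^\ast$ (using the same change of variables \eqref{wt 46} and Jacobian bound \eqref{wt 48}--\eqref{wt 49}) to obtain $\abs{I_4^\ast}\le C\e^{-3/(2m)}\d^{-5/(2m)}\nm{\bar u}_{L^{2m}([0,t]\times\Omega)}$, then absorb the $\d\nm{u}_{L^\infty}$ terms and plug in Lemma~\ref{LN estimate.}.

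One simplification you are missing: the two-step $\Gamma^+$/$\Omega$ absorption and the separate handling of $\nm{u}_{L^{4m/3}(\Gamma^+)}$ via interpolation are unnecessary here. Unlike the steady Theorem~\ref{LN estimate}, the unsteady Lemma~\ref{LN estimate.} already delivers a closed bound on $\nm{\bar u}_{L^{2m}}$ with \emph{no} $o(1)\nm{u}_{L^\infty}$ or $\nm{u}_{L^{4m/3}(\Gamma^+)}$ terms on the right (those were absorbed internally via the $L^{2m}$ energy estimate and \eqref{wt 8'}). So you can take the supremum directly over $[0,t]\times\Omega\times\s^2$ and substitute the lemma in one shot. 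This makes the final bookkeeping, which you correctly flag as the delicate part, considerably shorter.
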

\begin{proof}
Based on the analysis in proving Theorem \eqref{LI estimate.'}, the key step is the estimate of $I_4^{\ast}$. We utilize the same substitution in \eqref{wt 46} and apply H\"{o}lder's inequality with a different exponent to obtain
\begin{align}\label{wt 81}
\abs{I_4^{\ast}}\leq&C\int_{0}^{t_b}\Bigg(\int_{ r\geq\d}\int_{q\geq\d}\bigg(\int_{\Omega}\frac{1}{\e^3\d^5}\abs{\bar u(t''-\e^2q,\vec
y)}^{2m}\ud{\vec y}\bigg)\ue^{-{q}}\ue^{-{r}}\ud q\ud{r}\Bigg)^{\frac{1}{2m}}\ue^{-{s}}\ud{s}\\
\leq&\frac{C}{\e^{\frac{3}{2m}}\d^{\frac{5}{2m}}}\nm{\bar u(t)}_{L^{2m}([0,t]\times\Omega)}.\no
\end{align}
Summarizing \eqref{wt 33}, \eqref{wt 34}, \eqref{wt 35}, \eqref{wt 36}, \eqref{wt 37}, \eqref{wt 38}, \eqref{wt 39}, \eqref{wt 40}, \eqref{wt 41}, \eqref{wt 42}, \eqref{wt 43}, \eqref{wt 47}, \eqref{wt 81}, we have shown that for any $(t,\vx,\vw)\in[0,t]\times\bar\Omega\times\s^2$,
\begin{align}\label{ctt 1.}
\abs{u(t,\vx,\vw)}\leq& C\bigg(\delta\im{u}{[0,t]\times\Omega\times\s^2}+\frac{1}{\e^{\frac{3}{2m}}\d^{\frac{5}{2m}}}\nm{\bar u(t)}_{L^{2m}(\Omega\times\s^2)}\\
&+\im{\ss}{[0,t]\times\Omega\times\s^2}+\im{\h}{\Omega\times\s^2}+\im{\g}{[0,t]\times\Gamma^-}\bigg).\no
\end{align}
Let $\d$ be sufficiently small such that $C\d\leq\dfrac{1}{2}$. Taking supremum over $(t,\vx,\vw)\in[0,t]\times\Omega\times\s^2$ in (\ref{ctt 1.}) and using Theorem \ref{LN estimate.}, we have
\begin{align}
&\nm{u}_{L^{\infty}([0,t]\times\Omega\times\s^2)}\\
\leq& C \bigg(\frac{1}{\e^{2+\frac{2}{m}}}\nm{S}_{L^{\frac{2m}{2m-1}}([0,t]\times\Omega\times\s^2)}
+\frac{1}{\e^{1+\frac{2}{m}}}\nm{\ss}_{L^2([0,t]\times\Omega\times\s^2)}+\frac{1}{\e^{1+\frac{2}{m}}}\nm{S}_{L^{2m}([0,t]\times\Omega\times\s^2)}
+\im{\ss}{[0,t]\times\Omega\times\s^2}\no\\
&+\frac{1}{\e^{\frac{2}{m}}}\tm{\dt\ss}{[0,t]\times\Omega\times\s^2}+
\frac{1}{\e^{\frac{2}{m}}}\nm{\ss(0)}_{L^2(\Omega\times\s^2)}\no\\
&+
\e^{1-\frac{2}{m}}\nm{\vw\cdot\nx\h}_{L^2(\Omega\times\s^2)}+
\frac{1}{\e^{\frac{2}{m}}}\nm{\h}_{L^2(\Omega\times\s^2)}+\frac{1}{\e^{\frac{3}{2m}}}\nm{\h}_{L^{2m}(\Omega\times\s^2)}+\im{\h}{\Omega\times\s^2}\no\\
&+\frac{1}{\e^{\frac{2}{m}}}\nm{\g}_{L^{\frac{4m}{3}}([0,t]\times\Gamma^-)}+\frac{1}{\e^{\frac{2}{m}}}\nm{\g}_{L^{2m}([0,t]\times\Gamma^-)}
+\frac{1}{\e^{\frac{1}{2}+\frac{2}{m}}}\nm{\g}_{L^2([0,t]\times\Gamma^-)}+\im{\g}{[0,t]\times\Gamma^-}+\e^{\frac{3}{2}-\frac{2}{m}}\nm{\dt\g}_{L^2([0,t]\times\Gamma^-)}\bigg).\no
\end{align}

\end{proof}

\subsection{Superposition Argument}

In Theorem \ref{LI estimate..}, the estimates depend on the derivative bounds of initial and boundary data. Here we can use superposition property to get rid of such dependence and get a cleaner form.
\begin{theorem}\label{LI estimate...}
Assume $\ss(t,\vx,\vw)\in
L^{\infty}(\rp\times\Omega\times\s^2)$, $\h(\vx,\vw)\in
L^{\infty}(\Omega\times\s^2)$ and $\g(t,x_0,\vw)\in
L^{\infty}(\rp\times\Gamma^-)$. Then the solution $u(t,\vx,\vw)$ to the neutron transport
equation \eqref{neutron.} satisfies
\begin{align}
&\nm{u}_{L^{\infty}([0,t]\times\Omega\times\s^2)}\\
\leq& C \bigg(\frac{1}{\e^{2+\frac{2}{m}}}\nm{S}_{L^{\frac{2m}{2m-1}}([0,t]\times\Omega\times\s^2)}
+\frac{1}{\e^{1+\frac{2}{m}}}\nm{\ss}_{L^2([0,t]\times\Omega\times\s^2)}+\frac{1}{\e^{1+\frac{2}{m}}}\nm{S}_{L^{2m}([0,t]\times\Omega\times\s^2)}
\no\\
&+\im{\ss}{[0,t]\times\Omega\times\s^2}+\frac{1}{\e^{\frac{2}{m}}}\tm{\dt\ss}{[0,t]\times\Omega\times\s^2}+
\frac{1}{\e^{\frac{2}{m}}}\nm{\ss(0)}_{L^2(\Omega\times\s^2)}\no\\
&+\frac{1}{\e^{\frac{3}{2m}}}\nm{\h}_{L^{2m}(\Omega\times\s^2)}+\im{\h}{\Omega\times\s^2}
+\frac{1}{\e^{\frac{2}{m}}}\nm{\g}_{L^{2m}([0,t]\times\Gamma^-)}+\im{\g}{[0,t]\times\Gamma^-}\bigg).\no
\end{align}
\end{theorem}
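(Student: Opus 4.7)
The plan is to exploit the linearity of \eqref{neutron.} via a clean superposition. Write $u = u^S + u^D$, where $u^S$ solves \eqref{neutron.} with the full source $\ss$ but vanishing initial and boundary data ($\h \equiv 0$, $\g \equiv 0$), and $u^D$ solves the homogeneous problem ($\ss \equiv 0$) with the prescribed data $\h, \g$. Both subproblems are well-posed by Theorem \ref{LI estimate.'}; compatibility for $u^S$ holds trivially and that for $u^D$ is inherited from the original problem, so by uniqueness $u = u^S + u^D$.

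For $u^S$, I would apply Theorem \ref{LI estimate..} directly. Every data-dependent term on its right-hand side---including the troublesome $\e^{1-2/m}\nm{\vw\cdot\nx\h}_{L^2(\Omega\times\s^2)}$, $\e^{-2/m}\nm{\h}_{L^2(\Omega\times\s^2)}$, $\e^{-2/m}\nm{\g}_{L^{4m/3}([0,t]\times\Gamma^-)}$, $\e^{-1/2-2/m}\nm{\g}_{L^2([0,t]\times\Gamma^-)}$, and $\e^{3/2-2/m}\nm{\dt\g}_{L^2([0,t]\times\Gamma^-)}$---vanishes identically, leaving exactly the six source-dependent terms appearing in Theorem \ref{LI estimate...}.

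For $u^D$, the source terms of Theorem \ref{LI estimate..} vanish, and the task reduces to proving the data part of the bound using only $L^{2m}$ and $L^\infty$ norms of $\h$ and $\g$. I would avoid the time-differentiated equation entirely (which otherwise reintroduces $\vw\cdot\nx\h$ via the initial condition $\tilde\h = \e^{-2}(-\e\vw\cdot\nx\h - \h + \bar\h)$ used in the proof of Theorem \ref{LN estimate.}). Instead, I would run the $L^{2m}$ energy identity directly: test the weak formulation \eqref{wt 51} against $\phi = (u^D)^{2m-1}$. Since $\ss = 0$, this identity controls $\nm{u^D(t)}_{L^{2m}(\Omega\times\s^2)}$, $\nm{u^D}_{L^{2m}([0,t]\times\Gamma^+)}$, and $\nm{u^D-\bar u^D}_{L^{2m}([0,t]\times\Omega\times\s^2)}$ in terms only of $\nm{\h}_{L^{2m}(\Omega\times\s^2)}$ and $\nm{\g}_{L^{2m}([0,t]\times\Gamma^-)}$. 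To bound $\nm{\bar u^D}_{L^{2m}}$ I would revisit the kernel argument of Theorem \ref{LN estimate.} with $\zeta^D(t)$ solving $\Delta_x\zeta^D = (\bar u^D(t))^{2m-1}$, but handle the $\e^2\int\phi\,\dt u^D$ term by integration by parts in time: the resulting time-boundary contributions $\iint u^D(t)\phi(t) - \iint\h\,\phi(0)$ are absorbed by $\nm{u^D(t)}_{L^{2m}}^{2m}$ (already controlled) and $\nm{\h}_{L^{2m}}^{2m}$, dispensing with any derivative on the data. Feeding this $L^{2m}$ mass control into the double-Duhamel iteration of Theorem \ref{LI estimate..}, but with H\"older's inequality applied at exponent $2m$ rather than $2$, produces exactly the factors $\e^{-3/(2m)}$ in front of $\nm{\h}_{L^{2m}}$ and $\e^{-2/m}$ in front of $\nm{\g}_{L^{2m}}$.

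The main obstacle I anticipate is the time-integration-by-parts step for $u^D$: when the $\dt$ lands on $\zeta^D$ one picks up $\dt\bar u^D$, which must be rewritten via $\e^2\dt\bar u^D = -\e\int_{\s^2}\vw\cdot\nx u^D\,\ud\vw$ (obtained by averaging the homogeneous equation over $\vw\in\s^2$) and absorbed back into the $L^2$ control of $u^D - \bar u^D$ supplied by Theorem \ref{LT estimate.} applied to $u^D$, whose right-hand side involves only $\nm{\h}_{L^2}$ and $\nm{\g}_{L^2}$, themselves dominated by the stated $L^{2m}$ norms up to universal constants. Once this absorption closes, adding the bounds for $u^S$ and $u^D$ yields the claim.
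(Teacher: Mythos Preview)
Your superposition $u = u^S + u^D$ is exactly the paper's decomposition $u = u_2 + u_1$, and your treatment of $u^S$ via Theorem \ref{LI estimate..} with vanishing data is identical to the paper's Step 2.

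For $u^D$, however, you are working harder than necessary. The $L^{2m}$ energy identity you already propose---testing against $(u^D)^{2m-1}$ with $\ss\equiv 0$---yields, after taking the $(2m)^{\text{th}}$ root,
\[
\e^{\frac{1}{m}}\nm{u^D(t)}_{L^{2m}(\Omega\times\s^2)} \;\leq\; C\Big(\e^{\frac{1}{m}}\nm{\h}_{L^{2m}(\Omega\times\s^2)} + \e^{\frac{1}{2m}}\nm{\g}_{L^{2m}([0,t]\times\Gamma^-)}\Big)
\]
\emph{uniformly in $t$}. Since $\nm{\bar u^D(t)}_{L^{2m}(\Omega)}\leq \nm{u^D(t)}_{L^{2m}(\Omega\times\s^2)}$ by Jensen's inequality, this is already enough to feed into the double-Duhamel bound \eqref{ctt 1.} at exponent $2m$, producing the factors $\e^{-3/(2m)}$ and $\e^{-2/m}$ you are after. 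The paper does precisely this in its Step 1. Your proposed kernel step---choosing $\phi=-\vw\cdot\nx\zeta^D$ with $\Delta_x\zeta^D=(\bar u^D)^{2m-1}$, integrating by parts in time, and rewriting $\dt\bar u^D$ via the averaged equation---is superfluous: there is no need to separately control $\nm{\bar u^D}_{L^{2m}([0,t]\times\Omega)}$ once the instantaneous $\nm{u^D(t)}_{L^{2m}}$ is in hand. The ``main obstacle'' you anticipate never arises.
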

\begin{proof}
Since equation \eqref{neutron.} is linear, we may decompose the solution $u=u_1+u_2$ satisfying
\begin{align}\label{neutron..}
\left\{
\begin{array}{l}
\e^2\dt u_1+\e \vw\cdot\nabla_x u_1+u_1-\bar
u_1=0\ \ \ \text{for}\ \
(t,\vx,\vw)\in\rp\times\Omega\times\s^2,\\\rule{0ex}{2.0em}
u_1(0,\vx,\vw)=\h(\vx,\vw)\ \ \text{for}\ \ (\vx,\vw)\in\Omega\times\s^2\\\rule{0ex}{2.0em}
u_1(t,\vx_0,\vw)=\g(t,\vx_0,\vw)\ \ \text{for}\ \ t\in\rp,\ \ \vx_0\in\p\Omega\ \ \text{and}\ \ \vw\cdot\vn<0,
\end{array}
\right.
\end{align}
and
\begin{align}\label{neutron...}
\left\{
\begin{array}{l}
\e^2\dt u_2+\e \vw\cdot\nabla_x u_2+u_2-\bar
u_2=\ss\ \ \ \text{for}\ \
(t,\vx,\vw)\in\rp\times\Omega\times\s^2,\\\rule{0ex}{2.0em}
u_2(0,\vx,\vw)=0\ \ \text{for}\ \ (\vx,\vw)\in\Omega\times\s^2\\\rule{0ex}{2.0em}
u_2(t,\vx_0,\vw)=0\ \ \text{for}\ \ t\in\rp,\ \ \vx_0\in\p\Omega\ \ \text{and}\ \ \vw\cdot\vn<0.
\end{array}
\right.
\end{align}
\ \\
Step 1: Estimate of $u_1$.\\
In Step 2 of proof of Theorem \ref{LN estimate.}, we have shown
\begin{align}
&\e^2\nm{u_1 (t)}_{L^{2m}(\Omega\times\s^2)}^{2m}+\e\nm{u_1}^{2m}_{L^{2m}([0,t]\times\Gamma^+)}+\nm{u_1 -\bar
u_1 }_{L^{2m}([0,t]\times\Omega\times\s^2)}^{2m}\\
\leq&C\bigg(\e^2\nm{\h}_{L^{2m}(\Omega\times\s^2)}^{2m}+\e\nm{\g}_{L^{2m}([0,t]\times\Gamma^-)}^{2m}\bigg).\no
\end{align}
Take $(2m)^{th}$ root on both sides, we have
\begin{align}
&\e^{\frac{1}{m}}\nm{u_1 (t)}_{L^{2m}(\Omega\times\s^2)}+\e^{\frac{1}{2m}}\nm{u_1}_{L^{2m}([0,t]\times\Gamma^+)}+\nm{u_1 -\bar
u_1 }_{L^{2m}([0,t]\times\Omega\times\s^2)}\\
\leq&C\bigg(\e^{\frac{1}{m}}\nm{\h}_{L^{2m}(\Omega\times\s^2)}+\e^{\frac{1}{2m}}\nm{\g}_{L^{2m}([0,t]\times\Gamma^-)}\bigg).\no
\end{align}
Dividing $\e^{\frac{1}{m}}$ on both sides, we arrive at
\begin{align}
&\nm{u_1 (t)}_{L^{2m}(\Omega\times\s^2)}+\frac{1}{\e^{\frac{1}{2m}}}\nm{u_1}_{L^{2m}([0,t]\times\Gamma^+)}+\frac{1}{\e^{\frac{1}{m}}}\nm{u_1 -\bar
u_1 }_{L^{2m}([0,t]\times\Omega\times\s^2)}\\
\leq&C\bigg(\nm{\h}_{L^{2m}(\Omega\times\s^2)}+\frac{1}{\e^{\frac{1}{2m}}}\nm{\g}_{L^{2m}([0,t]\times\Gamma^-)}\bigg).\no
\end{align}
Then using the argument in the proof of Theorem \ref{LI estimate..}, we have
\begin{align}\label{ltt 101}
\nm{u_1}_{L^{\infty}([0,t]\times\Omega\times\s^2)}\leq& C\bigg(\frac{1}{\e^{\frac{3}{2m}}}\nm{\bar u_1(t)}_{L^{2m}(\Omega\times\s^2)}+\im{\h}{\Omega\times\s^2}+\im{\g}{[0,t]\times\Gamma^-}\bigg)\\
\leq&C\bigg(\frac{1}{\e^{\frac{3}{2m}}}\nm{\h}_{L^{2m}(\Omega\times\s^2)}+\im{\h}{\Omega\times\s^2}
+\frac{1}{\e^{\frac{2}{m}}}\nm{\g}_{L^{2m}([0,t]\times\Gamma^-)}+\im{\g}{[0,t]\times\Gamma^-}\bigg).\no
\end{align}
\ \\
Step 2: Estimate of $u_2$.\\
Here we refer to Theorem \ref{LI estimate..}. Since initial and boundary data are all zero, the estimate is much simpler
\begin{align}\label{ltt 102}
&\nm{u_2}_{L^{\infty}([0,t]\times\Omega\times\s^2)}\\
\leq& C \bigg(\frac{1}{\e^{2+\frac{2}{m}}}\nm{S}_{L^{\frac{2m}{2m-1}}([0,t]\times\Omega\times\s^2)}
+\frac{1}{\e^{1+\frac{2}{m}}}\nm{\ss}_{L^2([0,t]\times\Omega\times\s^2)}+\frac{1}{\e^{1+\frac{2}{m}}}\nm{S}_{L^{2m}([0,t]\times\Omega\times\s^2)}
\no\\
&+\im{\ss}{[0,t]\times\Omega\times\s^2}+\frac{1}{\e^{\frac{2}{m}}}\tm{\dt\ss}{[0,t]\times\Omega\times\s^2}+
\frac{1}{\e^{\frac{2}{m}}}\nm{\ss(0)}_{L^2(\Omega\times\s^2)}\bigg).\no
\end{align}
\ \\
Step 3: Synthesis.\\
Combining \eqref{ltt 101} and \eqref{ltt 102}, we obtain
\begin{align}
&\nm{u}_{L^{\infty}([0,t]\times\Omega\times\s^2)}\leq \nm{u_1}_{L^{\infty}([0,t]\times\Omega\times\s^2)}+\nm{u_2}_{L^{\infty}([0,t]\times\Omega\times\s^2)}\\
\leq& C \bigg(\frac{1}{\e^{2+\frac{2}{m}}}\nm{S}_{L^{\frac{2m}{2m-1}}([0,t]\times\Omega\times\s^2)}
+\frac{1}{\e^{1+\frac{2}{m}}}\nm{\ss}_{L^2([0,t]\times\Omega\times\s^2)}+\frac{1}{\e^{1+\frac{2}{m}}}\nm{S}_{L^{2m}([0,t]\times\Omega\times\s^2)}
\no\\
&+\im{\ss}{[0,t]\times\Omega\times\s^2}+\frac{1}{\e^{\frac{2}{m}}}\tm{\dt\ss}{[0,t]\times\Omega\times\s^2}+
\frac{1}{\e^{\frac{2}{m}}}\nm{\ss(0)}_{L^2(\Omega\times\s^2)}\no\\
&+\frac{1}{\e^{\frac{3}{2m}}}\nm{\h}_{L^{2m}(\Omega\times\s^2)}+\im{\h}{\Omega\times\s^2}
+\frac{1}{\e^{\frac{2}{m}}}\nm{\g}_{L^{2m}([0,t]\times\Gamma^-)}+\im{\g}{[0,t]\times\Gamma^-}\bigg).\no
\end{align}
\end{proof}

\begin{theorem}\label{LI estimate.}
Assume $\ue^{Kt}\ss(t,\vx,\vw)\in
L^{\infty}(\rp\times\Omega\times\s^2)$, $\h(\vx,\vw)\in
L^{\infty}(\Omega\times\s^2)$ and $\ue^{K t}\g(t,x_0,\vw)\in
L^{\infty}(\rp\times\Gamma^-)$ for some $K>0$. Then there exists $0<K_0\leq K$ such that the solution
$u(t,\vx,\vw)$ to the neutron transport equation \eqref{neutron.} satisfies
\begin{align}
&\nm{\ue^{K_0t}u}_{L^{\infty}([0,t]\times\Omega\times\s^2)}\\
\leq& C \bigg(\frac{1}{\e^{2+\frac{2}{m}}}\nm{\ue^{K_0t}S}_{L^{\frac{2m}{2m-1}}([0,t]\times\Omega\times\s^2)}
+\frac{1}{\e^{1+\frac{2}{m}}}\nm{\ue^{K_0t}\ss}_{L^2([0,t]\times\Omega\times\s^2)}+\frac{1}{\e^{1+\frac{2}{m}}}\nm{\ue^{K_0t}S}_{L^{2m}([0,t]\times\Omega\times\s^2)}\no\\
&
+\im{\ue^{K_0t}\ss}{[0,t]\times\Omega\times\s^2}+\frac{1}{\e^{\frac{2}{m}}}\tm{\ue^{K_0t}\dt\ss}{[0,t]\times\Omega\times\s^2}+
\frac{1}{\e^{\frac{2}{m}}}\nm{\ss(0)}_{L^2(\Omega\times\s^2)}\no\\
&+\frac{1}{\e^{\frac{3}{2m}}}\nm{\h}_{L^{2m}(\Omega\times\s^2)}+\im{\h}{\Omega\times\s^2}+\frac{1}{\e^{\frac{2}{m}}}\nm{\ue^{K_0t}\g}_{L^{2m}([0,t]\times\Gamma^-)}
+\im{\ue^{K_0t}\g}{[0,t]\times\Gamma^-}\bigg).\no
\end{align}
\end{theorem}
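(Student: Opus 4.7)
The plan is to reduce the weighted estimate to the unweighted one (Theorem \ref{LI estimate...}) by the standard exponential substitution, and then verify that the whole chain of estimates (Theorem \ref{LT estimate.}, Theorem \ref{LI estimate.'}, Lemma \ref{LN estimate.}, Theorem \ref{LI estimate..} and Theorem \ref{LI estimate...}) is stable under a small lower-order perturbation. Concretely, set
\begin{align*}
v(t,\vx,\vw)=\ue^{K_0 t}u(t,\vx,\vw),\qquad \tilde\ss(t,\vx,\vw)=\ue^{K_0 t}\ss(t,\vx,\vw),\qquad \tilde\g(t,\vx_0,\vw)=\ue^{K_0 t}\g(t,\vx_0,\vw).
\end{align*}
A direct computation shows that $v$ satisfies
\begin{align*}
\left\{
\begin{array}{l}
\e^2\dt v+\e\vw\cdot\nx v+v-\bar v=\tilde\ss+\e^2 K_0\, v\ \ \text{in}\ \rp\times\Omega\times\s^2,\\
v(0,\vx,\vw)=\h(\vx,\vw),\\
v(t,\vx_0,\vw)=\tilde\g(t,\vx_0,\vw)\ \ \text{for}\ \vw\cdot\vn<0.
\end{array}
\right.
\end{align*}
Hence $v$ solves exactly the neutron transport equation \eqref{neutron.} with source $\tilde\ss+\e^2 K_0 v$, initial data $\h$, and boundary data $\tilde\g$.

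The idea is to apply Theorem \ref{LI estimate...} directly to $v$ with this modified source. The output contains, on the right-hand side, weighted norms of $\tilde\ss$ (which are exactly the norms appearing in the statement of the present theorem) plus contributions from $\e^2 K_0 v$. Schematically, the contribution of $\e^2 K_0 v$ to the right-hand side is
\begin{align*}
C K_0\Bigg(\frac{1}{\e^{2/m}}\nm{v}_{L^{\frac{2m}{2m-1}}}+\frac{1}{\e^{-1+2/m}}\nm{v}_{L^2}+\frac{1}{\e^{-1+2/m}}\nm{v}_{L^{2m}}+\e^2\im{v}{}+\e^{2-2/m}\tm{v}{}\Bigg),
\end{align*}
where every space-time norm is taken over $[0,t]\times\Omega\times\s^2$. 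Each factor is bounded by $\im{v}{[0,t]\times\Omega\times\s^2}$ times a universal constant (using that $\Omega$ is bounded and that the weighted norm dominates the unweighted norm on a finite slab), so the total contribution is $C K_0\im{v}{[0,t]\times\Omega\times\s^2}$. Picking $K_0>0$ so small that $CK_0<\tfrac{1}{2}$ allows us to absorb this term into the left-hand side, which yields the stated inequality.

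The main obstacle, and the only step that requires care, is that Theorem \ref{LI estimate...} is derived as the last link of a chain LT $\to$ LI' $\to$ LN $\to$ LI.. $\to$ LI..., and the derivation of Lemma \ref{LN estimate.} goes through the differentiated equation \eqref{wt 65}. One must therefore check that the perturbation $\e^2 K_0 v$ does not break the difference-quotient argument in Step 1 of Theorem \ref{LT estimate.} and the $\dt$-differentiated energy estimate in Step 1 of Lemma \ref{LN estimate.}; differentiating $\e^2 K_0 v$ in $t$ just produces another $\e^2 K_0\dt v$ term which, after the same $L^2$ energy identity, contributes an $O(K_0)\nm{\dt v}_{L^2}$ term that can be absorbed exactly as above. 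Once this is verified, the chain closes and the weighted inequality of Theorem \ref{LI estimate.} follows, with $K_0$ chosen small enough that every absorption (in $L^2$, $L^{2m}$, and $L^\infty$) is admissible uniformly in $0<\e\ll 1$.
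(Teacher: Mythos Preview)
Your approach is essentially the same as the paper's: define $v=\ue^{K_0t}u$, observe that $v$ solves \eqref{neutron.} with source $\ue^{K_0t}\ss+K_0\e^2 v$, initial data $\h$, and boundary data $\ue^{K_0t}\g$, and then rerun the whole chain of estimates with this extra lower-order term absorbed. The paper's proof is in fact a one-line version of what you wrote, noting only that ``$\e^2$ helps to recover all the estimates in previous theorems,'' whereas you spell out where the absorption has to happen (in particular in the differentiated equation \eqref{wt 65}). One minor caution: your intermediate ``black-box'' heuristic of applying Theorem \ref{LI estimate...} directly and bounding $\nm{v}_{L^{p}([0,t]\times\Omega\times\s^2)}$ by $\nm{v}_{L^\infty}$ would introduce a $t$-dependent constant, so the absorption really must be done step by step inside the chain (as you correctly indicate in your final paragraph), not at the end.
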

\begin{proof}
Let $v=\ue^{K_0 t}u$. Then $v$ satisfies the equation
\begin{align}
\left\{
\begin{array}{l}
\e^2\dt v+\e\vw\cdot\nabla_xv+v-\bar v=\ue^{K_0t}\ss+K_0\e^2v\ \ \text{for}\ \
(t,\vx,\vw)\in\rp\times\Omega\times\s^2,\\\rule{0ex}{2.0em} v(0,\vx,\vw)=\h(\vx,\vw)\ \ \text{for}\ \
(\vx,\vw)\in\Omega\times\s^2,\\\rule{0ex}{2.0em} v(t,\vx_0,\vw)=\ue^{K_0t}\g(t,\vx_0,\vw)\
\ \text{for}\ \ t\in\rp\ \ \vx_0\in\p\Omega\ \ \text{and}\ \vw\cdot\vn<0.
\end{array}
\right.
\end{align}
Note that we have an extra term $K_0\e^2v$. However, $\e^2$ helps to recover all the estimates in previous theorems and we can obtain exactly the same results.
\end{proof}

\section{Asymptotic Analysis}

\subsection{Analysis of Regular Boundary Layer}

Based on a similar analysis as in steady problems, we have the following:
\begin{theorem}\label{dt theorem 1.}
For $K_0>0$ sufficiently small, the regular boundary layer satisfies
\begin{align}
\begin{array}{ll}
\lnnm{\ue^{K_0t}\ue^{K_0\eta}\ub_0(t)}\leq C,& \lnnm{\ue^{K_0t}\ue^{K_0\eta}\ub_1(t)}\leq C\abs{\ln(\e)}^8,\\\rule{0ex}{2em}
\lnnm{\ue^{K_0t}\ue^{K_0\eta}\dfrac{\p\ub_0}{\p t}(t)}\leq C\abs{\ln(\e)}^8,&\lnnm{\ue^{K_0t}\ue^{K_0\eta}\dfrac{\p\ub_1}{\p t}(t)}\leq C\abs{\ln(\e)}^{16},\\\rule{0ex}{2em}
\lnnm{\ue^{K_0t}\ue^{K_0\eta}\dfrac{\p\ub_0}{\p\iota_i}(t)}\leq C\abs{\ln(\e)}^8,&\lnnm{\ue^{K_0t}\ue^{K_0\eta}\dfrac{\p\ub_1}{\p\iota_i}(t)}\leq C\abs{\ln(\e)}^{16},\\\rule{0ex}{2em}
\lnnm{\ue^{K_0t}\ue^{K_0\eta}\dfrac{\p\ub_0}{\p\psi}(t)}\leq C\abs{\ln(\e)}^8,&\lnnm{\ue^{K_0t}\ue^{K_0\eta}\dfrac{\p\ub_1}{\p\psi}(t)}\leq C\abs{\ln(\e)}^{16}.
\end{array}
\end{align}
\end{theorem}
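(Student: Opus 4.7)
The plan is to reduce everything to the steady analysis already carried out in Theorem \ref{dt theorem 1} by exploiting the crucial structural observation that, in the unsteady setting, the $\e$-Milne problem with geometric correction defining $\ub_0$ and $\ub_1$ (see \eqref{et 1.} and \eqref{et 4.}) contains no time derivative: the variable $t$ enters only as a parameter through the boundary data $\gb(t,\iota_1,\iota_2,\phi,\psi)$ and through $\nx\u_0(t,\vx_0)$. Therefore, for each frozen $t\in\rp$, the function $\ub_0(t,\cdot)$ solves exactly the same Milne-type equation as in the steady chapter, with boundary data $\gb(t,\cdot)$ that inherits $C^3$ regularity in the tangential variables from the decomposition of $g(t,\cdot)$. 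The pointwise-in-$t$ estimates on $\ub_0,\ub_1$ and their $\iota_i,\psi$ derivatives are then immediate consequences of Theorem \ref{dt theorem 1}, provided the constants there depend only on the corresponding norms of $\gb(t,\cdot)$.

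The time-derivative estimates are obtained by differentiating the Milne system in $t$. Since the operator on the left-hand side of \eqref{et 1.} is time-independent, $\p_t\mathscr{F}_0$ satisfies the same $\e$-Milne problem driven by the boundary data $\p_t\gb(t,\iota_1,\iota_2,\phi,\psi)$ (and a compatible correction from $\p_t \mathscr{F}_{0,L}$). For $\p_t \ub_1$, one differentiates \eqref{et 4.} and uses the bounds on $\p_t G[\ub_0]$, which reduce via the chain rule to the already-controlled $\p_t$ of the tangential and velocity derivatives of $\ub_0$. So the whole hierarchy of Step 1 through Step 10 from the proof of Theorem \ref{dt theorem 1} runs a second time with $\gb$ replaced by $\p_t\gb$, producing the $\abs{\ln(\e)}^8$ and $\abs{\ln(\e)}^{16}$ bounds claimed for the time derivatives.

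To install the exponential-in-time weight $\ue^{K_0 t}$, I would note that the Milne operator, the geometric correction $F$, and the boundary data decomposition $g=\gb+\gf$ from the steady chapter are all linear and commute with multiplication by $\ue^{K_0 t}$. Under the standing hypothesis that $\ue^{K t}g\in L^\infty$ with $K>0$, and the hypothesis $\ue^{K t}\dt g\in L^\infty$ that follows from the $C^3$ regularity in $t$, the weighted quantities $\ue^{K_0 t}\gb$, $\ue^{K_0 t}\p_t\gb$ remain bounded in the relevant norms for any $0<K_0\leq K$. Applying the steady Milne estimates to $\ue^{K_0 t}\ub_k$ and $\ue^{K_0 t}\p_t\ub_k$ in place of $\ub_k$ therefore yields the $\ue^{K_0 t}\ue^{K_0\eta}$ weighted bounds stated in the theorem.

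The main obstacle is verifying that the boundary data decomposition $g(t,\cdot)=\gb(t,\cdot)+\gf(t,\cdot)$ can be chosen to depend smoothly on $t$ in such a way that $\p_t\gb$ still enjoys the cut-off properties near the grazing set that made the $\ue^{K_0 t}\ue^{K_0 \eta}\ub_1$ estimate in the steady case close with $\alpha=1$. This requires building the mollifier and the $\l(t)$ from the steady decomposition so that $\p_t \gb$ inherits the same $O(\e^{-\alpha})$ control on $\p_\phi$ and the same $L^\infty$ control on $\p_\psi$ uniformly in $t$, and checking that the normal-derivative matching $\p_\eta\mathscr{F}_0(t,0^+,\phi)=0$ for $0<\phi<\e^{\alpha}$ survives differentiation in $t$. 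Once this is arranged, the rest of the argument is a mechanical re-run of Steps 1--10 of the steady proof, together with the additional differentiated Milne equation for $\p_t \ub_0$ and $\p_t \ub_1$.
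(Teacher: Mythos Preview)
Your proposal is correct and matches the paper's approach exactly: the paper's entire ``proof'' of Theorem~\ref{dt theorem 1.} is the single sentence ``Based on a similar analysis as in steady problems, we have the following,'' and you have correctly unpacked what that similar analysis must be---namely that $t$ enters the $\e$-Milne problem \eqref{et 1.}, \eqref{et 4.} only as a parameter, so the steady machinery of Theorem~\ref{dt theorem 1} applies pointwise in $t$, and differentiating in $t$ yields a Milne problem of the same structure for $\p_t\ub_k$. Your identification of the one genuine point requiring care (that the decomposition $g=\gb+\gf$ and the choice of $\l$ must be performed smoothly in $t$ so that $\p_t\gb$ inherits the cut-off structure) is more explicit than anything in the paper itself.
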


\subsection{Analysis of Singular Boundary Layer}

Based on a similar analysis as in steady problems, we have the following:
\begin{theorem}\label{dt theorem 2.}
Let
\begin{align}
\left\{
\begin{array}{ll}
\chi_1: &0\leq\zeta<2\e^{\alpha},\\
\chi_2: &2\e^{\alpha}\leq\zeta\leq1.
\end{array}
\right.
\end{align}
For $K_0>0$ sufficiently small, the singular boundary layer satisfies
\begin{align}
\begin{array}{ll}
\lnnm{\ue^{K_0t}\ue^{K_0\eta}(\chi_1\uf_0)(t)}\leq C,& \lnnm{\ue^{K_0t}\ue^{K_0\eta}(\chi_2\uf_0)(t)}\leq C\e^{\alpha},\\\rule{0ex}{2em}
\lnnm{\ue^{K_0t}\ue^{K_0\eta}\dfrac{\p(\chi_1\uf_0)(t)}{\p t}}\leq C\abs{\ln(\e)}^8,&\lnnm{\ue^{K_0t}\ue^{K_0\eta}\dfrac{\p(\chi_2\uf_0)(t)}{\p t}}\leq C\e^{\alpha}\abs{\ln(\e)}^{8},\\\rule{0ex}{2em}
\lnnm{\ue^{K_0t}\ue^{K_0\eta}\dfrac{\p(\chi_1\uf_0)(t)}{\p\iota_i}}\leq C\abs{\ln(\e)}^8,&\lnnm{\ue^{K_0t}\ue^{K_0\eta}\dfrac{\p(\chi_2\uf_0)(t)}{\p\iota_i}}\leq C\e^{\alpha}\abs{\ln(\e)}^{8},\\\rule{0ex}{2em}
\lnnm{\ue^{K_0t}\ue^{K_0\eta}\dfrac{\p(\chi_1\uf_0)(t)}{\p\psi}}\leq C\abs{\ln(\e)}^8,&\lnnm{\ue^{K_0t}\ue^{K_0\eta}\dfrac{\p(\chi_2\uf_0)}{\p\psi}(t)}\leq C\e^{\alpha}\abs{\ln(\e)}^{8}.
\end{array}
\end{align}
\end{theorem}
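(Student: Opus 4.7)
The plan is to mirror the steady proof of Theorem \ref{dt theorem 2} while treating $t$ as a parameter, then estimate the time derivative as a fourth ``tangential'' direction. Since the boundary layer equation (\ref{et 2.}) contains no $\dt$, the time variable enters $\uf_0$ only through the boundary data $\gf(t,\iota_1,\iota_2,\phi,\psi)$. At each fixed $t$, $\uf_0(t,\cdot)$ solves exactly the $\e$-Milne problem analyzed in Theorem \ref{dt theorem 2}, so the steady bounds apply pointwise in $t$; the $\ue^{K_0 t}$ factor is propagated from the assumed exponential decay of $g(t,\vx_0,\vw)$ by choosing $K_0$ small relative to that rate (the same mechanism as in Theorem \ref{LI estimate.}).

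First I would redo the well-posedness and regularity step from the steady proof verbatim: decompose $\uf_0=\chi_1\uf_0+\chi_2\uf_0=f^{(1)}+f^{(2)}$; obtain $\tnnm{\ue^{K_0\eta}\uf_0(t)}\le C\e^{\alpha}$ from the small support of $\gf$; then use Lemma-level tracking along characteristics (the kernel estimate behind Theorem \ref{Milne theorem 2}) to split $\lnnm{\ue^{K_0\eta}\buf_0(t)}\le C(\e^{\alpha}\ltnm{\ue^{K_0\eta}f^{(1)}}+\ltnm{\ue^{K_0\eta}f^{(2)}})$, absorb, and conclude
\begin{align}
\lnnm{\ue^{K_0\eta}f^{(1)}(t)}\le C,\qquad \lnnm{\ue^{K_0\eta}f^{(2)}(t)}\le C\e^{\alpha}.
\end{align}
Multiplying through by $\ue^{K_0 t}$ and taking supremum in $t$ gives the first line of the claimed estimates.

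Next I would handle the tangential and velocity derivatives $\partial_{\iota_i}\uf_0$ and $\partial_\psi\uf_0$ exactly as in the steady case: they solve the same Milne equation with a forcing $-\partial_{\iota_i}F\cos\phi\,\partial_\phi\uf_0$ (respectively the $\psi$-analogue), which has zero $\phi$-average by the orthogonality $\int\cos\phi\,\partial_\phi\uf_0\,d\phi=\int\uf_0\sin\phi\,d\phi=0$, so Theorem \ref{Milne theorem 2} applies with $S_Q=0$. The boundary data satisfies $\lnmp{\e\partial_\phi\gf}\le C$ thanks to the cutoff in $\gf$, which absorbs the $\e^{-\alpha}$ singularity coming from $\partial_\phi$. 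Splitting again by $\chi_1,\chi_2$ and reiterating the absorption argument reproduces the bounds $C\abs{\ln\e}^8$ and $C\e^{\alpha}\abs{\ln\e}^8$, with the $\ue^{K_0 t}$ factor simply carried along since $\gf$ and its spatial/velocity derivatives decay at rate $\ue^{-K_0 t}$ by the hypothesis on $g$.

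For $\partial_t \uf_0$, differentiating (\ref{et 2.}) in $t$ produces the same $\e$-Milne problem with forcing $0$ and boundary data $\partial_t \gf$, since $F$ and the geometric coefficients are $t$-independent. So $\partial_t \uf_0$ satisfies structurally the same problem as $\uf_0$ itself, and inherits the same two-region bounds, gaining a $\abs{\ln\e}^8$ only through the same $\e^{-\alpha}$ regularity of $\partial_\phi(\partial_t \gf)$ produced by the cutoff construction. The hard part will be verifying that the time differentiation does not break the region decomposition or the orthogonality used for the $\chi_2$ improvement — but since the characteristics, cutoff regions, and projection onto the kernel all commute with $\partial_t$, this reduces to checking that the construction of $\gf$ in Section 3.1 is smooth in $t$, which follows from $g\in C^3(\rp\times\Gamma^-)$. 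Combining the four sets of estimates with the uniform $\ue^{K_0 t}$ decay completes the proof.
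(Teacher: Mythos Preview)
Your proposal is correct and matches the paper's approach: the paper's own proof of this theorem consists of the single line ``Based on a similar analysis as in steady problems, we have the following,'' and your write-up is precisely the expansion of that sentence --- freeze $t$, invoke the steady Theorem~\ref{dt theorem 2} verbatim for $\uf_0$, $\partial_{\iota_i}\uf_0$, $\partial_\psi\uf_0$, and treat $\partial_t$ as one more parametric direction, with the $\ue^{K_0 t}$ weight carried by the assumed decay of $g$. One small remark: for $\partial_t\uf_0$ the equation is homogeneous (no source) with data $\partial_t\gf$ still supported in $\abs{\sin\phi}\le 2\e^{\alpha}$, so in fact the sharper bounds $C$ and $C\e^{\alpha}$ (without the $\abs{\ln\e}^8$) already hold by the $\uf_0$ argument; the stated $\abs{\ln\e}^8$ in the theorem is just a uniform upper bound and your justification via $\partial_\phi(\partial_t\gf)$ is unnecessary there.
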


\subsection{Analysis of Initial Layer}

We divide the analysis into several steps:\\
\ \\
Step 1: Analysis of $\ui_0$.\\
Using \eqref{et 6.}, $\ui_0$ satisfies
\begin{align}
\ui_0(\tau,\vx,\vw)=\ue^{-\tau}(\ui_0-\bui_0)=\ue^{-\tau}(\h-\bar h).
\end{align}
Naturally, it decays exponentially in $\tau$.\\
\ \\
Step 1: Analysis of $\ui_1$.\\
Using \eqref{et 7.}, $\ui_1$ satisfies
\begin{align}
\p_{\tau}\bui_1=-\displaystyle\int_{\s^1}\Big(\vw\cdot\nabla_x\ui_{0}\Big)\ud{\vw}=-\ue^{-\tau}\int_{\s^2}\Big(\vw\cdot\nx(\h-\bar\h)\Big)\ud{\vw},
\end{align}
which, combined with $\bui_1(\infty,\vx)=0$, implies that
\begin{align}
\bui_1(\tau,\vx)=\ue^{-\tau}\int_{\s^2}\Big(\vw\cdot\nx(\h-\bar\h)\Big)\ud{\vw}.
\end{align}
Then using \eqref{et 7.}, we have
\begin{align}
\ui_1(\tau,\vx,\vw)=\ue^{-\tau}\bigg(\vw\cdot\nx\u_0(0)+\int_{\s^2}\Big(\vw\cdot\nx(\h-\bar\h)\Big)\ud{\vw}-\vw\cdot\nx(\h-\bar\h)\bigg).
\end{align}
Naturally, it decays exponentially in $\tau$.
\begin{theorem}\label{dt theorem 4.}
For $K_0>0$ sufficiently small, the initial layer satisfies
\begin{align}
\begin{array}{ll}
\im{\ue^{K_0\tau}\ui_0(\vx)}{\rp\times\s^2}\leq C,& \im{\ue^{K_0\tau}\ui_1(\vx)}{\rp\times\s^2}\leq C,\\\rule{0ex}{2em}
\im{\ue^{K_0\tau}\nx\ui_0(\vx)}{\rp\times\s^2}\leq C,&\im{\ue^{K_0\tau}\nx\ui_1(\vx)}{\rp\times\s^2}\leq C.
\end{array}
\end{align}
\end{theorem}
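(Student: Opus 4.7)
The plan is to exploit the fact that, unlike the boundary layer equations which require the elaborate $\e$-Milne analysis with geometric correction, the initial layers $\ui_0$ and $\ui_1$ admit completely explicit closed-form representations, obtained in the preceding subsection directly from \eqref{et 6.} and \eqref{et 7.}. Consequently the proof reduces to reading off the $\ue^{-\tau}$ prefactor and bounding the $(\vx,\vw)$-dependent coefficients in $L^\infty$ using the regularity of the data $h$ and of the leading-order interior solution $\u_0$.

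First, I would record the explicit formulas already derived:
\begin{align}
\ui_0(\tau,\vx,\vw)&=\ue^{-\tau}\Big(h(\vx,\vw)-\bar h(\vx)\Big),\\
\ui_1(\tau,\vx,\vw)&=\ue^{-\tau}\Bigg(\vw\cdot\nx\u_0(0,\vx)+\int_{\s^2}\Big(\vw\cdot\nx(h-\bar h)(\vx,\vw')\Big)\ud\vw'-\vw\cdot\nx\Big(h-\bar h\Big)(\vx,\vw)\Bigg).
\end{align}
Given these representations, the estimates for $\ui_0$, $\ui_1$ and their spatial gradients reduce to $L^\infty$ bounds on $h-\bar h$, $\nx(h-\bar h)$, $\nx^2(h-\bar h)$, together with $\nx\u_0(0,\cdot)$ and $\nx^2\u_0(0,\cdot)$. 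The former follow from the assumed regularity of the initial datum (extended as needed beyond the bare $C(\Omega\times\s^2)$ hypothesis, as is standard in the rest of the monograph), while the latter follow from Theorem \ref{dt theorem 3} applied at $t=0$, which gives $\u_0\in H^3(\Omega)\hookrightarrow C^1(\bar\Omega)$ with uniform bounds.

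Next, for the $\ue^{K_0\tau}$-weighted bounds, I would simply choose $0<K_0<1$ (sufficiently small, compatible with the constant selected for the boundary layer estimates in Theorems \ref{dt theorem 1.} and \ref{dt theorem 2.}) so that $\ue^{K_0\tau}\cdot\ue^{-\tau}=\ue^{-(1-K_0)\tau}$ remains bounded by $1$ on $\rp$. Then
\begin{align}
\im{\ue^{K_0\tau}\ui_0}{\rp\times\Omega\times\s^2}\leq \im{h-\bar h}{\Omega\times\s^2}\leq C,
\end{align}
and the three other estimates are obtained by the same token, differentiating under the $\ue^{-\tau}$ factor and invoking the aforementioned $L^\infty$ bounds.

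The ``main obstacle'' here is more bookkeeping than mathematics: one must make sure the regularity assumed on $h$ and the regularity produced by elliptic theory for $\u_0$ are sufficient to justify the spatial derivatives of $\ui_1$ (which are genuinely second-order in the data). In particular, the $\nx\ui_1$ estimate uses $\nx^2 h$ through the term $\vw\cdot\nx(h-\bar h)$ and $\nx^2\u_0(0,\cdot)$ through the term $\vw\cdot\nx\u_0(0,\vx)$; both are controlled, the first by the tacit smoothness of the initial data carried throughout the monograph, the second via Theorem \ref{dt theorem 3} and Sobolev embedding. Once these bounds are in hand, the conclusion of Theorem \ref{dt theorem 4.} follows immediately with no appeal to Milne-type or transport-type machinery.
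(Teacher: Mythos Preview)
Your proposal is correct and follows essentially the same approach as the paper: the subsection preceding the theorem derives exactly the closed-form expressions you record for $\ui_0$ and $\ui_1$, notes that each ``decays exponentially in $\tau$,'' and then states the theorem without further argument---the bounds being immediate from the $\ue^{-\tau}$ prefactor together with the assumed regularity of $h$ and of $\u_0$ at $t=0$. One small correction: in the unsteady chapter the relevant interior-solution estimate is Theorem~\ref{dt theorem 3.} (parabolic), not Theorem~\ref{dt theorem 3} (elliptic), though since $\u_0(0,\vx)=\bar h(\vx)$ the regularity of $\nx\u_0(0,\cdot)$ and $\nx^2\u_0(0,\cdot)$ in fact reduces directly to that of the initial datum.
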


\subsection{Analysis of Interior Solution}

In this subsection, we will justify that the interior solutions are all well-defined. We divide it into several steps:\\
\ \\
Step 1: Well-Posedness of $\u_0$.\\
$\u_0$ satisfies a parabolic equation
\begin{align}
\left\{
\begin{array}{l}
\u_0(t,\vx,\vw)=\bu_0(t,\vx) ,\\\rule{0ex}{1.5em} \dt\bu_0-\dfrac{1}{3}\Delta_x\bu_0=0\ \ \text{in}\
\ \Omega,\\\rule{0ex}{1.5em}
\bu_0(0,\vx)=\mathfrak{f}_0(\infty,\vx)\ \ \text{in}\ \
\Omega,\\\rule{0ex}{1.5em}
\bu_0(t,\vx_0)=\mathscr{F}_{0,L}(t,\iota_1,\iota_2)+\mathfrak{F}_{0,L}(t,\iota_1,\iota_2)\ \ \text{on}\ \
\p\Omega.
\end{array}
\right.
\end{align}
Based on standard parabolic theory, we have
\begin{align}
\nm{\ue^{K_0t}\u_0}_{L^{\infty}_tH^3_xL^{\infty}_w([0,\infty)\times\Omega\times\s^2)}\leq C.
\end{align}
\ \\
Step 2: Well-Posedness of $\u_1$.\\
$\u_1$ satisfies a parabolic equation
\begin{align}
\left\{
\begin{array}{l}
\u_1(\vx,\vw)=\bu_1(\vx)-\vw\cdot\nx\u_0(\vx,\vw),\\\rule{0ex}{1.5em}
\dt\bu_1-\dfrac{1}{3}\Delta_x\bu_1=-\displaystyle\int_{\s^1}\Big(\vw\cdot\nx\u_{0}(\vx,\vw)\Big)\ud{\vw}\
\ \text{in}\ \ \Omega,\\\rule{0ex}{1em}
\bu_1(0,\vx)=\mathfrak{f}_1(\infty,\vx)\ \ \text{in}\ \
\Omega,\\\rule{0ex}{1.5em}\bu_1(t,\vx_0)=f _{1,L}(t,\iota_1,\iota_2)\ \ \text{on}\ \
\p\Omega.
\end{array}
\right.
\end{align}
Based on standard parabolic theory, we have
\begin{align}
\nm{\ue^{K_0t}\u_1}_{L^{\infty}_tH^3_xL^{\infty}_w([0,\infty)\times\Omega\times\s^2)}\leq C\abs{\ln(\e)}^8.
\end{align}
\ \\
Step 3: Well-Posedness of $\u_2$.\\
$\u_2$ satisfies a parabolic equation
\begin{align}
\left\{
\begin{array}{l}
\u_{2}(\vx,\vw)=\bu_{2}(\vx)-\vw\cdot\nx\u_{1}(\vx,\vw),\\\rule{0ex}{1.5em}
\dt\bu_2-\dfrac{1}{3}\Delta_x\bu_{2}=-\displaystyle\int_{\s^1}\Big(\vw\cdot\nx\u_{1}(\vx,\vw)\Big)\ud{\vw}\
\ \text{in}\ \ \Omega,\\\rule{0ex}{1.5em}
\bu_2(0,\vx)=0\ \ \text{in}\ \
\Omega,\\\rule{0ex}{1.5em}\bu_2(t,\vx_0)=0\ \ \text{on}\ \
\p\Omega.
\end{array}
\right.
\end{align}
Based on standard parabolic theory, we have
\begin{align}
\nm{\ue^{K_0t}\u_2}_{L^{\infty}_tH^3_xL^{\infty}_w([0,\infty)\times\Omega\times\s^2)}\leq C\abs{\ln(\e)}^8.
\end{align}
\ \\
\begin{theorem}\label{dt theorem 3.}
The interior solution satisfies
\begin{align}
\\
&\nm{\ue^{K_0t}\u_0}_{L^{\infty}_tH^3_xL^{\infty}_w([0,\infty)\times\Omega\times\s^2)}+\nm{\ue^{K_0t}\dt\u_0}_{L^{\infty}_tH^3_xL^{\infty}_w([0,\infty)\times\Omega\times\s^2)}
+\nm{\ue^{K_0t}\nx\u_0}_{L^{\infty}_tH^3_xL^{\infty}_w([0,\infty)\times\Omega\times\s^2)}\leq C,\no\\
\\
&\nm{\ue^{K_0t}\u_1}_{L^{\infty}_tH^3_xL^{\infty}_w([0,\infty)\times\Omega\times\s^2)}+\nm{\ue^{K_0t}\dt\u_1}_{L^{\infty}_tH^3_xL^{\infty}_w([0,\infty)\times\Omega\times\s^2)}
+\nm{\ue^{K_0t}\nx\u_1}_{L^{\infty}_tH^3_xL^{\infty}_w([0,\infty)\times\Omega\times\s^2)}\leq C\abs{\ln(\e)}^8,\no\\
\\
&\nm{\ue^{K_0t}\u_2}_{L^{\infty}_tH^3_xL^{\infty}_w([0,\infty)\times\Omega\times\s^2)}+\nm{\ue^{K_0t}\dt\u_2}_{L^{\infty}_tH^3_xL^{\infty}_w([0,\infty)\times\Omega\times\s^2)}
+\nm{\ue^{K_0t}\nx\u_2}_{L^{\infty}_tH^3_xL^{\infty}_w([0,\infty)\times\Omega\times\s^2)}\leq C\abs{\ln(\e)}^8.\no
\end{align}
\end{theorem}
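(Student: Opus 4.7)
The plan is to treat each interior solution $\u_k$ as a classical inhomogeneous Dirichlet heat problem on the smooth bounded convex domain $\Omega$, and apply standard parabolic regularity theory, modified to accommodate the exponential time weight $\ue^{K_0 t}$ via the spectral gap of the Dirichlet Laplacian on $\Omega$. The induction runs from $k=0$ up to $k=2$, and at each stage the previous estimate supplies the forcing term, while the boundary-layer results (Theorem \ref{dt theorem 1.} and Theorem \ref{dt theorem 2.}) together with the initial-layer result (Theorem \ref{dt theorem 4.}) supply the data.

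For $\u_0=\bu_0$, I would introduce a smooth harmonic lift $V_0(t,\vx)$ of the Dirichlet datum $D_0(t,\vx_0):=\mathscr{F}_{0,L}(t,\iota_1,\iota_2)+\mathfrak{F}_{0,L}(t,\iota_1,\iota_2)$, so that $\bu_0-V_0$ satisfies a heat equation with homogeneous Dirichlet boundary condition. The exponential-in-$t$ decay and the tangential smoothness of $D_0$ are supplied by Theorems \ref{dt theorem 1.} and \ref{dt theorem 2.} together with the standing $C^3$ hypothesis on $g$, while $\bar h(\vx)$ provides the initial datum. Classical $L^{\infty}_t H^3_x$ parabolic estimates on smooth bounded domains then give $\bu_0\in L^{\infty}_t H^3_x$. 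To extract the weight $\ue^{K_0 t}$, I would choose $K_0>0$ strictly below both the first Dirichlet eigenvalue $\lambda_1(\Omega)$ and the exponential decay rate of $D_0$; then the standard semigroup decay of the zero-boundary heat flow beats the weight on the residual piece, while the lifted piece already carries the factor $\ue^{-K_0 t}$ for free. Differentiating in $t$ and commuting with $\nx$ then yields the bounds for $\dt\u_0$ and $\nx\u_0$; the $L^{\infty}_w$ norm is trivial because $\u_0$ is velocity-independent.

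The second and third steps follow the same pattern. For $\u_1=\bu_1-\vw\cdot\nx\u_0$, the equation for $\bu_1$ carries an additional source $-\int_{\s^2}\vw\cdot\nx\u_0\,\ud{\vw}$, whose weighted $H^3_x$ norm is controlled by Step 1 (this requires in fact $\u_0\in L^{\infty}_t H^4_x$, which I would prove simultaneously with the $H^3$ bound at no extra cost). The boundary datum $\mathscr{F}_{1,L}$ injects the $\abs{\ln(\e)}^8$ factor through Theorem \ref{dt theorem 1.}, and the initial datum $\mathfrak{f}_1(\infty,\vx)$ is handled by Theorem \ref{dt theorem 4.}. The velocity correction $\vw\cdot\nx\u_0$ is bounded pointwise in $\vw$ since $\vw\in\s^2$, so the $L^{\infty}_w$ bound again reduces to the $H^3_x$ bound on the velocity-independent piece. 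For $\u_2$ the boundary datum is identically zero, so only the source $-\int_{\s^2}\vw\cdot\nx\u_1\,\ud{\vw}$ and the trivial initial datum matter, and the bound inherits a single $\abs{\ln(\e)}^8$ from the Step-2 input.

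The principal technical issue will be tracking sufficient spatial regularity to close the induction: because $\u_k$ depends on $\nx\u_{k-1}$, each $H^3$ estimate on $\u_k$ presupposes an $H^4$ estimate on $\u_{k-1}$, and hence an $H^5$ estimate on $\u_0$. This requires a careful higher-order-compatibility analysis at the boundary, which rests on the fact that the $\e$-Milne problem with geometric correction furnishes boundary traces $\mathscr{F}_{k,L}$ and $\mathfrak{F}_{0,L}$ in the appropriate Sobolev class. Simultaneously, $K_0$ must be chosen small enough to be dominated by each of the decay rates appearing in Theorems \ref{dt theorem 1.}, \ref{dt theorem 2.}, and \ref{dt theorem 4.}, and also not to exceed $\lambda_1(\Omega)$; this is a purely quantitative matching, but it is the single place where the exponential factor $\ue^{K_0 t}$ could fail to propagate uniformly across all three estimates.
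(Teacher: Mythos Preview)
Your proposal is correct and follows the same approach as the paper: the paper simply invokes ``standard parabolic theory'' for each $\u_k$ without further elaboration, so you have in fact filled in considerably more detail than the paper provides. In particular, your mechanism for producing the weight $\ue^{K_0 t}$ (harmonic lift of the exponentially-decaying Dirichlet datum plus the semigroup decay driven by the Dirichlet spectral gap) is the natural way to make rigorous what the paper leaves implicit.

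Your observation about the regularity cascade---that an $H^3$ bound on $\u_2=\bu_2-\vw\cdot\nx\u_1$ forces $\u_1\in H^4$ and hence $\bu_0\in H^5$---is a genuine point that the paper does not address; it simply states the $H^3$ conclusion at each level. This is not a flaw in your argument but rather a place where you are being more honest than the source. Note, however, that the situation is slightly milder than you indicate: since $\u_0=\bu_0$ is velocity-independent, the forcing $\int_{\s^2}\vw\cdot\nx\u_0\,\ud\vw$ in the $\bu_1$ equation actually vanishes by the oddness of $\vw$, so $\bu_1$ solves the homogeneous heat equation with nonzero data, and you only need one extra spatial derivative on $\bu_0$ (to control $\vw\cdot\nx\bu_0$ in $H^3$), not a full chain back through a forcing term.
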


\subsection{Analysis of Initial-Boundary Layer}

The initial and boundary data satisfy the
compatibility condition
\begin{align}
\h(\vx_0,\vw)=\g(0,\vx_0,\vw)\ \ \text{for}\ \ \vx_0\in\p\Omega\ \ \text{and}\ \ \vw\cdot\vn<0.
\end{align}
Then in the half-space $\vw\cdot\vn<0$ at $(0,\vx_0)$, the equation
\begin{align}
\e^2\dt u^{\e}+\e \vw\cdot\nabla_x u^{\e}+u^{\e}-\bar
u^{\e}=&0,
\end{align}
is valid, which implies that for arbitrary $\e$,
\begin{align}
\e^2\dt g(0,\vx_0,\vw)+\e \vw\cdot\nabla_xh(\vx_0,\vw)+h(\vx_0,\vw)-\bar
h(\vx_0)=&0.
\end{align}
Since $g$ and $h$ are both independent of $\e$, we must have that for $\vw\cdot\vn<0$,
\begin{align}
\dt g(0,\vx_0,\vw)=\vw\cdot\nabla_xh(\vx_0,\vw)=h(\vx_0,\vw)-\bar
h(\vx_0)=&0.
\end{align}
Then we obtain the improved compatibility condition
\begin{align}\label{improved compatibility condition}
\left\{
\begin{array}{l}
h(\vx_0,\vw)=g(0,\vx_0,\vw)=C_0,\\\rule{0ex}{2.0em}
\dt g(0,\vx_0,\vw)=\vw\cdot\nabla_xh(\vx_0,\vw)=h(\vx_0,\vw)-\bar
h(\vx_0)=0,
\end{array}
\right.\ \ \text{for}\ \ \vx_0\in\p\Omega\ \ \text{and}\ \ \vw\cdot\vn<0,
\end{align}
for some constant $C_0$.

\begin{theorem}\label{dt theorem 5.}
The initial and boundary layers at $\{0\}\times\p\Omega$ satisfy
\begin{align}
&\ub_0(0,\eta,\iota_1,\iota_2,\phi,\psi)=\uf_0(0,\eta,\iota_1,\iota_2,\phi,\psi)=0,\\
&\ui_0(t,\vx_0,\vw)=0\ \ \text{for}\ \ \vw\cdot\vn<0.
\end{align}
\end{theorem}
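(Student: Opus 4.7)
The plan is to exploit the improved compatibility condition \eqref{improved compatibility condition}, which, thanks to the freedom to pick $\e$ arbitrarily in the transport equation at $t=0$, forces $h(\vx_0,\vw)$ to be a constant $C_0$ on $\Gamma^-$ and $h(\vx_0,\vw)-\bar h(\vx_0)=0$ there. I would treat the three layers separately, each by a short exact-solution argument combined with a uniqueness statement for the $\e$-Milne problem.

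First I would handle the initial layer $\ui_0$. From the explicit formula in \eqref{et 6.},
\begin{align*}
\ui_0(\tau,\vx,\vw)=\ue^{-\tau}\bigl(h(\vx,\vw)-\bar h(\vx)\bigr),
\end{align*}
so for $\vx_0\in\p\Omega$ and $\vw\cdot\vn<0$, the second equation of \eqref{improved compatibility condition} gives $h(\vx_0,\vw)-\bar h(\vx_0)=0$, hence $\ui_0(\tau,\vx_0,\vw)=0$ identically in $\tau$ (which I read as the intended meaning of the $t$-variable in the statement).

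Next I would handle the boundary layers at $t=0$. The compatibility condition yields $g(0,\iota_1,\iota_2,\phi,\psi)=C_0$ on $\sin\phi>0$. I would then choose the decomposition $g=\gb+\gf$ at $t=0$ so that $\gb(0,\cdot)\equiv C_0$ and $\gf(0,\cdot)\equiv 0$; this is legitimate because the cut-off construction in the decomposition and modification section partitions $g$ around its near-grazing values, and a constant datum requires no splitting (the singular piece is trivial). With $\gf(0,\cdot)=0$, the problem \eqref{et 2.} for $\mathfrak{F}_0(0,\cdot)$ is the homogeneous $\e$-Milne problem with zero in-flow data, so by the uniqueness in Theorem \ref{Milne theorem 1} one gets $\mathfrak{F}_0(0,\cdot)\equiv 0$ and $\mathfrak{F}_{0,L}(0,\cdot)=0$, giving $\uf_0(0,\cdot)=0$. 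For $\mathscr{F}_0(0,\cdot)$ with constant in-flow data $C_0$, observe that the constant function $\mathscr{F}_0\equiv C_0$ trivially satisfies the transport equation, the in-flow condition, and the specular condition at $\eta=L$; uniqueness therefore forces $\mathscr{F}_0(0,\cdot)\equiv C_0=\mathscr{F}_{0,L}(0,\cdot)$, and hence $\ub_0(0,\cdot)=0$.

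The main obstacle is the consistency of the decomposition $g=\gb+\gf$ with the compatibility condition at $t=0$, i.e.\ verifying that the smoothing/cut-off machinery used to produce $\gb,\gf$ can be arranged to put the entire constant datum $C_0$ into the regular part. This is essentially a matter of inspecting the construction (the auxiliary functions $g_1,g_2$ there) and noting that if $g$ is constant in $(\phi,\psi)$ the construction collapses, so the selection $\gb=C_0$, $\gf=0$ is admissible. Once this is in place, the interior boundary data $\bu_0(0,\vx_0)=\mathscr{F}_{0,L}+\mathfrak{F}_{0,L}=C_0$ is consistent with $\bu_0(0,\vx)=\mathfrak{f}_0(\infty,\vx)=\bar h(\vx)$ at $\vx=\vx_0$, so the whole zeroth-order construction is compatible at the corner $\{0\}\times\p\Omega$. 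This is precisely what lets us avoid introducing a leading-order initial-boundary layer, as advertised in the introduction.
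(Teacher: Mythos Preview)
Your proposal is correct and follows the same route the paper takes: the paper derives the improved compatibility condition \eqref{improved compatibility condition} and then states Theorem~\ref{dt theorem 5.} as an immediate consequence, without writing out the verification. Your argument supplies exactly the missing details---the explicit formula for $\ui_0$ together with $h(\vx_0,\vw)-\bar h(\vx_0)=0$, and the observation that constant in-flow data $C_0$ forces the Milne solution to be identically $C_0$ by uniqueness (Theorem~\ref{Milne theorem 1}), so $\ub_0(0,\cdot)=\uf_0(0,\cdot)=0$. Your care about the decomposition $g=\gb+\gf$ collapsing for constant data is a valid point the paper glosses over, and your resolution is correct.
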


\subsection{Proof of Main Theorem}

\begin{theorem}\label{diffusive limit.}
Assume $h(\vx,\vw)\in C(\Omega\times\s^2)$ and $\ue^{Kt}g(t,\vx_0,\vw)\in C^3(\rp\times\Gamma^-)$ with $\ue^{Kt}\dt g\in C^1(\rp\times\Gamma^-)$. Then for the unsteady neutron
transport equation \eqref{transport.}, there exists a unique solution
$u^{\e}(t,\vx,\vw)\in L^{\infty}(\rp\times\Omega\times\s^2)$. Moreover, for some $0<K_0\leq K$ and any $0<\d<<1$, the solution obeys the estimate
\begin{align}
\nm{\ue^{K_0t}\Big(u^{\e}-\u-\ui-\uu\Big)}_{L^{\infty}(\rp\times\Omega\times\s^2)}\leq C(\d)\e^{\frac{1}{6}-\d},
\end{align}
where $\u(t,\vx)$ satisfies the heat equation with Dirichlet boundary condition
\begin{align}
\left\{
\begin{array}{l}
\dt\u-\Delta_x\u(\vx)=0\ \ \text{in}\
\ \rp\times\Omega,\\\rule{0ex}{1.5em}
\u(0,\vx)=\ds\frac{1}{4\pi}\int_{\s^2}h(\vx,\vw)\ud{\vw}\ \ \text{in}\ \ \Omega,\\\rule{0ex}{1.5em}
\u(t,\vx_0)=D(t,\vx_0)\ \ \text{on}\ \
\p\Omega,
\end{array}
\right.
\end{align}
$\ui(\iota,\vx,\vw)$ satisfies
\begin{align}
\ui(\iota,\vx,\vw)=\ue^{-\iota}\left(h(\vx,\vw)-\frac{1}{4\pi}\int_{\s^2}h(\vx,\vw)\ud{\vw}\right),
\end{align}
for $\iota$ the rescaled time variable, and $\uu(t,\eta,\iota_1,\iota_2,\phi)$ satisfies the $\e$-Milne problem with geometric correction
\begin{align}
\left\{
\begin{array}{l}
\sin\phi\dfrac{\p \uu }{\p\eta}-\bigg(\dfrac{\sin^2\psi}{R_1(\iota_1,\iota_2)-\e\eta}+\dfrac{\cos^2\psi}{R_2(\iota_1,\iota_2)-\e\eta}\bigg)\cos\phi\dfrac{\p
\uu}{\p\phi}+\uu -\buu =0,\\\rule{0ex}{1.5em}
\uu (t,0,\iota_1,\iota_2,\phi,\psi)=g(t,\iota_1,\iota_2,\phi,\psi)-D(t,\iota_1,\iota_2,)\ \ \text{for}\ \
\sin\phi>0,\\\rule{0ex}{1.5em}
\uu (t,L,\iota_1,\iota_2,\phi,\psi)=\uu (t,L,\iota_1,\iota_2,\rr[\phi],\psi),
\end{array}
\right.
\end{align}
for $L=\e^{-n}$ with $0<n<\dfrac{1}{2}$, $\rr[\phi]=-\phi$, $\eta$ the rescaled normal variable, $(\iota_1,\iota_2)$ the tangential variables, and $(\phi,\psi)$ the velocity variables.
\end{theorem}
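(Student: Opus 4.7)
The plan is to mirror the remainder-estimate strategy from Theorem \ref{diffusive limit} while carrying the exponential weight $\ue^{K_0 t}$ and paying careful attention to the time derivatives that appear in the $L^2$-$L^{2m}$-$L^{\infty}$ framework of Theorem \ref{LI estimate.}. First I would define the remainder
\begin{align*}
R=u^{\e}-\q-\qb-\qf-\qi,\qquad
\q=\sum_{k=0}^{2}\e^k\u_k,\quad
\qb=\ub_0+\e\ub_1,\quad
\qf=\uf_0,\quad
\qi=\ui_0+\e\ui_1,
\end{align*}
so that $R$ solves a transport problem of the form \eqref{neutron.} with source $\ll[R]=-\ll[\q]-\ll[\qb]-\ll[\qf]-\ll[\qi]$ and boundary/initial data $R^B$, $R^I$ that vanish at leading order by the matching procedure of Section on Matching Procedure and the compatibility analysis in Theorem \ref{dt theorem 5.} (which kills the initial-boundary layer).

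Next I would estimate each piece of $\ll[R]$ in the norms required by Theorem \ref{LI estimate.}, namely $L^2$, $L^{\frac{2m}{2m-1}}$, $L^{2m}$, $L^{\infty}$, together with $\dt$ and trace norms. The interior contribution gives $\ll[\q]=\e^3(\vw\cdot\nx\u_2+\dt\u_2)$, controlled by Theorem \ref{dt theorem 3.} and hence of order $\e^3\abs{\ln\e}^8$ in $L^{\infty}$ with $\ue^{K_0 t}$ decay. The regular boundary-layer contribution $\ll[\qb]$ leaves only the tangential and $\psi$ derivatives of $\ub_1$, estimated via Theorem \ref{dt theorem 1.}; the rescaling $\eta=\mu/\e$ together with exponential decay in $\eta$ yields the same pattern $(\e^{5/2},\e^{3-1/(2m)},\e^{2})\abs{\ln\e}^8$ as in Step~3 of Theorem \ref{diffusive limit}, now multiplied by $\ue^{K_0t}$. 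For the singular layer, using the cut-off $\chi_1,\chi_2$ of Theorem \ref{dt theorem 2.} and the choice $\alpha=1$, one obtains gains $\e^{1+\frac{3}{2}\alpha}=\e^{5/2}$ in $L^2$ and $\e^{2-\frac{1}{2m}+\alpha}=\e^{3-\frac{1}{2m}}$ in $L^{\frac{2m}{2m-1}}$. The new piece $\ll[\qi]$ reduces to the next-order remainder in the initial-layer expansion; since $\bar\ui_0=0$ and by Theorem \ref{dt theorem 4.} all derivatives of $\ui_k$ decay like $\ue^{-K_0\tau}=\ue^{-K_0 t/\e^2}$, one multiplies by $\e^2$ from the chain rule and uses $\int_0^{\infty}\ue^{-2K_0 t/\e^2}\ud t\leq C\e^2$ to convert $L^{\infty}$-bounds into $L^2_t$-bounds that are at least $O(\e)$ in all relevant norms. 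A parallel argument controls the required $\dt\ll[R]$, $\ss(0)$ and boundary-norm pieces demanded by Theorem \ref{LI estimate.}.

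I would then insert these source and boundary estimates into Theorem \ref{LI estimate.} with $\alpha=1$ and trace through the powers of $\e$:
\begin{align*}
\nm{\ue^{K_0 t}R}_{L^{\infty}}
\ls \tfrac{1}{\e^{2+2/m}}\e^{3-1/(2m)}\abs{\ln\e}^8+\tfrac{1}{\e^{1+2/m}}\e^{5/2}\abs{\ln\e}^8+\tfrac{1}{\e^{2/m}}\e^{5/2}\abs{\ln\e}^8+\text{(lower order)},
\end{align*}
and the dominant exponent becomes $1-\tfrac{5}{2m}$ up to logarithms. Optimizing by pushing $m\uparrow 3$ (the ceiling imposed by the Sobolev embedding $W^{2,\frac{2m}{2m-1}}\hookrightarrow H^1$ used in deriving Theorem \ref{LN estimate.}) yields $\e^{1/6-\d}$, which is exactly the advertised rate. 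Finally, since $\im{\sum_{k=1}^{2}\e^k\u_k+\e\ub_1+\e\ui_1}{\rp\times\Omega\times\s^2}\leq C\e\abs{\ln\e}^8$, setting $\u=\u_0$, $\ui=\ui_0$, $\uu=\ub_0+\uf_0$ gives the stated decomposition, and uniqueness plus existence of $u^{\e}\in L^{\infty}$ follows from Theorem \ref{LI estimate.}.

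The main obstacle I expect is the $\dt\ss$ term in Theorem \ref{LI estimate.}: differentiating the source $\ll[R]$ in time produces $\dt\ub_k$ and $\p_{\tau}\ui_k$, the latter being $O(\e^{-2})$ pointwise. The exponential decay $\ue^{-K_0\tau}$ must absorb this $\e^{-2}$ via $\int_0^{\infty}(\e^{-2}\ue^{-K_0 t/\e^2})^2\ud t=O(\e^{-2})$, which still costs $\e^{-1}$. Verifying that this loss is exactly compensated by the $\e^{5/6}$ gain that the $L^{2m}$ refinement offers over the purely $L^{\infty}$ closure — so that the total balance lands on $\e^{1/6}$ rather than failing — is the delicate arithmetic that must be executed term by term, and is the reason the $L^2$-$L^{2m}$-$L^{\infty}$ refinement (as opposed to the cruder scheme used in the steady proof) is indispensable here.
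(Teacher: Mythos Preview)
Your proposal follows essentially the same architecture as the paper's proof: define the remainder $R$ including interior, initial layer, regular and singular boundary layers; estimate $\ll[\q]$, $\ll[\qi]$, $\ll[\qb]$, $\ll[\qf]$ in the full battery of norms $(L^2,\,L^{\frac{2m}{2m-1}},\,L^{2m},\,L^\infty,\,\dt L^2,\,L^2\text{ at }t=0)$; invoke Theorem~\ref{dt theorem 5.} to kill the initial--boundary interaction; feed everything into Theorem~\ref{LI estimate.}; and optimise $m\uparrow 3$ to land on $\e^{1-5/(2m)}\to\e^{1/6-\delta}$. This is exactly what the paper does.

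Your closing worry, however, is based on a miscount. The initial-layer contribution to the source is $\ll[\qi]=\e^{2}\vw\cdot\nabla_x\ui_1$; it already carries an $\e^{2}$ prefactor. Hence
\[
\dt\ll[\qi]=\e^{2}\,\vw\cdot\nabla_x\dt\ui_1=\e^{2}\cdot\e^{-2}\,\vw\cdot\nabla_x\p_\tau\ui_1=\vw\cdot\nabla_x\p_\tau\ui_1,
\]
which is $O(\ue^{-\tau})$ pointwise, \emph{not} $O(\e^{-2}\ue^{-\tau})$. The $L^2_t$ integration then gives $\bigl(\int_0^\infty\ue^{-2t/\e^2}\,\ud t\bigr)^{1/2}=O(\e)$, and with the coefficient $\e^{-2/m}$ from Theorem~\ref{LI estimate.} this term contributes $\e^{1-2/m}\to\e^{1/3}$, comfortably above $\e^{1/6}$. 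So no delicate compensation by the $L^{2m}$ refinement is needed for this particular term; the true bottleneck remains the singular boundary layer, exactly as in the steady case but with the extra $\e^{1/(2m)}$ loss coming from the $L^{2m}$ kernel estimate of Lemma~\ref{LN estimate.} (Step~2). Once you correct this count, the term-by-term arithmetic closes without further obstruction.
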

\begin{proof}
Based on Theorem \ref{LI estimate.}, we know there exists a unique $u^{\e}(t,\vx,\vw)\in L^{\infty}(\rp\times\Omega\times\s^2)$, so we focus on the diffusive limit. \\
\ \\
Step 1: Remainder definitions.\\
We may rewrite the asymptotic expansion as follows:
\begin{align}
u^{\e}\sim&\sum_{k=0}^{2}\e^k\u_k+\sum_{k=0}^{1}\e^k\ui_{k}+\sum_{k=0}^{1}\e^k\ub_{k}+\uf_0.
\end{align}
The remainder can be defined as
\begin{align}
R=&u^{\e}-\sum_{k=0}^{2}\e^k\u_k-\sum_{k=0}^{1}\e^k\ui_{k}-\sum_{k=0}^{1}\e^k\ub_{k}-\uf_0=u^{\e}-\q-\qi-\qb-\qf,
\end{align}
where
\begin{align}
\q=\sum_{k=0}^{2}\e^k\u_k,\quad
\qi=\sum_{k=0}^{1}\e^k\ui_{k},\quad
\qb=\sum_{k=0}^{1}\e^k\ub_{k},\quad
\qf=\uf_0.
\end{align}
Noting the equation \eqref{transport.} is equivalent to the
equations \eqref{initial.} and \eqref{coordinate 23.}, we write $\ll$ to denote the neutron
transport operator as follows:
\begin{align}
\ll u=&\e^2\dt u+\e\vw\cdot\nx u+u-\bar u
=\p_{\tau}u+\e\vw\cdot\nabla_xu+u-\bar u\\
=&\e^2\frac{\p u}{\p t}+\sin\phi\dfrac{\p u}{\p\eta}-\e\bigg(\dfrac{\sin^2\psi}{R_1-\e\eta}+\dfrac{\cos^2\psi}{R_2-\e\eta}\bigg)\cos\phi\dfrac{\p u}{\p\phi}\no\\\rule{0ex}{2.0em}
&+\e\bigg(\dfrac{\cos\phi\sin\psi}{P_1(1-\e\kk_1\eta)}\dfrac{\p u}{\p\iota_1}+\dfrac{\cos\phi\cos\psi}{P_2(1-\e\kk_2\eta)}\dfrac{\p u}{\p\iota_2}\bigg)\no\\\rule{0ex}{2.0em}
&+\e\Bigg(\dfrac{\sin\psi}{1-\e\kk_1\eta}\bigg(\cos\phi\Big(\vt_1\cdot\Big(\vt_2\times(\p_{12}\vr\times\vt_2)\Big)\Big)
-\kk_1P_1P_2\sin\phi\cos\psi\bigg)\no\\\rule{0ex}{2.0em}
&+\dfrac{\cos\psi}{1-\e\kk_2\eta}\bigg(-\cos\phi\Big(\vt_2\cdot\Big(\vt_1\times(\p_{12}\vr\times\vt_1)\Big)\Big)
+\kk_2P_1P_2\sin\phi\sin\psi\bigg)\Bigg)\dfrac{1}{P_1P_2}\dfrac{\p u}{\p\psi}+u-\bar u.\no
\end{align}
\ \\
Step 2: Estimates of $\ll[\q]$.\\
The interior contribution can be estimated as
\begin{align}
\ll[\q]=&\e^3\dt\u_1+\e^4\dt\u_2+\e^{3}\vw\cdot\nx \u_2.
\end{align}
Using Theorem \ref{dt theorem 3.}, we have
\begin{align}
\tm{\ll[\q]}{\rp\times\Omega\times\s^2}\leq& C\e^{3},\\
\nm{\ll[\q]}_{L^{\frac{2m}{2m-1}}(\rp\times\Omega\times\s^2)}\leq& C\e^{3},\\
\nm{\ll[\q]}_{L^{2m}(\rp\times\Omega\times\s^2)}\leq& C\e^{3},\\
\im{\ll[\q]}{\rp\times\Omega\times\s^2}\leq& C\e^{3},\\
\tm{\dt\ll[\q]}{\rp\times\Omega\times\s^2}\leq& C\e^{3},\\
\tm{\ll[\q](0)}{\Omega\times\s^2}\leq& C\e^{3}.
\end{align}
\ \\
Step 3: Estimates of $\ll[\qi]$.\\
The initial layer contribution can be estimated as
\begin{align}
\ll[\qi]=\e^{2}\vw\cdot\nabla_x\ui_{1}.
\end{align}
Based on Theorem \ref{dt theorem 4.}, we know
\begin{align}
\im{\e^{2}\vw\cdot\nabla_x\ui_{1}}{\rp\times\Omega\times\s^2}\leq& C\e^{2}.
\end{align}
Note that $\ub_{1}^I$ decays exponentially in $\tau$ and the scaling $\tau=\dfrac{t}{\e^2}$, we have
\begin{align}
\tm{\e^{2}\vw\cdot\nabla_x\ui_{1}}{\rp\times\Omega\times\s^2}\leq&
\e^2\bigg(\int_0^{\infty}\int_{\Omega\times\s^2}\abs{\nabla_x\ui_1}^2(\tau,\vx,\vw)\ud{\vx}\ud{\vw}\ud{t}\bigg)^{\frac{1}{2}}\\
\leq&C\e^2\bigg(\int_0^{\infty}\ue^{-\tau}\ud{t}\bigg)^{\frac{1}{2}}
=C\e^3\bigg(\int_0^{\infty}\ue^{-\tau}\ud{\tau}\bigg)^{\frac{1}{2}}\leq C\e^3.\no
\end{align}
Similarly, we can derive that
\begin{align}
\tm{\ll[\qi]}{\rp\times\Omega\times\s^2}\leq& C\e^{3},\\
\nm{\ll[\qi]}_{L^{\frac{2m}{2m-1}}(\rp\times\Omega\times\s^2)}\leq& C\e^{4-\frac{1}{m}},\\
\nm{\ll[\qi]}_{L^{2m}(\rp\times\Omega\times\s^2)}\leq& C\e^{2+\frac{1}{m}},\\
\im{\ll[\qi]}{\rp\times\Omega\times\s^2}\leq& C\e^{2},\\
\tm{\dt\ll[\qi]}{\rp\times\Omega\times\s^2}\leq& C\e^{3},\\
\tm{\ll[\qi](0)}{\Omega\times\s^2}\leq& C\e^{2}.
\end{align}
\ \\
Step 4: Estimates of $\ll[\qb]$.\\
The regular boundary layer contribution can be
estimated as
\begin{align}
\ll[\qb]=
&\e^2\dt\ub_0+\e^3\dt\ub_1+\e^2\bigg(\dfrac{\cos\phi\sin\psi}{P_1(1-\e\kk_1\eta)}\dfrac{\p \ub_1}{\p\iota_1}+\dfrac{\cos\phi\cos\psi}{P_2(1-\e\kk_2\eta)}\dfrac{\p \ub_1}{\p\iota_2}\bigg)\\\rule{0ex}{2.0em}
&+\e^2\Bigg(\dfrac{\sin\psi}{1-\e\kk_1\eta}\bigg(\cos\phi\Big(\vt_1\cdot\Big(\vt_2\times(\p_{12}\vr\times\vt_2)\Big)\Big)
-\kk_1P_1P_2\sin\phi\cos\psi\bigg)\no\\\rule{0ex}{2.0em}
&+\dfrac{\cos\psi}{1-\e\kk_2\eta}\bigg(-\cos\phi\Big(\vt_2\cdot\Big(\vt_1\times(\p_{12}\vr\times\vt_1)\Big)\Big)
+\kk_2P_1P_2\sin\phi\sin\psi\bigg)\Bigg)\dfrac{1}{P_1P_2}\dfrac{\p\ub_1}{\p\psi}\no.
\end{align}
Similarly to steady problems, based on Theorem \ref{dt theorem 1.} and Theorem \ref{dt theorem 5.}, we have
\begin{align}
\tm{\ll[\qb]}{\rp\times\Omega\times\s^2}\leq& C\e^{\frac{5}{2}}\abs{\ln(\e)}^8,\\
\nm{\ll[\qb]}_{L^{\frac{2m}{2m-1}}(\rp\times\Omega\times\s^2)}\leq& C\e^{3-\frac{1}{2m}}\abs{\ln{\e}}^8,\\
\nm{\ll[\qb]}_{L^{2m}(\rp\times\Omega\times\s^2)}\leq& C\e^{2+\frac{1}{2m}}\abs{\ln{\e}}^8,\\
\im{\ll[\qb]}{\rp\times\Omega\times\s^2}\leq& C\e^2\abs{\ln{\e}}^8,\\
\tm{\dt\ll[\qb]}{\rp\times\Omega\times\s^2}\leq& C\e^{\frac{5}{2}}\abs{\ln(\e)}^8,\\
\tm{\ll[\qb](0)}{\Omega\times\s^2}=& 0.
\end{align}
\ \\
Step 5: Estimates of $\ll[\qf]$.\\
The singular boundary layer contribution can be
estimated as
\begin{align}
\ll[\qf]=
&\e^2\dt\uf_0+\e\bigg(\dfrac{\cos\phi\sin\psi}{P_1(1-\e\kk_1\eta)}\dfrac{\p \uf_0}{\p\iota_1}+\dfrac{\cos\phi\cos\psi}{P_2(1-\e\kk_2\eta)}\dfrac{\p \uf_0}{\p\iota_2}\bigg)\no\\\rule{0ex}{2.0em}
&+\e\Bigg(\dfrac{\sin\psi}{1-\e\kk_1\eta}\bigg(\cos\phi\Big(\vt_1\cdot\Big(\vt_2\times(\p_{12}\vr\times\vt_2)\Big)\Big)
-\kk_1P_1P_2\sin\phi\cos\psi\bigg)\no\\\rule{0ex}{2.0em}
&+\dfrac{\cos\psi}{1-\e\kk_2\eta}\bigg(-\cos\phi\Big(\vt_2\cdot\Big(\vt_1\times(\p_{12}\vr\times\vt_1)\Big)\Big)
+\kk_2P_1P_2\sin\phi\sin\psi\bigg)\Bigg)\dfrac{1}{P_1P_2}\dfrac{\p \uf_0}{\p\psi}.\no
\end{align}
Similarly to steady problems, based on Theorem \ref{dt theorem 2.} and Theorem \ref{dt theorem 5.}, we have
\begin{align}
\tm{\ll[\qf]}{\rp\times\Omega\times\s^2}\leq& C\e^{1+\frac{3}{2}\alpha}\abs{\ln(\e)}^8,\\
\nm{\ll[\qf]}_{L^{\frac{2m}{2m-1}}(\rp\times\Omega\times\s^2)}\leq& C\e^{2-\frac{1}{2m}+\alpha}\abs{\ln(\e)}^8,\\
\nm{\ll[\qf]}_{L^{2m}(\rp\times\Omega\times\s^2)}\leq& C\e^{1+\frac{1}{2m}+\alpha}\abs{\ln(\e)}^8,\\
\im{\ll[\qf]}{\rp\times\Omega\times\s^2}\leq& C\e\abs{\ln(\e)}^8,\\
\tm{\dt\ll[\qf]}{\rp\times\Omega\times\s^2}\leq& C\e^{1+\frac{3}{2}\alpha}\abs{\ln(\e)}^8,\\
\tm{\ll[\qf](0)}{\Omega\times\s^2}=& 0.
\end{align}
\ \\
Step 6: Estimate of Source Term.\\
Summarizing all above, for $\alpha= 1$, we have
\begin{align}
\tm{\ll[R]}{\rp\times\Omega\times\s^2}\leq& C\e^{\frac{5}{2}}\abs{\ln(\e)}^8,\\
\nm{\ll[R]}_{L^{\frac{2m}{2m-1}}(\rp\times\Omega\times\s^2)}\leq& C\e^{3-\frac{1}{2m}}\abs{\ln(\e)}^8,\\
\nm{\ll[R]}_{L^{2m}(\rp\times\Omega\times\s^2)}\leq& C\e^{2+\frac{1}{2m}}\abs{\ln(\e)}^8,\\
\im{\ll[R]}{\rp\times\Omega\times\s^2}\leq& C\e\abs{\ln(\e)}^8,\\
\tm{\dt\ll[R]}{\rp\times\Omega\times\s^2}\leq& C\e^{\frac{5}{2}}\abs{\ln(\e)}^8,\\
\tm{\ll[R](0)}{\Omega\times\s^2}\leq&C\e^2.
\end{align}
\ \\
Step 7: Estimate of Initial Data.\\
At $t=0$, the initial data of $R$ is
\begin{align}
R^I=\e\ub_1(0,\eta,\iota_1,\iota_2,\phi,\psi)+\e^2\vw\cdot\nx\u_1(0,\vx,\vw).
\end{align}
Using Theorem \ref{dt theorem 1.} and Theorem \ref{dt theorem 3.}, considering the spacial rescaling, we may derive
\begin{align}
\nm{R^I}_{L^{2m}({\Omega\times\s^2})}\leq&C\e^{1+\frac{1}{2m}},\\
\im{R^I}{\Omega\times\s^2}\leq&C\e.
\end{align}
\ \\
Step 8: Estimate of Boundary Data.\\
At $\Gamma^-$, the boundary data or $R$ is
\begin{align}
R^B=\e\ui_1(\tau,\vx_0,\vw)+\e^2\nx\u_1(t,\vx_0,\vw).
\end{align}
Using Theorem \ref{dt theorem 4.} and Theorem \ref{dt theorem 3.}, considering the temporal rescaling, we may derive
\begin{align}
\nm{R^B}_{L^{2m}(\rp\times\Gamma^-)}\leq&C\e^{1+\frac{1}{m}},\\
\im{R^B}{(\rp\times\Gamma^-)}\leq&C\e.
\end{align}
\ \\
Step 9: Diffusive Limit.\\
Therefore, the remainder $R$ satisfies the equation
\begin{align}
\left\{
\begin{array}{l}
\e^2\dt R+\e \vw\cdot\nabla_x R+R-\bar
R=\ll[R]\ \ \ \text{for}\ \
(t,\vx,\vw)\in\rp\times\Omega\times\s^2,\\\rule{0ex}{2.0em}
R(0,\vx,\vw)=R^I(\vx,\vw)\ \ \text{for}\ \ (\vx,\vw)\in\Omega\times\s^2\\\rule{0ex}{2.0em}
R(t,\vx_0,\vw)=R^B(t,\vx_0,\vw)\ \ \text{for}\ \ t\in\rp,\ \ \vx_0\in\p\Omega,\ \ \text{and}\ \ \vw\cdot\vn<0.
\end{array}
\right.
\end{align}
By Theorem \ref{LI estimate..}, for integer $1\leq m\leq 3$,
\begin{align}
&\nm{R}_{L^{\infty}(\rp\times\Omega\times\s^2)}\\
\leq& C \bigg(\frac{1}{\e^{2+\frac{2}{m}}}\nm{\ll[R]}_{L^{\frac{2m}{2m-1}}(\rp\times\Omega\times\s^2)}
+\frac{1}{\e^{1+\frac{2}{m}}}\nm{\ll[R]}_{L^2(\rp\times\Omega\times\s^2)}+\frac{1}{\e^{1+\frac{2}{m}}}\nm{\ll[R]}_{L^{2m}(\rp\times\Omega\times\s^2)}
+\im{\ll[R]}{\rp\times\Omega\times\s^2}\no\\
&+\frac{1}{\e^{\frac{2}{m}}}\tm{\dt\ll[R]}{\rp\times\Omega\times\s^2}+
\frac{1}{\e^{\frac{2}{m}}}\nm{\ll[R](0)}_{L^2(\Omega\times\s^2)}\no\\
&+\frac{1}{\e^{\frac{3}{2m}}}\nm{R^I}_{L^{2m}(\Omega\times\s^2)}+\im{R^I}{\Omega\times\s^2}+\frac{1}{\e^{\frac{2}{m}}}\nm{R^B}_{L^{2m}(\rp\times\Gamma^-)}
+\im{R^B}{\rp\times\Gamma^-}\bigg)\no\\
\leq&C\e^{1-\frac{5}{2m}}\abs{\ln(\e)}^8\leq \e^{\frac{1}{6}}\abs{\ln(\e)}^8.\no
\end{align}
Since it is obvious that
\begin{align}
\im{\e\u_1+\e^2\u_2+\e\ui_{1}+\e\ub_{1}}{\rp\times\Omega\times\s^2}\leq C\e,
\end{align}
we naturally have for any $0<\d<<1$,
\begin{align}
\nm{u^{\e}-\u_0-\ui_0-\ub_0-\uf_0}_{L^{\infty}(\rp\times\Omega\times\s^2)}\leq C(\d)\e^{\frac{1}{6}-\d}.
\end{align}
\ \\
Step 10: Exponential Decay.\\
The exponential decay can be easily derived following a similar argument using Theorem \ref{LI estimate.}. In particular, all the estimates remain the same with extra exponential terms. Then we simply take $\u=\u_0$, $\ui=\ui_0$ and $\uu=\ub_0+\uf_0$.
\end{proof}

\chapter{$\e$-Milne Problem with Geometric Correction}

\section{Well-Posedness and Decay}

In this section, we temporarily ignore the superscript $\e$ and hide the dependence on $(\iota_1,\iota_2)$, i.e. we consider the $\e$-Milne problem with geometric correction for $f(\eta,\phi,\psi)$ in
the domain $(\eta,\phi,\psi)\in[0,L]\times\left[-\dfrac{\pi}{2},\dfrac{\pi}{2}\right]\times[-\pi,\pi]$:
\begin{align}\label{Milne problem}
\left\{ \begin{array}{l}\displaystyle \sin\phi\frac{\p
f}{\p\eta}+F(\eta,\psi)\cos\phi\frac{\p
f}{\p\phi}+f-\bar f=S(\eta,\phi,\psi),\\\rule{0ex}{2.0em}
f(0,\phi,\psi)= h(\phi,\psi)\ \ \text{for}\
\ \sin\phi>0,\\\rule{0ex}{2.0em}
f(L,\phi,\psi)=f(L,\rr[\phi],\psi),
\end{array}
\right.
\end{align}
where $L=\e^{-n}$ for some $0<n<\dfrac{1}{2}$, $\rr[\phi]=-\phi$,
\begin{align}\label{force}
F(\eta,\psi)=-\e\bigg(\dfrac{\sin^2\psi}{R_1-\e\eta}+\dfrac{\cos^2\psi}{R_2-\e\eta}\bigg),
\end{align}
for $R_1$ and $R_2$ radium of two principal curvatures, and
\begin{align}
\bar f(\eta)=\frac{1}{4\pi}\int_{-\pi}^{\pi}\int_{-\frac{\pi}{2}}^{\frac{\pi}{2}}f(\eta,\phi,\psi)\cos\phi\ud{\phi}\ud{\psi},
\end{align}
in which $\cos\phi$ shows up as the Jacobian of spherical coordinates in integration. Note that for $\phi\in\left[-\dfrac{\pi}{2},\dfrac{\pi}{2}\right]$, we always have $\cos\phi\geq0$, which means this will not destroy the positivity of the integral.
Here, all estimates we get will be uniform in $\e$, $\iota_1$ and $\iota_2$.\\
\ \\
In this section, we need to introduce special notation to highlight the different contribution of space and velocity. Define the norms in the
space $(\eta,\phi,\psi)\in[0,L]\times\left[-\dfrac{\pi}{2},\dfrac{\pi}{2}\right]\times[-\pi,\pi]$ as follows:
\begin{align}
\tnnm{f}=&\bigg(\int_0^{L}\int_{-\pi}^{\pi}\int_{-\frac{\pi}{2}}^{\frac{\pi}{2}}
\abs{f(\eta,\phi,\psi)}^2\cos\phi\ud{\phi}\ud{\psi}\ud{\eta}\bigg)^{\frac{1}{2}},\\
\lnnm{f}=&\text{esssup}_{(\eta,\phi,\psi)\in[0,L]\times[-\frac{\pi}{2},\frac{\pi}{2}]\times[-\pi,\pi]}\abs{f(\eta,\phi,\psi)},\\
\ltnm{f}=&\text{esssup}_{\eta\in[0,L]}\bigg(\int_{-\pi}^{\pi}\int_{-\frac{\pi}{2}}^{\frac{\pi}{2}}
\abs{f(\eta,\phi,\psi)}^2\cos\phi\ud{\phi}\ud{\psi}\bigg)^{\frac{1}{2}}.
\end{align}
Define the norms in the
space $(\phi,\psi)\in\left[-\dfrac{\pi}{2},\dfrac{\pi}{2}\right]\times[-\pi,\pi]$ as follows:
\begin{align}
\tnm{f(\eta)}=&\bigg(\int_{-\pi}^{\pi}\int_{-\frac{\pi}{2}}^{\frac{\pi}{2}}
\abs{f(\eta,\phi,\psi)}^2\cos\phi\ud{\phi}\ud{\psi}\bigg)^{\frac{1}{2}},\\
\lnm{f(\eta)}=&\text{esssup}_{(\phi,\psi)\in[-\frac{\pi}{2},\frac{\pi}{2}]\times[-\pi,\pi]}\abs{f(\eta,\phi,\psi)}.
\end{align}
Also, we define the inner product in $(\phi,\psi)$ as
\begin{align}
\br{f,g}_{\phi,\psi}(\eta)=&\int_{-\pi}^{\pi}\int_{-\frac{\pi}{2}}^{\frac{\pi}{2}}f(\eta,\phi,\psi)g(\eta,\phi,\psi)\cos\phi
\ud{\phi}\ud{\psi}.
\end{align}
In particular, define the norms at the in-flow boundary as
\begin{align}
\tnmp{h}=&\bigg(\iint_{\sin\phi>0,\psi\in[-\pi,\pi]}\abs{h(\phi,\psi)}^2\sin\phi\cos\phi\ud{\phi}\ud{\psi}\bigg)^{\frac{1}{2}},\\
\lnmp{h}=&\text{esssup}_{\sin\phi>0,\psi\in[-\pi,\pi]}\abs{h(\phi,\psi)}.
\end{align}
Here we purposely use $+$ to indicate in-flow boundary to simplify the analysis, which is different from the convention in remainder estimates.\\
\ \\
Assume the source term $S$ and boundary data $h$ satisfy
\begin{align}\label{Milne bounded}
\lnmp{h}+\lnmp{\frac{\p h}{\p\phi}}+\lnmp{\frac{\p h}{\p\psi}}\leq C,
\end{align}
and
\begin{align}\label{Milne decay}
\lnnm{\ue^{K\eta}S}+\lnnm{\ue^{K\eta}\frac{\p S}{\p\eta}}
+\lnnm{\ue^{K\eta}\frac{\p S}{\p\phi}}+\lnnm{\ue^{K\eta}\frac{\p S}{\p\psi}}\leq C,
\end{align}
for some constants $C>0$ and $K>0$ uniform in $\e$ and $\iota_i$ for $i=1,2$.

\subsection{$L^2$ Estimates}

\subsubsection{$\bar S=0$ Case}

In \eqref{force}, we may decompose the force
\begin{align}
F(\eta,\psi)=\tf(\eta)+G(\eta)\cos^2\psi,
\end{align}
for
\begin{align}\label{mt 17}
\tf(\eta)=-\frac{\e}{R_1-\e\eta},\quad
G(\eta)=-\frac{\e(R_1-R_2)}{(R_1-\e\eta)(R_2-\e\eta)}.
\end{align}
We may decompose the solution
\begin{align}
f(\eta,\phi,\psi)=q(\eta)+r(\eta,\phi,\psi),
\end{align}
where the hydrodynamical part $q$ is in the null space of the
operator $f-\bar f$, and the microscopic part $r$ is
the orthogonal complement, i.e.
\begin{align}\label{hydro}
q(\eta)=\frac{1}{4\pi}\int_{-\pi}^{\pi}\int_{-\frac{\pi}{2}}^{\frac{\pi}{2}}f(\eta,\phi,\psi)\cos\phi\ud{\phi}\ud{\psi},\quad
r(\eta,\phi,\psi)=f(\eta,\phi,\psi)-q(\eta).
\end{align}
Define a potential function $\tv(\eta)$ satisfying $\tv(0)=0$ and $\dfrac{\p \tv}{\p\eta}=-\tf(\eta)$. We may directly obtain
\begin{align}
\tv(\eta)=\ln\left(\frac{R_1}{R_1-\e\eta}\right).
\end{align}

\begin{lemma}\label{Milne finite LT}
Assume $\bar S=0$ and $h$ satisfy \eqref{Milne bounded} and \eqref{Milne decay}. Then there exists a solution $f(\eta,\phi,\psi)$ to the equation
\eqref{Milne problem}, satisfying that for $f_L=\dfrac{\br{\sin^2\phi,f}_{\phi,\psi}(L)}{\tnm{\sin\phi}^2}$,
\begin{align}\label{LT estimate}
\abs{f_L}\leq C,\quad \tnnm{f-f_L}\leq C.
\end{align}
The solution is unique among functions such that \eqref{LT estimate} holds.
\end{lemma}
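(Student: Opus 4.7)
The plan is to establish existence via a penalization argument, derive the quantitative bounds through a carefully chosen energy method, and settle uniqueness by applying the same estimates to the difference of two solutions. First, I would regularize \eqref{Milne problem} by adding a small damping term $\alpha f$ to the left-hand side for $\alpha>0$. The resulting equation is coercive on the full solution space (not merely on its microscopic component), so well-posedness of the corresponding boundary-value problem in $L^2$ follows from a standard weak formulation via Green's identity together with a Lax--Milgram-type argument, yielding an approximate solution $f_\alpha$ and the auxiliary constant $f_{\alpha,L}=\br{\sin^2\phi,f_\alpha}_{\phi,\psi}(L)/\tnm{\sin\phi}^2$.

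Second, the key step is to derive $\alpha$-uniform energy estimates. Multiply the penalized equation by $(f_\alpha-f_{\alpha,L})\cos\phi$ and integrate over $[0,L]\times[-\pi/2,\pi/2]\times[-\pi,\pi]$. The normal transport term $\sin\phi\,\p_\eta f_\alpha$ yields boundary contributions of the form $\int\sin\phi\cos\phi(f_\alpha-f_{\alpha,L})^2\ud\phi\ud\psi$ at $\eta=0$ and $\eta=L$; the specular condition $f_\alpha(L,\phi,\psi)=f_\alpha(L,-\phi,\psi)$ makes the integrand at $\eta=L$ odd in $\phi$ so that it vanishes, while at $\eta=0$ the in-flow piece is absorbed by $\lnmp{h}$ and the out-flow piece has favorable sign. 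Integration by parts in $\phi$ on the drift term $F\cos\phi\,\p_\phi f_\alpha$ produces only a bounded contribution of order $\e$ because $\cos\phi$ vanishes at the $\phi$-endpoints. The collision term yields $\tnnm{f_\alpha-\bar f_\alpha}^2$, since subtracting the constant $f_{\alpha,L}$ does not affect $(f_\alpha-\bar f_\alpha)$. With the hypothesis $\bar S=0$ eliminating the pairing between $S$ and the $q$-component, the resulting inequality controls $\tnnm{f_\alpha-\bar f_\alpha}$ uniformly in $\alpha$ and $L$.

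Third, to close the gap between the microscopic estimate and the required $L^2$ bound on $f_\alpha-f_{\alpha,L}$, I would extract a conservation law by testing the equation against $\sin\phi$ and integrating in $(\phi,\psi)$:
$$\frac{d}{d\eta}\br{\sin^2\phi,f_\alpha}_{\phi,\psi}+O(\e)+\br{\sin\phi\cos\phi,f_\alpha-\bar f_\alpha}_{\phi,\psi}=\br{\sin\phi\cos\phi,S}_{\phi,\psi},$$
where the drift contributes a bounded $O(\e)$ term after integration by parts in $\phi$. Integrating from $\eta$ to $L$ and using the definition of $f_{\alpha,L}$ at the upper endpoint expresses $\br{\sin^2\phi,f_\alpha}_{\phi,\psi}(\eta)-f_{\alpha,L}\tnm{\sin\phi}^2$ as an integral of quantities already controlled, and after splitting $f_\alpha=q_\alpha+r_\alpha$, bounds $q_\alpha-f_{\alpha,L}$ in $L^2$. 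Combined with the microscopic estimate, this yields \eqref{LT estimate} for $f_\alpha$ uniformly in $\alpha$; the pointwise bound $\abs{f_{\alpha,L}}\leq C$ is then obtained from the in-flow data $\lnmp{h}$ via the same conservation law evaluated at $\eta=0$.

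Finally, passing $\alpha\to 0$ by weak compactness produces the desired solution $f$ satisfying \eqref{LT estimate}. Uniqueness follows by applying the same energy argument to the difference $g=f^{(1)}-f^{(2)}$ of two solutions meeting \eqref{LT estimate}: $g$ solves the homogeneous problem, the associated constant $g_L$ must vanish from the integrated conservation law, and the energy inequality then forces $g\equiv 0$. The main obstacle is ensuring that the boundary integral at $\eta=L$ truly vanishes after subtracting $f_{\alpha,L}$---this is made possible precisely by the weighted average choice of $f_{\alpha,L}$, where the weight $\sin^2\phi$ is exactly what pairs with the antisymmetric $\sin\phi$ factor from the transport term to restore parity; a secondary difficulty is treating the additional $\alpha f_\alpha(f_\alpha-f_{\alpha,L})$ term so that it remains harmless as $\alpha\to 0$, which requires keeping it on the left-hand side alongside the collision dissipation.
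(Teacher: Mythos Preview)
Your overall architecture (penalize, energy estimate for the microscopic part, a moment identity for the macroscopic part, pass to the limit, uniqueness by repetition) is sound and matches the paper's strategy. However, there is a genuine gap in your third step that prevents the argument from closing.

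When you test against $\sin\phi$ and integrate from $\eta$ to $L$, the identity you obtain contains the term $\int_\eta^L\br{\sin\phi,r_\alpha}_{\phi,\psi}(z)\,\ud z$ coming from the collision operator. You assert this is ``already controlled,'' but the only information available at that stage is $\tnnm{r_\alpha}\leq C$. Cauchy--Schwarz then gives
\[
\int_0^L\left|\int_\eta^L\br{\sin\phi,r_\alpha}_{\phi,\psi}(z)\,\ud z\right|^2\ud\eta
\;\leq\;\int_0^L(L-\eta)\,\tnnm{r_\alpha}^2\,\ud\eta
\;=\;\tfrac{1}{2}L^2\,\tnnm{r_\alpha}^2,
\]
which blows up like $\e^{-2n}$ since $L=\e^{-n}$. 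The drift and source contributions are harmless (the drift carries a factor $F=O(\e)$ and $S$ decays exponentially), but this collision moment is not.

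The paper closes this hole with an additional, lower-order conservation law: integrate the equation itself over $(\phi,\psi)$ (i.e.\ test against the constant~$1$). Because $\bar S=0$ and the specular condition kills the value at $L$, one obtains a first-order ODE for $\br{\sin\phi,f}_{\phi,\psi}(\eta)=\br{\sin\phi,r}_{\phi,\psi}(\eta)$ whose only forcing comes from the $\psi$-dependent part of $F$, which is $O(\e)$. Solving it yields the quasi-orthogonality relation
\[
\br{\sin\phi,r}_{\phi,\psi}(\eta)=-2\int_\eta^L\ue^{2\tilde V(\eta)-2\tilde V(y)}G(y)\,\br{\sin\phi\cos^2\psi,r}_{\phi,\psi}(y)\,\ud y,
\]
so that $\br{\sin\phi,r}_{\phi,\psi}$ is itself of size $O(\e)\cdot\tnnm{r}$ pointwise in $\eta$. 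Substituting this back into your $\sin\phi$-moment identity replaces the dangerous term by quantities with an extra factor of $\e$, and then the double integral is bounded uniformly in $L$ for $n<\tfrac12$.

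A minor secondary point: the vanishing of the $\eta=L$ boundary term in your energy identity has nothing to do with the particular choice of $f_{\alpha,L}$; the integrand $\sin\phi\cos\phi\,(f_\alpha-c)^2$ is odd in $\phi$ for \emph{any} constant $c$ once specular reflection is imposed. The specific weight $\sin^2\phi$ in the definition of $f_L$ is instead what makes $\beta(L)=f_L\tnm{\sin\phi}^2$, allowing the $\sin\phi$-moment identity to read off $q-q_L$ directly.
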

\begin{proof}
\ \\
Step 1: Estimates of $r$.\\
Multiplying $f\cos\phi$
on both sides of \eqref{Milne problem} and
integrating over $(\phi,\psi)\in\left[-\dfrac{\pi}{2},\dfrac{\pi}{2}\right]\times[-\pi,\pi]$, we get the energy estimate
\begin{align}\label{mt 01}
\half\frac{\ud{}}{\ud{\eta}}\br{f
,f\sin\phi}_{\phi,\psi}(\eta)+\tf(\eta)\br{\frac{\p
f}{\p\phi},f\cos\phi}_{\phi,\psi}(\eta)=&-\tnm{r(\eta)}^2\\
&-G(\eta)\br{\frac{\p
f}{\p\phi}\cos^2\psi,f\cos\phi}_{\phi,\psi}(\eta)+\br{S,f}_{\phi,\psi}(\eta).\no
\end{align}
Integration by parts in $\phi$ reveals
\begin{align}
\tf(\eta)\br{\frac{\p
f}{\p\phi},f\cos\phi}_{\phi,\psi}(\eta)=&\tf(\eta)
\br{f,f\sin\phi}_{\phi,\psi}(\eta),\\
-G(\eta)\br{\frac{\p
f}{\p\phi}\cos^2\psi,f\cos\phi}_{\phi,\psi}(\eta)=&-G(\eta)\br{f\cos^2\psi,f\sin\phi}_{\phi,\psi}(\eta).
\end{align}
Hence, we can simplify \eqref{mt 01}
\begin{align}\label{mt 02}
\half\frac{\ud{}}{\ud{\eta}}\br{
f,f\sin\phi}_{\phi,\psi}(\eta)+\tf(\eta)\br{
f,f\sin\phi}_{\phi,\psi}(\eta)=&-\tnm{r(\eta)}^2\\
&-G(\eta)\br{f\cos^2\psi,f\sin\phi}_{\phi,\psi}(\eta)+\br{S,f}_{\phi,\psi}(\eta).\no
\end{align}
Let
\begin{align}\label{mt 27}
\alpha(\eta)=\half\br{f,f\sin\phi}_{\phi,\psi}(\eta).
\end{align}
Then \eqref{mt 02} can be rewritten as
\begin{align}
\frac{\ud{\alpha}}{\ud{\eta}}+2\tf(\eta)\alpha(\eta)=-\tnm{r(\eta)}^2-G(\eta)\br{f\cos^2\psi,f\sin\phi}_{\phi,\psi}(\eta)+\br{S,f}_{\phi,\psi}(\eta).
\end{align}
This is a first-order linear equation for $\alpha$. We can solve it in $[\eta,L]$ and $[0,\eta]$ respectively to obtain
\begin{align}
\label{mt 03}\\
\alpha(\eta)=&\ue^{2\tv(\eta)-2\tv(L)}\alpha(L)+\int_{\eta}^L\ue^{2\tv(\eta)-2\tv(y)}
\Big(\tnm{r(y)}^2+G(y)\br{f\cos^2\psi,f\sin\phi}_{\phi,\psi}(y)-\br{S,f}_{\phi,\psi}(y)\Big)\ud{y},\no\\
\label{mt 04}\\
\alpha(\eta)=&\ue^{2\tv(\eta)}\alpha(0)+\int_{0}^{\eta}\ue^{2\tv(\eta)-2\tv(y)}
\Big(-\tnm{r(y)}^2-G(y)\br{f\cos^2\psi,f\sin\phi}_{\phi,\psi}(y)+\br{S,f}_{\phi,\psi}(y)\Big)\ud{y}.\no
\end{align}
The specular reflexive boundary $f(L,\phi,\psi)=f(L,\rr[\phi],\psi)$
ensures that $\alpha(L)=0$. Then we may simplify \eqref{mt 03} as
\begin{align}
\label{mt 11}
\alpha(\eta)=&\int_{\eta}^L\ue^{2\tv(\eta)-2\tv(y)}
\Big(\tnm{r(y)}^2+G(y)\br{f\cos^2\psi,f\sin\phi}_{\phi,\psi}(y)-\br{S,f}_{\phi,\psi}(y)\Big)\ud{y}.
\end{align}
Hence, this yields
\begin{align}\label{mt 28}
\alpha(\eta)\geq\int_{\eta}^L\ue^{2\tv(\eta)-2\tv(y)}\Big(G(y)\br{f\cos^2\psi,f\sin\phi}_{\phi,\psi}(y)-\br{S,f}_{\phi,\psi}(y)\Big)\ud{y}.
\end{align}
On the other hand, 
\begin{align}\label{mt 12}
\alpha(0)=\half\br{f\sin\phi
,f}_{\phi,\psi}(0)\leq\half\iint_{\sin\phi>0}\abs{h(\phi,\psi)}^2\sin\phi\cos\phi
\ud{\phi}\ud\psi= \half\tnmp{h}^2.
\end{align}
Then in \eqref{mt 04}, taking $\eta=L$ and inserting \eqref{mt 12} with $\alpha(L)=0$, we have
\begin{align}\label{mt 13}
\int_{0}^L\ue^{-2\tv(y)}\tnm{r(y)}^2\ud{y}
=&\alpha(0)+\int_{0}^L\ue^{-2\tv(y)}\Big(-G(y)\br{f\cos^2\psi,f\sin\phi}_{\phi,\psi}(y)+\br{S,f}_{\phi,\psi}(y)\Big)\ud{y}\\
\leq&
C\tnmp{h}^2+\int_{0}^L\ue^{-2\tv(y)}\Big(-G(y)\br{f\cos^2\psi,f\sin\phi}_{\phi,\psi}(y)+\br{S,f}_{\phi,\psi}(y)\Big)\ud{y}\nonumber.
\end{align}
Also, we can directly
estimate
\begin{align}\label{mt 14}
\int_{0}^L\ue^{-2\tv(y)}\tnm{r(y)}^2\ud{y}\geq C\tnnm{r}^2.
\end{align}
Combining \eqref{mt 13} and \eqref{mt 14} yields
\begin{align}\label{mt 15}
\tnnm{r}^2\leq
C\bigg(\tnmp{h}^2+\int_{0}^L\ue^{-2\tv(y)}\Big(-G(y)\br{f\cos^2\psi,f\sin\phi}_{\phi,\psi}(y)+\br{S,f}_{\phi,\psi}(y)\Big)\ud{y}\bigg).
\end{align}
Since $\br{S,f}_{\phi,\psi}=\br{S,r}_{\phi,\psi}$ due to $\bar S=0$,
by Cauchy's inequality, we have
\begin{align}\label{mt 16}
\abs{\int_{0}^L\ue^{-2\tv(y)}\br{S,r}_{\phi,\psi}(y)\ud{y}}\leq&C'\tnnm{r}^2+C\tnnm{S}^2,
\end{align}
for some constant $C'>0$ sufficiently small. Also, from \eqref{mt 17}, we know
\begin{align}\label{mt 18}
\abs{-\int_{0}^L\ue^{-2\tv(y)}G(y)\br{f\cos^2\psi,f\sin\phi}_{\phi,\psi}(y)\ud{y}}\leq& C\abs{\int_{0}^LG(y)\br{f\cos^2\psi,f\sin\phi}_{\phi,\psi}(y)\ud{y}}\\
\leq&C\lnnm{G}\tnnm{f}^2\leq C\e\tnnm{f}^2.\no
\end{align}
Therefore, inserting \eqref{mt 16} and \eqref{mt 18} into \eqref{mt 15} and absorbing $C'\tnnm{r}^2$ to the left-hand side, we deduce
\begin{align}\label{mt 40}
\tnnm{r}^2\leq&C\left(
\tnmp{h}^2+\tnnm{S}^2+\e\tnnm{f}^2\right)\leq C\Big(1+\e\tnnm{f}^2\Big).
\end{align}
Note that this estimate is not closed since it depends on $f$.\\
\ \\
Step 2: Quasi-Orthogonality relation.\\
Multiplying $\cos\phi$ on both sides of \eqref{Milne problem} and integrating over $(\phi,\psi)\in\left[-\dfrac{\pi}{2},\dfrac{\pi}{2}\right]\times[-\pi,\pi]$ imply
\begin{align}
\frac{\ud{}}{\ud{\eta}}\br{\sin\phi,f}_{\phi,\psi}(\eta)=&-\tf\br{\cos\phi,\frac{\ud{f}}{\ud{\phi}}}_{\phi,\psi}(\eta)
-G\br{\cos\phi\cos^2\psi,\frac{\ud{f}}{\ud{\phi}}}_{\phi,\psi}(\eta)
+\bar S(\eta)\\
=&-2\tf\br{\sin\phi,f}_{\phi,\psi}(\eta)-2G\br{\sin\phi\cos^2\psi,f}_{\phi,\psi}(\eta),\no
\end{align}
where we use integration by parts in $\phi$ and the fact that $\bar S=0$. This is a first-order linear equation for $\br{\sin\phi,f}_{\phi,\psi}$. Note that the specular reflexive boundary
$f(L,\phi,\psi)=f(L,\rr[\phi],\psi)$ implies
$\br{\sin\phi,f}_{\phi,\psi}(L)=0$. We can solve it in $[\eta,L]$ to obtain that
\begin{align}\label{mt 19}
\br{\sin\phi,f}_{\phi,\psi}(\eta)=-2\int_{\eta}^L\ue^{2\tv(\eta)-2\tv(y)}G(y)\br{\sin\phi\cos^2\psi,f}_{\phi,\psi}(y)\ud{y}.
\end{align}
It is easy to check that
\begin{align}\label{mt 20}
\br{\sin\phi,q}_{\phi,\psi}(\eta)=\br{\sin\phi\cos^2\psi,q}_{\phi,\psi}(\eta)=0.
\end{align}
Hence, inserting \eqref{mt 20} into \eqref{mt 19}, we derive
\begin{align}\label{mt 21}
\br{\sin\phi,r}_{\phi,\psi}(\eta)=&-2\int_{\eta}^L\ue^{2\tv(\eta)-2\tv(y)}G(y)\br{\sin\phi\cos^2\psi,r}_{\phi,\psi}(y)\ud{y}.
\end{align}
\ \\
Step 3: Estimates of $q$.\\
Multiplying $\sin\phi\cos\phi$ on
both sides of \eqref{Milne problem} and
integrating over $(\phi,\psi)\in\left[-\dfrac{\pi}{2},\dfrac{\pi}{2}\right]\times[-\pi,\pi]$ lead to
\begin{align}
\label{mt 05}
\frac{\ud{}}{\ud{\eta}}\br{\sin^2\phi,f}_{\phi,\psi}(\eta)=&
-\br{\sin\phi,r}_{\phi,\psi}(\eta)-\tf(\eta)\br{\sin\phi\cos\phi,\frac{\p
f}{\p\phi}}_{\phi,\psi}(\eta)\\
&-G(\eta)\br{\sin\phi\cos\phi\cos^2\psi,\frac{\p
f}{\p\phi}}_{\phi,\psi}(\eta)+\br{\sin\phi,S}_{\phi,\psi}(\eta).\no
\end{align}
Integrate by parts in $\phi$ implies
\begin{align}
-\tf(\eta)\br{\sin\phi\cos\phi,\frac{\p
f}{\p\phi}}_{\phi,\psi}(\eta)=&\tf(\eta)\br{1-3\sin^2\phi,f}_{\phi,\psi}(\eta)\\
=&\tf(\eta)\br{1-3\sin^2\phi,r}_{\phi,\psi}(\eta),\no\\
-G(\eta)\br{\sin\phi\cos\phi\cos^2\psi,\frac{\p
f}{\p\phi}}_{\phi,\psi}(\eta)=&G(\eta)\br{1-3\sin^2\phi,f\cos^2\psi}_{\phi,\psi}(\eta)\\
=&G(\eta)\br{1-3\sin^2\phi,r\cos^2\psi}_{\phi,\psi}(\eta),\no
\end{align}
where we utilize the direct computation
\begin{align}
\br{1-3\sin^2\phi,q}_{\phi,\psi}(\eta)=\br{1-3\sin^2\phi,q\cos^2\psi}_{\phi,\psi}(\eta)=0.
\end{align}
Hence, we can simplify \eqref{mt 05} as
\begin{align}\label{mt 22}
\frac{\ud{}}{\ud{\eta}}\br{\sin^2\phi,f}_{\phi,\psi}(\eta)=&
-\br{\sin\phi,r}_{\phi,\psi}(\eta)+\tf(\eta)\br{1-3\sin^2\phi,r}_{\phi,\psi}(\eta)\\
&+G(\eta)\br{1-3\sin^2\phi,r\cos^2\psi}_{\phi,\psi}(\eta)+\br{\sin\phi,S}_{\phi,\psi}(\eta).\no
\end{align}
Let
\begin{align}
\beta(\eta)=\br{\sin^2\phi,f}_{\phi,\psi}(\eta).
\end{align}
Then \eqref{mt 22} can be rewritten as
\begin{align}\label{mt 06}
\frac{\ud{\beta}}{\ud{\eta}}=D(\eta),
\end{align}
where
\begin{align}\label{mt 23}
D(\eta)=&-\br{\sin\phi,r}_{\phi,\psi}(\eta)+\tf(\eta)\br{1-3\sin^2\phi,r}_{\phi,\psi}(\eta)\\
&+G(\eta)\br{1-3\sin^2\phi,r\cos^2\psi}_{\phi,\psi}(\eta)+\br{\sin\phi,S}_{\phi,\psi}(\eta).\no
\end{align}
Inserting the quasi-orthogonal relation \eqref{mt 21} into \eqref{mt 23}, we get
\begin{align}\label{mt 24}
D(\eta)=&2\int_{\eta}^L\ue^{2\tv(\eta)-\tv(y)}G(y)\br{\sin\phi\cos^2\psi,f}_{\phi,\psi}(y)\ud{y}+\tf(\eta)\br{1-3\sin^2\phi,r}_{\phi,\psi}(\eta)\\
&+G(\eta)\br{1-3\sin^2\phi,r\cos^2\psi}_{\phi,\psi}(\eta)+\br{\sin\phi,S}_{\phi,\psi}(\eta).\no
\end{align}
We can integrate over $[0,\eta]$ in \eqref{mt 06} to obtain
\begin{align}\label{mt 26}
\beta(\eta)-\beta(0)=\int_0^{\eta}D(z)\ud{z}.
\end{align}
Hence, plugging \eqref{mt 24} into \eqref{mt 26}, we deduce
\begin{align}\label{mt 25}
\\
\beta(\eta)-\beta(0)=&2\int_0^{\eta}\int_{z}^L\ue^{2\tv(z)-2\tv(y)}G(y)\br{\sin\phi\cos^2\psi,r}_{\phi,\psi}\ud{y}\ud{z}
+\int_0^{\eta}\tf(z)\br{1-3\sin^2\phi,r}_{\phi,\psi}(z)\ud{z}\no\\
&+\int_0^{\eta}G(z)\br{1-3\sin^2\phi,r\cos^2\psi}_{\phi,\psi}(z)\ud{z}+\int_0^{\eta}\br{\sin\phi,S}_{\phi,\psi}(z)\ud{z}.\no
\end{align}
For the boundary data $\beta(0)$ in \eqref{mt 25}, we use Cauchy's inequality to obtain
\begin{align}\label{mt 31}
\beta(0)=\br{\sin^2\phi,f}_{\phi,\psi}(0)\leq \bigg(\br{f,f\abs{\sin\phi}}_{\phi,\psi}(0)\bigg)^{\frac{1}{2}}\tnm{\sin\phi}^{\frac{3}{2}}\leq C
\bigg(\br{f,f\abs{\sin\phi}}_{\phi,\psi}(0)\bigg)^{\frac{1}{2}}.
\end{align}
We may decompose
\begin{align}\label{mt 30}
\br{f, f\abs{\sin\phi}
}_{\phi,\psi}(0)=\int_{\sin\phi>0}h^2(\phi)\sin\phi\cos\phi
\ud{\phi}-\int_{\sin\phi<0}\Big(f(0,\phi)\Big)^2\sin\phi\cos\phi\ud{\phi}.
\end{align}
Recall the definition of $\alpha(\eta)$ in \eqref{mt 27} and the estimate \eqref{mt 28} with $\bar S=0$, we have
\begin{align}
&\int_{\sin\phi>0} h^2(\phi)\sin\phi\cos\phi\ud{\phi}+\int_{\sin\phi<0}
\Big(f(0,\phi)\Big)^2\sin\phi\cos\phi\ud{\phi}=2\alpha(0)\\
\geq&
2\int_{0}^L\ue^{-2\tv(y)}\Big(G(y)\br{f\cos^2\psi,f\sin\phi}_{\phi,\psi}(y)-\br{S,r}_{\phi,\psi}(y)\Big)\ud{y}.\nonumber
\end{align}
This implies
\begin{align}\label{mt 29}
&-\int_{\sin\phi<0}
\Big(f(0,\phi)\Big)^2\sin\phi\cos\phi\ud{\phi}\\
\leq&\int_{\sin\phi>0}
h^2(\phi)\sin\phi\cos\phi\ud{\phi}-2\int_0^L\ue^{-2\tv(y)}\Big(G(y)\br{f\cos^2\psi,f\sin\phi}_{\phi,\psi}(y)-\br{S,r}_{\phi,\psi}(y)\Big)\ud{y}\no\\
\leq&
\tnmp{h}^2+C\abs{\int_0^L\Big(G(y)\br{f\cos^2\psi,f\sin\phi}_{\phi,\psi}(y)-\br{S,r}_{\phi,\psi}(y)\Big)\ud{y}}\nonumber.
\end{align}
Hence, inserting \eqref{mt 29} into \eqref{mt 30} and further \eqref{mt 31}, we obtain
\begin{align}\label{mt 32}
\\
\abs{\beta(0)}\leq& \abs{\br{f,f\abs{\sin\phi}}_{\phi,\psi}(0)}^{\frac{1}{2}}\leq
C\tnmp{h}+C\abs{\int_0^LG(y)\br{f\cos^2\psi,f\sin\phi}_{\phi,\psi}(y)\ud{y}}^{\frac{1}{2}}+C\abs{\int_0^L\br{S,r}_{\phi,\psi}(y)\ud{y}}^{\frac{1}{2}}.\no
\end{align}
From \eqref{mt 17}, we know
\begin{align}\label{mt 33}
\abs{\int_0^LG(y)\br{f\cos^2\psi,f\sin\phi}_{\phi,\psi}(y)\ud{y}}^{\frac{1}{2}}\leq C\lnnm{G}^{\frac{1}{2}}\tnnm{f}^2\leq C\e^{\frac{1}{2}}\tnnm{f}.
\end{align}
Using Cauchy's inequality, we have
\begin{align}\label{mt 34}
\abs{\int_0^L\br{S,r}_{\phi,\psi}(y)\ud{y}}^{\frac{1}{2}}\leq \tnnm{S}\tnnm{r}\leq C\tnnm{r}.
\end{align}
Inserting \eqref{mt 33} and \eqref{mt 34} into \eqref{mt 32}, we get
\begin{align}\label{mt 35}
\abs{\beta(0)}\leq&
C\bigg(\tnmp{h}+\e^{\frac{1}{2}}\tnnm{f}+\tnnm{r}\bigg)\leq C\bigg(1+\e^{\frac{1}{2}}\tnnm{f}+\tnnm{r}\bigg).
\end{align}
Now we turn to other terms in \eqref{mt 25}. Here we will repeated use the fact that $0<L<\e^{-n}$. Using Cauchy's inequality and \eqref{mt 17}, we obtain
\begin{align}\label{mt 36}
\abs{2\int_0^{\eta}\int_{z}^L\ue^{2\tv(z)-2\tv(y)}G(y)\br{\sin\phi\cos^2\psi,r}_{\phi,\psi}\ud{y}\ud{z}}\leq& 2\int_0^{\eta}\tnnm{G}\tnnm{r}\ud{z}\\
\leq& CL\tnnm{G}\tnnm{r}\leq C\e^{1-\frac{3n}{2}}\tnnm{r}.\no
\end{align}
Since $\tf(\eta)\in L^1[0,L]\cap L^2[0,L]$, using Cauchy's inequality, we know
\begin{align}\label{mt 37}
\abs{\int_0^{\eta}\tf(z)\br{1-3\sin^2\phi,r}_{\phi,\psi}(z)\ud{z}}\leq C\tnnm{\tf}\tnnm{r}\leq C\e^{1-\frac{n}{2}}\tnnm{r}.
\end{align}
Similarly, using Cauchy's inequality and \eqref{mt 17}, we may deduce
\begin{align}\label{mt 38}
\abs{\int_0^{\eta}G(z)\br{1-3\sin^2\phi,r\cos^2\psi}_{\phi,\psi}(z)\ud{z}}\leq \tnnm{G}\tnnm{r}\leq \e^{1-\frac{n}{2}}\tnnm{r}.
\end{align}
Considering the exponential decay of $S$ in \eqref{Milne decay}, we apply Cauchy's inequality to get
\begin{align}\label{mt 39}
\abs{\int_0^{\eta}\br{\sin\phi,S}_{\phi,\psi}(z)\ud{z}}\leq C\tnnm{\ue^{K\eta}S}\tnnm{\ue^{-K\eta}}\leq C.
\end{align}
Collecting all estimates in \eqref{mt 35}, \eqref{mt 36}, \eqref{mt 37}, \eqref{mt 38} and \eqref{mt 39}, and inserting them into \eqref{mt 25}, we have
\begin{align}
\abs{\beta(L)}\leq&C\Big(1+\e^{\frac{1}{2}}\tnnm{f}+\e^{1-\frac{3n}{2}}\tnnm{r}\Big).
\end{align}
Using \eqref{mt 40}, we further deduce that
\begin{align}\label{mt 41}
\abs{\beta(L)}\leq&C\Big(1+\e^{1-\frac{3n}{2}}\Big)\Big(1+\e^{\frac{1}{2}}\tnnm{f}\Big).
\end{align}
Define
\begin{align}\label{Milne limit}
f_L=q_L=\frac{\br{\sin^2\phi,f}_{\phi,\psi}(L)}{\tnm{\sin\phi}^2}=\frac{\beta(L)}{\tnm{\sin\phi}^2}.
\end{align}
We may rewrite \eqref{mt 41} as
\begin{align}
\abs{f_L}\tnm{\sin\phi}^2=\abs{\beta(L)}\leq&C\Big(1+\e^{1-\frac{3n}{2}}\Big)\Big(1+\e^{\frac{1}{2}}\tnnm{(f-f_L)+f_L}\Big)\\
\leq&C(1+\e^{1-\frac{3n}{2}})\left(1+\e^{\frac{1}{2}}\tnnm{f-f_L}+\e^{\frac{1}{2}}\tnnm{f_L}\right)\no\\
\leq&C(1+\e^{1-\frac{3n}{2}})\left(1+\e^{\frac{1}{2}}\tnnm{f-f_L}+\e^{\frac{1}{2}-\frac{n}{2}}\abs{f_L}\right)\no
\end{align}
Therefore, for $0<n<\dfrac{2}{3}$ and $\e$ sufficiently small, absorbing $\abs{f_L}$ into the left-hand side, we have
\begin{align}\label{mt 42}
\abs{f_L}\leq C\Big(1+\e^{\frac{1}{2}}\tnnm{f-f_L}\Big).
\end{align}
Thus, in \eqref{mt 40}, we may further estimate
\begin{align}
\tnnm{r}\label{mt 43}
\leq C\left(1+\e^{\frac{1}{2}}\tnnm{f-f_L}+\e^{\frac{1}{2}}\tnnm{f_L}\right)
\leq C\left(1+\e^{\frac{1}{2}}\tnnm{f-f_L}\right).
\end{align}
\ \\
Step 4: Estimates of $q-q_L$.\\
We can integrate over $[\eta,L]$ in \eqref{mt 06} to obtain
\begin{align}\label{mt 44}
\beta(L)-\beta(\eta)=&\int_{\eta}^LD(z)\ud{z}\\
=&\int_{\eta}^L\tf(z)\br{1-3\sin^2\phi,r}_{\phi,\psi}(z)\ud{z}
+\int_{\eta}^{L}\int_{z}^L\ue^{2\tv(z)-2\tv{y}}G(y)\br{\sin\phi\cos^2\psi,r}_{\phi,\psi}\ud{y}\ud{z}\no\\
&+\int_{\eta}^{L}G(z)\br{1-3\sin^2\phi,r\cos^2\psi}_{\phi,\psi}(z)\ud{z}+\int_{\eta}^{L}\br{\sin\phi,S}_{\phi,\psi}(z)\ud{z}.\no
\end{align}
Note that
\begin{align}
\beta(\eta)=&\br{\sin^2\phi,f}_{\phi,\psi}(\eta)=\br{\sin^2\phi,q}_{\phi,\psi}(\eta)+\br{\sin^2\phi,r}_{\phi,\psi}(\eta)\\
=&q(\eta)\tnm{\sin\phi}^2+\br{\sin^2\phi,r}_{\phi,\psi}(\eta),\no\\
\beta(L)=&q_L\tnm{\sin\phi}^2.
\end{align}
Hence, we have
\begin{align}\label{mt 54}
q(\eta)-q_L=\frac{\beta(\eta)-\beta(L)}{\tnm{\sin\phi}^2}-\frac{\br{\sin^2\phi,r}_{\phi,\psi}(\eta)}{\tnm{\sin\phi}^2}.
\end{align}
Then inserting \eqref{mt 44} into \eqref{mt 54}, we estimate
\begin{align}\label{mt 45}
\tnnm{q-q_L}^2\leq&C\Big(\tnnm{r}^2+\tnnm{\beta(\eta)-\beta(L)}^2\Big)\\
\leq&C\Bigg(\tnnm{r}^2+\int_0^L\abs{\int_{\eta}^{L}\int_{z}^L\ue^{2\tv(z)-2\tv{y}}G(y)\br{\sin\phi\cos^2\psi,r}_{\phi,\psi}\ud{y}\ud{z}}^2\ud{\eta}\no\\
&+\int_0^L\abs{\int_{\eta}^L\tf(z)\br{1-3\sin^2\phi,r}_{\phi,\psi}(z)\ud{z}}^2\ud{\eta}\no\\
&+\int_0^L\abs{\int_{\eta}^{L}G(z)\br{1-3\sin^2\phi,r\cos^2\psi}_{\phi,\psi}(z)\ud{z}}^2\ud{\eta}
+\int_0^L\abs{\int_{\eta}^{L}\br{\sin\phi,S}_{\phi,\psi}(z)\ud{z}}^2\ud{\eta}\Bigg).\no
\end{align}
We need to estimate each term on the right-hand side. Using Cauchy's inequality and \eqref{mt 17}, we obtain
\begin{align}\label{mt 46}
\int_0^L\abs{\int_{\eta}^{L}\int_{z}^L\ue^{2\tv(z)-2\tv{y}}G(y)\br{\sin\phi\cos^2\psi,r}_{\phi,\psi}\ud{y}\ud{z}}^2\ud{\eta}\leq& \int_0^L\abs{\int_{\eta}^{L}\tnnm{G}\tnnm{r}\ud{z}}^2\ud{\eta}\\
\leq& CL^3\tnnm{G}^2\tnnm{r}^2\leq C\e^{2-4n}\tnnm{r}^2.\no
\end{align}
Since $\tf(\eta)\in L^1[0,L]\cap L^2[0,L]$, using Cauchy's inequality, we know
\begin{align}\label{mt 47}
\int_0^L\abs{\int_{\eta}^L\tf(z)\br{1-3\sin^2\phi,r}_{\phi,\psi}(z)\ud{z}}^2\ud{\eta}\leq C\tnnm{r}^2\int_0^L\int_{\eta}^L\abs{\tf(z)}^2\ud{z}\ud{\eta}\leq C\e^{2-2n}\tnnm{r}^2.
\end{align}
Similarly, using Cauchy's inequality and \eqref{mt 17}, we may deduce
\begin{align}\label{mt 48}
\int_0^L\abs{\int_{\eta}^{L}G(z)\br{1-3\sin^2\phi,r\cos^2\psi}_{\phi,\psi}(z)\ud{z}}^2\ud{\eta}\leq CL\tnnm{G}^2\tnnm{r}^2\leq \e^{2-2n}\tnnm{r}.
\end{align}
Considering the exponential decay of $S$ in \eqref{Milne decay}, we apply Cauchy's inequality to get
\begin{align}\label{mt 49}
\int_0^L\abs{\int_{\eta}^{L}\br{\sin\phi,S}_{\phi,\psi}(z)\ud{z}}^2\ud{\eta}\leq \int_0^L\abs{\int_{\eta}^{L}\lnm{S(z)}\ud{z}}^2\ud{\eta}\leq \int_0^L\ue^{-2K\eta}\ud\eta\leq C.
\end{align}
Collecting all estimates in \eqref{mt 46}, \eqref{mt 47}, \eqref{mt 48} and \eqref{mt 49}, inserting them into \eqref{mt 45} and using \eqref{mt 43}, we obtain
\begin{align}\label{mt 50}
\tnnm{q-q_L}^2\leq&C\Big(1+\e^{2-4n}\Big)\Big(1+\tnnm{r}^2\Big)
\leq C\Big(1+\e^{2-4n}\Big)\left(1+\e\tnnm{f-f_L}^2\right).
\end{align}
Therefore, for $0<n<\dfrac{1}{2}$, we have
\begin{align}\label{mt 55}
\tnnm{q-q_L}\leq C\left(1+\e^{\frac{1}{2}}\tnnm{f-f_L}\right).
\end{align}
\ \\
Step 5: Synthesis.\\
For $0<n<\dfrac{1}{2}$, collecting \eqref{mt 43} and \eqref{mt 55}, we have
\begin{align}
\tnnm{r}\leq& C\left(1+\e^{\frac{1}{2}}\tnnm{f-f_L}\right),\\
\tnnm{q-q_L}\leq& C\left(1+\e^{\frac{1}{2}}\tnnm{f-f_L}\right).
\end{align}
On the other hand, based on the definition in \eqref{Milne limit} and above estimates,
\begin{align}\label{mt 53}
\\
\tnnm{f-f_L}=\tnnm{r+(q-q_L)}\leq \tnnm{r}+\tnnm{q-q_L}\leq C\left(1+\e^{\frac{1}{2}}\tnnm{f-f_L}\right).\no
\end{align}
Hence, for $\e$ sufficiently small, absorbing $\e^{\frac{1}{2}}\tnnm{f-f_L}$ to the left-hand side, we know
\begin{align}\label{mt 51}
\tnnm{f-f_L}\leq C.
\end{align}
Furthermore, using \eqref{mt 42}, we have
\begin{align}\label{mt 52}
\abs{f_L}\leq C.
\end{align}
\ \\
Step 6: Uniqueness.\\
Assume that there are
two solutions $f_1$ and $f_2$ to the equation \eqref{Milne problem} satisfying estimates \eqref{mt 51} and \eqref{mt 52}. Then $f'=f_1-f_2$ satisfies the equation
\begin{align}
\left\{
\begin{array}{l}\displaystyle
\sin\phi\frac{\p f'}{\p\eta}+F(\eta,\psi)\cos\phi\frac{\p
f'}{\p\phi}+f'-\bar f'=0,\\\rule{0ex}{2.0em}
f'(0,\phi,\psi)=0\ \ \text{for}\ \ \sin\phi>0,\\\rule{0ex}{2.0em}
f'(L,\phi,\psi)=f'(L,\rr[\phi],\psi).
\end{array}
\right.
\end{align}
Assume
\begin{align}
\abs{f'_L}\leq C,\quad \tnnm{f'-f'_L}\leq C.
\end{align}
Then we can repeat the proof in Step 1 to Step 5 and obtain the estimates similar to \eqref{mt 53},
\begin{align}
\tnnm{f'-f'_L}\leq C+C\e^{\frac{1}{2}}\tnnm{f'-f'_L}.
\end{align}
Note that in this proof, the $O(1)$ term $C$ purely comes from the boundary data and source term. Since all data are zero in $f'$ equation, we have
\begin{align}
\tnnm{f'-f'_L}\leq C\e^{\frac{1}{2}}\tnnm{f'-f'_L},
\end{align}
which implies $f=f_L$ is a constant. Then based on the zero boundary data, we must have $f=0$.

\end{proof}

\subsubsection{$\bar S\neq0$ Case}

\begin{lemma}\label{Milne finite LT.}
Assume $S$ and $h$ satisfy \eqref{Milne bounded} and \eqref{Milne decay}. Then there exists a solution $f(\eta,\phi,\psi)$ to the equation
\eqref{Milne problem}, satisfying that for $f_L=\dfrac{\br{\sin^2\phi,f}_{\phi,\psi}(L)}{\tnm{\sin\phi}^2}$,
\begin{align}\label{LT estimate'}
\abs{f_L}\leq C,\quad \tnnm{f-f_L}\leq C.
\end{align}
The solution is unique among functions such that \eqref{LT estimate'} holds.
\end{lemma}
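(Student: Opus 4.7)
The plan is to adapt the four-step proof of Lemma \ref{Milne finite LT}, tracking the additional terms produced when $\bar S \not\equiv 0$ and showing that each is controlled using the exponential decay assumption \eqref{Milne decay}. The overall bootstrap and absorption structure remains intact; the result is the same inequality in the same form, merely with extra source-driven contributions to bookkeep. Linearity and the previous lemma then suffice.

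First I would redo the microscopic energy estimate of Step 1. The previous identity $\br{S,f}_{\phi,\psi} = \br{S,r}_{\phi,\psi}$ no longer holds: instead we have $\br{S,f}_{\phi,\psi}(\eta) = \br{S,r}_{\phi,\psi}(\eta) + 4\pi q(\eta)\bar S(\eta)$. The additional integral $\int_0^L \ue^{-2\tv(y)} q(y)\bar S(y)\,\ud y$ is estimated via the splitting $q = q_L + (q - q_L)$: the constant piece contributes $C\abs{q_L}\int_0^L\abs{\bar S}\,\ud y \leq C\abs{q_L}$, and the microscopic piece contributes $\tnnm{q - q_L}\tnnm{\bar S} \leq C\tnnm{q - q_L}$, both harmless by the exponential decay of $\bar S$. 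Next, in Step 2 the quasi-orthogonality relation, obtained by integrating the equation against $\cos\phi$, gains $4\pi\bar S(\eta)$ on its right-hand side. Solving the resulting first-order ODE with the specular condition $\br{\sin\phi,f}_{\phi,\psi}(L) = 0$ adds the term $4\pi\int_\eta^L \ue^{2\tv(\eta)-2\tv(y)}\bar S(y)\,\ud y$ to $\br{\sin\phi,r}_{\phi,\psi}(\eta)$; exponential decay again makes this uniformly $O(1)$ in $\eta$ and $\e$.

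With these two modifications, Steps 3 and 4 of the original proof carry through essentially verbatim. The boundary estimate for $\beta(0) = \br{\sin^2\phi,f}_{\phi,\psi}(0)$ and the formula \eqref{mt 54} for $q - q_L$ pick up additional inhomogeneous $\bar S$-driven terms, each of which is $O(1)$ in $L^\infty_\eta$ (and thus $O(L^{1/2}) = O(\e^{-n/2})$ in $L^2_\eta$). These fit into the same bootstrap hierarchy as before and collapse into the final absorption inequality $\tnnm{f - f_L} \leq C + C\e^{1/2}\tnnm{f - f_L}$, yielding $\tnnm{f - f_L} \leq C$ and $\abs{f_L} \leq C$ for $\e$ small enough. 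Uniqueness is immediate: applied to the difference of two solutions, the equation has zero source (hence zero mean source), so the uniqueness statement of Lemma \ref{Milne finite LT} itself delivers the conclusion.

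The main obstacle I anticipate is the bookkeeping in Step 4, where the extra $L^2_\eta$ factor $L^{1/2} = \e^{-n/2}$ coming from integrating $O(1)$ $\bar S$-driven source terms over $[0,L]$ must combine favorably with the geometric-correction smallness factors $\e^{1-3n/2}$ and $\e^{2-4n}$ appearing in the original proof. The constraint $0 < n < 1/2$ should be exactly what is needed for the final $\e^{1/2}$-absorption to succeed, but verifying this carefully---and checking that all cross-terms of the form $\e^{a} L^{b}$ with $\bar S$-weights remain controlled uniformly in $\e$---is the place where the argument is most delicate.
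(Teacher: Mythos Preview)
Your approach is correct in principle and will close, but it takes a genuinely different route from the paper. The paper does \emph{not} redo the four-step energy argument with $\bar S$-terms. Instead it exploits linearity: write $S=S_Q+S_R$ with $S_Q=\bar S$ and $S_R=S-\bar S$, apply Lemma~\ref{Milne finite LT} directly to the $S_R$-problem, and for the $S_Q$-problem construct an explicit ansatz $f^2=a(\eta)\sin\phi$ where $a$ solves a scalar first-order ODE chosen so that the residual source $-\sin\phi\,\partial_\eta f^2-F\cos\phi\,\partial_\phi f^2-f^2+\bar f^2+S_Q$ has zero velocity-mean; then Lemma~\ref{Milne finite LT} applies once more to produce a correction $f^3$, and $f^1+f^2+f^3$ is the desired solution. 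This reduces the general case to two invocations of the already-proved $\bar S=0$ lemma plus one explicit ODE, with no new bootstrap.

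Your direct tracking works, but the concern you flag at the end is real and slightly sharper than you indicate: the extra $\int q\bar S$ term in Step~1 introduces $\abs{f_L}$ and $\tnnm{q-q_L}$ on the right-hand side of the $\tnnm{r}^2$-estimate \emph{without} a small $\e$-prefactor, so the hierarchy is no longer triangular in the way the original Steps~1--5 were. You then need a simultaneous bootstrap in all three quantities $\tnnm{r}$, $\tnnm{q-q_L}$, $\abs{f_L}$, using Young's inequality on the $\abs{f_L}^{1/2}$, $\tnnm{q-q_L}^{1/2}$ terms that arise in the modified $\abs{\beta(0)}$-bound. This closes (the final inequality is still of the form $A^2\le C(1+A)$ after substitution), but it is materially more intricate than your sketch suggests. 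The paper's auxiliary-function trick sidesteps this entirely, and is the standard device when one wants to reduce to a zero-mean source; it is worth learning.
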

\begin{proof}
We can utilize the superposition property for this linear problem, i.e. to
write $S=\bar S+(S-\bar S)=S_Q+S_R$. \\
\ \\
Step 1: Construction of auxiliary function $f^1$.\\
We first solve $f^1$ as the solution to
\begin{align}
\left\{
\begin{array}{l}\displaystyle
\sin\phi\frac{\p f^1}{\p\eta}+F(\eta)\cos\phi\frac{\p
f^1}{\p\phi}+f^1-\bar f^1=S_R(\eta,\phi,\psi),\\\rule{0ex}{2.0em}
f^1(0,\phi,\psi)=h(\phi,\psi)\ \ \text{for}\ \ \sin\phi>0,\\\rule{0ex}{2.0em}
f^1(L,\phi,\psi)=f^1(L,\rr[\phi],\psi).
\end{array}
\right.
\end{align}
Since $\bar S_R=0$, by Lemma \ref{Milne finite LT}, there
exists a unique solution $f^1$
satisfying the $L^2$ estimates \eqref{LT estimate'}.\\
\ \\
Step 2: Construction of auxiliary function $f^2$.\\
This is the most tricky step. We seek a function $f^{2}$ satisfying
\begin{align}\label{mt 07}
-\frac{1}{4\pi}\int_{-\pi}^{\pi}\int_{-\frac{\pi}{2}}^{\frac{\pi}{2}}\bigg(\sin\phi\frac{\p
f^{2}}{\p\eta}+F(\eta)\cos\phi\frac{\p
f^{2}}{\p\phi}\bigg)\cos\phi\ud{\phi}\ud{\psi}+S_Q=0.
\end{align}
The following analysis shows that this type of function can always be
found. Integration by parts in $\phi$ transforms the equation \eqref{mt 07} into
\begin{align}\label{mt 08}
-\int_{-\pi}^{\pi}\int_{-\frac{\pi}{2}}^{\frac{\pi}{2}}\frac{\p
f^{2}}{\p\eta}\sin\phi\cos\phi\ud{\phi}\ud{\psi}-2\int_{-\pi}^{\pi}\int_{-\frac{\pi}{2}}^{\frac{\pi}{2}}F(\eta)f^{2}\sin\phi
\cos\phi\ud{\phi}\ud{\psi}+4\pi S_Q=0.
\end{align}
Setting
\begin{align}
f^{2}(\phi,\eta)=a(\eta)\sin\phi,
\end{align}
and plugging this ansatz into \eqref{mt 08}, we have
\begin{align}
-\frac{\ud{a}}{\ud{\eta}}\int_{-\pi}^{\pi}\int_{-\frac{\pi}{2}}^{\frac{\pi}{2}}\sin^2\phi \cos\phi\ud{\phi}\ud{\psi}-2a\int_{-\pi}^{\pi}\int_{-\frac{\pi}{2}}^{\frac{\pi}{2}}F(\eta)\sin^2\phi\cos\phi\ud{\phi}\ud{\psi}+4\pi
S_Q=0.
\end{align}
Hence, we have
\begin{align}
\frac{\ud{a}}{\ud{\eta}}+\bar F(\eta)a(\eta)=6\pi S_Q,
\end{align}
where
\begin{align}
\bar F(\eta)=-2\pi\bigg(\dfrac{\e}{R_1-\e\eta}+\dfrac{\e}{R_2-\e\eta}\bigg).
\end{align}
This is a first order linear ordinary differential equation, which
possesses infinite solutions. The general solution is
\begin{align}
a(\eta)=\ue^{-\int_0^{\eta}\bar F(y)\ud{y}}\bigg(a(0)+6\pi\int_0^{\eta}\ue^{\int_0^y\bar F(z)\ud{z}}S_Q(y)\ud{y}\bigg).
\end{align}
We may take
\begin{align}
a(0)=-6\pi\int_0^{L}\ue^{\int_0^y\bar F(z)\ud{z}}S_Q(y)\ud{y}.
\end{align}
Based on the exponential decay of $S_Q$, we can directly verify that
$a(\eta)$ decays exponentially to zero as $\eta\rt L$ and $f^2$
satisfies the $L^2$ estimates \eqref{LT estimate'}.\\
\ \\
Step 3: Construction of auxiliary function $f^3$.\\
Based on above construction, we can directly verify that
\begin{align}\label{mt 09}
\int_{-\pi}^{\pi}\int_{-\frac{\pi}{2}}^{\frac{\pi}{2}}\bigg(-\sin\phi\frac{\p
f^{2}}{\p\eta}-F(\eta)\cos\phi\frac{\p f^{2}}{\p\phi}-f^{2}+\bar
f^{2}+S_Q\bigg)\cos\phi\ud{\phi}\ud{\psi}=0.
\end{align}
Then we can solve $f^3$ as the solution to
\begin{align}
\left\{
\begin{array}{l}\displaystyle
\sin\phi\frac{\p f^3}{\p\eta}+F(\eta)\cos\phi\frac{\p
f^3}{\p\phi}+f^3-\bar f^3=-\sin\phi\dfrac{\p
f^{2}}{\p\eta}-F(\eta)\cos\phi\dfrac{\p
f^{2}}{\p\phi}-f^{2}+\bar f^{2}+S_Q,\\\rule{0ex}{2.0em}
f^3(0,\phi,\psi)=-a(0)\sin\phi\ \ \text{for}\ \ \sin\phi>0,\\\rule{0ex}{2.0em}
f^3(L,\phi,\psi)=f^3(L,\rr[\phi],\psi).
\end{array}
\right.
\end{align}
By \eqref{mt 09}, we can apply Lemma \ref{Milne finite LT}
to obtain a unique solution $f^3$
satisfying the $L^2$ estimates \eqref{LT estimate'}.\\
\ \\
Step 4: Construction of auxiliary function $f^4$.\\
We now define $f^4=f^2+f^3$ and the superposition property implies that
\begin{align}
\left\{
\begin{array}{l}\displaystyle
\sin\phi\frac{\p f^4}{\p\eta}+F(\eta)\cos\phi\frac{\p
f^4}{\p\phi}+f^4-\bar f^4=S_Q(\eta,\phi,\psi),\\\rule{0ex}{2.0em}
f^4(0,\phi,\psi)=0\ \ \text{for}\ \ \sin\phi>0,\\\rule{0ex}{2.0em}
f^4(L,\phi,\psi)=f^4(L,\rr[\phi],\psi),
\end{array}
\right.
\end{align}
and $f^4$
satisfies the $L^2$ estimates \eqref{LT estimate'}.\\
\ \\
In summary, we deduce that $f^1+f^4$ is the solution to \eqref{Milne problem} and satisfies the $L^2$ estimates \eqref{LT estimate'}.
\end{proof}

In the above, we actually prove the $L^2$ estimates.
\begin{theorem}\label{Milne LT Theorem}
Assume $S$ and $h$ satisfy \eqref{Milne bounded} and \eqref{Milne decay}. Then there exists a solution $f(\eta,\phi,\psi)$ to the equation
\eqref{Milne problem}, satisfying that
\begin{align}
\abs{f_L}\leq C,\quad \tnnm{f-f_L}\leq C,
\end{align}
where
\begin{align}
f_L=\dfrac{\br{\sin^2\phi,f}_{\phi,\psi}(L)}{\tnm{\sin\phi}^2}.
\end{align}
The solution is unique among functions such that above estimates hold.
\end{theorem}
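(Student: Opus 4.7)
The plan is to recognize that Theorem \ref{Milne LT Theorem} is a verbatim restatement of Lemma \ref{Milne finite LT.}: existence, the two bounds $|f_L| \leq C$ and $\tnnm{f - f_L} \leq C$, and uniqueness among solutions satisfying those bounds, are all already present in that lemma. The proof therefore reduces to invoking Lemma \ref{Milne finite LT.} and its underlying tool Lemma \ref{Milne finite LT}. The remark just above the theorem (``In the above, we actually prove the $L^2$ estimates'') is the author's signal to this effect. Thus no new argument is required; what follows is a recap of how the two lemmas combine, since this theorem is the first clean synthesis of the full $L^2$ theory.

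For the outer superposition step (Lemma \ref{Milne finite LT.} $\Rightarrow$ theorem), I would decompose $S = (S - \bar S) + \bar S$, solve the zero-mean part via Lemma \ref{Milne finite LT}, and absorb $\bar S$ by constructing an explicit $f^{2}(\eta,\phi) = a(\eta)\sin\phi$, where $a(\eta)$ is chosen to solve the scalar ODE obtained by averaging the transport operator against $\sin\phi\cos\phi$; a third auxiliary $f^{3}$ (whose source has zero mean by construction) then cleans up the residual and itself falls under Lemma \ref{Milne finite LT}. For the inner $\bar S = 0$ step, the scheme is a three-moment energy argument: multiply \eqref{Milne problem} by $f\cos\phi$ to get an ODE for $\alpha(\eta) = \tfrac12\br{f,f\sin\phi}_{\phi,\psi}(\eta)$ and, using the specular-reflection identity $\alpha(L) = 0$, bound $\tnnm{r}$; multiply by $\cos\phi$ to obtain a quasi-orthogonality relation for $\br{\sin\phi,r}_{\phi,\psi}(\eta)$; and multiply by $\sin\phi\cos\phi$ to derive $d\beta/d\eta = D(\eta)$ with $\beta(\eta) = \br{\sin^2\phi,f}_{\phi,\psi}(\eta)$, integrating from $0$ to $L$ to extract $f_L$ and from $\eta$ to $L$ to control $q - q_L$.

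The main obstacle is the $\psi$-dependence of the geometric correction $F(\eta,\psi) = \tf(\eta) + G(\eta)\cos^2\psi$, which breaks the azimuthal symmetry that would otherwise make several moments vanish identically. The saving observation is that $G(\eta) = -\e(R_1 - R_2)/((R_1 - \e\eta)(R_2 - \e\eta)) = O(\e)$ uniformly on $[0,L]$, so the dangerous contributions scale like $L G \sim \e^{1-n}$ and $L^{3/2} G \sim \e^{1-3n/2}$, both $o(1)$ precisely under the hypothesis $L = \e^{-n}$ with $0 < n < \tfrac12$. This lets one close the circular inequality $\tnnm{f-f_L} \leq C(1 + \e^{1/2}\tnnm{f - f_L})$ by absorption. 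Uniqueness follows by applying the same a priori bound to the difference $f' = f_1 - f_2$ of two solutions: with zero data the estimate collapses to $\tnnm{f' - f'_L} \leq C\e^{1/2}\tnnm{f' - f'_L}$, forcing $f' \equiv f'_L$ constant, and the vanishing in-flow condition then gives $f' \equiv 0$.
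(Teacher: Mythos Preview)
Your proposal is correct and matches the paper's approach exactly: the theorem is a direct restatement of Lemma \ref{Milne finite LT.}, which itself reduces to Lemma \ref{Milne finite LT} via the superposition $S = S_R + S_Q$ and the auxiliary $f^2 = a(\eta)\sin\phi$ construction you describe. Your summary of the three-moment argument, the role of $G(\eta) = O(\e)$ in closing the absorption inequality under $0 < n < \tfrac12$, and the uniqueness-by-difference step are all faithful to the paper.
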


\subsection{$L^{\infty}$ Estimates}

\subsubsection{Formulation}

Consider the following $\e$-transport problem for $f(\eta,\phi,\psi)$ in $(\eta,\phi,\psi)\in[0,L]\times\left[-\dfrac{\pi}{2},\dfrac{\pi}{2}\right]\times[-\pi,\pi]$
\begin{align}\label{Milne transport}
\left\{
\begin{array}{l}\displaystyle
\sin\phi\frac{\p f}{\p\eta}+F(\eta,\psi)\cos\phi\frac{\p
f}{\p\phi}+f=H(\eta,\phi,\psi),\\\rule{0ex}{2.0em}
f(0,\phi,\psi)=h(\phi,\psi)\ \ \text{for}\ \ \sin\phi>0,\\\rule{0ex}{2.0em}
f(L,\phi,\psi)=f(L,\rr[\phi],\psi).
\end{array}
\right.
\end{align}
Comparing this with \eqref{Milne problem}, we actually have $H=\bar f+S$. Define a potential function $V(\eta,\psi)$ satisfying $V(0,\psi)=0$ and $\dfrac{\p V}{\p\eta}=-F(\eta,\psi)$. We may directly obtain
\begin{align}
V(\eta,\psi)=\ln\left(\frac{R_1}{R_1-\e\eta}\right)\sin^2\psi+\ln\left(\frac{R_2}{R_2-\e\eta}\right)\cos^2\psi.
\end{align}
Define the energy functional:
\begin{align}
E(\eta,\phi,\psi)=\ue^{-V(\eta,\psi)}\cos\phi.
\end{align}
\eqref{Milne transport} is a transport equation, so we may track the solution to the boundary data $h$ through the characteristic lines $\Big(\eta(s),\phi(s)\Big)$ for $s\in\r$, which satisfies
\begin{align}
\frac{\ud\eta}{\ud s}=\sin\phi,\quad \frac{\ud\phi}{\ud s}=F(\eta,\psi)\cos\phi.
\end{align}
We may check that along the characteristics, the energy $E$ is conserved and the equation can be simplified as follows:
\begin{align}
\frac{\ud{f}}{\ud{s}}+f=H,
\end{align}
or equivalently with $\phi=\phi(\eta)$,
\begin{align}
\sin\phi\frac{\ud{f}}{\ud{\eta}}+f=H.
\end{align}
Also, $\psi$ is a constant along the characteristics, so we may temporarily ignore $\psi$ dependence when there is no confusion. Note that all the estimates are uniform in $\psi$. \\
\ \\
If the characteristic touches $\phi=0$ line, let $\eta^+(\eta,\phi)$ satisfy
\begin{align}
E(\eta,\phi)=\ue^{-V(\eta^+)},
\end{align}
which means $(\eta^+,0)$ is on the same characteristics as $(\eta,\phi)$.\\
\ \\
Define the quantities for $0\leq\eta'\leq\eta^+$ as follows:
\begin{align}
\phi'(\phi,\eta;\eta')=&\cos^{-1}\Big(\ue^{V(\eta')-V(\eta)}\cos\phi\Big),\\
\rr[\phi'](\phi,\eta;\eta')=&-\cos^{-1}\Big(\ue^{V(\eta')-V(\eta)}\cos\phi\Big)=-\phi'(\phi,\eta;\eta'),
\end{align}
where the inverse trigonometric function can be defined
single-valued in the range $\left[0,\dfrac{\pi}{2}\right]$ and the quantities are always well-defined due to the monotonicity of $V$. In particular, no matter $\phi$ is positive or negative, $\phi'$ is always positive.\\
\ \\
Finally, we put
\begin{align}\label{mt 74}
G_{\eta,\eta'}(\phi)=&\int_{\eta'}^{\eta}\frac{1}{\sin\Big(\phi'(\phi,\eta;\xi)\Big)}\ud{\xi}.
\end{align}
Depending on whether the characteristics touch $\phi=0$ or $\eta=L$, we can rewrite the solution to the equation \eqref{Milne transport} as
\begin{align}
f(\eta,\phi)=\k[h](\eta,\phi)+\t[H](\eta,\phi),
\end{align}
where\\
\ \\
Region I: the characteristic touches neither $\phi=0$ nor $\eta=L$, i.e. $\sin\phi>0$.
\begin{align}\label{mt 71}
\k[h](\eta,\phi)=&h\Big(\phi'(\phi,\eta;0)\Big)\exp\Big(-G_{\eta,0}\Big),\\
\t[H](\eta,\phi)=&\int_0^{\eta}\frac{H\Big(\eta',\phi'(\phi,\eta;\eta')\Big)}{\sin\Big(\phi'(\phi,\eta;\eta')\Big)}\exp\Big(-G_{\eta,\eta'}\Big)\ud{\eta'}.\label{mt 71'}
\end{align}
\ \\
Region II: the characteristic touches, i.e. $\sin\phi<0$ and $E(\eta,\phi)\leq \ue^{-V(L)}$.
\begin{align}\label{mt 72}
\k[h](\eta,\phi)=&h\Big(\phi'(\phi,\eta;0)\Big)\exp\Big(-G_{L,0}-G_{L,\eta}\Big),\\
\t[H](\eta,\phi)=&\int_0^{L}\frac{H\Big(\eta',\phi'(\phi,\eta;\eta')\Big)}{\sin\Big(\phi'(\phi,\eta;\eta')\Big)}
\exp\Big(-G_{L,\eta'}-G_{L,\eta}\Big)\ud{\eta'}\label{mt 72'}\\
&+\int_{\eta}^{L}\frac{H\Big(\eta',\rr[\phi'](\phi,\eta;\eta')\Big)}{\sin\Big(\phi'(\phi,\eta;\eta')\Big)}\exp\Big(G_{\eta,\eta'}\Big)\ud{\eta'}.\no
\end{align}
\ \\
Region III: the characteristic touches, i.e. $\sin\phi<0$ and $E(\eta,\phi)\geq \ue^{-V(L)}$.
\begin{align}\label{mt 73}
\k[h](\eta,\phi)=&h\Big(\phi'(\phi,\eta;0)\Big)\exp\Big(-G_{\eta^+,0}-G_{\eta^+,\eta}\Big),\\
\t[H](\eta,\phi)=&\int_0^{\eta^+}\frac{H\Big(\eta',\phi'(\phi,\eta;\eta')\Big)}{\sin\Big(\phi'(\phi,\eta;\eta')\Big)}
\exp\Big(-G_{\eta^+,\eta'}-G_{\eta^+,\eta}\Big)\ud{\eta'}\label{mt 73'}\\
&+\int_{\eta}^{\eta^+}\frac{H\Big(\eta',\rr[\phi'](\phi,\eta;\eta')\Big)}{\sin\Big(\phi'(\phi,\eta;\eta')\Big)}\exp\Big(G_{\eta,\eta'}\Big)\ud{\eta'}.\no
\end{align}
Figure 1 depicts the characteristics of the equation and Figure 2 illustrates the domain decomposition.
\begin{figure}[H]
\begin{minipage}[t]{0.5\linewidth}
\centering
\includegraphics[width=3.2in]{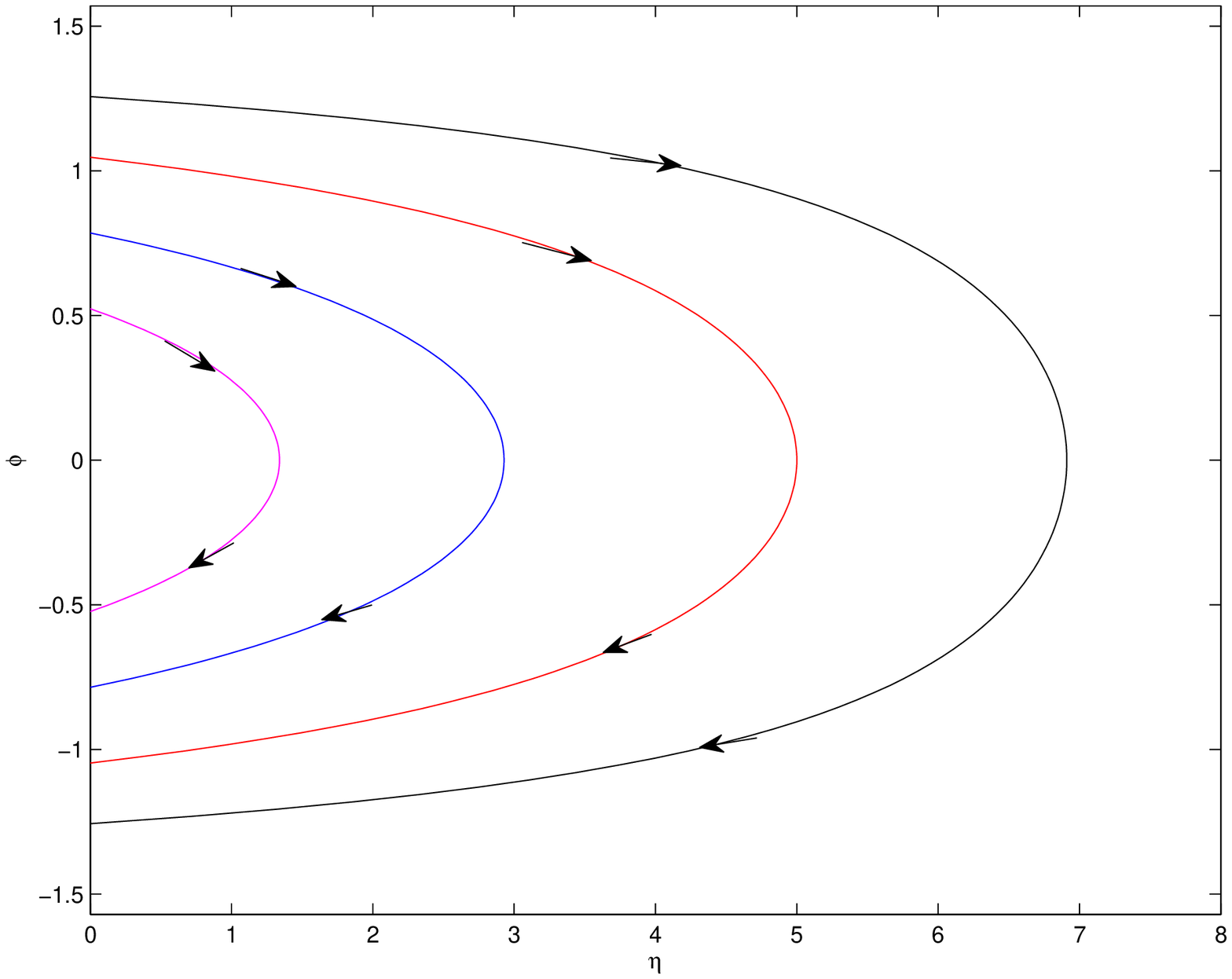}
\caption{Characteristics}
\end{minipage}%
\begin{minipage}[t]{0.5\linewidth}
\centering
\includegraphics[width=3.2in]{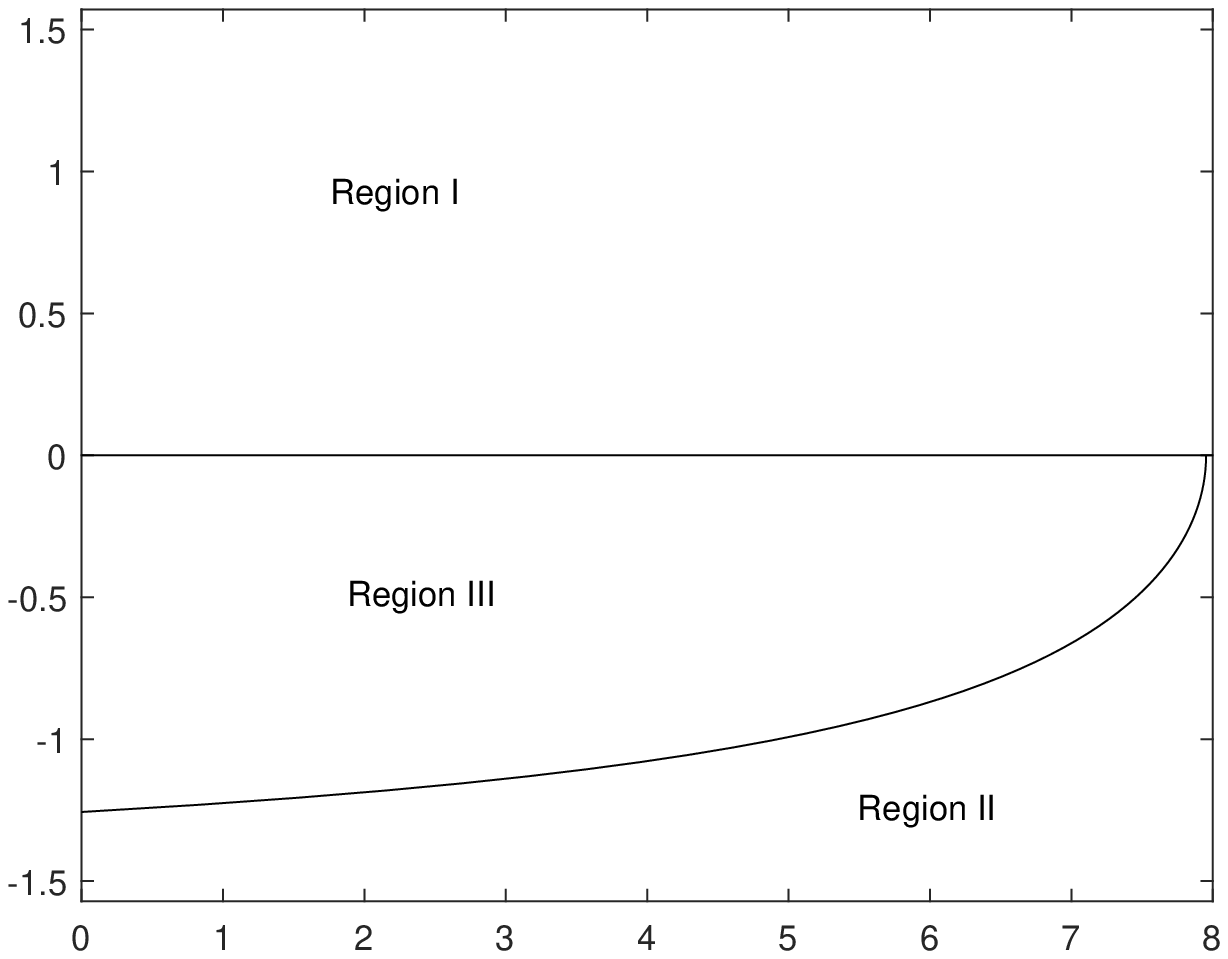}
\caption{Domain Decomposition}
\end{minipage}
\end{figure}

\subsubsection{Preliminaries}

\begin{lemma}\label{Milne lemma 1}
For any $0\leq\beta\leq1$, we have
\begin{align}
\lnnm{\ue^{\beta\eta}\k[h]}\leq \lnmp{h}.
\end{align}
In particular,
\begin{align}
\lnnm{\k[h]}\leq \lnmp{h}.
\end{align}
\end{lemma}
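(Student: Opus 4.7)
The proposal is to prove the pointwise bound in each of the three characteristic regions (I, II, III) defined by the formulas \eqref{mt 71}, \eqref{mt 72}, \eqref{mt 73} for $\k[h]$, and observe that in each case $\k[h](\eta,\phi,\psi)$ is simply $h$ evaluated at an in-flow point $\phi'(\phi,\eta;0)$ (which satisfies $\sin\phi' > 0$ by construction) multiplied by an exponentially decaying factor of the form $\exp(-G_{\ast,\ast})$. The entire lemma therefore reduces to showing that this exponential factor absorbs the growth $\ue^{\beta\eta}$ whenever $0\le\beta\le1$.

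The key observation is the elementary bound
\begin{equation*}
G_{\eta,\eta'}(\phi) \;=\; \int_{\eta'}^{\eta}\frac{d\xi}{\sin\bigl(\phi'(\phi,\eta;\xi)\bigr)} \;\ge\; \int_{\eta'}^{\eta} 1\,d\xi \;=\; \eta-\eta',
\end{equation*}
which follows from $\sin\phi'\in(0,1]$ on any characteristic segment where it is positive. This is the workhorse inequality, and everything else is bookkeeping.

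In Region I (where $\sin\phi>0$), I would simply apply this inequality to $G_{\eta,0}\ge \eta$, giving $\ue^{\beta\eta}\ue^{-G_{\eta,0}}\le\ue^{(\beta-1)\eta}\le 1$. In Region II, the exponent is $-G_{L,0}-G_{L,\eta}$; I would drop the second term, use $G_{L,0}\ge L\ge\eta$, and conclude $\ue^{\beta\eta}\ue^{-G_{L,0}-G_{L,\eta}}\le \ue^{\beta\eta-L}\le 1$ since $\eta\le L$ and $\beta\le 1$. Region III is identical after replacing $L$ by $\eta^+\ge\eta$: $G_{\eta^+,0}\ge\eta^+\ge\eta$ gives the bound. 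In every case the evaluation point $\phi'(\phi,\eta;0)$ lies in the in-flow set $\{\sin\phi>0\}$ by the definition of the inverse cosine in $[0,\pi/2]$, so $|h(\phi'(\phi,\eta;0))|\le\lnmp{h}$.

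There is no real obstacle here, since no compounding from the source $H$ appears (this is only the boundary-contribution kernel $\k$). The only minor care point is to confirm the characteristic geometry so that the $G_{\ast,\ast}$ integrands are positive on the indicated intervals and that $\phi'(\phi,\eta;0)$ is indeed in the in-flow half of velocity space in Regions II and III, so the supremum norm $\lnmp{h}$ is the correct quantity to invoke; this is immediate from the single-valued branch of $\cos^{-1}$ chosen in $[0,\pi/2]$. The specialization $\beta=0$ yields the second estimate $\lnnm{\k[h]}\le\lnmp{h}$ as a direct corollary.
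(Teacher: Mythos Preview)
Your proposal is correct and follows essentially the same approach as the paper: both use the elementary bound $G_{a,0}\ge a$ from $\sin\phi'\le 1$, then drop the nonnegative second exponent $G_{L,\eta}$ (resp.\ $G_{\eta^+,\eta}$) in Regions II and III and invoke $L\ge\eta$ (resp.\ $\eta^+\ge\eta$). The only cosmetic difference is that the paper writes the chain $\exp(-G_{L,0})\le\exp(-G_{\eta,0})\le\ue^{-\eta}$ rather than your $\exp(-G_{L,0})\le\ue^{-L}\le\ue^{-\eta}$, but these are equivalent one-line observations.
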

\begin{proof}
Since $\phi'$ is always in the domain $\left[0,\dfrac{\pi}{2}\right]$, we have
\begin{align}
0\leq\sin\Big(\phi'(\phi,\eta;\xi)\Big)\leq 1,
\end{align}
which further implies
\begin{align}
\frac{1}{\sin\Big(\phi'(\phi,\eta;\xi)\Big)}\geq 1.
\end{align}
Recall the definition of $G$ in \eqref{mt 74},
\begin{align}
G_{a,0}(\phi)=&\int_{0}^{a}\frac{1}{\sin\Big(\phi'(\phi,\eta;\xi)\Big)}\ud{\xi}\geq a.
\end{align}
Considering the fact that $L\geq\eta^+\geq\eta$, we deduce
\begin{align}
\exp\Big(-G_{\eta,0}\Big)\leq&\ue^{-\eta},\\
\exp\Big(-G_{L,0}-G_{L,\eta}\Big)\leq&\exp\Big(-G_{L,0}\Big)\leq \exp\Big(-G_{\eta,0}\Big)\leq\ue^{-\eta},\\
\exp\Big(-G_{\eta^+,0}-G_{\eta^+,\eta}\Big)\leq&\exp\Big(-G_{\eta^+,0}\Big)\leq\exp\Big(-G_{\eta,0}\Big)\leq
\ue^{-\eta}.
\end{align}
Hence, based on \eqref{mt 71}, \eqref{mt 72} and \eqref{mt 73}, our result follows.
\end{proof}

\begin{lemma}\label{Milne lemma 2}
The integral operator $\t$ satisfies
\begin{align}\label{mt 75}
\lnnm{\t[H]}\leq C\lnnm{H},
\end{align}
and for any $0\leq\beta\leq\dfrac{1}{2}$,
\begin{align}\label{mt 76}
\lnnm{\ue^{\beta\eta}\t[H]}\leq C\lnnm{\ue^{\beta\eta}H}.
\end{align}
\end{lemma}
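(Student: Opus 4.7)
The plan is to exploit the fact that the factor $\frac{1}{\sin(\phi'(\phi,\eta;\eta'))}$ appearing in \eqref{mt 71'}, \eqref{mt 72'}, and \eqref{mt 73'} is exactly what produces the exponential $\exp(-G_{\eta,\eta'})$ when differentiated. Specifically, from \eqref{mt 74} we have
\begin{equation*}
\frac{\p}{\p\eta'}G_{\eta,\eta'}(\phi)=-\frac{1}{\sin\big(\phi'(\phi,\eta;\eta')\big)},
\end{equation*}
so that $\frac{\p}{\p\eta'}\exp(-G_{\eta,\eta'})=\frac{1}{\sin(\phi')}\exp(-G_{\eta,\eta'})$. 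This is the cornerstone of every subsequent bound.

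For the unweighted estimate \eqref{mt 75}, I would treat the three regions in turn. In Region~I, bound
\begin{equation*}
\abs{\t[H](\eta,\phi)}\leq \lnnm{H}\int_0^{\eta}\frac{1}{\sin(\phi')}\exp(-G_{\eta,\eta'})\ud\eta'=\lnnm{H}\Big(1-\exp(-G_{\eta,0})\Big)\leq \lnnm{H},
\end{equation*}
using the identity above as an exact derivative. In Regions~II and III, split the two summands in \eqref{mt 72'} and \eqref{mt 73'}. For the reflected (first) piece, factor out $\exp(-G_{L,\eta})$ (resp.\ $\exp(-G_{\eta^+,\eta})$), which is $\leq 1$, and reduce to the same derivative identity on $[0,L]$ (resp.\ $[0,\eta^+]$). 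For the second piece, rewrite $\exp(G_{\eta,\eta'})=\exp\big(-(G_{L,\eta'}-G_{L,\eta})\big)$ (or the analogous identity with $\eta^+$) and again apply the derivative identity on $[\eta,L]$ (resp.\ $[\eta,\eta^+]$). Each of the resulting integrals is bounded by $1$.

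For the weighted estimate \eqref{mt 76}, I would use the trivial bound $G_{\eta,\eta'}\geq \eta-\eta'$ (since $\sin(\phi')\leq 1$) to write
\begin{equation*}
\exp(-G_{\eta,\eta'})=\exp\big(-\beta G_{\eta,\eta'}\big)\exp\big(-(1-\beta)G_{\eta,\eta'}\big)\leq \ue^{-\beta(\eta-\eta')}\exp\big(-(1-\beta)G_{\eta,\eta'}\big).
\end{equation*}
Inserting this into Region~I and absorbing the factor $\ue^{\beta\eta}\ue^{-\beta\eta'}\ue^{-\beta(\eta-\eta')}=1$ reduces matters to bounding
\begin{equation*}
\int_0^{\eta}\frac{1}{\sin(\phi')}\exp\big(-(1-\beta)G_{\eta,\eta'}\big)\ud\eta'=\frac{1}{1-\beta}\Big(1-\exp(-(1-\beta)G_{\eta,0})\Big)\leq \frac{1}{1-\beta}\leq 2
\end{equation*}
for $\beta\leq \tfrac{1}{2}$, via the same exact-derivative trick with the scaling factor $(1-\beta)$. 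Regions~II and III are handled in exactly the same manner after the splitting described above.

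The main obstacle I expect is the careful bookkeeping in Regions~II and III. The characteristic there is \emph{not} monotone in $\eta'$ after reflection, and $\sin(\phi')$ can become small near $\eta^+$ (the grazing turning point) so the pointwise integrand blows up. One must verify that combining $\exp(-G_{L,\eta'}-G_{L,\eta})$ (resp.\ with $\eta^+$) with the derivative identity still produces a telescoping bound, and that the weighted version survives the loss of monotonicity. The key check is that $\eta-\eta'$ and $\eta'-\eta$ along the forward and backward legs are both dominated by $G_{L,\cdot}$ or $G_{\eta^+,\cdot}$, so that the $\ue^{\beta(\eta-\eta')}$ factor is always controllable by an equal or larger contribution from the $G$-exponential. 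Once this bookkeeping is in place, the constants depend only on $\beta$ (through the $(1-\beta)^{-1}$ prefactor), yielding the claimed uniform bounds.
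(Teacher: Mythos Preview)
Your proposal is correct and follows essentially the same approach as the paper: the ``exact derivative'' identity you invoke is precisely the substitution $z=G_{\eta,\eta'}$ the paper uses, and your weighted splitting $\exp(-G)=\exp(-\beta G)\exp(-(1-\beta)G)$ together with $G_{\eta,\eta'}\geq\eta-\eta'$ is the same mechanism as the paper's bound $\beta(\eta-\eta')-G_{\eta,\eta'}\leq -\tfrac{1}{2}G_{\eta,\eta'}$ (yours is in fact slightly sharper). The bookkeeping you flag for Regions II and III is exactly what the paper carries out, with no additional ideas needed beyond what you have already identified.
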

\begin{proof}
We first prove \eqref{mt 75}. Based on a similar argument as in the proof of Lemma \ref{Milne lemma 1}, we know
\begin{align}
\frac{1}{\sin\Big(\phi'(\phi,\eta;\xi)\Big)}\geq 1.
\end{align}
When $\sin\phi>0$, using \eqref{mt 71'},
\begin{align}
\abs{\t[H]}\leq&\int_0^{\eta}\abs{H\Big(\eta',\phi'(\phi,\eta;\eta')\Big)}\frac{1}{\sin\Big(\phi'(\phi,\eta;\eta')\Big)}\exp\Big(-G_{\eta,\eta'}\Big)\ud{\eta'}\\
\leq&\lnnm{H}\int_0^{\eta}\frac{1}{\sin\Big(\phi'(\phi,\eta;\eta')\Big)}\exp\Big(-G_{\eta,\eta'}\Big)\ud{\eta'}\no.
\end{align}
Using substitution $z=G_{\eta,\eta'}$, we can directly estimate
\begin{align}
\int_0^{\eta}\frac{1}{\sin\Big(\phi'(\phi,\eta;\eta')\Big)}\exp\Big(-G_{\eta,\eta'}\Big)\ud{\eta'}\leq\int_0^{\infty}\ue^{-z}\ud{z}=1,
\end{align}
and thus \eqref{mt 75} follows. When $\sin\phi<0$ and
$E(\eta,\phi)\leq \ue^{-V(L)}$, using \eqref{mt 72'},
\begin{align}
\abs{\t[H]}\leq&\int_0^{L}\abs{H\Big(\eta',\phi'(\phi,\eta;\eta')\Big)}\frac{1}{\sin\Big(\phi'(\phi,\eta;\eta')\Big)}
\exp\Big(-G_{L,\eta'}\Big)\ud{\eta'}\\
&+\int_{\eta}^{\infty}\abs{H\Big(\eta',\phi'(\phi,\eta;\eta')\Big)}\frac{1}{\sin\Big(\phi'(\phi,\eta;\eta')\Big)}\exp\Big(G_{\eta,\eta'}\Big)\ud{\eta'}\no\\
\leq&\lnnm{H}\Bigg(\int_0^{L}\frac{1}{\sin\Big(\phi'(\phi,\eta;\eta')\Big)}
\exp\Big(-G_{L,\eta'}\Big)\ud{\eta'}+\int_{\eta}^{\infty}\frac{1}{\sin\Big(\phi'(\phi,\eta;\eta')\Big)}\exp\Big(G_{\eta,\eta'}\Big)\ud{\eta'}\Bigg)\nonumber.
\end{align}
The integral $\eta'\in[0,\eta]$ can be estimated as in $\sin\phi>0$ case, so we focus on $\eta'\in[\eta,L]$ integral.
Letting $z=G_{\eta,\eta'}$, we have
\begin{align}
\int_{\eta}^{\infty}\frac{1}{\sin\Big(\phi'(\phi,\eta;\eta')\Big)}\exp\Big(G_{\eta,\eta'}\Big)\ud{\eta'}\leq\int_{-\infty}^0\ue^{z}\ud{z}=1.
\end{align}
Hence, \eqref{mt 75} follows. When $\sin\phi<0$ and
$E(\eta,\phi)\geq \ue^{-V(L)}$, using \eqref{mt 73'}, we can use a similar argument to justify \eqref{mt 75}.\\
\ \\
Then we turn to \eqref{mt 76}. When $\sin\phi>0$, $\eta\geq\eta'$ and
$\beta<\dfrac{1}{2}$, since $G_{\eta,\eta'}\geq\eta-\eta'$, we have
\begin{align}
\beta(\eta-\eta')-G_{\eta,\eta'}\leq
\beta(\eta-\eta')-\half(\eta-\eta')-\half G_{\eta,\eta'}\leq -\half
G_{\eta,\eta'}.
\end{align}
Then using substitution $z=G_{\eta,\eta'}$, we know
\begin{align}
\int_0^{\eta}\frac{1}{\sin\Big(\phi'(\eta,\phi;\eta')\Big)}\exp\Big(\beta(\eta-\eta')-G_{\eta,\eta'}\Big)\ud{\eta'}
\leq&\int_0^{\eta}\frac{1}{\sin\Big(\phi'(\eta,\phi;\eta')\Big)}\exp\left(-\frac{G_{\eta,\eta'}}{2}\right)\ud{\eta'}\\
\leq&\int_0^{\infty}\ue^{-\frac{z}{2}}\ud{z}=2\no.
\end{align}
This leads to
\begin{align}
\abs{\ue^{\beta\eta}\t[H]}\leq&
\ue^{\beta\eta}\int_0^{\eta}\abs{H\Big(\eta',\phi'(\eta,\phi;\eta')\Big)}\frac{1}{\sin\Big(\phi'(\eta,\phi;\eta')\Big)}\exp\Big(-G_{\eta,\eta'}\Big)\ud{\eta'}\\
\leq&
\int_0^{\eta}\ue^{\beta(\eta-\eta')}\abs{\ue^{\beta\eta'}H\Big(\eta',\phi'(\eta,\phi;\eta')\Big)}
\frac{1}{\sin\Big(\phi'(\eta,\phi;\eta')\Big)}\exp\Big(-G_{\eta,\eta'}\Big)\ud{\eta'}\no\\
\leq&\lnnm{\ue^{\beta\eta}H}\int_0^{\eta}\frac{1}{\sin\Big(\phi'(\eta,\phi;\eta')\Big)}\exp\Big(\beta(\eta-\eta')-G_{\eta,\eta'}\Big)\ud{\eta'}\no\\
\leq&C\lnnm{\ue^{\beta\eta}H},\no
\end{align}
and \eqref{mt 76} follows. When $\sin\phi<0$ and
$E(\eta,\phi)\leq \ue^{-V(L)}$, note that $-G_{L,\eta'}-G_{L,\eta}\leq -G_{\eta,\eta'}$ and for $\eta'\geq\eta$
\begin{align}
\beta(\eta-\eta')+G_{\eta,\eta'}\leq
\beta(\eta-\eta')+\half(\eta-\eta')+\half G_{\eta,\eta'}\leq \half
G_{\eta,\eta'}.
\end{align}
Still the key is the integral for $\eta'\in[\eta,L]$.
Using substitution $z=G_{\eta,\eta'}$, we know
\begin{align}
\int_{\eta}^{\infty}\frac{1}{\sin\Big(\phi'(\eta,\phi;\eta')\Big)}\exp\Big(\beta(\eta-\eta')+G_{\eta,\eta'}\Big)\ud{\eta'}
\leq&\int_{\eta}^{\infty}\frac{1}{\sin\Big(\phi'(\eta,\phi;\eta')\Big)}\exp\left(\frac{G_{\eta,\eta'}}{2}\right)\ud{\eta'}\\
\leq&\int_{-\infty}^0\ue^{\frac{z}{2}}\ud{z}=2\no.
\end{align}
This yields
\begin{align}
&\ue^{\beta\eta}\int_{\eta}^{\infty}\abs{H\Big(\eta',\phi'(\eta,\phi;\eta')\Big)}\frac{1}{\sin\Big(\phi'(\eta,\phi;\eta')\Big)}\exp\Big(G_{\eta,\eta'}\Big)\ud{\eta'}\\
\leq&
\int_{\eta}^{\infty}\ue^{\beta(\eta-\eta')}\abs{\ue^{\beta\eta'}H\Big(\eta',\phi'(\eta,\phi;\eta')\Big)}
\frac{1}{\sin\Big(\phi'(\eta,\phi;\eta')\Big)}\exp\Big(G_{\eta,\eta'}\Big)\ud{\eta'}\no\\
\leq&\lnnm{\ue^{\beta\eta}H}\int_{\eta}^{\infty}\frac{1}{\sin\Big(\phi'(\eta,\phi;\eta')\Big)}\exp\Big(\beta(\eta-\eta')+G_{\eta,\eta'}\Big)\ud{\eta'}\no\\
\leq&C\lnnm{\ue^{\beta\eta}H}.\no
\end{align}
Then \eqref{mt 76} holds. The case $\sin\phi<0$ and $E(\eta,\phi)\geq
\ue^{-V(L)}$ can be shown in a similar fashion, so we
omit it here.
\end{proof}

\begin{lemma}\label{Milne lemma 3}
For any $\delta>0$ there is a constant $C(\delta)>0$ independent of
data such that
\begin{align}
\ltnm{\t[H]}\leq C(\delta)\tnnm{H}+\delta\lnnm{H}\label{mt 77}.
\end{align}
\end{lemma}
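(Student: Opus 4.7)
The approach is to split the angular integration region at each fixed $\eta$ into a near-grazing ``bad'' set $\mathcal{B}_{\delta'}=\{|\sin\phi|<\delta'\}$ and its complement $\mathcal{G}_{\delta'}$, where $\delta'$ will be chosen of order $\delta^2$. Because $V(\eta)\leq V(L)=O(\e^{1-n})$ is uniformly small, the entire Region III (characterized by $\cos\phi\geq e^{V(\eta)-V(L)}$) lies inside $\mathcal{B}_{\delta'}$ as soon as $\delta'\gtrsim\e^{(1-n)/2}$, and in Region II restricted to $\mathcal{G}_{\delta'}$ the reflected angle at $\eta'=L$ satisfies $\sin\phi'(L)\gtrsim\delta'$, so no internal grazing arises along the characteristic. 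On $\mathcal{B}_{\delta'}$, whose $\cos\phi\,d\phi\,d\psi$-measure is $O(\delta')$, I apply Lemma \ref{Milne lemma 2} pointwise to obtain $|\t[H]|\leq C\lnnm{H}$, contributing
\begin{align*}
\int_{\mathcal{B}_{\delta'}}|\t[H]|^{2}\cos\phi\,d\phi\,d\psi\leq C\delta'\lnnm{H}^{2}.
\end{align*}

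On $\mathcal{G}_{\delta'}$, I apply Cauchy-Schwarz in $\eta'$ to the representation \eqref{mt 71'} (and analogously to \eqref{mt 72'} in Region II), using that $\int_{0}^{\eta}\sin\phi'^{-1}e^{-G_{\eta,\eta'}}\,d\eta'\leq C$ (via the substitution $z=G_{\eta,\eta'}$, as in the proof of Lemma \ref{Milne lemma 2}), to get the pointwise bound
\begin{align*}
|\t[H](\eta,\phi,\psi)|^{2}\leq C\int_{0}^{\eta}\frac{|H(\eta',\phi'(\phi,\eta;\eta'),\psi)|^{2}\,e^{-G_{\eta,\eta'}}}{\sin\phi'(\phi,\eta;\eta')}\,d\eta'.
\end{align*}
Integrating this over $(\phi,\psi)\in\mathcal{G}_{\delta'}$, swapping the order of integration with $\eta'$, and for each fixed $(\eta',\psi)$ performing the energy-preserving change of variable $\phi\mapsto\phi'(\phi,\eta;\eta')$, one computes from $\cos\phi=e^{V(\eta)-V(\eta')}\cos\phi'$ that the Jacobian gives $\frac{\cos\phi\,d\phi}{\sin\phi'}=\frac{e^{2(V(\eta)-V(\eta'))}\cos\phi'}{\sin\phi}\,d\phi'$. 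Since $V$ is uniformly bounded on $[0,L]$ for $\e$ small (because $0<n<\frac{1}{2}$) and $|\sin\phi|\geq\delta'$ on $\mathcal{G}_{\delta'}$, this Jacobian is at most $\frac{C}{\delta'}\cos\phi'\,d\phi'$, so the $\mathcal{G}_{\delta'}$ contribution is bounded by $\frac{C}{\delta'}\tnnm{H}^{2}$.

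Adding the two contributions, taking the supremum over $\eta\in[0,L]$, and extracting a square root yields $\ltnm{\t[H]}\leq C(\delta')^{-1/2}\tnnm{H}+C(\delta')^{1/2}\lnnm{H}$; choosing $\delta'=(\delta/C)^{2}$ gives the stated estimate. The main obstacle will be Region II, where the characteristic has two segments meeting at $\eta=L$ via specular reflection: the Cauchy-Schwarz argument must be applied to both pieces of \eqref{mt 72'} and a change of variables must be carried out on each, taking care that $\sin\phi'$ stays bounded below (which is ensured by having isolated Region III inside $\mathcal{B}_{\delta'}$). A secondary but important check is that Region III lies entirely in $\mathcal{B}_{\delta'}$ for the chosen $\delta'$, which is what frees the $\mathcal{G}_{\delta'}$ analysis from having to deal with the turning-point singularity at $\eta^+$.
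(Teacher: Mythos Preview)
Your approach is correct and takes a genuinely different route from the paper's proof. The paper splits the \emph{inner} $\eta'$-integral according to whether $\sin\phi'(\eta,\phi;\eta')\geq m$ or $\leq m$; in Region~II this forces an auxiliary parameter $\sigma$ (splitting further on $\eta'-\eta\gtrless\sigma$) because $\sin\phi'\leq m$ does not by itself confine $\phi$ to a small set when $\eta'>\eta$. You instead split the \emph{outer} $\phi$-integral on $|\sin\phi|\gtrless\delta'$ and then, on the good set $\mathcal{G}_{\delta'}$, push a change of variables $\phi\mapsto\phi'$ through the Cauchy--Schwarz bound; the Jacobian identity $\frac{\cos\phi\,d\phi}{\sin\phi'}=\frac{e^{2(V(\eta)-V(\eta'))}\cos\phi'}{|\sin\phi|}\,d\phi'$ is exactly what makes this work, since the possibly vanishing $\sin\phi'$ cancels and only the controlled $|\sin\phi|\geq\delta'$ survives in the denominator. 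This is cleaner: no second small parameter, no case $\eta'-\eta\geq\sigma$ with exponential gain.

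Two remarks. First, your isolation of Region~III inside $\mathcal{B}_{\delta'}$ is correct but actually not needed: the same change-of-variables argument applies there verbatim, since the Jacobian depends on $|\sin\phi|$ and not on $\sin\phi'$ (which may vanish at $\eta^+$). Second, you should state explicitly that the choice $\delta'=(\delta/C)^2$ is made for $\e$ small enough that $\delta'\geq C\e^{(1-n)/2}$; this is harmless because the lemma is applied with $\delta$ fixed (e.g.\ $\delta=\tfrac12$ in Theorem~\ref{Milne LI Theorem}) and all constants must be uniform only for $\e$ below a threshold. The paper's proof, by contrast, chooses $m$ and $\sigma$ independently of $\e$, which is a mild advantage in presentation but not in substance.
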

\begin{proof}
In the following, we use $\chi_i$ to represent certain indicator functions. Also, we let $m>0$ and $\sigma>0$ be some constants that are determined later. Since $\psi$ is always a constant along the characteristics and will not play a role in the estimates, we will omit it from the integrals and only highlight the relation between $\eta$ and $\phi$.\\
\ \\
Region I: $\sin\phi>0$.\\
Based on \eqref{mt 71'}, we have
\begin{align}
\t[H](\eta,\phi)=&\int_0^{\eta}\frac{H\Big(\eta',\phi'(\eta,\phi;\eta')\Big)}{\sin\Big(\phi'(\eta,\phi;\eta')\Big)}\exp\Big(-G_{\eta,\eta'}\Big)\ud{\eta'}.
\end{align}
We consider
\begin{align}
I=&\int_{\sin\phi>0}\abs{\t
[H](\eta,\phi)}^2\ud{\phi}=\int_{\sin\phi>0}\bigg(\int_0^{\eta}\frac{H\Big(\eta',\phi'(\eta,\phi;\eta')\Big)}{\sin\Big(\phi'(\eta,\phi;\eta')\Big)}
\exp(-G_{\eta,\eta'})\ud{\eta'}\bigg)^2\ud{\phi}=I_1+I_2.
\end{align}
Region I - Case I: $\chi_1: \sin\Big(\phi'(\eta,\phi;\eta')\Big)\geq m$.\\
By Cauchy's inequality, we get
\begin{align}\label{mt 78}
I_1\leq&\int_{\sin\phi>0}\bigg(\int_0^{\eta}\abs{H\Big(\eta',\phi'(\eta,\phi;\eta')\Big)}^2\ud{\eta'}\bigg)
\bigg(\int_0^{\eta}\chi_1\frac{\exp\Big(-2G_{\eta,\eta'}\Big)}{\sin^2\Big(\phi'(\eta,\phi;\eta')\Big)}
\ud{\eta'}\bigg)\ud{\phi}\\
\leq&\frac{1}{m}\int_{\sin\phi>0}\bigg(\int_0^{\eta}\abs{H\Big(\eta',\phi'(\eta,\phi;\eta')\Big)}^2\ud{\eta'}\bigg)
\bigg(\int_0^{\eta}\chi_1\frac{\exp\Big(-2G_{\eta,\eta'}\Big)}{\sin\Big(\phi'(\eta,\phi;\eta')\Big)}
\ud{\eta'}\bigg)\ud{\phi}\no\\
\leq&\frac{1}{m}\tnnm{H}^2\Bigg(\int_{\sin\phi>0}
\bigg(\int_0^{\eta}\chi_1\frac{\exp\Big(-2G_{\eta,\eta'}\Big)}{\sin\Big(\phi'(\eta,\phi;\eta')\Big)}
\ud{\eta'}\bigg)^2\ud{\phi}\Bigg)^{\frac{1}{2}}\nonumber\\
\leq&\frac{C}{m}\tnnm{H}^2\nonumber,
\end{align}
due to the substitution $z=G_{\eta,\eta'}$ which yields
\begin{align}
\int_0^{\eta}
\frac{1}{\sin\Big(\phi'(\eta,\phi;\eta')\Big)}\exp\Big(-2G_{\eta,\eta'}\Big)\ud{\eta'}\leq\int_{0}^{\infty}\ue^{-2z}\ud{z}=\frac{1}{2}.
\end{align}
\ \\
Region I - Case II: $\chi_2: \sin\Big(\phi'(\eta,\phi;\eta')\Big)\leq m$.\\
For $\eta'\leq\eta$, we can directly estimate
$\phi'(\eta,\phi;\eta')\geq\phi$. Hence, we have the relation
\begin{align}
\sin\phi\leq\sin\Big(\phi'(\eta,\phi;\eta')\Big).
\end{align}
Therefore, we can directly estimate $I_2$ as follows:
\begin{align}\label{mt 79}
I_2\leq&\lnnm{H}^2\int_{\sin\phi>0}
\bigg(\int_0^{\eta}\chi_2
\frac{1}{\sin\Big(\phi'(\eta,\phi;\eta')\Big)}\exp\Big(-G_{\eta,\eta'}\Big)\ud{\eta'}\bigg)^2\ud{\phi}\\
\leq&\lnnm{H}^2\int_{\sin\phi>0}\chi_2\ud{\phi}\no\\
\leq&Cm\lnnm{H}^2,\no
\end{align}
due to the substitution $z=G_{\eta,\eta'}$ which yields
\begin{align}
\int_0^{\eta}
\frac{1}{\sin\Big(\phi'(\eta,\phi;\eta')\Big)}\exp\Big(-G_{\eta,\eta'}\Big)\ud{\eta'}\leq\int_{0}^{\infty}\ue^{-z}\ud{z}=1.
\end{align}
Summing up \eqref{mt 78} and \eqref{mt 79}, for $m$
sufficiently small, we deduce \eqref{mt 77}.\\
\ \\
Region II: $\sin\phi<0$ and $E(\eta,\phi)\leq \ue^{-V(L)}$.\\
Based on \eqref{mt 72'}, we have
\begin{align}
\t[H](\eta,\phi)=&\int_0^{L}\frac{H\Big(\eta',\phi'(\eta,\phi;\eta')\Big)}{\sin\Big(\phi'(\eta,\phi;\eta')\Big)}
\exp\Big(-G_{L,\eta'}-G_{L,\eta}\Big)\ud{\eta'}
+\int_{\eta}^{L}\frac{H\Big(\eta',\rr[\phi'](\eta,\phi;\eta')\Big)}{\sin\Big(\phi'(\eta,\phi;\eta')\Big)}\exp\Big(G_{\eta,\eta'}\Big)\ud{\eta'}.
\end{align}
Since the integral $\eta'\in[0,\eta]$ can be estimated as in Region I, and for $\eta'\in[\eta,L]$, $-G_{L,\eta'}-G_{L,\eta}\leq -G_{\eta,\eta'}$, it suffices to estimate
\begin{align}
II=&\int_{\sin\phi<0}{\bf{1}}_{\{E(\eta,\phi)\leq
\ue^{-V(L)}\}}\bigg(\int_{\eta}^{L}
\frac{H\Big(\eta',\rr[\phi'](\eta,\phi;\eta')\Big)}{\sin\Big(\phi'(\eta,\phi;\eta')\Big)}
\exp\Big(G_{\eta,\eta'}\Big)\ud{\eta'}\bigg)^2\ud{\phi}
=II_1+II_2+II_3.
\end{align}
\ \\
Region II - Case I: $\chi_1: \sin\Big(\phi'(\eta,\phi;\eta')\Big)>m$.\\
Using Cauchy's inequality, we have
\begin{align}\label{mt 80}
II_1\leq&\int_{\sin\phi<0}\bigg(\int_{\eta}^{L}\abs{H\Big(\eta',\rr[\phi'](\eta,\phi;\eta')\Big)}^2\ud{\eta'}\bigg)
\bigg(\int_{\eta}^{L}\chi_1\frac{\exp\Big(2G_{\eta,\eta'}\Big)}{\sin^2\Big(\phi'(\eta,\phi;\eta')\Big)}
\ud{\eta'}\bigg)\ud{\phi}\\
\leq&\frac{1}{m}\int_{\sin\phi<0}\bigg(\int_{\eta}^{L}\abs{H\Big(\eta',\rr[\phi'](\eta,\phi;\eta')\Big)}^2\ud{\eta'}\bigg)
\bigg(\int_{\eta}^{L}\chi_1\frac{\exp\Big(2G_{\eta,\eta'}\Big)}{\sin\Big(\phi'(\eta,\phi;\eta')\Big)}
\ud{\eta'}\bigg)\ud{\phi}\no\\
\leq&\frac{1}{m}\tnnm{H}^2\int_{\sin\phi<0}
\bigg(\int_{\eta}^{L}\chi_1\frac{\exp\Big(2G_{\eta,\eta'}\Big)}{\sin\Big(\phi'(\eta,\phi;\eta')\Big)}
\ud{\eta'}\bigg)\ud{\phi}\no\\
\leq&\frac{C}{m}\tnnm{H}^2\no,
\end{align}
due to the substitution $z=G_{\eta,\eta'}$ which yields
\begin{align}
\int_{\eta}^L
\frac{1}{\sin\Big(\phi'(\eta,\phi;\eta')\Big)}\exp\Big(2G_{\eta,\eta'}\Big)\ud{\eta'}\leq\int^{0}_{-\infty}\ue^{2z}\ud{z}=\frac{1}{2}.
\end{align}
\ \\
Region II - Case II: $\chi_2: \sin\Big(\phi'(\eta,\phi;\eta')\Big)>m,\ \eta'-\eta\geq\sigma$.\\
We have
\begin{align}
II_2\leq&\lnnm{H}^2\int_{\sin\phi<0}
\bigg(\int_{\eta}^{L}\chi_2
\frac{\exp\Big(G_{\eta,\eta'}\Big)}{\sin\Big(\phi'(\eta,\phi;\eta')\Big)}\ud{\eta'}\bigg)^2\ud{\phi}.
\end{align}
Note that
\begin{align}
G_{\eta,\eta'}=\int_{\eta'}^{\eta}\frac{1}{\sin\Big(\phi'(\eta,\phi;y)\Big)}\ud{y}\leq-\frac{\eta'-\eta}{m}=-\frac{\sigma}{m}.
\end{align}
Then we can obtain
\begin{align}\label{mt 81}
II_2\leq&\lnnm{H}^2\int_{\sin\phi<0}
\bigg(\int^{-\frac{\sigma}{m}}_{-\infty}\ue^z\ud{z}\bigg)^2\ud{\phi}\leq C
\ue^{-\frac{2\sigma}{m}}\lnnm{H}^2.
\end{align}
\ \\
Region II - Case III: $\chi_3: \sin\Big(\phi'(\eta,\phi;\eta')\Big)>m,\ \eta'-\eta\leq\sigma$\\
For $II_3$, we can estimate as follows:
\begin{align}
I_3\leq&\lnnm{H}^2\int_{\sin\phi<0}
\bigg(\int_{\eta}^{L}\chi_3
\frac{\exp\Big(G_{\eta,\eta'}\Big)}{\sin\Big(\phi'(\eta,\phi;\eta')\Big)}\ud{\eta'}\bigg)^2\ud{\phi}\\
\leq&\lnnm{H}^2\int_{\sin\phi<0}\chi_3
\bigg(\int_{\eta}^{\eta+\sigma}
\frac{\exp\Big(G_{\eta,\eta'}\Big)}{\sin\Big(\phi'(\eta,\phi;\eta')\Big)}\ud{\eta'}\bigg)^2\ud{\phi}\no.
\end{align}
Note that due to the substitution $z=G_{\eta,\eta'}$, we have
\begin{align}
\int_{\eta}^{L}
\frac{1}{\sin\Big(\phi'(\eta,\phi;\eta')\Big)}\exp\Big(G_{\eta,\eta'}\Big)\ud{\eta'}\leq\int_{-\infty}^{0}\ue^{z}\ud{z}=1.
\end{align}
Then letting $\alpha=\ue^{V(\eta')-V(\eta)}$, we know
\begin{align}
1\leq\alpha:=\ue^{V(\eta')-V(\eta)}\leq
\ue^{V(\eta+\sigma)-V(\eta)}\leq 1+4\sigma.
\end{align}
Also, for $\eta'\in[\eta,\eta+\sigma]$,
\begin{align}
\sin\Big(\phi'(\eta,\phi;\eta')\Big)=\sin\bigg(\cos^{-1}(\alpha\cos\phi)\bigg).
\end{align}
Hence, $\sin\Big(\phi'(\eta,\phi;\eta')\Big)<m$ leads to
\begin{align}
\abs{\sin\phi}=&\sqrt{1-\cos^2\phi}=\sqrt{1-\frac{\cos^2\Big(\phi'(\eta,\phi;\eta')\Big)}{\alpha^2}}
=\frac{\sqrt{\alpha^2-\bigg(1-\sin^2\Big(\phi'(\eta,\phi;\eta')\Big)\bigg)}}{\alpha}\\
\leq&\frac{\sqrt{\alpha^2-1+m^2}}{\alpha}\leq\frac{\sqrt{(1+4\sigma)^2-1+m^2}}{\alpha}\leq
\sqrt{9\sigma+m^2}.\no
\end{align}
Hence, we can obtain
\begin{align}\label{mt 82}
I_3\leq&\lnnm{H}^2\int_{\sin\phi<0}\chi_3\ud{\phi}\leq\lnnm{H}^2
\int_{\sin\phi<0}{\bf{1}}_{\{\abs{\sin\phi}\leq \sqrt{9\sigma+m^2}\}}\ud{\phi}\leq C\sqrt{\sigma+m^2}.
\end{align}
Summarizing \eqref{mt 80}, \eqref{mt 81} and
\eqref{mt 82}, for sufficiently small $\sigma$,
we can always choose $m<<\sigma$ small enough to guarantee the relation \eqref{mt 77}.\\
\ \\
Region III: $\sin\phi<0$ and $E(\eta,\phi)\geq \ue^{-V(L)}$.\\
We have
\begin{align}
\t[H](\eta,\phi)=&\int_0^{\eta^+}\frac{H\Big(\eta',\phi'(\eta,\phi;\eta')\Big)}{\sin\Big(\phi'(\eta,\phi;\eta')\Big)}
\exp\Big(-G_{\eta^+,\eta'}-G_{\eta^+,\eta}\Big)\ud{\eta'}
+\int_{\eta}^{\eta^+}\frac{H\Big(\eta',\rr[\phi'(\eta,\phi;\eta')]\Big)}{\sin\Big(\phi'(\eta,\phi;\eta')\Big)}\exp\Big(G_{\eta,\eta'}\Big)\ud{\eta'}.
\end{align}
We can decompose $\t[H]$.
For the integral on $[0,\eta]$, we can apply a similar argument as in Region 1 and for
the integral on $[\eta,\eta^+]$, a similar argument as in Region 2 completes the proof.
\end{proof}

\subsubsection{$L^2-L^{\infty}$ Framework}

Consider the equation satisfied by $\v=f-f_L$ as follows:
\begin{align}\label{difference equation}
\left\{
\begin{array}{l}\displaystyle
\sin\phi\frac{\p \v}{\p\eta}+F(\eta,\psi)\cos\phi\frac{\p
\v}{\p\phi}+\v=\bar \v+S,\\\rule{0ex}{2em}
\v(0,\phi,\psi)=p(\phi,\psi):=h(\phi,\psi)-f_L\ \ \text{for}\ \ \sin\phi>0,\\\rule{0ex}{2em}
\v(L,\phi,\psi)=\v(L,\rr[\phi],\psi).
\end{array}
\right.
\end{align}
\begin{theorem}\label{Milne LI Theorem}
The unique solution $f(\eta,\phi,\psi)$ to the equation
\eqref{Milne problem} satisfies
\begin{align}\label{mt 87}
\lnnm{f-f_L}\leq C\bigg(1+\tnnm{f-f_L}\bigg).
\end{align}
\end{theorem}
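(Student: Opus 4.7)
The strategy is the $L^2$–$L^\infty$ framework. Set $\v = f - f_L$, so that $\v$ solves the transport problem \eqref{difference equation}, which matches the form \eqref{Milne transport} with source $H = \bar\v + S$ and in-flow data $p = h - f_L$. My first step is to write the mild representation along characteristics,
\begin{equation*}
\v = \k[p] + \t[S] + \t[\bar\v],
\end{equation*}
and dispose of the easy pieces. Theorem \ref{Milne LT Theorem} supplies $|f_L| \le C$, so $\lnmp{p} \le \lnmp{h} + |f_L| \le C$, and Lemma \ref{Milne lemma 1} gives $\lnnm{\k[p]} \le C$; Lemma \ref{Milne lemma 2} together with the decay hypothesis \eqref{Milne decay} gives $\lnnm{\t[S]} \le C$. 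All of the difficulty is therefore concentrated in the single term $\t[\bar\v]$.

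To bound $\t[\bar\v]$ pointwise in $(\eta,\phi,\psi)$, I will imitate the grazing/non-grazing splitting used in the proof of Lemma \ref{Milne lemma 3}. Since $\bar\v$ carries no velocity dependence, I expand $\bar\v(\eta') = \frac{1}{4\pi}\int \v(\eta',\phi_*,\psi_*)\cos\phi_*\, d\phi_*\, d\psi_*$ inside the operator $\t$, turning $\t[\bar\v]$ into a double integral over $(\eta',\phi_*,\psi_*)$. Fixing a small parameter $m>0$, I then decompose the integration according to whether $|\sin\phi'(\phi,\eta;\eta')| < m$ (grazing) or $\ge m$ (non-grazing). On the grazing set, the argument of Region I Case II and Region II Case III of the Lemma \ref{Milne lemma 3} proof shows that the relevant measure is of order $\sqrt{m}$, so this contribution is bounded by $o_m(1)\lnnm{\v}$. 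On the non-grazing set, $1/\sin\phi' \le 1/m$ is controlled, and the change of variables $(\eta',\phi_*,\psi_*) \mapsto \vec y$ analogous to \eqref{li 11} in Chapter 2 exhibits a Jacobian bounded below, so a Cauchy–Schwarz together with the $\exp(-G_{\eta,\eta'})$ factor produces a bound of the form $C(m)\tnnm{\v}$.

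Combining the three contributions yields
\begin{equation*}
\lnnm{\v} \le C + C(m)\tnnm{\v} + o_m(1)\lnnm{\v},
\end{equation*}
and choosing $m$ small enough to absorb the last term into the left-hand side gives $\lnnm{f-f_L} \le C(1+\tnnm{f-f_L})$, as claimed.

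The main obstacle will be carrying out the grazing/non-grazing split uniformly across the three characteristic regions I, II, III: in Regions II and III one must account for reflection at $\eta = L$ or turning at $\eta^+$, so the kernels pick up composite exponential factors such as $\exp(-G_{L,\eta'}-G_{L,\eta})$ and $\exp(-G_{\eta^+,\eta'}-G_{\eta^+,\eta})$, and a faithful bookkeeping of the grazing measure requires the same sub-case partition as in Lemma \ref{Milne lemma 3}. A secondary subtlety is that a direct application of Lemma \ref{Milne lemma 3} with $H = \bar\v$ only produces the mixed norm $\ltnm{\t[\bar\v]}$, not a pointwise $L^\infty$ bound; hence the splitting must be redone with the outer point $(\eta,\phi,\psi)$ held fixed, exploiting the hidden $(\phi_*,\psi_*)$-integration inside $\bar\v$ to recover control by $\tnnm{\v}$ directly rather than by $\tnnm{\bar\v}$ alone.
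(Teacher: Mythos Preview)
Your grazing estimate has a gap. You propose to split on $|\sin\phi'(\phi,\eta;\eta')|<m$ and claim the grazing contribution is $o_m(1)\lnnm{\v}$ by the mechanism of Lemma \ref{Milne lemma 3}. But the smallness in that lemma (Region I Case II, Region II Case III) comes from integrating the indicator $\{\sin\phi\le m\}$ over the \emph{outer} velocity $\phi$; that integration is precisely what produces the mixed norm $\ltnm{\cdot}$, and you have deliberately forfeited it by fixing $(\eta,\phi,\psi)$. The inner $(\phi_*,\psi_*)$-integration coming from expanding $\bar\v$ cannot substitute for it, because the grazing condition depends only on $(\phi,\eta,\eta')$ and is independent of $(\phi_*,\psi_*)$: the $(\phi_*,\psi_*)$-integral simply returns the constant $4\pi$. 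At a fixed grazing point (say $0<\sin\phi<m$ in Region I), the substitution $z=G_{\eta,\eta'}$ yields only $\int_0^\eta{\bf 1}_{\{\sin\phi'<m\}}\frac{\exp(-G_{\eta,\eta'})}{\sin\phi'}\,\ud\eta'\le 1$, with no smallness in $m$. The proposed change of variables analogous to \eqref{li 11} does not apply either: that was a three-dimensional spatial substitution exploiting the dummy velocity $\vw_t$, whereas here $(\eta',\phi_*,\psi_*)$ has no geometric target to map to.

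The paper closes this gap by a two-step bootstrap through the mixed norm. First, one records the elementary inequalities $\tnnm{\bar\v}\le\tnnm{\v}$ and $\lnnm{\bar\v}\le\ltnm{\v}$, the latter holding because $\bar\v$ is velocity-independent. Applying Lemma \ref{Milne lemma 3} exactly as stated to $H=\bar\v$ then gives
\[
\ltnm{\v}\le \lnmp{p}+\lnnm{S}+C(\delta)\tnnm{\v}+\delta\ltnm{\v},
\]
and absorbing the last term yields $\ltnm{\v}\le C(1+\tnnm{\v})$. Second, Lemma \ref{Milne lemma 2} gives $\lnnm{\t[\bar\v]}\le C\lnnm{\bar\v}\le C\ltnm{\v}$, so $\lnnm{\v}\le C(1+\ltnm{\v})\le C(1+\tnnm{\v})$. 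The point you are missing is that the velocity-independence of $\bar\v$ converts control of $\ltnm{\v}$ directly into control of $\lnnm{\bar\v}$; no pointwise grazing analysis is needed at all.
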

\begin{proof}
We first show the following important facts:
\begin{align}
\tnnm{\bar \v}\leq&\tnnm{\v}\label{mt 83},\\
\lnnm{\bar \v}\leq&\ltnm{\v}\label{mt 84}.
\end{align}
We can directly derive them by Cauchy's inequality as follows:
\begin{align}
\tnnm{\bar
\v}^2=&\int_0^{\infty}\int_{-\pi}^{\pi}\int_{-\frac{\pi}{2}}^{\frac{\pi}{2}}\bigg(\frac{1}{4\pi}\bigg)^2
\bigg(\int_{-\pi}^{\pi}\int_{-\frac{\pi}{2}}^{\frac{\pi}{2}}\v(\eta,\phi,\psi)\cos\phi\ud{\phi}\ud\psi\bigg)^2\ud{\phi}\ud\psi\ud{\eta}\\
\leq&\int_0^{\infty}\int_{-\pi}^{\pi}\int_{-\frac{\pi}{2}}^{\frac{\pi}{2}}\bigg(\frac{1}{4\pi}\bigg)
\bigg(\int_{-\pi}^{\pi}\int_{-\frac{\pi}{2}}^{\frac{\pi}{2}}\v^2(\eta,\phi,\psi)\cos\phi\ud{\phi}\ud\psi\bigg)\ud{\phi}\ud\psi\ud{\eta}\no\\
=&\int_0^{\infty}\bigg(\int_{-\pi}^{\pi}\int_{-\frac{\pi}{2}}^{\frac{\pi}{2}}\v^2(\eta,\phi,\psi)\cos\phi\ud{\phi}\ud\psi\bigg)\ud{\eta}=\tnnm{\v}^2\no.\\
\lnnm{\bar \v}^2=&\text{esssup}_{\eta}\bar
\v^2(\eta)=\text{esssup}_{\eta}\bigg(\frac{1}{4\pi}\int_{-\pi}^{\pi}\int_{-\frac{\pi}{2}}^{\frac{\pi}{2}}\v(\eta,\phi,\psi)\cos\phi\ud{\phi}\ud\psi\bigg)^2\\
\leq&\text{esssup}_{\eta}\bigg(\frac{1}{4\pi}\bigg)^2\bigg(\int_{-\pi}^{\pi}\int_{-\frac{\pi}{2}}^{\frac{\pi}{2}}\v^2(\eta,\phi,\psi)\cos\phi\ud{\phi}\ud\psi\bigg)
\bigg(\int_{-\pi}^{\pi}\int_{-\frac{\pi}{2}}^{\frac{\pi}{2}}\cos\phi\ud{\phi}\ud\psi\bigg)\no\\
=&\text{esssup}_{\eta}\bigg(\int_{-\pi}^{\pi}\int_{-\frac{\pi}{2}}^{\frac{\pi}{2}}\v^2(\eta,\phi,\psi)\cos\phi\ud{\phi}\ud\psi\bigg)=\ltnm{\v}^2\no.
\end{align}
In \eqref{difference equation}, based on Lemma \ref{Milne lemma 1}, Lemma \ref{Milne lemma 2}, Lemma \ref{Milne lemma 3}, \eqref{mt 83} and
\eqref{mt 84}, $\v=\k[p]+\t[\bar \v]+\t[S]$ leads to
\begin{align}\label{mt 85}
\ltnm{\v}\leq&\ltnm{\k[p]}+\ltnm{\t[S]}+\ltnm{\t[\bar\v]}\\
\leq&\ltnm{\k[p]}+\ltnm{\t[S]}+C(\delta)\tnnm{\bar \v}+\delta\lnnm{\bar \v}\no\\
\leq&\lnmp{p}+\lnnm{S}+C(\delta)\tnnm{\v}+\delta\ltnm{\v}\no.
\end{align}
We can take $\delta=\dfrac{1}{2}$ to absorb $\delta\ltnm{\v}$ into the left-hand side and obtain
\begin{align}\label{mt 86}
\ltnm{\v}\leq C\bigg(\tnnm{\v}+\lnmp{p}+\lnnm{S}\bigg).
\end{align}
Therefore, in \eqref{difference equation}, based on Lemma \ref{Milne lemma 1}, Lemma \ref{Milne lemma 2} and \eqref{mt 86}, we can achieve
\begin{align}
\\
\lnnm{\v}\leq&\lnnm{\k[p]}+\lnnm{\t[S]}+\lnnm{\t[\bar \v]}
\leq
C\bigg(\lnmp{p}+\lnnm{S}+\lnnm{\bar \v}\bigg)\no\\
\leq&C\bigg(\lnmp{p}+\lnnm{S}+\ltnm{\v}\bigg)
\leq
C\bigg(\lnmp{p}+\lnnm{S}+\tnnm{\v}\bigg)\no.
\end{align}
Then \eqref{mt 87} follows.
\end{proof}

Combining Theorem \ref{Milne LT Theorem} and Theorem \ref{Milne LI Theorem}, we deduce the main theorem.
\begin{theorem}\label{Milne theorem 1}
Assume $S$ and $h$ satisfy \eqref{Milne bounded} and \eqref{Milne decay}. Then there exists a solution $f(\eta,\phi,\psi)$ to the equation
\eqref{Milne problem}, satisfying that
\begin{align}
\abs{f_L}\leq C,\qquad \tnnm{f-f_L}+\lnnm{f-f_L}\leq C,
\end{align}
where
\begin{align}
f_L=\dfrac{\br{\sin^2\phi,f}_{\phi,\psi}(L)}{\tnm{\sin\phi}^2}.
\end{align}
The solution is unique among functions such that above estimates hold.
\end{theorem}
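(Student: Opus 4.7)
The plan is to obtain Theorem \ref{Milne theorem 1} as a direct corollary of the two preceding results, namely Theorem \ref{Milne LT Theorem} (which provides existence together with the $L^2$-type estimates) and Theorem \ref{Milne LI Theorem} (which upgrades the control to $L^{\infty}$). No new characteristic-tracking or energy arguments are required at this stage; all the heavy lifting has been done in the proofs of the constituent theorems.

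First, I invoke Theorem \ref{Milne LT Theorem}. Under the standing hypotheses \eqref{Milne bounded}--\eqref{Milne decay} on the source $S$ and boundary data $h$, it produces a solution $f$ of \eqref{Milne problem} for which
\begin{equation*}
  \abs{f_L}\leq C,\qquad \tnnm{f-f_L}\leq C,
\end{equation*}
where $f_L=\br{\sin^2\phi,f}_{\phi,\psi}(L)/\tnm{\sin\phi}^2$ is the hydrodynamic limit at $\eta=L$. This step also supplies uniqueness within the class of solutions obeying these $L^2$-type bounds.

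Next, I feed this solution into Theorem \ref{Milne LI Theorem}, which asserts
\begin{equation*}
  \lnnm{f-f_L}\leq C\bigl(1+\tnnm{f-f_L}\bigr).
\end{equation*}
Plugging in the $L^2$ bound from the previous step yields $\lnnm{f-f_L}\leq C$, which together with the already-obtained $\abs{f_L}\leq C$ and $\tnnm{f-f_L}\leq C$ gives exactly the conclusion
\begin{equation*}
  \abs{f_L}\leq C,\qquad \tnnm{f-f_L}+\lnnm{f-f_L}\leq C.
\end{equation*}

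For uniqueness in the stated class, suppose $f_1,f_2$ are two solutions satisfying the combined $L^2+L^{\infty}$ bounds of the theorem. Since any such pair in particular satisfies the $L^2$ assumptions of Theorem \ref{Milne LT Theorem}, the uniqueness clause of that theorem forces $f_1=f_2$. There is no genuine obstacle here: the whole argument is a two-line synthesis, and the only thing to verify is that the hypotheses of Theorem \ref{Milne LI Theorem} are automatically met for the solution produced by Theorem \ref{Milne LT Theorem}, which is immediate from the construction via $\v=f-f_L$ together with the bounds \eqref{Milne bounded}--\eqref{Milne decay} transferred from $h$ and $S$ to the data $p=h-f_L$ and the same source $S$ in the $\v$-equation \eqref{difference equation}.
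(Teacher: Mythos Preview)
Your proposal is correct and matches the paper's approach exactly: the paper itself merely states ``Combining Theorem \ref{Milne LT Theorem} and Theorem \ref{Milne LI Theorem}, we deduce the main theorem,'' and your write-up spells out precisely that synthesis, including the observation that uniqueness in the $L^2+L^\infty$ class follows from uniqueness in the larger $L^2$ class of Theorem \ref{Milne LT Theorem}.
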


\subsection{Exponential Decay}

In this section, we prove the spatial decay of the solution to the
$\e$-Milne problem with geometric correction.
\begin{theorem}\label{Milne theorem 2}
Assume $S$ and $h$ satisfy \eqref{Milne bounded} and \eqref{Milne decay}. Then there exists $K_0>0$ such that the solution $f(\eta,\phi,\psi)$ to the
equation \eqref{Milne problem} satisfies
\begin{align}
\tnnm{\ue^{K_0\eta}(f-f_L)}+\lnnm{\ue^{K_0\eta}(f-f_L)}\leq C.
\end{align}
\end{theorem}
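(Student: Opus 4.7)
The plan is to reduce to a modified $\e$-Milne problem with a small perturbative source and then reuse the machinery of Theorem \ref{Milne theorem 1} essentially verbatim. Set $\v = f - f_L$ and define the weighted unknown $g(\eta,\phi,\psi) = \ue^{K_0\eta}\v(\eta,\phi,\psi)$ for a constant $K_0>0$ to be chosen small. A direct computation using $\ue^{K_0\eta}\partial_\eta \v = \partial_\eta g - K_0 g$ shows that $g$ satisfies
\begin{align*}
\sin\phi\, \partial_\eta g + F(\eta,\psi)\cos\phi\, \partial_\phi g + g - \bar g = K_0 \sin\phi\, g + \ue^{K_0\eta} S,
\end{align*}
with in-flow data $g(0,\phi,\psi) = h(\phi,\psi) - f_L$ for $\sin\phi>0$ and the specular condition $g(L,\phi,\psi) = g(L,\rr[\phi],\psi)$. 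Crucially, the same algebra that produces $f_L$ yields $g_L = \ue^{K_0 L}\br{\sin^2\phi,\v}_{\phi,\psi}(L)/\tnm{\sin\phi}^2 = 0$, because $\br{\sin^2\phi, f-f_L}_{\phi,\psi}(L) = 0$ by the definition of $f_L$. Thus $g$ solves a Milne-type problem with effective source $\tilde S = K_0\sin\phi\, g + \ue^{K_0\eta}S$ and $g_L = 0$, provided we can handle the self-referential source.

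The $L^2$ step is to run the proof of Lemma \ref{Milne finite LT.} for the equation of $g$. The hydrodynamic/microscopic decomposition $g = q_g + r_g$ and the energy identity (multiplication by $g\cos\phi$, integration in $\phi,\psi$, and integration over $[0,L]$ combined with $\alpha_g(L) = 0$) produce
\begin{align*}
\tnnm{r_g}^2 \le C\bigl(\tnmp{p}^2 + \tnnm{\ue^{K_0\eta}S}^2 + \e\tnnm{g}^2 + K_0\tnnm{g}\tnnm{r_g}\bigr),
\end{align*}
where the $\e\tnnm{g}^2$ piece arises as in \eqref{mt 18} from the geometric-correction factor $G(\eta)$, and the $K_0$ term comes from the extra source $K_0\sin\phi\, g$ tested against $r_g$ (using $\bar{(\sin\phi\, g)}$ vanishes only against $r_g$, while against $\bar g$ we reuse the quasi-orthogonality argument of Step 2 in Lemma \ref{Milne finite LT}). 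The quasi-orthogonality relation \eqref{mt 21} and the identity for $\beta_g'$ are repeated mutatis mutandis, producing
\begin{align*}
\tnnm{q_g - g_L}^2 \le C\bigl(1 + \e\tnnm{g}^2 + K_0^2\tnnm{g}^2\bigr).
\end{align*}
Combining these with $g_L = 0$ and choosing first $\e$ small and then $K_0$ small (so that $(C\e + CK_0^2) < 1/2$) lets us absorb $\tnnm{g}$ on the left and conclude $\tnnm{g} \leq C$.

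The $L^\infty$ step is then the $L^2$-$L^\infty$ framework of Theorem \ref{Milne LI Theorem}, applied to the mild formulation $g = \k[h-f_L] + \t[\bar g + \tilde S]$. By Lemma \ref{Milne lemma 1} the contribution $\k[h-f_L]$ is uniformly bounded; by Lemma \ref{Milne lemma 2} the piece $\t[K_0\sin\phi\, g]$ is controlled by $CK_0 \lnnm{g}$, and $\t[\ue^{K_0\eta}S]$ is controlled by $C\lnnm{\ue^{K_0\eta}S} \leq C$. Lemma \ref{Milne lemma 3} bounds $\ltnm{\t[\bar g]}$ by $C(\delta)\tnnm{g} + \delta\lnnm{\bar g}$, which after absorbing $\delta\ltnm{g}$ yields $\ltnm{g} \leq C(1 + \tnnm{g})$. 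Re-inserting this into the mild formulation and absorbing $CK_0\lnnm{g}$ for $K_0$ sufficiently small gives $\lnnm{g} \leq C(1+\tnnm{g}) \leq C$, which is exactly the stated bound.

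The main obstacle is the self-referential source $K_0\sin\phi\, g$: it appears on the right-hand side of both the $L^2$ estimate and the $L^\infty$ mild formulation and must be absorbed in each step. This forces $K_0$ to be chosen after all the implicit constants in Lemmas \ref{Milne lemma 1}--\ref{Milne lemma 3} and in the $L^2$ bootstrap have been tracked, and it must be compatible with the smallness of $\e$ already used in Lemma \ref{Milne finite LT} to absorb the geometric-correction contribution $\e\tnnm{g}^2$. A secondary technical point is verifying that $g_L = 0$ is preserved under the weighting, which makes the exponential weight consistent with the specular condition at $\eta = L$; without this the analogue of $\alpha_g(L) = 0$ would fail and the whole energy scheme would produce an uncontrollable boundary contribution.
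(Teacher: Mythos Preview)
Your proposal is correct and takes essentially the same approach as the paper: define the weighted unknown (the paper calls it $Z$ rather than $g$), derive its equation with the extra perturbative source $K_0\sin\phi\,Z$, and absorb this term in both the $L^2$ bootstrap and the $L^\infty$ mild formulation for $K_0$ sufficiently small. The only cosmetic difference is that for the $L^2$ step the paper multiplies the $\v$-equation directly by $\ue^{2K_0\eta}\v\cos\phi$ and uses the unweighted quasi-orthogonality \eqref{mt 21} for $\v$, which keeps the $q$-estimate free of self-referential $K_0$ terms; your route through the $g$-equation is equivalent once you observe that the $K_0\beta_g$ contribution in the $\beta_g'$-ODE has the favorable sign when integrated backward from $\eta=L$.
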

\begin{proof}
We divide the proof into several steps:\\
\ \\
Step 1: $L^2$ Estimates.\\
We still use $\v=f-f_L$ as defined in \eqref{difference equation}. Assume $\bar S=0$. We consider the decomposition $\v=r_{\v}+q_{\v}$ as before. We continue using the notation $F=\tf+G$. Now we naturally have $(q_{\v})_{L}=0$.
%

The extra term is $K_0\v\sin\phi$. The quasi-orthogonal property \eqref{mt 21}
reveals
\begin{align}\label{mt 88}
\br{\v,\v\sin\phi}_{\phi,\psi}(\eta)=&\br{r_{\v},r_{\v}\sin\phi}_{\phi,\psi}(\eta)+2\br{r_{\v},q_{\v}\sin\phi}_{\phi,\psi}(\eta)+\br{q_{\v},q_{\v}\sin\phi}_{\phi,\psi}(\eta)\\
=&\br{r_{\v},r_{\v}\sin\phi}_{\phi,\psi}(\eta)-4q_{\v}(\eta)\int_{\eta}^L\ue^{2\tv(\eta)-2\tv(y)}G(y)\br{\sin\phi\cos^2\psi,r_{\v}}_{\phi,\psi}(y)\ud{y}.\no
\end{align}
Multiplying $\ue^{2K_0\eta}\v\cos\phi$ on both sides of equation \eqref{difference equation} and integrating over $(\phi,\psi)\in\left[-\dfrac{\pi}{2},\dfrac{\pi}{2}\right]\times[-\pi,\pi]$, considering \eqref{mt 88}, we obtain
\begin{align}\label{mt 91}
&\half\frac{\ud{}}{\ud{\eta}}\bigg(\ue^{2K_0\eta}\br{\v,\v\sin\phi}_{\phi}(\eta)\bigg)
+F(\eta)\bigg(\ue^{2K_0\eta}\br{\v,\v\sin\phi}_{\phi}(\eta)\bigg)\\
=&\ue^{2K_0\eta}K_0\br{\v,\v\sin\phi}_{\phi}(\eta)-\br{r_{\v},
r_{\v}}_{\phi}(\eta)
-G(\eta)\bigg(\ue^{2K_0\eta}\br{\v\cos^2\psi,\v\sin\phi}_{\phi}(\eta)\bigg)+\ue^{2K_0\eta}\br{S,r_{\v}}_{\phi}(\eta)\nonumber\\
=&\ue^{2K_0\eta}\bigg(K_0\br{r_{\v},r_{\v}\sin\phi}_{\phi}(\eta)-\br{r_{\v},
r_{\v}}_{\phi}(\eta)\bigg)\no\\
&+4\ue^{2K_0\eta}K_0q_{\v}(\eta)\int_{\eta}^L\ue^{2\tv(\eta)-\tv(y)}G(y)\br{\sin\phi\cos^2\psi,r_{\v}}_{\phi,\psi}(y)\ud{y}\no\\
&-G(\eta)\bigg(\ue^{2K_0\eta}\br{\v\cos^2\psi,\v\sin\phi}_{\phi}(\eta)\bigg)+\ue^{2K_0\eta}\br{S,r_{\v}}_{\phi}(\eta)\nonumber.
\end{align}
For $K_0<\min\left\{\dfrac{1}{2},K\right\}$, we have
\begin{align}\label{mt 92}
\frac{3}{2}\tnm{r_{\v}(\eta)}^2\geq-K_0\br{r_{\v},r_{\v}\sin\phi}_{\phi}(\eta)+\br{r_{\v},r_{\v}}_{\phi}(\eta)\geq
\half\tnm{r_{\v}(\eta)}^2.
\end{align}
Similar to the proof of Lemma \ref{Milne finite LT}, formula as \eqref{mt 91} and
\eqref{mt 92} imply
\begin{align}\label{mt 93}
\tnnm{\ue^{K_0\eta}r_{\v}}^2\leq&\abs{\int_0^L4\ue^{2K_0\eta}K_0q_{\v}(\eta)\int_{\eta}^L\ue^{2\tv(\eta)-\tv(y)}G(y)\br{\sin\phi\cos^2\psi,r_{\v}}_{\phi,\psi}(y)\ud{y}\ud{\eta}}\\
&+\abs{\int_0^LG(\eta)\bigg(\ue^{2K_0\eta}\br{\v\cos^2\psi,\v\sin\phi}_{\phi}(\eta)\bigg)\ud{\eta}}+\abs{\int_0^L\ue^{2K_0\eta}\br{S,r_{\v}}_{\phi}(\eta)\ud{\eta}}\no\\
\leq&CL\tnnm{G}\tnnm{\ue^{K_0\eta}r_{\v}}\tnnm{\ue^{K_0\eta}q_{\v}}+C\e\tnnm{\ue^{K_0\eta}\v}^2\no\\
&+C\tnnm{\ue^{K_0\eta}S}\tnnm{\ue^{K_0\eta}r_{\v}}\no\\
\leq&C\e\tnnm{\ue^{K_0\eta}q_{\v}}^2+C\e^{1-\frac{3n}{2}}\tnnm{\ue^{K_0\eta}r_{\v}}^2+\e\tnnm{\ue^{K_0\eta}\v}^2+C\tnnm{S}^2\no\\
\leq&C+C\tnnm{\ue^{K_0\eta}q_{\v}}^2+C\e^{1-\frac{3n}{2}}\tnnm{\ue^{K_0\eta}r_{\v}}^2+\e\tnnm{\ue^{K_0\eta}\v}^2.\no
\end{align}
Hence, for $\e$ sufficiently small, we know
\begin{align}\label{mt 95}
\tnnm{\ue^{K_0\eta}r_{\v}}^2\leq&C+C\e\tnnm{\ue^{K_0\eta}q_{\v}}^2+C\e\tnnm{\v}^2.
\end{align}
Then similar to the proof of Lemma \ref{Milne finite LT}, we deduce
\begin{align}
&\tnnm{\ue^{K_0\eta}q_{\v}}^2\\
\leq&\tnnm{\ue^{K_0\eta}r_{\v}}^2+\int_0^L\ue^{2K_0\eta}\abs{\int_{\eta}^L\tf(y)\br{1-3\sin^2\phi,r_{\v}}_{\phi,\psi}(y)\ud{y}}^2\ud{\eta}\no\\
&+\int_0^L\ue^{2K_0\eta}\abs{\int_{\eta}^{L}\int_{z}^L\ue^{2\tv(z)-2\tv{y}}G(y)\br{\sin\phi\cos^2\psi,r_{\v}}_{\phi,\psi}\ud{y}\ud{z}}^2\ud{\eta}\no\\
&+\int_0^L\ue^{2K_0\eta}\abs{\int_{\eta}^{L}G(y)\br{1-3\sin^2\phi,r_{\v}\cos^2\psi}_{\phi,\psi}(y)\ud{y}}^2\ud{\eta}
+\int_0^L\ue^{2K_0\eta}\abs{\int_{\eta}^{L}\br{\sin\phi,S}_{\phi,\psi}(y)\ud{y}}^2\ud{\eta}\no\\
\leq&C+C\tnnm{\ue^{K_0\eta}r_{\v}}^2+C\tnnm{\ue^{K_0\eta}r_{\v}}
\bigg(\int_0^{L}\int_{\eta}^{L}\ue^{2K_0(\eta-y)}F^2(y)\ud{y}\ud{\eta}\bigg)\no\\
&+L^3\tnnm{G}^2\tnnm{\ue^{K_0\eta}r_{\v}}^2+L\tnnm{G}^2\tnnm{\ue^{K_0\eta}r_{\v}}^2
+\int_0^{L}\ue^{2K_0\eta}\bigg(\int_{\eta}^{L}\lnm{S(y)}\ud{y}\bigg)^2\ud{\eta}
\nonumber\\
\leq&C+C(1+\e^{2-5n})\tnnm{\ue^{K_0\eta}r_{\v}}^2\no\\
\leq&C+C\tnnm{\ue^{K_0\eta}r_{\v}}^2\no\\
\leq&C+C\e\tnnm{\ue^{K_0\eta}q_{\v}}^2+C\e\tnnm{\ue^{K_0\eta}\v}^2,\no
\end{align}
which implies
\begin{align}\label{mt 96}
\tnnm{\ue^{K_0\eta}q_{\v}}^2\leq C+C\e\tnnm{\ue^{K_0\eta}\v}^2.
\end{align}
In summary, for $\e$ sufficiently small, \eqref{mt 95} and \eqref{mt 96} imply
\begin{eqnarray}
\tnnm{\ue^{K_0\eta}\v}^2\leq&\tnnm{\ue^{K_0\eta}q_{\v}}^2+\tnnm{\ue^{K_0\eta}r_{\v}}^2\leq C+C\e\tnnm{\ue^{K_0\eta}\v}^2,
\end{eqnarray}
which yields
\begin{eqnarray}\label{mt 94}
\tnnm{\ue^{K_0\eta}\v}\leq&C.
\end{eqnarray}
This is the desired result when $\bar S=0$.
By the method introduced in the proof of Lemma \ref{Milne finite LT.},
we can extend above $L^2$ estimates to the general $S$ case. Note
all the auxiliary functions
constructed in the proof of Lemma \ref{Milne finite LT.} satisfy the desired estimates. \\
\ \\
Step 2: $L^{\infty}$ Estimates.\\
Define $Z=\ue^{K_0\eta}\v$. Then $Z$ satisfies the equation
\begin{align}\label{decay equation}
\left\{
\begin{array}{l}\displaystyle
\sin\phi\frac{\p Z}{\p\eta}+F(\eta,\psi)\cos\phi\frac{\p
Z}{\p\phi}+Z=\bar Z+\ue^{K_0\eta}S+K_0\sin\phi Z,\\\rule{0ex}{2em}
Z(0,\phi,\psi)=p(\phi,\psi)=h(\phi,\psi)-f_L\ \ \text{for}\ \ \sin\phi>0,\\\rule{0ex}{2em}
Z(L,\phi,\psi)=Z(L,\rr[\phi],\psi).
\end{array}
\right.
\end{align}
In \eqref{decay equation}, based on Lemma \ref{Milne lemma 1}, Lemma \ref{Milne lemma 2}, Lemma \ref{Milne lemma 3}, \eqref{mt 83} and
\eqref{mt 84}, $Z=\k[p]+\t[\bar Z]+\t[\ue^{K_0\eta}S]+\t[K_0\sin\phi Z]$ leads to
\begin{align}\label{mt 85'}
\ltnm{Z}\leq&\ltnm{\k[p]}+\ltnm{\t[\ue^{K_0\eta}S]}+\ltnm{\t[K_0\sin\phi Z]}+\ltnm{\t[\bar\v]}\\
\leq&\ltnm{\k[p]}+\ltnm{\t[\ue^{K_0\eta}S]}+K_0\ltnm{Z}+C(\delta)\tnnm{\bar Z}+\delta\lnnm{\bar Z}\no\\
\leq&\lnmp{p}+\lnnm{\ue^{K_0\eta}S}+K_0\ltnm{Z}+C(\delta)\tnnm{Z}+\delta\ltnm{Z}\no.
\end{align}
We can take $\delta=\dfrac{1}{4}$ and $K_0=\min\left\{K,\dfrac{1}{4}\right\}$ to absorb $\delta\ltnm{Z}$ and $K_0\ltnm{Z}$ into the left-hand side and obtain
\begin{align}\label{mt 86'}
\ltnm{Z}\leq C\bigg(\tnnm{Z}+\lnmp{p}+\lnnm{\ue^{K_0\eta}S}\bigg).
\end{align}
Therefore, in \eqref{decay equation}, based on Lemma \ref{Milne lemma 1}, Lemma \ref{Milne lemma 2} and \eqref{mt 86'}, we can achieve
\begin{align}
\lnnm{Z}\leq&\lnnm{\k[p]}+\lnnm{\t[\ue^{K_0\eta}S]}+\ltnm{\t[K_0\sin\phi Z]}+\lnnm{\t[\bar Z]}\\
\leq&
C\bigg(\lnmp{p}+\lnnm{\ue^{K_0\eta}S}+K_0\lnnm{Z}+\lnnm{\bar Z}\bigg)\no\\
\leq&C\bigg(\lnmp{p}+\lnnm{\ue^{K_0\eta}S}+K_0\lnnm{Z}+\ltnm{Z}\bigg)\no\\
\leq&
C\bigg(\lnmp{p}+\lnnm{\ue^{K_0\eta}S}+K_0\lnnm{Z}+\tnnm{Z}\bigg)\no.
\end{align}
Absorbing $K_0\lnnm{Z}$ into the left-hand side, we obtain
\begin{align}\label{mt 93}
\lnnm{Z}\leq
C\bigg(\lnmp{p}+\lnnm{\ue^{K_0\eta}S}+\tnnm{Z}\bigg).
\end{align}
\ \\
Step 3: Synthesis.\\
Combining \eqref{mt 94} and \eqref{mt 93}, we
deduce the desired result
\begin{align}
\lnnm{\ue^{K_0\eta}(f-f_L)}\leq C\bigg(1+\tnnm{\ue^{K_0\eta}(f-f_L)}\bigg)\leq C.
\end{align}
\end{proof}

\subsection{Maximum Principle}

\begin{theorem}\label{Milne theorem 3}
The solution $f(\eta,\phi,\psi)$ to the equation \eqref{Milne problem} with $S=0$ satisfies the maximum principle, i.e.
\begin{align}
\min_{\sin\phi>0}h(\phi,\psi)\leq f(\eta,\phi,\psi)\leq
\max_{\sin\phi>0}h(\phi,\psi).
\end{align}
\end{theorem}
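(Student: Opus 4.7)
The plan is to exploit the mild formulation $f=\mathcal{K}[h]+\mathcal{T}[\bar f]$ (specialized to $S=0$) developed in the $L^2$--$L^\infty$ framework and show that $f$ is a weighted average of its own values in a way that propagates the bounds on $h$. Two structural ingredients will be needed. The first is the \emph{positivity} of $\mathcal{K}$ and $\mathcal{T}$, which is immediate from the explicit formulas \eqref{mt 71}--\eqref{mt 73'}: all the exponential weights and all the denominators $\sin(\phi'(\phi,\eta;\xi))$ are positive, since $\phi'\in[0,\pi/2]$. The second is the normalization identity
\begin{align*}
\mathcal{K}[1](\eta,\phi,\psi)+\mathcal{T}[1](\eta,\phi,\psi)=1,
\end{align*}
which I verify in each of the three regions by the substitution $z=G_{\cdot,\cdot}$ already used in the proof of Lemma \ref{Milne lemma 2}. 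In Region I one finds $\mathcal{K}[1]=e^{-G_{\eta,0}}$ and $\mathcal{T}[1]=\int_0^{G_{\eta,0}}e^{-z}\,dz=1-e^{-G_{\eta,0}}$; Regions II and III are analogous, with the reflection at $\eta=L$ (respectively the turning point $\eta^+$) producing one contribution from the $[0,L]$ integral and one from the $[\eta,L]$ reflected integral that combine with $\mathcal{K}[1]$ to give exactly $1$. A useful sanity check is that $f\equiv 1$ is a trivial solution of \eqref{Milne problem} with $h\equiv 1$, $S\equiv 0$, so by uniqueness it must coincide with $\mathcal{K}[1]+\mathcal{T}[1]$.

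With these tools in hand, set $M=\max_{\sin\phi>0}h$ and $m=\min_{\sin\phi>0}h$, and define the iteration $f^{(0)}\equiv M$, $f^{(n+1)}=T f^{(n)}$ where $Tg:=\mathcal{K}[h]+\mathcal{T}[\bar g]$. Because the averaging operator $g\mapsto\bar g$ preserves the pointwise bounds $[m,M]$, positivity of $\mathcal{K}$, $\mathcal{T}$ together with the normalization give $T g\le M\mathcal{K}[1]+M\mathcal{T}[1]=M$ and $Tg\ge m$ whenever $m\le g\le M$. The same positivity makes $T$ monotone, and since $f^{(1)}\le f^{(0)}=M$, the sequence $\{f^{(n)}\}$ is monotone decreasing and bounded below by $m$; hence it converges pointwise a.e.\ to some $f^*\in[m,M]$. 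Dominated convergence applied to $\mathcal{T}[\overline{f^{(n)}}]$ then yields $f^*=T f^*$, i.e., $f^*$ solves \eqref{Milne problem}. Since $f^*\in L^\infty$ with $|f^*_L|\le C$ and $\tnnm{f^*-f^*_L}+\lnnm{f^*-f^*_L}\le C$, the uniqueness clause of Theorem \ref{Milne theorem 1} forces $f^*=f$, which gives the desired inequalities.

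\textbf{Main obstacle.} The only non-routine step is verifying the normalization identity $\mathcal{K}[1]+\mathcal{T}[1]=1$ in all three regions, since one must track the reflection at $\eta=L$ and the turning at $\eta^+$ carefully and confirm that the two summands in $\mathcal{T}[1]$ for the Region II/III formulas combine to precisely $1-\mathcal{K}[1]$. Everything after that is soft and parallels the standard maximum principle for Markov-type kernels; monotonicity of the iteration sidesteps the absence of a contraction factor in Lemma \ref{Milne lemma 2} (whose best constant is only $\le 1$, not strictly less than $1$).
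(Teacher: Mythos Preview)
Your argument is correct and takes a genuinely different route from the paper. Both proofs iterate the mild formulation $f=\mathcal{K}[h]+\mathcal{T}[\bar f]$, but the paper first reduces to the claim ``$h\le 0\Rightarrow f\le 0$'' and then introduces a penalized problem with an extra term $\lambda f_\lambda$; the penalty makes the corresponding transport operator a strict contraction (factor $(1+\lambda)^{-1}$), so the iterates $f_m$ converge in $L^\infty$ to $f_\lambda\le 0$, and one finishes by passing to the weak limit $\lambda\to 0$ using the uniform-in-$\lambda$ $L^2$ bounds. You instead exploit the exact normalization $\mathcal{K}[1]+\mathcal{T}[1]=1$---equivalently, that constants solve the homogeneous problem---to run a \emph{monotone} iteration from $f^{(0)}=M$ with no penalty, trading the contraction for order-convergence and then invoking the uniqueness clause of Theorem~\ref{Milne theorem 1}. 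Your approach is more elementary and makes the Markov-kernel structure of the problem transparent; the paper's penalization argument, by contrast, does not rely on the exact normalization and would transfer more readily to variants with a genuine absorbing zeroth-order term or a sub-stochastic kernel. The normalization computation you flag as the main obstacle is indeed routine once one uses the substitution $z=G_{\cdot,\cdot}$ in each region; your sanity check via the constant solution $f\equiv 1$ already guarantees it.
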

\begin{proof}
We claim that it suffices to show $f(\eta,\phi,\psi)\leq0$ whenever
$h(\phi,\psi)\leq0$. Suppose this claim is justified.
Denote $m=\min_{\sin\phi>0}h(\phi,\psi)$ and
$M=\max_{\sin\phi>0}h(\phi,\psi)$. Then $f^1=f-M$
satisfies the equation
\begin{align}
\left\{
\begin{array}{l}\displaystyle
\sin\phi\frac{\p f^1}{\p\eta}+F(\eta,\psi)\cos\phi\frac{\p
f^1}{\p\phi}+f^1-\bar f^1=0,\\\rule{0ex}{2em}
f^1(0,\phi,\psi)=h(\phi,\psi)-M\ \ \text{for}\ \ \sin\phi>0,\\\rule{0ex}{2em}
f^1(L,\phi,\psi)=f^1(L,\rr[\phi],\psi).
\end{array}
\right.
\end{align}
Hence, $h-M\leq0$ implies $f^1\leq0$ which is actually $f\leq M$. On
the other hand, $f^2=m-f$ satisfies the equation
\begin{align}
\left\{
\begin{array}{l}\displaystyle
\sin\phi\frac{\p f^2}{\p\eta}+F(\eta,\psi)\cos\phi\frac{\p
f^2}{\p\phi}+f^2-\bar f^2=0,\\\rule{0ex}{2em}
f^2(0,\phi,\psi)=m-h(\phi,\psi)\ \ \text{for}\ \ \sin\phi>0,\\\rule{0ex}{2em}
f^2(L,\phi,\psi)=f^2(L,\rr[\phi],\psi).
\end{array}
\right.
\end{align}
Thus, $m-h\leq0$ implies $f^2\leq0$ which further leads to $f\geq
m$. Therefore, the maximum principle is established.\\
\ \\
We now prove the claim that if $h(\phi,\psi)\leq0$, we have $f(\eta,\phi,\psi)\leq0$.
Assuming $h(\phi,\psi)\leq0$, we then consider the penalized Milne
problem for $f_{\l}(\eta,\phi,\psi)$
\begin{align}\label{mt 101}
\left\{
\begin{array}{l}\displaystyle
\l f_{\l}+\sin\phi\frac{\p f_{\l}}{\p\eta}+F(\eta,\psi)\cos\phi\frac{\p
f_{\l}}{\p\phi}+f_{\l}-\bar f_{\l}=0,\\\rule{0ex}{2em}
f_{\l}(0,\phi,\psi)=h(\phi,\psi)\ \ \text{for}\ \ \sin\phi<0,\\\rule{0ex}{2em}
f_{\l}(L,\phi,\psi)=f_{\l}(L,\rr[\phi],\psi).
\end{array}
\right.
\end{align}
In order to construct the solution of \eqref{mt 101}, we iteratively define the sequence
$\{f_{m}\}_{m=1}^{\infty}$ as $f_{0}=0$ and
\begin{align}
\left\{
\begin{array}{l}\displaystyle
\l f_{m}+\sin\phi\frac{\p
f_{m}}{\p\eta}+F(\eta,\psi)\cos\phi\frac{\p
f_{m}}{\p\phi}+f_{m}-\bar f_{m-1}=0,\\\rule{0ex}{2em}
f_{m}(0,\phi,\psi)=h(\phi,\psi)\ \ \text{for}\ \ \sin\phi<0,\\\rule{0ex}{2em}
f_{m}(L,\phi,\psi)=f_{m}(L,\rr[\phi],\psi).
\end{array}
\right.
\end{align}
Along the characteristics, it is easy to see we always have
$f_{m}<0$. The standard $L^{\infty}$ estimates reveals that $f_{m}$ converges strongly in
$L^{\infty}$ to $f_{\l}$ which satisfies
\eqref{mt 101}. Also, $f_{\l}$ satisfies
\begin{align}
\lnnm{f_{\l}}\leq \frac{1+\l}{\l}\lnmp{h}.
\end{align}
Naturally, we obtain $f_{\l}\leq0$. Similar to the proof of Lemma \ref{Milne finite LT.}, we know $f_{\l}$
is uniformly bounded in $L^{2}$ with respect
to $\l$, which implies we can take weakly convergent subsequence
$f_{\l}\rightharpoonup f$ as $\l\rt0$ with $f\in
L^2$.
Naturally, we have $f(\eta,\phi,\psi)\leq0$. This justifies the claim and completes the proof.
\end{proof}

\section{Regularity}

Using the notation as in the section of well-posedness and decay, $\v=f-f_L$ satisfies the $\e$-Milne problem with geometric correction
\begin{align}\label{Milne difference problem}
\left\{
\begin{array}{l}\displaystyle
\sin\phi\frac{\p \v}{\p\eta}+F(\eta,\psi)\cos\phi\frac{\p
\v}{\p\phi}+\v-\bar\v=S(\eta,\phi,\psi),\\\rule{0ex}{2.0em}
\v(0,\phi,\psi)=p(\phi,\psi)=h(\phi,\psi)-f_L\ \ \text{for}\ \ \sin\phi>0,\\\rule{0ex}{2.0em}
\v(L,\phi,\psi)=\v(L,\rr[\phi],\psi),
\end{array}
\right.
\end{align}
where
\begin{align}
F(\eta,\psi)=-\e\bigg(\dfrac{\sin^2\psi}{R_1-\e\eta}+\dfrac{\cos^2\psi}{R_2-\e\eta}\bigg).
\end{align}
Still, we will omit the dependence on $\e$ and $\iota_i$ for $i=1,2$ when there is no confusion. \\
\ \\
The potential function
\begin{align}\label{potential}
V(\eta,\psi)=\ln\left(\frac{R_1}{R_1-\e\eta}\right)\sin^2\psi+\ln\left(\frac{R_2}{R_2-\e\eta}\right)\cos^2\psi,
\end{align}
will play a significant role in this section. Recall that $V$ satisfies $V(0,\psi)=0$ and $\dfrac{\p V}{\p\eta}=-F(\eta,\psi)$. \\
\ \\
Define weight function $\zeta(\eta,\phi,\psi)$ as
\begin{align}\label{weight function}
\zeta(\eta,\phi,\psi)=\Bigg(1-\bigg(\ue^{-V(\eta,\psi)}\cos\phi\bigg)^2\Bigg)^{\frac{1}{2}}.
\end{align}
Note that $\zeta$ goes to zero as $(\eta,\phi,\psi)$ approaches the grazing set. Also, we have $0\leq\zeta\leq 1$.
Recall the energy functional
\begin{align}\label{rt 25}
E(\eta,\phi,\psi)=\ue^{-V(\eta,\psi)}\cos\phi=\cos\phi\bigg(\frac{R_1-\e\eta}{R_1}\bigg)^{\sin^2\psi}\bigg(\frac{R_2-\e\eta}{R_2}\bigg)^{\cos^2\psi}.
\end{align}
Then we know
\begin{align}\label{rt 26}
\zeta=(1-E^2)^{\frac{1}{2}}=\Bigg(1-\cos^2\phi\bigg(\frac{R_1-\e\eta}{R_1}\bigg)^{2\sin^2\psi}\bigg(\frac{R_2-\e\eta}{R_2}\bigg)^{2\cos^2\psi}\Bigg)^{\frac{1}{2}}.
\end{align}
\begin{lemma}\label{rt lemma 1}
We have
\begin{align}
\sin\phi\frac{\p \zeta}{\p\eta}+F(\eta,\psi)\cos\phi\frac{\p\zeta}{\p\phi}=0.
\end{align}
\end{lemma}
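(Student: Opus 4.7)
The plan is to exploit the fact that the weight $\zeta$ is a function of the energy functional $E(\eta,\phi,\psi)=\ue^{-V(\eta,\psi)}\cos\phi$ alone, namely $\zeta=(1-E^{2})^{1/2}$, and that $E$ is conserved along the characteristics of the transport operator $\sin\phi\,\p_\eta+F(\eta,\psi)\cos\phi\,\p_\phi$. In fact, the paper already implicitly records this conservation when it sets up the $L^{\infty}$ characteristic analysis for the Milne problem. So morally the lemma is just: a conserved quantity has vanishing material derivative, and any smooth function of a conserved quantity is also conserved.

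To make this rigorous, I would first compute the two partial derivatives of $E$ directly from the defining relation $V(0,\psi)=0$, $\p_\eta V=-F$: one gets $\p_\eta E=F\,\ue^{-V}\cos\phi=F\,E$ and $\p_\phi E=-\ue^{-V}\sin\phi$. Substituting these into the transport operator gives
\begin{align*}
\sin\phi\,\p_\eta E+F\cos\phi\,\p_\phi E
=\sin\phi\,(F\,\ue^{-V}\cos\phi)+F\cos\phi\,(-\ue^{-V}\sin\phi)=0,
\end{align*}
which is the conservation of $E$ along characteristics.

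Next, I would apply the chain rule to $\zeta=(1-E^{2})^{1/2}$, obtaining $\p_\eta\zeta=-E\,\p_\eta E/\zeta$ and $\p_\phi\zeta=-E\,\p_\phi E/\zeta$ on the set where $\zeta>0$. Plugging in,
\begin{align*}
\sin\phi\,\p_\eta\zeta+F\cos\phi\,\p_\phi\zeta
=-\frac{E}{\zeta}\bigl(\sin\phi\,\p_\eta E+F\cos\phi\,\p_\phi E\bigr)=0,
\end{align*}
which is exactly the claim.

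The only minor wrinkle is handling the grazing set $\{\zeta=0\}$, where the chain rule expression has a $1/\zeta$ factor. I would address this either by noting that the identity to be proved is a pointwise statement that extends by continuity from $\{\zeta>0\}$ to the closure (both sides of the identity are continuous, and the left-hand side makes sense without dividing by $\zeta$ once one multiplies through), or by rewriting the identity as $\sin\phi\,\zeta\,\p_\eta\zeta+F\cos\phi\,\zeta\,\p_\phi\zeta=0$, which is the same as $\tfrac{1}{2}(\sin\phi\,\p_\eta+F\cos\phi\,\p_\phi)\zeta^{2}=-\tfrac{1}{2}(\sin\phi\,\p_\eta+F\cos\phi\,\p_\phi)E^{2}=0$, and this last form is manifestly free of any singularity. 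I do not anticipate any real obstacle; the entire proof is a two-line chain-rule computation, with the only subtlety being the behavior at the grazing set, which is resolved by working with $\zeta^{2}$ or $E^{2}$ instead of $\zeta$ directly.
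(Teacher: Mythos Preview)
Your proof is correct and essentially the same as the paper's: the paper computes $\p_\eta\zeta$ and $\p_\phi\zeta$ directly (which is exactly your chain-rule step written out with $E^2=\ue^{-2V}\cos^2\phi$) and then substitutes into the transport operator to see the cancellation. Your framing via conservation of $E$ is a slightly cleaner way to organize the same two-line computation, and your remark about handling the grazing set $\{\zeta=0\}$ by passing to $\zeta^2$ is a nice touch that the paper's proof leaves implicit.
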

\begin{proof}
We may directly compute
\begin{align}
\frac{\p \zeta}{\p\eta}=&\frac{1}{2}\Bigg(1-\bigg(\ue^{-V(\eta,\psi)}\cos\phi\bigg)^2\Bigg){-\frac{1}{2}}\bigg(-2\ue^{-2V(\eta,\psi)}\cos^2\phi\bigg)F(\eta,\psi)
=-\frac{\ue^{-2V(\eta,\psi)}F(\eta,\psi)\cos^2\phi}{\zeta},\\
\frac{\p\zeta}{\p\phi}=&\frac{1}{2}\Bigg(1-\bigg(\ue^{-V(\eta,\psi)}\cos\phi\bigg)^2\Bigg){-\frac{1}{2}}\bigg(-2\ue^{-2V(\eta,\psi)}\cos\phi\bigg)(-\sin\phi)
=\frac{\ue^{-2V(\eta,\psi)}\cos\phi\sin\phi}{\zeta}.
\end{align}
Hence, we know
\begin{align}
\\
\sin\phi\frac{\p \zeta}{\p\eta}+F(\eta,\psi)\cos\phi\frac{\p\zeta}{\p\phi}&=&
\frac{-\sin\phi\bigg(\ue^{-2V(\eta,\psi)}F(\eta,\psi)\cos^2\phi\bigg)+F(\eta,\psi)\cos\phi\bigg(\ue^{-2V(\eta,\psi)}\cos\phi\sin\phi\bigg)}{\zeta}=0.\no
\end{align}
\end{proof}

\subsection{Mild Formulation}

Taking $\eta$ derivative in \eqref{Milne difference problem} and multiplying $\zeta$, we obtain the $\e$-transport problem for $\a=\zeta\dfrac{\p\v}{\p\eta}$ as
\begin{align}\label{regularity 1}
\left\{
\begin{array}{l}\displaystyle
\sin\phi\frac{\p\a}{\p\eta}+F(\eta,\psi)\cos\phi\frac{\p
\a}{\p\phi}+\a=\tilde\a+S_{\a},\\\rule{0ex}{1.5em}
\a(0,\phi,\psi)=p_{\a}(\phi,\psi)\ \ \text{for}\ \ \sin\phi>0,\\\rule{0ex}{1.5em}
\a(L,\phi,\psi)=\a(L,\rr[\phi],\psi),
\end{array}
\right.
\end{align}
where $p_{\a}$ and $S_{\a}$ will be specified later. Also,
\begin{align}
\tilde\a(\eta,\phi,\psi)=\frac{1}{2\pi}\int_{-\pi}^{\pi}\int_{-\frac{\pi}{2}}^{\frac{\pi}{2}}
\frac{\zeta(\eta,\phi,\psi)}{\zeta(\eta,\phi_{\ast},\psi_{\ast})}\a(\eta,\phi_{\ast},\psi_{\ast})\cos(\phi_{\ast})\ud{\phi_{\ast}}\ud\psi_{\ast}.
\end{align}
Here for clarity, we use $\phi_{\ast}$ and $\psi_{\ast}$ to represent the velocity dummy variables. Along the characteristics, the energy $E$ and the weight $\zeta$ are constants. The equation \eqref{regularity 1} can be simplified as
\begin{align}
\sin\phi\frac{\ud{\a}}{\ud{\eta}}+\a=\tilde\a+S_{\a}.
\end{align}
Also, $\psi$ is a constant along the characteristics, so we may temporarily ignore $\psi$ dependence when there is no confusion. Note that all the estimates are uniform in $\psi$. As in $L^{\infty}$ estimates of the $\e$-Milne problem with geometric correction, we define $\phi'$, $\rr[\phi']$, $\eta^+$ and $G_{\eta,\eta'}$\\
\ \\
Depending on whether the characteristics touch $\phi=0$ or $\eta=L$, we can rewrite the solution to the equation \eqref{regularity 1} as
\begin{align}
\a(\eta,\phi)=\k[p_{\a}]+\t[\tilde\a+S_{\a}],
\end{align}
where\\
\ \\
Region I:\\
For $\sin\phi>0$,
\begin{align}
\k[p_{\a}]=&p_{\a}\Big(\phi'(\eta,\phi;0)\Big)\exp\Big(-G_{\eta,0}\Big)\\
\t[\tilde\a+S_{\a}]=&\int_0^{\eta}\frac{(\tilde\a+S_{\a})\Big(\eta',\phi'(\eta,\phi;\eta')\Big)}{\sin\Big(\phi'(\eta,\phi;\eta')\Big)}\exp\Big(-G_{\eta,\eta'}\Big)\ud{\eta'}.
\end{align}
\ \\
Region II:\\
For $\sin\phi<0$ and $E(\eta,\phi)\leq \ue^{-V(L)}$,
\begin{align}
\k[p_{\a}]=&p_{\a}\Big(\phi'(\eta,\phi;0)\Big)\exp\Big(-G_{L,0}-G_{L,\eta}\Big)\\
\t[\tilde\a+S_{\a}]=&\int_0^{L}\frac{(\tilde\a+S)\Big(\eta',\phi'(\eta,\phi;\eta')\Big)}{\sin\Big(\phi'(\eta,\phi;\eta')\Big)}
\exp\Big(-G_{L,\eta'}-G_{L,\eta}\Big)\ud{\eta'}\\
&+\int_{\eta}^{L}\frac{(\tilde\a+S)\Big(\eta',\rr[\phi'](\eta,\phi;\eta')\Big)}{\sin\Big(\phi'(\eta,\phi;\eta')\Big)}\exp\Big(-G_{\eta',\eta}\Big)\ud{\eta'}.\no
\end{align}
\ \\
Region III:\\
For $\sin\phi<0$ and $E(\eta,\phi)\geq \ue^{-V(L)}$,
\begin{align}
\k[p_{\a}]=&p_{\a}\Big(\phi'(\eta,\phi;0)\Big)\exp\Big(-G_{\eta^+,0}-G_{\eta^+,\eta}\Big)\\
\t[\tilde\a+S_{\a}]=&\int_0^{\eta^+}\frac{(\tilde\a+S_{\a})\Big(\eta',\phi'(\eta,\phi;\eta')\Big)}{\sin\Big(\phi'(\eta,\phi;\eta')\Big)}
\exp\Big(-G_{\eta^+,\eta'}-G_{\eta^+,\eta}\Big)\ud{\eta'}\\&+
\int_{\eta}^{\eta^+}\frac{(\tilde\a+S_{\a})\Big(\eta',\rr[\phi'](\eta,\phi;\eta')\Big)}{\sin\Big(\phi'(\eta,\phi;\eta')\Big)}\exp\Big(-G_{\eta',\eta}\Big)\ud{\eta'}.\no
\end{align}
Then we need to estimate $\k[p_{\a}]$ and $\t[\tilde\a+S_{\a}]$ in each region. Assume $0<\d<<1$ and $0<\d_0<<1$ are small quantities which will be determined later.
Since we always assume that $(\eta,\phi)$ and $(\eta',\phi')$ are on the same characteristics, when there is no confusion, we simply write $\phi'$ or $\phi'(\eta')$ instead of $\phi'(\eta,\phi;\eta')$.

\subsection{Region I: $\sin\phi>0$}

We consider
\begin{align}
\k[p_{\a}]=&p_{\a}\Big(\phi'(\eta,\phi;0)\Big)\exp\Big(-G_{\eta,0}\Big)\\
\t[\tilde\a+S_{\a}]=&\int_0^{\eta}\frac{(\tilde\a+S_{\a})\Big(\eta',\phi'(\eta,\phi;\eta')\Big)}{\sin\Big(\phi'(\eta,\phi;\eta')\Big)}\exp\Big(-G_{\eta,\eta'}\Big)\ud{\eta'}.
\end{align}
Based on Lemma \ref{Milne lemma 1} and Lemma \ref{Milne lemma 2},
we can directly obtain
\begin{align}
\abs{\k[p_{\a}]}\leq&\lnmp{p_{\a}},\\
\abs{\t[S_{\a}]}\leq&\lnnm{S_{\a}}.
\end{align}
Hence, we only need to estimate
\begin{align}\label{mild 1}
I=\t[\tilde\a]=\int_0^{\eta}\frac{\tilde\a\Big(\eta',\phi'(\eta,\phi;\eta')\Big)}{\sin\Big(\phi'(\eta,\phi;\eta')\Big)}\exp\Big(-G_{\eta,\eta'}\Big)\ud{\eta'}.
\end{align}
Define a cut-off function $\chi\in C^{\infty}\left[-\dfrac{\pi}{2},\dfrac{\pi}{2}\right]$ satisfying $\chi(\phi_{\ast})\in[0,1]$ and
\begin{align}
\chi(\phi_{\ast})=\left\{
\begin{array}{ll}
1&\text{for}\ \ \abs{\sin\phi_{\ast}}\leq\d,\\
0&\text{for}\ \ \abs{\sin\phi_{\ast}}\geq2\d,
\end{array}
\right.
\end{align}
In the following, we will divide the estimate of $I$ into several cases based on the value of $\sin\phi$, $\cos\phi$, $\sin\phi'$, $\e\eta'$ and $\e(\eta-\eta')$. Let $\id$ denote the indicator function. We write
\begin{align}
I=&\int_0^{\eta}\id_{\{\sin\phi\geq\d_0\}}\id_{\{\cos\phi\geq\d_0\}}+\int_0^{\eta}\id_{\{0\leq\sin\phi\leq\d_0\}}\id_{\{\chi(\phi_{\ast})<1\}}\\
&+\int_0^{\eta}\id_{\{0\leq\sin\phi\leq\d_0\}}\id_{\{\chi(\phi_{\ast})=1\}}\id_{\{ \sqrt{\e\eta'}\geq\sin\phi'\}}\no\\
&+\int_0^{\eta}\id_{\{0\leq\sin\phi\leq\d_0\}}\id_{\{\chi(\phi_{\ast})=1\}}\id_{\{\sqrt{\e\eta'}\leq\sin\phi'\}}\id_{\{\sin^2\phi\leq\e(\eta-\eta')\}}\no\\
&+\int_0^{\eta}\id_{\{0\leq\sin\phi\leq\d_0\}}\id_{\{\chi(\phi_{\ast})=1\}}\id_{\{\sqrt{\e\eta'}\leq\sin\phi'\}}\id_{\{\sin^2\phi\geq\e(\eta-\eta')\}}\no\\
&+\int_0^{\eta}\id_{\{0\leq\cos\phi\leq\d_0\}}\no\\
=&I_1+I_2+I_3+I_4+I_5+I_6.\no
\end{align}
\ \\
Step 0: Preliminaries.\\
Using \eqref{rt 26}, we can directly obtain
\begin{align}\label{pt 01}
\zeta(\eta',\phi',\psi)=&\sqrt{1-\bigg(1-\frac{\e\eta'}{R_1}\bigg)^{2\sin^2\psi}\bigg(1-\frac{\e\eta'}{R_2}\bigg)^{2\cos^2\psi}\cos^2\phi'}
\\
=&\sqrt{1-\bigg(1-\frac{\e\eta'}{R_1}\bigg)^{2\sin^2\psi}\bigg(1-\frac{\e\eta'}{R_2}\bigg)^{2\cos^2\psi}
+\bigg(1-\frac{\e\eta'}{R_1}\bigg)^{2\sin^2\psi}\bigg(1-\frac{\e\eta'}{R_2}\bigg)^{2\cos^2\psi}\sin^2\phi'}
\no\\
\leq&\sqrt{1-\bigg(1-\frac{\e\eta'}{R_1}\bigg)^{2\sin^2\psi}\bigg(1-\frac{\e\eta'}{R_2}\bigg)^{2\cos^2\psi}}
+\sqrt{\bigg(1-\frac{\e\eta'}{R_1}\bigg)^{2\sin^2\psi}\bigg(1-\frac{\e\eta'}{R_2}\bigg)^{2\cos^2\psi}\sin^2\phi'}
\no\\
\leq& C\bigg(\sqrt{\e\eta'}+\sin\phi'\bigg),\no
\end{align}
and
\begin{align}\label{pt 02}
\zeta(\eta',\phi',\psi)\geq\sqrt{1-\bigg(1-\frac{\e\eta'}{R_1}\bigg)^{2\sin^2\psi}\bigg(1-\frac{\e\eta'}{R_2}\bigg)^{2\cos^2\psi}}\geq C\sqrt{\e\eta'}.
\end{align}
Also, we know for $0\leq\eta'\leq\eta$,
\begin{align}
\sin\phi'=&\sqrt{1-\cos^2\phi'}\leq\sqrt{1-\bigg(\frac{R_1-\e\eta}{R_1-\e\eta'}\bigg)^{2\sin^2\psi}\bigg(\frac{R_2-\e\eta}{R_2-\e\eta'}\bigg)^{2\cos^2\psi}\cos^2\phi}\\
=&\sqrt{\sin^2\phi+\Bigg(1-\bigg(1-\frac{\e(\eta-\eta')}{R_1-\e\eta'}\bigg)^{2\sin^2\psi}\bigg(1-\frac{\e(\eta-\eta')}{R_2-\e\eta'}\bigg)^{2\cos^2\psi}\Bigg)\cos^2\phi}.\no
\end{align}
Hence, we have
\begin{align}
\sin\phi\leq\sin\phi'
\leq\sqrt{\sin^2\phi+\e(\eta-\eta')},
\end{align}
which means
\begin{align}
\frac{1}{\sqrt{\sin^2\phi+\e(\eta-\eta')}}\leq\frac{1}{\sin\phi'}
\leq\frac{1}{\sin\phi}.
\end{align}
Therefore,
\begin{align}\label{pt 03}
-\int_{\eta'}^{\eta}\frac{1}{\sin\phi'(y)}\ud{y}\leq& -\int_{\eta'}^{\eta}\frac{1}{\sqrt{\sin^2\phi+\e(\eta-y)}}\ud{y}=\frac{1}{\e}\bigg(\sin\phi-\sqrt{\sin^2\phi+\e(\eta-\eta')}\bigg)\\
=&-\frac{\eta-\eta'}{\sin\phi+\sqrt{\sin^2\phi+\e(\eta-\eta')}}
\leq-\frac{\eta-\eta'}{2\sqrt{\sin^2\phi+\e(\eta-\eta')}}.\no
\end{align}
\ \\
Step 1: Estimate of $I_1$ for $\sin\phi\geq\d_0$ and $\cos\phi\geq\d_0$.\\
In this case, we do not need the mild formulation of $\a$. Instead, using \eqref{weight function}, we directly estimate
\begin{align}
\abs{\a}\leq\abs{\zeta}\abs{\frac{\p\v}{\p\eta}}\leq \abs{\frac{\p\v}{\p\eta}}.
\end{align}
We will estimate $I_1$ based on the characteristics of $\v$ itself instead of the derivative.
Here, we will use two formulations of the equation \eqref{Milne difference problem} along the characteristics:
\\
\ \\
Formulation I: $\eta$ is the principal variable, $\phi=\phi(\eta)$, and the equation can be rewritten as
\begin{align}
\sin\phi\frac{\ud{\v}}{\ud{\eta}}+\v=\bar\v+S.
\end{align}
Formulation II: $\phi$ is the principal variable, $\eta=\eta(\phi)$ and the equation can be rewritten as
\begin{align}
F(\eta)\cos\phi\frac{\ud{\v}}{\ud{\phi}}+\v=\bar\v+ S.
\end{align}
These two formulations are equivalent and can be applied to different regions of the domain.\\
\ \\
We may decompose $\v=\v_1+\v_2$ where $\v_1$ satisfies
\begin{align}\label{Milne difference problem 1}
\left\{
\begin{array}{l}\displaystyle
\sin\phi\frac{\p \v_1}{\p\eta}+F(\eta)\cos\phi\frac{\p
\v_1}{\p\phi}+\v_1=\bar \v,\\\rule{0ex}{1.5em}
\v_1(0,\phi)=p(\phi)\ \ \text{for}\ \ \sin\phi>0,\\\rule{0ex}{1.5em}
\v_1(L,\phi)=\v_1(L,\rr[\phi]),
\end{array}
\right.
\end{align}
and $\v_2$ satisfies
\begin{align}\label{Milne difference problem 2}
\left\{
\begin{array}{l}\displaystyle
\sin\phi\frac{\p \v_2}{\p\eta}+F(\eta)\cos\phi\frac{\p
\v_2}{\p\phi}+\v_2=S,\\\rule{0ex}{1.5em}
\v_2(0,\phi)=0\ \ \text{for}\ \ \sin\phi>0,\\\rule{0ex}{1.5em}
\v_2(L,\phi)=\v_2(L,\rr[\phi]).
\end{array}
\right.
\end{align}
Since $\v\in L^{\infty}$ is well-defined, then $\v_1\in L^{\infty}$ and $\v_2\in L^{\infty}$ are also well-defined.\\
\ \\
Using Formulation I, we rewrite the equation \eqref{Milne difference problem 1} along the characteristics as
\begin{align}\label{rt 01}
\v_1(\eta,\phi)=&\exp\Big(-G_{\eta,0}\Big)\Bigg(p\Big(\phi'(0)\Big)
+\int_0^{\eta}\frac{\bar\v(\eta')}{\sin\Big(\phi'(\eta')\Big)}
\exp\Big(G_{\eta',0}\Big)\ud{\eta'}\Bigg),
\end{align}
where $\Big(\eta',\phi'(\eta')\Big)$, $\Big(0,\phi'(0)\Big)$ and $(\eta,\phi)$ are on the same characteristic with $\sin\phi'\geq0$, and
\begin{align}
G_{t,s}=&\int_{s}^{t}\frac{1}{\sin\Big(\phi'(\xi)\Big)}\ud{\xi}.
\end{align}
Taking $\eta$ derivative on both sides of \eqref{rt 01}, we have
\begin{align}\label{rt 02}
\frac{\p\v_1}{\p\eta}=&X_1+X_2+X_3+X_4+X_5,
\end{align}
where
\begin{align}
X_1=&-\exp\Big(-G_{\eta,0}\Big)\frac{\p G_{\eta,0}}{\p\eta}\Bigg(p\Big(\phi'(0)\Big)
+\int_0^{\eta}\frac{\bar\v(\eta')}{\sin\Big(\phi'(\eta')\Big)}
\exp\Big(G_{\eta',0}\Big)\ud{\eta'}\Bigg),\\
X_2=&\exp\Big(-G_{\eta,0}\Big)\frac{\p p\Big(\phi'(0)\Big)}{\p\eta},\\
X_3=&\frac{\bar\v(\eta)}{\sin\phi},\\
X_4=&-\exp\Big(-G_{\eta,0}\Big)\int_0^{\eta}\bar\v(\eta')
\exp\Big(G_{\eta',0}\Big)
\frac{\cos\Big(\phi'(\eta')\Big)}{\sin^2\Big(\phi'(\eta')\Big)}\frac{\p\phi'(\eta')}{\p\eta}\ud{\eta'},\\
X_5=&\exp\Big(-G_{\eta,0}\Big)\int_0^{\eta}\frac{\bar\v(\eta')}{\sin\Big(\phi'(\eta')\Big)}
\exp\Big(G_{\eta',0}\Big)\frac{\p G_{\eta',0}}{\p\eta}\ud{\eta'}.
\end{align}
Then we need to estimate each term. This procedure is standard, so we omit the details. Note the fact that for $0\leq\eta'\leq\eta$, we have $\sin\phi'\geq\sin\phi\geq\d_0$ and
\begin{align}
\int_0^{\eta}\frac{1}{\sin\Big(\phi'(\eta')\Big)}
\exp\left(-G_{\eta,\eta'}\right)\ud{\eta'}\leq \int_0^{\infty}\ue^{-y}\ud{y}=1,
\end{align}
with the substitution $y=G_{\eta,\eta'}$. The estimates can be listed as below:
\begin{align}
\abs{X_1}=&\abs{\frac{\p G_{\eta,0}}{\p\eta}}\abs{\v_1}\leq\frac{C}{\d_0}\lnnm{\v},\\
\abs{X_2}\leq&\abs{\frac{\p p}{\p\phi}\Big(\phi'(0)\Big)}\abs{\frac{\p\phi'(0)}{\p\eta}}\leq\frac{C\e}{\d_0}\lnmp{\frac{\p p}{\p\phi}},\\
\abs{X_3}\leq&\frac{1}{\d_0}\lnnm{\v},\\
\abs{X_4}\leq&\abs{\bar\v(\eta')}\abs{\frac{\cos\Big(\phi'(\eta')\Big)}{\sin\Big(\phi'(\eta')\Big)}}\abs{\frac{\p\phi'(\eta')}{\p\eta}}\leq\frac{C\e}{\d_0^2}\lnnm{\v},\\
\abs{X_5}\leq&\abs{\bar\v(\eta')}\abs{\frac{\p G_{\eta',0}}{\p\eta}}\leq\frac{C\e}{\d_0^3}\lnnm{\v}.
\end{align}
In total, we have
\begin{align}\label{rt 03}
\abs{\frac{\p\v_1}{\p\eta}}\leq C\left(\frac{1}{\d_0}+\frac{\e}{\d_0^3}\right)\lnnm{\v}+\frac{C\e}{\d_0}\lnmp{\frac{\p p}{\p\phi}}.
\end{align}
\ \\
Using Formulation II, we rewrite the equation \eqref{Milne difference problem 2} along the characteristics as
\begin{align}\label{rt 01'}
\v_2(\eta,\phi)=&\exp\left(-H_{\phi,\phi_{\ast}}\right)\int_{\phi_{\ast}}^{\phi}\frac{S\Big(\eta'(\phi'),\phi'\Big)}{F\Big(\eta'(\phi')\Big)\cos\phi'}
\exp\left(H_{\phi',\phi_{\ast}}\right)\ud{\phi'},
\end{align}
where $\Big(\eta'(\phi'),\phi'\Big)$, $\Big(0,\phi_{\ast}\Big)$ and $(\eta,\phi)$ are on the same characteristic with $\sin\phi\geq\d_0$, and
\begin{align}
H_{t,s}=&\int_{s}^{t}\frac{1}{F\Big(\eta'(\xi)\Big)\cos\xi}\ud{\xi}.
\end{align}
Taking $\eta$ derivative on both sides of \eqref{rt 01'}, we have
\begin{align}\label{rt 02'}
\frac{\p\v_2}{\p\eta}=&Y_1+Y_2+Y_3+Y_4+Y_5,
\end{align}
where
\begin{align}
Y_1=&-\exp\left(-H_{\phi,\phi_{\ast}}\right)\frac{\p H_{\phi,\phi_{\ast}}}{\p\eta}\int_{\phi_{\ast}}^{\phi}\frac{S\Big(\eta'(\phi'),\phi'\Big)}{F\Big(\eta'(\phi')\Big)\cos\phi'}
\exp\left(H_{\phi',\phi_{\ast}}\right)\ud{\phi'},\\
Y_2=&-\frac{S(0,\phi_{\ast})}{F(0)\cos(\phi_{\ast})}\frac{\p\phi_{\ast}}{\p\eta},\\
Y_3=&-\exp\left(-H_{\phi,\phi_{\ast}}\right)\int_{\phi_{\ast}}^{\phi}S\Big(\eta'(\phi'),\phi'\Big)\frac{1}{F^2\Big(\eta'(\phi')\Big)\cos\phi'}\frac{\p F\Big(\eta'(\phi')\Big)}{\p\eta}
\exp\left(H_{\phi',\phi_{\ast}}\right)\ud{\phi'},\\
Y_4=&\exp\left(-H_{\phi,\phi_{\ast}}\right)\int_{\phi_{\ast}}^{\phi}\frac{S\Big(\eta'(\phi'),\phi'\Big)}{F\Big(\eta'(\phi')\Big)\cos\phi'}
\exp\left(H_{\phi',\phi_{\ast}}\right)\frac{\p H_{\phi',\phi_{\ast}}}{\p\eta}\ud{\phi'},\\
Y_5=&\exp\left(-H_{\phi,\phi_{\ast}}\right)\int_{\phi_{\ast}}^{\phi}\frac{\p_{\eta'}S\Big(\eta'(\phi'),\phi'\Big)}{F\Big(\eta'(\phi')\Big)\cos\phi'}\frac{\p\eta'(\phi')}{\p\eta}
\exp\left(H_{\phi',\phi_{\ast}}\right)\ud{\phi'}.
\end{align}
Then we need to estimate each term. Along the characteristics, for $0\leq\eta'\leq\eta$, we know
\begin{align}
\ue^{-V(\eta')}\cos\phi'=\ue^{-V(\eta)}\cos(\phi),
\end{align}
which implies
\begin{align}
\cos\phi'=&\ue^{V(\eta')-V(\eta)}\cos\phi\geq \ue^{V(0)-V(L)}\cos\phi\geq \ue^{V(0)-V(L)}\d_0.
\end{align}
Using \eqref{potential}, we can further deduce that
\begin{align}
\cos\phi'\geq\Big(1-C\e^{\frac{1}{2}}\Big)\d_0\geq \frac{\d_0}{2},
\end{align}
when $\e$ is sufficiently small. Also, we have
\begin{align}
\int_{\phi_{\ast}}^{\phi}\frac{1}{F\Big(\eta'(\phi')\Big)\cos\phi'}
\exp\left(H_{\phi,\phi'}\right)\ud{\phi'}\leq \int_0^{\infty}\ue^{-y}\ud{y}=1,
\end{align}
with the substitution $y=H_{\phi,\phi'}$. Similar to $X_i$ estimates, using standard argument, we may obtain
\begin{align}
\abs{Y_1}\leq&\abs{\v_2}\abs{\frac{\p H_{\phi,\phi_{\ast}}}{\p\eta}}\leq\frac{C}{\d_0^2}\lnnm{S},\\
\abs{Y_2}\leq&\frac{C}{\d_0}\abs{S(0,\phi_{\ast})}\abs{\frac{1}{F(0)}}\abs{\frac{\p\phi_{\ast}}{\p\eta}}\leq\frac{C}{\d_0^2}\lnnm{S},\\
\abs{Y_3}\leq&\abs{S\Big(\eta'(\phi'),\phi'\Big)}\abs{\frac{1}{F\Big(\eta'(\phi')\Big)}}\abs{\frac{\p F\Big(\eta'(\phi')\Big)}{\p\eta}}\leq\frac{C\e}{\d_0}\lnnm{S},\\
\abs{Y_4}\leq&\abs{S\Big(\eta'(\phi'),\phi'\Big)}\abs{\frac{\p H_{\phi',\phi_{\ast}}}{\p\eta}}\leq\frac{C}{\d_0^2}\lnnm{S},\\
\abs{Y_5}\leq&\abs{\p_{\eta'}S\Big(\eta'(\phi'),\phi'\Big)}\abs{\frac{\p\eta'(\phi')}{\p\eta}}\leq C\lnnm{\frac{\p S}{\p\eta}}.
\end{align}
In total, we have
\begin{align}\label{rt 04}
\abs{\frac{\p\v_2}{\p\eta}}\leq \frac{C}{\d_0^2}\lnnm{S}+C\lnnm{\frac{\p S}{\p\eta}}.
\end{align}
Combining \eqref{rt 03} and \eqref{rt 04}, we have
\begin{align}
\abs{\frac{\p\v}{\p\eta}}\leq&C\Bigg(\left(\frac{1}{\d_0}+\frac{\e}{\d_0^3}\right)\lnnm{\v}+\frac{\e}{\d_0}\lnmp{\frac{\p p}{\p\phi}}+\frac{1}{\d_0^2}\lnnm{S}+\lnnm{\frac{\p S}{\p\eta}}\Bigg).
\end{align}
Hence, noting that $\zeta(\eta,\phi)\geq\sin\phi\geq\d_0$, we know
\begin{align}\label{rt 07}
\abs{I_1}\leq&C\Bigg(\left(\frac{1}{\d_0}+\frac{\e}{\d_0^3}\right)\lnnm{\v}+\frac{\e}{\d_0}\lnmp{\frac{\p p}{\p\phi}}+\frac{1}{\d_0^2}\lnnm{S}+\frac{1}{\d_0}\lnnm{\zeta\frac{\p S}{\p\eta}}\Bigg).
\end{align}
\ \\
Step 2: Estimate of $I_2$ for $0\leq\sin\phi\leq\d_0$ and $\chi(\phi_{\ast})<1$.\\
We have
\begin{align}\label{rt 05}
\\
I_2=&\frac{1}{4\pi}\int_0^{\eta}\bigg(\int_{-\pi}^{\pi}\int_{-\frac{\pi}{2}}^{\frac{\pi}{2}}\frac{\zeta(\eta',\phi',\psi)}{\zeta(\eta',\phi_{\ast},\psi_{\ast})}
\Big(1-\chi(\phi_{\ast})\Big)
\a(\eta',\phi_{\ast},\psi_{\ast})\cos\phi_{\ast}\ud{\phi_{\ast}}\ud{\psi_{\ast}}\bigg)
\frac{1}{\sin\phi'}\exp\Big(-G_{\eta,\eta'}\Big)\ud{\eta'}\no\\
=&\frac{1}{4\pi}\int_0^{\eta}\bigg(\int_{-\pi}^{\pi}\int_{-\frac{\pi}{2}}^{\frac{\pi}{2}}\zeta(\eta',\phi',\psi)\Big(1-\chi(\phi_{\ast})\Big)
\frac{\p\v(\eta',\phi_{\ast},\psi_{\ast})}{\p\eta'}\cos\phi_{\ast}\ud{\phi_{\ast}}\ud{\psi_{\ast}}\bigg)\frac{1}{\sin\phi'}\exp\Big(-G_{\eta,\eta'}\Big)\ud{\eta'}.\no
\end{align}
Based on the equation \eqref{Milne difference problem} of $\v$
\begin{align}
\sin\phi_{\ast}\frac{\p\v(\eta',\phi_{\ast},\psi_{\ast})}{\p\eta'}+F(\eta',\psi_{\ast})\cos\phi_{\ast}\frac{\p\v(\eta',\phi_{\ast},\psi_{\ast})}{\p\phi_{\ast}}
+\v(\eta',\phi_{\ast},\psi_{\ast})-\bar\v(\eta')=S(\eta',\phi_{\ast},\psi_{\ast}),
\end{align}
we have
\begin{align}\label{rt 06}
\\
\frac{\p\v(\eta',\phi_{\ast},\psi_{\ast})}{\p\eta'}=-\frac{1}{\sin\phi_{\ast}}
\bigg(F(\eta',\psi_{\ast})\cos\phi_{\ast}\frac{\p\v(\eta',\phi_{\ast},\psi_{\ast})}{\p\phi_{\ast}}
+\v(\eta',\phi_{\ast},\psi_{\ast})-\bar\v(\eta')-S(\eta',\phi_{\ast},\psi_{\ast})\bigg).\no
\end{align}
Hence, inserting \eqref{rt 06} into \eqref{rt 05}, we have the quantity in the large paranthesis
\begin{align}
\tilde\a:=&\int_{-\pi}^{\pi}\int_{-\frac{\pi}{2}}^{\frac{\pi}{2}}\zeta(\eta',\phi',\psi')\Big(1-\chi(\phi_{\ast})\Big)
\frac{\p\v(\eta',\phi_{\ast},\psi_{\ast})}{\p\eta'}\cos\phi_{\ast}\ud{\phi_{\ast}}\ud{\psi_{\ast}}\\
=&-\int_{-\pi}^{\pi}\int_{-\frac{\pi}{2}}^{\frac{\pi}{2}}\zeta(\eta',\phi',\psi')\Big(1-\chi(\phi_{\ast})\Big)
\frac{1}{\sin\phi_{\ast}}
\bigg(\v(\eta',\phi_{\ast},\psi_{\ast})-\bar\v(\eta')-S(\eta',\phi_{\ast},\psi_{\ast})\bigg)\cos\phi_{\ast}\ud{\phi_{\ast}}\ud{\psi_{\ast}}\no\\
&-\int_{-\pi}^{\pi}\int_{-\frac{\pi}{2}}^{\frac{\pi}{2}}\zeta(\eta',\phi',\psi')\Big(1-\chi(\phi_{\ast})\Big)
\frac{1}{\sin\phi_{\ast}}
\bigg(F(\eta',\psi_{\ast})\cos\phi_{\ast}\frac{\p\v(\eta',\phi_{\ast},\psi_{\ast})}{\p\phi_{\ast}}\bigg)\cos\phi_{\ast}\ud{\phi_{\ast}}\ud{\psi_{\ast}}\no\\
:=&\tilde\a_1+\tilde\a_2.\no
\end{align}
We may directly obtain
\begin{align}
\abs{\tilde\a_1}\leq&\int_{-\pi}^{\pi}\int_{-\frac{\pi}{2}}^{\frac{\pi}{2}}
\frac{1}{\sin\phi_{\ast}}
\abs{\v(\eta',\phi_{\ast},\psi_{\ast})-\bar\v(\eta')-S(\eta',\phi_{\ast},\psi_{\ast})}\ud{\phi_{\ast}}\ud{\psi_{\ast}}\\
\leq&\frac{1}{\d}\int_{-\pi}^{\pi}\int_{-\frac{\pi}{2}}^{\frac{\pi}{2}}
\bigg(\abs{\v(\eta',\phi_{\ast},\psi_{\ast})}+\abs{\bar\v(\eta')}+\abs{S(\eta',\phi_{\ast},\psi_{\ast})}\bigg)\ud{\phi_{\ast}}\ud{\psi_{\ast}}\no\\
\leq&\frac{C}{\d}\bigg(\lnnm{\v}+\lnnm{S}\bigg).\no
\end{align}
On the other hand, integration by parts yields
\begin{align}
\tilde\a_2=&\int_{-\pi}^{\pi}\int_{-\frac{\pi}{2}}^{\frac{\pi}{2}}\frac{\p}{\p\phi_{\ast}}\bigg(\zeta(\eta',\phi',\psi')\Big(1-\chi(\phi_{\ast})\Big)
\frac{1}{\sin\phi_{\ast}}
F(\eta',\psi_{\ast})\cos^2\phi_{\ast}\bigg)\v(\eta',\phi_{\ast},\psi_{\ast})\ud{\phi_{\ast}}\ud{\psi_{\ast}},
\end{align}
which further implies
\begin{align}
\abs{\tilde\a_2}\leq&\frac{C\e}{\d^2}\lnnm{\v}.
\end{align}
Since we can use substitution to show
\begin{align}
\int_0^{\eta}\frac{1}{\sin\phi'}\exp\Big(-G_{\eta,\eta'}\Big)\ud{\eta'}\leq 1,
\end{align}
we have
\begin{align}\label{rt 08}
\abs{I_2}\leq&C\bigg(\frac{1}{\d}+\frac{\e}{\d^2}\bigg)\bigg(\lnnm{\v}+\lnnm{S}\bigg)\int_0^{\eta}\frac{1}{\sin\phi'}\exp\Big(-G_{\eta,\eta'}\Big)\ud{\eta'}\\
\leq&C\bigg(\frac{1}{\d}+\frac{\e}{\d^2}\bigg)\bigg(\lnnm{\v}+\lnnm{S}\bigg).\no
\end{align}
\ \\
Step 3: Estimate of $I_3$ for $0\leq\sin\phi\leq\d_0$, $\chi(\phi_{\ast})=1$ and $\sqrt{\e\eta'}\geq\sin\phi'$.\\
Based on \eqref{pt 01}, this implies
\begin{align}
\zeta(\eta',\phi',\psi)\leq C\sqrt{\e\eta'}.\no
\end{align}
Then combining this with \eqref{pt 02},
we know
\begin{align}
\abs{\frac{\zeta(\eta',\phi',\psi)}{\zeta(\eta',\phi_{\ast},\psi_{\ast})}}\leq C,
\end{align}
which yields
\begin{align}
\\
\int_{-\pi}^{\pi}\int_{-\frac{\pi}{2}}^{\frac{\pi}{2}}\frac{\zeta(\eta',\phi',\psi)}{\zeta(\eta',\phi_{\ast},\psi_{\ast})}\chi(\phi_{\ast})
\a(\eta',\phi_{\ast},\psi_{\ast})\cos\phi_{\ast}\ud{\phi_{\ast}}\ud{\psi_{\ast}}\leq&C\int_{-\pi}^{\pi}\int_{-\d}^{\d}
\a(\eta',\phi_{\ast},\psi_{\ast})\ud{\phi_{\ast}}\ud{\psi_{\ast}}\leq C\d\lnnm{\a}.\no
\end{align}
Hence, we have
\begin{align}\label{rt 09}
\abs{I_3}\leq&C\d\lnnm{\a}\int_0^{\eta}\frac{1}{\sin\phi'}\exp\Big(-G_{\eta,\eta'}\Big)\ud{\eta'}\leq C\d\lnnm{\a}.
\end{align}
\ \\
Step 4: Estimate of $I_4$ for $0\leq\sin\phi\leq\d_0$, $\chi(\phi_{\ast})=1$, $\sqrt{\e\eta'}\leq\sin\phi'$ and $\sin^2\phi\leq\e(\eta-\eta')$.\\
Based on \eqref{pt 01}, this implies
\begin{align}
\zeta(\eta',\phi',\psi)\leq C\sin\phi'.
\end{align}
Based on \eqref{pt 03} and using $\sin^2\phi\leq\e(\eta-\eta')$, we have
\begin{align}
-G_{\eta,\eta'}=-\int_{\eta'}^{\eta}\frac{1}{\sin\phi'(y)}\ud{y}\leq -\frac{\eta-\eta'}{2\sqrt{\sin^2\phi+\e(\eta-\eta')}}\leq&-\frac{\eta-\eta'}{2\sqrt{\e(\eta-\eta')}}\leq-C\sqrt{\frac{\eta-\eta'}{\e}}.
\end{align}
Hence, considering \eqref{pt 02}, i.e. $\zeta(\eta',\phi_{\ast},\psi_{\ast})\geq\sqrt{\e\eta'}$, we know
\begin{align}
\abs{I_4}\leq&C\int_0^{\eta}\bigg(\int_{-\pi}^{\pi}\int_{-\frac{\pi}{2}}^{\frac{\pi}{2}}\frac{\zeta(\eta',\phi',\psi)}{\zeta(\eta',\phi_{\ast},\psi_{\ast})}\chi(\phi_{\ast})
\a(\eta',\phi_{\ast},\psi_{\ast})\cos\phi_{\ast}\ud{\phi_{\ast}}\ud{\psi_{\ast}}\bigg)
\frac{1}{\sin\phi'}\exp\Big(-G_{\eta,\eta'}\Big)\ud{\eta'}\\
\leq&C\int_0^{\eta}\bigg(\int_{-\pi}^{\pi}\int_{-\d}^{\d}\frac{1}{\zeta(\eta',\phi_{\ast},\psi_{\ast})}
\a(\eta',\phi_{\ast},\psi_{\ast})\ud{\phi_{\ast}}\ud{\psi_{\ast}}\bigg)
\frac{\zeta(\eta',\phi',\psi)}{\sin\phi'}\exp\Big(-G_{\eta,\eta'}\Big)\ud{\eta'}\no\\
\leq&C\lnnm{\a}\int_0^{\eta}\bigg(\int_{-\pi}^{\pi}\int_{-\d}^{\d}\frac{1}{\zeta(\eta',\phi_{\ast},\psi_{\ast})}
\ud{\phi_{\ast}}\ud{\psi_{\ast}}\bigg)
\frac{\sin\phi'}{\sin\phi'}\exp\Big(-G_{\eta,\eta'}\Big)\ud{\eta'}\no\\
\leq&C\d\lnnm{\a}\int_0^{\eta}\frac{1}{\sqrt{\e\eta'}}\exp\Big(-G_{\eta,\eta'}\Big)\ud{\eta'}\no\\
\leq&C\d\lnnm{\a}\int_0^{\eta}\frac{1}{\sqrt{\e\eta'}}\exp\bigg(-C\sqrt{\frac{\eta-\eta'}{\e}}\bigg)\ud{\eta'}\no
\end{align}
Define $z=\dfrac{\eta'}{\e}$, which implies $\ud{\eta'}=\e\ud{z}$. Substituting this into above integral, we have
\begin{align}
\abs{I_4}\leq&C\d\lnnm{\a}\int_0^{\frac{\eta}{\e}}\frac{1}{\sqrt{z}}\exp\bigg(-C\sqrt{\frac{\eta}{\e}-z}\bigg)\ud{z}\\
=&C\d\lnnm{\a}\Bigg(\int_0^{1}\frac{1}{\sqrt{z}}\exp\bigg(-C\sqrt{\frac{\eta}{\e}-z}\bigg)\ud{z}
+\int_1^{\frac{\eta}{\e}}\frac{1}{\sqrt{z}}\exp\bigg(-C\sqrt{\frac{\eta}{\e}-z}\bigg)\ud{z}\Bigg).\no
\end{align}
We can estimate these two terms separately.
\begin{align}
\int_0^{1}\frac{1}{\sqrt{z}}\exp\bigg(-C\sqrt{\frac{\eta}{\e}-z}\bigg)\ud{z}\leq&\int_0^{1}\frac{1}{\sqrt{z}}\ud{z}=2.
\end{align}
\begin{align}
\int_1^{\frac{\eta}{\e}}\frac{1}{\sqrt{z}}\exp\bigg(-C\sqrt{\frac{\eta}{\e}-z}\bigg)\ud{z}\leq&\int_1^{\frac{\eta}{\e}}\exp\bigg(-C\sqrt{\frac{\eta}{\e}-z}\bigg)\ud{z}
\overset{t^2=\frac{\eta}{\e}-z}{\leq}2\int_0^{\infty}t\ue^{-Ct}\ud{t}<\infty.
\end{align}
Hence, we know
\begin{align}\label{rt 10}
\abs{I_4}\leq&C\d\lnnm{\a}.
\end{align}
\ \\
Step 5: Estimate of $I_5$ for $0\leq\sin\phi\leq\d_0$, $\chi(\phi_{\ast})=1$, $\sqrt{\e\eta'}\leq\sin\phi'$ and $\sin^2\phi\geq\e(\eta-\eta')$.\\
Based on \eqref{pt 01}, this implies
\begin{align}
\zeta(\eta',\phi',\psi)\leq C\sin\phi'.
\end{align}
Based on \eqref{pt 03}, we have
\begin{align}
-G_{\eta,\eta'}=-\int_{\eta'}^{\eta}\frac{1}{\sin\phi'(y)}\ud{y}\leq -\frac{\eta-\eta'}{2\sqrt{\sin^2\phi+\e(\eta-\eta')}}\leq&-\frac{C(\eta-\eta')}{\sin\phi}.
\end{align}
Hence, we have
\begin{align}
\abs{I_5}\leq& C\lnnm{\a}\int_0^{\eta}\bigg(\int_{-\pi}^{\pi}\int_{-\d}^{\d}\frac{1}{\zeta(\eta',\phi_{\ast},\psi_{\ast})}
\ud{\phi_{\ast}}\ud{\psi_{\ast}}\bigg)
\exp\left(-\frac{C(\eta-\eta')}{\sin\phi}\right)\ud{\eta'}.
\end{align}
Here, we use a different way to estimate the inner integral. We use substitution to find
\begin{align}
&\int_{-\d}^{\d}\frac{1}{\zeta(\eta',\phi_{\ast},\psi_{\ast})}
\ud{\phi_{\ast}}\\
=&\int_{-\d}^{\d}\frac{1}{\Bigg(1-\bigg(1-\dfrac{\e\eta'}{R_1}\bigg)^{2\sin^2\psi_{\ast}}\bigg(1-\dfrac{\e\eta'}{R_2}\bigg)^{2\cos^2\psi_{\ast}}
\cos^2\phi_{\ast}\Bigg)^{\frac{1}{2}}}
\ud{\phi_{\ast}}\no\\
\overset{\sin\phi_{\ast}\ small}{\leq}&C\int_{-\d}^{\d}\frac{\cos\phi_{\ast}}{\Bigg(1-\bigg(1-\dfrac{\e\eta'}{R_1}\bigg)^{2\sin^2\psi_{\ast}}\bigg(1-\dfrac{\e\eta'}{R_2}\bigg)^{2\cos^2\psi_{\ast}}
\cos^2\phi_{\ast}\Bigg)^{\frac{1}{2}}}
\ud{\phi_{\ast}}\no\\
=&C\int_{-\d}^{\d}\frac{\cos\phi_{\ast}}{\Bigg(1-\bigg(1-\dfrac{\e\eta'}{R_1}\bigg)^{2\sin^2\psi_{\ast}}\bigg(1-\dfrac{\e\eta'}{R_2}\bigg)^{2\cos^2\psi_{\ast}}
+\bigg(1-\dfrac{\e\eta'}{R_1}\bigg)^{2\sin^2\psi_{\ast}}\bigg(1-\dfrac{\e\eta'}{R_2}\bigg)^{2\cos^2\psi_{\ast}}\sin\phi_{\ast}^2\Bigg)^{\frac{1}{2}}}
\ud{\phi_{\ast}}\no\\
\overset{y=\sin\phi_{\ast}}{=}&C\int_{-\d}^{\d}\frac{1}{\Bigg(1-\bigg(1-\dfrac{\e\eta'}{R_1}\bigg)^{2\sin^2\psi_{\ast}}\bigg(1-\dfrac{\e\eta'}{R_2}\bigg)^{2\cos^2\psi_{\ast}}
+\bigg(1-\dfrac{\e\eta'}{R_1}\bigg)^{2\sin^2\psi_{\ast}}\bigg(1-\dfrac{\e\eta'}{R_2}\bigg)^{2\cos^2\psi_{\ast}}y^2\Bigg)^{\frac{1}{2}}}
\ud{y}.\no
\end{align}
Define
\begin{align}
p=&\sqrt{1-\bigg(1-\dfrac{\e\eta'}{R_1}\bigg)^{2\sin^2\psi_{\ast}}\bigg(1-\dfrac{\e\eta'}{R_2}\bigg)^{2\cos^2\psi_{\ast}}}\leq C\sqrt{\e\eta'},\\
q=&\bigg(1-\dfrac{\e\eta'}{R_1}\bigg)^{\sin^2\psi_{\ast}}\bigg(1-\dfrac{\e\eta'}{R_2}\bigg)^{\cos^2\psi_{\ast}}\geq C,\\
r=&\frac{p}{q}\leq C\sqrt{\e\eta'}.
\end{align}
Then we have
\begin{align}
\int_{-\d}^{\d}\frac{1}{\zeta(\eta',\phi_{\ast},\psi_{\ast})}\ud{\phi_{\ast}}\leq&C\int_{-\d}^{\d}\frac{1}{(p^2+q^2y^2)^{\frac{1}{2}}}\ud{y}\\
\leq&C\int_{-2}^{2}\frac{1}{(p^2+q^2y^2)^{\frac{1}{2}}}\ud{y}\leq C\int_{-2}^{2}\frac{1}{(r^2+y^2)^{\frac{1}{2}}}\ud{y}\no\\
\leq&C\int_{0}^{2}\frac{1}{(r^2+y^2)^{\frac{1}{2}}}\ud{y}=\bigg(\ln(y+\sqrt{r^2+y^2})-\ln(r)\bigg)\bigg|_0^{2}\no\\
\leq&C\bigg(\ln(2+\sqrt{r^2+4})-\ln{r}\bigg)\leq C\bigg(1+\ln(r)\bigg)\no\\
\leq&C\bigg(1+\abs{\ln(\e)}+\abs{\ln(\eta')}\bigg).\no
\end{align}
Hence, we know
\begin{align}
\abs{I_5}\leq&C\lnnm{\a}\int_0^{\eta}\bigg(1+\abs{\ln(\e)}+\abs{\ln(\eta')}\bigg)
\exp\left(-\frac{C(\eta-\eta')}{\sin\phi}\right)\ud{\eta'}
\end{align}
We may directly compute
\begin{align}
\abs{\int_0^{\eta}\bigg(1+\abs{\ln(\e)}\bigg)
\exp\left(-\frac{C(\eta-\eta')}{\sin\phi}\right)\ud{\eta'}}\leq C\sin\phi(1+\abs{\ln(\e)}).
\end{align}
Hence, we only need to estimate
\begin{align}
\abs{\int_0^{\eta}\abs{\ln(\eta')}
\exp\left(-\frac{C(\eta-\eta')}{\sin\phi}\right)\ud{\eta'}}.
\end{align}
If $\eta\leq 2$, using Cauchy's inequality, we have
\begin{align}
\abs{\int_0^{\eta}\abs{\ln(\eta')}
\exp\left(-\frac{C(\eta-\eta')}{\sin\phi}\right)\ud{\eta'}}
\leq&\bigg(\int_0^{\eta}\ln^2(\eta')\ud{\eta'}\bigg)^{\frac{1}{2}}\bigg(\int_0^{\eta}
\exp\left(-\frac{2C(\eta-\eta')}{\sin\phi}\right)\ud{\eta'}\bigg)^{\frac{1}{2}}\\
\leq&\bigg(\int_0^{2}\ln^2(\eta')\ud{\eta'}\bigg)^{\frac{1}{2}}\bigg(\int_0^{\eta}
\exp\left(-\frac{2C(\eta-\eta')}{\sin\phi}\right)\ud{\eta'}\bigg)^{\frac{1}{2}}\no\\
\leq&\sqrt{\sin\phi}.\no
\end{align}
If $\eta\geq 2$, we decompose and apply Cauchy's inequality to obtain
\begin{align}
&\abs{\int_0^{\eta}\abs{\ln(\eta')}
\exp\left(-\frac{C(\eta-\eta')}{\sin\phi}\right)\ud{\eta'}}\\
\leq&\abs{\int_0^{2}\abs{\ln(\eta')}
\exp\left(-\frac{C(\eta-\eta')}{\sin\phi}\right)\ud{\eta'}}+\abs{\int_2^{\eta}\ln(\eta')
\exp\left(-\frac{C(\eta-\eta')}{\sin\phi}\right)\ud{\eta'}}\no\\
\leq&\bigg(\int_0^{2}\ln^2(\eta')\ud{\eta'}\bigg)^{\frac{1}{2}}\bigg(\int_0^{2}
\exp\left(-\frac{2C(\eta-\eta')}{\sin\phi}\right)\ud{\eta'}\bigg)^{\frac{1}{2}}+\ln(L)\abs{\int_2^{\eta}
\exp\left(-\frac{C(\eta-\eta')}{\sin\phi}\right)\ud{\eta'}}\no\\
\leq&C\bigg(\sqrt{\sin\phi}+\abs{\ln(\e)}\sin\phi\bigg)\leq C\Big(1+\abs{\ln(\e)}\Big)\sqrt{\sin\phi}.\no
\end{align}
Hence, we have
\begin{align}\label{rt 11}
\abs{I_5}\leq C\Big(1+\abs{\ln(\e)}\Big)\sqrt{\d_0}\lnnm{\a}.
\end{align}
\ \\
Step 6: Estimate of $I_6$ for $\cos\phi<\d_0$.\\
We have
\begin{align}\label{rt 21}
I_6=&\frac{1}{4\pi}\int_0^{\eta}\bigg(\int_{-\pi}^{\pi}\int_{-\frac{\pi}{2}}^{\frac{\pi}{2}}\frac{\zeta(\eta',\phi',\psi)}{\zeta(\eta',\phi_{\ast},\psi_{\ast})}
\a(\eta',\phi_{\ast},\psi_{\ast})\cos\phi_{\ast}\ud{\phi_{\ast}}\ud\psi_{\ast}\bigg)
\frac{1}{\sin\phi'}\exp\Big(-G_{\eta,\eta'}\Big)\ud{\eta'}\\
=&\frac{1}{4\pi}\int_0^{\eta}\bigg(\int_{-\pi}^{\pi}\int_{-\frac{\pi}{2}}^{\frac{\pi}{2}}\zeta(\eta',\phi',\psi)\Big(1-\chi(\phi_{\ast})\Big)
\frac{\v(\eta',\phi_{\ast},\psi_{\ast})}{\p\eta'}\cos\phi_{\ast}\ud{\phi_{\ast}}\ud\psi_{\ast}\bigg)\frac{1}{\sin\phi'}\exp\Big(-G_{\eta,\eta'}\Big)\ud{\eta'}\no\\
&+\frac{1}{4\pi}\int_0^{\eta}\bigg(\int_{-\pi}^{\pi}\int_{-\frac{\pi}{2}}^{\frac{\pi}{2}}\chi(\phi_{\ast})
\frac{\zeta(\eta',\phi',\psi)}{\zeta(\eta',\phi_{\ast},\psi_{\ast})}
\a(\eta',\phi_{\ast},\psi_{\ast})\cos\phi_{\ast}\ud{\phi_{\ast}}\ud\psi_{\ast}\bigg)\frac{1}{\sin\phi'}\exp\Big(-G_{\eta,\eta'}\Big)\ud{\eta'}.\no
\end{align}
The first term in \eqref{rt 21} can be estimated as $I_2$.
\begin{align}
&\frac{1}{4\pi}\int_0^{\eta}\bigg(\int_{-\pi}^{\pi}\int_{-\frac{\pi}{2}}^{\frac{\pi}{2}}\zeta(\eta',\phi',\psi')\Big(1-\chi(\phi_{\ast})\Big)
\frac{\v(\eta',\phi_{\ast},\psi_{\ast})}{\p\eta'}\cos\phi_{\ast}\ud{\phi_{\ast}}\ud\psi_{\ast}\bigg)\frac{1}{\sin\phi'}\exp\Big(-G_{\eta,\eta'}\Big)\ud{\eta'}\\
\leq&C\bigg(\frac{1}{\d}+\frac{\e}{\d^2}\bigg)\bigg(\lnnm{\v}+\lnnm{S}\bigg).\no
\end{align}
It is easy to check that $\sqrt{\e\eta'}\leq\sin\phi\leq\sin\phi'$ and $\sin^2\phi\geq\e(\eta-\eta')$, so the second term in \eqref{rt 21} can be estimated as $I_5$.
\begin{align}
&\frac{1}{4\pi}\int_0^{\eta}\bigg(\int_{-\pi}^{\pi}\int_{-\frac{\pi}{2}}^{\frac{\pi}{2}}\chi(\phi_{\ast})
\frac{\zeta(\eta',\phi',\psi')}{\zeta(\eta',\phi_{\ast},\psi_{\ast})}
\a(\eta',\phi_{\ast},\psi_{\ast})\cos\phi_{\ast}\ud{\phi_{\ast}}\ud\psi_{\ast}\bigg)\frac{1}{\sin\phi'}\exp\Big(-G_{\eta,\eta'}\Big)\ud{\eta'}\\
\leq&C\Big(1+\abs{\ln(\e)}\Big)\sqrt{\sin\phi}\sup_{\sin\phi_{\ast}\leq\d}\abs{\a(\eta,\phi_{\ast},\psi_{\ast})}\leq C\Big(1+\abs{\ln(\e)}\Big)\sup_{\sin\phi_{\ast}\leq\d}\abs{\a(\eta,\phi_{\ast},\psi_{\ast})}.\no
\end{align}
Note that now we lose the smallness since $\sin\phi\geq\dfrac{1}{2}$, so we need a more detailed analysis. Actually, the value of $\abs{\a}$ for $\sin\phi_{\ast}\leq\d$, is covered in $I_2,I_3,I_4,I_5$ and the following $II_2,II_3,II_4,III$. Therefore, in fact, we get the estimate
\begin{align}
&\frac{1}{4\pi}\int_0^{\eta}\bigg(\int_{-\pi}^{\pi}\int_{-\frac{\pi}{2}}^{\frac{\pi}{2}}\chi(\phi_{\ast})
\frac{\zeta(\eta',\phi',\psi')}{\zeta(\eta',\phi_{\ast},\psi_{\ast})}
\a(\eta',\phi_{\ast},\psi_{\ast})\cos\phi_{\ast}\ud{\phi_{\ast}}\ud\psi_{\ast}\bigg)\frac{1}{\sin\phi'}\exp\Big(-G_{\eta,\eta'}\Big)\ud{\eta'}\\
\leq&C\Big(1+\abs{\ln(\e)}\Big)\bigg(\lnm{p_{\a}}+\lnnm{S_{\a}}\bigg)+C\Big(1+\abs{\ln(\e)}\Big)\bigg(\frac{1}{\d}+\frac{\e}{\d^2}\bigg)\bigg(\lnnm{\v}+\lnnm{S}\bigg)\no\\
&+C\Big(1+\abs{\ln(\e)}\Big)\bigg(\d+\Big(1+\abs{\ln(\e)}\Big)\sqrt{\d_0}\bigg)\lnnm{\a}.\no
\end{align}
Here, we do not need $I_6$ and $II_5$, so there is no logical loop. Therefore, we have
\begin{align}\label{rt 12}
\\
\abs{I_6}\leq&C\Big(1+\abs{\ln(\e)}\Big)\bigg(\lnm{p_{\a}}+\lnnm{S_{\a}}\bigg)+C\Big(1+\abs{\ln(\e)}\Big)\bigg(\frac{1}{\d}+\frac{\e}{\d^2}\bigg)\bigg(\lnnm{\v}+\lnnm{S}\bigg)\no\\
&+C\Big(1+\abs{\ln(\e)}\Big)\bigg(\d+\Big(1+\abs{\ln(\e)}\Big)\sqrt{\d_0}\bigg)\lnnm{\a}.\no
\end{align}
\ \\
Step 7: Synthesis.\\
Collecting all the terms \eqref{rt 07}, \eqref{rt 08}, \eqref{rt 09}, \eqref{rt 10}, \eqref{rt 11} and \eqref{rt 12}, we have proved
\begin{align}\label{rt 22}
\abs{I}\leq&C\Big(1+\abs{\ln(\e)}\Big)\bigg(\lnmp{p_{\a}}+\lnnm{S_{\a}}\bigg)\\
&+C\bigg(\lnmp{\e\frac{\p p}{\p\phi}}+\lnnm{\zeta\frac{\p S}{\p\eta}}\bigg)\no\\
&+C\Big(1+\abs{\ln(\e)}\Big)\bigg(\frac{1}{\d_0^3}+\frac{1}{\d}+\frac{\e}{\d^2}\bigg)\bigg(\lnnm{\v}+\lnnm{S}\bigg)\no\\
&+C\Big(1+\abs{\ln(\e)}\Big)\bigg(\d+\Big(1+\abs{\ln(\e)}\Big)\sqrt{\d_0}\bigg)\lnnm{\a}.\no
\end{align}

\subsection{Region II: $\sin\phi<0$ and $E(\eta,\phi)\leq \ue^{-V(L)}$}

We consider
\begin{align}
\k[p_{\a}]=&p_{\a}\Big(\phi'(\eta,\phi;0)\Big)\exp\Big(-G_{L,0}-G_{L,\eta}\Big)\\
\t[\tilde\a+S_{\a}]=&\int_0^{L}\frac{(\tilde\a+S)\Big(\eta',\phi'(\eta,\phi;\eta')\Big)}{\sin\Big(\phi'(\eta,\phi;\eta')\Big)}
\exp\Big(-G_{L,\eta'}-G_{L,\eta}\Big)\ud{\eta'}\\
&+\int_{\eta}^{L}\frac{(\tilde\a+S)\Big(\eta',\rr[\phi'](\eta,\phi;\eta')\Big)}{\sin\Big(\phi'(\eta,\phi;\eta')\Big)}\exp\Big(-G_{\eta',\eta}\Big)\ud{\eta'}.\no
\end{align}
Based on Lemma \ref{Milne lemma 1} and Lemma \ref{Milne lemma 2},
we can directly obtain
\begin{align}
\abs{\k[p_{\a}]}\leq&\lnmp{p_{\a}},\\
\abs{\t[S_{\a}]}\leq&\lnnm{S_{\a}}.
\end{align}
Hence, we only need to estimate
\begin{align}\label{mild 2}
II=\t[\tilde\a]=&\int_0^{L}\frac{\tilde\a\Big(\eta',\phi'(\eta,\phi;\eta')\Big)}{\sin\Big(\phi'(\eta,\phi;\eta')\Big)}
\exp\Big(-G_{L,\eta'}-G_{L,\eta}\Big)\ud{\eta'}\\
&+\int_{\eta}^{L}\frac{\tilde\a\Big(\eta',\rr[\phi'(\eta,\phi;\eta')]\Big)}{\sin\Big(\phi'(\eta,\phi;\eta')\Big)}\exp\Big(-G_{\eta',\eta}\Big)\ud{\eta'}\nonumber.
\end{align}
In particular,
since the integral $\displaystyle\int_0^{\eta}\cdots$ can be estimated as in Region I, so we only need to estimate the integral $\displaystyle\int_{\eta}^L\cdots$. Also, noting the fact that
\begin{align}
\exp(-G_{L,\eta'}-G_{L,\eta})\leq \exp\Big(-G_{\eta',\eta}\Big),
\end{align}
we only need to estimate
\begin{align}
\int_{\eta}^{L}\frac{\tilde\a\Big(\eta',\rr[\phi'(\eta,\phi;\eta')]\Big)}{\sin\Big(\phi'(\eta,\phi;\eta')\Big)}\exp\Big(-G_{\eta',\eta}\Big)\ud{\eta'}.
\end{align}
Here the proof is very similar to that in Region I, so we only point out the key differences.\\
\ \\
In the following, we will divide the estimate of $II$ into several cases based on the value of $\sin\phi$, $\cos\phi$, $\sin\phi'$ and $\e\eta'$. We write
\begin{align}
II=&\int_{\eta}^L\id_{\{\sin\phi\leq-\d_0\}}\id_{\{\cos\phi\geq\d_0\}}+\int_{\eta}^L\id_{\{-\d_0\leq\sin\phi\leq0\}}\id_{\{\chi(\phi_{\ast})<1\}}\\
&+\int_{\eta}^L\id_{\{-\d_0\leq\sin\phi\leq0\}}\id_{\{\chi(\phi_{\ast})=1\}}\id_{\{\sqrt{\e\eta'}\geq\sin\phi'\}}
+\int_{\eta}^L\id_{\{-\d_0\leq\sin\phi\leq0\}}\id_{\{\chi(\phi_{\ast})=1\}}\id_{\{\sqrt{\e\eta'}\leq\sin\phi'\}}\no\\
&+\int_{\eta}^L\id_{\{\cos\phi\leq\d_0\}}\no\\
=&II_1+II_2+II_3+II_4+II_5.\no
\end{align}
\ \\
Step 0: Preliminaries.\\
We need to update one key result. For $0\leq\eta\leq\eta'$,
\begin{align}
\sin\phi'=&\sqrt{1-\cos^2\phi'}\leq\sqrt{1-\bigg(\frac{R_1-\e\eta}{R_1-\e\eta'}\bigg)^{2\sin^2\psi}\bigg(\frac{R_2-\e\eta}{R_2-\e\eta'}\bigg)^{2\cos^2\psi}\cos^2\phi}
\leq\abs{\sin\phi}.
\end{align}
Then we have
\begin{align}\label{pt 04}
-\int_{\eta}^{\eta'}\frac{1}{\sin\phi'(y)}\ud{y}\leq&-\frac{\eta'-\eta}{\abs{\sin\phi}}.
\end{align}
\ \\
Step 1: Estimate of $II_1$ for $\sin\phi\leq-\d_0$ and $\cos\phi\geq\d_0$.\\
This step is very similar to the estimate of $I_1$, so we omit the details here. There is one key difference. In estimating $I_1$, we always have $\sin\phi'\geq\sin\phi$ when $0\leq\eta'\leq\eta$. However, now we allow $\eta\leq\eta'\leq L$, so we should update the bound of $\sin\phi'$.\\
\ \\
Along the characteristics, we know
\begin{align}
\ue^{-V(\eta')}\cos\phi'=\ue^{-V(\eta)}\cos\phi,
\end{align}
which implies
\begin{align}
\cos\phi'=&\ue^{V(\eta')-V(\eta)}\cos\phi\leq \ue^{V(L)-V(0)}\cos\phi= \ue^{V(L)-V(0)}\sqrt{1-\d_0^2}.
\end{align}
We can further deduce that
\begin{align}
\cos\phi'\leq \Big(1-\e^{\frac{1}{2}}\Big)^{-1}\sqrt{1-\d_0^2}.
\end{align}
Then we have
\begin{align}
\sin\phi'\geq\sqrt{1-\Big(1-\e^{\frac{1}{2}}\Big)^{-2}(1-\d_0^2)}\geq \d_0-\e^{\frac{1}{4}}>\frac{\d_0}{2},
\end{align}
when $\e$ is sufficiently small.\\
\ \\
Similar to Region I, we will use two formulations to handle different terms and we will decompose $\v=\v_1+\v_2$.\\
\ \\
Using Formulation I, we rewrite the $\v_1$ equation along the characteristics as
\begin{align}\label{rt 33}
\v_1(\eta,\phi)=&p\Big(\phi'(0)\Big)\exp(-G_{L,0}-G_{L,\eta})\\
&+\int_0^{L}\frac{\bar\v(\eta')}{\sin\Big(\phi'(\eta')\Big)}
\exp(-G_{L,\eta'}-G_{L,\eta})\ud{\eta'}+\int_{\eta}^{L}\frac{\bar\v(\eta')}{\sin\Big(\phi'(\eta')\Big)}\exp(-G_{\eta',\eta})\ud{\eta'},\no
\end{align}
where $(\eta',\phi')$ and $(\eta,\phi)$ are on the same characteristic with $\sin\phi'\geq0$.
Then taking $\eta$ derivative on both sides of \eqref{rt 33} yields
\begin{align}
\frac{\p\v_1}{\p\eta}=&X_1+X_2+X_3+X_4+X_5+X_6+X_7,
\end{align}
where
\begin{align}
X_1=&\frac{\p p\Big(\phi'(0)\Big)}{\p\eta}\exp(-G_{L,0}-G_{L,\eta}),\\
X_2=&-p\Big(\phi'(0)\Big)\exp(-G_{L,0}-G_{L,\eta})\bigg(\frac{\p G_{L,0}}{\p\eta}+\frac{\p G_{L,\eta}}{\p\eta}\bigg),\\
X_3=&-\int_0^{L}\bar\v(\eta')\frac{\cos\Big(\phi'(\eta')\Big)}{\sin^2\Big(\phi'(\eta')\Big)}\frac{\p\phi'(\eta')}{\p\eta}\exp(-G_{L,\eta'}-G_{L,\eta})\ud{\eta'},\\
X_4=&-\int_0^{L}\frac{\bar\v(\eta')}{\sin\Big(\phi'(\eta')\Big)}\exp(-G_{L,\eta'}-G_{L,\eta})\bigg(\frac{\p G_{L,\eta'}}{\p\eta}+\frac{\p G_{L,\eta}}{\p\eta}\bigg)\ud{\eta'},\\
X_5=&-\int_{\eta}^{L}\bar\v(\eta')\frac{\cos\Big(\phi'(\eta')\Big)}{\sin^2\Big(\phi'(\eta')\Big)}\frac{\p\phi'(\eta')}{\p\eta}\exp(-G_{\eta',\eta})\ud{\eta'},\\
X_6=&-\int_{\eta}^{L}\frac{\bar\v(\eta')}{\sin\Big(\phi'(\eta')\Big)}\exp(-G_{\eta',\eta})\frac{\p G_{\eta',\eta}}{\p\eta}\ud{\eta'},\\
X_7=&-\frac{\bar\v(\eta)}{\sin(\phi)}.
\end{align}
We need to estimate each term. The estimates are standard, so we only list the results:
\begin{align}
\abs{X_1}&\leq\abs{\frac{\p p}{\p\phi}\Big(\phi'(0)\Big)}\abs{\frac{\p\phi'(0)}{\p\eta}}\leq\frac{C\e}{\d_0}\lnmp{\frac{\p p}{\p\phi}},\\
\abs{X_2}&\leq\abs{p\Big(\phi'(0)\Big)}\abs{\frac{\p G_{L,0}}{\p\eta}+\frac{\p G_{L,\eta}}{\p\eta}}\leq\frac{C}{\d_0}\lnmp{p},\\
\abs{X_3}&\leq\abs{\bar\v(\eta')}\abs{\frac{\cos\Big(\phi'(\eta')\Big)}{\sin\Big(\phi'(\eta')\Big)}}\abs{\frac{\p\phi'(\eta')}{\p\eta}}\leq\frac{C\e}{\d_0^2}\lnnm{\v},\\
\abs{X_4}&\leq\abs{\bar\v(\eta')}\abs{\frac{\p G_{L,\eta'}}{\p\eta}+\frac{\p G_{L,\eta}}{\p\eta}}\leq\frac{C}{\d_0}\lnnm{\v},\\
\abs{X_5}&\leq\abs{\bar\v(\eta')}\abs{\frac{\cos\Big(\phi'(\eta')\Big)}{\sin\Big(\phi'(\eta')\Big)}}\abs{\frac{\p\phi'(\eta')}{\p\eta}}\leq\frac{C\e}{\d_0^2}\lnnm{\v},\\
\abs{X_6}&\leq\abs{\bar\v(\eta')}\abs{\frac{\p G_{\eta',\eta}}{\p\eta}}\leq\frac{C}{\d_0}\lnnm{\v},\\
\abs{X_7}&\leq\frac{1}{\d_0}\lnnm{\v}.
\end{align}
In total, we have
\begin{align}\label{rt 35}
\abs{\frac{\p\v_1}{\p\eta}}\leq C\left(\frac{1}{\d_0}+\frac{\e}{\d_0^3}\right)\lnnm{\v}+\frac{C\e}{\d_0}\lnmp{\frac{\p p}{\p\phi}}.
\end{align}
\ \\
Using Formulation II, we rewrite the $\v_2$ equation along the characteristics as
\begin{align}\label{rt 34}
\\
\v_2(\eta,\phi)=&\int_{\phi_{\ast}}^{\phi^{\ast}}\frac{S\Big(\eta'(\phi'),\phi'\Big)}{F\Big(\eta'(\phi')\Big)\cos(\phi')}
\exp(-H_{\phi^{\ast},\phi'}-H_{-\phi^{\ast},\phi})\ud{\phi'}
+\int_{\phi}^{-\phi^{\ast}}\frac{S\Big(\eta'(\phi'),\phi'\Big)}{F\Big(\eta'(\phi')\Big)\cos(\phi')}\exp(-H_{\phi',\phi})\ud{\phi'}\no
\end{align}
where $(\eta',\phi')$, $(0,\phi_{\ast})$, $(L,\phi^{\ast})$, $(L,-\phi^{\ast})$ and $(\eta,\phi)$ are on the same characteristic with $\sin\phi'\geq0$ and $\phi^{\ast}\geq0$.
Then taking $\eta$ derivative on both sides of \eqref{rt 34} yields
\begin{align}
\frac{\p\v_2}{\p\eta}=&Y_1+Y_2+Y_3+Y_4+Y_5+Y_6+Y_7+Y_8,
\end{align}
where
\begin{align}
Y_1=&\frac{S(L,\phi^{\ast})}{F(L)\cos(\phi^{\ast})}\exp(-H_{-\phi^{\ast},\phi})\frac{\p\phi^{\ast}}{\p\eta}-\frac{S(0,\phi_{\ast})}{F(0)\cos(\phi_{\ast})}
\exp(-H_{\phi^{\ast},\phi_{\ast}}-H_{-\phi^{\ast},\phi})\frac{\p\phi_{\ast}}{\p\eta},\\
Y_2=&-\int_{\phi_{\ast}}^{\phi^{\ast}}S\Big(\eta'(\phi'),\phi'\Big)\frac{1}{F^2\Big(\eta'(\phi')\Big)\cos(\phi')}\frac{\p F\Big(\eta'(\phi')\Big)}{\p\eta}
\exp(-H_{\phi^{\ast},\phi'}-H_{-\phi^{\ast},\phi})\ud{\phi'},\\
Y_3=&-\int_{\phi_{\ast}}^{\phi^{\ast}}\frac{S\Big(\eta'(\phi'),\phi'\Big)}{F\Big(\eta'(\phi')\Big)\cos(\phi')}
\exp(-H_{\phi^{\ast},\phi'}-H_{-\phi^{\ast},\phi})\bigg(\frac{\p H_{\phi^{\ast},\phi'}}{\p\eta}+\frac{\p H_{-\phi^{\ast},\phi}}{\p\eta}\bigg)\ud{\phi'},\\
Y_4=&\int_{\phi_{\ast}}^{\phi^{\ast}}\frac{\p_{\eta'}S\Big(\eta'(\phi'),\phi'\Big)}{F\Big(\eta'(\phi')\Big)\cos(\phi')}\frac{\p \eta'(\phi')}{\p\eta}
\exp(-H_{\phi^{\ast},\phi'}-H_{-\phi^{\ast},\phi})\ud{\phi'},\\
Y_5=&-\frac{S(L,-\phi^{\ast})}{F(L)\cos(-\phi^{\ast})}\exp(-H_{-\phi^{\ast},\phi})\frac{\p \phi^{\ast}}{\p\eta},\\
Y_6=&-\int_{\phi}^{-\phi^{\ast}}S\Big(\eta'(\phi'),\phi'\Big)\frac{1}{F^2\Big(\eta'(\phi')\Big)\cos(\phi')}\frac{\p F\Big(\eta'(\phi')\Big)}{\p\eta}\exp(-H_{\phi',\phi})\ud{\phi'},\\
Y_7=&-\int_{\phi}^{-\phi^{\ast}}\frac{S\Big(\eta'(\phi'),\phi'\Big)}{F\Big(\eta'(\phi')\Big)\cos(\phi')}\exp(-H_{\phi',\phi})\frac{\p H_{\phi',\phi}}{\p\eta}\ud{\phi'},\\
Y_8=&\int_{\phi}^{-\phi^{\ast}}\frac{\p_{\eta'}S\Big(\eta'(\phi'),\phi'\Big)}{F\Big(\eta'(\phi')\Big)\cos(\phi')}\frac{\p \eta'(\phi')}{\p\eta}\exp(-H_{\phi',\phi})\ud{\phi'}.
\end{align}
We need to estimate each term. The estimates are standard, so we only list the results:
\begin{align}
\abs{Y_1}&\leq\frac{C}{\d_0}\bigg(\abs{S(0,\phi_{\ast})}\abs{\frac{1}{F(0)}}\abs{\frac{\p\phi_{\ast}}{\p\eta}}
+\abs{S(L,\phi^{\ast})}\abs{\frac{1}{F(L)}}\abs{\frac{\p\phi^{\ast}}{\p\eta}}\bigg)\leq\frac{C}{\d_0^2}\lnnm{S},\\
\abs{Y_2}&\leq\abs{S\Big(\eta'(\phi'),\phi'\Big)}\abs{\frac{1}{F\Big(\eta'(\phi')\Big)}}\abs{\frac{\p F\Big(\eta'(\phi')\Big)}{\p\eta}}\leq\frac{C\e}{\d_0}\lnnm{S},\\
\abs{Y_3}&\leq\abs{S\Big(\eta'(\phi'),\phi'\Big)}\abs{\frac{\p H_{\phi^{\ast},\phi'}}{\p\eta}+\frac{\p H_{-\phi^{\ast},\phi}}{\p\eta}}\leq\frac{C}{\d_0^2}\lnnm{S},\\
\abs{Y_4}&\leq\abs{\p_{\eta'}S\Big(\eta'(\phi'),\phi'\Big)}\abs{\frac{\p\eta'(\phi')}{\p\eta}}\leq C\lnnm{\frac{\p S}{\p\eta}},\\
\abs{Y_5}&\leq\frac{C}{\d_0}\abs{S(L,\phi^{\ast})}\abs{\frac{1}{F(L)}}\abs{\frac{\p\phi^{\ast}}{\p\eta}}\leq\frac{C}{\d_0^2}\lnnm{S},\\
\abs{Y_6}&\leq\abs{S\Big(\eta'(\phi'),\phi'\Big)}\abs{\frac{1}{F\Big(\eta'(\phi')\Big)}}\abs{\frac{\p F\Big(\eta'(\phi')\Big)}{\p\eta}}\leq\frac{C\e}{\d_0}\lnnm{S},\\
\abs{Y_7}&\leq\abs{S\Big(\eta'(\phi'),\phi'\Big)}\abs{\frac{\p H_{\phi',\phi}}{\p\eta}}\leq\frac{C}{\d_0^2}\lnnm{S},\\
\abs{Y_8}&\leq\abs{\p_{\eta'}S\Big(\eta'(\phi'),\phi'\Big)}\abs{\frac{\p\eta'(\phi')}{\p\eta}}\leq C\lnnm{\frac{\p S}{\p\eta}}.
\end{align}
In total, we have
\begin{align}\label{rt 36}
\abs{\frac{\p\v_2}{\p\eta}}\leq \frac{C}{\d_0^2}\lnnm{S}+C\lnnm{\frac{\p S}{\p\eta}}.
\end{align}
Combining \eqref{rt 35} and \eqref{rt 36}, we have
\begin{align}
\abs{\frac{\p\v}{\p\eta}}\leq&C\Bigg(\left(\frac{1}{\d_0}+\frac{\e}{\d_0^3}\right)\lnnm{\v}+\frac{\e}{\d_0}\lnmp{\frac{\p p}{\p\phi}}+\frac{1}{\d_0^2}\lnnm{S}+\lnnm{\frac{\p S}{\p\eta}}\Bigg).
\end{align}
Hence, noting that $\zeta(\eta,\phi)\geq\sin\phi\geq\d_0$, we know
\begin{align}\label{rt 13}
\abs{II_1}\leq&C\Bigg(\left(\frac{1}{\d_0}+\frac{\e}{\d_0^3}\right)\lnnm{\v}+\frac{\e}{\d_0}\lnmp{\frac{\p p}{\p\phi}}+\frac{1}{\d_0^2}\lnnm{S}+\frac{1}{\d_0}\lnnm{\zeta\frac{\p S}{\p\eta}}\Bigg).
\end{align}
\ \\
Step 2: Estimate of $II_2$ for $-\d_0\leq\sin\phi\leq0$ and $\chi(\phi_{\ast})<1$.\\
This is similar to the estimate of $I_2$ based on the integral
\begin{align}
\int_{\eta}^{L}\frac{1}{\sin\phi'}\exp\Big(-G_{\eta',\eta}\Big)\ud{\eta'}\leq 1.
\end{align}
Then we have
\begin{align}\label{rt 14}
\abs{II_2}
\leq&\bigg(\frac{1}{\d}+\frac{\e}{\d^2}\bigg)\bigg(\lnnm{\v}+\lnnm{S}\bigg).
\end{align}
\ \\
Step 3: Estimate of $II_3$ for $-\d_0\leq\sin\phi\leq0$, $\chi(\phi_{\ast})=1$ and $\sqrt{\e\eta'}\geq\sin\phi'$.\\
This is similar to the estimate of $I_3$, we have
\begin{align}\label{rt 15}
\abs{II_3}\leq&C\d\lnnm{\a}.
\end{align}
\ \\
Step 4: Estimate of $II_4$ for $-\d_0\leq\sin\phi\leq0$, $\chi(\phi_{\ast})=1$ and $\sqrt{\e\eta'}\leq\sin\phi'$.\\
This step is different. We do not need to further decompose the cases.
Based on \eqref{pt 04}, we have
\begin{align}
-G_{\eta,\eta'}\leq&-\frac{\eta'-\eta}{\abs{\sin\phi}}.
\end{align}
Then following the same argument in estimating $I_5$, we obtain
\begin{align}
\abs{II_4}\leq&C\lnnm{\a}\int_{\eta}^{L}\bigg(1+\abs{\ln(\e)}+\abs{\ln(\eta')}\bigg)
\exp\left(-\frac{\eta'-\eta}{\abs{\sin\phi}}\right)\ud{\eta'}
\end{align}
If $\eta\geq 2$, we directly obtain
\begin{align}
\abs{\int_{\eta}^{L}\abs{\ln(\eta')}
\exp\left(-\frac{\eta'-\eta}{\abs{\sin\phi}}\right)\ud{\eta'}}\leq& \abs{\int_{2}^{L}\ln(\eta')
\exp\left(-\frac{\eta'-\eta}{\abs{\sin\phi}}\right)\ud{\eta'}}\\
\leq&\ln(2)\abs{\int_{2}^{L}
\exp\left(-\frac{\eta'-\eta}{\abs{\sin\phi}}\right)\ud{\eta'}}\no\\
\leq&C\sqrt{\abs{\sin\phi}}.\no
\end{align}
If $\eta\leq 2$, we decompose as
\begin{align}
&\abs{\int_{\eta}^{L}\abs{\ln(\eta')}
\exp\left(-\frac{\eta'-\eta}{\abs{\sin\phi}}\right)\ud{\eta'}}\\
\leq&\abs{\int_{\eta}^{2}\abs{\ln(\eta')}
\exp\left(-\frac{\eta'-\eta}{\abs{\sin\phi}}\right)\ud{\eta'}}+\abs{\int_{2}^{L}\abs{\ln(\eta')}
\exp\left(-\frac{\eta'-\eta}{\abs{\sin\phi}}\right)\ud{\eta'}}.\no
\end{align}
The second term is identical to the estimate in $\eta\geq2$. We apply Cauchy's inequality to the first term
\begin{align}
\abs{\int_{\eta}^{2}\abs{\ln(\eta')}
\exp\left(-\frac{\eta'-\eta}{\abs{\sin\phi}}\right)\ud{\eta'}}
\leq&\bigg(\int_{\eta}^{2}\ln^2(\eta')\ud{\eta'}\bigg)^{\frac{1}{2}}\bigg(\int_{\eta}^{2}
\exp\left(-\frac{2(\eta'-\eta)}{\abs{\sin\phi}}\right)\ud{\eta'}\bigg)^{\frac{1}{2}}\\
\leq&\bigg(\int_0^{2}\ln^2(\eta')\ud{\eta'}\bigg)^{\frac{1}{2}}\bigg(\int_{\eta}^{2}
\exp\left(-\frac{2(\eta'-\eta)}{\abs{\sin\phi}}\right)\ud{\eta'}\bigg)^{\frac{1}{2}}\no\\
\leq&C\sqrt{\abs{\sin\phi}}.\no
\end{align}
Hence, we have
\begin{align}\label{rt 16}
\abs{II_4}\leq C\Big(1+\abs{\ln(\e)}\Big)\sqrt{\d_0}\lnnm{\a}.
\end{align}
\ \\
Step 5: Estimate of $II_5$ for $\cos\phi<\d_0$.\\
This is similar to the estimate of $I_6$, we have
\begin{align}\label{rt 17}
\\
\abs{II_5}\leq&C\Big(1+\abs{\ln(\e)}\Big)\bigg(\lnm{p_{\a}}+\lnnm{S_{\a}}\bigg)+C\Big(1+\abs{\ln(\e)}\Big)\bigg(\frac{1}{\d}+\frac{\e}{\d^2}\bigg)\bigg(\lnnm{\v}+\lnnm{S}\bigg)\no\\
&+C\Big(1+\abs{\ln(\e)}\Big)\bigg(\d+\Big(1+\abs{\ln(\e)}\Big)\sqrt{\d_0}\bigg)\lnnm{\a}.\no
\end{align}
\ \\
Step 6: Synthesis.\\
Collecting all the terms \eqref{rt 13}, \eqref{rt 14}, \eqref{rt 15}, \eqref{rt 16} and \eqref{rt 17}, we have proved
\begin{align}\label{rt 23}
\abs{II}\leq&C\Big(1+\abs{\ln(\e)}\Big)\bigg(\lnmp{p_{\a}}+\lnnm{S_{\a}}\bigg)\\
&+C\bigg(\lnmp{\e\frac{\p p}{\p\phi}}+\lnnm{\zeta\frac{\p S}{\p\eta}}\bigg)\no\\
&+C\Big(1+\abs{\ln(\e)}\Big)\bigg(\frac{1}{\d_0^3}+\frac{1}{\d}+\frac{\e}{\d^2}\bigg)\bigg(\lnnm{\v}+\lnnm{S}\bigg)\no\\
&+C\Big(1+\abs{\ln(\e)}\Big)\bigg(\d+\Big(1+\abs{\ln(\e)}\Big)\sqrt{\d_0}\bigg)\lnnm{\a}.\no
\end{align}

\subsection{Region III: $\sin\phi<0$ and $E(\eta,\phi)\geq \ue^{-V(L)}$}

We consider
\begin{align}
\k[p_{\a}]=&p_{\a}\Big(\phi'(\eta,\phi;0)\Big)\exp\Big(-G_{\eta^+,0}-G_{\eta^+,\eta}\Big)\\
\t[\tilde\a+S_{\a}]=&\int_0^{\eta^+}\frac{(\tilde\a+S_{\a})\Big(\eta',\phi'(\eta,\phi;\eta')\Big)}{\sin\Big(\phi'(\eta,\phi;\eta')\Big)}
\exp\Big(-G_{\eta^+,\eta'}-G_{\eta^+,\eta}\Big)\ud{\eta'}\\&+
\int_{\eta}^{\eta^+}\frac{(\tilde\a+S_{\a})\Big(\eta',\rr[\phi'](\eta,\phi;\eta')\Big)}{\sin\Big(\phi'(\eta,\phi;\eta')\Big)}\exp\Big(-G_{\eta',\eta}\Big)\ud{\eta'}.\no
\end{align}
Based on Lemma \ref{Milne lemma 1} and Lemma \ref{Milne lemma 2}, we still have
\begin{align}
\abs{\k[p_{\a}]}\leq&\lnmp{p_{\a}},\\
\abs{\t[S_{\a}]}\leq&\lnnm{S_{\a}}.
\end{align}
Hence, we only need to estimate
\begin{align}
III=\t[\tilde\a]=&\int_0^{\eta^+}\frac{\tilde\a\Big(\eta',\phi'(\eta,\phi;\eta')\Big)}{\sin\Big(\phi'(\eta,\phi;\eta')\Big)}
\exp\Big(-G_{\eta^+,\eta'}-G_{\eta^+,\eta}\Big)\ud{\eta'}\\
&+
\int_{\eta}^{\eta^+}\frac{\tilde\a\Big(\eta',\rr[\phi'(\eta,\phi;\eta')]\Big)}{\sin\Big(\phi'(\eta,\phi;\eta')\Big)}\exp\Big(-G_{\eta',\eta}\Big)\ud{\eta'}\nonumber.
\end{align}
Note that $\abs{E(\eta,\phi)}\geq \ue^{-V(L)}$ implies
\begin{align}
\ue^{-V(\eta)}\cos\phi\geq \ue^{-V(L)}.
\end{align}
Hence, we can further deduce that
\begin{align}
\cos\phi\geq&\ue^{V(\eta)-V(L)}\geq \ue^{V(0)-V(L)}\geq \bigg(1-\e^{\frac{1}{2}}\bigg).
\end{align}
Hence, we know
\begin{align}
\abs{\sin\phi}\leq\sqrt{1-\bigg(1-\e^{\frac{1}{2}}\bigg)^2}\leq \e^{\frac{1}{4}}.
\end{align}
Hence, when $\e$ is sufficiently small, we always have
\begin{align}
\abs{\sin\phi}\leq \e^{\frac{1}{4}}\leq \d_0.
\end{align}
This means we do not need to bother with the estimate of $\sin\phi\leq-\d_0$ as Step 1 in estimating $I$ and $II$. Also, it is not necessary to discuss the case $\cos\phi<\d_0$.

Then the integral $\displaystyle\int_0^{\eta}(\cdots)$ is similar to the argument in Region I, and the integral $\displaystyle\int_{\eta}^{\eta^+}(\cdots)$ is similar to the argument in Region II.
Hence, combining the methods in Region I and Region II, we can show the desired result, i.e.
\begin{align}\label{rt 24}
\abs{III}\leq&C\Big(1+\abs{\ln(\e)}\Big)\bigg(\lnmp{p_{\a}}+\lnnm{S_{\a}}\bigg)+C\Big(1+\abs{\ln(\e)}\Big)\bigg(\frac{1}{\d}+\frac{\e}{\d^2}\bigg)\bigg(\lnnm{\v}+\lnnm{S}\bigg)\\
&+C\Big(1+\abs{\ln(\e)}\Big)\bigg(\d+\Big(1+\abs{\ln(\e)}\Big)\sqrt{\d_0}\bigg)\lnnm{\a}.\no
\end{align}

\subsection{Estimate of Normal and Velocity Derivative}

\begin{theorem}\label{regularity theorem 1}
The solution $\a$ to the equation \eqref{regularity 1} satisfies
\begin{align}
\lnnm{\a}\leq&C\abs{\ln(\e)}\bigg(\lnm{p_{\a}}+\lnnm{S_{\a}}\bigg)\\
&+C\abs{\ln(\e)}^8\bigg(\lnmp{\e\dfrac{\p p}{\p\phi}}+\lnnm{S}+\lnnm{\zeta\dfrac{\p S}{\p\eta}}+\lnnm{\v}\bigg).\no
\end{align}
\end{theorem}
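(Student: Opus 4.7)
The plan is to assemble the already-established regional bounds \eqref{rt 22}, \eqref{rt 23}, \eqref{rt 24} into a single $L^\infty$ inequality for $\a$, and then close the estimate by absorbing the $\lnnm{\a}$ term on the right-hand side via a judicious choice of the two small parameters $\d$ and $\d_0$ introduced in the decompositions. Concretely, the mild formulation gives
\begin{align*}
\a(\eta,\phi,\psi)=\k[p_{\a}](\eta,\phi,\psi)+\t[\tilde\a+S_{\a}](\eta,\phi,\psi),
\end{align*}
and Lemma \ref{Milne lemma 1} together with Lemma \ref{Milne lemma 2} immediately yield $\abs{\k[p_\a]}\leq \lnmp{p_\a}$ and $\abs{\t[S_\a]}\leq C\lnnm{S_\a}$. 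The nontrivial part is $\t[\tilde\a]$, and this is exactly what the Region I/II/III decomposition controls, giving in each region the same schematic estimate
\begin{align*}
\abs{\t[\tilde\a]}\le &\;C(1+\abs{\ln\e})\Big(\lnmp{p_\a}+\lnnm{S_\a}\Big)+C\Big(\lnmp{\e\p_\phi p}+\lnnm{\zeta\p_\eta S}\Big)\\
&+C(1+\abs{\ln\e})\Big(\tfrac{1}{\d_0^3}+\tfrac{1}{\d}+\tfrac{\e}{\d^2}\Big)\Big(\lnnm{\v}+\lnnm{S}\Big)\\
&+C(1+\abs{\ln\e})\Big(\d+(1+\abs{\ln\e})\sqrt{\d_0}\Big)\lnnm{\a}.
\end{align*}

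Next I would take the essential supremum over $(\eta,\phi,\psi)$ and choose the parameters to absorb the last line. A convenient choice is $\d\sim 1/\abs{\ln\e}$, which forces $(1+\abs{\ln\e})\d\le \tfrac14$, and $\d_0\sim 1/\abs{\ln\e}^{4}$, which forces $(1+\abs{\ln\e})^2\sqrt{\d_0}\le \tfrac14$. With these choices the coefficient of $\lnnm{\a}$ drops below $\tfrac12$ and can be moved to the left-hand side, while the coefficient multiplying $\lnnm{\v}+\lnnm{S}$ grows polynomially in $\abs{\ln\e}$ of degree dictated by $1/\d_0^3$ together with the prefactor $(1+\abs{\ln\e})$. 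Combining the three regions (they sum rather than interact, since they correspond to disjoint subsets of phase space) yields a single estimate for $\lnnm{\a}$ of the desired shape, with the $\abs{\ln\e}$ factors distributed as in the statement.

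The main obstacle is the interplay between $\d_0$ and the grazing-set singularity of $\zeta$. The splitting near $\sin\phi\approx 0$ must be fine enough to kill the $\frac{1}{\zeta}$-type singularity from $\tilde\a$, but coarse enough that the leftover small-angle contribution does not blow up when we differentiate $\v$. This is exactly the dichotomy $\sqrt{\e\eta'}\gtrless \sin\phi'$ carried out in Steps 3--5 of Region I and Steps 3--4 of Region II, and it is also the reason the $\abs{\ln\e}$ powers in the final estimate come out rather large. The logical structure of the absorption also requires care: Step 6 of Region I (and Step 5 of Region II) invokes the bound on $\a$ at small $\sin\phi_\ast$, which is itself controlled only by the earlier subregions $I_2,I_3,I_4,I_5,II_2,II_3,II_4,III$, so the circularity must be broken by noting that $I_6$ and $II_5$ never feed back into themselves. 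Once this is verified, the final step is a one-line absorption argument that produces the stated inequality with $\abs{\ln\e}$ and $\abs{\ln\e}^8$ factors on the two respective data groups.
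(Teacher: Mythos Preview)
Your proposal is correct and follows essentially the same approach as the paper: combine the regional estimates \eqref{rt 22}, \eqref{rt 23}, \eqref{rt 24}, take the supremum, choose $\d\sim(1+\abs{\ln\e})^{-1}$ and $\d_0\sim(1+\abs{\ln\e})^{-4}$ to make the coefficient of $\lnnm{\a}$ small, and absorb. The paper also notes, as you do, that this choice of $\d_0$ is consistent with the standing assumption $\e^{1/4}\leq\d_0/2$ used in Region~III, and that the $I_6$, $II_5$ estimates only feed on the other subregions, avoiding circularity.
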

\begin{proof}
Combining the estimates \eqref{rt 22}, \eqref{rt 23} and \eqref{rt 24}, and taking supremum over all $(\eta,\phi,\psi)$, we have
\begin{align}\label{pt 05}
\lnnm{\a}\leq&C\Big(1+\abs{\ln(\e)}\Big)\bigg(\lnmp{p_{\a}}+\lnnm{S_{\a}}\bigg)\\
&+C\bigg(\lnmp{\e\frac{\p p}{\p\phi}}+\lnnm{\zeta\frac{\p S}{\p\eta}}\bigg)\no\\
&+C\Big(1+\abs{\ln(\e)}\Big)\bigg(\frac{1}{\d_0^3}+\frac{1}{\d}+\frac{\e}{\d^2}\bigg)\bigg(\lnnm{\v}+\lnnm{S}\bigg)\no\\
&+C\Big(1+\abs{\ln(\e)}\Big)\bigg(\d+\Big(1+\abs{\ln(\e)}\Big)\sqrt{\d_0}\bigg)\lnnm{\a}.\no
\end{align}
Then we choose the constants $\d$ and $\d_0$ to perform absorbing argument. First we choose $\d=C_0\Big(1+\abs{\ln(\e)}\Big)^{-1}$ for $C_0>0$ sufficiently small such that
\begin{align}
C\d\leq\frac{1}{4}.
\end{align}
Then we take $\d_0=C_0\Big(1+\abs{\ln(\e)}\Big)^{-4}$ such that
\begin{align}
C\Big(1+\abs{\ln(\e)}\Big)^2\sqrt{\d_0}\leq \frac{1}{4},
\end{align}
for $\e$ sufficiently small. Note that this mild decay of $\d_0$ with respect to $\e$ also justifies the assumption in Case III that
\begin{align}
\e^{\frac{1}{4}}\leq \frac{\d_0}{2},
\end{align}
for $\e$ sufficiently small. Hence, we can absorb all the term related to $\lnnm{\a}$ on the right-hand side of \eqref{pt 05} to the left-hand side to obtain
\begin{align}
\lnnm{\a}\leq&C\abs{\ln(\e)}\bigg(\lnmp{p_{\a}}+\lnnm{S_{\a}}\bigg)\\
&+C\abs{\ln(\e)}^8\bigg(\lnmp{\e\dfrac{\p p}{\p\phi}}+\lnnm{S}+\lnnm{\zeta\dfrac{\p S}{\p\eta}}+\lnnm{\v}\bigg).\no
\end{align}
\end{proof}

\begin{theorem}\label{pt theorem 1}
The solution $\v$ to the difference equation \eqref{Milne difference problem} satisfies
\begin{align}
&\lnnm{\zeta\frac{\p\v}{\p\eta}}+\lnnm{F(\eta,\psi)\cos\phi\frac{\p\v}{\p\phi}}\\
\leq&C\abs{\ln(\e)}^8\bigg(\lnmp{p}+\lnmp{\e\dfrac{\p p}{\p\phi}}+\lnnm{S}+\lnnm{\zeta\dfrac{\p S}{\p\eta}}+\lnnm{\v}\bigg).\no
\end{align}
\end{theorem}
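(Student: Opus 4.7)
The plan is to reduce the desired estimate to an application of Theorem~\ref{regularity theorem 1} by realizing the weighted normal derivative as a solution of the $\e$-transport problem \eqref{regularity 1}, and then to extract the velocity-derivative bound algebraically from the original equation. Concretely, I would first set $w = \dfrac{\p\v}{\p\eta}$ and $\a = \zeta w$. Differentiating the difference equation \eqref{Milne difference problem} in $\eta$ yields
\begin{align*}
\sin\phi\frac{\p w}{\p\eta}+F(\eta,\psi)\cos\phi\frac{\p w}{\p\phi}+w-\bar w=\frac{\p S}{\p\eta}-\frac{\p F}{\p\eta}\cos\phi\frac{\p \v}{\p\phi}.
\end{align*}
Multiplying by $\zeta$ and applying Lemma~\ref{rt lemma 1}, which states that $\zeta$ is conserved along characteristics of the transport operator, gives exactly \eqref{regularity 1} for $\a$, with source term
\begin{align*}
S_{\a}=\zeta\frac{\p S}{\p\eta}-\zeta\frac{\p F}{\p\eta}\cos\phi\frac{\p \v}{\p\phi},
\end{align*}
and in-flow data $p_{\a}(\phi,\psi)=\zeta(0,\phi,\psi)\dfrac{\p\v}{\p\eta}(0,\phi,\psi)$ for $\sin\phi>0$.

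Next I would bound $p_{\a}$ and $S_{\a}$. Since $V(0,\psi)=0$, we have $\zeta(0,\phi,\psi)=\abs{\sin\phi}$, so solving the equation on $\eta=0$ for $\dfrac{\p\v}{\p\eta}$ cancels the $1/\sin\phi$ factor exactly and yields
\begin{align*}
\lnmp{p_{\a}}\leq C\bigg(\lnmp{\e\dfrac{\p p}{\p\phi}}+\lnmp{p}+\lnnm{\v}+\lnnm{S}\bigg).
\end{align*}
For $S_{\a}$, the key observation is that $\abs{\dfrac{\p F}{\p\eta}}\leq C\e\,\abs{F}$ because of the explicit form \eqref{force}, so using $\zeta\leq 1$ we get
\begin{align*}
\lnnm{S_{\a}}\leq \lnnm{\zeta\dfrac{\p S}{\p\eta}}+C\e\,\lnnm{F(\eta,\psi)\cos\phi\dfrac{\p\v}{\p\phi}}.
\end{align*}
Invoking Theorem~\ref{regularity theorem 1} then gives
\begin{align*}
\lnnm{\a}\leq C\abs{\ln(\e)}^{8}\bigg(\lnmp{p}+\lnmp{\e\dfrac{\p p}{\p\phi}}+\lnnm{S}+\lnnm{\zeta\dfrac{\p S}{\p\eta}}+\lnnm{\v}\bigg)+C\e\abs{\ln(\e)}\,\lnnm{F\cos\phi\dfrac{\p\v}{\p\phi}}.
\end{align*}

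To close the loop, I would recover $F\cos\phi\dfrac{\p\v}{\p\phi}$ directly from the original equation \eqref{Milne difference problem}:
\begin{align*}
F(\eta,\psi)\cos\phi\dfrac{\p\v}{\p\phi}=-\sin\phi\dfrac{\p\v}{\p\eta}+\bar\v-\v+S.
\end{align*}
Because $\zeta=\sqrt{1-E^{2}}\geq\sqrt{1-\cos^{2}\phi}=\abs{\sin\phi}$, we have $\abs{\sin\phi}\abs{\p_\eta \v}\leq \abs{\a}$, so
\begin{align*}
\lnnm{F\cos\phi\dfrac{\p\v}{\p\phi}}\leq \lnnm{\a}+2\lnnm{\v}+\lnnm{S}.
\end{align*}
Substituting this into the previous bound and absorbing the $C\e\abs{\ln(\e)}\lnnm{\a}$ term into the left-hand side for $\e$ sufficiently small yields the desired estimate for $\lnnm{\a}$, and then the same algebraic identity upgrades it to the bound on $\lnnm{F\cos\phi\,\p_{\phi}\v}$.

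The only delicate point, and what I view as the main obstacle, is the apparent circularity: the source $S_{\a}$ of the equation for $\a$ contains $F\cos\phi\,\p_{\phi}\v$, which is precisely one of the quantities we are trying to control. The whole proof succeeds only because the geometric correction $F$ is of order $\e$ uniformly, which forces $\p_{\eta}F$ to be of order $\e\cdot F$, providing the smallness needed to absorb the offending term. Apart from this, there is one bookkeeping subtlety: verifying that $p_{\a}$ is genuinely bounded despite the $1/\sin\phi$ that appears when solving for $\p_{\eta}\v$ at $\eta=0$; this requires the explicit identity $\zeta(0,\phi,\psi)=\abs{\sin\phi}$, which cancels the singularity exactly.
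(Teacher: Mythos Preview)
Your proof is correct and follows essentially the same approach as the paper: identify $\a=\zeta\,\p_{\eta}\v$ as the solution of \eqref{regularity 1}, bound $p_{\a}$ and $S_{\a}$ using the exact cancellation $\zeta(0,\phi,\psi)=\abs{\sin\phi}$ and the structural fact $\abs{\p_{\eta}F}\leq C\e\abs{F}$, invoke Theorem~\ref{regularity theorem 1}, and close by extracting $F\cos\phi\,\p_{\phi}\v$ from the original equation and absorbing the $O(\e\abs{\ln\e})$ feedback term. The paper's proof is identical in structure and in the two key observations you flagged as delicate.
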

\begin{proof}
Based on Theorem \ref{regularity theorem 1}, we have
\begin{align}\label{pt 06}
\lnnm{\a}\leq&C\abs{\ln(\e)}\bigg(\lnmp{p_{\a}}+\lnnm{S_{\a}}\bigg)\\
&+C\abs{\ln(\e)}^8\bigg(\lnmp{\e\dfrac{\p p}{\p\phi}}+\lnnm{S}+\lnnm{\zeta\dfrac{\p S}{\p\eta}}+\lnnm{\v}\bigg).\no
\end{align}
Taking $\eta$ derivatives on both sides of \eqref{Milne difference problem} and multiplying $\zeta$, we have
\begin{align}
p_{\a}=&\e\cos\phi\frac{\p p}{\p\phi}-p+\bar\v(0)+S(0,\phi),\\
S_{\a}=&\frac{\p{F}}{\p{\eta}}\zeta\cos\phi\frac{\p\v}{\p\phi}+\zeta\frac{\p S}{\p\eta}.
\end{align}
Since $\abs{F(\eta,\psi)}\leq C\e$ and $\abs{\dfrac{\p F}{\p\eta}}\leq C\e F$, we may directly estimate
\begin{align}
\lnnm{p_{\a}}\leq&C\bigg(\lnmp{p}+\lnmp{\e\dfrac{\p p}{\p\phi}}+\lnnm{S}+\lnnm{\v}\bigg),\label{pt 07}\\
\lnnm{S_{\a}}\leq&C\bigg(\e\lnnm{F(\eta,\psi)\cos\phi\frac{\p\v}{\p\phi}}+\lnnm{\zeta\dfrac{\p S}{\p\eta}}\bigg).\label{pt 08}
\end{align}
Then inserting \eqref{pt 07} and \eqref{pt 08} into \eqref{pt 06}, we derive
\begin{align}
\lnnm{\a}\leq&C\e\lnnm{F(\eta,\psi)\cos\phi\frac{\p\v}{\p\phi}}\\
&+C\abs{\ln(\e)}^8\bigg(\lnmp{p}+\lnmp{\e\dfrac{\p p}{\p\phi}}+\lnnm{S}+\lnnm{\zeta\dfrac{\p S}{\p\eta}}+\lnnm{\v}\bigg).\no
\end{align}
This is actually
\begin{align}
\lnnm{\zeta\frac{\p\v}{\p\eta}}\leq&C\e\lnnm{F(\eta,\psi)\cos\phi\frac{\p\v}{\p\phi}}\\
&+C\abs{\ln(\e)}^8\bigg(\lnmp{p}+\lnmp{\e\dfrac{\p p}{\p\phi}}+\lnnm{S}+\lnnm{\zeta\dfrac{\p S}{\p\eta}}+\lnnm{\v}\bigg).\no
\end{align}
Since $\zeta(\eta,\phi)\geq\abs{\sin\phi}$, we know
\begin{align}\label{pt 10}
\lnnm{\sin\phi\frac{\p\v}{\p\eta}}\leq&C\e\lnnm{F(\eta,\psi)\cos\phi\frac{\p\v}{\p\phi}}\\
&+C\abs{\ln(\e)}^8\bigg(\lnmp{p}+\lnmp{\e\dfrac{\p p}{\p\phi}}+\lnnm{S}+\lnnm{\zeta\dfrac{\p S}{\p\eta}}+\lnnm{\v}\bigg).\no
\end{align}
Considering the equation \eqref{Milne difference problem}, and inserting \eqref{pt 10}, we have
\begin{align}
\lnnm{F(\eta,\psi)\cos\phi\frac{\p\v}{\p\phi}}\leq&\lnnm{\sin\phi\frac{\p\v}{\p\eta}}+\lnnm{\v}+\lnnm{\bar\v}+\lnnm{S}\\
\leq&C\e\lnnm{F(\eta,\psi)\cos\phi\frac{\p\v}{\p\phi}}\no\\
&+C\abs{\ln(\e)}^8\bigg(\lnmp{p}+\lnmp{\e\dfrac{\p p}{\p\phi}}+\lnnm{S}+\lnnm{\zeta\dfrac{\p S}{\p\eta}}+\lnnm{\v}\bigg).\no
\end{align}
Absorbing $\lnnm{F(\eta,\psi)\cos\phi\dfrac{\p\v}{\p\phi}}$ into the left-hand side, we obtain
\begin{align}
\\
\lnnm{F(\eta,\psi)\cos\phi\frac{\p\v}{\p\phi}}\leq&C\abs{\ln(\e)}^8\bigg(\lnmp{p}+\lnmp{\e\dfrac{\p p}{\p\phi}}+\lnnm{S}+\lnnm{\zeta\dfrac{\p S}{\p\eta}}+\lnnm{\v}\bigg).\no
\end{align}
Therefore, we further derive
\begin{align}
\\
\lnnm{\zeta\frac{\p\v}{\p\eta}}\leq&C\abs{\ln(\e)}^8\bigg(\lnmp{p}+\lnmp{\e\dfrac{\p p}{\p\phi}}+\lnnm{S}+\lnnm{\zeta\dfrac{\p S}{\p\eta}}+\lnnm{\v}\bigg).\no
\end{align}
\end{proof}

\begin{theorem}\label{pt theorem 2}
For $K_0>0$ sufficiently small, the solution $\v$ to the difference equation \eqref{Milne difference problem} satisfies
\begin{align}
&\lnnm{\ue^{K_0\eta}\zeta\frac{\p\v}{\p\eta}}+\lnnm{\ue^{K_0\eta}F(\eta,\psi)\cos\phi\frac{\p\v}{\p\phi}}\\
\leq&C\abs{\ln(\e)}^8\bigg(\lnmp{p}+\lnmp{\e\dfrac{\p p}{\p\phi}}+\lnnm{\ue^{K_0\eta}S}+\lnnm{\ue^{K_0\eta}\zeta\dfrac{\p S}{\p\eta}}+\lnnm{\ue^{K_0\eta}\v}\bigg).\no
\end{align}
\end{theorem}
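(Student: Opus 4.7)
The plan is to mimic exactly the upgrade from Theorem \ref{Milne theorem 1} to Theorem \ref{Milne theorem 2}: introduce the exponentially weighted unknowns, observe that the only new source term is $O(K_0)$ times the unknown itself, re-run the mild-formulation analysis used to prove Theorem \ref{pt theorem 1}, and absorb the new term on the left by taking $K_0$ small.

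First, I would set $W=\ue^{K_0\eta}\a=\ue^{K_0\eta}\zeta\dfrac{\p\v}{\p\eta}$. A direct computation shows that $W$ satisfies a transport equation of the same structural type as \eqref{regularity 1}, namely
\begin{align*}
\sin\phi\frac{\p W}{\p\eta}+F(\eta,\psi)\cos\phi\frac{\p W}{\p\phi}+W=\tilde W+\ue^{K_0\eta}S_{\a}+K_0\sin\phi\, W,
\end{align*}
with boundary data $\ue^{K_0\cdot 0}p_{\a}=p_{\a}$ on $\sin\phi>0$ and specular reflection at $\eta=L$. The kernel piece $\tilde W$ is obtained from $\tilde\a$ by multiplying through by $\ue^{K_0\eta}$ and inherits the same quasi-averaged structure because $\eta$ is frozen in the integrand of $\tilde\a$.

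Next I would repeat the six-region decomposition (I, II, III with subcases in $\sin\phi$, $\cos\phi$, $\sin\phi'$, $\sqrt{\e\eta'}$, $\sin^2\phi$ vs.\ $\e(\eta-\eta')$) used for $I$, $II$, $III$ in the proof of Theorem \ref{pt theorem 1}. The only changes are: (i) the source $S_{\a}$ is replaced by $\ue^{K_0\eta}S_{\a}+K_0\sin\phi W$; (ii) the kernel estimates now contain a factor $\ue^{K_0(\eta-\eta')}$, which can be absorbed into $\exp(-G_{\eta,\eta'})$ or $\exp(-G_{\eta',\eta})$ exactly as in Lemma \ref{Milne lemma 2} (taking $K_0\leq \tfrac12$ so that $\ue^{K_0(\eta-\eta')}\exp(-G_{\eta,\eta'})\leq \exp(-\tfrac12 G_{\eta,\eta'})$). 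Every characteristic integral that was shown to be $O(1)$ remains $O(1)$, so each of the seven steps in Region I, and the analogous steps in Regions II and III, produces the same bounds as before, now expressed in terms of exponentially weighted norms. Summing them gives
\begin{align*}
\lnnm{W}\leq C\abs{\ln\e}\bigl(\lnmp{p_{\a}}+\lnnm{\ue^{K_0\eta}S_{\a}}\bigr)+CK_0\lnnm{W}+C\abs{\ln\e}^8\bigl(\lnmp{\e p_{\phi}}+\lnnm{\ue^{K_0\eta}S}+\lnnm{\ue^{K_0\eta}\zeta S_{\eta}}+\lnnm{\ue^{K_0\eta}\v}\bigr),
\end{align*}
where the $CK_0\lnnm{W}$ term is precisely the contribution of the extra $K_0\sin\phi W$ source (estimated via $|\sin\phi|\leq 1$ and Lemma \ref{Milne lemma 2}). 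Choosing $K_0>0$ so small that $CK_0\leq \tfrac12$ lets me absorb this term into the left side.

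Finally, I would unwind $W$ back to $\v$ by expressing $\ue^{K_0\eta}S_{\a}=\ue^{K_0\eta}\dfrac{\p F}{\p\eta}\zeta\cos\phi\dfrac{\p\v}{\p\phi}+\ue^{K_0\eta}\zeta\dfrac{\p S}{\p\eta}$ and $p_{\a}$ in terms of $p$, $\e\dfrac{\p p}{\p\phi}$, $S(0,\cdot)$, $\bar\v(0)$, as was done at the end of the proof of Theorem \ref{pt theorem 1}. The factor $\e$ from $|\p_\eta F|\leq C\e F$ then absorbs $\lnnm{\ue^{K_0\eta}F\cos\phi\,\p_\phi \v}$ on the right into the left, and invoking the equation \eqref{Milne difference problem} multiplied by $\ue^{K_0\eta}$ converts the $\zeta\p_\eta$ bound into a $F\cos\phi\,\p_\phi$ bound (and vice versa), yielding the claimed estimate. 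The only delicate point, and the main obstacle, is to make sure the integrals $\int_0^\eta \ue^{K_0(\eta-\eta')}/\sin\phi'\cdot \exp(-G_{\eta,\eta'})\,\ud\eta'$ and their analogues on $[\eta,L]$ in Regions II and III remain uniformly bounded; this works because $G_{\eta,\eta'}\geq |\eta-\eta'|$ (resp.\ $G_{\eta',\eta}\geq|\eta-\eta'|$) along the characteristic, so $K_0\leq \tfrac12$ suffices, and the $K_0$ chosen here is compatible with the one in Theorem \ref{Milne theorem 2}.
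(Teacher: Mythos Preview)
Your proposal is correct and matches the paper's approach. The paper's own proof is just one sentence: it notes that the argument is identical to Theorem \ref{pt theorem 1} except that $S_{\a}$ picks up an extra $K_0\a\sin\phi$, which is absorbed for $K_0$ small. Your write-up spells this out in more detail than the paper does (and your point (ii) about the factor $\ue^{K_0(\eta-\eta')}$ is slightly redundant once you have passed to $W=\ue^{K_0\eta}\a$, since the exponential is already built into $W$ and its source), but the underlying mechanism is exactly the same.
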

\begin{proof}
This proof is almost identical to that of Theorem \ref{pt theorem 1}. The only difference is that $S_{\a}$ is added by $K_0\a\sin\phi$. When $K_0$ is sufficiently small, we can also absorb them into the left-hand side. Hence, this is obvious.
\end{proof}

\subsection{Local $\e$-Milne Problem with Geometric Correction}

In the analysis of regular boundary layer, we actually need to estimate the solution to the following modified $\e$-Milne problem with geometric correction:
\begin{align}\label{Milne difference problem.}
\left\{
\begin{array}{l}\displaystyle
\sin\phi\frac{\p g}{\p\eta}+F(\eta,\psi)\cos\phi\frac{\p
g}{\p\phi}+g=S(\eta,\phi,\psi),\\\rule{0ex}{2.0em}
g(0,\phi,\psi)=h(\phi,\psi)\ \ \text{for}\ \ \sin\phi>0,\\\rule{0ex}{2.0em}
g(L,\phi,\psi)=g(L,\rr[\phi],\psi).
\end{array}
\right.
\end{align}
Since there is no non-local term $\bar g$, the estimates is even simpler. We can always track along the characteristics back to the boundary data. Here $S$ is chosen such that the solution $g$ decays exponentially to a constant $g_L$.
Equivalently, $\w=g-g_L$ satisfies
\begin{align}\label{Milne difference problem'}
\left\{
\begin{array}{l}\displaystyle
\sin\phi\frac{\p\w}{\p\eta}+F(\eta,\psi)\cos\phi\frac{\p
\w}{\p\phi}+\w=S(\eta,\phi,\psi),\\\rule{0ex}{2.0em}
\w(0,\phi,\psi)=h(\phi,\psi)\ \ \text{for}\ \ \sin\phi>0,\\\rule{0ex}{2.0em}
\w(L,\phi,\psi)=\w(L,\rr[\phi],\psi).
\end{array}
\right.
\end{align}

\begin{theorem}\label{Milne theorem 2'}
Assume $S$ and $h$ satisfy \eqref{Milne bounded} and \eqref{Milne decay}. Then there exists $K_0>0$ such that the solution $g(\eta,\phi,\psi)$ to the
equation \eqref{Milne difference problem.} satisfies
\begin{align}
\tnnm{\ue^{K_0\eta}\w}+\lnnm{\ue^{K_0\eta}\w}\leq C.
\end{align}
\end{theorem}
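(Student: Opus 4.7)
The key observation is that the local problem \eqref{Milne difference problem.} lacks the nonlocal averaging term $\bar g$, so the characteristics fully decouple and along each characteristic the equation reduces to the scalar ODE $\sin\phi\,\ud g/\ud\eta + g = S$. Moreover, comparing \eqref{Milne difference problem'} with \eqref{Milne difference problem.} shows that $\w$ obeys the same equation, the same inflow data $h$, and the same specular reflection condition at $\eta=L$ as $g$ itself; consequently $g_L$ plays no active role in the proof, and it suffices to bound $g$ directly.

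The plan is to write $g = \k[h] + \t[S]$ using the three-region characteristic decomposition (with the same $\phi'$, $\rr[\phi']$, $\eta^+$, and $G_{\eta,\eta'}$ already set up for the $L^\infty$ analysis of the $\e$-Milne problem). Fix $K_0 < \min\{K, 1/2\}$, where $K$ is the decay rate of $S$ from \eqref{Milne decay}. The weighted form of Lemma \ref{Milne lemma 1} yields $\lnnm{\ue^{K_0\eta}\k[h]} \leq \lnmp{h} \leq C$, and the weighted form of Lemma \ref{Milne lemma 2} yields $\lnnm{\ue^{K_0\eta}\t[S]} \leq C\lnnm{\ue^{K_0\eta}S} \leq C$. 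Summing these gives $\lnnm{\ue^{K_0\eta}\w} \leq C$ directly, bypassing the kernel/quasi-orthogonality machinery required for the full $\e$-Milne problem.

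For the $L^2$ half of the theorem, pick an auxiliary rate $K_1 \in (K_0, \min\{K, 1/2\})$ and apply the same weighted lemmas with rate $K_1$ to obtain the sharper pointwise bound $|g(\eta,\phi,\psi)| \leq C\ue^{-K_1\eta}$. Then
\begin{align*}
\tnnm{\ue^{K_0\eta}\w}^2 \leq C\int_0^L \ue^{2(K_0 - K_1)\eta}\,\ud\eta \leq \frac{C}{2(K_1 - K_0)},
\end{align*}
which is uniform in $L = \e^{-n}$ and hence in $\e$. There is no substantive obstacle here: the structural difficulties of the genuine $\e$-Milne problem (nontrivial hydrodynamic kernel, quasi-orthogonality relation, $L^2$--$L^\infty$ coupling, and the construction of $f_L$ via the moment $\br{\sin^2\phi,f}_{\phi,\psi}$) are all absent once the nonlocal term is removed. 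The only care required is the strict inequality $K_0 < K_1$ so that the $\eta$-integral converges independently of $L$.
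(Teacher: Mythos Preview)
Your proposal is correct and follows the same route the paper indicates. The paper does not write out a proof of this theorem; it only remarks that ``since there is no non-local term $\bar g$, the estimates is even simpler. We can always track along the characteristics back to the boundary data,'' and your argument is precisely the execution of that remark via the already-established Lemma \ref{Milne lemma 1} and Lemma \ref{Milne lemma 2}. Your handling of the $L^2$ piece by choosing an auxiliary rate $K_1>K_0$ is the right way to make the $\eta$-integral uniform in $L=\e^{-n}$.

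One small comment: your claim that $\w$ satisfies literally the same equation and boundary data as $g$ relies on the paper's formulation \eqref{Milne difference problem'}, which itself is written as if $g_L=0$. Strictly, subtracting a nonzero constant $g_L$ would shift the source to $S-g_L$ and the inflow data to $h-g_L$. Your own argument actually resolves this: the pointwise decay $|g(\eta,\phi,\psi)|\leq C\ue^{-K_1\eta}$ forces any such $g_L$ (however it is defined, e.g.\ via a moment at $\eta=L$) to satisfy $|g_L|\leq C\ue^{-K_1 L}\leq C\ue^{-K_1\eta}$ for all $\eta\in[0,L]$, so $|\w|\leq C\ue^{-K_1\eta}$ as well and nothing changes. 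It would be worth stating this explicitly rather than asserting that $g_L$ ``plays no active role.''
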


\begin{theorem}\label{pt theorem 2'}
For $K_0>0$ sufficiently small, the solution $\v$ to the difference equation \eqref{Milne difference problem'} satisfies
\begin{align}
&\lnnm{\ue^{K_0\eta}\zeta\frac{\p\w}{\p\eta}}+\lnnm{\ue^{K_0\eta}F(\eta,\psi)\cos\phi\frac{\p\w}{\p\phi}}\\
\leq&C\abs{\ln(\e)}^8\bigg(\lnmp{p}+\lnmp{\e\dfrac{\p p}{\p\phi}}+\lnnm{\ue^{K_0\eta}S}+\lnnm{\ue^{K_0\eta}\zeta\dfrac{\p S}{\p\eta}}+\lnnm{\ue^{K_0\eta}\w}\bigg).\no
\end{align}
\end{theorem}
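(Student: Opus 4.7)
The plan is to mimic the proof of Theorem \ref{pt theorem 2}, exploiting the fact that the local problem \eqref{Milne difference problem'} contains no nonlocal term $\bar\w$ on its right-hand side. This simplification means that when I differentiate and multiply by the weight, the resulting equation for $\a=\zeta\dfrac{\p\w}{\p\eta}$ becomes a pure transport equation on characteristics with no integral coupling in $(\phi,\psi)$, so the entire delicate Region~I/II/III weight-quotient analysis used to bound the term $\tilde\a$ in Theorem \ref{regularity theorem 1} is unnecessary.

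First I would differentiate \eqref{Milne difference problem'} in $\eta$ and multiply by $\zeta$. Using Lemma \ref{rt lemma 1}, the transport operator commutes through $\zeta$, so $\a=\zeta\dfrac{\p\w}{\p\eta}$ satisfies
\begin{align*}
\sin\phi\frac{\p\a}{\p\eta}+F(\eta,\psi)\cos\phi\frac{\p\a}{\p\phi}+\a=\zeta\frac{\p F}{\p\eta}\cos\phi\frac{\p\w}{\p\phi}+\zeta\frac{\p S}{\p\eta},
\end{align*}
with in-flow data $\a(0,\phi,\psi)=\sin\phi\cdot\dfrac{\p\w}{\p\eta}(0,\phi,\psi)$, which we read off from the equation as $\e\cos\phi\dfrac{\p p}{\p\phi}-p+S(0,\phi,\psi)$, plus specular reflection at $\eta=L$. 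Next I set $Z=\ue^{K_0\eta}\a$, which satisfies the same type of transport equation with an extra source $K_0\sin\phi\, Z$ and with right-hand side multiplied by $\ue^{K_0\eta}$.

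Then I solve explicitly along characteristics using the mild formulation $Z=\k[\cdot]+\t[\cdot]$ and invoke Lemma \ref{Milne lemma 1} and Lemma \ref{Milne lemma 2} directly: since there is no nonlocal $\tilde\a$-term to contend with, both operators map $L^\infty$ to $L^\infty$ with norm $O(1)$. This yields
\begin{align*}
\lnnm{\ue^{K_0\eta}\a}\leq C\lnmp{\tilde p_\a}+C\lnnm{\ue^{K_0\eta}\tilde S_\a}+CK_0\lnnm{\ue^{K_0\eta}\a},
\end{align*}
and for $K_0$ sufficiently small the last term is absorbed. Using $\abs{\p_\eta F}\leq C\e\abs{F}$ to control $\tilde S_\a$, I bound
\begin{align*}
\lnnm{\ue^{K_0\eta}\zeta\frac{\p\w}{\p\eta}}\leq C\e\lnnm{\ue^{K_0\eta}F\cos\phi\frac{\p\w}{\p\phi}}+C\bigg(\lnmp{p}+\lnmp{\e\frac{\p p}{\p\phi}}+\lnnm{\ue^{K_0\eta}S}+\lnnm{\ue^{K_0\eta}\zeta\frac{\p S}{\p\eta}}+\lnnm{\ue^{K_0\eta}\w}\bigg).
\end{align*}

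Finally, reading the equation \eqref{Milne difference problem'} itself as an identity for $F\cos\phi\,\p_\phi\w$, and using $\zeta\geq\abs{\sin\phi}$ so that $\lnnm{\sin\phi\,\p_\eta\w}\leq\lnnm{\zeta\,\p_\eta\w}$, I obtain a companion bound for $\lnnm{\ue^{K_0\eta}F\cos\phi\,\p_\phi\w}$ in which the only coupling back to $\lnnm{\ue^{K_0\eta}\zeta\,\p_\eta\w}$ carries a factor of $\e$ (or of the small constant arising from $K_0$). A standard absorbing argument then closes the estimate and delivers the claimed $\abs{\ln\e}^8$ bound---the logarithmic factor appearing only because Theorem \ref{regularity theorem 1} in its underlying $L^\infty$ framework, which I reuse verbatim, already produces this loss. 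The only conceptual step beyond mirroring Theorem \ref{pt theorem 2} is to verify that the reflection boundary condition at $\eta=L$ is compatible with the derivative estimate, but this follows because $\rr[\phi]=-\phi$ preserves both $\zeta$ and $\cos\phi$, so no further boundary work is required. The main (mild) obstacle is just bookkeeping: tracking that every $\e$-small coefficient produced by differentiating $F$ in $\eta$ is paired with the corresponding large factor to yield an absorbable $O(\e)\cdot\lnnm{\cdot}$ contribution.
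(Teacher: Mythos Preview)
Your approach is correct and matches the paper's implicit treatment: the paper gives no separate proof of this theorem, merely remarking beforehand that ``since there is no non-local term $\bar g$, the estimates [are] even simpler,'' which is precisely the simplification you exploit. One clarification: your closing remark that the $\abs{\ln\e}^8$ factor arises because you ``reuse Theorem \ref{regularity theorem 1} verbatim'' is inconsistent with the rest of your argument---you do \emph{not} reuse that theorem, you bypass it by applying Lemmas \ref{Milne lemma 1} and \ref{Milne lemma 2} directly, and these give $O(1)$ operator bounds. Your argument therefore actually yields the stated inequality with constant $C$ in place of $C\abs{\ln\e}^8$; the logarithmic loss in the theorem statement is just slack inherited from the parallel Theorem \ref{pt theorem 2}, where the nonlocal term $\tilde\a$ genuinely forces the choice $\d_0\sim\abs{\ln\e}^{-4}$.
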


\newpage


\bibliographystyle{siam}
\bibliography{Reference}

\end{document}